\numberwithin{equation}{section}
\numberwithin{subsection}{section}
\newtheorem{theorem}{Theorem}[section]
\newtheorem{proposition}[theorem]{Proposition}
\newtheorem{proposition-definition}[theorem]
{Proposition-Definition}
\newtheorem{corollary}[theorem]{Corollary}
\newtheorem{lemma}[theorem]{Lemma}
\theoremstyle{definition}
\newtheorem{definition}[theorem]{Definition}
\newtheorem{example}[theorem]{Example}
\newtheorem{remark}[theorem]{Remark}
\theoremstyle{remark}
\newcommand\cA{\mathcal{A}} \newcommand\cB{\mathcal{B}}
\newcommand\cC{\mathcal{C}} \newcommand\cD{\mathcal{D}}
\newcommand\cE{\mathcal{E}} \newcommand\cF{\mathcal{F}}
 \newcommand\cH{\mathcal{H}}
 \newcommand\cL{\mathcal{L}}
\newcommand\cM{\mathcal{M}} 
\newcommand\cO{\mathcal{O}} \newcommand\cP{\mathcal{P}}
\newcommand\cS{\mathcal{S}} 
\newcommand\cU{\mathcal{U}} 
 \newcommand\cX{\mathcal{X}}
\newcommand\cY{\mathcal{Y}} \newcommand\cZ{\mathcal{Z}}
\renewcommand\AA{\mathbb{A}} 
\newcommand\CC{\mathbb{C}}  
\newcommand\GG{\mathbb{G}} \newcommand\HH{\mathbb{H}}
 \newcommand\NN{\mathbb{N}}
 \newcommand\PP{\mathbb{P}}
\newcommand\QQ{\mathbb{Q}} 
 \newcommand\VV{\mathbb{V}}
 \newcommand\ZZ{\mathbb{Z}}
\newcommand\rma{\mathrm{a}}
\newcommand\rmm{\mathrm{m}}
\newcommand\arr{\ifinner\to\else\longrightarrow\fi}
\newcommand\arrto{\ifinner\mapsto\else\longmapsto\fi}
\newcommand\larr{\longrightarrow}
\newcommand{\hooklongrightarrow}{\lhook\joinrel\longrightarrow}
\renewcommand\H{\operatorname{H}}
\newcommand\into{\hookrightarrow}
\newcommand\im[1]{\operatorname{im}(#1)}
\def\displaytimes_#1{\mathrel{\mathop{\times}\limits_{#1}}}
\def\displayotimes_#1{\mathrel{\mathop{\bigotimes}\limits_{#1}}}
\renewcommand\hom{\operatorname{Hom}}
\newcommand\ext{\operatorname{Ext}}
\newcommand\aut{\operatorname{Aut}}
\newcommand\tor{\operatorname{Tor}}
\newcommand\pic{\operatorname{Pic}}
\newcommand\spec{\operatorname{Spec}}
\newcommand\codim{\operatorname{codim}}
\newcommand{\proj}{\operatorname{Proj}}
\newcommand\id{\mathrm{id}}
\newcommand{\underhom}{\mathop{\underline{\mathrm{Hom}}}\nolimits}
\newcommand{\underaut}{\mathop{\underline{\mathrm{Aut}}}\nolimits}
\newlength{\ignora}
\renewcommand{\setminus}{\smallsetminus}
\newcommand{\mmu}{\boldsymbol{\mu}}
\newcommand{\gm}{\GG_{\rmm}}
\newcommand{\GL}{\mathrm{GL}}
\newcommand{\PGL}{\mathrm{PGL}}
\newcommand{\ga}{\GG_{\rma}}
\newcommand{\ds}[1]{[\mspace{-2mu}[#1]\mspace{-2mu}]}
\DeclareFontFamily{U}{mathx}{\hyphenchar\font45}
\DeclareFontShape{U}{mathx}{m}{n}{
	<5> <6> <7> <8> <9> <10>
	<10.95> <12> <14.4> <17.28> <20.74> <24.88>
	mathx10
}{}
\DeclareSymbolFont{mathx}{U}{mathx}{m}{n}
\DeclareMathAccent{\widecheck}{0}{mathx}{"71}
\DeclareMathAccent{\wideparen}{0}{mathx}{"75}
\renewcommand{\epsilon}{\varepsilon}
\newcommand{\Mbar}{\overline{\cM}}
\newcommand{\Mtilde}{\widetilde{\mathcal M}}
\newcommand{\Ctilde}{\widetilde{\mathcal C}}
\newcommand{\Htilde}{\widetilde{\cH}}
\newcommand{\Hbar}{\overline{\cH}}
\newcommand{\ThTilde}{\widetilde{\Theta}}
\newcommand{\Detilde}{\widetilde{\Delta}}
\newcommand{\Debar}{\overline{\Delta}}
\newcommand{\ch}[1][*]{\operatorname{CH}^{#1}}
\newcommand{\mt}{\widetilde{\mathcal M}}
\newcommand{\htil}{\widetilde{\HH}}
\newcommand{\emptypage}{
	\newpage
	\thispagestyle{empty}
	\mbox{}
	\newpage
}
\title{$A_r$-stable curves \\ and the Chow ring of $\Mbar_{3}$}
\author{Michele Pernice}
\begin{document}

\thispagestyle{empty}

\begin{center}
\includegraphics[width=0.9\linewidth]{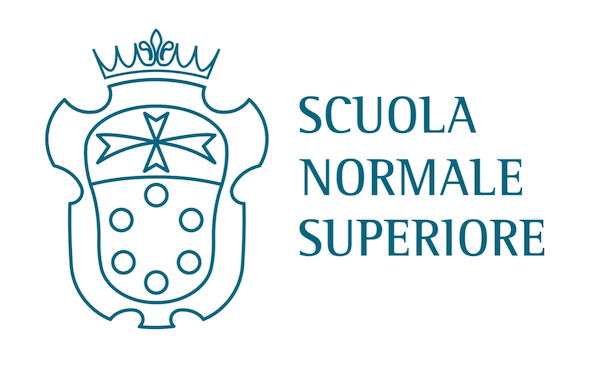}

\vspace{3cm}
Classe di SCIENZE\\
Corso di perfezionamento in MATEMATICA\\
XXXIV ciclo\\
Anno Accademico 2021-2022

\vfill 

{\Large $A_r$-stable curves and the Chow ring of $\Mbar_3$ \par}
\vfill
Tesi di perfezionamento in Matematica \\
MAT/03
\vfill
\textsc{Candidato}\\ \textit{Michele Pernice}

\vfill

\textsc{Relatore}\\
\textit{Prof. Angelo Vistoli} (Scuola Normale Superiore)

\end{center}   

\clearpage

\emptypage

\pagenumbering{Roman}
\tableofcontents

\emptypage

\chapter*{Introduction}
\addcontentsline{toc}{chapter}{Introduction}

The geometry of the moduli spaces of curves has always been the subject of intensive investigations, because of its manifold implications, for instance in the study of families of curves. One of the main aspects of this investigation is the intersection theory of these spaces, which has both enumerative and geometrical implication. In his groundbreaking paper \cite{Mum}, Mumford introduced the intersection theory with rational coefficients for the moduli spaces of stable curves. He also computed the Chow ring (with rational coefficients) of $\Mbar_2$, the moduli space of stable genus $2$ curves. While the rational Chow ring of $\cM_g$, the moduli space of smooth curves, is known for $2\leq g\leq 9$ (\cite{Mum}, \cite{Fab}, \cite{Iza}, \cite{PenVak}, \cite{CanLar}), the computations in the stable case are much harder. The complete description of the rational Chow ring has been obtained only for genus $2$ by Mumford and for genus $3$ by Faber in \cite{Fab}. In his PhD thesis, Faber also computed the rational Chow ring of $\Mbar_{2,1}$, the moduli space of $1$-pointed stable curves of genus $2$.

Edidin and Graham introduced in \cite{EdGra} the intersection theory of global quotient stacks with integer coefficients. It is a more refined invariant but as expected, the computations for the Chow ring with integral coefficients of the moduli stack of curves are much harder than the ones with rational coefficients. To date, the only complete description for the integral Chow ring of the moduli stack of stable curves is the case of $\Mbar_2$, obtained by Larson in \cite{Lar} and subsequently with a different strategy by Di Lorenzo and Vistoli in \cite{DiLorVis}. It is also worth mentioning the result of Di Lorenzo, Pernice and Vistoli regarding the integral Chow ring of $\Mbar_{2,1}$, see \cite{DiLorPerVis}.

The aim of this thesis is to describe the Chow ring with $\ZZ[1/6]$-coefficients of the moduli stack $\Mbar_3$ of stable genus $3$ curves. This provides a refinement of the result of Faber with a completely indipendent method. The approach is a generalization of the one used in \cite{DiLorPerVis}: we introduce an Artin stack, which is called the stack of $A_r$-stable curves, where we allow curves with $A_r$-singularities to appear. The idea is to compute the Chow ring of this newly introduced stack in the genus $3$ case and then, using localization sequence, find a description for the Chow ring of $\Mbar_3$. The stack $\Mtilde_{g,n}$ introduced in \cite{DiLorPerVis} is cointained as an open substack inside our stack. We state the main theorem. 

\begin{theorem}
	The Chow ring of $\Mbar_3$ with $\ZZ[1/6]$-coefficients is the quotient of the graded polynomial algebra 
	$$\ZZ[1/6,\lambda_1,\lambda_2,\lambda_3,\delta_{1},\delta_{1,1},\delta_{1,1,1},H]$$
	where 
	\begin{itemize}
		\item[] $\lambda_1,\delta_1,H$ have degree $1$, \item[]$\lambda_2,\delta_{1,1}$ have degree $2$, \item[]$\lambda_3,\delta_{1,1,1}$ have degree $3$.
	\end{itemize}
	The quotient ideal is generated by 15 homogeneous relations, where
	\begin{itemize}
		\item  $1$ of them is in codimension $2$,
		\item  $5$ of them are in codimension $3$,
		\item  $8$ of them are in codimension $4$,
		\item  $1$ of them is in codimension $5$.
	\end{itemize}  .
\end{theorem}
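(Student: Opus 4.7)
The plan is to execute the strategy sketched in the introduction: introduce the Artin stack $\Mtilde_{3}$ of $A_{r}$-stable genus $3$ curves, compute its Chow ring with $\ZZ[1/6]$-coefficients, and then apply the localization sequence along the closed substack $\Mtilde_{3}\setminus \Mbar_{3}$ (parametrizing curves having at least one singularity worse than a node) to read off the stated presentation of $\ch(\Mbar_{3})$.

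First I would fix the largest tolerated singularity type $A_{r}$, choose the bound for $r$ so that $\Mtilde_{3}$ is a smooth Artin stack of finite type (so the integral Chow ring in the sense of Edidin--Graham is defined), and then introduce the classes $\lambda_{1},\lambda_{2},\lambda_{3},\delta_{1},\delta_{1,1},\delta_{1,1,1},H$ as classes on $\Mtilde_{3}$: the $\lambda_{i}$ come from the Hodge bundle on the universal curve, the $\delta$'s are the closures of the standard boundary strata of reducible nodal curves, and $H$ is the closure of the hyperelliptic locus. Next I would build a finite stratification of $\Mtilde_{3}$ according to singularity and dual-graph type, and compute $\ch(\Mtilde_{3})$ inductively by peeling off strata starting from the open substack $\Mtilde_{3}\cu$ of curves without separating nodes (which admits an explicit global-quotient description as in \cite{DiLorPerVis}). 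At each step the Chow ring of the enlarged open set is determined from the previous one by a localization sequence, and the new generators and relations come from explicit computation of pushforwards and pullbacks along the stratum inclusions.

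The main computational inputs will be: (i) the universal deformation of each $A_{r}$-singularity, which gives the Chern classes of the normal bundle of the corresponding stratum and hence its self-intersection classes; (ii) the clutching/gluing maps from products of moduli of pointed lower-genus curves to each boundary stratum, used to express boundary classes in terms of tautological classes pulled back from $\Mbar_{1,1}$, $\Mbar_{1,2}$ and genus-$2$ moduli of pointed curves; and (iii) Mumford-type relations between the $\lambda_{i}$'s and the boundary classes coming from Grothendieck--Riemann--Roch applied to the universal curve. Once $\ch(\Mtilde_{3})$ is presented, the localization sequence
\[
\ch(\Mtilde_{3}\setminus \Mbar_{3})\longrightarrow \ch(\Mtilde_{3})\longrightarrow \ch(\Mbar_{3})\longrightarrow 0
\]
shows that $\ch(\Mbar_{3})$ is the quotient of $\ch(\Mtilde_{3})$ by the image of the pushforwards from the cusp/tacnode and higher $A_{r}$ strata; rewriting these pushforwards in terms of the ambient generators yields the final list of $1+5+8+1=15$ homogeneous relations.

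The hard part will be the last stage: identifying and proving minimality of the fifteen relations, especially the eight in codimension $4$, where several independent contributions from the hyperelliptic divisor and from the higher-codimension boundary classes compete. The class $H$ is particularly delicate, because its powers have to be computed via the double-cover description of the hyperelliptic locus, and the comparison of these powers with polynomials in the $\lambda_{i}$'s has to be carried out with $\ZZ[1/6]$-coefficients rather than rationally. The inversion of $2$ and $3$ will be essential both to trivialize the $\mmu_{2}$-action arising from the hyperelliptic involution and to handle the automorphism groups of curves with $A_{r}$-singularities of small $r$; outside these primes one should not expect such a clean presentation.
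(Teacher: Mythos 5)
Your overall architecture matches the paper's: enlarge $\Mbar_3$ to the Artin stack $\Mtilde_3$ of $A_r$-stable curves, compute $\ch(\Mtilde_3)$ by a stratification, then use right-exactness of the localization sequence to divide out by the pushforwards from the $A_n$-strata ($n\ge 2$). But there is one genuine gap at the heart of your inductive step. You assert that, when peeling off a closed stratum $\cZ\subset\cX$ with open complement $\cU$, ``the Chow ring of the enlarged open set is determined from the previous one by a localization sequence.'' It is not: the sequence $\ch(\cZ)\to\ch(\cX)\to\ch(\cU)\to 0$ is only right-exact, and knowing $\ch(\cZ)$ and $\ch(\cU)$ leaves the kernel of $i_*$ completely undetermined. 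The paper's entire strategy hinges on the \emph{gluing condition} (its Lemma on the fiber product $\ch(\cZ)\times_{\ch(\cZ)/(c_d(N_{\cZ|\cX}))}\ch(\cU)$): one must verify that the top Chern class of the normal bundle of each closed stratum is a non-zero divisor in $\ch(\cZ)$, which forces $\ker(i_*)=i_*\mathrm{Ann}(c_d(N_{\cZ|\cX}))=0$ and turns the patching into an isomorphism. This is not a technicality one can wave at: for a Deligne--Mumford separated stratum the condition \emph{always} fails (the Chow ring is torsion above the dimension), which is precisely why the paper works with $A_7$-stable curves rather than, say, only nodes or cusps --- the value $r=7$ is dictated by making the gluing condition hold on the hyperelliptic stratum. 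Your plan, as written, would stall at the first stratum you try to glue back in.

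Two smaller points. First, the paper's stratification is not by dual-graph/singularity type but by the chain $\Detilde_{1,1,1}\subset\Detilde_{1,1}\subset\Detilde_1$ of separating-node loci together with the hyperelliptic divisor $\Htilde_3$ as a separate codimension-one stratum; the open piece is then identified with an open locus of plane quartics modulo $\GL_3$, and $\Htilde_3\setminus\Detilde_1$ with a space of cyclic double covers of (at most one-nodal) genus-$0$ curves. Treating the hyperelliptic locus as a stratum in its own right, with its own global-quotient presentation, is what makes the powers of $H$ computable; your proposal correctly flags this as delicate but offers no mechanism for it. Second, establishing that the hyperelliptic locus is a \emph{closed} substack of $\Mtilde_3$ (injectivity on geometric points, unramifiedness, properness of $\eta$) is a substantial chapter of the paper and a prerequisite for using $H$ as a stratum at all; it does not come for free from the $A_1$-stable case because the inertia of $\Mtilde_3$ is not finite.
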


At the end of the thesis, we explain how to compare the result of Faber with ours and comment about the information we lose in the process of tensoring with rational coefficients.

\subsection*{Stable $A_r$-curves and the strategy of the proof}

The strategy for the computation is the same used in \cite{DiLorPerVis} for the integral Chow ring of $\Mbar_{2,1}$. Suppose we have a closed immersion of smooth stacks $\cZ \into \cX$ and we know how to compute the Chow rings of $\cZ$ and $\cX \setminus \cZ$. We would like to use the well-known localization sequence
$$ 
 \ch(\cZ) \rightarrow \ch(\cX) \rightarrow \ch(\cX \setminus \cZ) \rightarrow 0
$$
to get the complete description of the Chow ring of $\cX$. To this end, we  make use of a patching technique which is at the heart of the Borel-Atiyah-Seigel-Quillen localization theorem, which has been used by many authors in the study of equivariant cohomology, equivariant Chow ring and equivariant K-theory. See the introduction of \cite{DiLorVis} for a more detailed discussion. 

However, without information regarding the kernel of the pushforward of the closed immersion $\cZ \into \cX$, there is no hope to get the complete description of the Chow ring of $\cX$. If the top Chern class of the normal bundle $N_{\cZ|\cX}$ is a non-zero divisor inside the Chow ring of $\cZ$, we can recover $\ch(\cX)$ from $\ch(\cX \setminus \cZ)$, $\ch(\cZ)$ and some patching information. We will refer to the condition on the top Chern class of the normal bundle as the \emph{gluing condition}. The gluing condition implies that the troublesome kernel is trivial. See \Cref{lem:gluing} for a more detailed statement. 

Unfortunately, there is no hope that this condition is verified if $\cZ$ is a Deligne-Mumford separated stack, because in this hypothesis the integral Chow ring is torsion above the dimension of the stack. This follows from Theorem 3.2 of \cite{EdGra}. This is exactly the reason that motivated the authors of  \cite{DiLorPerVis} to introduce the stack of cuspidal stable curves, which is not a Deligne-Mumford separated stack because it has some positive-dimensional affine stabilizers. However, introducing cusps is not enough in the case of $\Mbar_3$ to have the gluing condition verified (for the stratification we choose). This motivated us to introduce a generalization of the moduli stack of cuspidal stable curves, allowing curves with $A_r$-singular points to appear in our stack. They are a natural generalization of nodes and cusps, and \'etale locally are plane singularities described by the equation $y^2=x^{n+1}$.

\subsection*{Future Prospects}

As pointed out in the introduction of \cite{DiLorPerVis},  the limitations of this strategy are not clear. It seems that the more singularities we add, the more it is likely that the gluing condition is verified. However, adding more singularities implies that we  have to eventually compute the relations coming from such loci, which can be hard. Moreover, we are left with a difficult problem, namely to find the right stratification for these newly introduced stacks. We hope that this strategy will be useful to study the intersection theory of $\Mbar_{3,1}$ or $\Mbar_{4}$. Moreover, we believe that our approach can be used to obtain a complete description for the integral Chow ring of $\Mbar_3$. We have not verified the gluing condition with integer coefficients because we do not know the integral Chow ring of some of the strata. However, one can try to prove alternative descriptions for these strata, for instance using weighted blowups, and compute their integral Chow ring using these descriptions. See \cite{Ink} for an example.

\subsection*{Outline of the thesis}

\Cref{chap:1} focuses on introducing the moduli stack of $A_r$-stable curves and proving results that are useful for the computations. Specifically, \Cref{sec:1-1} is dedicated to studying the possible involutions (and relative quotients) of the complete local ring of an  $A_r$-singularity. In \Cref{sec:1-2}, we define the moduli stack $\Mtilde_{g,n}^r$ of $n$-pointed $A_r$-stable curves of genus $g$ and prove that it is a smooth global quotient stack. We also prove that it shares some features with the classic stable case, for example the existence of the Hodge bundle and of the contraction morphism (as defined in \cite{Knu}), which is just an open immersion (instead of an isomorphism as in the stable case). \Cref{sec:1-3} focuses on the second important actor of this computation: the hyperelliptic locus inside $\Mtilde_g^r$. We  introduce the moduli stack $\Htilde_g^r$ of hyperelliptic $A_r$-stable curves of genus $g$ generalizing the definition for the stable case, prove that it is a smooth stack and that it contains the stack of smooth hyperelliptic curves of genus $g$ as a dense open substack. To do this, we give an alternative description of $\Htilde_g^r$ as the moduli stack of cyclic covers of degree $2$ over twisted curves of genus $0$. This description is one of the main reasons why we choose $A_r$-singularities as they appear naturally in the case of ramified branching divisor. We also give a natural morphism $$\eta:\Htilde_g^r \rightarrow \Mtilde_g^r.$$
 Finally, \Cref{sec:1-4} is entirely dedicated to proving that the morphism $\eta$ is a closed immersion of algebraic stacks as we expect from the stable case. The proof is long and uses combinatorics of hyperelliptic curves over algebraically closed fields for the injectivity on geometric points, deformation theory for the unramifiedness, and degenerations of families of $A_r$-curves for properness. 

In \Cref{chap:2}, we compute the Chow ring of $\Mtilde_3^7$. We start by introducing the stratification we use for the computations. This includes two closed substacks of codimension $1$, namely the hyperelliptic locus $\Htilde_3$ and $\Detilde_1$, which parametrizes curves with at least one separating node. We have the codimension $2$ substack $\Detilde_{1,1}$, which parametrizes curves with at least two separating nodes. Finally, we have the codimension $3$ substack $\Detilde_{1,1,1}$ which parametrizes curves with three separating nodes. For this sequence of closed immersions 
$$\Detilde_{1,1,1} \subset \Detilde_{1,1} \subset \Detilde_1$$ 
 the gluing condition is verified even if we add only cusps, i.e. if we consider $A_2$-stable curves. The hyperelliptic locus is the reason why we need to add all the other singularities.
 
 \Cref{sec:H3tilde} focuses on the description of $\Htilde_3 \setminus \Detilde_1$ as a global quotient stack and on the computation of its Chow ring. In \Cref{sec:m3tilde-open}, we describe the open complement of the two closed codimension $1$ strata and prove that the description given in \cite{DiLor2} of $\cM_3 \setminus \cH_3$ as an open of the space of quartics in $\PP^2$ with a $\GL_3$-action extends to $\Mtilde_3^7 \setminus (\Htilde_3^7 \cup \Detilde_1)$. Then we  compute its Chow ring. Similarly, \Cref{sec:detilde-1}, \Cref{sec:detilde-1-1} and \Cref{sec:detilde-1-1-1}  are dedicated to describing respectively $\Detilde_1\setminus \Detilde_{1,1}$, $\Detilde_{1,1}\setminus \Detilde_{1,1,1}$ and $\Detilde_{1,1,1}$ and compute their Chow rings. Finally, in \Cref{sec:chow-m3tilde} we  describe how to glue the informations we collected in the previous sections to get the Chow ring of $\Mtilde_3^7$ and to write an explicit presentation of this ring. 
 
 Finally, \Cref{chap:3} revolves around the computations of the Chow ring of $\Mbar_3$. In \Cref{sec:3-1}, we study in detail the moduli stack $\cA_n$ for $n\leq r$, which parametrizes pairs $(C,p)$ where $C$ is an $A_r$-stable curve of some fixed genus and $p$ is an $A_n$-singularity. \Cref{sec:3-2} focuses on finding the generators for the ideal of relations coming from $\Mtilde_3^7 \setminus \Mbar_3$ while in \Cref{sec:strategy} we compute the explicit description of these generators in the Chow ring of $\Mtilde_3^7$. Finally, in \Cref{sec:3-4} we put all our relations together and describe the Chow ring of $\Mbar_3$ and we compare our result with Faber's.
 
 We also have added three appendices, where we prove (or cite) some results needed for the computations or some technical lemmas that are probably well-know to experts. In Appendix A we describe a variant of the stack of finite flat algebras, namely the stack of finite flat extensions of degree $d$ of finite flat (curvilinear) algebras. In Appendix B we recall some results regarding blowups and pushouts in families and some conditions to have functoriality results. In Appendix C we generalize Proposition 4.2 of \cite{EdFul} and as a byproduct we also obtain a general formula for the stratum of $A_n$-singularities restricted to the open of the hyperelliptic locus parametrizing cyclic covers of the projective line. This formula is used several times in \Cref{sec:strategy}.

\pagenumbering{arabic}

\chapter{$A_r$-stable curves and the hyperelliptic locus}\label{chap:1}
 
This chapter is dedicated to the study of $A_r$-singularities and of $A_r$-stable curves. In the first section we study the possible involutions acting on the complete local ring of an $A_r$-singularity. The main result is \Cref{lem:quotient}, where we describe explicitly the possible quotients. In the second section we introduce the moduli stack $\Mtilde_g^r$ of $A_r$-stable curves of genus $g$ and prove some standard results known for the (classic) stable case. The third section focuses on the moduli stack $\Htilde_g^r$ of hyperelliptic curves and the main theorem proves an alternative description of $\Htilde_g^r$ as the moduli stack of cyclic covers over a genus $0$ twisted curve. Finally, the last section is dedicated to proving that the moduli stack of hyperelliptic $A_r$-stable curves embeds as a closed substack inside the moduli stack of $A_r$-stable curves.  
	
\section{$A_r$-singularities and involutions}\label{sec:1-1}

In this section we study the possible involutions acting on a singularity of type $A_r$.

Let $r$ be a non-negative integer and $k$ be an algebraically closed field with characteristic coprime with $r+1$ and $2$. The complete local $k$-algebra  $$A_r:=k[[x,y]]/(y^2-x^{r+1})$$
is called an $A_r$-singularity. By definition, $A_0$ is a regular ring.		

Suppose we have an involution $\sigma$ of $A_r$. Because the normalization of a noetherian ring is universal among dominant morphism from normal rings, we know the involution lifts to the normalization of $A_r$.

\begin{remark}\label{rem:norm}
	We recall the description of the normalization of $A_r$: 
\begin{itemize}
	\item if $r$ is even, then the normalization is the morphism
	$$ \iota: A_r \arr k[[t]] $$ 
	defined by the associations $x\mapsto t^2$ and $y\mapsto t^{r+1}$;
	 
	\item if $r$ is odd, then the normalization is the morphism 
	$$ \iota: A_r \arr k[[t]]\oplus k[[t]]$$ 
	defined by the associations $x\mapsto (t,t)$ and $y \mapsto (t^{\frac{r+1}{2}}, -t^{\frac{r+1}{2}})$.
\end{itemize}

In the even case, we are identifying $A_r$ to the subalgebra of $k[[t]]$ of power series with only even degrees up to degree $r+1$. In the odd case, we are identifying $A_r$ to the subalgebra of $k[[t]]\oplus k[[t]]$ of pairs of power series which coincide up to degree $(r+1)/2$.
\end{remark}

If $\sigma$ is an involution of $k[[t]]$, we know that the differential $d\sigma$ is an involution of $k$ as a vector space over itself, therefore there exists $\xi_{\sigma} \in k$ such that $d\sigma=\xi_{\sigma} \id$ with $\xi_{\sigma}^2=1$.
We define an endomorphism $\phi_{\sigma}$ of $k[[t]]$ by the association $t\mapsto (t+\xi_{\sigma}\sigma(t))/2$. 

\begin{lemma}
	In the setting above, we have that $\phi_{\sigma}$ is an automorphism and $\sigma':=\phi_{\sigma}^{-1}\sigma\phi_{\sigma}$ is the involution of $k[[t]]$ defined by the association $t\mapsto \xi_{\sigma} t$.
\end{lemma}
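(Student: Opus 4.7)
The plan is to proceed by a direct computation on the generator $t$, relying on the fact that continuous $k$-algebra endomorphisms of $k[[t]]$ are determined by the image of $t$ and that such an endomorphism is an automorphism precisely when the linear coefficient of its image of $t$ is nonzero (this uses that $2$ is invertible, which holds by the standing assumption on $\cha k$).

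First I would observe that any involution $\sigma$ of $k[[t]]$ is a local homomorphism, so $\sigma(t) \in (t)$, and by construction $\xi_\sigma$ is exactly the coefficient of $t$ in $\sigma(t)$; the condition $\sigma^2 = \id$ forces $\xi_\sigma^2 = 1$ on linear parts (which reproves what is already noted in the excerpt). Next I would evaluate $\phi_\sigma$ on $t$: writing $\sigma(t) = \xi_\sigma t + \text{(higher order)}$, one gets
\[
\phi_\sigma(t) \;=\; \tfrac{1}{2}\bigl(t + \xi_\sigma \sigma(t)\bigr) \;=\; \tfrac{1}{2}\bigl(t + \xi_\sigma^2 t\bigr) + \text{(higher order)} \;=\; t + \text{(higher order)},
\]
so the linear coefficient of $\phi_\sigma(t)$ is $1$. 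Hence $\phi_\sigma$ is an automorphism of $k[[t]]$.

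For the second claim, denote by $\sigma_0 : k[[t]] \to k[[t]]$ the involution sending $t \mapsto \xi_\sigma t$. Since every continuous $k$-algebra endomorphism of $k[[t]]$ is determined by its value on $t$, it suffices to check $(\sigma \circ \phi_\sigma)(t) = (\phi_\sigma \circ \sigma_0)(t)$. The left-hand side equals
\[
\sigma\!\left(\tfrac{t + \xi_\sigma \sigma(t)}{2}\right) \;=\; \tfrac{\sigma(t) + \xi_\sigma \sigma^2(t)}{2} \;=\; \tfrac{\sigma(t) + \xi_\sigma t}{2},
\]
using $\sigma^2 = \id$. The right-hand side equals
\[
\phi_\sigma(\xi_\sigma t) \;=\; \xi_\sigma \phi_\sigma(t) \;=\; \tfrac{\xi_\sigma t + \xi_\sigma^2 \sigma(t)}{2} \;=\; \tfrac{\xi_\sigma t + \sigma(t)}{2},
\]
using $\xi_\sigma^2 = 1$. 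The two expressions agree, so $\sigma \circ \phi_\sigma = \phi_\sigma \circ \sigma_0$, which rearranges to $\sigma' = \phi_\sigma^{-1} \sigma \phi_\sigma = \sigma_0$, as desired.

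I do not anticipate a genuine obstacle: the lemma is essentially a linearization/averaging trick (the map $\phi_\sigma$ is the symmetrization operator adapted to the sign $\xi_\sigma$), and the only mild technical points are invertibility of $2$ in $k$ and the standard criterion for an endomorphism of $k[[t]]$ to be an automorphism, both of which are immediate from the hypotheses.
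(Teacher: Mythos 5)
Your proof is correct and follows the same route as the paper: the observation that the linear coefficient of $\phi_\sigma(t)$ equals $1$ is precisely the paper's statement that $d\phi_\sigma=\id$, and your verification of $\sigma\circ\phi_\sigma=\phi_\sigma\circ\sigma_0$ on the topological generator $t$ is exactly the ``straightforward computation'' the paper leaves implicit. No gaps.
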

\begin{proof}
	The fact that $d\phi_{\sigma}=\id$ implies $\phi_{\sigma}$ is an automorphism. The second statement follows from a straightforward computation.
\end{proof}

The idea is to prove the above lemma also for the algebra $A_r$ using the morphism $\phi_{\sigma}$. In fact we prove that in the even case the automorphism $\phi_{\sigma}$ restricts to an automorphism of $A_r$. Similarly, in the odd case we can construct an automorphism of $k[[t]]\oplus k[[t]]$, prove that it restricts to an automorphism of the subalgebra $A_r$ and describe explicitly the conjugation of the involution by this automorphism.

 \begin{proposition}\label{prop:descr-inv}
 	Every non-trivial involution of $A_r$ is one of the following: 
 	\begin{itemize}
 		\item[$(a)$] if $r$ is even, $\sigma:k[[x,y]]/(y^2-x^{r+1}) \arr k[[x,y]]/(y^2-x^{r+1})$ is defined by the associations $x\mapsto x$ and $y\mapsto -y$;	
 		\item[$(b)$] if $r$ is odd and $r\geq 3$, we get that $\sigma:k[[x,y]]/(y^2-x^{r+1}) \arr k[[x,y]]/(y^2-x^{r+1})$ is defined by one of the following associations:
 		\begin{itemize}
 			\item[$(b_1)$] $x\mapsto x$ and $ y \mapsto -y$,
 			\item[$(b_2)$] $x\mapsto -x$ and $y \mapsto -y$,
 			\item[$(b_3)$] $x\mapsto -x$ and $y \mapsto -y$;
 		\end{itemize}
 		\item[$(c)$] if $r=1$, we get that $\sigma:k[[x,y]]/(y^2-x^2) \arr k[[x,y]]/(y^2-x^2)$ is defined by one of the following associations: 
 		\begin{itemize}
 			\item[$(c_1)$] $x\mapsto x$ and $y\mapsto -y$,
 			\item[$(c_2)$] $x\mapsto -x$ and $y\mapsto -y$,
 			\item[$(c_3)$] $x\mapsto y$ and $y\mapsto x$;
 		\end{itemize}
 	\end{itemize}
 up to conjugation by an automorphism of $A_r$. 
\end{proposition}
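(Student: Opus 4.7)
The strategy is to lift $\sigma$ to the normalization of \Cref{rem:norm}, to straighten it by conjugation with an automorphism of the normalization, and to verify that this conjugator restricts to $A_r$ so that the straightening descends. The preceding lemma already normalizes any involution of $k[[t]]$; the additional work lies in the restriction property and the odd-case bookkeeping.

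\emph{Even $r$.} The normalization is $k[[t]]$ and $A_r = k[[t^2, t^{r+1}]]$. Lifting $\sigma$ to $\tilde\sigma$ and applying the preceding lemma yields $\phi := \phi_{\tilde\sigma}$ bringing $\tilde\sigma$ to $t \mapsto \xi t$ with $\xi \in \{\pm 1\}$. I would verify $\phi$ preserves $A_r$ using the explicit formula $\phi(t) = (t + \xi \tilde\sigma(t))/2$ together with the fact that $\tilde\sigma(t^2)$ and $\tilde\sigma(t^{r+1})$ lie in $A_r$. Non-triviality gives $\xi = -1$; since $r+1$ is odd, the induced action fixes $x = t^2$ and negates $y = t^{r+1}$, producing case $(a)$.

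\emph{Odd $r$, componentwise lift.} Now $A_r = \{(f, g) : f \equiv g \pmod{t^{(r+1)/2}}\}$, and a componentwise lift $\tilde\sigma = (\sigma_1, \sigma_2)$ preserves $A_r$ iff $\sigma_1 \equiv \sigma_2 \pmod{t^{(r+1)/2}}$. Applying the preceding lemma in each factor gives $\Phi = (\phi_{\sigma_1}, \phi_{\sigma_2})$ whose components agree modulo $t^{(r+1)/2}$ (from the explicit formula and, when $r \geq 3$, the consequence $\xi_{\sigma_1} = \xi_{\sigma_2}$ of matching linear terms), hence $\Phi$ preserves $A_r$. For $r \geq 3$ non-triviality forces the common value $\xi = -1$, and the induced involution on $(x, y)$ is $x \mapsto -x$, $y \mapsto (-1)^{(r+1)/2} y$, producing cases $(b_2), (b_3)$ according to parity. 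For $r = 1$ the congruence is vacuous; $(\xi_1, \xi_2) = (-1, -1)$ yields $(c_2)$, while either of $(1, -1), (-1, 1)$ yields case $(c_3)$ (after an $A_1$-automorphism swapping $x$ and $y$ if needed).

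\emph{Odd $r$, swap lift.} Write $\tilde\sigma(f, g) = (\tau(g), \tau^{-1}(f))$; $A_r$-preservation forces $\tau^2 \equiv \id$ in $\aut(k[[t]]/(t^{(r+1)/2}))$. Since the unipotent subgroup of $\aut(k[[t]]/(t^N))$ has no nontrivial involutions in characteristic $\neq 2$ (via the exponential description: $\tau = \exp(\delta)$ and $\tau^2 = \exp(2\delta) = \id$ force $\delta = 0$), a diagonal conjugator $(\phi_0, \phi_0)$---automatically $A_r$-preserving---brings $\tau$ to $\xi \cdot \id \pmod{t^{(r+1)/2}}$ with $\xi = d\tau \in \{\pm 1\}$. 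A further diagonal conjugator $\Phi = (\phi_1, \id)$ with $\phi_1(t) := \xi \tau(t)$ is $A_r$-preserving (since $\phi_1(t) \equiv \xi \cdot \xi t = t \pmod{t^{(r+1)/2}}$) and sends $\tilde\sigma$ to $(f, g) \mapsto (g(\xi t), f(\xi t))$. For $\xi = 1$ this induces $x \mapsto x$, $y \mapsto -y$: case $(b_1)$ (or $(c_1)$ when $r = 1$). For $\xi = -1$ it induces $x \mapsto -x$ and $y \mapsto \pm y$ with sign depending on the parity of $(r+1)/2$, reproducing one of $(b_2), (b_3)$ for $r \geq 3$ and reducing (for $r = 1$) to $(c_1)$ via the $A_1$-automorphism $x \leftrightarrow y$. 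The \emph{main obstacle} is this swap case: one must handle the approximate-involution property of $\tau$ using the structure of $\aut(k[[t]]/(t^N))$ and verify the mod-$t^{(r+1)/2}$ congruences for the successive conjugators, both relying crucially on the characteristic hypothesis on $k$.
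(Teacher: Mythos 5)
Your proposal takes essentially the same route as the paper: lift $\sigma$ to the normalization of \Cref{rem:norm}, straighten it with the averaging conjugator $t\mapsto (t+\xi\sigma(t))/2$, verify that the conjugator restricts to an automorphism of $A_r$, and in the odd case split according to whether the lift preserves or swaps the two branches. Your two-stage conjugation in the swap case (first normalize $\tau$ modulo $t^{(r+1)/2}$, then apply $(\phi_1,\id)$ with $\phi_1=\xi\tau$) is a harmless repackaging of the paper's single conjugator $\mathrm{diag}(\phi_1,\phi_2)$, and your bookkeeping of the resulting signs on $(x,y)$ agrees with the paper's.

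One sub-step is supported by the wrong lemma. In the swap case you reduce $\bar\tau\in\aut\bigl(k[[t]]/(t^{(r+1)/2})\bigr)$ to $\xi\cdot\id$ by invoking the absence of nontrivial unipotent involutions; that argument only settles the case $\xi=d\tau=1$, where $\bar\tau$ itself is unipotent. When $\xi=-1$, writing $\bar\tau=m_{-1}\circ u$ with $m_{-1}\colon t\mapsto -t$ and $u$ unipotent, the involution condition reads $m_{-1}um_{-1}=u^{-1}$ rather than $u^2=\id$, and $u$ need not be trivial: $\bar\tau(t)=-t-at^2$ is an involution modulo $t^3$ for every $a$. The conclusion you want is still true, but the correct justification is the averaging conjugator $t\mapsto(t+\xi\bar\tau(t))/2$ applied to the truncated ring (i.e.\ the preceding lemma again), whose diagonal lift preserves $A_r$ exactly as you argue. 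Two smaller points: for $r=1$ the congruence on $\tau$ is vacuous, so $d\tau$ is not forced to lie in $\{\pm1\}$; taking $(\tau,\id)$ as conjugator still lands you in $(c_1)$. And the automorphism identifying $x\mapsto -y$, $y\mapsto -x$ with $(c_3)$ is the sign change $x\mapsto -x$, $y \mapsto y$, not the swap $x\leftrightarrow y$.
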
  

\begin{proof}

We start with the even case. We identify $\sigma$ with its lifting to the normalization of $A_r$ by abuse of notation. First of all, we know that $\sigma(t)=\xi_{\sigma}tp(t)$ where $p(t)\in k[[t]]$ with $p(0)=1$. Because $\sigma$ is induced by an involution on $A_r$, we have that $\sigma(t)^2=\sigma(t^2) \in A_r$. An easy computation shows that this implies $t^2p(t) \in A_r$.
We see that the images of the two generators of $A_r$ through the morphism $\phi_{\sigma}$  are inside $A_r$:
$$\phi_{\sigma}(t^2)=\frac{t^2+\sigma(t)^2 + 2\xi_{\sigma}t\sigma(t)}{4}= \frac{t^2+\sigma(t^2)+2t^2p(t)}{4} \in A_r$$
and  
$$ \phi_{\sigma}(t^{r+1})=t^{r+1}\frac{(1+p(t))^{r+1}}{2^{r+1}}\in t^{r+1}k[[t]] \subset A_r;$$
notice that if we compute the differential of the restriction $\phi_{\sigma}\vert_{A_r}:A_r \arr A_r$ we get an endomorphism of the tangent space of $A_r$ of the form 
$$
\begin{pmatrix}
  1  &  \star \\
  0  &    1
\end{pmatrix} 
$$ 
therefore  $\phi_{\sigma}\vert_{A_r}$ is an injective morphism with surjective differential between complete noetherian rings, thus it is an automorphism. Finally, if we define $\sigma':=\phi_{\sigma}^{-1}\sigma\phi_{\sigma}$, then $\sigma'\vert_{A_r}=(\phi_{\sigma}^{-1}\vert_{A_r})(\sigma\vert_{A_r})(\phi_{\sigma}\vert_{A_r})$ and we can describe its action on the generators:
\begin{itemize}
	\item $\sigma'(x)=\sigma'(t^2)=\xi_{\sigma}^2 t^2 = x$,
	\item $\sigma'(y)=\sigma'(t^{r+1})=\xi_{\sigma}^{r+1} t^{r+1} = \xi_{\sigma} y$
\end{itemize} 
 where we know that $\xi_{\sigma}^2=1$. We can have both $\xi_{\sigma}=1$ and $\xi_{\sigma}=-1$, although the first one is just the identity.

They same idea works for the odd case. We describe the lifting $\Sigma$ of the involution $\sigma$ of $A_r$ to the normalization $k[[t]]\oplus k[[t]]$. We have two possibilities: the involution $\sigma$ exchanges the two branches or not. This translates in the condition that $\Sigma$ exchanges the two connected component of the normalization (or not). Firstly, we consider the case where $\Sigma$ fixes the two connected components of the normalization, therefore we can describe $\Sigma:k[[t]]^{\oplus 2}\arr k[[t]]^{\oplus 2}$ as a matrix of the form
$$
\Sigma=
\begin{pmatrix}
      \sigma_1 & 0 \\ 0 & \sigma_2
\end{pmatrix}
$$
where $\sigma_1,\sigma_2$ are involutions of $k[[t]]$. Because $\Sigma$ is induced by an involution of $A_r$, we have that $(\sigma_1(t),\sigma_2(t))\in A_r$, i.e. $\sigma_1(t)=\sigma_2(t) \mod t^{\frac{r+1}{2}}$.

 We consider the automorphism $\Phi_{\Sigma}$ of $k[[t]]^{\oplus 2}$ which can described as a matrix of the form
$$
\Phi_{\Sigma}:=
\begin{pmatrix}
\phi_{\sigma_1} & 0 \\ 0 & \phi_{\sigma_2}
\end{pmatrix};
$$
 we have the following equalities:
$$
\Phi_{\Sigma}(t,t)=(\phi_{\sigma_1}(t),\phi_{\sigma_2}(t))=1/2(t+\xi_{\sigma_1}\sigma_1(t),t+\xi_{\sigma_2}\sigma_2(t)) \in A_r
$$ 
and 
$$
\Phi_{\Sigma}(t^{\frac{r+1}{2}},-t^{\frac{r+1}{2}})=
(\phi_{\sigma_1}(t)^{\frac{r+1}{2}},-\phi_{\sigma_2}(t)^{\frac{r+1}{2}}) \in A_r
$$
 and again we have that the differential of $\Phi_{\Sigma}\vert_{A_r}$ is of the form
$$
\begin{pmatrix}
1  &  \star \\
0  &    1
\end{pmatrix} 
$$ 
therefore $\Phi_{\sigma}\vert_{A_r}$ is an automorphism of $A_r$. Notice that if $r\geq 3$ we have that $\xi_{\sigma_1}$ and $\xi_{\sigma_2}$ are the same, but if $r=1$ we don't; nevertheless it is still true that $\Phi_{\Sigma}(t,t) \in A_1$. 
Again, we have proved that if the involution of $A_r$ fixes the two branches, then we have only a finite number of involutions up to conjugation. If $r\geq 3$ then we have only the involution described on generators by $x\mapsto \xi x$ and $y\mapsto \xi^{\frac{r+1}{2}} y$ where $\xi^2=1$. Conversely, if $r=1$ we get more possible involutions, as $\xi_{\sigma_1}$ and $\xi_{\sigma_2}$ can be different. Specifically, we get four of them; if we consider their action on the pair of generators $(x,y)$ of $A_r$, we get the following matrices describing the four involutions:
$$
\begin{pmatrix}
1  &  0 \\
0  &    1
\end{pmatrix} ,
\begin{pmatrix}
	-1  &  0 \\
	0  &   -1
\end{pmatrix},
\begin{pmatrix}
0  &  1 \\
1  &  0
\end{pmatrix},
\begin{pmatrix}
	0  &  -1 \\
	-1  &  0
\end{pmatrix};
$$
notice that the third matrix is the conjugate of the fourth one by the automorphism of $A_1$ defined on the pair of generators $(x,y)$ by the following matrix:
$$
\begin{pmatrix}
-1  &  0 \\
 0 &  1
\end{pmatrix}.
$$
Lastly, we consider the case when $\Sigma$ exchanges the connected components. Because it is an involution, $\Sigma$ is an automorphism of $k[[t]]^{\oplus 2}$ of the form 
$$
\begin{pmatrix}
0  &  \tilde{\sigma} \\
\tilde{\sigma}^{-1}  &    0
\end{pmatrix} 
$$
and because $\Sigma$ is induced by the involution $\sigma$ of $A_r$, we get that $\tilde{\sigma}$ is an involution of $k[[t]]/(t^{\frac{r+1}{2}})$, i.e. $\tilde{\sigma}^2(t)=t+t^{\frac{r+1}{2}}p(t)$ with $p(t)\in k[[t]]$. Notice that if $r\geq 3$ we get that $\tilde{\sigma}(t)=\xi_{\tilde{\sigma}}tp(t)$ where $p(t)\in k[[t]]$ with $p(0)=1$ and $\xi_{\tilde{\sigma}}^2=1$. On the contrary, if $r=1$ the previous condition is empty, i.e. $\tilde{\sigma}$ is any automorphism,. Let us first consider the case $r\geq 3$. Consider the endomorphism $\Phi_{\Sigma}$ of $k[[t]]^{\oplus 2}$ of the form 
$$
\Phi_{\Sigma}:=
\begin{pmatrix}
\phi_1  &  0 \\
0  &    \phi_2
\end{pmatrix} 
$$ 
where we define $\phi_1(t)=1/2(t+\xi_{\tilde{\sigma}}\tilde{\sigma}(t))$ and $\phi_2(t)=1/2(t+\xi_{\tilde{\sigma}}\tilde{\sigma}(t)+t^{\frac{r+1}{2}}p(t))$. Again, an easy computation shows that the automorphism $\Phi_{\Sigma}$ restricts to the algebra $A_r$ and it is in fact an automorphism. For the case $r=1$, we can simply consider $\Phi_{\Sigma}$ of the form 
$$
\Phi_{\Sigma}:=
\begin{pmatrix}
 \tilde{\sigma} &  0 \\
0  &    \id
\end{pmatrix} 
$$
which restricts to an automorphism of $A_r$ as well. As before, if $r\geq 3$ we have that the involution is of the form $x\mapsto \xi x$ and $y \mapsto -\xi^{\frac{r+1}{2}}y$. Instead, if $r=1$ we have the involution described by the association $ x \mapsto x$ and $y \mapsto -y$. 
	
\end{proof}

The previous corollary finally implies the description of the invariant we were looking for. Let us focus on the description of this invariant subalgebras (in the case of a non trivial involution). We prove the following statement.
 
\begin{corollary}
	Let $\sigma$ be a non-trivial involution of the algebra $A_r$ and let us denote by $A_r^{\sigma}$ the invariant subalgebra and by $i:A_r^{\sigma}\into A_r$ the inclusion. If we refer to the classification proved in \Cref{prop:descr-inv}, we have that
	\begin{itemize}
		\item[$(a)$] if $r$ is even, we have that $A_r^{\sigma} \simeq A_0$, the inclusion $i$ is faithfully flat and the fixed locus of $\sigma$ has length $r+1$ and it is a Cartier divisor;
		\item[$(b)$] if $r:=2k-1$ is odd and $k\geq 2$ we have that 
		\begin{itemize}
			\item[$(b_1)$] $A_r^{\sigma} \simeq A_0$, the inclusion $i$ is faithfully flat, the fixed locus of $\sigma$ has length $r+1$ and it is the support of a Cartier divisor;
			\item[$(b_2)$] $A_r^{\sigma} \simeq A_{k-1}$, the inclusion $i$ is faithfully flat, the fixed locus of $\sigma$ has length $2$ and it is the support of a Cartier divisor;
			\item[$(b_3)$] $A_r^{\sigma} \simeq A_k$, the inclusion $i$ is not flat, the fixed locus is of length $1$ and it is not the support of a Cartier divisor;
		\end{itemize}
		\item[$(c)$] if $r=1$, we have that
		\begin{itemize}
			\item[$(c_1)$] $A_1^{\sigma} \simeq A_0$, the inclusion $i$ is faithfully flat, the fixed locus of $\sigma$ has length $2$ and it is the support of a Cartier divisor;
			\item[$(c_2)$] $A_1^{\sigma} \simeq A_1$, the inclusion $i$ is not faithfully flat, the fixed locus of $\sigma$ has length $1$ and it is not the support of a Cartier divisor;
			\item[$(c_3)$] $A_1^{\sigma} \simeq A_1$, the inclusion $i$ is faithfully flat and the fixed locus coincides with one of two irreducible components.
		\end{itemize}
	\end{itemize}
\end{corollary}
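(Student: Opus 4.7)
The proof is a case-by-case verification along the normal forms provided by \Cref{prop:descr-inv}. For each involution we carry out three parallel tasks: exhibit topological generators $u,v$ of $A_r^\sigma$ and read off the presentation $A_r^\sigma\cong A_m$ from the relation they satisfy; analyze $A_r$ as an $A_r^\sigma$-module, producing either an explicit free basis (hence faithful flatness, by the local Noetherian criterion) or exhibiting torsion (hence non-flatness); and compute the ideal $I_\sigma=(x-\sigma(x),y-\sigma(y))$ in order to read off the length and Cartier nature of the fixed subscheme. Throughout, passing to the normalization $A_r\hookrightarrow k[[t]]$ or $k[[t_1]]\oplus k[[t_2]]$ from \Cref{rem:norm}, together with the explicit lifts $\Sigma$ of $\sigma$ built in the proof of \Cref{prop:descr-inv}, makes these computations immediate.

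The reflection cases $(a)$, $(b_1)$, $(c_1)$ are straightforward: $\sigma$ fixes $x$ and negates $y$, so $A_r^\sigma=k[[x]]\cong A_0$; the pair $\{1,y\}$ is a free $A_r^\sigma$-basis of $A_r$; and $I_\sigma=(y)$ cuts out $k[[x]]/(x^{r+1})$, a Cartier divisor of length $r+1$.

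For the remaining odd cases set $k=(r+1)/2$ and work inside $k[[t_1]]\oplus k[[t_2]]$. The lift $\Sigma$ either preserves the two factors (cases $(b_2)$, $(b_3)$, $(c_2)$) or swaps them (case $(c_3)$); in either situation the invariants of $k[[t_1]]\oplus k[[t_2]]$ under $\Sigma$ are immediate (they are the diagonal, or an even-subscript subring on each factor), and intersecting with the ``agreement-up-to-order-$k$'' subring $A_r$ yields a two-generator presentation $k[[u,v]]/(v^2-u^m)$ for $A_r^\sigma$ with $m$ an explicit function of the parity of $k$ and of the signs appearing in $\Sigma$. This matches the isomorphism type claimed in the statement. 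For the flat sub-cases one then exhibits a two-element free basis $\{1,w\}$ of $A_r$ over $A_r^\sigma$ with $w\in\{x,y,x\pm y\}$ read off the normal form; the ideal $I_\sigma$ is generated by a single explicit element, and length and Cartier-ness follow by inspection.

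The main obstacle is the verification of non-flatness in cases $(b_3)$ and $(c_2)$, where one must exclude the existence of \emph{any} free basis. The cleanest argument is a Nakayama fibre count: for a finitely generated module $M$ over a Noetherian local ring $R$, flatness is equivalent to freeness, of rank $\dim_k M/\frkm_R M$, and this rank must coincide with the generic ranks of $M$ at the minimal primes of $R$. A direct computation (e.g.\ in $(c_2)$, using the basis $\{1,\alpha,\beta\}$ of $A_1$ modulo $(\alpha^2,\beta^2)$) shows that $A_r/\frkm_{A_r^\sigma}A_r$ has $k$-dimension three, strictly larger than the generic rank $2$ of the degree-$2$ involution extension; this rules out freeness and hence flatness. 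The fixed ideal $I_\sigma$ then equals the maximal ideal $(x,y)$, which has length one and is not principal, giving the asserted non-Cartier fixed point. Finally, in case $(c_3)$ the fixed ideal $(x-y)$ already cuts out an entire branch of $A_1=k[[x,y]]/(y^2-x^2)$, identifying the fixed locus with one of the two irreducible components, and the asserted isomorphism $A_1^\sigma\cong A_1$ follows from the explicit description of the invariants as $k[[u,v]]/(uv)$ with $u=x+y$, $v=(x-y)^2$.
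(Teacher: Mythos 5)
Your overall strategy --- a case-by-case verification along the normal forms of \Cref{prop:descr-inv}, exhibiting explicit generators of $A_r^{\sigma}$, an explicit $A_r^{\sigma}$-basis of $A_r$ in the flat cases, and the ideal $I_\sigma$ for the fixed locus --- is exactly the paper's. In the non-flat cases $(b_3)$ and $(c_2)$ you are in fact more careful than the paper, which merely asserts non-flatness: your fibre count $\dim_k A_r/\frkm_{A_r^{\sigma}}A_r=3>2$ (via the basis $1,x,y$ modulo $\frkm_{A_r^\sigma}A_r=(x^2,xy)$) is a clean, correct way to rule out freeness, and the identification of $I_\sigma$ with the non-principal maximal ideal $(x,y)$ matches the paper.

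There is, however, one step that fails, in case $(c_3)$: the claimed free basis $\{1,w\}$ with $w=x-y$ does not exist. Writing $u=x+y$, $v=x-y$, so that $A_1\simeq k[[u,v]]/(uv)$ with $\sigma(u)=u$, $\sigma(v)=-v$ and $A_1^{\sigma}=k[[u,v^2]]$, the candidate generator $v$ is annihilated by $u$; the eigenspace decomposition gives $A_1=A_1^{\sigma}\oplus A_1^{\sigma}\!\cdot v$ with $A_1^{\sigma}\!\cdot v\cong A_1^{\sigma}/(u)$, which is not free. Your own rank criterion detects this: $\sigma$ fixes the branch $\{v=0\}$ pointwise and acts generically freely on $\{u=0\}$, so the generic ranks of $A_1$ at the two minimal primes of $A_1^{\sigma}$ are $1$ and $2$, and a finite module over a Noetherian local ring with non-constant generic rank cannot be free, hence cannot be flat. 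So the flatness assertion in $(c_3)$ cannot be proved this way --- indeed it is the one sub-case where the paper's own proof also only asserts flatness without argument, and where the assertion is problematic. Fortunately this sub-case plays no role in the rest of the paper: by \Cref{rem:fix-locus}, hyperelliptic involutions are required to have finite fixed locus, which excludes $(c_3)$. Everything else in your write-up is correct and follows the paper's argument.
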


\begin{proof}
Let us start with the even case. Thanks to \Cref{prop:descr-inv}, we have that the involution $\sigma$ of $A_r$ is defined by the associations $x\mapsto x$ and $y\mapsto -y$ (up to conjugation by an isomorphism of $A_r$). Therefore it is clear that $A_r^{\sigma} \simeq  k[[x]]$ and the quotient morphism is induced by the inclusion $$A_0 \simeq k[[x]] \subset \frac{k[[x,y]]}{(y^2-x^{r+1})} \simeq A_r;$$
 the same is true for the odd case when the involution $\sigma$ acts in the same way. In this case the algebras extension (corrisponding to the quotient morphism) is faithfully flat, the fixed locus is the support of a Cartier divisor defined by the ideal $(y)$ in $A_r$ and it has length $r+1$.

Now we consider the case $r=2k-1$ with $\sigma$ acting as follows: $\sigma(x)=-x$ and $\sigma(y)=y$. A straightforward computation shows that the invariant algebra $A_r^{\sigma}$ is of the type $A_{k-1}$. To be precise, the inclusion of the invariant subalgebra can be described by the morphism
$$ i: A_{k-1}\simeq\frac{k[[x,y]]}{(y^2-x^{k})} \hooklongrightarrow \frac{k[[x,y]]}{(y^2-x^{2k})}\simeq A_r$$ 
where $i(x)=x^2$ and $i(y)=y$. In this case we get the $A_r$ is a faithfully flat $A_{k-1}$-algebra and the fixed locus is the support of a Cartier divisor defined by the ideal $(x)$ in $A_r$ and it has length $2$.

If $\sigma$ is defined by the associations $x\mapsto -x$ and $y \mapsto -y$ and $r=2k-1$, then the invariant subalgebra $A_r^{\sigma}$ is of type $A_k$ and the quotient morphism is defined by the inclusion

$$ i: A_k\simeq \frac{k[[x,y]]}{(y^2-x^{k+1})} \hooklongrightarrow \frac{k[[x,y]]}{(y^2-x^{r+1})}\simeq A_r $$

where $i(x)=x^2$ and $i(y)=xy$. In contrast with the two previous cases, $A_r$ is not a flat $A_k$-algebra and the fixed locus is not (the support of) a Cartier divisor, it is in fact defined by the ideal $(x,y)$ in $A_r$ and it has length $1$. 

Finally, we consider the case where $r=2$ and the action of $\sigma$ is described by $x\mapsto y$ and $y \mapsto x$. If you consider the isomorphism 
$$ \frac{k[[u,v]]}{(uv)} \rightarrow  \frac{k[[x,y]]}{(y^2-x^2)}$$
defined by the associations $u\mapsto y+x$ and $v\mapsto y-x$  we get that the invariant subalgebra is defined by the inclusion 
$$ i: A_1\simeq \frac{k[[u,v]]}{(uv)} \hooklongrightarrow \frac{k[[u,v]]}{(uv)}$$ 
where $i(u)=u$ and $i(v)=v^2$. Notice that in this situation the algebras extension is flat but the fixed locus is not a Cartier divisor, as it is defined by $(v)$, which is a zero divisor in $A_1$.
\end{proof}

\begin{remark}
	If $r$ is odd and $r\geq 3$ (case $(b)$), we have that every involution gives a different quotient. The same is not true for the case $r=1$ as we can obtain the nodal singularity in two ways.
\end{remark}

\begin{remark}\label{rem:fix-locus}
 Notice that $(c_3)$ is the only case when the fixed locus is an irreducible component. This situation do not appear in the stack of hyperelliptic curves as we consider only involutions with finite fixed locus. 
\end{remark}
 
 We end this section with a technical lemma which will be useful afterwards. 
 \begin{lemma}\label{lem:local-node-involution}
 	Let $(R,m)\hookrightarrow (S,n)$ be a flat extension of noetherian complete local rings over $k$ such that  $$S\otimes_R R/m \simeq A_1.$$
 	Suppose we have an $R$-involution $\sigma$ of $S$ such that $\sigma \otimes R/m$ (seen as an involution of $A_1$) does not exchange the two irreducible components. Hence there exists a $R$-isomorphism 
 	$$ S \simeq R[[x,y]]/(xy-t) $$
 	where $t \in R$ such that $\sigma$ (seen as an involution of the right-hand side of the isomorphism) acts as follows: $\sigma(x)=\xi_2x$ and $\sigma(y)=\xi_1y$ for some $\xi_i \in k$ such that $\xi_i^2=1$ for $i=1,2$. Furthermore, if $\xi_1=-\xi_2$ we have $t=0$.

 \end{lemma}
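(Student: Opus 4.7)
My plan has two parts. First I construct a presentation $S\simeq R[[x,y]]/(xy-t)$ in which $\sigma$ acts diagonally with the specified eigenvalues; the ``furthermore'' clause is then immediate, since applying $\sigma$ to $xy=t$ gives $\xi_1\xi_2\,xy=t$ on one side and $t=xy$ on the other ($\sigma$ being $R$-linear), so $(1-\xi_1\xi_2)t=0$, forcing $t=0$ when $\xi_1=-\xi_2$ (using $\operatorname{char} k\neq 2$). Hence all the work lies in producing the diagonal presentation.

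For the construction I start from a presentation $S\simeq R[[u,v]]/(uv-s)$, $s\in m$, which exists by the classical deformation theory of the node. By hypothesis and \Cref{prop:descr-inv}, the reduction $\bar\sigma$ of $\sigma$ to $A_1=S/mS$ does not exchange branches, so after composing with a suitable automorphism I may assume $\bar\sigma(u)=\xi_2 u$, $\bar\sigma(v)=\xi_1 v$ with $\xi_i^2=1$. Averaging as in the proof of \Cref{prop:descr-inv}, set
\[x_0:=\tfrac12\bigl(u+\xi_2\sigma(u)\bigr),\qquad y_0:=\tfrac12\bigl(v+\xi_1\sigma(v)\bigr);\]
a direct computation gives $\sigma(x_0)=\xi_2 x_0$, $\sigma(y_0)=\xi_1 y_0$, and $x_0\equiv u$, $y_0\equiv v$ modulo $mS+n^2$. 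Nakayama applied to the relative cotangent space $n/(mS+n^2)$ shows that $\phi\colon R[[X,Y]]\to S$, $X\mapsto x_0$, $Y\mapsto y_0$, is surjective, and its kernel is principal, since $S$ is an $R$-flat complete intersection deformation of $A_1$; it is generated by some $f$ with $f\equiv XY\bmod m$.

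It remains to bring $f$ to the form $XY-t$ via a $\tilde\sigma$-equivariant change of coordinates, where $\tilde\sigma$ acts on $R[[X,Y]]$ diagonally with the same eigenvalues. Since $\phi$ is equivariant, $(f)$ is $\tilde\sigma$-stable, and averaging produces a semi-invariant generator $f':=\tfrac12\bigl(f+\xi_1\xi_2\,\tilde\sigma(f)\bigr)$ with eigenvalue $\xi_1\xi_2$ and $f'\equiv XY\bmod m$. When $\xi_1=-\xi_2$ the eigenvalue constraint forces $f'$ to involve only odd powers of the eigenvalue-$(-1)$ variable, say $f'=Y\cdot g(X,Y^2)$ with $g\equiv X\bmod m$; Weierstrass preparation in $X$ over $R[[Y^2]]$ gives $g=w(X-h(Y^2))$ with $w$ a unit and $h$ in the maximal ideal of $R[[Y^2]]$, and the substitution $X\mapsto w\cdot(X-h(Y^2))$, $Y\mapsto Y$ (both still eigenvectors, since $h\in R[[Y^2]]$ and $w\in R[[X,Y^2]]$ lie in the $\tilde\sigma$-invariant subring) turns $f'$ into $XY$, yielding $t=0$ as predicted. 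The case $\xi_1=\xi_2$ is handled by an iterative equivariant elimination of the higher-order monomials of $f'$ via substitutions of the form $X\mapsto X+\alpha$, $Y\mapsto Y+\beta$ with $\alpha,\beta$ in the appropriate $\tilde\sigma$-eigenspaces, eventually producing $f'=XY-t$ for some $t\in R$; this step rests on (an equivariant version of) the versality of $R[[X,Y]]/(XY-t)$ as the miniversal deformation of $A_1$. The principal obstacle throughout is respecting the eigenspace structure at each coordinate substitution, which I expect to control by monomial-level eigenvalue bookkeeping.
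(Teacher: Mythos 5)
Your route is genuinely different from the paper's: you fix a presentation $R[[X,Y]]\twoheadrightarrow S$ once and for all (averaging the generators to make the surjection equivariant, then averaging a generator of the principal kernel), and then try to bring the single relation $f'$ into the normal form $XY-t$ by an equivariant coordinate change; the paper instead constructs the eigenvector generators $x_n,y_n$ and the element $t_n$ order by order in $S\otimes R/m^{n+1}$, using flatness to identify $\ker(S_{n+1}\to S_n)$ with $A_1\otimes_k(m^{n+1}/m^{n+2})$ and checking explicitly which correction terms are compatible with the eigenvalue constraints. Your global setup through the averaged $f'$ is correct (one should note that $\tilde\sigma(f)=wf$ with $w\equiv\xi_1\xi_2 \bmod m$, so that $f'=\tfrac12(1+\xi_1\xi_2 w)f$ still generates the kernel), the derivation of the ``furthermore'' clause is fine, and the Weierstrass argument in the case $\xi_1=-\xi_2$ works --- though the assertion that $w$ and $h$ lie in the invariant subring requires the normalization $\xi_2=1$ (harmless by symmetry); if instead $\xi_2=-1$ one should argue that the eigenvalue constraint forces $g$ to be odd in $X$, so that $f'=XY\cdot(\text{unit})$ directly.

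The genuine gap is the case $\xi_1=\xi_2$, which you dispatch with ``iterative equivariant elimination \dots rests on (an equivariant version of) the versality of $R[[X,Y]]/(XY-t)$''. When $\xi_1=\xi_2=1$ the equivariance constraint is vacuous (indeed your averaged presentation already shows $\sigma=\mathrm{id}$), so only the ordinary versality of the node is needed, which you may reasonably quote. But when $\xi_1=\xi_2=-1$ the statement you are appealing to --- that an invariant $f'\equiv XY\bmod m$ in $R[[X^2,XY,Y^2]]$ can be brought to $XY-t$ by a coordinate change whose new coordinates are anti-invariant --- is precisely the content of the lemma in this case, and it is not a citation but an argument: one must first diagonalize the quadratic part $\alpha X^2+(1+\beta)XY+\gamma Y^2$ to $XY$ over $R$ (a Hensel-type step), and then run an equivariant formal Morse/splitting lemma to absorb the higher invariant terms while keeping the substitutions $X\mapsto X+\alpha$, $Y\mapsto Y+\beta$ inside the correct eigenspaces. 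This is exactly where the paper's proof does its real work, namely the computation showing that in this case the correction terms can be taken of the form $p(x'')=\tilde p(x''^2)$ and $q(y'')=\tilde q(y''^2)$, so that the adjusted generators remain eigenvectors. Until you either carry out that equivariant normal-form argument or replace it by the paper's order-by-order computation, the proof is incomplete in the case $\xi_1=\xi_2=-1$.
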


 \begin{proof}
 For the sake of notation, we still denote by $\sigma$ the involution $\sigma \otimes R/m^{n+1}$. We inductively construct elements $x_n,y_n$ in $S_n:=S\otimes R/m^{n+1}$ and $t_n \in R/m^{n+1}$ such that 
 	\begin{itemize}
 		\item[1)] $\sigma(x_n)=\xi_1x_n$,
 		\item[2)] $\sigma(y_n)=\xi_2y_n$,
 		\item[3)] $x_ny_n=t_n$ in $S_n$;
 	\end{itemize}
 for some $\xi_i \in k$ indipendent of $n$ such that $\xi_i^2=1$ for $i=1,2$. The case $n=0$ follows from \Cref{prop:descr-inv} and it gives us $\xi_i$ for $i=1,2$. Suppose we have constructed $(x_n,y_n,t_n)$ with the properties listed above. 

 Consider two general liftings $x'_{n+1},y'_{n+1}$ in $S_{n+1}$. 
 
 We define $x''_{n+1}:=(x'_{n+1}+\xi_1\sigma(x'_{n+1}))/2$ and $y''_{n+1}:=(y'_{n+1}+\xi_2\sigma(y'_{n+1}))/2$. The pair $(x''_{n+1},y''_{n+1})$ clearly verify the properties 1) and 2). A priori, $x''_{n+1}y''_{n+1}=t_n+h$ with $h$ an element in $S_{n+1}$ such that its restriction in $S_n$ is zero. The flatness of the extension implies that 
 $$ \ker(S_{n+1}\rightarrow S_n)\simeq A \otimes_R (m^{n+1}/m^{n+2})\simeq S_0 \otimes_k (m^{n+1}/m^{n+2})$$ and therefore $$ h = h_0 + x''_{n+1}p(x''_{n+1}) + y''_{n+1}q(y''_{n+1})$$ 
 where all the coefficients of the polynomial $p$ and $q$ (and clearly $h_0$) are in $m^{n+1}/m^{n+2}$. if we define 
 \begin{itemize}
 	\item $t_{n+1}:=t_n+h_0$,
 	\item $x_{n+1}:=x''_{n+1}+q(y''_{n+1})$,
 	\item $y_{n+1}:=y''_{n+1}+p(x''_{n+1})$,
 \end{itemize}  
 the third condition above is verified but we have to prove that the first two are still verified for $x_{n+1}$ and $y_{n+1}$. 

 Using the fact that $\sigma(x''_{n+1}y''_{n+1})=\xi_1\xi_2x''_{n+1}y''_{n+1}$, we reduce to analyze three cases.
 
 If $\xi_1=\xi_2=1$, there is nothing to prove.
 If $\xi_1=-\xi_2$, a computation shows that   
 \begin{itemize}
 	\item $h_0=0$,
 	\item $ p\equiv 0$,
 	\item $q(y''_{n+1})=\tilde{q}(y''^2_{n+1})$
 \end{itemize}
for a suitable polynomial $\tilde{q}$ with coefficients in $m^{n+1}/m^{n+2}$. 
 If $\xi_1=\xi_2=-1$, a computation shows that   
\begin{itemize}
	\item $ p(x''_{n+1})=\tilde{p}(x''^2_{n+1})$,
	\item $q(y''_{n+1})=\tilde{q}(y''^2_{n+1})$
\end{itemize}
for a suitable $\tilde{p},\tilde{q}$ polynomials with coefficients in $m^{n+1}/m^{n+2}$. 

Hence $(x_{n+1},y_{n+1},t_{n+1})$ satisfies the conditions 1), 2) and 3),
therefore we have a morphism of flat $R/m^{n+1}$-algebras
$$ (R/m^{n+1})[[x_{n+1},y_{n+1}]]/(x_{n+1}y_{n+1}-t_{n+1}) \longrightarrow S_{n+1}$$ 
which is an isomorphism modulo $m$, therefore it is an isomorphism. If we pass to the limit we get the result.
 \end{proof}

\section{$A_r$-stable curves and moduli stack}\label{sec:1-2}

Fix a nonnegative integer $r$.
\begin{definition}	Let $k$ be an algebraically closed field and $C/k$ be a proper reduced connected one-dimensional scheme over $k$. We say the $C$ is an \emph{$A_r$-prestable curve} if it has at most $A_r$-singularity, i.e. for every $p\in C(k)$, we have an isomorphism
		$$ \widehat{\cO}_{C,p} \simeq k[[x,y]]/(y^2-x^{h+1}) $$ 
	with $ 0\leq h\leq r$. Furthermore, we say that $C$ is $A_r$-stable if it is $A_r$-prestable and the dualizing sheaf $\omega_C$ is ample. A $n$-pointed $A_r$-stable curve over $k$ is $A_r$-prestable curve together with $n$ smooth distinct closed points $p_1,\dots,p_n$ such that $\omega_C(p_1+\dots+p_n)$ is ample.
\end{definition}
\begin{remark}
	Notice that a $A_r$-prestable curve is l.c.i by definition, therefore the dualizing complex is in fact a line bundle. 
\end{remark}

For the rest of the chapter, we fix a base field $\kappa$ where all the primes smaller than $r+1$ are invertible. Every time we talk about genus, we intend arithmetic genus, unless specified otherwise.

\begin{remark}\label{rem:genus-count}
	Let $C$ be a connected, reduced, one-dimensional, proper scheme over an algebraically closed field. Let $p$ be a rational point which is a singularity of $A_r$-type. We denote by $b:\widetilde{C}\arr C$ the partial normalization at the point $p$ and by $J_b$ the conductor ideal of $b$. Then a straightforward computation shows that \begin{enumerate}
		\item if $r=2h$, then $g(C)=g(\widetilde{C})+h$;
		\item if $r=2h+1$ and $\widetilde{C}$ is connected, then $g(C)=g(\widetilde{C})+h+1$,
		\item if $r=2h+1$ and $\widetilde{C}$ is not connected, then $g(C)=g(\widetilde{C})+h$.
	\end{enumerate}
	If $\widetilde{C}$ is not connected, we say that $p$ is a separating point. Furthermore, Noether formula gives us that $b^*\omega_C \simeq \omega_{\widetilde{C}}(J_b^{\vee})$.
\end{remark}
Let us define the moduli stack we are interested in. Let $g$ be an integer with $g\geq 2$.
We denote by $\Mtilde_{g}^r$ the category defined in the following way: an object is a proper flat finitely presented morphism $C\arr S$ over $\kappa$ such that every geometric fiber over $S$ is a $A_r$-stable curve of genus $g$. These families are called \emph{$A_r$-stable curves} over $S$. Morphisms are defined in the usual way. This is clearly a fibered category over the category of schemes over $\kappa$.

Fix a positive integer $n$. In the same way, we can define $\Mtilde_{g,n}^r$ the fibered category whose objects are the datum of $A_r$-stable curves over $S$ with $n$ distinct sections $p_1,\dots,p_n$ such that every geometric fiber over $S$ is a  $n$-pointed $A_r$-stable curve. These families are called \emph{$n$-pointed $A_r$-stable curves} over $S$. Morphisms are just morphisms of $n$-pointed curves.

The main result of this section is the description of $\Mtilde_{g,n}^r$ as a quotient stack. Firstly, we need to prove two results which are classical in the case of ($A_1$)-stable curves.
 
 \begin{proposition}\label{prop:openness}
 	Let $C \arr S$ a proper flat finitely presented morphism with $n$-sections $s_i:S \arr C$ for $i=1,\dots,n$. There exists an open subscheme $S' \subseteq S$ with the property that a morphism $T \arr S$ factors through $S'$ if and only if the projection $T\times_{S} C \arr T$, with the sections induced by the $s_{i}$, is a $n$-pointed $A_r$-stable curve.
 \end{proposition}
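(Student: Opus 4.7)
The plan is to express $A_r$-stability for $(C\to S, s_1,\dots,s_n)$ as a finite intersection of conditions on the geometric fibers of $C\to S$, each of which is open on $S$, and then set $S'$ to be that intersection.

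First, the standard semi-continuity theorems for proper flat morphisms cut out an open subscheme of $S$ over which the geometric fibers are reduced, connected, of pure dimension $1$, and moreover local complete intersections. On this locus, the singularities of every geometric fiber have embedding dimension $\leq 2$, so one may use the local classification of plane curve singularities.

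Next, to enforce that these planar singularities are actually of type $A_h$ with $0\leq h\leq r$, I would bound two numerical invariants defined on the relative singular locus $C\sing\to S$, which is proper over $S$. A reduced planar curve singularity has multiplicity $\leq 2$ if and only if it is analytically isomorphic to $y^2-x^{h+1}=0$ for some $h\geq 0$; and in that case its Tjurina number equals $h$ (and since $\cha\kappa$ is coprime to $2$ and to $r+1$, Tjurina equals Milnor for these quasi-homogeneous equations). Both multiplicity and Tjurina number are upper semi-continuous functions on the relative singular locus in a flat family, so the loci where they are pointwise bounded by $2$ and by $r$ respectively push down, via the proper map $C\sing\to S$, to open subschemes of $S$.

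The remaining conditions are formal. The smooth locus of $C\to S$ is open in $C$, so the loci where each $s_i$ factors through it are open, as is pairwise disjointness of the $s_i$ (since $S$ is separated and the sections are closed immersions). Finally, ampleness of $\omega_{C/S}(s_1+\cdots+s_n)$ on geometric fibers is open by the standard openness of ampleness in proper flat families, applied to this relative line bundle (note that $\omega_{C/S}$ exists as a line bundle throughout the l.c.i.\ locus produced above).

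Intersecting the open subschemes produced at each stage yields $S'$, and the universal factoring property is automatic since every condition used is stable under arbitrary base change. The main obstacle is the step bounding the singularity type: one needs a careful semi-continuity argument for the Milnor/Tjurina invariant on the relative singular locus, combined with the multiplicity $\leq 2$ reduction to singularities of pure $A_h$-type. Everything else reduces to classical openness results from EGA and the openness of the smooth locus.
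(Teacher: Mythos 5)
Your proof is correct and follows the same outline as the paper's: first cut out the open locus where the fibers are $A_r$-prestable, then the open locus where the sections are smooth, disjoint points, then invoke openness of ampleness for $\omega_{C/S}(s_1+\dots+s_n)$. The only real difference is in the first step: the paper disposes of it in one sentence by citing the well-known fact that a small deformation of a curve with only $A_h$-singularities, $h\leq r$, again has only such singularities, whereas you actually prove this openness via upper semicontinuity of the multiplicity and of the Tjurina number on the relative singular locus (which is proper over $S$, so the bad loci push down to closed subsets). That is a legitimate and more self-contained justification of the step the paper leaves to the reader, and the numerical characterization you use (reduced, planar, multiplicity $\leq 2$, Tjurina number $h\leq r$ $\Leftrightarrow$ type $A_h$ with $h\leq r$, in the stated characteristic) is right. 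One slip to fix: being a local complete intersection does \emph{not} force embedding dimension $\leq 2$ for a curve singularity (the cone over four general points of $\PP^2$ is a complete-intersection curve in $\AA^3$ with embedding dimension $3$ at the vertex), so planarity does not come for free from the l.c.i.\ locus. This is harmless because your next condition, multiplicity $\leq 2$, already implies embedding dimension $\leq 2$ by Abhyankar's inequality $\mathrm{embdim}\leq e+\dim-1$ for one-dimensional Cohen--Macaulay local rings; you should simply impose the multiplicity bound first and deduce planarity from it, rather than the other way around.
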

 
 \begin{proof}
 	It is well known that a small deformation of a curve with $A_h$-singularities for $h\leq r$ still has $A_h$-singularities for $h\leq r$. Hence, after restricting to an open subscheme of $S$ we can assume that $C \arr S$ is an $A_{r}$-prestable curve. By further restricting $S$ we can assume that the sections land in the smooth locus of $C \arr S$, and are disjoint. Then the result follows from openness of ampleness for invertible sheaves. 
 \end{proof}

The following result is already known for canonically positive Gorestein curves (see \cite[Theorem B and Theorem C]{Cat}). We extend the proof to the case of $n$-pointed $A_r$-stable curves. As a matter of fact, we also prove Theorem 1.8 of \cite{Knu} for $n$-pointed $A_r$-stable curves of genus $g$.

\begin{proposition}\label{prop:boundedness}
	Let $(C,p_1,\dots,p_n)$ be a $n$-pointed $A_r$-stable curve over an algebraically closed field $k/\kappa$. Then 
	\begin{enumerate}
		\item[i)] $\H^1(C,\omega_C(p_1+\dots+p_n)^{\otimes m})=0$ for every $m\geq 2$,
		\item[ii)] $\omega_C(p_1+\dots+p_n)^{\otimes m}$ is very ample for every $m\geq 3$.
		\item[iii)] $\omega_C(p_1+\dots+p_n)^{\otimes m}$ is normally generated for $m\geq 6$.
	\end{enumerate}
\end{proposition}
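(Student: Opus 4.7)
The plan is to follow the classical approach of Knudsen and Catanese for Gorenstein curves, adapted to incorporate marked points and $A_r$-singularities of arbitrary embedding dimension. I would fix an $A_r$-stable curve $(C,p_1,\dots,p_n)$ over an algebraically closed field $k/\kappa$, set $D := p_1+\dots+p_n$ and $L_m := \omega_C(D)^{\otimes m}$; because $A_r$-prestable curves are l.c.i., hence Gorenstein, $\omega_C$ is an honest line bundle and Serre duality is available throughout.

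For (i), I would apply Serre duality to get $H^1(C,L_m)^\vee \simeq H^0(C,\omega_C^{1-m}(-mD))$ and compute the degree on every irreducible component $C_i$ as
$$\deg_{C_i}\bigl(\omega_C^{1-m}(-mD)\bigr) = (1-m)\,\deg_{C_i}\bigl(\omega_C(D)\bigr) - \#\{j : p_j \in C_i\}.$$
Ampleness of $\omega_C(D)$ gives $\deg_{C_i}(\omega_C(D)) \geq 1$, so the right-hand side is at most $-(m-1) \leq -1$ for $m \geq 2$; this forces any global section to vanish component by component, yielding the desired vanishing.

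For (ii), I would use the standard criterion that $L_m$ is very ample iff the map $H^0(C,L_m) \to H^0(Z,L_m|_Z)$ is surjective for every closed subscheme $Z$ cut out locally by an ideal of the form $\frkm_p\frkm_q$ with $p \neq q$ or $\frkm_p^2$. This reduces to $H^1(C, L_m \otimes \cI_Z) = 0$. When $Z$ lies in the smooth locus of $C$, $\cI_Z$ is invertible and the Serre-duality degree computation of part (i), with two extra units of positivity added to the single component containing $Z$, still produces strict negativity elsewhere once $m \geq 3$; a short extra argument using connectedness of $C$ handles the borderline degree-zero case. At an $A_r$-singular point with $r \geq 1$, the embedding dimension is $2$ and $\cO_{C,p}/\frkm_p^2$ has length $3$, so $\cI_Z = \frkm_p^2$ is no longer invertible; here I would work locally using the presentation $\widehat{\cO}_{C,p} \simeq k[[x,y]]/(y^2-x^{h+1})$ and express $\frkm_p^2$ as a fractional ideal whose contribution to component-wise degrees remains bounded, reducing again to the negativity estimate of (i).

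For (iii), I would invoke Mumford's normal generation criterion. Once (ii) provides the very ampleness of $L_m$, let $M_{L_m} := \ker\bigl(H^0(L_m) \otimes \cO_C \twoheadrightarrow L_m\bigr)$; then surjectivity of the multiplication maps $H^0(L_m) \otimes H^0(L_m^{\otimes n}) \to H^0(L_m^{\otimes n+1})$ for every $n \geq 1$ is equivalent, via the long exact sequence of cohomology, to $H^1(C, M_{L_m} \otimes L_m^{\otimes n}) = 0$. A Castelnuovo-type argument, combined with the splitting $L_m = L_3 \otimes L_{m-3}$ and the very ampleness of $L_3$ furnished by (ii), delivers these vanishings for $m \geq 6$. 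The main obstacle across all three parts is the bookkeeping at $A_r$-singular points and at rational components of low valence; the bounds $m \geq 2,\,3,\,6$ are exactly those needed to ensure strict negativity of the relevant Serre dual on every component, and extending Knudsen's original arguments in the presence of higher-order $A_r$-singularities is the essential technical content of the proposition.
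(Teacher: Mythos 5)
Your parts (i) and (iii) track the paper's proof closely: (i) is the same Serre-duality-plus-componentwise-degree argument, and (iii) is the same Castelnuovo-type reduction (the paper factors the multiplication map through $\H^0(L^{\otimes 3})\otimes\H^0(L^{\otimes m-3})\otimes\H^0(L^{\otimes km})$ and reduces to the vanishing of $\H^1(L^{\otimes km-3})$ and $\H^1(L^{\otimes km+6-m})$, which is your kernel-bundle criterion in a slightly different packaging; both only need global generation of $L^{\otimes 3}$ and $L^{\otimes m-3}$, which is where $m\ge 6$ enters).

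The gap is in part (ii), precisely at the singular points. Reducing very ampleness to $\H^1(C,\cI_Z\otimes L_m)=0$ for $\cI_Z=\frkm_{q_1}\frkm_{q_2}$ or $\frkm_q^2$ is correct, and dualizing turns this into the vanishing of $\hom_{\cO_C}(\cI_Z,\omega_C(D)^{\otimes(1-m)}(-D))$; but when $q$ is singular the sheaf $\curshom(\frkm_q^2,\cF)$ is \emph{not} computed by "adding a bounded contribution to the component-wise degree of $\cF$" on $C$ itself — it is not a line bundle on $C$ at all, and a purely local expression of $\frkm_q^2$ as a fractional ideal does not by itself produce a global section space you can bound. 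The missing idea is the passage to the partial normalization $\pi:\tilde C\to C$ at the singular point(s): one needs the inclusions $\curshom(\frkm_{q_1},\cO_C)\subset\pi_*\cO_{\tilde C}$ and, for the case $q_1=q_2=q$ singular, $\curshom(\frkm_q^2,\cO_C)\subset\pi_*M^{\vee}$ where $M$ is the ideal generated by $\pi^{-1}\frkm_q$ (Lemmas 2.1 and 2.2 of Catanese, which the paper quotes). These convert the Hom space into $\H^0$ of an honest line bundle on $\tilde C$ whose degree on each component exceeds that of $\pi^*\cF$ by at most the length of $M$ (which a local computation at an $A_r$-point shows is $2$), and only then does the negativity estimate of part (i) apply. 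Without this reduction your step "reducing again to the negativity estimate of (i)" does not go through as written, and it is exactly this control that makes the bound $m\ge 3$ (rather than some larger $m$) correct.
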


\begin{proof}
	We denote by $\Sigma$ the Cartier divisor $p_1+\dots+p_n$ of $C$. Using duality, i) is equivalent to 
	$$ \H^0(C,\omega_C^{\otimes (1-m)}(-m\Sigma))=\H^0(C,\omega_C(\Sigma)^{\otimes (1-m)}(-\Sigma))=0$$
	for every $m\geq 2$. If $\Gamma$ is an irreducible component of $C$, then we denote by $a_{\Gamma}$ the degree of $\omega_C(\Sigma)\vert_{\Gamma}$, which is a positive integer by the stability condition. Thus, 
	$$\deg \big(\omega_C(\Sigma)^{\otimes 1-m}(-\Sigma)\big) \leq (1-m)a_{\Gamma}<0$$
	for every $m\geq2$. Thus the sections of the line bundle are zero restricted on every irreducible component, which implies the line bundle does not have non trivial sections.
	
	Regarding ii), we need to prove that for every pair of closed point $q_1,q_2 \in C$ (possibly $q_1=q_2$) the restriction morphism 
	$$ \H^0(C,\omega_C(\Sigma)^{\otimes m})\longrightarrow \omega_C(\Sigma)^{\otimes m}\otimes D $$
	is surjective, where $D$ is the closed subscheme of $C$ defined by the ideal $I_D:=m_{q_1}m_{q_2}$ ($m_{q_i}$ is the ideal defining $q_i$ for $i=1,2$). This is equivalent to prove that 
	$$ \H^1(C,I_D\omega_C(\Sigma)^{\otimes m})=0$$
	or by duality 
	$$ \hom_{\cO_C}(I_D, \omega_C(\Sigma)^{\otimes (1-m)}(-\Sigma))=0.$$
	
	The proof is divided into $4$ cases, starting with the situation when both $q_1$ and $q_2$ are smooth.

	First of all, notice that, if $\Gamma \subset C$ is an irreducible component and $q$ is a smooth point on $\Gamma$, then 
	$$\deg\big(\omega_C(\Sigma)^{\otimes (1-m)}(-\Sigma+q)\vert_{\Gamma}\big)<0$$
	for $m\geq 3$. Therefore, we know the vanishing result for every pair of smooth point that do not lie in the same irreducible component. However, if $q_1,q_2 \in \Gamma$ with $\Gamma$ irreducible component, this implies that $$\deg\big(\omega_C(\Sigma)^{\otimes (1-m)}(-\Sigma+q_1+q_2)\vert_{\tilde{\Gamma}}\big)\leq0$$
	where equality is possible only for $\tilde{\Gamma}=\Gamma$. Therefore, if $C$ is not irreducible, we get the vanishing result again. Finally, if $C=\Gamma$ is integral, one can prove that 
	$$\deg\big(\omega_C(\Sigma)^{\otimes (1-m)}(-\Sigma+q_1+q_2)\vert_{\Gamma}\big)<0$$
	by simply using the stability condition. Thus we have proved that the morphism associated to the complete linear system of $\omega_C(\Sigma)^{\otimes m}$ separates smooth points.
	
	Suppose now that $q_1$ is singular and $q_2$ is smooth. Let us call $\pi:\tilde{C} \rightarrow C$ the partial normalization in $q_1$ of $C$. We have that (see Lemma 2.1 of \cite{Cat})
	$$ \underhom(m_{q_1},\cO_C) \subset \pi_{*}\cO_{\tilde{C}}$$
	therefore  
	$$\hom_{\cO_C}(I_D, \omega_C(\Sigma)^{\otimes (1-m)}(-\Sigma))\subset \H^0(\tilde{C}, \pi^*\omega_C(\Sigma)^{\otimes (1-m)}(-\Sigma+q_2)).$$
	If we denote by $\cF$ the line bundle $\omega_C(\Sigma)^{\otimes (1-m)}(-\Sigma+q_2)$, then clearly the restriction of $\pi^*\cF$ to every irreducible component $\tilde{\Gamma}$ of $\tilde{C}$ has the same degree of the restriction of $\cF$ to the irreducible component $\Gamma=\pi(\tilde{\Gamma})$ because $\pi$ is finite and birational. Again, the restriction $\cF\vert_{\Gamma}$ has negative degree, therefore the vanishing result follows.
	
	Suppose that both $q_1$ and $q_2$ are singular but distinct. Let us call $\pi:\tilde{C} \rightarrow C$ the partial normalization in $q_1$ and $q_2$ of $C$. Then we have that (see Lemma 2.1 of \cite{Cat})
	$$ \underhom(m_{q_1}m_{q_2},\cO_C) \subset \pi_{*}\cO_{\tilde{C}}$$
	therefore
	$$\hom_{\cO_C}(I_D, \omega_C(\Sigma)^{\otimes (1-m)}(-\Sigma))\subset \H^0(\tilde{C}, \pi^*\omega_C(\Sigma)^{\otimes (1-m)}(-\Sigma)).$$
	By the same argument again, we get the vanishing result.
	
	Last case we need to consider is when $q:=q_1=q_2$ and it is singular. Let $\pi:\tilde{C} \rightarrow C$ be the partial normalization at $q$ and $M$ be the ideal generated by $\pi^{-1}m_q$ in $\tilde{C}$.  Then we have that (see Lemma 2.2 of \cite{Cat})
	$$ \underhom(m_q^2,\cO_C) \subset \pi_{*}M^{\vee}$$
	which implies 
	$$\hom_{\cO_C}(I_D, \omega_C(\Sigma)^{\otimes (1-m)}(-\Sigma))\subset \H^0(\tilde{C}, M^{\vee}\otimes\pi^*\omega_C(\Sigma)^{\otimes (1-m)}(-\Sigma)).$$
	A local computation shows that $M$ is the (invertible) ideal of definition of a subscheme of length $2$ in $\tilde{C}$. Therefore the same argument used for the case when $q_1$ and $q_2$ are smooth can be applied here to get the vanishing result. 
	
	Finally, we prove that if $L:=\omega(\Sigma)$ then $L^{\otimes m}$ is normally generated for $m \geq 6$. This is a simplified version of the proof in \cite{Knu} as the author proved the same statement for $m\geq 3$ which requires more work. For simplicity, we write $\H^0(-)$ and $\H^1(-)$ instead of $\H^0(C,-)$ and $\H^1(C,-)$.
	
	Consider the following commutative diagram for $k\geq 1$
	$$
	\begin{tikzcd}
		\H^0(L^{\otimes 3}) \otimes \H^{0}(L^{\otimes m-3}) \otimes H^0(L^{\otimes km}) \arrow[rr] \arrow[d, "\varphi_1"] &  & \H^0(L^{\otimes m})\otimes \H^0(L^{\otimes km}) \arrow[d, "\phi"] \\
		\H^0(L^{\otimes m-3}) \otimes \H^0(L^{\otimes km+3}) \arrow[rr, "\varphi_2"]                                      &  & \H^0(L^{\otimes (k+1)m});                                        
	\end{tikzcd}
	$$
	we need to prove that $\phi$ is surjective, therefore it is enough to prove that $\varphi_1$ and $\varphi_2$ are surjective. As both $L^{\otimes 3}$ and $L^{\otimes m-3}$ are globally generated because $m\geq 6$, we can use the Generalized Lemma of Castelnuovo (see pag 170 of \cite{Knu}) and reduce to prove that $\H^1(L^{\otimes (km-3)})$ and $\H^1(L^{\otimes (km+6-m)})$ are zero for $k\geq 1$. We have that by Grothendieck duality 
	$$\H^1(L^{\otimes km-3})=\H^0(\omega_C \otimes L^{\otimes 3-km}).$$ 
	If we focus on a single irreducible component $\Gamma$ and denote by $a_{\Gamma}$ the quantity $deg \omega_C(\Sigma)\vert_{\Gamma}$, we have that 
	$$ \deg (\omega_{C} \otimes L^{3-km}) \leq 4-km <0$$ 
	for $k\geq 1$. The same reasoning prove the vanishing of the other cohomology group.
	
\end{proof}

\begin{remark} 
	One can also prove that $\omega_C(\Sigma)^{\otimes m}$ is globally generated for $m\geq 2$. For our purpose, we just need to prove that there exists an integer $m$, indipendent of the field $k$, such that  $\omega_C(\Sigma)^{\otimes m}$ is very ample.
\end{remark}

Now we are ready to prove the theorem. To be precise, both the proofs of the following theorem and of \Cref{prop:openness} are an adaptation of Theorem 1.3 and of Proposition 1.2 of \cite{DiLorPerVis} to the more general case of $A_r$-stable curve. 

\begin{theorem}\label{theo:descr-quot}
	$\Mtilde_{g,n}^r$ is a smooth algebraic stack of finite type over $\kappa$. Furthermore, it is a quotient stack: that is, there exists a smooth quasi-projective scheme X with an action of $\GL_N$ for some positive $N$, such that 
	$ \Mtilde_{g,n}^r \simeq [X/\GL_N]$. If $k$ is a field, then it is connected.
\end{theorem}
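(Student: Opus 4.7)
The plan is to realize $\Mtilde_{g,n}^r$ as a quotient stack $[X'/\GL_{N+1}]$ of an open subscheme of a Hilbert scheme by a linear group action, following the template used in \cite{DiLorPerVis} for the cuspidal case. Fix an integer $m \geq 3$; by \Cref{prop:boundedness} the line bundle $L_C := \omega_C(p_1+\cdots+p_n)^{\otimes m}$ is very ample with $\H^1(C, L_C) = 0$ for every $n$-pointed $A_r$-stable curve $(C, p_1, \ldots, p_n)$. Riemann--Roch then gives a constant value $N + 1 := m(2g-2+n) + 1 - g$ for $\dim \H^0(C, L_C)$, and a fixed Hilbert polynomial $P(t) := mt(2g-2+n) + 1 - g$ for the image of the associated embedding $C \hookrightarrow \PP(\H^0(C, L_C)^\vee) \simeq \PP^N$.

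I would then introduce $X'$ as the open subscheme of the $n$-fold fibre product over $\mathrm{Hilb}^P_{\PP^N_\kappa}$ of the universal family, parametrizing tuples $(C \hookrightarrow \PP^N_S, s_1, \ldots, s_n)$ where $C \to S$ with its sections is an $n$-pointed $A_r$-stable curve (open by \Cref{prop:openness}) and the hyperplane bundle $\cO_C(1)$ is induced by $\omega_{C/S}(s_1+\cdots+s_n)^{\otimes m}$ (an open condition, checked via cohomology and base change). The tautological $\GL_{N+1}$-action on $\PP^N$ lifts to $X'$, and the forgetful map $X' \to \Mtilde_{g,n}^r$ is a $\GL_{N+1}$-torsor, since a local section over $(C \to S, s_i)$ is precisely a choice of basis of the locally free rank $N+1$ sheaf $\pi_* \omega_{C/S}(\Sigma)^{\otimes m}$. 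Consequently $\Mtilde_{g,n}^r \simeq [X'/\GL_{N+1}]$ is of finite type, with $X'$ quasi-projective.

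For smoothness I would argue on the deformation-theoretic side: each $A_r$-singularity $y^2 = x^{h+1}$ is a hypersurface in the smooth affine plane and so locally complete intersection. The cotangent complex of an $A_r$-prestable curve $C$ therefore has Tor-amplitude in $[-1,0]$, and its local $\mathcal{E}\!xt^j(\Omega_C, \cO_C)$ sheaves are zero-dimensional for $j \geq 1$. Combined with $\dim C = 1$ this forces the global obstruction space $\ext^2(\Omega_C, \cO_C)$ to vanish, so abstract deformations of $(C, p_1, \ldots, p_n)$ are unobstructed and $\Mtilde_{g,n}^r$ is smooth; smoothness of $X'$ then transfers from the stack via the torsor presentation (alternatively, one checks it directly for the Hilbert scheme using vanishing of $\H^1$ of the normal complex, a consequence of the positivity of $\omega_C(\Sigma)^{\otimes m}$ on every component).

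Finally, to establish connectedness over a field $\kappa$, I would exhibit the smooth locus $\cM_{g,n}$ as a dense open substack of $\Mtilde_{g,n}^r$. Each $A_r$-singularity admits the miniversal smoothing $y^2 = x^{h+1} + a_{h-1} x^{h-1} + \cdots + a_0$, whose generic fibre is smooth; by the unobstructedness above, a compatible choice of local smoothings at all the singular points of any given $A_r$-stable curve globalises to a deformation over a smooth pointed base with smooth generic fibre. Hence $\cM_{g,n}$ is dense in $\Mtilde_{g,n}^r$, and since $\cM_{g,n}$ is geometrically connected over $\kappa$ by classical results of Deligne--Mumford, $\Mtilde_{g,n}^r$ is connected as well. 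The hard part will be the passage from local to global smoothings and the careful bookkeeping needed to confirm smoothness of $X'$ (not merely of the stack quotient); both ultimately rest on the l.c.i.\ character of $A_r$-singularities together with the positivity statements of \Cref{prop:boundedness}.
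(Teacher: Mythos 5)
Your overall route is the paper's: embed by a power of the log-dualizing sheaf, cut out the relevant locus in (a fibre product over) the Hilbert scheme, present the stack as a quotient, and get smoothness and connectedness from unobstructedness of $A_r$-singularities. But one step fails as written. Your $X'$ lives inside the Hilbert scheme, so a point of $X'$ remembers only the curve together with its embedding in $\PP^N$, i.e.\ an isomorphism $\PP^N_S \simeq \PP(\pi_*\omega_{C/S}(\Sigma)^{\otimes m})$ --- a basis of $\pi_*\omega_{C/S}(\Sigma)^{\otimes m}$ only up to a global scalar. The $\GL_{N+1}$-action on $X'$ therefore factors through $\PGL_{N+1}$: the centre $\gm$ acts trivially, so $X' \to \Mtilde_{g,n}^r$ is a $\PGL_{N+1}$-torsor, not a $\GL_{N+1}$-torsor, and $[X'/\GL_{N+1}]$ is a $\gm$-gerbe over $\Mtilde_{g,n}^r$ rather than the stack itself. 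To obtain a genuine $\GL$-presentation you must rigidify further: the paper first introduces the functor $X$ of curves equipped with an actual isomorphism $\cO_S^N \simeq \pi_*\omega_{C/S}(\Sigma_1+\dots+\Sigma_n)^{\otimes 3}$, observes that $X \to Z$ (your $X'$) is a $\gm$-torsor --- whence $X$ is a quasi-projective scheme --- and only then shows that $X \to \Mtilde_{g,n}^r$ is a $\GL_N$-torsor. This extra layer is exactly what your argument is missing.

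A second, smaller point: the condition that $\cO_C(1)$ be isomorphic to $\omega_{C/S}(\Sigma)^{\otimes m}$ fibrewise is not open; it is the equalizer of two sections of the relative Picard scheme $\pic^{m(2g-2+n)}_{E/Y} \to Y$, hence only locally closed (this is how the paper phrases it). That does not endanger quasi-projectivity, but your $X'$ is a locally closed, not open, subscheme of the fibre product over the Hilbert scheme. The remaining steps --- smoothness via vanishing of $\ext^2(\Omega_C,\cO_C)$ from the l.c.i.\ property and the dimension count, and connectedness by globalising the miniversal smoothings $y^2 = x^{h+1}+a_{h-1}x^{h-1}+\cdots+a_0$ over a pointed smooth base and invoking \Cref{prop:openness} --- coincide with the paper's argument and are sound.
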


\begin{proof}
	It follows from \Cref{prop:boundedness} that if $(\pi\colon C \arr S,\Sigma_1,\dots,\Sigma_n)$ is a $n$-pointed $A_{r}$-stable curve of genus $g$, then $\pi_{*}\omega_{C/S}(\Sigma_{1}+ \dots + \Sigma_{n})^{\otimes 3}$ is locally free sheaf of rank $N:=5g-5+3n$, and its formation commutes with base change, because of Grothendieck's base change theorem.
	
	Call $X$ the stack over $k$, whose sections over a scheme $S$ consist of a $A_{r}$-stable $n$-pointed curve as above, and an isomorphism $\cO_{S}^{N} \simeq \pi_{*}\omega_{C/S}(\Sigma_{1}+ \dots + \Sigma_{n})^{\otimes m}$ of sheaves of $\cO_{S}$-modules. Since $\pi_{*}\omega_{C/S}(\Sigma_{1}+ \dots + \Sigma_{n})^{\otimes m}$ is very ample, the automorphism group of an object of $X$ is trivial, and $X$ is equivalent to its functor of isomorphism classes.
	
	Call $H$ the Hilbert scheme of subschemes of $\PP^{N-1}_{\kappa}$ with Hilbert polynomial $P(t)$, and $D \arr H$ the universal family. Call $F$ the fiber product of $n$ copies of $D$ over $S$, and $C \arr F$ the pullback of $D \arr H$ to $F$; there are $n$ tautological sections $s_{1}$, \dots,~$s_{n}\colon F \arr C$. Consider the largest open subscheme $F'$ of $F$ such that the restriction $C'$ of $C$, with the restrictions of the $n$ tautological sections, is a $n$-pointed $A_{r}$-stable curve, as in Proposition~\ref{prop:openness}. Call $Y \subseteq F'$ the open subscheme whose points are those for which the corresponding curve is nondegenerate, $E \arr Y$ the restriction of the universal family, $\Sigma_{1}$, \dots,~$\Sigma_{n} \subseteq E$ the tautological sections. Call $\cO_{E}(1)$ the restriction of $\cO_{\PP^{N-1}_{Y}}(1)$ via the tautological embedding $E \subseteq \PP^{N-1}_{Y}$; there are two section of the projection $\pic_{E/Y}^{3(2g-2 + n)}\arr Y$ from the Picard scheme parametrizing invertible sheaves of degree $3(2g-2 + n)$, one defined by $\cO_{E}(1)$, the other by $\omega_{E/Y}(\Sigma_{1} + \dots + \Sigma_{n})^{\otimes 3}$; let $Z \subseteq Y$ the equalizer of these two sections, which is a locally closed subscheme of $Y$.
	
	Then $Z$ is a quasi-projective scheme over $\kappa$ representing the functor sending a scheme $S$ into the isomorphism class of tuples consisting of a $n$-pointed  $A_{r}$-stable curve $\pi\colon C \arr S$, together with an isomorphism of $S$-schemes
	\[
	\PP^{N-1}_{S} \simeq \PP(\pi_{*}\omega_{C/S}(\Sigma_{1} + \dots + \Sigma_{n})^{\otimes 3})\,.
	\]
	There is an obvious functor $X \arr Z$, associating with an isomorphism $\cO_{S}^{N} \simeq \pi_{*}\omega_{C/S}(\Sigma_{1}+ \dots + \Sigma_{n})^{\otimes 3}$ its projectivization. It is immediate to check that $X \arr Z$ is a $\gm$-torsor; hence it is representable and affine, and $X$ is a quasi-projective scheme over $\spec \kappa$.
	
	On the other hand, there is an obvious morphism $X \arr \mt_{g,n}^r$ which forgets the isomorphism $\cO_{S}^{N} \simeq \pi_{*}\omega_{C/S}(\Sigma_{1}+ \dots + \Sigma_{n})^{\otimes 3}$; this is immediately seen to be a $\GL_{N}$ torsor. We deduce that $\mt_{g,n}^r$ is isomorphic to $[X/\GL_{N}]$. This shows that is a quotient stack, as in the last statement; this implies that $\mt_{g,n}^r$ is an algebraic stack of finite type over $\kappa$.
	
	The fact that $\mt_{g,n}^r$ is smooth follows from the fact that $A_{r}$-prestable curves are unobstructed.
	
	Finally, to check that $\mt_{g,n}^r$ is connected it is enough to check that the open embedding $\cM_{g,n} \subseteq \mt_{g,n}^r$ has a dense image, since $\cM_{g,n}$ is well known to be connected. This is equivalent to saying that every $n$-pointed $A_r$-stable curve over an algebraically closed extension $\Omega$ of $\kappa$ has a small deformation that is smooth. Let $(C, p_{1}, \dots, p_{n})$ be a $n$-pointed $A_r$-stable curve; the singularities of $C$ are unobstructed, so we can choose a lifting $\overline{C}\arr \spec \Omega\ds{t}$, with smooth generic fiber. The points $p_{i}$ lift to sections $\spec\Omega\ds{t} \arr \overline{C}$, and then the result follows from  Proposition~\ref{prop:openness}.

\end{proof}
\begin{remark}\label{rem: max-sing}
Clearly, we have an open embedding $\Mtilde_{g,n}^r  \subset \Mtilde_{g,n}^s$ for every $r\leq s$. Notice that $\Mtilde_{g,n}^r=\Mtilde_{g,n}^{2g+1}$ for every $r\geq 2g+1$. 
\end{remark}

We prove that the usual definition of Hodge bundle extends to our setting.  As a consequence we obtain a locally free sheaf $\htil_{g}$ of rank~$g$ on $\mt_{g, n}^r$, which is called \emph{Hodge bundle}.

 	\begin{proposition}\label{prop:hodge-bundle}
 	Let $\pi\colon C \arr S$ be an $A_r$-stable of genus $g$. Then $\pi_{*}{\omega}_{C/S}$ is a locally free sheaf of rank $g$ on $S$, and its formation commutes with base change.
 \end{proposition}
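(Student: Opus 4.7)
The plan is to reduce the statement to Grothendieck's cohomology and base change theorem applied to the invertible sheaf $\omega_{C/S}$. Since every $A_r$-prestable curve is l.c.i.\ (as remarked right after the definition of $A_r$-prestability), $\pi\colon C\arr S$ is a flat, proper, Cohen--Macaulay morphism of pure relative dimension one, and $\omega_{C/S}$ is an invertible sheaf whose formation commutes with arbitrary base change. Hence it suffices to control the dimensions of $\H^i(C_s,\omega_{C_s})$ on geometric fibers $C_s$.

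On a geometric fiber $C_s$, which is itself an $A_r$-stable curve of arithmetic genus $g$, relative duality (valid because $C_s$ is Cohen--Macaulay of pure dimension one and l.c.i.) gives canonical isomorphisms
$$\H^0(C_s,\omega_{C_s}) \cong \H^1(C_s,\cO_{C_s})^{\vee}, \qquad \H^1(C_s,\omega_{C_s})\cong \H^0(C_s,\cO_{C_s})^{\vee}.$$
By the very definition of arithmetic genus the first vector space has dimension $g$, while since $C_s$ is proper, reduced and connected the second has dimension one. In particular, both dimensions are constant as $s$ varies in $S$.

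With these constancies in hand, the conclusion is routine. Since $\dim\H^1(C_s,\omega_{C_s})=1$ is constant, Grauert's theorem implies that $R^1\pi_*\omega_{C/S}$ is a line bundle on $S$ whose formation commutes with base change; the standard corollary of cohomology and base change then forces $\pi_*\omega_{C/S}$ also to commute with base change, and combined with $\dim\H^0(C_s,\omega_{C_s})=g$ on fibers it must be locally free of rank $g$. The only point requiring any care is the validity of Serre duality on singular fibers, but this is automatic because $A_r$-singularities are l.c.i., hence Cohen--Macaulay; so no serious obstacle arises.
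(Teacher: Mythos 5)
There is a genuine gap: Grauert's theorem, which you invoke to conclude that $R^1\pi_*\omega_{C/S}$ is a line bundle from the constancy of $\dim \H^1(C_s,\omega_{C_s})$, is only valid when $S$ is reduced. Over a non-reduced base, constancy of fibre dimension does not imply local freeness (e.g.\ the $k[\epsilon]$-module $k$ has constant fibre dimension $1$ but is not free), and in fact by cohomology and base change the local freeness of $R^1\pi_*\omega_{C/S}$ near $s$ is \emph{equivalent} to the surjectivity of the base-change map in degree $0$ — essentially the statement you are trying to prove — so the argument as written is circular outside the reduced case. Your duality computations ($h^0(\omega_{C_s})=g$, $h^1(\omega_{C_s})=1$) and the chain of implications from cohomology and base change are all fine, and they do prove the proposition whenever $S$ is reduced; what is missing is the reduction of the general case to the reduced one.

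The paper supplies exactly this missing step: since $A_r$-stable curves are unobstructed, the versal deformation space of any geometric fibre is smooth, so \'etale-locally on $S$ the family $C \arr S$ is pulled back from a family over a smooth (in particular reduced) base, and local freeness together with compatibility with base change then descend to arbitrary $S$. You should either add this deformation-theoretic reduction, or replace the appeal to Grauert by an argument valid over any base — for instance, first showing $\pi_*\cO_C \simeq \cO_S$ universally (using geometric connectedness and reducedness of the fibres) and identifying $R^1\pi_*\omega_{C/S}$ with $\cO_S$ via the trace map of relative duality, after which the standard cohomology-and-base-change criterion gives the claim for $\pi_*\omega_{C/S}$.
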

 \begin{proof}
 	If $C$ is an $A_r$-stable curve of genus $g$ over a field $k$, the dimension of $\H^{0}(C, \omega_{C/k})$ is $g$; so the result follows from Grauert's theorem when $S$ is reduced. But the versal deformation space of an $A_{r}$-stable curve over a field is smooth, so every $A_{r}$-stable curve comes, \'{e}tale-locally, from an $A_{r}$-stable curve over a reduced scheme, and this proves the result.
 \end{proof}

 Finally, we define the contraction morphism which will be useful later. First of all, we study the possible contractions over an algebraically closed field. We refer to Definition 1.3 of \cite{Knu} for the definition of contraction.
 
 \begin{remark}\label{rem:contrac}
 	Let $k$ be an algebraically closed field and $(C,p_1,\dots,p_{n+1})$ a $n+1$-pointed $A_r$-stable curve of genus $g\geq 2$. Suppose $(C,p_1,\dots,p_n)$ is not stable. If $r \leq 2$, this implies that $p_{n+1}$ lies in either a rational bridge or a rational tail, see case $(1)$ and case $(2)$ in \Cref{fig:contrac}. If $r \geq 3$, we have another possible situation: the point $p_{n+1}$ lies in an irreducible component of genus $0$ which intersect the rest of the curve in a tacnode, see case $(3)$ in \Cref{fig:contrac}. This is a conseguence of \Cref{rem:genus-count}. We call this irreducible component a rational almost-bridge; it is a limit of  rational bridges. One can construct a contraction $c$ of an almost-bridge which satisfy all the properties listed in Definition 1.3 of \cite{Knu}. The image of the almost-bridge is a $A_2$-singularity, i.e. a cusp. In \Cref{fig:contrac}, we describe the three possible situations, where $p$ is a smooth point and $q$ is the image of $p$ through $c$.
 	\begin{figure}[H]
 		\caption{Contraction morphisms}
 		\centering
 		\includegraphics[width=1\textwidth]{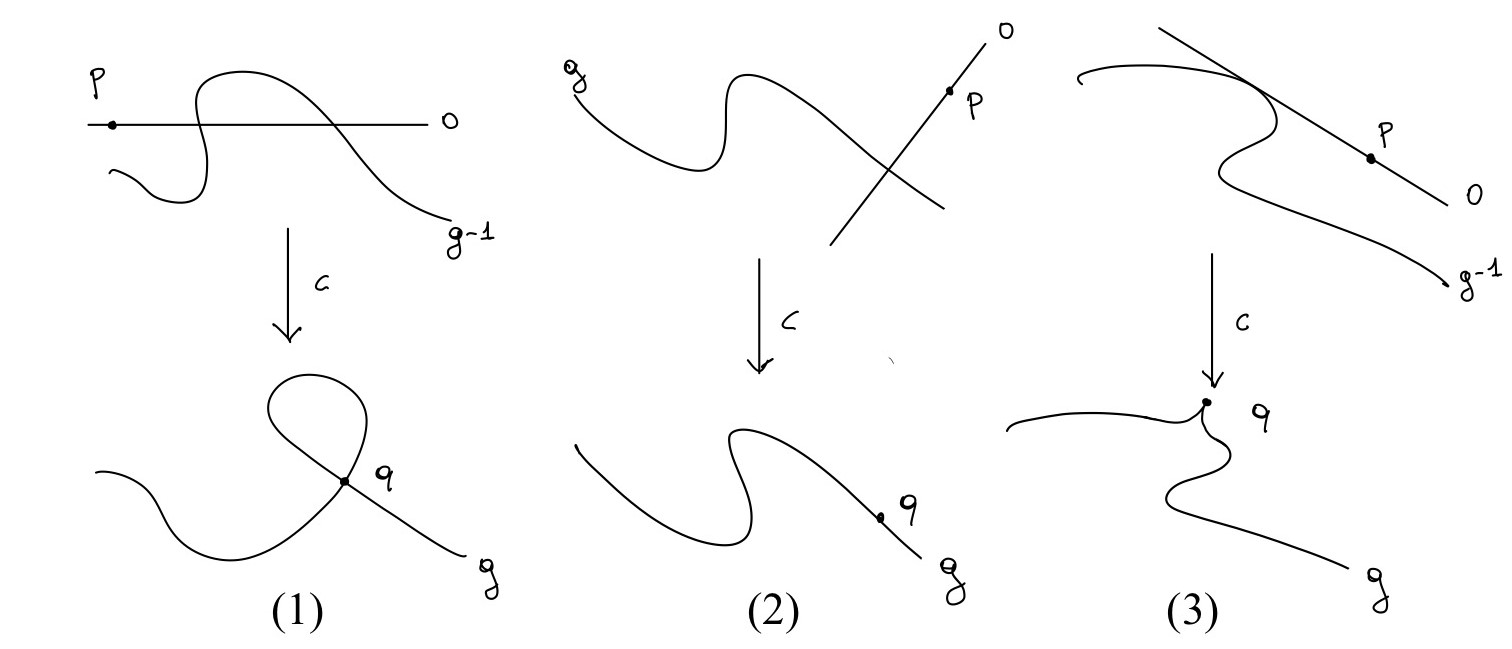}
 		\label{fig:contrac}
 	\end{figure}
 \end{remark} 
 
 \begin{lemma}\label{lem:knutsen}
 	Let $(C,p_1,\dots,p_n,p_{n+1})$ be a $n+1$-pointed $A_r$-stable curve of genus $g$ over an algebraically closed field $k/\kappa$ such that $2g-2+n>0$. Then the line bundle $\omega_C(p_1,\dots,p_n)$ satisfies the same statement as in \Cref{prop:boundedness}.
 \end{lemma}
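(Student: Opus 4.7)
The plan is to reduce to Proposition~\ref{prop:boundedness} via the contraction morphism $c\colon C \arr C'$ from Remark~\ref{rem:contrac}. Set $\Sigma = p_1 + \dots + p_n$ and $L = \omega_C(\Sigma)$. If $(C,\Sigma)$ is already $A_r$-stable, Proposition~\ref{prop:boundedness} applies directly. Otherwise $p_{n+1}$ lies on a rational bridge, tail or almost-bridge $E$ as classified in Remark~\ref{rem:contrac}, and $c$ collapses all such unstable components to produce an $A_r$-stable $(C',\Sigma')$ of the same arithmetic genus $g$ with $\Sigma' = c_*\Sigma$; write $L' = \omega_{C'}(\Sigma')$. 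Since $2g-2+n>0$, Proposition~\ref{prop:boundedness} provides the required statements for $(C',L')$.

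The key step is the identification $L \simeq c^*L'$. Away from the exceptional locus of $c$ this is tautological. On each unstable component $E$, a degree count using $\omega_C|_E = \omega_E(D)$ with $D$ supported at the attaching singularities, together with the classification of Remark~\ref{rem:contrac}, shows that $L|_E \simeq \cO_E$, which matches the trivial restriction of $c^*L'$. The local gluing at the attaching point is then a direct computation based on Section~\ref{sec:1-1}: on the adjacent branch $F$ the pole of $\omega_C$ contributed by the attaching node (resp.\ tacnode) matches precisely the twist in $\omega_{C'}(\Sigma')$ coming either from the images of the marked points on $E$ (rational tail case), from the node created on $C'$ (rational bridge case), or from the $A_2$-singularity appearing on $C'$ (almost-bridge case, where collapsing $E$ creates a cusp as recalled in Remark~\ref{rem:contrac}).

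Granted $L \simeq c^*L'$, part (i) follows from Leray. Indeed $c_*\cO_C = \cO_{C'}$ since $c$ is proper with geometrically connected fibers, and $R^1c_*\cO_C = 0$ since every positive-dimensional fiber of $c$ is a tree of $\PP^1$'s, hence has vanishing $\H^1(\cO)$. By the projection formula $Rc_*L^{\otimes m} = L'^{\otimes m}$, so by the Leray spectral sequence $\H^i(C,L^{\otimes m}) \simeq \H^i(C',L'^{\otimes m})$ for all $i$ and $m\geq 1$, and Proposition~\ref{prop:boundedness}(i) applied to $(C',L')$ yields $\H^1(C,L^{\otimes m}) = 0$ for $m\geq 2$. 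For parts (ii) and (iii), the same push-pull argument shows that the natural map $\H^0(C',L'^{\otimes m})\arr \H^0(C,L^{\otimes m})$ is an isomorphism, so the morphism $C\arr \PP^N$ defined by $|L^{\otimes m}|$ factors as $C\xrightarrow{c} C' \hookrightarrow \PP^N$ through the embedding (resp.\ normally generated embedding) of $(C',L')$ furnished by Proposition~\ref{prop:boundedness}, from which the required very ampleness and normal generation statements for $L$ are read off via this factorization.

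The main obstacle is the local verification that $L \simeq c^*L'$ at the attaching singularities, in particular at the tacnode in the almost-bridge case (3) of Remark~\ref{rem:contrac}, where one must carefully track the conductor computation from Remark~\ref{rem:genus-count} to match the twists on both sides. All remaining ingredients are standard applications of the projection formula, the Leray spectral sequence, and the stable case already established in Proposition~\ref{prop:boundedness}.
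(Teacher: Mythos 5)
Your overall strategy --- contract the component that becomes unstable after forgetting $p_{n+1}$, identify $\omega_C(\Sigma)$ with $c^*\omega_{C'}(\Sigma')$, and transfer the statements from the stable model via the projection formula and Leray --- is exactly the route the paper takes; the paper simply delegates the line-bundle comparison and the cohomological bookkeeping to Lemma 1.6 of Knudsen, while you spell them out. Your degree checks on rational tails, bridges and almost-bridges (via the conductor computation of \Cref{rem:genus-count}), the identities $c_*\cO_C=\cO_{C'}$ and $R^1c_*\cO_C=0$, and hence part (i) and the surjectivity of the multiplication maps in part (iii), are correct, and these are precisely the facts consumed later in \Cref{prop:contrac}.

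The one step that does not work as written is the deduction of part (ii). If $c$ genuinely contracts a component $E$ (cases (1)--(3) of \Cref{rem:contrac}), then $L^{\otimes m}=c^*L'^{\otimes m}$ has degree $0$ on $E$, so the morphism $C\arr\PP^N$ defined by $|L^{\otimes m}|$ factors through the non-injective map $c$ and cannot be a closed immersion: very ampleness of $L^{\otimes m}$ on $C$ is not something that can be ``read off via this factorization''---it is simply false in that case. What the factorization does give is that $|L^{\otimes m}|$ embeds the image $C'$, i.e.\ the very ampleness statement holds for the stable model rather than for $C$ itself. This imprecision is already present in the statement of the lemma and is glossed over in the paper's own one-line proof; since only (i) and (iii) --- base change and the identification of the image with the relative $\proj$ of the section ring --- are used in \Cref{prop:contrac}, nothing downstream is affected, but your write-up should assert (ii) for $(C',L')$ and not claim it for $L$ on $C$.
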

 
 \begin{proof}
 	We do not give all the detail as again we rely on the results in \cite{Knu}. The idea is to use Lemma 1.6 of \cite{Knu}. If $(C,p_1,\dots,p_n)$ is a $n$-pointed $A_r$-stable curve, then the same proof of \Cref{prop:boundedness} works. Otherwise, we can contract the irreducible component of genus $0$ which is not stable without the point $p_{n+1}$. Therefore Lemma 1.6 of \cite{Knu} generalizes in our situation and we can conclude.  
 \end{proof}

 \begin{proposition}\label{prop:contrac}
 	We have a morphism of algebraic stacks 
 	$$ \gamma:\Mtilde_{g,n+1}^r \longrightarrow \Ctilde_{g,n}^r$$
 	where $\Ctilde_{g,n}^r$ is the universal curve of $\Mtilde_{g,n}^r$. Furthermore, it is an open immersion and its image is the open locus in $\Ctilde_{g,n}^r$ parametrizing $n$-pointed $A_r$-stable genus $g$ curves $(C,p_1,\dots,p_n)$ and a (non-necessarily smooth) section $q$ such that $q$ is an $A_h$-singularity for $h\leq 2$.
 \end{proposition}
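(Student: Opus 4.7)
The plan is to mimic Knudsen's construction of the contraction for $\overline{\cM}_{g,n+1}$ from \cite{Knu}, adapted to the $A_r$-stable setting and extended to accommodate the new almost-bridge phenomenon of \Cref{rem:contrac}. First I would construct $\gamma$ as a relative contraction. Given an $(n+1)$-pointed $A_r$-stable family $(\pi \colon C \to S, p_1, \dots, p_{n+1})$, set $L := \omega_{\pi}(p_1 + \dots + p_n)$; by \Cref{lem:knutsen}, for $m$ sufficiently large the sheaf $L^{\otimes m}$ is relatively globally generated and $\pi_{*} L^{\otimes m}$ is locally free with formation commuting with base change, so the relative $\proj$ of $\bigoplus_{m \geq 0} \pi_{*} L^{\otimes m}$ yields a flat $S$-scheme $C'$ together with a canonical morphism $c \colon C \to C'$. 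On each geometric fiber, $L$ has degree zero on exactly the components destabilized by forgetting $p_{n+1}$, namely a rational tail meeting the rest in one node and carrying $p_{n+1}$ together with one other marked point, a rational bridge carrying only $p_{n+1}$, or a rational almost-bridge carrying only $p_{n+1}$; on every other component $L$ is ample. Thus $c$ contracts precisely these components and is an isomorphism elsewhere. The images $\overline{p}_{i} := c \circ p_i$ for $i \leq n$ together with the section $q := c \circ p_{n+1}$ realize the image in $\Ctilde_{g,n}^r$; by \Cref{rem:contrac}, the result is an $n$-pointed $A_r$-stable curve equipped with a section landing in a point of singularity type $A_h$ with $h \leq 2$.

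To prove $\gamma$ is an open immersion onto the claimed locus $\cU \subseteq \Ctilde_{g,n}^r$, I would exhibit a two-sided inverse. Given $(C' \to S, \overline{p}_1, \dots, \overline{p}_n, q) \in \cU(S)$ with $q$ landing fiberwise in an $A_h$-singularity for $h \leq 2$, the preimage is reconstructed étale-locally around the image of $q$ by a blow-up-and-glue recipe depending on the fiberwise singularity type at $q$: a smooth $q$ is left untouched and $p_{n+1} := q$; a nodal $q$ is separated into two branches by blowing up the node and a rational bridge carrying a new marked point is glued in along the two resulting sections via the pushout constructions reviewed in Appendix B; a cuspidal $q$ is analogously replaced by a rational almost-bridge using the local models for $A_2$-singularities described in \Cref{sec:1-1}. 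These local constructions glue to a flat $(n+1)$-pointed $A_r$-stable family over $S$ and, by \Cref{rem:contrac}, invert $\gamma$ on geometric fibers. Functoriality on both sides promotes this to an inverse morphism of stacks; since both $\Mtilde_{g,n+1}^r$ and $\cU$ are smooth of dimension $3g - 2 + n$ by \Cref{theo:descr-quot}, the resulting equivalence is automatically an isomorphism.

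The main obstacle is the almost-bridge case, which is absent from \cite{Knu}. One must check that the $\proj$ construction applied to a family whose generic fiber has a rational bridge and whose special fiber has a rational almost-bridge produces a family of nodes degenerating flatly to a cusp, compatibly with arbitrary base change; equivalently, the inverse construction must unfold a cusp in a family into an almost-bridge. This ultimately reduces to an explicit computation in the versal deformation of the tacnode, using the classification of involutions and local normal forms established in \Cref{sec:1-1}, together with the functoriality of blow-ups and pushouts in families from Appendix B.
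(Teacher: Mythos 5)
Your construction of $\gamma$ itself is the paper's: take $L=\omega_{C/S}(p_1+\dots+p_n)$, use \Cref{lem:knutsen} to get base change and normal generation for $L^{\otimes 6}$, and define the image family as the relative $\proj$ of the resulting graded algebra; fiberwise this is the identity or one of the three contractions of \Cref{rem:contrac}. The divergence, and the gap, is in how you show $\gamma$ is an open immersion. Your inverse is defined by a case division ``depending on the fiberwise singularity type at $q$'', but that type is not locally constant on the base: a single family $(C'\to S,\dots,q)$ in the claimed image can have $q$ smooth over a dense open, nodal over a locally closed stratum, and cuspidal over a closed one. A recipe that blows up ``the node'' or ``the cusp'' therefore does not define anything over such an $S$, and gluing fiberwise constructions does not produce a flat family; so no morphism of stacks from the image locus to $\Mtilde_{g,n+1}^r$ has actually been constructed. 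Your third paragraph correctly flags this degeneration as the crux but then defers it to ``an explicit computation in the versal deformation of the tacnode'' without saying what that computation is; note also that the relevant versal deformation is that of the cusp (the singularity downstairs at $q$), not of the tacnode upstairs.

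The missing idea is the choice of blow-up center in a family. The paper avoids a global inverse altogether: it first shows $\gamma$ is a monomorphism (uniqueness of the contraction, Proposition 2.1 of \cite{Knu}), so that it suffices to prove $\gamma$ is \'etale. \'Etaleness is checked at an almost-bridge point by passing to a smooth (versal) neighborhood $(\spec A, m_q)$ of the image $(\widetilde C, q)$ in $\Ctilde_g^r$, where the curve near $q$ has the form $A[[x,y]]/(y^2-x^3-r_1x-r_2)$ with $r_1,r_2$ part of a system of parameters, and then blowing up the total space along the section restricted to the discriminant locus $\VV(4r_1^3+27r_2^2)$ --- a codimension-two center, not the section itself and not a fiberwise object. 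One must then verify that this blowup is flat over $A$ (a $\tor$-vanishing computation), that it commutes with the relevant base changes, and that after reducing to a DVR transverse to the discriminant its special fiber is the almost-bridge one started with; this produces the \'etale-local lifting. Finally, your closing step --- that an equivalence on geometric points between smooth stacks of the same dimension ``is automatically an isomorphism'' --- is not a valid deduction (and would be unnecessary if you really had a two-sided inverse of stacks); some flatness or \'etaleness input is needed, which is exactly what the versal computation supplies.
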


\begin{proof}
	We again follow the proof of Proposition 2.1 in \cite{Knu}. Let $$(C/S,p_1,\dots,p_n,p_{n+1})$$ be an object in $\Mtilde_{g,n+1}^r$ and consider the morphism induced by $\pi_*\omega_{C/S}(p_1+\dots+p_n)^{\otimes 6}$. We know that the line bundle satisfy base change because of \Cref{lem:knutsen}. Furthermore, because it is normally generated we have that the image of the morphism can be described as the relative $\proj$ of the graded algebra $$\cA:=\bigoplus_{n \in \NN} \pi_*\omega_{C/S}(p_1+\dots+p_n)^{\otimes 6i}$$ which is flat over $S$ because of the base change property. Therefore we have that $\widetilde{C}:=\proj_S(\cA) \rightarrow S$ is a flat proper finitely presented morphism and, if we define $q_i$ as the image of $p_i$ through the morphism for $i=1,\dots,n+1$, we have that $(\widetilde{C},q_1,\dots,q_n)$ is a $n$-pointed $A_r$-stable genus $g$ curve. Over an algebraically closed field, a computation shows that the morphism is either the identity if the $n$-pointed curve $(C,p_1,\dots,p_n)$ is stable or one of the three contractions described in \Cref{rem:contrac}. Therefore the uniquess follows from Proposition 2.1 of \cite{Knu}. This implies we have that $\gamma$ is a monomorphism and in particular representable. It is enough to prove that $\gamma$ is \'etale to have that it is an open immersion.
		
    Notice that if we consider a geometric point $(C,p_1,\dots,p_{n+1})$ where there are no almost-bridges, then the morphism $\gamma$ is an isomorphism in a neighborhood of that point. This follows from \Cref{lem:sep-sing} and the result for stable curves. The only non trivial case is when we have an almost-bridge, which does not depend on the existence of the other sections $p_1,\dots,p_n$. For the sake of clarity, we do the case $n=0$. The general case is analogous.
    
    Let us consider the case when the geometric object $(C,p)$ has an almost-bridge, that is going to be contracted to  a cusp through the morphism $\gamma$. Let us denote by $(\widetilde{C},q)$ the image of $(C,p)$ through $\gamma$. We know that $q$ is the image of the almost-bridge and in particular a cusp. As usual, deformation theory tells us that $\Ctilde_g^r$ is smooth, therefore we can consider a smooth neighborhood $(\spec A,m_q)$ of $(\widetilde{C},q)$ in $\Ctilde_g^r$ with $A$ a smooth algebra. This implies that we have an $A_r$-stable curve
    $$
    \begin{tikzcd}
    	\widetilde{C}_A \arrow[r] & \spec A \arrow[l, "q_A"', bend right]
    \end{tikzcd}
	$$
	with $q_A$ a global section such that $q_A \otimes_A A/m_q = q$. By deformation theory of cusps (see Example 6.2.12 of \cite{TalVis}), we know that the completion of the local ring of $q$ in $\widetilde{C}_A$ is of the form
	$$ A[[x,y]]/(y^2-x^3-r_1x-r_2)$$
	where $r_1,r_2$ are part of a system of parameters for $A$ because $(\spec A, m_q)$ is versal. Consider know the element $f:=4r_1^3+27r_2^2 \in A$, which parametrizes the locus when the section $q_A$ ends in a singular point, which is either a node or a cusp. Let $q_1:\spec A/f \into \widetilde{C}_A$ be the codimension two closed immersion and denote by $C'_A:={\rm Bl}_{q_1}\widetilde{C}$. We have $C'_A$ is clearly proper and finitely presented over $A$, but it is also flat because $\tor_2^A(A/f^n,N)=0$ for any $A$-module $N$, which implies $\tor_1^A(I_{q_1}^n,N)=0$ for any $A$-module $N$ (we denote by $I_{q_1}$ the defining ideal of $q_1$ in $C_A$). We want to prove that the geometric fiber of $C_A'$ over $m_q$ is an almost-bridge.
 	
	 Notice that the formation of the blowup does not commute with arbitrary base change. However, consider the system of parameters $(r_1,r_2,\dots,r_a)$ with $a:=\dim A=3g-2$. Then if we denote by $J:=(r_2,\dots,r_a)$ we have that 
	$$ \tor_1^A(A/f^n,A/J)=0$$
	for every $n$. This implies that the blowup commutes with the base change for $A/J$ and therefore we can reduce to the case when $A$ is a DVR, $r:=r_1$ is the uniformizer and the completion of the local ring of $q$ in $\widetilde{C}_A$ is of the form 
	$$ A[[x,y]]/(y^2-x^3-rx).$$ A computation proves that over the special fiber we have
	\begin{itemize}
		\item $C_A'\otimes_A A/m_q$ is an $A_r$-prestable curve and if we denote by $p'_A$ the proper transform of $q_A$, we have that $(C_A' \otimes_A A/m_q, p'_A \otimes_A A/m_q)$ is an $1$-pointed $A_r$-stable curve of genus $g$,  
		\item $C_A' \rightarrow \widetilde{C}_A \otimes_A A/m_q$ is the contraction of an almost-bridge.
		\item  we have an isomorphism $(C_A'\otimes_A A/m_q,p_A'\otimes_A A/m_q) \rightarrow (C,p)$ of schemes over $(\widetilde{C},q)$.
	\end{itemize}  

Because the nodal case is already being studied in \cite{Knu} and Proposition 2.1 of \cite{Knu} is still true in our setting, we have that the object $(C_A',p_A')$ gives rise to a lifting
$$
\begin{tikzcd}
	& \spec A \arrow[d, "{(\widetilde{C}_A,q_A)}"] \arrow[ld, "{(C_A',p_A')}"', dashed] \\
	{\Mtilde_{g,1}^r} \arrow[r, "\gamma"'] & \Ctilde_{g}^r.                                                            
\end{tikzcd}
$$  
which implies that $\gamma$ is \'etale. 
\end{proof}

\section{Moduli stack of hyperelliptic curves}\label{sec:1-3}

Another important actor is the moduli stack of hyperelliptic $A_r$-stable curves. We start by proving a technical lemma.

\begin{lemma}\label{lem:quotient}
	Let $n$ be a positive integer coprime with the characteristic of $\kappa$.
	Let $X\arr S$ be a finitely presented, proper, flat morphism of schemes over $\kappa$ and let $\mmu_{n,S}\arr S$ the group of $n$-th roots acting on $X$ over $S$. Then there exists a geometric categorical quotient $X/\mmu_n\arr S$ which it is still  flat, proper and finitely presented.  
\end{lemma}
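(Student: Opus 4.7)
The plan is to construct $X/\mmu_n$ locally by taking invariants on $\mmu_n$-stable affine opens and then to glue, exploiting the linear reductivity of $\mmu_n$ when $n$ is coprime to the residue characteristics of $S$. Since the statement and all the properties to verify (flatness, properness, finite presentation, the universal property) are Zariski-local on $S$, I would first reduce to $S = \spec R$ affine and note that once the construction is shown to commute with flat base change, the quotient over the original $S$ is obtained by gluing in the Zariski topology.

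The core local step proceeds as follows. On any $\mmu_n$-stable affine open $U = \spec A \subseteq X$, the $\mmu_n$-coaction gives a decomposition $A = \bigoplus_{i=0}^{n-1} A_i$ into weight spaces for the characters of $\mmu_n$; invertibility of $n$ is essential here, as it produces the idempotents $e_i = \tfrac{1}{n}\sum_{\zeta \in \mmu_n} \zeta^{-i}\zeta^*$ realizing the splitting. I would take $\spec A_0 = \spec A^{\mmu_n}$ as the local model of the quotient. The splitting exhibits $A_0$ as an $R$-module direct summand of $A$, so $A_0$ is flat over $R$; every $a \in A$ is a root of the monic polynomial $\prod_{\zeta \in \mmu_n}(T - \zeta \cdot a) \in A_0[T]$, so $A$ is integral over $A_0$, and being finitely generated as an $R$-algebra (because $X/S$ is of finite presentation) it is in fact a finite $A_0$-module. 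Hence $\spec A \to \spec A_0$ is a finite surjective geometric categorical quotient whose formation commutes with flat base change on $\spec A_0$.

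To globalize I need $X$ to be covered by $\mmu_n$-stable affine opens. For any $x \in X$, the orbit $\mmu_n \cdot x$ is a finite set of points lying in the single fiber $X_{f(x)}$; given an affine open $U \subseteq X$ containing the orbit, the intersection $V := \bigcap_{\zeta \in \mmu_n}\zeta(U)$ is $\mmu_n$-stable, still contains $x$, and is affine since $X \to S$ is separated. Gluing the local quotients $\spec A_0$ along such invariant opens produces $Y := X/\mmu_n$ together with a finite surjective $\mmu_n$-invariant morphism $\pi\colon X \to Y$ over $S$. Flatness and finite presentation of $Y/S$ are local and are already in hand; properness of $Y/S$ follows from surjectivity of $\pi$ and properness of $X/S$ once $Y/S$ is seen to be separated, which is again local on $Y$. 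The main obstacle is the geometric step of producing, for each orbit, an affine open containing it: in the curve setting for which the lemma is needed this is automatic, since a proper curve over a field is projective and the complement of any finite set of closed points is affine, but in full generality this requires either relative quasi-projectivity of $X/S$ or the SGA 3 / Chevalley-type fact that orbits of finite group schemes on sufficiently nice schemes lie in affine opens. Once this geometric input is supplied, everything else — the grading, flatness of invariants, finiteness, gluing, and compatibility with flat base change on $S$ — is formal.
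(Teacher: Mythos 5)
Your route is genuinely different from the paper's: you build the quotient by hand from invariant rings on $\mmu_n$-stable affine opens, whereas the paper outsources existence, separatedness, properness and local finite presentation to Rydh's general results on quotients by finite locally free group schemes, uses diagonalizability of $\mmu_n$ only to get flatness and compatibility with base change, and then runs a noetherian approximation argument to reduce the finite-presentation and properness claims to the locally noetherian case. The difficulty is that the step you yourself flag as the ``main obstacle'' is a genuine gap in the generality in which the lemma is stated: $X\arr S$ is only assumed proper, flat and finitely presented, with no quasi-projectivity hypothesis, so there is no reason for an orbit to be contained in an affine open. This is not a technicality one can wave away --- for proper non-quasi-projective schemes the condition can fail (Hironaka-type examples of proper threefolds with a free involution whose orbits lie in no affine open, so that the quotient is not even a scheme), and the lemma is invoked in the paper before any projectivity of the curves in question has been established. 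Your fallback observation that a proper curve over a field is projective does not suffice either, since the quotient must be constructed for the family $X\arr S$, not fiberwise. The paper avoids the issue entirely because Rydh's existence theorem does not go through invariant affine covers.

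Two smaller points. First, your formulas treat $\mmu_n$ as a constant group ($e_i=\tfrac1n\sum_{\zeta}\zeta^{-i}\zeta^*$, the integrality polynomial $\prod_{\zeta}(T-\zeta\cdot a)$); over a general base $\mmu_n$ is only \'etale-locally constant, so these should be replaced by the $\ZZ/n$-grading $A=\bigoplus_i A_i$ coming directly from the coaction of the diagonalizable group scheme (which is where the paper's appeal to diagonalizability enters), or justified by faithfully flat descent after adjoining roots of unity. Second, your claim that finite presentation of $A_0$ over $R$ is ``already in hand'' needs the Artin--Tate lemma ($A$ finite over the subalgebra $A_0$ and finitely generated over $R$ implies $A_0$ finitely generated over $R$), which requires $R$ noetherian; for a general base you would need the same noetherian approximation step the paper performs. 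Both of these are repairable; the affine-cover issue is the one that actually blocks the argument as written.
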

\begin{proof}
First of all, we know the existence and also the separatedness (see Corollary 5.4 of\cite{Rydh}) because $\mmu_{n,S}$ is a finite locally free group scheme over $S$. Since $\mmu_{n}$ is diagonalizable, we know that the formation of the quotient commutes with base change and that flatness is preserved. We also know that if $S$ is locally noetherian, we get that $X/\mmu_{n,S} \arr S$ is locally of finite presentation and proper (see Proposition 4.7 of \cite{Rydh}). Because proper and locally of finite presentation are two local conditions on the target, we can reduce to the case $S=\spec R$ is affine. Using now that the morphism $X\arr S$ is of finite presentation we get that there exists $R_0 \subset R$ subalgebra of $R$ which is of finite type over $\kappa$ (therefore noetherian) such that there exist a morphism $X_0\arr S_0:=\spec R_0$ proper and flat, and an action of $\mmu_{S_0,n}$ over $X_0/S_0$ such that the pullback to $S$ is our initial data. Because formation of the quotient commutes with base change, we get that we can assume $S$ noetherian and we are done.
\end{proof}
\begin{definition}
	Let $C$ be an $A_r$-stable curve of genus $g$ over an algebraically closed field. We say that $C$ is hyperelliptic if there exists an involution $\sigma$ of $C$ such that the fixed locus of $\sigma$ is finite and the geometric categorical quotient, which is denoted by $Z$, is a reduced connected nodal curve of genus $0$. We  call the pair $(C,\sigma)$ a \emph{hyperelliptic $A_r$-stable curve} and such $\sigma$  is called a \emph{hyperelliptic involution}.
\end{definition}

Finally we can define $\Htilde_g^r$ as the following fibered category: its objects are the data of a pair $(C/S,\sigma)$ where $C/S$ is an $A_r$-stable curve over $S$ and $\sigma$ is an involution of $C$ over $S$ such that $(C_s,\sigma_s)$ is a $A_r$-stable hyperelliptic curve of genus $g$ for every geometric point $s \in S$. These are called \emph{hyperelliptic $A_r$-stable curves over $S$}. A morphism is a morphism of $A_r$-stable curves that commutes with the involutions. We clearly have a morphism of fibered categories 
$$\eta:\Htilde_g^r  \larr \Mtilde_g^r$$
 over $\kappa$ defined by forgetting the involution. This morphism is known to be a closed embedding for $r\leq 1$, where both source and target are smooth algebraic stacks of finite type over $\kappa$.
 In the case of $r = 0$ the theory is well-known, so from now on, we can suppose $r\geq 1$. We assume ${\rm char} \, \kappa > r$ and in particular $2$ is invertible.
 \begin{remark}
 	 We have a morphism $i:\Htilde_g^r \arr \mathit{I}_{\Mtilde_g^r}$ over $\Mtilde_g^r$ induced by $\eta$ where $\mathit{I}_{\Mtilde_g^r}$ is the inertia stack of $\Mtilde_g^r$. This factors through $\mathit{I}_{\Mtilde_g^r}[2]$, the two torsion elements of the inertia, which is closed inside the inertia stack. It is fully faithful by definition. This implies $\Htilde_g^r$ is a prestack in groupoid. It is easy to see that it is in fact a stack in groupoid.
 \end{remark}

We want to describe $\Htilde_g^r$ as a connected component of the $2$-torsion of the inertia. To do so, we need to following lemma.  

\begin{lemma}\label{lem:conn-comp}
	Let $C\arr S$ be a $A_r$-stable curve over $S$, and $\sigma$ be an automorphism of the curve over $S$ such that $\sigma^2=\id$. Then there exists an open and closed subscheme $S'\subset S$ such that the following holds: a morphism $f:T\arr S$  factors through $S'$ if and only if $(C\times_S T, \sigma \times_S T)$ is a hyperelliptic $A_r$-stable curve over $T$.
\end{lemma}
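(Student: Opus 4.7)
The plan is to apply Lemma~\ref{lem:quotient} with $n=2$ to the $\mu_2$-action on $C/S$ determined by $\sigma$ (note $\cha\kappa\neq 2$), obtaining a flat, proper, finitely presented quotient $\pi\colon C \to Z := C/\langle\sigma\rangle$ over $S$ whose formation commutes with base change. Flatness of $Z \to S$ implies that $s \mapsto p_a(Z_s) = 1 - \chi(\cO_{Z_s})$ is locally constant on $S$, so $S_0 := \{s\in S : p_a(Z_s) = 0\}$ is open and closed in $S$ and clearly contains the locus $S'$ we are looking for.

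Next I would characterize $S'$ as the subset of $S_0$ on which the fixed scheme $F_s := (C_s)^{\sigma_s}$ is finite. The forward implication is immediate from the definition of hyperellipticity. For the converse, assume $s \in S_0$ and $F_s$ finite: by the corollary to \Cref{prop:descr-inv}, $Z_s$ has only $A_k$-singularities, it is reduced (invariants of a reduced ring under an involution in characteristic $\neq 2$ are reduced), and it is connected as the image of the connected $C_s$. Expanding the identity $0 = p_a(Z_s) = 1 - n + \sum_i g(\widetilde{\Gamma}_i) + \sum_p \delta_p$ and combining it with the requirement that the dual graph of $Z_s$ be connected (hence contain at least $n-1$ two-branch singularities, each contributing $\delta \geq 1$) forces all $g_i = 0$, all singularities to be $A_1$-nodes, and the dual graph to be a tree, so $Z_s$ is indeed a reduced nodal connected curve of genus $0$.

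It then remains to show that $\{s\in S_0 : F_s \text{ finite}\}$ is open and closed in $S_0$. Openness follows from upper semicontinuity of fiber dimension for the proper closed subscheme $F = C^\sigma \subseteq C$. The main obstacle I expect is closedness: one must show that when $s_0 \in S_0 \setminus S'$ has a $\sigma$-fixed irreducible component $\Gamma_0 \subseteq C_{s_0}$, the same configuration persists in a Zariski neighborhood of $s_0$. The strategy is to exploit rigidity of $\sigma$-fixed components under equivariant deformation: by the characterization above such a $\Gamma_0$ must be a $\PP^1$ attached to the rest of $C_{s_0}$ by nodes whose local $\sigma$-action is of the form $(x,y) \mapsto (-x,y)$ on $k[[x,y]]/(xy)$ (case $(c_1)$ of \Cref{prop:descr-inv}). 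Mimicking the argument of \Cref{lem:local-node-involution}, one sees that an equivariant smoothing parameter $t$ would have to satisfy $\sigma(t) = -t$, but $\sigma$ is $S$-linear so $\sigma(t) = t$, forcing $t=0$; thus such nodes admit no nontrivial $\sigma$-equivariant deformation. Combined with the observation that the locus $\{d\sigma = +1\}$ on the smooth locus $C^{\rm sm}$ is open and closed there (since $d\sigma$ takes only the values $\pm 1$ at fixed points), this preserves the combinatorial type containing a fixed component under arbitrary equivariant deformation, giving the desired closedness.
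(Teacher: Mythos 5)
Your proof is correct and follows essentially the same route as the paper: form the quotient via \Cref{lem:quotient} so that the genus of the fibres of $Z\to S$ is locally constant, deduce openness of the finite-fixed-locus condition from semicontinuity of fibre dimension, and deduce closedness from the rigidity of nodes with a pointwise-fixed branch, which is exactly the $\xi_1=-\xi_2\Rightarrow t=0$ clause of \Cref{lem:local-node-involution}. The only slip is cosmetic: the pointwise-fixed-branch node is case $(c_3)$ of \Cref{prop:descr-inv} (cf.\ \Cref{rem:fix-locus}), not $(c_1)$, though the normal form $(x,y)\mapsto(-x,y)$ on $k[[x,y]]/(xy)$ that you write down is the right one.
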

  
\begin{proof}
Consider $$S':=\{s\in S| (C_s,\sigma_s) \in \Htilde_g^r(s)  \}\subset S$$
i.e. the subset where the geometric fibers over $S$ are hyperelliptic $A_r$-stable curves of genus $g$. If we prove $S'$ is open and closed, we are done.
Let $Z\rightarrow S$ be the geometric quotient of $C$ by the involution. Because of \Cref{lem:quotient}, $Z\rightarrow S$ is flat, proper and finitely presented and thus both the dimension and the genus of the geometric fibers over $S$ are locally costant function on $S$. It remain to prove that the finiteness of the fixed locus is an open and closed condition. Notice that in general the fixed locus is not flat over $S$, therefore this is not trivial.

The openess follows from the fact the fixed locus of the involution is proper over $S$, and therefore we can use the semicontinuity of the dimension of the fibers. 
 
Regarding the closedness, the fixed locus of a geometric fiber over a point $s\in S$ is positive dimensional only when we have a projective line on the fiber $C_s$ such that intersects the rest of the curve only in separating nodes (because of \Cref{prop:descr-inv}). Therefore the result is a direct conseguence of \Cref{lem:local-node-involution}.
\end{proof}
 	
\begin{proposition}\label{prop:open-closed-imm}
	The morphism $i:\Htilde_g^r \arr \mathit{I}_{\Mtilde_g^r}[2]$ is an open and closed immersion of algebraic stacks. In particular, $\Htilde_g^r$ is a closed substack of $\mathit{I}_{\Mtilde_g^r}$ and it is locally of finite type (over $\kappa$). 
\end{proposition}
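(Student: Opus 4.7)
The strategy is to reduce the entire statement to \Cref{lem:conn-comp}, which was set up precisely for this purpose. By the remark preceding the proposition, $i\colon \Htilde_g^r\to \mathit{I}_{\Mtilde_g^r}[2]$ is fully faithful as a morphism of stacks in groupoids, hence a monomorphism and in particular representable. To show that $i$ is an open and closed immersion it then suffices to verify that for every scheme $T$ with a morphism $T\to \mathit{I}_{\Mtilde_g^r}[2]$, the pullback $T\times_{\mathit{I}_{\Mtilde_g^r}[2]}\Htilde_g^r\to T$ is an open and closed immersion of schemes.

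The key observation is that a morphism $T\to \mathit{I}_{\Mtilde_g^r}[2]$ is exactly the datum of an $A_r$-stable curve $C\to T$ of genus $g$ together with an involution $\sigma$ over $T$ satisfying $\sigma^2=\id$. Unwinding the definition of $\Htilde_g^r$, the fiber product $T\times_{\mathit{I}_{\Mtilde_g^r}[2]}\Htilde_g^r$ represents the functor on $T$-schemes $f\colon T'\to T$ for which $(C\times_T T',\sigma\times_T T')$ is a hyperelliptic $A_r$-stable curve. This is precisely the locus described in \Cref{lem:conn-comp}, which exhibits it as an open and closed subscheme of $T$. This establishes that $i$ is an open and closed immersion of algebraic stacks.

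For the final assertion, I would appeal to the remark preceding the proposition, which already records that $\mathit{I}_{\Mtilde_g^r}[2]$ is closed inside $\mathit{I}_{\Mtilde_g^r}$ (being cut out by $\sigma^2=\id$, using that the inertia is separated over $\Mtilde_g^r$ since $\Mtilde_g^r$ is a quotient stack of a quasi-projective scheme by $\GL_N$ by \Cref{theo:descr-quot}). Combining the two steps, $\Htilde_g^r$ is a closed substack of $\mathit{I}_{\Mtilde_g^r}$. Local finite-typeness over $\kappa$ is inherited from $\mathit{I}_{\Mtilde_g^r}$, which is locally of finite type because $\Mtilde_g^r$ is of finite type over $\kappa$.

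The proposition itself is therefore formal once \Cref{lem:conn-comp} is in hand; the substantive content, namely that the hyperelliptic locus is simultaneously open and closed within pairs $(C,\sigma)$ with $\sigma^2=\id$, is exactly what was established there, with openness supplied by upper semicontinuity of fiber dimension of the fixed locus and closedness by the local analysis via \Cref{lem:local-node-involution} of how a rational bridge of fixed points can deform.
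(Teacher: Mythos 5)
Your proposal is correct and follows essentially the same route as the paper: both reduce the statement to \Cref{lem:conn-comp} by identifying the fiber product of $i$ against a scheme-valued point of $\mathit{I}_{\Mtilde_g^r}[2]$ with the open and closed subscheme produced there, after first using fully faithfulness of $i$ to see that this fiber product is (equivalent to) a functor in sets. The only cosmetic difference is that the paper separately spells out representability of the diagonal of $\Htilde_g^r$ via the cartesian-square criterion for fully faithful morphisms, which your appeal to ``fully faithful, hence a monomorphism, hence representable'' subsumes.
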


\begin{proof}
We first prove that $\Htilde_g^r$ is an algebraic stack, and an open and closed substack of $\mathit{I}_{\Mtilde_g^r}[2]$.
First of all, we need to prove that the diagonal of $\Htilde_g^r$ is representable by algebraic spaces. It follows from the following fact: given a morphism of fibered categories $X\arr Y$, we can consider the $2$-commutative diagram
$$
\begin{tikzcd}
X \arrow[d, "\Delta_X"] \arrow[r, "f"] & Y \arrow[d, "\Delta_Y"] \\
X\times X \arrow[r, "{(f,f)}"]       & Y\times Y;            
\end{tikzcd}
$$
 if $f$ is fully faithful, then the diagram is also cartesian.
Secondly, we need to prove that the morphism $i$ is representable by algebraic spaces and it is an open and closed immersion. Suppose we have a morphism $ V\arr \mathit{I}_{\Mtilde_g^r}$ from a $\kappa$-scheme $S$. Thus we have a cartesian diagram:
 $$
 \begin{tikzcd}
 F \arrow[d] \arrow[r, "i_S"] & S \arrow[d]              \\
 \Htilde_g^r \arrow[r, "i"]   & \mathit{I}_{\Mtilde_g^r}
 \end{tikzcd}
 $$
 where $F$ is equivalent to a category fibered in sets as $i$ is fully faithful. We can describe $F$ in the following way: for every $T$ a $\kappa$-scheme, we have $$F(T)=\{ f:T\arr S |(C_S\times_S T,\sigma_S \times_S T) \in \Htilde_g^r(T) \}$$
 where $(C_S,\sigma_S) \in \mathit{I}_{\Mtilde_g^r}(S)$ is the object associated to the morphism $S\arr   \mathit{I}_{\Mtilde_g^r}$. By \Cref{lem:conn-comp}, we deduce $F=S'$ and the morphism $i_S$ is an open and closed immersion.
\end{proof}

In the last part of this section we introduce another description of $\Htilde_g^r$, useful for understanding the link with the smooth case. We refer to \cite{AbOlVis} for the theory of twisted nodal curves, although we consider only twisted curves with $\mu_2$ as stabilizers as with no markings. 

The first description is a way of getting cyclic covers from $A_r$-stable hyperelliptic curves. 

\begin{definition}
	Let $\cC_g^r$ be the category fibered in groupois whose object are morphisms $f:C\rightarrow \cZ$ over some base scheme $S$ such that $C\rightarrow S$ is a family of $A_r$-stable genus $g$ curve, $\cZ\rightarrow S$ is a family of twisted curves of genus $0$ and $f$ is a finite flat morphism of degree $2$ which is generically \'etale. Morphisms are commutative diagrams of the form
	$$
	\begin{tikzcd}
	C \arrow[d, "\phi_C"] \arrow[r, "f"] & \cZ \arrow[d, "\phi_Z"] \arrow[r] & S \arrow[d] \\
	C' \arrow[r, "f'"]                   & \cZ' \arrow[r]                    & S'.         
	\end{tikzcd}
	$$
\end{definition}

\begin{remark}
	The definition implies that the morphism $f$ is \'etale over the stacky locus of $\cZ$ . First of all, we can reduce to the case of $S$ being a spectrum of an algebraically closed field. Let $\xi: B\mu_2\hookrightarrow \cZ$ be a stacky node of $\cZ$, thus $f^{-1}(\xi)\rightarrow B\mu_2$ is finite flat of degree two. It is clear that  $f^{-1}(\xi)\subset C$ implies $f^{-1}(\xi)\rightarrow B\mu_2$ \'etale. It is easy to prove that over a stacky node of $\cZ$ there can only be a separating node of $C$. 
\end{remark}

The theory of cyclic covers (see for instance \cite{ArVis}) guarantees the existence of the functor of fibered categories 
$$\gamma:\cC_g^r \longrightarrow \Htilde_g^r$$
defined in the following way on objects: if $f:C\rightarrow Z$ is finite flat of degree $2$, we can give a $\ZZ/(2)$-grading on $f_*\cO_C$, because it splits as the sum of $\cO_{\cZ}$ and some line bundle $\cL$ on $\cZ$ with a section $\cL^{\otimes 2}\hookrightarrow \cO_{\cZ}$. This grading defines an action of $\mu_2$ over $C$. Everything is relative to a base scheme $S$. The geometric quotient by this action is the coarse moduli space of $\cZ$, which is a genus $0$ curve. The fact that $f$ is generically \'etale, implies that the fixed locus is finite. 

We prove a general lemma which gives us the uniqueness of an involution once we fix the geometric quotient. 

\begin{lemma}\label{lem:unique-inv-quotient}
	Suppose $S$ is a $\kappa$-scheme.  Let $X/S$ be a finitely presented, proper, flat $S$-scheme and $\sigma_1,\sigma_2$ be two involutions of $X/S$. Consider a geometric quotient $\pi_i:X\rightarrow Y_i$ of the involution $\sigma_i$ for $i=1,2$. If there exists an isomorphism  $\psi:Y_1 \rightarrow Y_2$ of $S$-schemes which commutes with the quotient maps, then $\sigma_1=\sigma_2$. 
\end{lemma}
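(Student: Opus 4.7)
The plan is to translate the statement into the uniqueness of a non-trivial $\cO_Y$-algebra involution of $\cA := \pi_{*}\cO_X$ with prescribed invariant subalgebra, and then to settle this uniqueness by an explicit local computation in a quadratic extension. Via $\psi$, I identify $Y_1 = Y_2 =: Y$ and $\pi_1 = \pi_2 =: \pi$; both $\sigma_1, \sigma_2$ then become $\cO_Y$-algebra involutions of $\cA$ whose invariant subsheaves coincide with the image of $\cO_Y \hookrightarrow \cA$. It therefore suffices to show that such an involution is unique.

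Next, since $\sigma_i$ defines a $\mmu_2$-action on $X$ (using that $\mathrm{char}\,\kappa$ is coprime with $2$) with geometric quotient $Y$, and since $Y/S$ is flat, proper, and finitely presented by \Cref{lem:quotient}, the morphism $\pi$ is finite flat of degree $2$, so $\cA$ is locally free of rank $2$ over $\cO_Y$. After a local completion of the square (possible because $2$ is invertible), I can write $\cA \simeq \cO_Y[t]/(t^2 - a)$ for some $a \in \cO_Y$.

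Finally, I classify the $\cO_Y$-algebra involutions of this local model. Writing $\sigma(t) = \alpha + \beta t$ with $\alpha, \beta \in \cO_Y$, the condition $\sigma(t)^2 = a$ produces $\alpha^2 + \beta^2 a = a$ and $2\alpha\beta = 0$, while $\sigma^2 = \id$ produces $\beta^2 = 1$ and $\alpha(1+\beta) = 0$. Working in a local chart so that $\cO_Y$ is local, $\beta^2 = 1$ forces $\beta \in \{+1,-1\}$; the case $\beta = 1$ yields $\sigma = \id$, which is ruled out because its invariant subalgebra is all of $\cA$ rather than $\cO_Y$, while the case $\beta = -1$ combined with $2\alpha\beta = 0$ and the invertibility of $2$ forces $\alpha = 0$, so $\sigma(t) = -t$. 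Hence the unique involution with invariant subalgebra $\cO_Y$ is $\sigma(t) = -t$, so $\sigma_1 = \sigma_2$ on every local chart of $Y$ and therefore globally as morphisms $X \to X$. The main subtlety is verifying that $\pi$ is finite flat of degree $2$ so that the local quadratic model is available; in the intended application to hyperelliptic $A_r$-stable curves this is automatic, since $\pi$ will be a finite cover of degree two, after which the rest of the argument reduces to the routine local computation above.
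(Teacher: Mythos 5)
Your proof has a genuine gap at the second step: the claim that $\pi$ is finite flat of degree $2$ does not follow from the hypotheses and is in fact false in the situations where the paper uses this lemma. \Cref{lem:quotient} only gives that $Y \to S$ is flat, proper and finitely presented; it says nothing about flatness of $X \to Y$. The quotient of a scheme by an involution is in general not flat over the quotient, and the paper goes to some length to record exactly this phenomenon for $A_r$-curves: in \Cref{prop:description-quotient}, case (n3), the quotient morphism at a separating node is finite of generic degree $2$ but \emph{not} flat, with a fiber of length $3$ over the image node, and \Cref{ex:not-involution} exhibits the standard genus-$2$ example (two elliptic curves glued at a separating node) where this happens. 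So your closing remark that in the intended application $\pi$ "will be a finite cover of degree two" is precisely where the argument breaks: the lemma is invoked for hyperelliptic $A_r$-stable curves with separating nodes, where the local quadratic model $\cO_Y[t]/(t^2-a)$ is simply unavailable at those points. Your computation is fine on the open locus where $\pi$ is flat, but extending the equality $\sigma_1=\sigma_2$ from a dense open to all of $X$ would need separatedness and reducedness of $X$, neither of which is assumed.

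The paper avoids all of this with a short functor-of-points argument: for a $T$-point $t$ of $X$, base-change $\pi_i$ along $\pi_i(t):T\to Y_i$; the resulting $X_{\pi_i(t)}\to T$ admits at most the two sections $t$ and $\sigma_i(t)$, and since $\pi_2=\psi\circ\pi_1$ the two fiber products agree, forcing $\sigma_1(t)=\sigma_2(t)$. That argument uses no flatness of $\pi$ and works uniformly at the non-flat points. If you want to keep your algebraic strategy, you would have to replace "finite flat of degree $2$" by an analysis of $\cO_Y\hookrightarrow\pi_*\cO_X$ that also covers the non-flat local models classified in the corollary to \Cref{prop:descr-inv} (e.g.\ $A_k\hookrightarrow A_{2k-1}$ and the length-$3$ fiber at a separating node), which is considerably more work than the pointwise argument.
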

\begin{proof}
	Let $T$ be a $S$-scheme and $t:T\rightarrow X$ be a $T$-point of $X$. We want to prove that $\sigma_1(t)=\sigma_2(t)$.
	Fix $i\in \{1,2\}$ and consider the cartesian diagram
	$$
	\begin{tikzcd}
	X_{\pi_i(t)} \arrow[d] \arrow[r] & X \arrow[d, "\pi_i"] \\
	T \arrow[r, "\pi_i(t)"]          & Y_i,                
	\end{tikzcd}
	$$
	thus we have that there are at most two sections of the morphism $X_{\pi_i(t)}\rightarrow T$, namely $t$ and $\sigma_i(t)$. Using the fact that $\pi_2=\psi\circ \pi_1$, it follows easily that $\sigma_1(t)=\sigma_2(t)$.
\end{proof}	

\begin{proposition}\label{prop:cyclic-covers}
	The functor $\gamma:\cC_g^r \rightarrow \Htilde_g^r$ is an equivalence of fibered categories.
\end{proposition}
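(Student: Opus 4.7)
The plan is to construct a quasi-inverse $\delta\colon\Htilde_g^r\rightarrow\cC_g^r$ by sending $(C/S,\sigma)$ to the canonical map $f\colon C\rightarrow \cZ:=[C/\mmu_{2,S}]$, with $\mmu_2$ acting via $\sigma$. Tautologically $f$ is a $\mmu_2$-torsor, hence finite flat of degree $2$, and since the fixed locus of $\sigma$ is finite on each geometric fiber, $f$ is generically \'etale.

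The substantive point is to show that $\cZ$ is a twisted nodal curve of genus $0$ over $S$. Its coarse moduli $Z = C/\mmu_2$ is flat, proper and finitely presented over $S$ by \Cref{lem:quotient}, and its geometric fibers are reduced connected genus $0$ nodal curves by the hyperelliptic hypothesis. To determine the stacky structure, I would work \'etale-locally at a $\sigma_s$-fixed point of a geometric fiber, using \Cref{prop:descr-inv} and its corollary to enumerate the local models. The cases $(b_3)$ and $(b_2)$ with $k\geq 3$ cannot occur, since they would produce an $A_k$-singularity with $k\geq 2$ on the categorical quotient, violating the nodality of $Z_s$; case $(c_3)$ is excluded by finiteness of the fixed locus, see \Cref{rem:fix-locus}. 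Among the remaining local models, all but $(c_2)$ have $\cO_Z\hookrightarrow f_{*}\cO_C$ flat, and for these $\cZ$ agrees \'etale-locally with $Z$ (a smooth point or an ordinary node). In case $(c_2)$, upstairs at the fixed node we have $\spec k[[x,y]]/(xy)$ with $\mmu_2$ acting by $(x,y)\mapsto(-x,-y)$; since $-1=(-1)^{-1}$ in $\mmu_2$, the quotient stack is precisely a twisted node in the sense of \cite{AbOlVis}. To upgrade from geometric fibers to families, I would invoke \Cref{lem:local-node-involution}: at such a $(c_2)$-type node it produces an \'etale-local model $R[[x,y]]/(xy-t)$ with $\xi_1=\xi_2=-1$, which forces $t=0$ and a diagonal $\mmu_2$-action, exhibiting the standard twisted nodal family. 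I expect this \'etale-local-to-global step to be the main obstacle, since one must juggle the algebraic classification, the twisted-node local model, and flatness simultaneously.

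Finally I would check the two compositions. For $\gamma\circ\delta$, the functor $\gamma$ reads the involution back from the $\ZZ/2$-grading on $f_{*}\cO_C$ defined by the torsor $f$, returning $\sigma$. For $\delta\circ\gamma$, starting from $(f\colon C\rightarrow\cZ)$, the involution recovered by $\gamma$ is that of the torsor, and the universal property of the quotient stack yields a canonical isomorphism $\cZ\simeq[C/\mmu_2]$ over $S$; uniqueness of this identification is secured by \Cref{lem:unique-inv-quotient} applied to the pair of involutions on $C$.
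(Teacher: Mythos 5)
There is a genuine gap, and it sits exactly where you predicted the main difficulty would be. You define $\delta$ by sending $(C/S,\sigma)$ to $C\rightarrow[C/\mmu_{2,S}]$, the \emph{full} stack quotient. But $[C/\mmu_2]$ carries a $\mmu_2$-stabilizer at \emph{every} fixed point of $\sigma$, including the smooth ramification points (the $2g+2$ Weierstrass points in the generic case) and the tacnodes lying over non-stacky nodes. A twisted nodal curve in the sense of \cite{AbOlVis}, as used in the definition of $\cC_g^r$, is allowed stacky structure only at nodes, so $[C/\mmu_2]$ is not an object of the right kind; your claim that at the flat local models $\cZ$ ``agrees \'etale-locally with $Z$'' is false for the stack quotient (e.g.\ $[\spec k[[t]]/\mmu_2]$ with $t\mapsto -t$ is a stacky point, not $\spec k[[t^2]]$). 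Relatedly, your opening sentence is internally inconsistent: a $\mmu_2$-torsor in characteristic $\neq 2$ is \'etale everywhere, so $f$ could not be the ramified, finite flat degree-$2$, merely generically \'etale cover that $\cC_g^r$ requires. The inconsistency propagates: for a smooth hyperelliptic $C\rightarrow\PP^1$ your $\delta\circ\gamma$ returns a stacky $\PP^1$ with $2g+2$ orbifold points rather than $\PP^1$, so the composite is not the identity.

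The missing idea, which is the actual content of the paper's proof, is a \emph{rigidification} step: one must kill the $\mmu_2$-stabilizers precisely along the part of the fixed locus where the quotient map is flat, and keep them only at the non-flat fixed nodes (your case $(c_2)$), which become the stacky nodes of $\cZ$. The paper does this by writing $F_\sigma$ for the fixed locus, cutting out the non-flat locus $N_\sigma\subset F_\sigma$ as the vanishing of the first Fitting ideal of $F_\sigma$, using \Cref{lem:local-node-involution} to see that $F_\sigma\setminus N_\sigma$ is closed in $C$, and defining $\cZ$ as the rigidification of $[C/\sigma]$ along $F_\sigma\setminus N_\sigma$; flatness of $C\rightarrow\cZ$ at those points then follows because there the map coincides with the geometric quotient, which is flat by \Cref{prop:descr-inv}. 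Your case analysis excluding $(b_3)$, $(b_2)$ with $k\geq 3$, and $(c_3)$ is correct and matches the paper, and your use of \Cref{lem:unique-inv-quotient} for the composite is the right tool; but without the rigidification (done functorially in families, which is where \Cref{lem:local-node-involution} and the Fitting-ideal description earn their keep) the construction of $\delta$ does not land in $\cC_g^r$ and the equivalence fails.
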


\begin{proof}
	We explicitly construct an inverse. Let $(C,\sigma)$ be a hyperelliptic $A_r$-stable genus $g$ curve over $S$. Consider $F_{\sigma}\subset C$ the fixed locus of $\sigma$, which is finite over $S$. \Cref{prop:descr-inv} implies that the defining ideal of $F_{\sigma}$ in $C$ is locally generated by at most $2$ elements and the quotient morphism is not flat exactly in the locus where it is generated by $2$ element. By the theory of Fitting ideals, we can describe the non-flat locus as the vanishing locus of the $1$-st Fitting ideal of $F_{\sigma}$. Let $N_{\sigma}$ be such locus in $C$. \Cref{lem:local-node-involution} implies that $N_{\sigma}$ is also open inside $F_{\sigma}$, therefore $F_{\sigma}\setminus N_{\sigma}$ is closed inside $C$. If we look at the stacky structure on the image of $F_{\sigma}\setminus N_{\sigma}$ through the stacky quotient morphism $C\rightarrow [C/\sigma]$, we know that the stabilizers are isomorphic to $\mu_2$ as it is contained in the fixed locus. We denote by $[C/\sigma] \rightarrow \cZ$ the rigidification along $F_{\sigma}\setminus N_{\sigma}$ of $[C/\sigma]$. Because we are dealing with linearly reductive stabilizers, we know that the rigidification is functorial and $\cZ\rightarrow S$ is still flat (proper and finitely presented).
	
	We claim that the composition $$C\longrightarrow [C/\sigma] \longrightarrow \cZ$$ is an object of $\cC_g^r$. As the quotient morphism is \'etale, the only points where we need to prove flatness are the ones in $F_{\sigma}\setminus N_{\sigma}$, which are fixed points. Because the morphism $C\rightarrow \cZ$ locally at a point $p\in F_{\sigma}\setminus N_{\sigma}$ is the same as the geometric quotient, it follows from \Cref{prop:descr-inv} that it is flat. Thus $C\rightarrow \cZ$ is a finite flat morphism of degree $2$, generically \'etale as $F_{\sigma}$ is finite. 
	
	Hence, this construction defines a functor 
	$$\tau: \Htilde_g^r \longrightarrow \cC_g^r$$
	where the association on morphisms is defined in the natural way. A direct inspection using \Cref{lem:unique-inv-quotient} shows that $\tau$ and $\gamma$ are one the quasi-inverse of the other. 
	
\end{proof}

Using the theory developed in \cite{ArVis}, we know that $\cC_g^r$ is isomorphic to a stack of cyclic covers, namely the datum $C\rightarrow \cZ$ is equivalent to the triplet $(\cZ,\cL,i)$ where $\cZ$ is a twisted nodal curve, $\cL$ is a line bundle over $\cZ$ and $i:\cL^{\otimes 2}\rightarrow \cO_{\cZ}$ is a morphism of $\cO_{\cZ}$-modules. The vanishing locus of $i^{\vee}$ determines a subscheme of $\cZ$ which consists of the branching points of the cyclic cover. 

To recover $C$, we consider the sheaf of $\cO_{\cZ}$-algebras $\cA:=\cO_{\cZ}\oplus \cL$ where the algebra structure is defined by the section $i:\cL^{\otimes 2}\hookrightarrow \cO_{\cZ}$. Thus we define $C:=\spec_{\cZ}(\cA)$. Clearly not every triplet as above recovers a $A_r$-stable genus $g$ curve $C$. We need to understand what are the conditions for $\cL$ and $i$ such that $C$ is a $A_r$-stable genus $g$ curve. Because $C\rightarrow S$ is clearly flat, proper and of finite presentation,  it is a family of $A_r$-stable genus $g$ curve if and only if the geometric fiber $C_{s}$ is a $A_r$-stable genus $g$ curve for every point $s \in S$. Therefore we can reduce to understand the conditions for $\cL$ and $i$ over an algebraically closed field $k$. 

\begin{lemma}
	In the situation above, we have that $C$ is a proper one-dimensional Deligne-Mumford stack over $k$. Furthermore:
	\begin{itemize}
		\item[(i)] $C$ is a scheme if and only if the morphism $\cZ \rightarrow B\GG_m$ induced by $\cL$ is representable and $i$ does not vanish on the stacky nodes, 
		\item[(ii)] $C$ is reduced if and only if $i$ is injective,
		\item[(iii)] $C$ has arithmetic genus $g$ if and only if $\chi(\cL)=-g$.
	\end{itemize}
\end{lemma}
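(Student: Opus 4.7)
The plan is to reduce all three statements to an étale-local analysis on $\cZ$, using that $f\colon C = \spec_{\cZ}(\cA) \to \cZ$ is finite, affine and representable (because $\cA$ is a locally free $\cO_{\cZ}$-algebra of rank~$2$). This already yields that $C$ is a proper one-dimensional Deligne-Mumford stack: properness and dimension come from $\cZ$ via finiteness of $f$, and the DM property is inherited from that of $\cZ$ through the representable morphism $f$.

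For part $(i)$, away from the stacky nodes $\cZ$ is a scheme, so $C$ is too. At a stacky node the étale-local model of $\cZ$ is $[\spec k\ds{x,y}/(xy) / \mu_2]$ with $\mu_2$ acting by $(x,y) \mapsto (-x,-y)$; the restriction of $\cL$ corresponds to a character of $\mu_2$ (trivial or sign), and $\cZ \to B\gm$ is representable at this point iff $\cL$ is the sign character. In the sign case, writing $\cL$ locally as a free module generated by $u$ of weight~$1$, the algebra $\cA$ is $\cO_{\cZ}[u]/(u^2-f)$ with $f$ a $\mu_2$-invariant element representing $i$. The $\mu_2$-fixed locus in $\spec_{\cZ}(\cA)$ at this node is cut out by $x=y=u=0$, and it is empty iff $f(0,0)\neq 0$, i.e., iff $i$ does not vanish at the node. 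In the trivial case, $\cZ \to B\gm$ factors through $B\mu_2 \to B\gm$ and is not representable; then $C$ retains a nontrivial $\mu_2$-stabilizer and is not a scheme.

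For part $(ii)$ the same local picture applies: $C$ is locally $R[u]/(u^2-f)$ for $R$ the local ring of $\cZ$ (either regular at smooth points or $k\ds{x,y}/(xy)$ at a node) and $f$ the local equation of $i$. Such an algebra is reduced iff $f$ is a non-zero divisor in $R$, which at a (possibly stacky) node means $f$ does not vanish identically on either irreducible component of $\spec R$. Patching over $\cZ$, this is equivalent to $i\colon \cL^{\otimes 2} \to \cO_{\cZ}$ being injective as a morphism of sheaves. For part $(iii)$, since $f$ is affine and $f_{*}\cO_C = \cO_{\cZ} \oplus \cL$, we get
\[
\chi(C,\cO_C) \;=\; \chi(\cZ, \cO_{\cZ}) + \chi(\cZ, \cL) \;=\; 1 + \chi(\cL),
\]
using that $\cZ$ has arithmetic genus~$0$. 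Hence $p_a(C) = 1 - \chi(C,\cO_C) = -\chi(\cL)$, and the claimed equivalence follows.

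The step I expect to be the main obstacle is $(i)$ at the stacky nodes: one must translate the global condition of representability of $\cZ \to B\gm$ into the explicit condition that $\cL$ restricts to the sign character on every $\mu_2$-stabilizer, and then combine this with the non-vanishing of $i$ to ensure freeness of the $\mu_2$-action on the cyclic cover. Parts $(ii)$ and $(iii)$ reduce cleanly to the same local models once this is in hand.
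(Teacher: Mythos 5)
Your proposal is correct and follows essentially the same route as the paper: finiteness of $f$ gives the DM/properness claims, (i) is checked on the fibers over the stacky points via the local model of the cyclic cover, (ii) reduces to the local algebras $R[u]/(u^2-f)$, and (iii) follows from $f_*\cO_C=\cO_{\cZ}\oplus\cL$. The only cosmetic differences are that the paper deduces (ii) from Cohen--Macaulayness (reduced iff generically reduced) rather than your direct nonzerodivisor computation, and for (iii) it additionally records that $\chi(\cL)\le 0$ forces $C$ to be connected.
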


\begin{proof}
	 Because $C\rightarrow \cZ$ is finite, then $C$ is a proper one-dimensional Deligne-Mumford stack. To prove $(i)$, it is enough to prove that $C$ is an algebraic space, or equivalently it has trivial stabilizers. Because the map $f: C\rightarrow \cZ$ is affine, it is enough to check the fiber of the stacky points of $\cZ$. By the local description of the morphism, it is clear that if $p:B\mu_2 \hookrightarrow \cZ$ is a stacky point of $\cZ$, we have that the fiber $f^{-1}(p)$ is the quotient of the artinian algebra $k[x]/(x^2-h)$ by $\mu_2$, where $h \in k$. By the usual description of the cyclic cover, we know that $x$ is the local generator of $\cL$ and $h$ is the value of $i^{\vee}$ at the point $p$. The representability of the fiber then is equivalent to $h\neq 0$ and to the $\mu_2$-action being not trivial on $x$. Thus (i) follows.
	 	
	 Notice that $\cZ$ is clearly a Cohen-Macaulay stack and $f$ is finite flat and representable, therefore $C$ is also CM. This implies that $C$ is reduced if and only if it is generically reduced, i.e. the local ring of every irreducible component is a field. It is easy to see that this is equivalent to the fact that $i$ does not vanish in the generic point of any component.
	 	
	 As far as (iii) is concerned, firstly we prove that $\chi(\cL)\leq 0$ implies $C$ is connected. Suppose $C$ is the disjoint union of $C_1$ and $C_2$, then the involution has to send at least one irreducible component of $C_1$ to one irreducible component of $C_2$, otherwise the quotient $\cZ$ is not connected. But this implies that the restriction $f\vert_{C_1}$ is a finite flat representable morphism of degree $1$, therefore an isomorphism. In particular $f$ is a trivial \'etale cover of $\cZ$ and therefore $\chi(\cO_C)=2$, or equivalently $\chi(\cL)=1$. A straightforward computation now shows the equivalence.
	 	
\end{proof}
 
 \begin{remark}
 	Notice that $C$ is disconnected if and only if $\chi(\cO_C)=2$. We say that in this case $C$ has genus $-1$ and it is equivalent to $\chi(\cL)=1$.
 \end{remark}

Secondly, we have to take care of the $A_r$-prestable condition which makes sense only in the case when $C$ is a scheme. Therefore, we suppose $i^{\vee}$ does not vanish on the stacky points.

The $A_r$-prestable condition is encoded in the section $i^{\vee}\in \H^0(\cZ,\cL^{\otimes -2})$. The local description of the cyclic covers of degree $2$ (see \Cref{prop:descr-inv}) implies the following lemma.

\begin{lemma}
	In the situation above, $C$ is an $A_r$-stable curves if and only if $i^{\vee}$ has the following properties: 
	\begin{itemize}
		\item if it vanishes at a non-stacky node, then $r\geq 3$, $i^{\vee}$ vanishes with order $2$ and, locally at the node, it is not a zero divisor (there is a tacnode over the regular node),
		\item if it vanishes at a smooth point, than it vanishes with order at most $r$.
	\end{itemize}
\end{lemma}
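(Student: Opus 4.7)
The plan is to reduce to a local computation on $\cZ$ and combine it with the classification of involutions in \Cref{prop:descr-inv}. Since being $A_r$-prestable is a fibrewise local condition on $C \arr S$, we may pass to geometric fibres and then to the completed local ring at each closed point $z\in\cZ$. Writing $R=\widehat{\cO}_{\cZ,z}$, a local trivialisation of $\cL$ (possible since $R$ is local) presents $C$ as $\spec R[y]/(y^2-f)$ for $f\in R$ representing $i^\vee$, so the task reduces to explicitly classifying the resulting local singularity of $C$ according to the type of the point $z$.

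At a stacky node, $i^\vee$ does not vanish by hypothesis, so $C\arr\cZ$ is \'etale there and the preimage in $C$ is a separating node, which is always admissible. At a smooth point, $R\simeq k[[x]]$ and we write $f=x^n u(x)$ with $u(0)\neq 0$; extracting a square root of $u$ (valid since $\cha k\neq 2$) identifies the local germ of $C$ with $\spec k[[x,y]]/(y^2-x^n)$, that is, an $A_{n-1}$-singularity. The $A_r$-prestable condition then translates directly to the bound on the order of vanishing asserted in the lemma.

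The delicate case is a non-stacky node $z$, where $R\simeq k[[u,v]]/(uv)$. If $f$ is a unit the cover is \'etale and the fibre is a separating node of $C$, so assume $f$ vanishes at the origin; using $uv=0$, write $f=f_1(u)+f_2(v)$. Reducedness of $C$ (equivalent to injectivity of $i$, by the previous lemma) forces both $f_1,f_2\neq 0$, which is exactly the non-zero-divisor condition on $f$. The key claim is that the only way for $C$ to acquire an $A_h$-singularity above $z$ is $\mathrm{ord}(f_1)=\mathrm{ord}(f_2)=1$. In this case, absorbing units via square roots places $f$ in the normal form $u+v$; eliminating $v=y^2-u$ against the relation $uv=0$ presents $C$ near the preimage as
\[
\spec k[[u,y]]/\bigl(u^2-uy^2\bigr)\;\simeq\;\spec k[[a,b]]/(b^2-a^4),
\]
a tacnode ($A_3$), which forces $r\geq 3$ and matches case $(b_2)$ of \Cref{prop:descr-inv}. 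Any other combination of orders is ruled out by inspecting the analytic germ $k[[u,v,y]]/(uv,\,y^2-f_1(u)-f_2(v))$: either the preimage of the node acquires at least three analytic branches (already $\mathrm{ord}(f_1)\geq 2$ gives two branches over that side of $\cZ$, producing at least three in total) or an analytic branch is cuspidal (as soon as some $\mathrm{ord}(f_i)\geq 3$ is odd), and neither phenomenon is of type $A_h$.

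The main obstacle is organising the non-stacky nodal case cleanly; once the normal form $f=u+v$ is obtained and the resulting germ is recognised as a tacnode, the remaining case analysis is mechanical. The translation between ``order $2$ at the node and not a zero divisor'' and ``$\mathrm{ord}(f_1)=\mathrm{ord}(f_2)=1$'' is a length computation $R/(f)\simeq k[[u]]/(u^2)$ in the nodal ring, and assembling the three local pictures above yields the lemma.
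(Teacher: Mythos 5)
Your argument is essentially the proof the paper intends: the lemma is stated with no proof beyond a pointer to the local classification of degree-$2$ covers in \Cref{prop:descr-inv}, and your fibrewise reduction to $\spec R[y]/(y^2-f)$, the normal form $f=u+v$ at a non-stacky node yielding a tacnode, and the exclusion of all other vanishing orders are exactly the computation that citation encodes. One point to fix: at a smooth point your own computation identifies the germ as an $A_{n-1}$-singularity when $i^\vee$ vanishes to order $n$, so $A_r$-prestability is equivalent to $n\le r+1$; this matches condition (b3) of \Cref{def:hyp-A_r} (``length at most $r+1$'') and case $(a)$ of the corollary to \Cref{prop:descr-inv} (fixed locus of length $r+1$), but not the bound ``order at most $r$'' printed in the lemma, and you assert the translation is ``direct'' without noticing this off-by-one, which is a typo in the statement rather than an error in your local analysis. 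Also, the parenthetical ``$\mathrm{ord}(f_1)\ge 2$ gives two branches over that side'' is only true for even order; your two alternatives do jointly cover all cases, but the even/odd split should be made explicit.
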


Lastly, we want to understand how to describe the stability condition. For this, we need a lemma.

\begin{lemma}
	Let $(C,\sigma)$ be an $A_r$-prestable hyperelliptic curve of genus $g\geq 2$ over an algebraically closed field such that the geometric quotient $Z:=C/\sigma$ is integral of genus $0$, i.e. a projective line. Then $(C,\sigma)$ is $A_r$-stable. 
	
	Furthermore, suppose instead that $g=1$ and let $p_1,p_2$ be two smooth points such that $\sigma(p_1)=p_2$. Then $(C,p_1,p_2)$ is stable.
\end{lemma}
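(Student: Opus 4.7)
The plan is to verify ampleness by checking that $\omega_C$ (respectively $\omega_C(p_1+p_2)$) has positive degree on every irreducible component of $C$; since $C$ is l.c.i., $\omega_C$ is a line bundle, and on a reduced projective curve positivity of degree on each component is equivalent to ampleness.

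First I would observe that because $Z = C/\sigma$ is integral and $\sigma$ is an involution, $C$ has at most two irreducible components: either $C$ is itself integral, or $C = \Gamma_1 \cup \Gamma_2$ with $\sigma$ exchanging $\Gamma_1$ and $\Gamma_2$. In the integral case there is nothing subtle: $\omega_C$ has degree $2g-2 \geq 2 > 0$ when $g \geq 2$, and $\omega_C(p_1+p_2)$ has degree $2>0$ when $g=1$, giving ampleness on the integral projective curve.

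In the two-component case, each composition $\Gamma_i \hookrightarrow C \to Z$ is a finite birational morphism onto the smooth curve $Z \cong \PP^1$, hence an isomorphism by Zariski's main theorem; in particular both components are smooth, so every singularity of $C$ must lie on $\Gamma_1 \cap \Gamma_2$ and have two branches, and is therefore of type $A_{2k-1}$ for some $k \geq 1$. Using the explicit normalization from \Cref{rem:norm}, at such a point $p$ the invariant $\delta_p := \dim_k(\widetilde\cO_{C,p}/\cO_{C,p})$ equals $k_p$, and the conductor $J_b$ restricts to $(t^{k_p})$ on each branch. Combining the normalization exact sequence with \Cref{rem:genus-count} then yields
\[
g(C) \,=\, \textstyle\sum_p k_p - 1, \qquad \deg(\omega_C|_{\Gamma_i}) \,=\, -2 + \textstyle\sum_p k_p \,=\, g(C) - 1,
\]
where the sums run over the singular points of $C$, all lying on $\Gamma_1 \cap \Gamma_2$ and contributing equally to each component by the $\sigma$-symmetry.

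For the first claim, $g \geq 2$ forces $\deg(\omega_C|_{\Gamma_i}) \geq 1$, proving ampleness. For the second claim, smoothness of $p_1,p_2$ keeps them off the intersection, and $\sigma(p_1)=p_2$ places exactly one on each component, so $\omega_C(p_1+p_2)$ restricts to a line bundle of degree $g(C)=1$ on each $\Gamma_i$, again giving ampleness. The main point requiring care will be the identification of each $\Gamma_i$ with $\PP^1$ together with the conductor-and-genus bookkeeping at the $A_{2k-1}$ singularities; once these are in place the stability conditions are automatic from the numerical bounds on $g$.
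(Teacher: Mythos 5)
Your proof is correct and follows essentially the same route as the paper's: split into the integral case and the case of two components exchanged by $\sigma$, identify the intersection points as $A_{2k-1}$-singularities, and compute $\deg(\omega_C|_{\Gamma_i}) = -2+\sum_p k_p = g-1$ to conclude. The only cosmetic differences are that you make the smoothness of the two components explicit via Zariski's main theorem and derive the degree from the conductor and Noether's formula, where the paper instead quotes Catanese's formula $\deg\omega_C|_{C_j} = 2g(C_j)-2+\operatorname{length}(C_j\cap(C-C_j))$; these give the same bookkeeping.
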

\begin{proof}
	If the curve $C$ is integral, there is nothing to prove. Suppose $C$ is not, then it has two irreducible components $C_1$ and $C_2$ of genus $0$ that has to be exchanged by the involution and their intersection $C_1 \cap C_2$ is a disjoint union of $A_{2k+1}$-singularities for $2k+1\leq r$. Let $p_1,\dots,p_h$ be the support of the intersection and $k_1,\dots,k_h$ integers such that $p_i$ is a $A_{2k_i+1}$-singularity for $i=1,\dots,h$. By \Cref{rem:genus-count}, we have that 
	$$  k_1+\dots+k_h-1=g\geq 2$$ 
	but at the same time \cite[Lemma 1.12]{Cat} implies
	$$\deg\omega_C\vert_{C_j}=-2+k_1+\dots+k_h=g-1>0$$
    for $j=1,2$. 
    
    Suppose now $g=1$. If $C$ is integral, then there is nothing to prove. If $C$ is not integral, again it has two irreducible components of genus $0$ such that their intersection is either two nodes or a tacnode, because of \Cref{rem:genus-count} (see the proof of \Cref{lem:genus1} for a more detailed discussion). Then again the statement follows.
\end{proof}

\begin{remark}
	In the previous lemma, we can take $p:=p_1=p_2$ to be a fixed smooth point of the hyperelliptic involution. In this case, $C$ has to be integral and therefore $(C,p)$ is $A_r$-stable. 
\end{remark}

The stability condition makes sense when $C$ is Gorestein, and in particular when it is $A_r$-prestable. Therefore suppose we are in the situation when $C$ is a $A_r$-prestable curve. We translate the stability condition on $C$ to a condition on the restrictions of $i$ to the irreducible components of $\cZ$.

Given an irreducible component $\Gamma$ of $\cZ$, we can define a quantity $$g_{\Gamma}:=\frac{n_{\Gamma}}{2}-\deg \cL\vert_{\Gamma}-1$$ where $n_{\Gamma}$ is the number of stacky points of $\Gamma$. It is easy to see that  $g_{\Gamma}$ coincides with the arithmetic genus of the preimage $C_{\Gamma}:=f^{-1}(\Gamma)$, when it is connected. The previous lemma implies that $\omega_{C}\vert_{C_{\Gamma}}$ is ample for all the components $\Gamma$ such that $g_{\Gamma}\geq 1$ . 

Let us try to understand the stability condition for $g_{\Gamma}=0$, i.e. $\deg\cL\vert_{\Gamma}=n_{\Gamma}/2-1$. Let $m_{\Gamma}$ be the number of points of the intersection of $\Gamma$ with the rest of the curve $\cZ$, or equivalently the number of nodes (stacky or not) lying on the component. Then the stability condition on $C$ implies that $2m_{\Gamma}-n_{\Gamma}\geq 3$, because the fiber of the morphism $C\rightarrow \cZ$ of every non-stacky node of $\cZ$ is of length $2$ (either two disjoint nodes or a tacnode) in $C$ while the fiber of a stacky node ($B\mu_2 \hookrightarrow \cZ$) has length $1$.

Suppose now that the curve $C_{\Gamma}$ is disconnected. Thus $C_{\Gamma}$ is the disjoint union of two projective line with an involution that exchange them. In this case $\cL\vert_{\Gamma}$ is trivial but also $n_{\Gamma}=0$. Stability condition on $C$ is equivalent to $m_{\Gamma}\geq 3$.  Notice that $g_{\Gamma}=-1$ if and only if $C_{\Gamma}$ is disconnected, or equivalently it is the \'etale trivial cover of projective line. 

This motivates the following definition.

\begin{definition}\label{def:hyp-A_r}
	Let $\cZ$ be a twisted nodal curve over an algebraically closed field. We denote by $n_{\Gamma}$ the number of stacky points of $\Gamma$ and by $m_{\Gamma}$ the number of intersections of $\Gamma$ with the rest of the curve for every $\Gamma$ irreducible component of $\cZ$. Let $\cL$ be a line bundle on $\cZ$ and $i:\cL^{\otimes 2} \rightarrow \cO_{\cZ}$ be a morphism of $\cO_{\cZ}$-modules.  We denote by $g_{\Gamma}$ the quantity $n_{\Gamma}/2-1-\deg\cL\vert_{\Gamma}$. 
	\begin{itemize}
	\item[(a)] We say that $(\cL,i)$ is hyperelliptic if the following are true:
			\begin{itemize}
				\item[(a1)] the morphism $\cZ \rightarrow B\GG_m$ induced by $\cL$ is representable,
				\item[(a2)] $i^{\vee}$ does not vanish restricted to any stacky point.
			\end{itemize}
	\item[(b)] We say that $(\cL,i)$ is $A_r$-prestable and hyperelliptic of genus $g$ if $(\cL,i)$ is hyperelliptic, $\chi(\cL)=-g$ and the following are true:
		\begin{itemize}
			\item[(b1)] $i^{\vee}$ does not vanish restricted to any irreducible component of $\cZ$ or equivalently the morphism $i:\cL^{\otimes 2 }\rightarrow \cO_{\cZ}$ is injective,
			\item[(b2)] if $p$ is a non-stacky node and $i^{\vee}$ vanishes at $p$, then $r\geq 3$ and the vanishing locus $\VV(i^{\vee})_p$ of $i^{\vee}$ localized at $p$ is a Cartier divisor of length $2$;
			\item[(b3)] if $p$ is a smooth point and $i^{\vee}$ vanishes at $p$, then the vanishing locus $\VV(i^{\vee})_p$ of $i^{\vee}$ localized at $p$ has length at most $r+1$.
		\end{itemize}	
	\item[(c)] We say that $(\cL,i)$ is $A_r$-stable and hyperelliptic of genus $g$ if it is $A_r$-prestable and hyperelliptic of genus $g$ and the following are true for every irreducible component $\Gamma$ in $\cZ$:
			\begin{itemize}
				\item[(c1)] if $g_{\Gamma}=0$ then we have $2m_{\Gamma}-n_{\Gamma}\geq 3$,
				\item[(c2)] if $g_{\Gamma}=-1$ then we have
				$m_{\Gamma}\geq 3$ ($n_{\Gamma}=0$).
			\end{itemize}
\end{itemize}
\end{definition}

Let us define now the stack classifying these data. We denote by $\widetilde{\cC}_g^r$ the fibered category defined in the following way: the objects are triplet $(\cZ\rightarrow S,\cL,i)$ where $\cZ \rightarrow S$ is a family of twisted curves of genus $0$, $\cL$ is a line bundle of $\cZ$ and $i:\cL^{\otimes 2}\rightarrow \cO_{\cZ}$ is a morphism of $\cO_{\cZ}$-modules such that the restriction $(\cL_s,i_s)$ to the geometric fiber is $A_r$-stable and hyperelliptic of genus $g$ for every point $s \in S$ .  Morphisms are defined as in \cite{ArVis}. We have proven the following equivalence.

\begin{proposition}\label{prop:descr-hyper}
	The fibered category $\widetilde{\cC}_g^r$ is isomorphic to $\cC_g^r$.
\end{proposition}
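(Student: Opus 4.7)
The plan is to assemble the equivalence from the pieces that have already been developed in the preceding discussion, treating it essentially as a bookkeeping statement rather than as a new result. The functor $\widetilde{\cC}_g^r \arr \cC_g^r$ will send $(\cZ \arr S, \cL, i)$ to $f\colon C = \spec_{\cZ}(\cO_{\cZ} \oplus \cL) \arr \cZ$ over $S$, where the $\cO_{\cZ}$-algebra structure on $\cO_{\cZ} \oplus \cL$ is the one determined by $i\colon \cL^{\otimes 2}\arr \cO_{\cZ}$; its quasi-inverse will send $f\colon C \arr \cZ$ to the decomposition $f_{*}\cO_{C} = \cO_{\cZ} \oplus \cL$ coming from the canonical $\mu_{2}$-action on a finite flat degree-$2$ cover, together with the multiplication map $\cL^{\otimes 2}\arr \cO_{\cZ}$. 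The fact that these two constructions are quasi-inverse to each other is the basic content of the theory of cyclic covers, so I would cite \cite{ArVis} here rather than redo it.

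Once this formal equivalence is in place, the core task is to match the two lists of conditions on geometric fibers. Since $\widetilde{\cC}_g^r$ and $\cC_g^r$ are both defined pointwise on the base $S$ (all other properties being automatic from flatness and properness of the two sides), I reduce immediately to the case $S = \spec k$ with $k$ algebraically closed, so that the conditions become the bullets listed in \Cref{def:hyp-A_r}. I would then go through them one at a time: conditions (a1)--(a2) are exactly the lemma characterizing when the cyclic cover $C$ is a scheme; (b1) is the reducedness criterion from the same lemma; (b2)--(b3) are the local description of $A_r$-singularities on $C$ obtained from the restriction of $i^{\vee}$, and correspond pointwise to the classification recalled in \Cref{prop:descr-inv} together with the shape of the normalization in \Cref{rem:norm}. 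The condition $\chi(\cL) = -g$ translates via $\chi(\cO_C) = \chi(\cO_{\cZ}) + \chi(\cL) = 1 + \chi(\cL)$ into $p_a(C) = g$ combined with connectedness, which is handled in the same lemma.

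What remains is matching the stability conditions. Here I would argue component by component on $\cZ$: for an irreducible component $\Gamma \subset \cZ$, the preimage $C_{\Gamma} = f^{-1}(\Gamma)$ has arithmetic genus $g_{\Gamma} = n_{\Gamma}/2 - 1 - \deg \cL|_{\Gamma}$ (when connected, with the obvious modification for the trivial étale cover case $g_{\Gamma}=-1$). By the lemma just proved, $\omega_{C}|_{C_{\Gamma}}$ is automatically ample whenever $g_{\Gamma} \geq 1$, so ampleness of $\omega_{C}$ imposes conditions only on components with $g_{\Gamma} \in \{-1, 0\}$; a direct degree count on each such component, using that the fibre of $f$ at a non-stacky node has length $2$ and at a stacky node has length $1$, reproduces precisely (c1) and (c2). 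The main potential pitfall, and the part I would be most careful with, is the $g_{\Gamma}=0$ case where $C_{\Gamma}$ may itself degenerate to a reducible curve (two $\PP^1$'s meeting in a tacnode) and one must verify that the inequality $2m_{\Gamma} - n_{\Gamma} \geq 3$ captures $\omega_{C}|_{C_{\Gamma}}$ being ample in that degenerate configuration as well.

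Putting the matching of pointwise conditions together with the formal cyclic-cover equivalence yields an equivalence of fibered categories $\widetilde{\cC}_g^r \simeq \cC_g^r$, and combined with \Cref{prop:cyclic-covers} this identifies $\widetilde{\cC}_g^r$ with $\Htilde_g^r$, which is the description we want for subsequent computations.
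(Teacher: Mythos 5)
Your proposal is correct and follows essentially the same route as the paper: the formal triplet/cyclic-cover equivalence is quoted from \cite{ArVis}, the conditions of \Cref{def:hyp-A_r} are matched pointwise on geometric fibers via the lemmas on representability, reducedness, Euler characteristic and the local shape of $i^{\vee}$, and stability is checked component by component using that $\omega_C\vert_{C_\Gamma}$ is automatic for $g_\Gamma\geq 1$. Indeed the paper gives no separate proof at all, stating the proposition as a summary of exactly this preceding discussion, so your write-up is if anything slightly more explicit about the degree count behind (c1)--(c2).
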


We can use this description to get the smoothness of $\Htilde_g^r$. Firstly, we need to understand what kind of line bundles $\cL$ on $\cZ$ can appear in $\Ctilde_g^r$.

\begin{lemma}\label{lem:hyp-line-bun}
	Let $Z$ be a nodal curve of genus $0$ over an algebraically closed field $k/\kappa$, $\cL$ a line bundle on $Z$ and $s \in \H^0(Z,\cL)$. We consider the following assertions:
	\begin{itemize}
		\item[$(i)$] $s$ does not vanish identically on any irreducible component of $Z$,
		\item[$(ii)$] $\cL$ is globally generated,
		\item[$(ii')$] $\deg \cL\vert_{\Gamma}\geq 0$ for every $\Gamma$ irreducible component of $Z$,
		\item[$(iii)$] $\H^1(Z,\cL)=0$;
	\end{itemize}
	then we have $(i)\implies (ii)\iff (ii')\implies (iii)$.
\end{lemma}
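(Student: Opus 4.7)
The plan is to induct on the number of irreducible components of $Z$, exploiting that a nodal curve of arithmetic genus $0$ over an algebraically closed field has dual graph which is a tree and all irreducible components isomorphic to $\PP^1$ (the arithmetic genus formula $g(Z)=\sum g(\widetilde{\Gamma}_i)+h^1(\text{dual graph})$ forces both the vanishing of each geometric genus and the tree property).

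The implication $(i)\implies(ii')$ is immediate: if $s\vert_{\Gamma}\not\equiv 0$ then $\cL\vert_{\Gamma}$ admits a nonzero section on $\Gamma\simeq\PP^1$, so $\deg\cL\vert_{\Gamma}\geq 0$. Similarly $(ii)\implies(ii')$ is standard, since a globally generated line bundle on a projective curve has non-negative degree on every component. For $(ii')\implies(ii)$ and $(ii')\implies(iii)$ I would proceed simultaneously by induction on the number of components $N$ of $Z$. The base case $N=1$ is $Z\simeq\PP^1$ with $\cL\simeq\cO(d)$, $d\geq 0$, for which both statements are classical.

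For the inductive step, since the dual graph of $Z$ is a tree with $N\geq 2$ vertices, it has a leaf: an irreducible component $\Gamma$ meeting the rest of $Z$ in a single node $p$. Writing $Z=\Gamma\cup Z'$ with $\Gamma\cap Z'=\{p\}$, the curve $Z'$ is again a nodal curve of arithmetic genus $0$ with $N-1$ components, and the Mayer--Vietoris sequence
$$ 0\longrightarrow \cL \longrightarrow \cL\vert_{\Gamma}\oplus \cL\vert_{Z'} \longrightarrow \cL\vert_{p} \longrightarrow 0 $$
yields a long exact sequence in cohomology. By induction $\cL\vert_{Z'}$ is globally generated, so $H^0(Z',\cL\vert_{Z'})\to \cL\vert_{p}$ is surjective; combined with $H^1(\Gamma,\cL\vert_{\Gamma})=0$ (degree $\geq 0$ on $\PP^1$) and the inductive vanishing $H^1(Z',\cL\vert_{Z'})=0$, this forces $H^1(Z,\cL)=0$, proving $(iii)$. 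For global generation at a point $q\in Z$, say $q\in\Gamma$, I pick $s_{\Gamma}\in H^0(\Gamma,\cL\vert_{\Gamma})$ with $s_{\Gamma}(q)\neq 0$ (possible since $\cL\vert_{\Gamma}$ is globally generated), then use that $\cL\vert_{Z'}$ is globally generated to find $s'\in H^0(Z',\cL\vert_{Z'})$ with $s'(p)=s_{\Gamma}(p)$; the pair glues to a section of $\cL$ on $Z$ non-vanishing at $q$ via the Mayer--Vietoris sequence, and the case $q\in Z'$ is symmetric.

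The main technical point is really just the existence of a leaf component, which rests on the genus $0$ hypothesis via the tree structure of the dual graph; once this is in hand, everything reduces to the Mayer--Vietoris calculation and the elementary theory of line bundles on $\PP^1$. No serious obstacle is expected.
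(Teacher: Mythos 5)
Your proof is correct and follows essentially the same route as the paper: both arguments reduce to the tree structure of the dual graph of a genus $0$ nodal curve and induct on the number of components, using the standard short exact sequence $0\to\cL\to\cL\vert_{\Gamma}\oplus\cL\vert_{Z'}\to\cL\vert_p\to 0$ together with the elementary theory of line bundles on $\PP^1$. The only difference is organizational (you peel off a leaf component and run one simultaneous induction for $(ii)$ and $(iii)$, while the paper centers the decomposition at the component containing the target point for $(ii)$ and uses an arbitrary binary splitting for $(iii)$), which does not change the substance.
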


\begin{proof}
	It is easy to prove that $(i) \implies(ii')$ and $(ii)\implies (ii')$. To prove that $(ii')$ implies $(ii)$ we proceed by induction on the number of components of $Z$. Let $p$ a smooth point on $Z$ and $\Gamma_p$ the irreducible component which cointains $p$. Then the morphism $$\H^0(\Gamma_p,\cL\vert_{\Gamma_p})\longrightarrow k(p)$$ is clearly surjective because $\deg \cL\vert_{\Gamma_p}\geq 0$ and $\Gamma \simeq \PP^1$. Therefore it is enough to extend a section from $\Gamma_p$ to a section to the whole $Z$. 
	
	Because the dual graph of $Z$ is a tree, we know that $Z$ can be obtained by gluing a finite number of genus $0$ curve $Z_i$ to $\Gamma_p$ such that these curves are disjoint. Hence it is enough to find a section for every $Z_i$ such that it glues in the point of intersection with $\Gamma_p$. Because everyone of the $Z_i$'s has fewer irreducible components than $Z$, we are done.
	
	Finally, we need to prove that $(ii')\implies (iii)$. We know there exists a decomposition $Z=Z_1\cup Z_2$ where $Z_1$ and $Z_2$ are nodal genus $0$ curves such that $Z_1\cap Z_2$ has length $1$. Let $i_h:Z_h \hookrightarrow Z$ the closed embedding for $h=1,2$. Thus if we can consider the exact sequence of vector spaces 
	$$ 0\rightarrow \H^0(\cL)\rightarrow \H^0(i_1^*\cL)\oplus\H^0(i_2^*\cL) \rightarrow k \rightarrow \H^1(\cL) \rightarrow \H^1(i_1^*\cL)\oplus \H^1(i_2^*\cL) \rightarrow 0;$$  
	it is clear that the morphism $\H^0(i_1^*\cL)\oplus\H^0(i_2^*\cL) \rightarrow k$ is surjective (both $i_1^*\cL$ and $i_2^*\cL$ are globally generated) and therefore $\H^1(Z,\cL)=\H^1(Z_1,i_1^*\cL)\oplus \H^1(Z_2,i_2^*\cL)$. We get that 
	$$\H^1(Z,\cL)=\bigoplus_{\Gamma} \H^1(\Gamma, \cL\vert_{\Gamma})$$
	where the sum is indexed over the irreducible components $\Gamma$ of $Z$. Thus it is enough to prove that $\H^1(\Gamma,\cL\vert_{\Gamma})=0$ for every $\Gamma$ irreducible component of $Z$, which follows from $(iii))$.
\end{proof} 

\begin{proposition}\label{prop:smooth-hyp}
	The moduli stack $\Htilde_g^r$ of $A_r$-stable hyperelliptic curves of genus $g$ is smooth and the open $\cH_g$ parametrizing smooth hyperelliptic curves is dense in $\Htilde_g^r$. In particular $\Htilde_g^r$ is connected.
\end{proposition}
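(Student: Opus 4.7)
The approach is via the equivalence $\Htilde_g^r \simeq \widetilde{\cC}_g^r$ from \Cref{prop:cyclic-covers} and \Cref{prop:descr-hyper}, which presents an object as a triple $(\cZ,\cL,i)$ with $\cZ$ a twisted nodal curve of genus $0$, $\cL$ a line bundle on $\cZ$, and $i\colon\cL^{\otimes 2}\to\cO_{\cZ}$ a morphism subject to the hyperelliptic and $A_r$-stability conditions of \Cref{def:hyp-A_r}. Smoothness will be established by showing that each of the three layers in the forgetful tower $(\cZ,\cL,i)\mapsto(\cZ,\cL)\mapsto\cZ$ is unobstructed, reducing the problem to a single cohomological vanishing. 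Density will then follow from an explicit smoothing construction.

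For smoothness, the stack of twisted nodal curves of genus $0$ is smooth by the foundational results of \cite{AbOlVis}, since ordinary nodal curves are unobstructed and the stacky structure at each node is controlled by a smooth local picture. The relative Picard stack over it is smooth, since on any one-dimensional stack $H^2$ of the structure sheaf vanishes. Fixing a pair $(\cZ,\cL)$, the deformation theory of the section $i^{\vee}\in H^0(\cZ,\cL^{\otimes -2})$ has obstructions in $H^1(\cZ,\cL^{\otimes -2})$, so the heart of the matter is to check that this group vanishes. By condition (a1) the line bundle $\cL$ is representable, so the $\mu_2$-action on each stacky fiber of $\cL$ is by the sign character; consequently the action on $\cL^{\otimes -2}$ is trivial at every stacky point and $\cL^{\otimes -2}$ descends to an honest line bundle $\overline{\cM}$ on the coarse moduli space $Z$ of $\cZ$. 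Since $\mu_2$ is linearly reductive and $2$ is invertible, the coarse map is cohomologically trivial, so $H^1(\cZ,\cL^{\otimes -2})=H^1(Z,\overline{\cM})$. Condition (b1) guarantees that $i^{\vee}$ descends to a section of $\overline{\cM}$ not vanishing identically on any irreducible component of $Z$, so \Cref{lem:hyp-line-bun} (implication $(i)\Rightarrow(iii)$) gives $H^1(Z,\overline{\cM})=0$ as required.

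For density, the plan is to explicitly smooth any $(\cZ_0,\cL_0,i_0)$. First, deform $\cZ_0$ to $\PP^1$ by smoothing all of its nodes simultaneously; for a twisted genus $0$ curve the stacky structure is concentrated at nodes, so smoothing leaves a scheme-theoretic $\PP^1$. The line bundle $\cL_0$ lifts along this deformation because $\operatorname{Pic}$ is smooth, and the section $i_0^{\vee}$ lifts because of the vanishing $H^1=0$ established above. On the generic fiber we end up with $\PP^1$, the line bundle $\cO_{\PP^1}(2g+2)$ (since $\chi(\cL)=-g$ forces $\deg\cL=-g-1$), and a section of $\cO(2g+2)$. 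A generic such section has $2g+2$ distinct simple zeros and thus yields a smooth hyperelliptic curve of genus $g$; and having only simple zeros is an open nonempty condition on the smooth versal deformation space, which we may reach by a small perturbation on the generic fiber. This proves $\cH_g$ meets every irreducible component of $\Htilde_g^r$, and since $\cH_g$ is known to be irreducible, $\Htilde_g^r$ is irreducible, hence connected.

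The main obstacle I anticipate is the cohomological descent step, namely verifying cleanly that $\cL^{\otimes -2}$ really descends to the coarse space (because $(-1)^2=1$ in $\mu_2$) and that this descent preserves $H^1$; once this reduction is in place, the rest is formal. A secondary technicality is checking that the one-parameter family simultaneously smoothing nodes (both twisted and ordinary) exists in the moduli of twisted genus $0$ curves and can be lifted to a family in $\Htilde_g^r$, but this follows from the smoothness of each forgetful layer and the openness of the $\cH_g$-locus.
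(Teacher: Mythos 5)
Your proposal is correct and follows essentially the same route as the paper: the same reduction via the cyclic-cover description to the tower $(\cZ,\cL,i)\mapsto(\cZ,\cL)\mapsto\cZ$, the same key vanishing $\H^1(\cZ,\cL^{\otimes -2})=0$ obtained by descending $\cL^{\otimes -2}$ to the coarse space and applying \Cref{lem:hyp-line-bun}, and the same smoothing-over-a-DVR argument for density. The only cosmetic difference is that the paper packages the last layer as an open substack of the vector bundle $\VV(\pi_*\cL_{\cP}^{\otimes -2})$ over the smooth open locus $\cP_g^0$, whereas you phrase it as vanishing of the obstruction space for deforming the section — these are the same computation.
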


\begin{proof}
We use the description of $\Htilde_g^r$ as $\Ctilde_g^r$. First of all, let us denote by $\cP$ the moduli stack parametrizing the pair $(\cZ,\cL)$ where $\cZ$ is a twisted curve of genus $0$ and $\cL$ is a line bundle on $\cZ$. Consider the natural morphism $\cP \rightarrow \cM_0^{\rm tw}$ defined by the association $(\cZ,\cL)\mapsto \cZ$, where $\cM_0^{\rm tw}$ is the moduli stack of twisted curve of genus $0$ (see Section 4.1 of \cite{AbGrVis}).Proposition 2.7 \cite{AbOlVis} implies the  formal smoothness of this morphism. We have the smoothness of $\cM_0^{\rm tw}$ thanks to Theorem A.6 of \cite{AbOlVis}. Therefore $\cP$ is formally smooth over the base field $\kappa$.

 Let us consider now $\cP_g^0$, the substack of $\cP_g$ whose geometric objects are pairs $(\cZ,\cL)$ such that $\chi(\cL)=-g$ and $\H^1(\cZ,\cL^{\otimes -2})=0$. To be precise, its objects consist of families of twisted curves $\cZ\rightarrow S$ and a line bundle $\cL$ on $\cZ$ such that $\H^1(\cZ_s,\cL_s^{\otimes -2})=0$ for every $s \in S$. Because the Euler characteristic $\chi$ is locally constant for families of line bundles, the semicontinuity of the $\H^1$  implies that $\cP_g^0$ is an open inside $\cP$, therefore it is formally smooth over $\kappa$.

 We have a morphism 
 $$ \Htilde_g^r\simeq \Ctilde_g^r \rightarrow \cP$$ 
 defined by the association $(\cZ,\cL,i) \mapsto (\cZ,\cL)$. This factors through $\cP_g^0$ because of \Cref{lem:hyp-line-bun}. Consider the universal object $(\pi:\cZ_{\cP}\rightarrow \cP_g^0,\cL_{\cP})$ over $\cP_g^0$. Then $\cL_{\cP}^{\otimes -2}$ satisfies base change by construction and therefore we have that $\Ctilde_g^r$ is a substack of $\VV(\pi_*\cL_{\cP}^{\otimes -2})$, the geometric vector bundle over $\cP_g^0$ associated to $\pi_*\cL_{\cP}^{\otimes -2}$. The inclusion $\Ctilde_g^r\subset \VV(\pi_*\cL_{\cP}^{\otimes -2})$ is an open immersion because of \Cref{prop:openness}, which implies the smoothness of $\Ctilde_g^r$. 
 
 Given a twisted curve $\cZ$ over an algebraically closed field $k$, we can construct a family $\cZ_R\rightarrow \spec R$ of twisted curves where $R$ is a DVR such that the special fiber is $\cZ$ and the generic fiber is smooth. The smoothness of the morphism $\VV(\pi_{*}\cL^{\otimes -2})\rightarrow \Mtilde_0^{\rm tw}$ implies that given the datum $(\cZ,\cL,i) \in \Ctilde_g^r(k)$ with $k$ an algebraically closed field, we can lift it to $(\cZ_R,\cL_R,i_R) \in \Ctilde_g^r(R)$ where $R$ is a DVR such that it restricts to $(\cZ,\cL,i)$ in the special fiber and such that the generic fiber is isomorphic $(\PP^1, \cO(-g-1), i)$ with $i^{\vee} \in \H^0(\PP^1,\cO(2g+2))$. Finally, the open substack of $\VV(\pi_*\cL_{\cP}^{\otimes -2})$ parametrizing sections $i^{\vee}$ without multiple roots is dense, therefore we get that we can deform every datum $(\cZ,\cL,i)$ to the datum of a smooth hyperelliptic curve.
\end{proof}

\begin{remark}
	In the proof of \Cref{prop:smooth-hyp}, we have used the implication $(i) \implies (iii)$ as in  \Cref{lem:hyp-line-bun} for a twisted curve $\cZ$ of genus $0$. In fact, we can apply \Cref{lem:hyp-line-bun} to the coarse moduli space $Z$ of $\cZ$ and get the implication for $\cZ$ because the line bundle  $\cL^{\otimes -2}$ descends to $Z$ and the morphism $\cZ\rightarrow Z$ is cohomologically affine.
\end{remark}

\section{$\eta$ is a closed immersion}\label{sec:1-4}

We have a morphism $\eta:\Htilde_g^r \arr \Mtilde_g^r$ between smooth algebraic stacks. If we prove $\eta$ is representable, formally unramified, injective on geometric points and universally closed, then we get that $\eta$ is a closed immersion.
\begin{remark}
The morphism $\eta$ is faithful, as the automorphisms of $(C,\sigma)$ are by definition a subset of the ones of $C$ over any $\kappa$-scheme $S$. This implies that $\eta$ is representable.
\end{remark}
Firstly, we discuss why $\eta$ is injective on geometric points. We just need to prove that for every geometric point the morphism $\eta$ is full, i.e. every automorphism of a $A_r$-stable curve which is also hyperelliptic have to commute with the involution. This follows directly from the unicity of the hyperelliptic involution. Therefore our next goal is to prove that the hyperelliptic involution is unique over an algebraically closed field.

\subsection*{Injectivity on geometric points}

First of all, we use the results of the first section to describe the possible quotients.

\begin{proposition}\label{prop:description-quotient}
	Let $k/\kappa$ be an algebraically closed field and $(C,\sigma)$ be a hyperelliptic $A_r$-stable curve of genus $g$. Denote by $Z$ the geometric quotient by the involution and suppose $Z$ is a reduced nodal curve of genus $0$ (with only separating nodes). Furthermore, let $c\in C$ be a closed point, $z \in Z$ be the image of $c$ in the quotient and $C_z$ be the schematic fiber of $z$, which is the spectrum of an artinian $k$-algebra. Then
	
	\begin{itemize}
		\item if $z$ is a smooth point, either 
		\begin{itemize}
			\item[(s1)] $C_z$ is disconnected and supported on two smooth points of $C$ (i.e. the quotient morphism is étale at $c$),
			\item[(s2)] or $C_z$ is connected and supported on a possibly singular point of $C$ (i.e. the quotient morphism is flat and ramified at $c$); 
		\end{itemize}
		\item if $z$ is a separating node, either
		\begin{itemize}
			\item[(n1)] $C_z$ is disconnected and supported on two nodes (i.e. the quotient morphism is étale at $c$),
			\item[(n2)] or $C_z$ is connected and supported on a tacnode (i.e. the quotient morphism is flat and ramified at $c$),
			\item[(n3)] or $C_z$ is connected and supported on a node (i.e. the quotient morphism is ramified at $c$ but not flat). 
		\end{itemize}
	\end{itemize} 

Finally, the quotient morphism is finite of generic degree $2$ and ${\rm (n3)}$ is the only case when the length of $C_z$ is not $2$ but $3$. 
\end{proposition}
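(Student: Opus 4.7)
The plan is to reduce everything to the local classification of involutions on $A_h$-singularities proved in Section~\ref{sec:1-1}, splitting into two cases according to whether $\sigma$ fixes $c$ or not.

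If $\sigma(c) \neq c$, the set-theoretic fiber over $z$ is the orbit $\{c,\sigma(c)\}$, and the two points have isomorphic complete local rings since they are exchanged by an automorphism. The quotient morphism is \'etale at $c$ because the stabilizer is trivial and $\sigma$ has order prime to the characteristic, so $c$ is smooth (resp.\ a node) if and only if $z$ is smooth (resp.\ a node). This delivers cases (s1) and (n1), with $C_z$ of length $2$ in both.

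If instead $\sigma(c) = c$, then $\sigma$ induces a nontrivial involution of $\widehat{\cO}_{C,c} \simeq A_h$ for some $h \le r$ (it must be nontrivial, for otherwise the fixed locus would contain an open neighborhood of $c$, contradicting its finiteness). I would then apply \Cref{prop:descr-inv} and its corollary to enumerate the pairs $(A_h,\sigma)$ together with the associated invariant subalgebra $A_h^\sigma$. Case (c3) is excluded by \Cref{rem:fix-locus}, since its fixed locus is positive-dimensional, and case (b3) with $k\ge 2$ is excluded by the hypothesis on $Z$, because $A_h^\sigma \simeq A_k$ with $k\ge 2$ would force $z$ to be worse than a node. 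What remains matches the geometry of $z$ as follows: when $A_h^\sigma \simeq A_0$ (cases (a), (b1), (c1)) the extension is faithfully flat and the fiber $C_z$ is a local Artinian $k$-algebra of length $2$, giving (s2); when $A_h^\sigma \simeq A_1$ and the extension is flat we are in case (b2) with $k=2$, so $c$ is a tacnode, the fiber has length $2$, and we obtain (n2); when $A_h^\sigma \simeq A_1$ and the extension is \emph{not} flat we are in case (c2), so $c$ is a node and a direct computation with $A_1 = k[[x,y]]/(xy)$ and the involution $x\mapsto -x,\,y\mapsto -y$ shows that $C_z \simeq k[x,y]/(xy,x^2,y^2)$, which is of length $3$; this yields (n3) and is the unique case where the fiber has length $3$. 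The assertion about generic degree is then immediate: the non-flat locus consists only of the isolated points of type (n3), while on its complement $C \to Z$ is finite flat of constant degree $2$.

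The only genuine computation hiding in this plan is the length-$3$ fiber in case (n3); the rest is bookkeeping to match each entry of the involution classification with the correct geometric picture. The subtlest step, and therefore the likely main obstacle, is ensuring that the hypothesis ``$Z$ is a reduced nodal curve of genus $0$ with only separating nodes'' genuinely rules out the remaining exotic involutions (notably case (b3)); this has to be checked uniformly in $h \le r$, but amounts to observing that any invariant subring of type $A_k$ with $k \ge 2$ is incompatible with $Z$ being nodal.
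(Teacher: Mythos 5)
Your proposal is correct and follows exactly the route the paper takes: split into the free case (where the quotient is \'etale, giving (s1) and (n1)) and the fixed case (where one passes to the completion and invokes the classification of involutions of $A_h$-singularities from \Cref{prop:descr-inv} and its corollary). The paper's own proof is only a two-sentence sketch of this reduction, so your more detailed matching of the cases (a), (b1), (c1), (b2), (c2) to (s2), (n2), (n3), the exclusion of (b3) and (c3), and the length-$3$ computation in case (n3) is a faithful elaboration of the intended argument.
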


\begin{proof}
	As we are dealing with the quotient by an involution, the quotient morphism is either étale at closed point $c\in C$ or the point $c\in C$ is fixed by the involution. If the point is in the fixed locus, we can pass to the completion and apply  \Cref{prop:descr-inv}. 
\end{proof}
\begin{remark}
	We claim that if $C$ is an $A_r$-prestable curve of genus $g$ and $\sigma$ is any involution, then the geometric quotient $Z$ is automatically an $A_r$-prestable curve. The quotient is still reduced because $C$ is reduced. We have connectedness because of the surjectivity of the quotient morphism. The description of the quotient singularities in the first section implies the claim. Therefore if the quotient $Z$ has genus $0$, then it is a nodal curve 
	(with only separating nodes).
\end{remark}
\begin{remark}
      If $z$ is a node, we can say more about $C_z$. In fact, it is easy to prove that $C_z$ is a separating closed subset, i.e. the partial normalization of $C$ along the support of $C_z$ is not connected. This follows from the fact that every node in $Z$ is separating.
\end{remark}
We start by proving the uniqueness of the hyperelliptic involution for the case of an integral curve. For the remaining part of the section, $k$ is an algebraically closed field over $\kappa$. 

\begin{proposition}\label{prop:integral}
	Let $C$ be a $A_r$-stable integral curve of genus $g\geq 2$ over $k$ and suppose are given two hyperelliptic involution $\sigma_1,\sigma_2$ of $C$. Then $\sigma_1=\sigma_2$. 
\end{proposition}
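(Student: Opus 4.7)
The plan is to reduce, via \Cref{lem:unique-inv-quotient}, to showing that the two quotient morphisms $f_i\colon C\to Z_i := C/\sigma_i$ agree up to isomorphism of the target, and to recover each $f_i$ from the canonical morphism of $C$, which depends only on $C$. Since $C$ is integral, each $Z_i$ is integral of arithmetic genus zero, hence isomorphic to $\PP^1$, and each $f_i$ is a finite surjection of degree~$2$. Because $C$ is l.c.i.\ (hence Cohen--Macaulay) and $\PP^1$ is regular, miracle flatness makes $f_i$ flat.

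Write $(f_i)_*\cO_C = \cO_{\PP^1}\oplus\cL_i$ for the $\sigma_i$-eigendecomposition, where $\cL_i$ is the anti-invariant part. Grothendieck duality for the finite flat morphism $f_i$ gives
$$(f_i)_*\omega_C \;=\; \curshom_{\cO_{\PP^1}}\!\bigl((f_i)_*\cO_C,\omega_{\PP^1}\bigr) \;=\; \omega_{\PP^1}\oplus(\cL_i^{-1}\otimes\omega_{\PP^1}),$$
in which the first summand is the $\sigma_i$-invariant part. Taking global sections, $H^0(\PP^1,\omega_{\PP^1})=0$ forces $\sigma_i^*$ to act as $-\id$ on $H^0(C,\omega_C)$; comparing dimensions then gives $\cL_i\simeq\cO_{\PP^1}(-g-1)$.

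Being an integral Gorenstein curve of arithmetic genus $g\geq 2$, $C$ has globally generated dualizing sheaf by the classical theorem of Rosenlicht, so the canonical morphism $\phi\colon C\to \PP^{g-1}=\PP(H^0(C,\omega_C)^\vee)$ is a well-defined morphism intrinsic to $C$. Since $\sigma_i^*$ acts by $-\id$ on the defining linear system, $\phi\circ\sigma_i=\phi$ as morphisms to projective space, so $\phi$ factors as $\phi=\overline{\phi}_i\circ f_i$ for some $\overline{\phi}_i\colon\PP^1\to\PP^{g-1}$. A degree check shows $\overline{\phi}_i^{\,*}\cO_{\PP^{g-1}}(1)\simeq\cO_{\PP^1}(g-1)$, and the identification $H^0(C,\omega_C)\simeq H^0(\PP^1,\cO_{\PP^1}(g-1))$ coming from the anti-invariant summand implies $\overline{\phi}_i$ is given by the complete linear system $|\cO_{\PP^1}(g-1)|$. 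For $g\geq 2$ this is the $(g-1)$-uple Veronese embedding.

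Consequently, $\phi(C)\subset\PP^{g-1}$ is a rational normal curve independent of $i$, and each $\overline{\phi}_i$ induces an isomorphism $Z_i\xrightarrow{\sim}\phi(C)$. Setting $\psi := \overline{\phi}_2^{\,-1}\circ\overline{\phi}_1\colon Z_1\to Z_2$, the equality $\overline{\phi}_i\circ f_i = \phi$ yields $\psi\circ f_1 = f_2$, and \Cref{lem:unique-inv-quotient} gives $\sigma_1=\sigma_2$. The main technical input is the global generation of $\omega_C$ at the $A_r$-singularities, where the Gorenstein property and Rosenlicht's argument provide precisely the classical hyperelliptic picture once we have reduced to the integral case.
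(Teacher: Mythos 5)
Your proof is correct and follows essentially the same route as the paper: both arguments use $f_{i,*}\cO_C=\cO_{\PP^1}\oplus\cO_{\PP^1}(-g-1)$ to identify the composite $C\to\PP^1\xrightarrow{\;i_{g-1}\;}\PP^{g-1}$ with the canonical morphism, so that the quotient map is intrinsic to $C$. You merely make explicit two steps the paper leaves implicit — that $\sigma_i^*$ acts as $-\id$ on $\H^0(C,\omega_C)$ (so the canonical map factors through the quotient) and the final appeal to \Cref{lem:unique-inv-quotient} — whereas the paper instead verifies $\phi^*\cO_{\PP^{g-1}}(1)\simeq\omega_C$ directly by Riemann--Roch.
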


\begin{proof}
First of all, notice that the quotient $Z:=C/\sigma$ is an integral curve with arithmetic genus $0$ over $k$, therefore $Z\simeq \PP^1$. Consider now the following morphism:
$$ 
\begin{tikzcd}
\phi: C \arrow[r, "f"] & \PP^1 \arrow[r, "i_{g-1}", hook] & \PP^{g-1}
\end{tikzcd}
$$ 
where $f$ is the quotient morphism and $i_{g-1}$ is the $(g-1)$-embedding. It is enough to prove that $\phi^*(\cO_{\PP^{g-1}}(1)) \simeq \omega_C$ as it implies that every hyperelliptic involution comes from the canonical morphism (therefore it is unique).

We denote by $\cL$ the line bundle $\phi^*\cO_{\PP^{g-1}}(1)$. Using Riemann-Roch for integral curves, we get that 
$$ h^0(C,\omega_C \otimes \cL^{-1})= h^0(C,\cL) + 1 -g $$ 
therefore if we prove that $h^0(C,\cL)= g$, we get that $\deg (\omega_C \otimes \cL^{-1})=0$ and that $h^0(\omega_C \otimes \cL^{-1})=1$ which implies $\cL \simeq \omega_C$ (as $C$ is integral). Because $f$ is finite, we have that $$\H^0(C,\cL)=\H^0(\PP^1,i_{g-1}^*\cO_{\PP^{g-1}}(1) \otimes f_*\cO_C) $$
thus using that $f_*\cO_C= \cO_{\PP^1}\oplus \cO_{\PP^1}(-g-1)$ as $f$ is a cyclic cover of degree $2$, we get that 
$$ \H^0(C,\cL)= \H^0(\PP^1, \cO_{\PP^1}(g-1))\oplus \H^0(\PP^1, \cO_{\PP^1}(-2))$$
which implies $h^0(C,\cL)=g$. 
\end{proof}

Now we deal with the genus $1$ case.
\begin{lemma}\label{lem:genus1}
	Let $(C,p_1,p_2)$ be a $2$-pointed $A_r$-stable curve of genus $1$. Then there exists a unique hyperelliptic involution which sends one point into the other. 
\end{lemma}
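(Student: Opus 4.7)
The plan is to mimic the argument of \Cref{prop:integral}, after a case analysis of the possible topological types of $(C, p_1, p_2)$. Using the genus formula $p_a(C) = 1 - n + \sum_\Gamma p_a(\widetilde\Gamma) + \sum_p \delta_p$ (where $n$ is the number of irreducible components) together with the ampleness of $\omega_C(p_1+p_2)$ on every component, the list reduces to: (a) $C$ integral (smooth elliptic, nodal, or cuspidal); (b) $C$ a ``banana'' of two rational components meeting in two nodes, with one marked point on each; (c) $C$ the union of two rational components meeting at a tacnode, with one marked point on each; and (d) $C = E \cup_p T$ with $E$ integral of genus $1$, $T \simeq \PP^1$ a rational tail attached at a node $p$, and both marked points on $T$.

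In cases (a)--(c) the dualizing sheaf $\omega_C$ is trivial: its unique (up to scalar) global section vanishes nowhere on the normalization. I would therefore consider $L := \cO_C(p_1+p_2)$. Riemann-Roch on the Gorenstein curve gives $\chi(L) = 2$, and since $\omega_C \otimes L^{-1}$ has strictly negative degree on every irreducible component of $C$, $h^1(L) = 0$ and $h^0(L) = 2$. A parallel computation shows $|L|$ is basepoint-free, so it defines a morphism $\phi : C \to \PP^1$ with $\phi^{*}\cO_{\PP^{1}}(1) = L$. Since $C$ is Cohen-Macaulay (all $A_r$-singularities are l.c.i.) and $\PP^1$ is regular, miracle flatness makes $\phi$ finite flat of degree $\deg L = 2$; in characteristic different from $2$ this degree-$2$ cover carries a canonical Galois involution $\sigma$, and the fiber of $\phi$ over the point corresponding to the section of $L$ vanishing on $p_1 + p_2$ is exactly $\{p_1, p_2\}$, so $\sigma(p_1) = p_2$. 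The quotient $C/\sigma \simeq \PP^1$ confirms $(C, \sigma)$ is hyperelliptic.

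Case (d) requires a separate argument, because there $\omega_C$ has multidegree $(1, -1)$ and is not trivial, and one checks that $|L|$ no longer cuts out a degree-$2$ cover. Instead, I construct $\sigma$ component by component. Take $\sigma|_T$ to be the unique involution of $T \simeq \PP^1$ swapping $p_1, p_2$ while fixing $p$, and $\sigma|_E$ to be the unique nontrivial involution of the integral genus-$1$ curve $E$ which fixes the smooth point $p$ and has finite fixed locus (the ``inversion around $p$''; when $E$ is smooth this is the map $[-1]$ in the group law with origin $p$, and an analogous unique involution exists when $E$ is nodal or cuspidal). Both restrictions act at $p$ by negating the respective branch parameter, so they glue into a global involution of $C$ acting locally at $p$ as case $(c_2)$ of \Cref{prop:descr-inv}. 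The quotient $C/\sigma$ is then the nodal union of two $\PP^1$'s, a reduced connected nodal curve of genus $0$.

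For uniqueness, let $\sigma'$ be any hyperelliptic involution with $\sigma'(p_1) = p_2$ and write $\pi' : C \to Z' := C/\sigma'$. Since $p_1, p_2$ are distinct smooth points, \Cref{prop:description-quotient} places them in case (s1): $z' := \pi'(p_1) = \pi'(p_2)$ is a smooth point of $Z'$ and $\pi'^{*}\cO_{Z'}(z') = L$. In cases (a)--(c) the topology of $C$ forces $\sigma'$ to act on the components in a specific way (trivially in (a); swapping the two rationals in (b) and (c)), and together with the genus-$0$ constraint on $Z'$ this forces $Z' \simeq \PP^1$; the map $\pi'$ therefore coincides with $\phi$ up to an automorphism of $\PP^1$, and \Cref{lem:unique-inv-quotient} yields $\sigma = \sigma'$. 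In case (d), the different arithmetic genera of $E$ and $T$ force $\sigma'$ to preserve each component, after which the conditions ``swap $p_1, p_2$'' and ``quotient of genus $0$'' pin down $\sigma'|_E$ and $\sigma'|_T$ uniquely as the involutions of the existence step. The main obstacle throughout is the case analysis, with case (d) being the most delicate since the straightforward linear-system approach fails there.
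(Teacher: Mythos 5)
Your proof is correct and takes essentially the same route as the paper: the paper's argument consists precisely of the enumeration of the same four topological types (integral curve; two rational components meeting in two nodes; two rational components meeting in a tacnode; a genus-$1$ component with a rational tail carrying both markings), after which it simply asserts that existence and uniqueness are easy in each case. You additionally supply those omitted verifications -- the degree-$2$ cover cut out by $|\cO_C(p_1+p_2)|$ in the three cases where $\omega_C$ is trivial, and the component-wise construction and gluing at the separating node in the rational-tail case -- and they check out.
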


\begin{proof}
	The proof of this lemma consists in describing all the possible cases. First of all, the condition on the genus implies that the curve is $A_3$-stable, as the arithmetic genus would be too high with more complicated $A_r$-singularities. Clearly, 
	$$\sum_{\Gamma \subset C} g(\Gamma) \leq g(C)=1$$ 
	where $\Gamma$ varies in the set of irreducible components of $C$. Consider first the case where there exists $\Gamma$ such that $g(\Gamma)=1$, therefore all the other irreducible components have genus $0$ and all the separating points are nodes. Thanks to the stability condition, it is clear that $C$ is either integral (case (a) in \Cref{fig:Genus1} ) with two smooth points or it has two irreducible components $\Gamma_1$ and $\Gamma_0$, where $g(\Gamma_1)=1$, $g(\Gamma_0)=0$, they intersect in a separating node and the two smooth points lie on $\Gamma_0$ (case (b) in \Cref{fig:Genus1}).
	Suppose then that for every irreducible component $\Gamma$ of $C$ we have $g(\Gamma)=0$. Therefore, if the curve is not $A_1$-prestable, we have that the only possibility is that there exists a separating tacnode between two genus $0$ curves. Stability condition implies that $C$ can be described as two integral curves of genus $0$ intersecting in a tacnode, and the two points lie in different components (case (d) in \Cref{fig:Genus1}). Finally, if $C$ is $1$-prestable we get that $C$ has two irreducible components of genus $0$ intersecting in two points and the smooth sections lie in different components (case (c) in \Cref{fig:Genus1}). 
	\begin{figure}[H]
		\caption{$2$-pointed genus $1$ curves}
		\centering
		\includegraphics[width=1\textwidth]{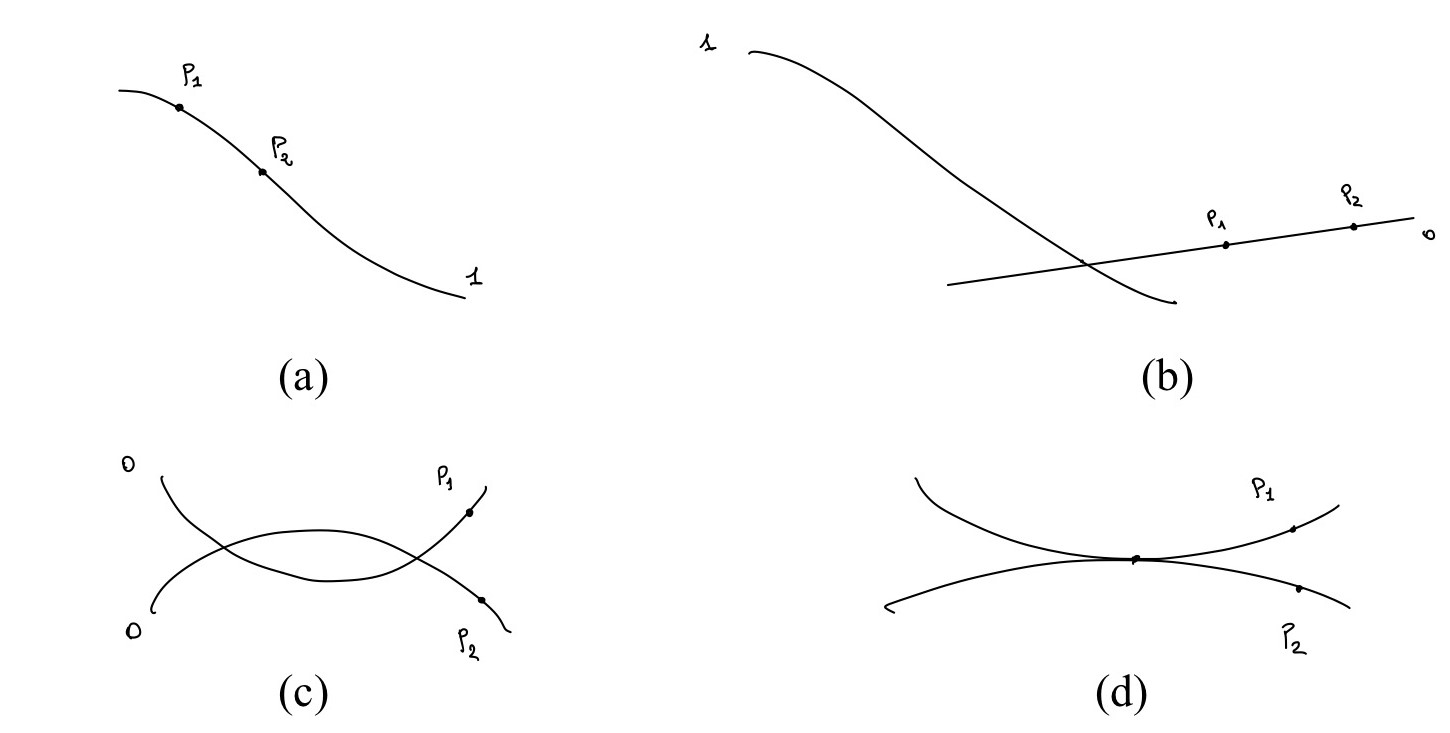}
		\label{fig:Genus1}
	\end{figure}
	In all of these four situations, it is easy to prove existence and uniqueness of the hyperelliptic involution. 
\end{proof}

\begin{remark}
In the previous lemma, we can also consider the case when  $p:=p_1=p_2$ and $(C,p)$ is an $1$-pointed $A_2$-stable curve. The same result is true.
\end{remark}
We want to treat the case of reducible curves.

\begin{definition}\label{def:subcurve}
	 Let $(C,\sigma)$ a hyperelliptic $A_r$-stable curve over an algebraically closed field. A one-equidimensional reduced closed subscheme $\Gamma \subset C$ is called a \emph{subcurve} of $C$. We denote by $\iota_{\Gamma}:\Gamma \rightarrow C$ the closed immersion. If $\Gamma$ is a subcurve of $C$, we denote by $C-C'$ the complementary subcurve of $C'$, i.e. the closure of $C\setminus C'$ in $C$. 
\end{definition}

\begin{remark}
	A subcurve is just a union of irreducible components of $C$ with the reduced scheme structure. Furthermore, given an one-equidimensional closed subset of $C$, we can always consider the associated subcurve with the reduced scheme structure.
\end{remark}

\begin{lemma}\label{lem:subcurve}
	 Let $(C,\sigma)$ be a hyperelliptic $A_r$-stable curve of genus $g\geq 2$ over an algebraically closed field and $\Gamma \subset C$ be a subcurve such that $g(\Gamma)\geq 1$. Then $\dim(\Gamma \cap \sigma(\Gamma))=1$.  
\end{lemma}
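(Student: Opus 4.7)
The plan is to argue by contradiction: assume $\Gamma$ and $\sigma(\Gamma)$ share no irreducible component (so their intersection is $0$-dimensional or empty), and derive a contradiction via an Euler-characteristic comparison along the quotient map $f : C \to Z$, which is finite and surjective of generic degree $2$.

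First I would reduce to the case where $\Gamma$ is connected. Writing $\Gamma = \bigsqcup_j \Gamma_j$ in terms of its connected components, additivity of Euler characteristic together with $g(\Gamma_j) \geq 0$ for each reduced connected proper one-dimensional scheme forces, from $g(\Gamma) \geq 1$, at least one $\Gamma_j$ to satisfy $g(\Gamma_j) \geq 1$. Since $\Gamma_j \cap \sigma(\Gamma_j) \subseteq \Gamma \cap \sigma(\Gamma)$, I may replace $\Gamma$ by such a $\Gamma_j$ and assume $\Gamma$ is connected.

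Setting $Y := f(\Gamma)$ with its reduced structure, I would next show that $f|_\Gamma : \Gamma \to Y$ is finite and birational. At the generic point $\xi$ of any irreducible component $\Gamma_1 \subseteq \Gamma$, the fibre $f^{-1}(f(\xi))$ equals $\{\xi, \sigma(\xi)\}$ with $\sigma(\xi) \in \sigma(\Gamma_1)$; by hypothesis $\sigma(\Gamma_1)$ is not a component of $\Gamma$, so the fibre of $f|_\Gamma$ over $f(\xi)$ reduces to $\{\xi\}$. The same reasoning shows that distinct components of $\Gamma$ map to distinct components of $Y$, so $f|_\Gamma$ is generically of degree $1$ and hence birational. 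Then the canonical inclusion $\cO_Y \hookrightarrow (f|_\Gamma)_\ast \cO_\Gamma$ has finite-length cokernel $Q$, giving
\[ \chi(\cO_\Gamma) \;=\; \chi(\cO_Y) + \chi(Q) \;\geq\; \chi(\cO_Y). \]
Since $Y$ is a reduced, connected, one-dimensional closed subscheme of the genus-$0$ nodal curve $Z$ (which is a tree of $\PP^1$'s), $Y$ is itself a subtree of $\PP^1$'s, so $\chi(\cO_Y) = 1$. This forces $\chi(\cO_\Gamma) \geq 1$, contradicting $g(\Gamma) = 1 - \chi(\cO_\Gamma) \geq 1$.

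The main obstacle is the rigorous identification of the generic fibre of $f|_\Gamma$ as a single reduced point: this is where one needs $\sigma$ to swap the two sheets of $f$ over the generic locus, together with the standing assumption that no component of $\Gamma$ coincides with a component of $\sigma(\Gamma)$. The remaining ingredients -- additivity of $\chi$, non-negativity of the cokernel's length, and the computation $\chi(\cO_Y) = 1$ on a subtree of $\PP^1$'s -- are routine.
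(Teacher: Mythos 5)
Your proof is correct and follows essentially the same route as the paper's: assume $\dim(\Gamma\cap\sigma(\Gamma))=0$, show that the quotient map restricted to $\Gamma$ is finite and birational onto its image in $Z$, and derive a contradiction from the genus-$0$ hypothesis on $Z$. The only (harmless) differences are that you make explicit the reduction to connected $\Gamma$ and phrase the final contradiction via $\chi(\cO_Y)=1$ for a subtree of $\PP^1$'s, where the paper instead invokes the chain of inequalities $g(Z)\geq g(f(\Gamma))\geq g(\Gamma)\geq 1$.
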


\begin{proof}
Suppose $\dim(\Gamma \cap \sigma(\Gamma))=0$ and consider the quotient morphism $f: C \arr Z:=C/\sigma$. Consider now the schematic image $f(\Gamma) \subset Z$, which is a subcurve of $Z$ as it is reduced and $f$ is finite. If we restrict $f$ to $f(\Gamma)$, we get the following commutative diagram:
$$
\begin{tikzcd}
\Gamma \arrow[r, hook] \arrow[rd] & f^{-1}f(\Gamma) \arrow[r, hook] \arrow[d, "f_{\Gamma}"] & C \arrow[d, "f"] \\
& f(\Gamma) \arrow[r, hook]                               & Z               
\end{tikzcd}
$$
where the square diagram is cartesian. If we consider $U:=f(\Gamma) \setminus f(\Gamma \cap \sigma(\Gamma))$, we get that $f^{-1}(U)$ is a disjoint union of two open subsets and $\sigma$ maps one into the other,  therefore the action on $f^{-1}(U)$ is free, giving that the degree of $f_{\Gamma}$ restricted to $f^{-1}(U)$ is $2$. The condition $\dim(\Gamma \cap \sigma(\Gamma))=0$ assures us that $f^{-1}(U)=(\Gamma \cup \sigma(\Gamma))\setminus (\Gamma \cap \sigma(\Gamma))$ is in fact dense in $f^{-1}f(\Gamma)$. Thus the morphism $f\vert_{\Gamma}:\Gamma \arr \pi(\Gamma)$ is a finite morphism which is in fact an isomorphism over $\Gamma\setminus \sigma(\Gamma)$, which is a dense open, therefore it is birational. This implies the following inequality
$$g(Z)\geq g(f(\Gamma))\geq g(\Gamma)\geq 1$$
which is absurd because $g(Z)=0$. 
\end{proof}

\begin{remark}
	Notice that this lemma implies that the only irreducible components that are not fixed by the hyperelliptic involution have (arithmetic) genus equal to $0$. Therefore, if we have a hyperelliptic involution $\sigma$ of $C$ and $\Gamma$ is an irreducible component of positive genus, we get that $\sigma(\Gamma)=\Gamma$ and $\sigma\vert_{\Gamma}$ is a hyperelliptic involution of $\Gamma$. This is true because we are quotienting by a linearly reductive group, thus the  morphism 
	$$\Gamma/\sigma \rightarrow C/\sigma$$
	induced by the closed immersion $\Gamma \subset C$ is still a closed immersion.
\end{remark}	

Let $C$ be an $A_r$-stable curve of genus $g\geq 2$. We denote by ${\rm Irr}(C)$ the set whose elements are the irreducible components of $C$. Then every automorphism $\phi$ of $C$ induces a permutation $\tau_{\phi}$ of the set ${\rm Irr}(C)$. First of all, we need to prove that the action on ${\rm Irr}(C)$ is the same for every hyperelliptic involution. Then we prove that the action on the $0$-dimensional locus described by all the intersections between the irreducible components is the same for all hyperelliptic involutions. Finally we see how these two facts imply the uniqueness of the hyperelliptic involution. We denote by $l(Q)$ the length of a $0$-dimensional subscheme $Q\subset C$.

\begin{lemma}\label{lem:exist-decomposition}
	Let $(C,\sigma)$ be a hyperelliptic $A_r$-stable curve of genus $g$ over $k$ and suppose there exists two irreducible components $\Gamma_1$ and $\Gamma_2$  of genus $0$ of $C$ such that $\sigma$ send $\Gamma_1$ in $\Gamma_2$. Let $n$ be the length of $\Gamma_1\cap \Gamma_2$. Then there exists a nonnegative integer $m$ and $m$ disjoint subcurves $D_i \subset C$ such that the following properties holds:
	\begin{itemize}
		\item[a)] $m+n\geq 3$,
		\item[b)] $\sigma(D_i)=D_i$ for every $i=1,\dots,m$,
		\item[c)] the length of the subscheme $D_i\cap \Gamma_j$ is $1$ for every $i=1,\dots,m$ and $j=1,2$ and $D_i \cap \Gamma_1 \cap \Gamma_2=\emptyset$,
		\item[d)] if we denote by $P_j^i$ the intersection $D_i \cap \Gamma_j$, we have that $(D_i,P_1^i,P_2^i)$ is a $2$-pointed $A_r$-stable curve of genus $g_i>0$ such that $\sigma\vert_{D_i}$ is a hyperelliptic involution of $D_i$ which maps $P_1^i$ to $P_2^i$,
		\item[e)] $C=\Gamma_1\cup \Gamma_2 \cup \bigcup_{i=1}^m D_i$.
	\end{itemize}
Furthermore, the following equality holds $$g=m+n-1+\sum_{i=0}^m g_i. $$
\end{lemma}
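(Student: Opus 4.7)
The plan is to use the quotient morphism $f \colon C \to Z := C/\sigma$ to locate the $D_i$'s as the preimages of the branches of $Z$ sprouting from $Y_0 := f(\Gamma_1) = f(\Gamma_2) \simeq \PP^1$. Since by hypothesis $Z$ is a reduced nodal curve of genus $0$, it is a tree of $\PP^1$'s, and the connected components of $Z \setminus Y_0$ have closures $B_1, \ldots, B_u$, each attached to $Y_0$ at a single separating node $z_l$.

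The key step is to show that at every $z_l$ one is in case $(n_1)$ of \Cref{prop:description-quotient}: the preimage $f^{-1}(z_l)$ consists of two distinct nodes $P_1^l \in \Gamma_1$ and $P_2^l = \sigma(P_1^l) \in \Gamma_2$. Otherwise the preimage would be a single $\sigma$-fixed point $p$, and since the quotient at $p$ is the node $z_l$, inspection of \Cref{prop:descr-inv} shows that $\sigma$ must preserve each local branch of $C$ at $p$. A $\sigma$-fixed point on $\Gamma_1$ would also lie on $\sigma(\Gamma_1) = \Gamma_2$, hence in $\Gamma_1 \cap \Gamma_2$; but points of $\Gamma_1 \cap \Gamma_2$ have their two branches swapped by $\sigma$ and therefore quotient to smooth points of $Y_0$, contradicting $f(p) = z_l$. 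Thus $p \notin \Gamma_1 \cup \Gamma_2$. Letting $E$ be the connected component of $C - (\Gamma_1 \cup \Gamma_2)$ containing $p$, the connectedness of $C$ forces $E \cap (\Gamma_1 \cup \Gamma_2) \neq \emptyset$; every intersection point maps into $f(E) \cap Y_0 \subseteq B_l \cap Y_0 = \{z_l\}$, hence lies in $f^{-1}(z_l) = \{p\}$, contradicting $p \notin \Gamma_1 \cup \Gamma_2$.

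Having case $(n_1)$ at every $z_l$, I set $D_l := f^{-1}(B_l)$ with its reduced subcurve structure. Each $D_l$ is $\sigma$-invariant, meets $\Gamma_j$ transversally only at the node $P_j^l$, and avoids $\Gamma_1 \cap \Gamma_2$, giving $(b)$, $(c)$, and $(e)$. Connectedness of $D_l$---that is, ruling out a splitting $D_l = E \sqcup \sigma(E)$ where each factor maps isomorphically to $B_l$---follows from a leaf argument: such an $E$ would be a connected tree of $\PP^1$'s meeting the rest of $C$ only at $P_1^l$, so any leaf $\PP^1$ of $E$ would carry at most two special points, forcing $\deg \omega_C$ on that leaf to be $\leq 0$, contradicting $A_r$-stability. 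The same argument applied inside a hypothetically genus-zero $D_l$ (a tree of $\PP^1$'s with only $P_1^l$ and $P_2^l$ as markings) forces $g_l \geq 1$. Finally, $\omega_C|_{D_l} \simeq \omega_{D_l}(P_1^l + P_2^l)$ by adjunction, so ampleness of $\omega_C$ gives ampleness of $\omega_{D_l}(P_1^l + P_2^l)$, proving $A_r$-stability in $(d)$; and $\sigma|_{D_l}$ is hyperelliptic since its quotient $D_l/\sigma|_{D_l} \simeq B_l$ is a reduced connected nodal curve of genus zero.

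The genus formula and the inequality $m + n \geq 3$ are numerical checks. Partially normalizing $C$ at all boundary singularities produces $\Gamma_1 \sqcup \Gamma_2 \sqcup \bigsqcup_l D_l$, of arithmetic genus $\sum_l g_l - m - 1$; the $\delta$-invariants of the re-gluings contribute $k_i$ at each $A_{2k_i-1}$-point of $\Gamma_1 \cap \Gamma_2$ (summing to $n$) and $1$ at each of the $2m$ nodes $P_j^l$, so that $g(C) = \sum_l g_l + n + m - 1$. Computing $\deg \omega_C|_{\Gamma_1}$ via the same conductor contributions yields $-2 + n + m$, which must be positive by ampleness of $\omega_C$, whence $m + n \geq 3$. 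The main obstacle throughout is the fixed-point analysis of the second paragraph: it requires combining the classification of involutions on $A_r$-singularities with the global connectedness of $C$ and the tree structure of $Z$.
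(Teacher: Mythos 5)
Your proof is correct and follows essentially the same route as the paper's: both define the $D_i$ as the reduced preimages under the quotient map of the closures of the connected components of $Z$ minus the image of $\Gamma_1\cup\Gamma_2$, establish case (n1) of \Cref{prop:description-quotient} at the attaching nodes by ruling out a $\sigma$-fixed point there, and deduce connectedness of each $D_i$ (hence $g_i>0$ and stability) from the ampleness of $\omega_C$. Your write-up merely makes explicit a few steps the paper compresses, such as the fixed-point analysis and the fact that a $\sigma$-swapped splitting of $D_i$ would consist of genus-$0$ pieces, which the paper obtains by citing \Cref{lem:subcurve}.
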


Before proving the lemma, let us explain it more concretely. First of all, the intersections in the lemma are scheme-theoretic, therefore we can have non-reduced ones. More precisely, the intersections have to be supported in singular points of type $A_{2h-1}$, as the local ring is not integral. It is easy to prove that the scheme-theoretic intersection in such a point is in fact of length $h$. Notice that both $m$ or $n$ can be zero.

\begin{figure}[H]
	\caption{Decomposition as in \Cref{lem:exist-decomposition}}
	\centering
	\includegraphics[width=1\textwidth]{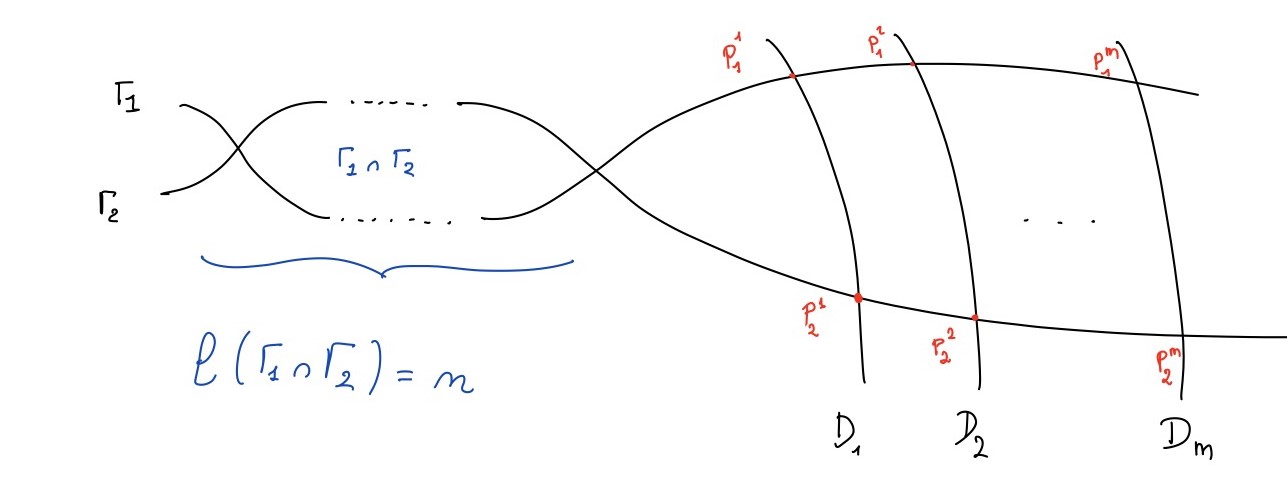}
	\label{fig:Decomp}
\end{figure}
	
\begin{proof}
	  Let $f:C\rightarrow Z$ be the quotient morphism and $\Gamma$ the image of $\Gamma_1\cup\Gamma_2$ through $\pi$. Because every node in $Z$ is a separating node, we know that $Z-\Gamma = \bigsqcup_{i=1}^m E_i$, i.e. it is a disjoint union (possibly empty) of $m$ subcurves of $Z$, which are still reduced, connected and of genus $0$.  Let $D_i$ be the subcurve of $C$ associated to the closed subset $\pi^{-1}(E_i)$ of $C$. We prove that $D_1,\dots,D_m$ verify the properties listed in the proposition.
	
	 Clearly, $D_i\cap D_j=\emptyset$ for every $i\neq j$ by construction. Properties b) and e) are verified by construction as well. Notice that $\Gamma_1\cap\Gamma_2\cap D_i=\emptyset$ for every $i=1,\dots,m$, otherwise if $p\in \Gamma_1\cap\Gamma_2\cap D_i$ is a closed point, then the local ring $\cO_{C,p}$ would have $3$ minimal primes. This cannot occur as the only singularities allowed are of type $A_r$, which have at most $2$ minimal primes. Therefore if we define $Q_i:=E_i\cap \Gamma$, we have that $\pi^{-1}(Q_i)$ is disconnected (as it does not belong to $\pi(\Gamma_1\cap\Gamma_2)$) and thus we are in the situation (n1) of \Cref{prop:description-quotient}. Property c) follows easily from this. Because $C$ is $A_r$-stable, we also get property a). 
	 
	 Regarding property d), we know that $D_i$ is reduced and that $D_i\cap (C-D_i)$ has length $2$, hence it is enough to prove $D_i$ is connected and then the statement follows. However, suppose $D_i$ is not connected, namely  $D_i=D_i^1 \bigsqcup D_i^2$ with $D_i^j$ two subcurves of $C$ for $j=1,2$ such that $\sigma(D_i^1)=D_i^2$. Using \Cref{lem:subcurve} again, we get $g(D_i^j)=0$ for $j=1,2$. This is not possible because of the stability condition on $C$. The genus formula follows from a straightforward computation using \Cref{rem:genus-count}.
\end{proof}	

Now that we have described the geometric structure of $C$, we can use it to prove that any other hyperelliptic involution has to act in the same way over the set ${\rm Irr}(C)$ of the irreducible components. 
\begin{lemma}
	Let $C/k$ be a $A_r$-stable curve of genus $g$ and suppose we have a decomposition as in \Cref{lem:exist-decomposition}. Thus every hyperelliptic involution $\sigma$ of $C$ commutes with the decomposition, i.e. we have $\sigma(\Gamma_1)=\Gamma_2$.
\end{lemma}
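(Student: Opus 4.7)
The plan is to let $\sigma'$ be an arbitrary hyperelliptic involution of $C$ and to rule out every possibility for $\sigma'(\Gamma_1)$ except $\sigma'(\Gamma_1)=\Gamma_2$. By the remark following \Cref{lem:subcurve}, $\sigma'$ fixes every positive-genus irreducible component of $C$, so $\sigma'(\Gamma_1)$ is forced to be a genus-zero component: either $\Gamma_1$, $\Gamma_2$, or some $\PP^1$-component $\Theta$ of one of the subcurves $D_{i_0}$.

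First I would establish a structural step. For each $i$ the bound $g(D_i)=g_i\geq 1$ together with \Cref{lem:subcurve} gives $\dim(D_i\cap\sigma'(D_i))=1$, so $\sigma'(D_i)$ shares at least one irreducible component with $D_i$. Because the $D_j$'s are pairwise disjoint and meet the rest of $C$ only through the attachment points $P_1^j,P_2^j\in\Gamma_1\cup\Gamma_2$, a connectedness analysis of the image $\sigma'(D_i)$ shows that it cannot contain a component of a different $D_{i'}$ (such a component, together with the one shared with $D_i$, would disconnect $\sigma'(D_i)$ unless linked through $\Gamma_1$ or $\Gamma_2$), and it contains $\Gamma_k$ only if $\sigma'(\Gamma_k)\in D_i$. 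Consequently $\sigma'(D_i)=D_i$ whenever $i$ is not one of the (at most two) indices $i_0,j_0$ where $\sigma'(\Gamma_1)$ or $\sigma'(\Gamma_2)$ sits.

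In \emph{Case~1}, when $\sigma'(\Gamma_1)=\Gamma_1$ and $\sigma'(\Gamma_2)=\Gamma_2$, the structural step gives $\sigma'(D_i)=D_i$ for every $i$, so each attachment point $P_j^i$ is a fixed point of $\sigma'|_{\Gamma_j}$. I would analyze the quotient $Z':=C/\sigma'$, a genus-$0$ nodal curve: each $\sigma'$-orbit on the singular points of $\Gamma_1\cap\Gamma_2$ produces exactly one node of $\bar\Gamma_1\cap\bar\Gamma_2$, and each $\bar D_i$ yields one additional path between $\bar\Gamma_1$ and $\bar\Gamma_2$; the tree structure of $Z'$ therefore forces $\alpha+\beta+m\leq 1$, where $\alpha$ is the number of fixed singular points and $\beta$ the number of swap-pairs of $\sigma'$ on $\Gamma_1\cap\Gamma_2$. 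Combining \Cref{prop:descr-inv} with the requirement that every quotient singularity in $Z'$ be an $A_1$-node (which rules out the $A_{2k-1}$ cases with $k\geq 3$ at fixed points and forces swap-pairs to be pairs of nodes) yields $n\leq 2(\alpha+\beta)$, and pairing this with the stability bound $m+n\geq 3$ gives $m\leq -1$, a contradiction.

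In \emph{Case~2}, when $\sigma'(\Gamma_1)=\Theta\subset D_{i_0}$, write $\Theta':=\sigma'(\Gamma_2)$. The structural step together with $\sigma'(P_1^i)\in\sigma'(\Gamma_1)\cap\sigma'(D_i)=\Theta\cap D_i=\emptyset$ is immediately absurd for every $i\notin\{i_0,j_0\}$, forcing $m\leq 2$ (or $m\leq 1$ when $i_0=j_0$). I would then split on $\sigma'(\Gamma_2)$: if $\sigma'(\Gamma_2)=\Gamma_2$, then $\sigma'(\Gamma_1\cap\Gamma_2)=\Theta\cap\Gamma_2\subseteq\{P_2^{i_0}\}$ gives $n\leq 1$ while the stability bound forces $n\geq 3-m\geq 2$; if $\Theta'\subset D_{j_0}$ with $j_0\neq i_0$, then $\sigma'(\Gamma_1\cap\Gamma_2)\subseteq D_{i_0}\cap D_{j_0}=\emptyset$ forces $n=0$ and hence $m\geq 3$, contradicting $m\leq 2$; finally if both $\Theta,\Theta'\subset D_{i_0}$, then $m=1$ and $n\geq 2$, so $g(\Gamma_1\cup\Gamma_2)=n-1\geq 1$ and \Cref{lem:subcurve} applied to $\Gamma_1\cup\Gamma_2$ demands $\dim\bigl((\Theta\cup\Theta')\cap(\Gamma_1\cup\Gamma_2)\bigr)=1$, which is absurd since this intersection is contained in $D_{i_0}\cap(\Gamma_1\cup\Gamma_2)=\{P_1^{i_0},P_2^{i_0}\}$. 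The hardest step will be the bookkeeping in Case~1, where one has to convert the tree condition on $Z'$ together with the local classification in \Cref{prop:descr-inv} into the arithmetic inequality $n\leq 2(\alpha+\beta)$ that collides with $m+n\geq 3$.
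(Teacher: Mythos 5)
Your argument is correct and uses the same ingredients as the paper's proof --- \Cref{lem:subcurve}, the local classification of \Cref{prop:descr-inv}, the stability inequality $m+n\geq 3$, and the fact that the genus-$0$ quotient has a tree as dual graph --- but it organizes them differently and, in places, more efficiently. The paper runs four separate cases over the ranges of $(m,n)$ compatible with $m+n\geq 3$, re-deriving in each one that the relevant subcurves cannot be permuted badly; you instead split on where $\sigma'(\Gamma_1)$ lands. Your ``structural step'' ($\sigma'(D_i)=D_i$ for every $i$ not hit by $\sigma'(\Gamma_1)$ or $\sigma'(\Gamma_2)$; the clean way to say it is that if $\sigma'(D_i)$ contains neither $\Gamma_1$ nor $\Gamma_2$ as a component then it sits inside $\bigcup_j D_j$, hence inside a single $D_j$ by connectedness, hence equals $D_i$ by \Cref{lem:subcurve}) packages in one statement what the paper redoes case by case, and your Case~2 then closes with three short length counts. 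Your Case~1 collision $n\leq 2(\alpha+\beta)$, $\alpha+\beta+m\leq 1$, $m+n\geq 3$ is a uniform version of the paper's scattered ``both components fixed is impossible'' arguments; I checked the bound $n\leq 2(\alpha+\beta)$ against the corollary to \Cref{prop:descr-inv}: a branch-preserving involution at an $A_{2h-1}$-point has nodal quotient only for $h\leq 2$, and a swapped pair must consist of two ordinary nodes because the quotient is \'etale there, so each orbit contributes at most $2$ to $n$, exactly the local input the paper uses in its $(m=0,n\geq 3)$ case.

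One case is missing from your division: $\sigma'(\Gamma_1)=\Gamma_1$ together with $\sigma'(\Gamma_2)=\Theta'\subset D_{j_0}$ falls under neither your Case~1 (which assumes both components are fixed) nor your Case~2 (which assumes $\sigma'(\Gamma_1)$ lands in some $D_{i_0}$); note that fixing $\sigma'(\Gamma_1)=\Gamma_1$ does not force $\sigma'(\Gamma_2)=\Gamma_2$. Since the decomposition of \Cref{lem:exist-decomposition} is symmetric in $\Gamma_1$ and $\Gamma_2$, this is literally your sub-case ``$\sigma'(\Gamma_2)=\Gamma_2$, $\sigma'(\Gamma_1)\subset D_{i_0}$'' with the labels $1$ and $2$ exchanged, so it costs one sentence to dispose of --- but it should be said.
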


\begin{proof}
	Let $\pi:C\rightarrow Z$ be the quotient morphism. We start with the case $m=0$ and $n:=l(\Gamma_1\cap \Gamma_2)\geq 3$. Suppose $\sigma(\Gamma_j)=\Gamma_j$ for $j=1,2$. Recall that the only singularities that can appear in the intersection $\Gamma_1\cap \Gamma_2$ are of the form $A_{2h-1}$. As the involution does not exchange the two irreducible components, \Cref{prop:descr-inv} implies that the only possible singularities in the intersection are nodes or tacnodes with nodes as quotients. Because $n\geq 3$ we have that the quotient has two irreducible components intersecting in at least two nodes, but this does not have genus $0$. Therefore $\sigma(\Gamma_1)=\Gamma_2$.
	
	Suppose now $m\geq 1$ and $n\geq 2$. Because $n\geq 2$ then $\Gamma_1 \cup \Gamma_2$ has positive genus therefore $\sigma(\Gamma_1\cup\Gamma_2)$ and $\Gamma_1\cup\Gamma_2$ have a common component. Suppose $\sigma(\Gamma_1)\neq \Gamma_2$ and thus without loss of generality $\sigma(\Gamma_2)=\Gamma_2$. If $\sigma(\Gamma_1)=\Gamma_1$, then $\pi(\Gamma_1)\cap\pi(\Gamma_2)$ contains at least a node because $n\geq 2$ and the subcurve $\pi(\Gamma_1\cup\Gamma_2\cup D_1)$ of $Z$ does not have genus $0$ as $D_1$ is connected. Therefore we can suppose $\sigma(\Gamma_1)\subset D_1$. Then $\sigma(\Gamma_1\cap\Gamma_2)\subset D_1\cap\Gamma_2$, but this implies $l(\Gamma_1\cap\Gamma_2)\leq 1$ which is in contradiction with $n\geq 2$.
	
	Now we consider the case $n=1$ and $m\geq 2$. Again it is easy to prove that we cannot have $\sigma(\Gamma_i)=\Gamma_i$ for $i=1,2$. Without loss of generality we can suppose $\sigma(\Gamma_1)\subset D_1$. However $\sigma(D_2)\neq D_2$ as $D_2\cap \Gamma_1\neq \emptyset$ and thus $\sigma(D_2)\cap D_2$ has to share an irreducible component because of \Cref{lem:subcurve}. This implies that $\Gamma_2 \subset \sigma(D_2)$ because $D_2$ is connected. Thus $\sigma(\Gamma_1)\cap\sigma(\Gamma_2)\subset D_1\cap D_2 = \emptyset$, which is absurd. The only possibility is $\sigma(\Gamma_1)=\Gamma_2$.
	
	Finally, we consider the case $n=0$ and $m\geq 3$. As above, we cannot have $\sigma(\Gamma_j)=\Gamma_j$ for $j=1,2$, otherwise the quotient does not have genus $0$. If $\sigma(\Gamma_1)\subset D_1$, then $\sigma(\Gamma_2)$ is contained in only one of the subcurves $\{D_i\}_{i=1\dots,m}$. Thus there exists at least one between the $D_i$'s, say $D_2$, which is stable under the action of the involution (because $m\geq 3$, $D_i$ is connected for every $i=1,\dots,m$ and $\dim D_i \cap \sigma(D_i) = 1$). This is absurd because $\sigma(\Gamma_1)\cap \sigma(D_2) \subset D_1\cap D_2=\emptyset$. This implies $\sigma(\Gamma_1)=\Gamma_2$ and we are done.
\end{proof}

\begin{corollary}\label{cor:action-irrcomp}
	 If $C/k$ is an $A_r$-stable curve of genus $g\geq 2$ and $\sigma_1$ and $\sigma_2$ are two hyperelliptic involution, then $\tau_{\sigma_1}=\tau_{\sigma_2}$.
\end{corollary}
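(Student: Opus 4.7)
The plan is to argue componentwise: for every irreducible component $\Gamma$ of $C$, show that $\sigma_1(\Gamma) = \sigma_2(\Gamma)$. Since the previous two lemmas together give a very tight control on how hyperelliptic involutions permute components, the argument should be essentially a case analysis and should not require any new computation.

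First I would dispose of positive-genus components. If $g(\Gamma) \geq 1$, the remark following \Cref{lem:subcurve} (applied separately to $\sigma_1$ and to $\sigma_2$) gives $\sigma_i(\Gamma) = \Gamma$ for $i=1,2$, because an irreducible component of positive genus has to be preserved by any hyperelliptic involution. So $\tau_{\sigma_1}$ and $\tau_{\sigma_2}$ agree on the positive-genus part of $\mathrm{Irr}(C)$.

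For a genus-$0$ component $\Gamma$, I would split into two subcases according to whether $\sigma_1$ fixes $\Gamma$ or not. If $\sigma_1(\Gamma) = \Gamma'$ with $\Gamma' \neq \Gamma$, then $\sigma_1$ swaps $\Gamma_1 := \Gamma$ and $\Gamma_2 := \Gamma'$, and \Cref{lem:exist-decomposition} applied to $(\Gamma,\Gamma',\sigma_1)$ produces a decomposition $C = \Gamma \cup \Gamma' \cup \bigcup_{i=1}^m D_i$ of the type required by the preceding lemma. That lemma, applied now to $\sigma_2$, forces $\sigma_2(\Gamma) = \Gamma'$, so $\sigma_1$ and $\sigma_2$ agree on $\Gamma$. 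Conversely, if $\sigma_1(\Gamma) = \Gamma$, I would argue by contradiction: assuming $\sigma_2(\Gamma) = \Gamma'' \neq \Gamma$, \Cref{lem:exist-decomposition} applied to $(\Gamma,\Gamma'',\sigma_2)$ yields a decomposition, and then the preceding lemma applied to $\sigma_1$ would give $\sigma_1(\Gamma) = \Gamma''$, contradicting $\sigma_1(\Gamma) = \Gamma$.

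The conceptual point that makes this short is that the preceding lemma is really a statement about decompositions rather than about the particular involution that produced them: once any hyperelliptic involution exhibits a pair of exchanged genus-$0$ components, every other hyperelliptic involution is forced to exchange the same pair. There is no serious obstacle to overcome here; the work has already been done in \Cref{lem:subcurve} and \Cref{lem:exist-decomposition}, and this corollary is just the bookkeeping that assembles them.
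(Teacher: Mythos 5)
Your proposal is correct and is exactly the argument the paper intends: the corollary is stated without proof precisely because it is the bookkeeping you describe, combining the remark after \Cref{lem:subcurve} (positive-genus components are fixed by every hyperelliptic involution) with \Cref{lem:exist-decomposition} and the lemma immediately preceding the corollary (any decomposition exhibited by one involution constrains all others). The case analysis on genus-$0$ components, including the contradiction argument when $\sigma_1$ fixes $\Gamma$ but $\sigma_2$ allegedly does not, is complete and sound.
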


Let us study now the action on the intersections between irreducible components.

\begin{lemma}\label{lem:action-intersection}
	Let $\Gamma_1$ and $\Gamma_2$ two irreducible components of $C$ and $p\in \Gamma_1\cap \Gamma_2$ be a closed point. If $\sigma_1$ and $\sigma_2$ are two hyperelliptic involution of $C$, then $\sigma_1(p)=\sigma_2(p)$. 
\end{lemma}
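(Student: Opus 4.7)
The strategy exploits Corollary \ref{cor:action-irrcomp}: $\sigma_1$ and $\sigma_2$ induce the same permutation $\tau$ on $\mathrm{Irr}(C)$, so either both $\sigma_i$ fix each of $\Gamma_1,\Gamma_2$ setwise or both swap them. I would treat these two scenarios separately.

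In the \emph{swap case} $\tau(\Gamma_1)=\Gamma_2$, Lemma \ref{lem:exist-decomposition} supplies the decomposition $C=\Gamma_1\cup\Gamma_2\cup\bigsqcup_{i=1}^m D_i$. Any $p\in\Gamma_1\cap\Gamma_2$ is an $A_{2h-1}$-singularity with one branch on $\Gamma_1$ and one on $\Gamma_2$. If $p$ were a node ($h=1$) fixed by $\sigma_i$, the involution would locally swap the two branches and hence be of type $(c_3)$ in \Cref{prop:descr-inv}; but Remark \ref{rem:fix-locus} excludes this case for hyperelliptic involutions (its fixed locus is one-dimensional). Therefore any node in $\Gamma_1\cap\Gamma_2$ would have to be swapped with another node, and a genus computation on the connected image $\pi_i(\Gamma_1\cup\Gamma_2)\subset Z_i$ shows that such pairings introduce self-identifications and thus positive arithmetic genus, contradicting $g(Z_i)=0$. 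Consequently $\Gamma_1\cap\Gamma_2$ contains only $A_{2h-1}$-singularities with $h\geq 2$, and for each such $p$ an application of \Cref{prop:description-quotient} (ruling out cases $(\mathrm{n1})$--$(\mathrm{n3})$ when one preimage is a tacnode or higher) forces $\sigma_i(p)=p$. Hence $\sigma_1(p)=p=\sigma_2(p)$.

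In the \emph{preserving case}, each $\sigma_i|_{\Gamma_j}$ is an involution of $\Gamma_j$ for $j=1,2$. If some $\Gamma_j$ has arithmetic genus at least $2$, then by the remark following \Cref{lem:subcurve} the restriction $\sigma_i|_{\Gamma_j}$ is a hyperelliptic involution of the integral curve $\Gamma_j$, and \Cref{prop:integral} forces $\sigma_1|_{\Gamma_j}=\sigma_2|_{\Gamma_j}$, whence $\sigma_1(p)=\sigma_2(p)$ for every $p\in\Gamma_1\cap\Gamma_2\subset\Gamma_j$. When both $\Gamma_1,\Gamma_2$ have arithmetic genus at most $1$, further case analysis is needed: one invokes \Cref{lem:genus1} to obtain uniqueness of the hyperelliptic involution on a genus $1$ integral component once its marked points are specified (here provided by the intersection with the rest of $C$, consistently labeled by $\tau$), while for a rational component the stability condition on $C$ together with the requirement that the quotient subcurve of $Z_i$ be of arithmetic genus $0$ pins down the restriction uniquely.

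I expect the main obstacle to lie in the low-genus sub-case of the preserving case: there no single uniqueness statement analogous to \Cref{prop:integral} applies, so one must combine \Cref{lem:genus1} with the structural constraints coming from the nodal genus-zero quotient $Z_i$ and the common permutation $\tau$, carefully tracking how each singular point of $\Gamma_1\cap\Gamma_2$ sits relative to the rest of $C$ to force the two restrictions to agree.
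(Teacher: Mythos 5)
There are two genuine problems with your argument. First, in the swap case you misread Proposition \ref{prop:descr-inv}: an involution of a node that exchanges the two branches is of type $(c_1)$ (in coordinates $u=y+x$, $v=y-x$ where the branches are $u=0$ and $v=0$, the map $x\mapsto x$, $y\mapsto -y$ sends $u\mapsto -v$, $v\mapsto -u$), not of type $(c_3)$; the type $(c_3)$ involution $x\mapsto y$, $y\mapsto x$ \emph{preserves} each branch and fixes one of them pointwise. Type $(c_1)$ has finite fixed locus and smooth quotient, so a fixed node of $\Gamma_1\cap\Gamma_2$ whose branches are swapped is perfectly admissible --- indeed it is the generic situation: two copies of $\PP^1$ glued at $g+1$ nodes with the swap involution is a hyperelliptic $A_1$-stable curve (this is exactly the family appearing in the paper's discussion after \Cref{theo:univ-closed}). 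Your intermediate conclusion that $\Gamma_1\cap\Gamma_2$ contains no nodes is therefore false, even though the final claim $\sigma(p)=p$ is true. The correct (and much shorter) argument is the one the paper uses: $\sigma$ preserves the set $\Gamma_1\cap\Gamma_2$, and since the quotient of $\Gamma_1\cup\Gamma_2$ by the swap is an irreducible genus $0$ curve, identifying two distinct intersection points would create a non-separating node in $Z$, contradicting $g(Z)=0$; hence every point of $\Gamma_1\cap\Gamma_2$ is fixed.

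Second, in the preserving case your plan is both incomplete and at risk of circularity. You leave the low-genus sub-case open, and the tool you propose for a genus $1$ component, \Cref{lem:genus1}, requires knowing where the two marked points (the intersections with the rest of the curve) are sent --- which is precisely the content of the lemma you are trying to prove. You are also proving far more than is asked: the statement only concerns the action on $\Gamma_1\cap\Gamma_2$, not the full restrictions $\sigma_i\vert_{\Gamma_j}$. The paper sidesteps all of this: when $\sigma(\Gamma_1)\neq\Gamma_2$, the images $\pi(\Gamma_1)$ and $\pi(\Gamma_2)$ are distinct components of the genus $0$ curve $Z$, so they meet in a single separating node, and by \Cref{prop:description-quotient} its fiber $\Gamma_1\cap\Gamma_2$ is supported either on one point (a node or a tacnode, so there is nothing to prove) or on two disjoint nodes, which every hyperelliptic involution must exchange since otherwise the quotient would acquire positive genus.
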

\begin{proof}
 Suppose $\sigma$ is a hyperelliptic involution such that $\sigma(\Gamma_1)=\Gamma_2$. Because the quotient of $\Gamma_1\cup\Gamma_2$ by $\sigma$ is irreducible of genus $0$, we have that $\sigma(p)=p$ for every $p \in \Gamma_1\cap\Gamma_2$. Suppose now $\sigma(\Gamma_1)\neq \Gamma_2$ and denote by $\pi:C\rightarrow Z$ the quotient morphism. Clearly $\pi(\Gamma_1\cup\Gamma_2)$ is not irreducible and $\pi(\Gamma_1\cap\Gamma_2)$ is a separating node, therefore $\Gamma_1\cap\Gamma_2$ is either supported on a node, a tacnode or two disjoint nodes exchanged by the involution. In the first two cases, the intersection is supported on one point therefore there is nothing to prove. If the intersection is supported on two nodes, then every involution has to exchange them because otherwise the quotient would not have genus $0$.
\end{proof}

\begin{remark}\label{rem:part-case}
	We can say more about the case $\sigma(\Gamma_1)=\Gamma_2$. In this situation, we claim every hyperelliptic involution acts trivially on $\Gamma_1\cap \Gamma_2$ scheme-theoretically, not only set-theoretically. In fact, suppose we have a fix point $p\in \Gamma_1\cap\Gamma_2$.  We know that $\Gamma_1$ and $\Gamma_2$ are irreducible components of genus $0$ and the image of $p$ in the quotient by $\sigma$ is a smooth point. Therefore we can consider the local ring $A:=\cO_{C,p}$ and we know that the invariant subalgebra is a DVR which is denoted by $R$. By flatness, we know that $A=R[y]/(y^2-h)$ where $h$ is an element of $R$ and $\sigma$ is defined by the association $y\mapsto -y$. By hypothesis, $A$ has two minimal primes, namely $p$ and $q$, such that $p\cap q=0$. We can consider the morphism 
	$$A \rightarrow A/p\oplus A/q$$
	which is injective after completion, therefore injective. Because $R$ is a DVR, an easy computation shows that $h$ is a square, thus $A$ is of the form $R[y]/(y^2-r^2)$ with $r\in R$ and the involution is defined by the formula $\sigma(a+by)=a-by$. Clearly we have that the two minimal primes are $y-r$ and $y+r$ and the intersection $\Gamma_1\cap\Gamma_2$ is contained in the fixed locus of the action because is defined by the ideal $(y,r)$ (the fixed locus is defined by the ideal $(y)$).
 \end{remark}
Finally we can prove the theorem.
\begin{theorem}
	Let $C/k$ be a $A_r$-stable curves of genus $g\geq 2$. If $\sigma_1$ and $\sigma_2$ are two hyperelliptic involution of $C$, then $\sigma_1=\sigma_2$.
\end{theorem}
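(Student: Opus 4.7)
The plan is to reduce the global equality $\sigma_1 = \sigma_2$ to a componentwise verification. By \Cref{cor:action-irrcomp}, $\sigma_1$ and $\sigma_2$ induce the same permutation $\pi$ of ${\rm Irr}(C)$, so it suffices to show that they agree on each irreducible component $\Gamma$ of $C$. I would dispatch by the arithmetic genus of $\Gamma$ and whether $\pi$ fixes $\Gamma$ or pairs it with another component; in every case, \Cref{lem:action-intersection} will supply matching values on the intersections of components needed for a uniqueness argument.

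For a $\pi$-fixed $\Gamma$ with $g(\Gamma)\geq 2$, the restriction $\sigma_i|_\Gamma$ is a hyperelliptic involution of the integral curve $\Gamma$ (its quotient is an irreducible subcurve of the genus-$0$ quotient of $C$, hence $\PP^1$), and the proof of \Cref{prop:integral} applies verbatim to $\Gamma$. For $g(\Gamma)=1$, $g(C)\geq 2$ forces $\Gamma\subsetneq C$, so $\Gamma\cap(C-\Gamma)\neq\emptyset$; choosing an intersection point $p$, either \Cref{lem:genus1} (if $p$ is swapped with a second intersection point) or its one-pointed variant in the following remark (if $p$ is fixed, which is automatic when there is only one intersection) forces $\sigma_i|_\Gamma$ to be unique, and the two involutions pick the same $p$ by \Cref{lem:action-intersection}.

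The delicate case is $g(\Gamma)=0$, where $\Gamma\simeq\PP^1$ since a reduced irreducible curve of arithmetic genus zero is smooth. Let $n$ be the set-theoretic cardinality of $\Gamma\cap(C-\Gamma)$; each intersection $p_i$ is an $A_{2k_i-1}$-singularity with $\Gamma$ as one branch, and stability reads $\sum k_i\geq 3$. If $n\geq 3$, \Cref{lem:action-intersection} furnishes three points of $\PP^1$ on which $\sigma_1|_\Gamma$ and $\sigma_2|_\Gamma$ agree, and two automorphisms of $\PP^1$ agreeing on three points coincide. If $n=2$ with $p_1,p_2$ both fixed by $\sigma_i|_\Gamma$, an involution of $\PP^1$ is determined by its (exactly two) fixed points. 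The remaining subcases must be excluded: for $n=1$, the unique intersection $p$ has both branches fixed by $\sigma_i$ (the branch of $\Gamma$ because $\sigma_i(\Gamma)=\Gamma$ and $\Gamma$ is smooth at $p$, and the other branch because the single component of $C-\Gamma$ through $p$ is forced to be $\pi$-fixed), so \Cref{prop:descr-inv} plus nodality of the quotient forces $k_1\leq 2$, contradicting $k_1\geq 3$; for $n=2$ with $\sigma_i|_\Gamma$ swapping $p_1, p_2$, one has $k_1=k_2=k$ and the invariant ring $(\widehat{\cO}_{C,p_1}\times\widehat{\cO}_{C,p_2})^{\sigma}\simeq A_{2k-1}$ must be nodal, forcing $k=1$ and $\sum k_i=2<3$.

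For a $\pi$-swapped pair $\{\Gamma_1,\Gamma_2\}$, \Cref{lem:subcurve} gives $g(\Gamma_j)=0$, hence $\Gamma_j\simeq\PP^1$. \Cref{lem:exist-decomposition} then yields integers $n:=\ell(\Gamma_1\cap\Gamma_2)$ and $m\geq 0$ subcurves $D_i$ with $m+n\geq 3$, so that the $m$ points $P_1^i := D_i\cap \Gamma_1$ and the $n$ points of $\Gamma_1\cap\Gamma_2$ furnish $m+n$ locations on $\Gamma_1$ at which the two isomorphisms $\sigma_j|_{\Gamma_1}\colon \Gamma_1\to \Gamma_2$ have the same image (the $P_1^i$'s map to $P_2^i$ by construction of the decomposition, and the points of $\Gamma_1\cap\Gamma_2$ are fixed by \Cref{rem:part-case} together with \Cref{lem:action-intersection}). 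Two isomorphisms of $\PP^1$ agreeing on three points are equal, concluding this case. The main technical obstacle is the fixed $\PP^1$ subcase with $n\leq 2$, where the local classification \Cref{prop:descr-inv} and the stability bound must be combined to rule out the single-intersection and swapped-intersection configurations.
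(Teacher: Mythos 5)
Your proof is correct and follows essentially the same route as the paper: reduction to irreducible components via \Cref{cor:action-irrcomp} and \Cref{lem:action-intersection}, then \Cref{prop:integral} for genus $\geq 2$, \Cref{lem:genus1} for genus $1$, and the decomposition of \Cref{lem:exist-decomposition} together with \Cref{rem:part-case} for a swapped pair of rational components. Your explicit exclusion of the $n=1$ and swapped-$n=2$ configurations for a fixed rational component (via the local classification and stability) spells out what the paper compresses into ``by stability we have tacnodes or nodes as intersections,'' and is a welcome clarification rather than a deviation.
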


\begin{proof}
	 We prove that two hyperelliptic involutions coincide restricted to a decomposition of $C$ in subcurves.
	The integral case is done in \Cref{prop:integral}. Because of \Cref{cor:action-irrcomp} and \Cref{lem:action-intersection}, we know that every hyperelliptic involution $\sigma$ acts in the same way on the set of irreducible components and on the (set-theoretic) intersection on every pair of irreducible components. 
	
	 If we consider an irreducible component of genus greater than $2$, then $\sigma$ restricts to the irreducible component, it still is a hyperelliptic involution  and we can use \Cref{prop:integral} to get the uniqueness restricted to the component. If we restrict $\sigma$ to a component $\Gamma$ of genus $1$, we still have that it is a hyperelliptic involution. If we consider a point $p \in \Gamma\cap C-\Gamma$, we get that every other hyperelliptic involution $\sigma'$ has to verify $\sigma'(p)=\sigma(p)$ thanks to \Cref{lem:action-intersection}. Then we can apply \Cref{lem:genus1} to get the uniqueness restricted to the component $\Gamma$. 
	  
	  Finally, if $\Gamma$ has genus $0$, one can have only two possibilities, namely $\sigma(\Gamma)=\Gamma$ or $\sigma(\Gamma)\neq \Gamma$. If $\sigma(\Gamma)=\Gamma$, then two different involutions have to coincide when restricted to $\Gamma$ in at least two points by stability (we have tacnodes or nodes as intersections). This easily implies they coincide on the whole component. If $\sigma(\Gamma)\neq \Gamma$, it is easy to see that every hyperelliptic involution restricted to $\sigma(\Gamma) \cup \Gamma$ are determined by an involution of $\PP^1$ (one can just choose two identification $\Gamma\simeq \PP^1$ and $\sigma(\Gamma)\simeq \PP^1$). Because $\sigma$ does not fix $\Gamma$, we can use \Cref{lem:exist-decomposition} to get a decomposition $C$ in subcurves $D_1,\dots,D_m$ with the properties listed in the lemma. The stability condition, i.e. $m+l(\Gamma\cap \sigma(\Gamma))\geq 3$, implies that two hyperelliptic involution restricted to $\Gamma\cup \sigma(\Gamma)$ are determined by two involution of $\PP^1$ which coincide restricted to a subscheme of length $m+l(\Gamma\cap\sigma(\Gamma))$, therefore they coincide.
\end{proof}

\begin{remark}
	 Notice that for the case $m=0$ and $\Gamma\cap\sigma(\Gamma)$ supported on a single point we really need \Cref{rem:part-case}. In fact for the case of two $\PP^1$ glued on a subscheme of length greater or equal than $3$ concentrated on a single point, the condition that two hyperelliptic involution have to coincide on the set-theoretic intersection is not enough to conclude they are the same. 
\end{remark}
\subsection*{Unramifiedness}

Next we focus on the unramifiedness of the map $\eta$. We prove it using deformation theory of curves and of morphisms of curves.

\begin{remark}\label{rem:deformation}
Let $\cX,\cY$ be two algebraic stacks and let $f:\cY\arr \cX$ by a morphism. Suppose it is given a 2-commutative diagram
$$
\begin{tikzcd}
\spec k\arrow[r, "y"] \arrow[d] & \cY \arrow[d] \\
{\spec k[\epsilon]} \arrow[r, "x_{\epsilon}"]          & \cX                       
\end{tikzcd}
$$ 
with $k$ a field and $k[\epsilon]$ the ring of dual numbers over $k$. We define $x:=f(y)$ and $\cY_x$ the fiber product of $f$ with the morphism induced by $x$; clearly we have a lifting of $y$ from $\cY$ to $\cY_x$ which is denoted by $y$ by abuse of notation. By standard argument in deformation theory, we get the following exact sequence of vector spaces over $k$:
$$
\begin{tikzcd}
0 \arrow[r] & T_{\id}\aut_{\cY_x}(y) \arrow[r] & T_{\id}\aut_{\cY}(y) \arrow[r] & T_{\id}\aut_{\cX}(x) \arrow[lld, "\alpha_f(y)" description ] \\
& \pi_0(T_{y}\cY_x) \arrow[r]        & \pi_0(T_y\cY) \arrow[r]          & \pi_0(T_x\cX)                     
\end{tikzcd}
$$ 
where $T_x \cX$ is the groupoid of morphisms $x_{\epsilon}:\spec k[\epsilon] \arr \cX$ such that the composition with $\spec k \hookrightarrow \spec k[\epsilon]$ is exactly $x:\spec k \arr \cX$. By standard notation, we call $T_x\cX$ the tangent space of $\cX$ at $x$.
\end{remark} 

If we prove that $\pi_0(T_y\cY_x)=0$, then the morphism $f$ is fully faithful at the level of tangent spaces, and therefore unramified. We prove that this is true for the morphism $\eta$. 

First of all, we need to describe the fiber $\Htilde_C$ of the morphism $\eta: \Htilde_g^r \rightarrow \Mtilde_g^r$ in a point $C \in \Mtilde_g^r(k)$, where $k/\kappa$ is an extension of fields with $k$ algebraically closed. As the map $\eta$ is faithful, we know that $\Htilde_C$ is equivalent to a set. Given a point $(C,\sigma)\in \Htilde_C(k)$, we have that an element in $T_{(C,\sigma)}\Htilde_C$ is a pair $(C[\epsilon],\sigma_{\epsilon})$ where $C[\epsilon]$ is the trivial deformation of $C$ and $\sigma_{\epsilon}:C[\epsilon]\rightarrow C[\epsilon]$ is a deformation of $\sigma$. Therefore, we need to prove the uniqueness of the hyperelliptic involution for deformations. To do so, we study the deformations of the quotient map $\pi: C \rightarrow Z:=C/\sigma$. 

Let ${\rm Def}^{\rm fix}_{C/Z}$ be the deformation functor associated to the problem of deforming the morphism $\pi:C \rightarrow Z$ with both source and target fixed and ${\rm InfAut}(Z)$ be the deformation functor of infinitesimal automorphisms of $Z$. There is a natural morphism of deformation functors
$$ \alpha:{\rm InfAut}(Z) \longrightarrow {\rm Def}^{\rm fix}_{C/Z}$$
whose restriction to the tangent spaces $d\alpha$ induces a morphism of $k$-vector spaces. Furthermore, we have a map $$\gamma:T_{(C,\sigma)}\Htilde_C \longrightarrow {\rm Def}^{\rm fix}_{C/Z} $$ defined by the association $(C[\epsilon],\sigma_{\epsilon})\mapsto \pi_{\epsilon}:C[\epsilon]\rightarrow Z[\epsilon]\simeq C[\epsilon]/\sigma_{\epsilon}$.

\begin{remark}
It is not completely trivial that the quotient $C[\epsilon]/\sigma_{\epsilon}$ is isomorphic to the trivial deformation of $Z$. One can prove it using the fact that the morphism $\pi$ is finite reducing to the affine case.
\end{remark}

Notice that $\gamma(\sigma_{\epsilon}) \in \im{d\alpha}$ implies $\sigma_{\epsilon}=0$ because of \Cref{lem:unique-inv-quotient}. Therefore, it is enough to prove $\im{\gamma}\subset \im{d\alpha}$.

Let us focus on the morphism $\alpha$. The morphism $d\alpha$ can be identified with the map 
$$ \hom_{\cO_Z}(\Omega_Z,\cO_Z) \longrightarrow \hom_{\cO_Z}(\Omega_Z,\pi_*\cO_C)$$ 
induced by applying $\hom_{\cO_Z}(\Omega_Z,-)$ to the natural exact sequence
$$ 0 \rightarrow \cO_Z \rightarrow \pi_*\cO_C \rightarrow L\rightarrow 0 .$$
Clearly, if we have $\hom_{\cO_Z}(\Omega_Z, L)=0$, we get that $d\alpha$ is surjective and we have done. This is not true in general and we will see why in \Cref{ex:not-involution}. Therefore we need to treat the problem with care, studying the deformation spaces involved. 

Let us start with the case of $(C,\sigma)$, where $C$ does not have separating node. Thanks to the description in \Cref{prop:description-quotient}, we have that the quotient map $\pi:C \rightarrow Z$ is finite flat of degree $2$. The theory of cyclic covers implies that every deformation of $\pi$ still induces a hyperelliptic involution, meaning that in this case the composition 
$$ \bar{\gamma}: T_{(C,\sigma)}\Htilde_C \longrightarrow \frac{{\rm Def}^{\rm fix}_{C/Z}}{\im{d\alpha}} $$
is an isomorphism of vector spaces. 

However, if $C$ has at least one separating node, the deformations of $\pi$ do not always give a deformation of the involution of $\sigma$. 

\begin{example}\label{ex:not-involution}
	Let $C$ be an ($A_1$)-stable curve of genus $2$ over $k$ with a separating node $p \in C(k)$. We have a hyperelliptic involution $\sigma$ which fixes the two genus $1$ components such that the two points $p_1,p_2$ over the node $p$ in the normalization of $C$ are fixed by the involution as well. Therefore we have a quotient morphism $C \rightarrow Z$, where $Z$ is a genus $0$ curve with two components meeting in on separating node $q$, image of $p$. The morphism $\pi$ is finite and it is flat of degree $2$ restricted to the open $C \setminus p$. On the contrary, locally around the point $p$ is finite completely ramified of degree $3$, i.e. $\pi^{-1}(q)$ is the spectrum of a length $3$ local artinian $k$-algebra. Clearly, we can deform $\pi$ without deforming source and target but making it unramified over $q$. This deformation cannot correspond to a hyperelliptic involution. 
	
	In another way, we are deforming the two involutions on the two components of $C$ such that the fixed locus moves away from $p_1$ and $p_2$. This implies that we cannot patch them together to get an involution of $C$. To sum up, we need to consider only deformations where the fixed locus of the two involutions does not move. 
	
	Now we look at the tangent space. Let $C_1$ and $C_2$ be the two genus $1$ component of $C$ and $Z_1$ and $Z_2$ are the two components of $Z$ (reduced schematic images of $C_1$ and $C_2$ respectively). Let $L_i$ be the quotient line bundle of $\cO_{Z_i} \hookrightarrow \pi_*\cO_{C_i}$ for $i=1,2$. An easy computation (see \Cref{lem:decomp-line-bundle}) shows that $$\hom_{\cO_Z}(\Omega_Z,L)=\hom_{\cO_{Z_1}}(\Omega_{Z_1},L_1)\oplus \hom_{\cO_{Z_2}}(\Omega_{Z_2},L_2)$$
	and in particular $\hom_{\cO_Z}(\Omega_Z,L)$ is not zero. Nevertheless, if we ask that our deformation of $\pi$ is totally ramified over the separating node, we get exactly the space $\hom_{\cO_{Z_1}}(\Omega_{Z_1},L_1(-p_1))\oplus \hom_{\cO_{Z_2}}(\Omega_{Z_2},L_2(-p_2))$ which is zero. We are going to generalize this computation to our setting.
\end{example}

Suppose $\Gamma$ is an  subcurve of $Z$, then we define the subcurve $C_{\Gamma}:=\pi^{-1}(\Gamma)_{\rm red}$ of $C$. Let $\pi_{\Gamma}:C_{\Gamma}\rightarrow \Gamma$ be the restriction of $\pi$ to $C_{\Gamma}$ and $L_{\Gamma}$ be the quotient bundle of the natural map $\cO_{\Gamma}\hookrightarrow \pi_{\Gamma,*}\cO_{C_{\Gamma}}$.
The main statement in this section is the following theorem.

\begin{theorem}
	In the situation above, the morphism $\bar{\gamma}$ factors through the following inclusion of vector spaces
	$$ \bigoplus_{\Gamma \in {\rm Irr}(Z)}\hom_{\cO_{\Gamma}}(\Omega_{\Gamma},L_{\Gamma}(-D_{\Gamma}))\subset \hom_{\cO_Z}(\Omega_{Z},L)$$
where $D_{\Gamma}$ is the Cartier divisor on $\Gamma$ defined as $\sum^{\Gamma'\neq \Gamma}_{\Gamma' \in {\rm Irr(Z)}}\Gamma \cap \Gamma'$, or equivalently $\Gamma \cap (Z - \Gamma)$.
\end{theorem}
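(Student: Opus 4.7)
The plan is to parametrize $T_{(C,\sigma)}\Htilde_C$ explicitly in terms of anti-equivariant vector fields and then translate the claim into a local vanishing condition at each separating node of $Z$. A tangent vector in $T_{(C,\sigma)}\Htilde_C$ is an involution $\sigma_\epsilon$ of the trivial deformation $C[\epsilon]$ lifting $\sigma$. Writing $\sigma_\epsilon = \sigma + \epsilon\delta$, the condition that $\sigma_\epsilon$ is a $k[\epsilon]$-algebra homomorphism forces $\delta$ to be a global $k$-derivation of $\cO_C$, i.e.\ a global vector field on $C$, while $\sigma_\epsilon^2 = \id$ forces $\sigma\delta + \delta\sigma = 0$, so that $\delta \in \H^0(C, T_C)^{-\sigma}$. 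Since $\cO_Z = \cO_C^\sigma$, the restriction $\delta|_{\cO_Z}$ automatically takes values in $L = (\pi_*\cO_C)^{-\sigma}$.

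Next I would compute $\bar{\gamma}(\sigma_\epsilon)$ explicitly. A direct calculation on $\sigma_\epsilon$-invariants shows that $f_0 + \epsilon f_1 \in \cO_{C[\epsilon]}$ is $\sigma_\epsilon$-invariant if and only if $f_0 \in \cO_Z$ and $f_1 \equiv \tfrac{1}{2}\delta(f_0) \pmod{\cO_Z}$; equivalently, $\pi_\epsilon^\sharp$ is given by $f_0 + \epsilon f_1^+ \mapsto f_0 + \epsilon(f_1^+ + \tfrac{1}{2}\delta(f_0))$, so that $\bar{\gamma}(\sigma_\epsilon)$ corresponds to the derivation $D := \tfrac{1}{2}\delta|_{\cO_Z} : \cO_Z \to L$, viewed in $\hom_{\cO_Z}(\Omega_Z, L)$. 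It remains to show that $D$ lies in the subspace $\bigoplus_\Gamma \hom_{\cO_\Gamma}(\Omega_\Gamma, L_\Gamma(-D_\Gamma))$. Using the decomposition of $\hom_{\cO_Z}(\Omega_Z, L)$ along the normalization of $Z$ at its separating nodes, this subspace is characterized as the set of morphisms vanishing at every separating node of $Z$, so it suffices to verify this vanishing one node at a time.

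At each separating node $q \in Z$, the schematic fiber $\pi^{-1}(q)$ is in one of the three cases (n1), (n2), (n3) of \Cref{prop:description-quotient}, and \Cref{prop:descr-inv} provides an explicit normal form for $\sigma$ on the completion $\widehat{\cO}_{C,c}$. In each case, applying anti-equivariance of $\delta$ together with the Leibniz rule to the defining equation of the singularity (for instance $xy = 0$ at a node, $y^2 - x^4 = 0$ at a tacnode) forces $\delta(x) \in (x)$ and $\delta(y) \in (y)$ in the completed local ring. Pushing this down via $\pi$, for any local coordinates $u,v$ of $Z$ at $q$ with $uv = 0$ one obtains $\delta(\pi^\sharp u), \delta(\pi^\sharp v) \in \frkm_q(\pi_*\cO_C)_q$, which is exactly the desired vanishing of $D$ at $q$ on each branch.

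The main obstacle is the case-by-case local analysis, with (n3) being the subtlest: there $\pi$ is not flat at $c$, the sheaf $L$ fails to be locally free at $q$, and the summands $L_\Gamma$ contain torsion contributions. The key saving identity is still the Leibniz relation on $xy = 0$, which kills the constant terms of $\delta(x)$ and $\delta(y)$ directly; combined with the explicit anti-equivariance $\sigma(x) = -x,\ \sigma(y) = -y$ characterizing case (n3), this propagates cleanly to the required vanishing of $D(du), D(dv)$ at $q$. The étale case (n1), which might a priori look unconstrained, is handled by the same Leibniz argument applied at the two nodes of $C$ lying above $q$, and the flat ramified case (n2) is analogous and technically less involved.
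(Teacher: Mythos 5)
Your proof is correct, but it takes a genuinely different route from the paper's. You linearize the problem at the source: a tangent vector of $\Htilde_C$ is $\sigma_\epsilon=\sigma\circ(\id+\epsilon\delta)$ for a global anti-invariant derivation $\delta\in\H^0(C,\curshom(\Omega_C,\cO_C))$, you compute $\bar{\gamma}(\sigma_\epsilon)$ as (a multiple of) $\delta|_{\cO_Z}:\cO_Z\to L$, and you obtain the vanishing at each node of $Z$ from the Leibniz constraint on derivations of the completed local rings of $C$, case by case along \Cref{prop:description-quotient}. The paper instead argues in two global steps: it first splits off the images of the separating nodes of $C$ via the decomposition $L\simeq\bigoplus\iota_{Z_i,*}L_i$ along the $A_1$-separating decomposition, and gets the vanishing there from the rigidity statement $\sigma_\epsilon\circ p[\epsilon]=p[\epsilon]$; it then shows that in the flat case the inclusion is actually an \emph{equality} ($\hom_{\cO_Z}(\Omega_Z,L)\simeq\bigoplus_\Gamma\hom(\Omega_\Gamma,L_\Gamma(-D_\Gamma))$ when $L$ is a line bundle), so at the remaining nodes there is nothing to check. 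Your approach is more uniform and makes the role of the involution transparent: the only input beyond "deformation of $\pi$" is that the deformation extends to a derivation of $\cO_C$ itself, and derivations of $k[[x,y]]/(xy)$ are tangent to the branches. The paper's approach buys a cleaner structural statement about $\hom(\Omega_Z,L)$ and avoids local normal forms in the flat part. Two details in your sketch deserve care: (i) the assertion "$\delta(x)\in(x)$, $\delta(y)\in(y)$" is literally true at a node but not at a tacnode $y^2=x^4$ (there one only gets $\delta(x)\in(x^2,y)$; the required vanishing follows by restricting the Leibniz identity $2y\delta(y)=4x^3\delta(x)$ to the two branches of $Z$, which forces the values at $q$ to be simultaneously $\pm$ each other, hence zero); (ii) your characterization of the subspace as "morphisms vanishing at every node" is only automatic where $L$ is locally free — at the image of a separating node of $C$ one needs the local splitting $L\simeq\iota_{1,*}L_1\oplus\iota_{2,*}L_2$ (the paper's \Cref{lem:decomp-line-bundle}) to make sense of the claim, and there the vanishing is a genuine condition, which your case (n3) computation does supply.
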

The theorem above implies the result we need to conclude the study of the unramifiedness of $\eta$.
\begin{corollary}
	For every $(C,\sigma) \in \Htilde_C$, the morphism $\bar{\gamma}\equiv 0$, which implies that $\im \gamma \subset \im{d\alpha}$. 
\end{corollary}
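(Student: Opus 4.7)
The corollary follows from the theorem by a degree count on each component of $Z$, so the substantive work is the theorem itself; my plan addresses both. For the theorem, I would localize the analysis of a first-order deformation $\pi_\epsilon \in \hom_{\cO_Z}(\Omega_Z,L)$ at each irreducible component $\Gamma$ of $Z$. From the standard description of $\Omega_Z$ on a nodal curve one produces a natural restriction map
\[
\hom_{\cO_Z}(\Omega_Z,L)\longrightarrow\bigoplus_{\Gamma\in\mathrm{Irr}(Z)}\hom_{\cO_\Gamma}(\Omega_\Gamma,L_\Gamma),
\]
and the kernel, being supported at the smooth nodes of $Z$, is absorbed by $\im{d\alpha}$ since the infinitesimal automorphisms of $Z$ already move those nodes freely. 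Hence it is enough to show that, for every $\Gamma$, the $\Gamma$-component of $\bar\gamma(\sigma_\epsilon)$ vanishes along the divisor $D_\Gamma$.

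The local input is \Cref{prop:description-quotient}: at every node $q$ of $Z$ the fiber $\pi^{-1}(q)$ falls into one of the cases (n1), (n2), (n3), and \Cref{lem:local-node-involution} together with \Cref{prop:descr-inv} give explicit normal forms for $\sigma$ on the completion of $C$ above $q$. A first-order deformation $\sigma_\epsilon$ must preserve these normal forms; in each of the three cases this forces the induced deformation $\pi_\epsilon$ to be rigid at $q$ once one has quotiented by infinitesimal automorphisms of $Z$. Summing this vanishing condition over the nodes of $Z$ lying on $\Gamma$ produces precisely the twist by $-D_\Gamma$. The principal obstacle is the non-flat case (n3), where $L_\Gamma$ acquires torsion at $q$ and the deformation theory of finite flat covers does not apply directly; I would treat this case by an explicit calculation in $\widehat{\cO}_{C,\pi^{-1}(q)}$ using the normal form of \Cref{prop:descr-inv}$(b_3)$.

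Granted the theorem, the corollary reduces to a degree computation on $\Gamma\simeq\PP^1$. Using $\chi(\pi_{\Gamma,*}\cO_{C_\Gamma})=\chi(\cO_\Gamma)+\chi(L_\Gamma)$ one reads $\deg L_\Gamma=-g_{C_\Gamma}-1$ in the locally free situation (with an analogous estimate in case (n3)), and combined with the stability condition for $(C,\sigma)$, as codified in \Cref{def:hyp-A_r} and the decomposition \Cref{lem:exist-decomposition}, this yields
\[
\deg\bigl(\Omega_\Gamma^{\vee}\otimes L_\Gamma(-D_\Gamma)\bigr)=2+\deg L_\Gamma-|D_\Gamma|<0,
\]
so the global sections vanish on $\PP^1$. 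Hence $\bar\gamma\equiv 0$, and by definition of $\bar\gamma$ as the composition of $\gamma$ with the projection $\mathrm{Def}^{\rm fix}_{C/Z}\to\mathrm{Def}^{\rm fix}_{C/Z}/\im{d\alpha}$, this immediately gives $\im\gamma\subset\im{d\alpha}$.
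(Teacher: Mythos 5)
Your proof of the corollary itself is the same argument the paper gives: granted the theorem, one computes $\deg\bigl(\Omega_{\PP^1}^{\vee}\otimes L_{\Gamma}(-D_{\Gamma})\bigr)=2+\deg L_{\Gamma}-n_{\Gamma}=1-h_{\Gamma}-n_{\Gamma}$ and uses the stability condition (which forces $h_{\Gamma}+n_{\Gamma}\geq 2$) to conclude negativity of the degree and hence vanishing of the Hom space on each component. The lengthy sketch of the theorem's proof is extra material not required for this statement, so there is nothing further to compare here.
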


\begin{proof}
	It is enough to prove that for every $\Gamma$ irreducible component of $Z$, we have $\hom_{\cO_{\Gamma}}(\Omega_{\Gamma},L_{\Gamma}(-D_{\Gamma}))=0$. Let us explain why this follows from the stability condition on $C$. Clearly $C_{\Gamma}$ is a $A_r$-prestable hyperelliptic curve, with a $2:1$-morphism over $\Gamma\simeq \PP^1$. Let $n_{\Gamma}$ be the number of nodal point on the component $\Gamma$, or equivalently the degree of $D_{\Gamma}$. Thus $\deg(\Omega_{\PP^1}^{\vee} \otimes L_{\Gamma}(-D_{\Gamma}))=+2+(-h_{\Gamma}-1-n_{\Gamma})=1-h_{\Gamma}-n_{\Gamma}$, where $h_{\Gamma}$ is the  (arithmetic) genus of $C_{\Gamma}$. The stability condition on $C$ implies that $2h_{\Gamma}-2+2n_{\Gamma}>0$ because the restriction to $C_{\Gamma}$ of the fiber of a node has at most length $2$. Therefore $\deg (\Omega_{\PP^1}^{\vee} \otimes L_{\Gamma}(-D_{\Gamma}))<0$ and we are done. Equivalently, it is clear that if $h_{\Gamma}>1$, then the degree we want to compute is negative. If $h_{\Gamma}=1$, then $C_{\Gamma}$ has at least a point of intersection with $C-C_{\Gamma}$, therefore $n_{\Gamma}>0$. If $h_{\Gamma}=0$, then $C_{\Gamma}$ intersect $C-C_{\Gamma}$ in subscheme of length at least $3$, but because the restriction to $C_{\Gamma}$ of the fiber of a node of $Z$ has at most length $2$, the support of the intersection contains at least two points, which implies $n_{\Gamma}>1$. 
\end{proof}
To prove the theorem, we reduce to the case when the map $\pi$ is flat, or equivalently there are no separating node on $C$ and then we prove the statement in that case.

\begin{definition}\label{def:sep-dec}
	Let $C$ be an $A_r$-prestable curve and let $\{C_i\}_{i \in I}$ be a set subcurves of $C$ (see \Cref{def:subcurve} for the definition of subcurves). We  say that $\{C_i\}_{i \in I}$ is an $A_1$-separating decomposition of $C$ if the following three properties are satisfied:
\begin{enumerate}
	\item $C=\bigcup_{i \in I} C_i$,
	\item no $C_i$ has a separating node (for $C_i$ itself),
	\item $C_i\cap C_j$ is either empty or a separating node for every $i\neq j$. 
\end{enumerate} 
\end{definition}
 Such a decomposition exists and is unique. 
 Let $(C,\sigma)$ be a hyperelliptic $A_r$-stable curve and $\{C_i\}_{i \in I}$ be its $A_1$-separating decomposition. As usual, we denote by $\pi:C \rightarrow Z:=C/\sigma$ the quotient morphism. Fix an index $i \in I$. Let $Z_i$ be the schematic image of $C_i$ through $\pi$ and $\pi_i:C_i \rightarrow Z_i$ be the restriction of $\pi$ to $C_i$. Using again the local description in \Cref{prop:description-quotient}, we get that $Z_i$ is reduced (therefore a subcurve of $Z$) and $\pi_i$ is flat. Finally, let $L_i$ the quotient of the natural map 
$$\cO_{Z_i} \hookrightarrow \pi_{i,*}\cO_{C_i}$$
which is a line bundle because $\pi_i$ is flat finite of degree $2$ (and we are in characteristic different from $2$).

The following lemma generalizes the idea in \Cref{ex:not-involution}.

\begin{lemma}\label{lem:decomp-line-bundle}
In the situation above, we have an isomorphism of coherent sheaves over $Z$
$$ L \simeq \bigoplus_{i \in I} \iota_{Z_i,*}L_i$$

where $\iota_{Z_i}:Z_i \hookrightarrow Z$ is the closed immersion of the subcurve $Z_i$ in $Z$.

\end{lemma}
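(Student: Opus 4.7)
The plan is to identify $L$ with $\bigoplus_i \iota_{Z_i,*}L_i$ by partially normalizing $C$ and $Z$ at separating nodes and then applying the snake lemma. Let $\nu\colon \widetilde{C} = \bigsqcup_{i\in I} C_i \to C$ be the partial normalization at the separating nodes of $C$, and let $\mu\colon \widetilde{Z} = \bigsqcup_{i\in I} Z_i \to Z$ be the analogous normalization of $Z$ at the corresponding nodes.

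The first step would be to check that, since $g(C)\geq 2$, at every separating node $p\in C$ the involution $\sigma$ fixes both branches through $p$ (case $(c_2)$ of \Cref{prop:descr-inv}). Indeed, if instead $\sigma$ swapped the two branches, then the two subcurves $C_i, C_j$ meeting at $p$ would be exchanged by $\sigma$ and hence isomorphic, giving $g(C)=2g(C_i)$ and $g(C/\sigma)=g(C_i)$; hyperellipticity would then force $g(C)=0$, a contradiction. Consequently the separating nodes of $C$ correspond bijectively to separating nodes of $Z$ (case $(n_3)$ of \Cref{prop:description-quotient}), $\sigma$ preserves each $C_i$, and $\pi$ restricts to flat degree-$2$ covers $\pi_i\colon C_i\to Z_i$ fitting into the commutative square $\pi\circ\nu=\mu\circ\widetilde{\pi}$ with $\widetilde{\pi}=\bigsqcup_i\pi_i$. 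In particular $\mu_*\widetilde{\pi}_*\cO_{\widetilde{C}}=\pi_*\nu_*\cO_{\widetilde{C}}$ and $\mu_*\widetilde{L}=\bigoplus_{i\in I}\iota_{Z_i,*}L_i$, where $\widetilde{L}=\bigoplus_i L_i$ on $\widetilde{Z}$.

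The next step is to put the short exact sequences
\begin{equation*}
0\to \cO_Z\to \pi_*\cO_C\to L\to 0, \qquad 0\to \mu_*\cO_{\widetilde{Z}}\to \mu_*\widetilde{\pi}_*\cO_{\widetilde{C}}\to \mu_*\widetilde{L}\to 0
\end{equation*}
into a vertical ladder via the injections induced by $\mu$ and $\nu$ (using that $\pi$ is affine so $\pi_*$ is exact). The cokernels of the leftmost and middle arrows are the skyscraper sheaves $Q_Z=\mu_*\cO_{\widetilde{Z}}/\cO_Z$ and $\pi_*Q_C$, each of length one at every separating node of $Z$ (by the bijection above). The snake lemma then yields
\begin{equation*}
0\to \ker\bigl(L\to\mu_*\widetilde{L}\bigr)\to Q_Z\to \pi_*Q_C\to \coker\bigl(L\to\mu_*\widetilde{L}\bigr)\to 0,
\end{equation*}
reducing the whole lemma to showing that the middle map $Q_Z\to\pi_*Q_C$ is an isomorphism; by length count, non-vanishing pointwise suffices.

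The final step is the local check at a separating node $z=\pi(p)$. In local coordinates $\cO_{C,p}\simeq k[[u,v]]/(uv)$, and the case-$(c_2)$ computation gives $\cO_{Z,z}\simeq k[[U,V]]/(UV)$ with $U=u^2$, $V=v^2$; the partial normalizations then become $k[[U]]\oplus k[[V]]$ and $k[[u]]\oplus k[[v]]$, and the class of $(1,0)$ generates both $(Q_Z)_z$ and $(Q_C)_p$. The induced map $Q_Z\to\pi_*Q_C$ sends the first generator to the class of $(1,0)\in k[[u]]\oplus k[[v]]$ in $Q_C$, which is non-zero since no $f\in k[[u,v]]/(uv)$ can satisfy both $f(u,0)=1$ and $f(0,v)=0$. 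Hence the map is an isomorphism at every separating node, which finishes the proof. The only non-routine point in this plan is the preliminary step ruling out swap separating nodes via the genus hypothesis $g\geq 2$; everything else is a straightforward snake-lemma argument combined with an elementary local computation already implicit in \Cref{prop:descr-inv}.
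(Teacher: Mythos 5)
Your proof is correct and follows essentially the same route as the paper: the same commutative square comparing $\cO_Z\hookrightarrow\pi_*\cO_C$ with its pullback to the partial normalizations $\bigsqcup Z_i$ and $\bigsqcup C_i$, the snake lemma to reduce everything to the map between the horizontal cokernels supported at the separating nodes, and the local computation $U=u^2$, $V=v^2$ that the paper leaves as "a local computation in the separating nodes." The only loose point is your preliminary step, which silently assumes $\sigma(p)=p$ for a separating node $p$ and does not mention case $(c_3)$ of \Cref{prop:descr-inv}; both are harmless, since the former follows from \Cref{lem:subcurve} together with the stability condition, the latter is excluded by finiteness of the fixed locus, and in any case the paper treats this local structure at separating nodes as already established in the paragraph preceding the lemma.
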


\begin{proof}
 
The commutative diagram
$$\begin{tikzcd}
\bigsqcup_{i \in I} C_i \arrow[r, "\bigsqcup \iota_{C_i}"] \arrow[d, "\bigsqcup \pi_i"'] & C \arrow[d, "\pi"] \\
\bigsqcup_{i\in I}Z_i \arrow[r, "\bigsqcup \iota_{Z_i}"]                    & Z                 
\end{tikzcd}
$$ 
 induces a commutative diagram at the level of structural sheaves
$$
\begin{tikzcd}
\cO_Z \arrow[r, hook] \arrow[d, hook] & \bigoplus_{i \in I} \iota_{Z_i,*}\cO_{Z_i} \arrow[d, hook] \\
\pi_*\cO_{C} \arrow[r, hook]          & \bigoplus_{i \in I}\pi_*\iota_{C_i,*}\cO_{C_i}.                 
\end{tikzcd}
$$ 
We want to prove that the map induced between the quotients of the two vertical maps is an isomorphism, in fact the quotient of the right-hand vertical map is trivially isomorphic to $\bigoplus_{i \in I} \iota_{Z_i,*}L_i$ by construction. By the Snake lemma, it is enough to prove that the induced morphism between the two horizontal quotients is an isomorphism. This follows from a local computation in the separating nodes.   
\end{proof}

\begin{remark}
	Because of the fundamental exact sequence for the differentials associated to the immersion $\iota_{Z_i}:Z_i\hookrightarrow Z$, we have that 
	$$ \hom_{\cO_{Z_i}}(\Omega_{Z_i},L_i)\simeq  \hom_{\cO_{Z_i}}(\iota_{Z_i}^*\Omega_{Z},L_i)$$
	and therefore the previous lemma implies that 
	$$ \hom_{\cO_Z}(\Omega_Z,L) \simeq \bigoplus_{i \in I} \hom_{\cO_{Z_i}}(\Omega_{Z_i},L_{i}).$$
	It is easy to see that the last isomorphism can be described using deformations just restricting a deformation of $\pi$ with both source and target fixed to a deformation of $\pi_i$ with both source and target fixed.
	
\end{remark}

\begin{proposition}
  The map $\bar{\gamma}$ factors through the inclusion of vector spaces
  
  $$ \bigoplus_{i\in I}\hom_{\cO_{Z_i}}(\Omega_{Z_i},L_i(-D_i)) \subset \hom_{\cO_Z}(\Omega_Z, L) $$
  
  where $D_i$ is the Cartier divisor on $Z_i$ defined as $\Sigma_{j \in I, j\neq i} (Z_i \cap Z_j)$.
\end{proposition}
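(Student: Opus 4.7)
The plan is to reduce the factorization to a local vanishing condition at each point of $D_i$ and then verify it by an explicit computation in the complete local ring of $C$ at the corresponding separating node.

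By the remark following \Cref{lem:decomp-line-bundle} we have
$$\hom_{\cO_Z}(\Omega_Z, L) \simeq \bigoplus_{i \in I} \hom_{\cO_{Z_i}}(\Omega_{Z_i}, L_i),$$
and under this isomorphism the class of a deformation of $\pi$ corresponds to the tuple of classes of the restrictions to the $\pi_i : C_i \arr Z_i$. Since $\hom_{\cO_{Z_i}}(\Omega_{Z_i}, L_i(-D_i)) \subset \hom_{\cO_{Z_i}}(\Omega_{Z_i}, L_i)$ consists precisely of morphisms whose induced section of $L_i|_{D_i}$ vanishes, the factorization becomes a local condition: for every $i \in I$ and every $q \in D_i$, the image of $\bar\gamma(\sigma_\epsilon)$ in $\hom_{\cO_{Z_i}}(\Omega_{Z_i}, L_i) \otimes k(q)$ must be zero.

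Fix $q \in Z_i \cap Z_j$ and let $p \in C_i \cap C_j$ be its unique preimage in $C$, a separating node. Because $\pi$ is ramified and non-flat at $p$ we are in case $(n3)$ of \Cref{prop:description-quotient}, which corresponds to case $(c_2)$ of \Cref{prop:descr-inv}; applying \Cref{lem:local-node-involution} with $R = k$ and $\xi_1 = \xi_2 = -1$ produces an isomorphism
$$\widehat{\cO}_{C,p} \simeq k[[x,y]]/(xy)$$
under which $\sigma(x) = -x$ and $\sigma(y) = -y$, the branches $\{y = 0\}$ and $\{x = 0\}$ are the completions of $C_i$ and $C_j$ at $p$, $\widehat{\cO}_{Z_i,q} \simeq k[[u_i]]$ with $u_i = x^2$, and $L_i$ is locally the free $\cO_{Z_i}$-module generated by the class of $x$.

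A lift $\sigma_\epsilon$ of $\sigma$ on the trivial deformation has the form $\sigma_\epsilon(x) = -x + \epsilon A$, $\sigma_\epsilon(y) = -y + \epsilon B$ with $A, B \in \widehat{\cO}_{C,p}^{\sigma}$, subject to the relation $xB + yA = 0$ imposed by $\sigma_\epsilon(xy) = 0$. Separating this identity onto the two branches of $k[[x,y]]/(xy)$ using $xy = 0$ forces $A \in x^2 k[[x^2]]$ and $B \in y^2 k[[y^2]]$. Taking the antiinvariant lift $\tilde x := x - \tfrac{\epsilon}{2} A$, the induced quotient sends $u_i \mapsto \tilde x^2 = x^2 - \epsilon x A$; since $A \in x^2 k[[x^2]]$ we have $xA \in u_i k[[u_i]] \cdot x$, so under the local trivialization of $L_i$ by $x$ the class of $\pi_{i,\epsilon}$ is represented by an element of $u_i k[[u_i]]$, which is precisely the local description of $\hom_{\cO_{Z_i}}(\Omega_{Z_i}, L_i(-q))$. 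Infinitesimal automorphisms of $Z$ locally at $q$ contribute only to the $\cO_{Z_i}$-summand of $\pi_{i,*}\cO_{C_i}$, so the conclusion survives passage to the quotient by $\im{d\alpha}$. The delicate point of the argument is the relation $xB + yA = 0$ in the singular ring $k[[x,y]]/(xy)$: without it, $\sigma$-invariance alone would allow an arbitrary class in $\hom_{\cO_{Z_i}}(\Omega_{Z_i}, L_i)$, and it is this extra constraint that produces the required vanishing along $D_i$.
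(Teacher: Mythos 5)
Your proof is correct and follows essentially the same route as the paper's: both reduce the statement to a local condition at each separating node and use the fact that an infinitesimal deformation of the hyperelliptic involution on the trivial deformation $C[\epsilon]$ cannot move the separating node, which forces the class of the induced deformation of $\pi_i$ to vanish at $q=\pi(p)$. Your explicit computation in $k[\epsilon][[x,y]]/(xy)$ --- deriving $A \in x^2k[[x^2]]$ and $B \in y^2k[[y^2]]$ from $\sigma$-invariance together with the relation $xB+yA=0$ --- is precisely the ``local computation'' that the paper invokes without writing out.
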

\begin{proof}
	Let us start with a deformation of the hyperelliptic involution $\sigma_{\epsilon}:C[\epsilon]\rightarrow C[\epsilon]$. Let $p:\spec k \hookrightarrow C$ be a separating nodal point, then a local computation around $p$ in $C[\epsilon]$ shows that the following diagram is commutative
	$$\begin{tikzcd}
	{\spec k[\epsilon]} \arrow[r, "{p[\epsilon]}", hook] \arrow[d, "{p[\epsilon]}"', hook] & {C[\epsilon]} \\
	{C[\epsilon]} \arrow[ru, "\sigma_{\epsilon}"]                                          &              
	\end{tikzcd}$$
	where $p[\epsilon]$ is the trivial deformation of $p$. This means that every deformation of the hyperelliptic involution (where the curve $C$ is not deformed) cannot deform around the separating nodes. This also implies that the same is true for $\pi_{\epsilon}$ the quotient morphism (which can be associated to the element $\bar{\gamma}(\sigma_{\epsilon})$), namely $\pi_{\epsilon}\circ p[\epsilon]= q[\epsilon]$ where $q=\pi(p)$. Furthermore, the same is true for the restriction $\pi_{i,\epsilon}$ of $\pi_{\epsilon}$ to $C_i[\epsilon]$ for every $i \in I$. Therefore, the statement follows from the following fact: given the element $\delta_{i,\epsilon}$ representing $\pi_{i,\epsilon}$ in $\hom_{\cO_{Z_i}}(\Omega_{Z_i},L_i)$, the condition $ \pi_{i,\epsilon}\circ p[\epsilon]= q[\epsilon]$ translates into the condition $\delta_{i,\epsilon}(p)=0$, which implies $\delta_{i,\epsilon} \in \hom_{\cO_{Z_i}}(\Omega_{Z_i},L_i(-p))$. 
\end{proof}

\begin{remark}
		Notice that the intersection $Z_i \cap Z_j$ is a smooth point on $Z_i$, therefore a Cartier divisor.
\end{remark}

Finally, we have reduced ourself to study the case of $C$ having no separating nodes, or equivalently the quotient morphism $\pi:C \rightarrow Z$ being flat. We have to describe the whole $\hom_{\cO_Z}(\Omega_{Z},L)$ when $(C,\sigma)$ is a hyperelliptic $A_r$-prestable curve without separating node, because every deformation of the morphism $\pi$ gives rise to a deformation of $\sigma$. 

Let $\{ Z_i \}_{i \in I}$ be the $A_1$-separating decomposition of $Z$ (or equivalently the irreducible component decomposition as $Z$ has genus 0). We denote by $\pi_i:C_i \rightarrow Z_i$ the restriction of $\pi:C\rightarrow Z$ to $Z_i$, i.e. $C_i:=Z_i\times_Z C$. Again $L_i$ is the quotient line bundle of the natural morphism $\cO_{Z_i} \hookrightarrow \pi_{i,*}\cO_{C_i}$. 

\begin{remark}
 	Notice that in this situation we are just considering the pullback of $\pi$, while in the case of separating nodes we needed to work with the restriction to the subcurve $C_i$. The reason is that in the previous situation, $\pi^{-1}(Z_i)=\pi^{-1}\pi(C_i)$ was set-theoretically equal to $C_i$, but not schematically. In fact, $\pi^{-1}Z_i$ is not reduced, and it has embedded components supported on the restrictions of the separating nodes. 
\end{remark}
\begin{proposition}
	In the situation above,
	$$\hom_{\cO_Z}(\Omega_{Z},L) \simeq \bigoplus_{i \in I}\hom_{\cO_{Z_i}}(\Omega_{Z_i},L_i(-D_i))$$
	where $D_i$ is the Cartier divisor on $Z_i$ defined as $\sum_{j \in I. j \neq i}(Z_i \cap Z_j)$.
\end{proposition}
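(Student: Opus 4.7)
The plan is to study $\Omega_Z$ via the normalization $\nu\colon \tilde Z = \bigsqcup_{i \in I} Z_i \to Z$ and then apply $\hom_{\cO_Z}(-,L)$. First I would establish an exact sequence
\[
0 \to T \to \Omega_Z \to \nu_*\Omega_{\tilde Z} \to 0
\]
in which $T$ is a torsion sheaf supported on the nodes of $Z$. This is a local check at each node: writing the local ring of $Z$ at a node $q$ as $R = k[[x,y]]/(xy)$, the module $\Omega_R$ is generated by $dx, dy$ subject to $x\,dy + y\,dx = 0$, while $\nu_*\Omega_{\tilde Z}$ is locally $k[[x]]\,dx \oplus k[[y]]\,dy$. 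The natural map $dx \mapsto (dx,0)$, $dy \mapsto (0,dy)$ is surjective, and using $x\,dy = -y\,dx$ together with $x^2\,dy = xy\,dy = 0$ one verifies that its kernel is the one-dimensional $k$-span of the class of $x\,dy$, which is annihilated by $m_q$.

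Since $L$ is a line bundle, every morphism from a skyscraper sheaf into $L$ is zero, so $\hom_{\cO_Z}(T,L) = 0$. Hence applying $\hom_{\cO_Z}(-,L)$ to the exact sequence and using the decomposition $\nu_*\Omega_{\tilde Z} = \bigoplus_i (\iota_i)_*\Omega_{Z_i}$ (where $\iota_i\colon Z_i \hookrightarrow Z$) yields
\[
\hom_{\cO_Z}(\Omega_Z, L) \simeq \bigoplus_{i\in I}\hom_{\cO_Z}\bigl((\iota_i)_*\Omega_{Z_i},L\bigr).
\]

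The main remaining step is to identify each summand with $\hom_{\cO_{Z_i}}(\Omega_{Z_i}, L_i(-D_i))$; I would do this by sheafifying and checking stalks. Away from nodes of $Z$ the map $\iota_i$ is locally an isomorphism and the identification is clear. At a node $q = Z_i \cap Z_j$, after trivializing $L_q \simeq R$, the stalk of $(\iota_i)_*\Omega_{Z_i}$ is the $R$-module $k[[x]]\,dx$ on which $y$ acts as $0$; any $R$-linear map to $R$ is thus forced to land in $\mathrm{ann}_R(y) = x\,k[[x]]$, which is exactly the stalk at $q_i := \iota_i^{-1}(q)$ of $L_i(-q_i)$. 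Globalizing, the sheaf $\underhom_{\cO_Z}((\iota_i)_*\Omega_{Z_i}, L)$ equals $(\iota_i)_*\underhom_{\cO_{Z_i}}(\Omega_{Z_i}, L_i(-D_i))$, and taking global sections yields the result.

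The main obstacle, though routine, consists of the two local computations in $R = k[[x,y]]/(xy)$: determining that the kernel of $\Omega_R \to k[[x]]\,dx \oplus k[[y]]\,dy$ is exactly the $k$-line generated by $x\,dy$, and computing $\mathrm{ann}_R(y) = x\,k[[x]]$. Both follow from elementary manipulations with the relation $xy = 0$, and it is from the latter identification that the twist by $-D_i$ in the final formula arises.
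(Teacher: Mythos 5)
Your argument is correct, but it decomposes the opposite side of the pairing from the paper. The paper starts from the structure-sheaf sequence $0 \to \cO_Z \to \bigoplus_i \iota_{Z_i,*}\cO_{Z_i} \to \bigoplus_{n} k(n) \to 0$, tensors with the line bundle $L$, and applies the covariant functor $\hom_{\cO_Z}(\Omega_Z,-)$; left-exactness exhibits $\hom_{\cO_Z}(\Omega_Z,L)$ as the set of tuples $\{f_i\} \in \bigoplus_i \hom_{\cO_{Z_i}}(\Omega_{Z_i},L_i)$ with $f_i(n)=0$ at every node $n$ on $Z_i$, which is the twist by $-D_i$. You instead decompose $\Omega_Z$ itself via the normalization, $0 \to T \to \Omega_Z \to \nu_*\Omega_{\widetilde Z} \to 0$ with $T$ the length-one torsion at each node, kill $\hom_{\cO_Z}(T,L)$ (this does need the observation that no nonzero element of $k[[x,y]]/(xy)$ is annihilated by the whole maximal ideal, not merely that $L$ is invertible), and then recover the twist from the adjunction for the closed immersion $\iota_i$: an $\cO_Z$-linear map out of $\iota_{i,*}\Omega_{Z_i}$ must land in the sections of $L$ annihilated by the ideal of $Z_i$, and the computation $\mathrm{ann}_R(y)=xk[[x]]$ is what produces $L_i(-D_i)$. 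The two routes are genuinely different but of comparable length: the paper's makes the ``vanishing at the nodes'' condition transparent and reuses the conormal-sequence identification $\hom_{\cO_{Z_i}}(\iota_{Z_i}^*\Omega_Z,L_i)\simeq\hom_{\cO_{Z_i}}(\Omega_{Z_i},L_i)$ already set up earlier in the section, while yours trades that for two explicit local computations in $k[[x,y]]/(xy)$ and has the advantage of explaining intrinsically where the divisor $D_i$ comes from (the annihilator of the other branch). Both are complete proofs of the statement.
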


\begin{proof}
  Firstly, we consider the exact sequence
  $$ 0 \rightarrow \cO_Z \rightarrow \bigoplus_{i \in I}\iota_{Z_i,*} \cO_{Z_i} \rightarrow \bigoplus_{n \in N(Z)}k(n) \rightarrow 0 $$
  where $N(Z)$ is the set of nodal point of $Z$.
  Because $L$ is a line bundle ($\pi$ is flat), if we tensor the exact sequence with $L$ and then apply the functor $\hom_{\cO_Z}(\Omega_{Z},-)$, we end up with an exact sequence
  $$ 0 \rightarrow \hom_{\cO_Z}(\Omega_{Z},L) \rightarrow \bigoplus_{i \in I}\hom_{\cO_{Z_i}}(\iota_{Z_i}^*\Omega_Z,\iota_{Z_i}^*L) \rightarrow \bigoplus_{n \in N(Z)}\hom_{\cO_Z}(\Omega_Z,K(n)). $$
  The flatness of $\pi$ implies that $\iota_{Z_i}^*L \simeq L_{Z_i}$ and using the fundamental exact sequence of the differentials we get
  $$\hom_{\cO_{Z_i}}(\iota_{Z_i}^*\Omega_Z,L_i) \simeq \hom_{\cO_{Z_i}}(\Omega_{Z_i},L_i).$$
  Notice that $\hom_{\cO_Z}(\Omega_{Z},k(n))=k(n)dx\oplus k(n)dy$ where $dx,dy$ are the two generators of $\Omega_Z$ locally at the node $n \in Z$. Therefore, an element $f \in \hom_{\cO_Z}(\Omega_{Z},L)$ is the same as an element $\{f_i\} \in \bigoplus_{i \in I}\hom_{\cO_{Z_i}}(\Omega_{Z_i},L_i)$ such that $f_i(n)=0$ for every $n \in N(Z) \cap Z_i$.
\end{proof}

\subsection*{Universally closedness}

This section is dedicated to prove that $\eta$ is universally closed. The valuative criterion tells us that $\eta$ is universally closed if and only if for every diagram 
$$
\begin{tikzcd}
\spec Q \arrow[r] \arrow[d, hook] & \Htilde_g^r \arrow[d, "\eta"] \\
\spec R \arrow[r]                 & \Mtilde_g^r                  
\end{tikzcd}
$$
where $R$ is a noetherian complete DVR with algebraically closed residue field $k$ and field of fractions $K$, there exists a lifting 
$$
\begin{tikzcd}
\spec K \arrow[r] \arrow[d, hook]    & \Htilde_g^r \arrow[d, "\eta"] \\
\spec R \arrow[r] \arrow[ru, dashed] & \Mtilde_g^r                  
\end{tikzcd}
$$
which makes everything commutes. 

This amounts to extending the hyperelliptic involution from the general fiber of a family of $A_r$-stable curves over a DVR to the whole family. The precise statement is the following. 

\begin{theorem}\label{theo:univ-closed}
	Let $C_R\rightarrow \spec R$ be a family of $A_r$-stable curves over $R$, which is a noetherian complete DVR with fraction field $K$ and algebraically closed residue field $k$. Suppose there exists a hyperelliptic involution $\sigma_K$ of the general fiber $C_K \rightarrow \spec K$. Then there exists a unique hyperelliptic involution $\sigma_R$ of the whole family $C_R\rightarrow \spec R$ such that $\sigma_R$ restrict to $\sigma_K$ over the general fiber.
\end{theorem}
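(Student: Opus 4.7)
Uniqueness is immediate: if $\sigma_R$ and $\sigma_R'$ are two involutions of $C_R$ both restricting to $\sigma_K$, their equalizer is closed in $C_R$ and contains the generic fiber $C_K$. Since $A_r$-stable curves are l.c.i.\ and hence Cohen--Macaulay, $C_R$ has no embedded associated points, and every associated point of $C_R$ dominates $\spec R$; thus $C_K$ is schematically dense in $C_R$ and the equalizer is all of $C_R$.

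For existence, my plan is to use the equivalence $\Htilde_g^r \simeq \widetilde{\cC}_g^r$ furnished by \Cref{prop:cyclic-covers} and \Cref{prop:descr-hyper}, which translates the problem of extending $\sigma_K$ into the problem of extending a cyclic double cover. Concretely, $\sigma_K$ corresponds to a triple $(\cZ_K, \cL_K, i_K)$ with $\cZ_K$ a twisted genus $0$ curve over $K$, $\cL_K$ a line bundle on $\cZ_K$, and $i_K\colon \cL_K^{\otimes 2}\hookrightarrow \cO_{\cZ_K}$ a section. After a finite ramified base change $R\hookrightarrow R'$ with fraction field $K'$, which can be removed at the end by uniqueness together with Galois descent, I would proceed in three steps. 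First, marking the branch points $\VV(i_K^\vee)$ turns $\cZ_K$ into a stably pointed twisted genus $0$ curve, which extends to a family $(\cZ_{R'}, B_{R'})$ over $R'$ by properness of the moduli space of pointed stable twisted curves. Second, I extend $\cL_K$ to a line bundle $\cL_{R'}$ on $\cZ_{R'}$ whose square admits a section $i_{R'}$ cutting out $B_{R'}$; since the central fiber of $\cZ_{R'}$ is a nodal genus $0$ twisted curve, the obstructions to such an extension can be controlled using the cohomology vanishing $\H^1(\cL_{R'}^{\otimes -2})=0$ that was exploited in \Cref{prop:smooth-hyp}. Third, the double cover $\widetilde{C}_{R'} := \spec_{\cZ_{R'}}(\cO_{\cZ_{R'}}\oplus \cL_{R'})$ is then a candidate family of $A_r$-stable hyperelliptic curves over $R'$ whose generic fiber is $C_K\otimes_K K'$; that the numerical and pointwise conditions of \Cref{def:hyp-A_r} remain satisfied in the limit is verified by a direct case analysis, possibly after contracting unstable components of $\cZ_{R'}$.

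The main obstacle is then to identify $\widetilde{C}_{R'}$ with $C_R\otimes_R R'$. Because $\Mtilde_g^r$ is not separated — the $\gm$-stabilizers of $A_r$-singularities preclude this — agreement of the generic fibers of two $A_r$-stable families does not automatically propagate to the special fiber, so this step cannot be formal. I would compare central fibers directly, using the hyperelliptic structure of $\widetilde{C}_{R'}$ to constrain which rational bridges, rational almost-bridges, tacnodes, and cusps may appear over the closed point; the decomposition of \Cref{lem:exist-decomposition} together with the local description of the quotient morphism in \Cref{prop:description-quotient} should force a single $A_r$-stable central fiber to be compatible both with $C_R$ and with the double-cover structure, yielding the desired isomorphism $\widetilde{C}_{R'}\simeq C_R\otimes_R R'$. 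Transporting the tautological involution on $\widetilde{C}_{R'}$ and descending via Galois-equivariance (guaranteed by the uniqueness already established) then produces $\sigma_R$ on $C_R$.
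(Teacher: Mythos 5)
Your uniqueness argument is fine, and the first half of your existence plan (translating $\sigma_K$ into a triple $(\cZ_K,\cL_K,i_K)$ via \Cref{prop:cyclic-covers} and extending that triple over a ramified cover $R'$) can probably be made to work after some care with the stability conditions of \Cref{def:hyp-A_r} --- though note that the relevant moduli of branch data is itself not proper when one allows the branch points to collide into $A_r$-singularities, so "properness of pointed stable twisted curves" does not hand you the extension for free. But the decisive gap is exactly the step you flag as "the main obstacle" and then do not close: identifying $\widetilde{C}_{R'}$ with $C_R\otimes_R R'$. Because $\Mtilde_g^r$ is not separated, the family $C_{R'}$ and your constructed family $\widetilde{C}_{R'}$ are two $A_r$-stable extensions of the same generic fiber, and they genuinely need not have isomorphic special fibers: a priori the special fiber of $C_R$ could fail to be hyperelliptic altogether (the theorem is precisely the assertion that it cannot), while your construction only shows that \emph{some} $A_r$-stable limit is hyperelliptic. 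Knowing the combinatorial shape of hyperelliptic special fibers via \Cref{lem:exist-decomposition} and \Cref{prop:description-quotient} constrains $\widetilde{C}_{R'}\otimes k$, but says nothing about $C_R\otimes k$, which is given to you and about which you have no hyperelliptic information yet. Moreover, even an abstract isomorphism of special fibers would not suffice; you need an isomorphism of families over $R'$ extending the given identification on generic fibers, and producing that is equivalent to showing that the birational map $C_{R'}\dashrightarrow\widetilde{C}_{R'}$ (closure of the graph of the generic identification) contracts no component of the special fiber --- which is the actual content of the theorem and is missing from your proposal.

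For comparison, the paper's route avoids constructing a second family at all. It first reduces to the generically smooth case using the density statement of \Cref{prop:smooth-hyp}, so that $C_R$ is a normal surface; then $\sigma_K$ extends to a birational self-map $\sigma'$ of $C_R$ defined outside a codimension-two set, and \Cref{lem:contract} reduces everything to showing that $\sigma'$ contracts no component of the special fiber. That non-contraction is proved by exhibiting linear systems with which $\sigma'$ must commute because they restrict to the canonical system on the generic fiber (\Cref{lem:can-com}): the canonical system itself handles most components (after \Cref{prop:base-point-can} and \Cref{lem:not-poss-comp} dispose of its base locus), and the twists $\omega_{C_R}(mC_i)$ of \Cref{lem:var-can} handle the genus $0$ and genus $1$ components that the canonical system degenerates on. Some substitute for this $\sigma'$-equivariant positivity argument is what your proposal would need to supply.
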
 

\begin{remark}
	In the ($A_1$)-stable case, \Cref{theo:univ-closed} follows from the finiteness of the inertia, as $\overline{\mathcal{M}}_g$ is a Deligne-Mumford separated (in fact proper) stacks. However if $r\geq 2$, the inertia of $\Mtilde_g^r$ is not proper. As a matter of fact, $I_{\Mtilde_g^r}[2]$, i.e. the $2$-torsion of $I_{\Mtilde_g^r}$, is not finite over $\Mtilde_g^r$ either. It is clearly quasi-finite, but the properness fails as the following example shows. 
	
	Let $k=\CC$ and $\AA^1$ be the affine line over the complex number. We consider the following situation: consider the projective line $p_1:\PP^1\times \AA^1\rightarrow \AA^1$ over $\AA^1$ and the line bundle $p_2^*\cO_{\PP^1}(-4)$ (where $p_2:\PP^1\times \AA^1\rightarrow \PP^1$ is the natural projection). Every non-zero section $f\in p_{1,*}p_2^*\cO_{\PP^1}(8)\simeq \H^0(\PP^1,\cO(8)) \otimes_{\CC} \cO_{\AA^1}$ gives rise to a cyclic cover of 
	$$\begin{tikzcd}
	C \arrow[rd] \arrow[rr, "2:1"] &       & \PP^1\times\AA^1 \arrow[ld, "p_1"] \\
	& \AA^1 &                                   
	\end{tikzcd}$$
	 such that $C_f\rightarrow \AA^1$ is a family of (arithmetic) genus $3$ curves (see \cite{ArVis}). One can prove that they are all $A_r$-stable if $r\geq 8$. Furthermore, every automorphism of the data $(\PP^1\times \AA^1,\cO(-4),f)$ gives us an automorphism of $C$ which commutes with the cyclic cover map. We have already proven that the association is fully faithful and these are the only possible automorphisms. 
	 
	 Consider the section $f:=x_0^2x_1^2(x_0-x_1)^2(tx_0-x_1)^2 \in \H^0(\PP^1,\cO(8))\otimes \CC[t]$ where $[x_0:x_1]$ are the homogeneous coordinates of the projective line. Then it is easy to show that the family $C_f$ lives in $\Mtilde_3^3$. If $t\neq 0,1$, we have constructed a $A_1$-stable genus $3$ curve which can be obtained by gluing two projective line in four points with the same cross ratio, which is exactly $t$. Whereas if $t$ is either $0$ or $1$, we obtain a genus $3$ curve which is $A_3$-stable, as two of the four nodes in the generic case collapse into a tacnode. 
	 
	 Let $\phi_t$ be an element of $\PGL_2(k(t))$ defined by the matrix
	 $$
	 \begin{bmatrix}
	 0  &  1 \\
	 t  &  0
	 \end{bmatrix}, 
	 $$
	 hence an easy computation shows that $\phi_t$ is an involution of the data $$(\PP^1_{\CC(t)},\cO(-4),f)$$, and therefore of $C_f\otimes_{\AA^1}\spec \CC(t)$. First of all, this is not hyperelliptic: the quotient of $C_f\otimes_{\AA^1}\spec \CC(t)$ by this involution is a genus $1$ curve geometrically obtained by intersecting two projective lines in two points. Furthermore, it does not have a limit for $t=0$. We really need the hyperelliptic condition to have our result.
\end{remark}

First of all, we can reduce to considering morphisms $\spec K \rightarrow \Htilde_g^r$ which land in a dense open of $\Htilde_g^r$. Therefore \Cref{prop:smooth-hyp} implies that it is enough to prove the theorem when the family $C_R \rightarrow \spec R$ is generically smooth. In particular this implies $C_R$ is a $2$-dimensional normal scheme.

Using the normality of $C_R$ and the properness of the morphism $C_R \rightarrow \spec R$, one can prove that $\sigma_K$ can be uniquelly extended to an open $U$ of $C_R$ whose complement has $\codim 2$. Let us call $\sigma':C_R \dashrightarrow C_R$ the extension of $\sigma_K$ to the open $U$. 

\begin{lemma}\label{lem:contract}
	In the situation above, suppose that $\sigma'$ does not contract any one-dimensional subscheme of $C_R$ to a point. Then there exists an extension of $\sigma'$ to a regular morphism $\sigma_R:C_R\rightarrow C_R$ which is a hyperelliptic involution. 
\end{lemma}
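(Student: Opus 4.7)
The plan is to realize $\sigma_R$ via the scheme-theoretic closure $\bar\Gamma \subset C_R \times_R C_R$ of the graph $\Gamma$ of $\sigma'\colon U \to C_R$. Both projections $p_1, p_2\colon \bar\Gamma \to C_R$ are proper (since $C_R \to \spec R$ is), and $p_1$ is birational with $p_1^{-1}(U) \to U$ an isomorphism. Because $\sigma_K$ is an involution on the generic fiber, the swap $\tau$ of the two factors preserves $\Gamma_K$, and hence also $\bar\Gamma$.

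The crucial step is to show that $p_1$ has finite fibers everywhere; this is automatic over $U$. Assume, towards a contradiction, that there exists $p \in C_R\setminus U$ such that $p_1^{-1}(p)$ contains a one-dimensional irreducible closed subscheme $D'$. Then $D := p_2(D')_{\rm red}$ is a reduced irreducible curve in $C_R$, and $D \cap U$ is a dense open of $D$ because $C_R\setminus U$ is zero-dimensional. The $\tau$-invariance of $\bar\Gamma$ yields $(q, p) \in \bar\Gamma$ for every $q \in D$. On the other hand, since $\Gamma$ is already closed in the open $U \times_R C_R$ (the target being separated), we have $\bar\Gamma \cap (U \times_R C_R) = \Gamma$, so for $q \in U$ the unique element of $\bar\Gamma$ with first coordinate $q$ is $(q, \sigma'(q))$. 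Comparing forces $\sigma'(q) = p$ for every $q \in D\cap U$, contradicting the hypothesis that $\sigma'$ contracts no one-dimensional subscheme.

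Hence $p_1$ is proper with finite fibers, therefore finite, and birational. Since $C_R$ is normal and integral, a finite birational morphism $p_1\colon \bar\Gamma \to C_R$ must be an isomorphism: indeed, $p_{1,*}\cO_{\bar\Gamma}$ embeds in $\kappa(C_R)$ and is integral over $\cO_{C_R}$, so it coincides with $\cO_{C_R}$ by normality. I then set $\sigma_R := p_2 \circ p_1^{-1}\colon C_R \to C_R$, which by construction restricts to $\sigma'$ on $U$.

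Finally, $\sigma_R \circ \sigma_R = \id$ holds on the dense subscheme $C_K$ and hence everywhere by separatedness of $C_R$, so $\sigma_R$ is an involution; and by \Cref{lem:conn-comp} the locus of $\spec R$ over which the pair $(C,\sigma)$ is a hyperelliptic $A_r$-stable curve is open and closed, so containing the generic point it must coincide with $\spec R$. The main obstacle in this argument is the fiber-finiteness of $p_1$, which is the only place where both the involution symmetry $\tau(\bar\Gamma)=\bar\Gamma$ and the non-contraction hypothesis on $\sigma'$ are essential; once that is established, normality of $C_R$ and \Cref{lem:conn-comp} close everything up.
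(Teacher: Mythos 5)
Your argument is correct and is essentially the paper's proof: both form the closure $\Theta$ of the graph of $\sigma'$ in $C_R\times_R C_R$, rule out positive-dimensional fibers of a projection using the non-contraction hypothesis, conclude that a finite birational morphism onto the normal scheme $C_R$ is an isomorphism, and deduce the hyperelliptic property from the open-and-closedness of the hyperelliptic locus. The only (cosmetic) difference is that the paper makes $p_2$ quasi-finite and recovers $\sigma_R$ as the inverse $p_1\circ p_2^{-1}$, using that $\sigma_K$ is an involution at the end, whereas you use the involution property up front, via $\tau$-invariance of $\Theta$, to make $p_1$ quasi-finite and invert it directly.
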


\begin{proof}
	Let $U\subset C_R$ be the open subscheme where $\sigma'$ is defined and let $\Theta\subset C_R\times_{\spec R} C_R$ be the closure of the graph $U\hookrightarrow C_R\times_{\spec R} C_R$ associated to $\sigma'$. We denote by $p_i$ the restriction of the $i$-th projection to $\Theta$ for $i=1,2$. Suppose there exists a one-dimensional irreducible scheme $\Gamma\subset \Theta$ such that $p_2(\Gamma)$ is just a point, then $p_1(\Gamma)$ has to be one-dimensional but this would imply that $\sigma'$ contracts it. Therefore $p_2$ is quasi-finite and proper, thus finite. We have then a finite birational morphism to a normal variety, therefore it is an isomorphism. Hence we have that $\sigma'^{-1}$ can be defined over $C_R$, i.e. it is a regular morphism. We denote it by $\widetilde{\sigma}$. Because both $\widetilde{\sigma}$ and $\sigma'$ restricts to $\sigma_K$ at the generic fiber, we have that $\widetilde{\sigma}$ is in fact an extension of $\sigma'$ to a regular morphism and it is an involution. The hyperelliptic property follows from \Cref{prop:open-closed-imm}.
\end{proof}

The previous lemma implies that it is enough to prove that $\sigma'$ does not contract any one-dimensional subscheme of $C_R$. To do this, we use a classical but fundamental fact for smooth hyperelliptic curves. 

\begin{lemma}\label{lem:can-com}
	Let $C$ be a smooth hyperelliptic curve of genus $g$ over an algebraically closed field. Then the canonical morphism $$\phi_{|\omega_C|}:C\longrightarrow \PP^{g-1}$$
	factors through the hyperelliptic quotient.
\end{lemma}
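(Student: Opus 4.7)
The plan is to identify $\omega_C$ explicitly as the pullback of a line bundle from the hyperelliptic quotient, and then to observe that all of its global sections come from pullback. Let $\pi\colon C\arr \PP^1$ be the hyperelliptic double cover. Since $\cha k\neq 2$, the theory of cyclic covers gives a decomposition $\pi_*\cO_C\simeq \cO_{\PP^1}\oplus \cL^{-1}$ on which the hyperelliptic involution acts as $+1$ on the first summand and $-1$ on the second, with $\cL^{\otimes 2}\simeq \cO_{\PP^1}(B)$ where $B$ is the branch divisor. By Riemann--Hurwitz, $\deg B=2g+2$, so $\cL\simeq \cO_{\PP^1}(g+1)$.

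Next I would use the standard formula for the relative dualizing sheaf of a finite flat double cover, $\omega_{C/\PP^1}\simeq \pi^*\cL$, which can either be quoted from Grothendieck duality or verified by an explicit local calculation on an affine chart $k[x,y]/(y^2-f(x))$. Combining with $\pi^*\omega_{\PP^1}\simeq \pi^*\cO(-2)$, this gives
$$\omega_C\;\simeq\;\pi^*\cO_{\PP^1}(g-1).$$
In particular $\omega_C$ is the pullback of a line bundle from $\PP^1$, which immediately implies that the canonical morphism $\phi_{|\omega_C|}$ is constant on the fibers of $\pi$, i.e.\ factors through the hyperelliptic quotient.

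To make the factorization concrete, I would apply the projection formula:
$$\H^0(C,\omega_C)\simeq \H^0(\PP^1,\cO(g-1))\oplus \H^0(\PP^1,\cO(g-1)\otimes\cL^{-1})=\H^0(\PP^1,\cO(g-1))\oplus \H^0(\PP^1,\cO(-2)).$$
The second summand vanishes, so the pullback $\pi^*\colon \H^0(\PP^1,\cO(g-1))\arr \H^0(C,\omega_C)$ is an isomorphism (both sides have dimension $g$). Therefore $\phi_{|\omega_C|}$ coincides with the composition
$$C\xarr{\pi}\PP^1\xarr{v_{g-1}}\PP^{g-1},$$
where $v_{g-1}$ is the $(g-1)$-uple Veronese embedding, proving the claim.

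There is no real obstacle here; the argument is essentially the same computation already used inside the proof of \Cref{prop:integral}, where the line bundle $\phi^*\cO_{\PP^{g-1}}(1)$ was shown to coincide with $\omega_C$ by a global sections count. The only point requiring mild care is the identification $\omega_{C/\PP^1}\simeq \pi^*\cL$, which is where the hypothesis $\cha k\neq 2$ is used.
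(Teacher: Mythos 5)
Your proof is correct. Note that the paper states \Cref{lem:can-com} without proof, as a classical fact, so there is no argument of the paper to compare against line by line; the closest analogue is the computation carried out inside the proof of \Cref{prop:integral}, which you correctly point to. The two computations run in opposite directions: you identify $\omega_C\simeq\pi^*\cO_{\PP^1}(g-1)$ up front, via $\cL\simeq\cO_{\PP^1}(g+1)$ and the branch-divisor formula $\omega_{C/\PP^1}\simeq\pi^*\cL$, and then use the projection formula to see that all $g$ canonical sections are pulled back from $\PP^1$; the paper instead starts from the pullback bundle $\cL:=\phi^*\cO_{\PP^{g-1}}(1)$, counts its sections using $f_*\cO_C=\cO_{\PP^1}\oplus\cO_{\PP^1}(-g-1)$, and invokes Riemann--Roch to conclude $\cL\simeq\omega_C$. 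Your route is slightly more economical and self-contained in the smooth case; the paper's route has the advantage of extending verbatim to integral $A_r$-stable hyperelliptic curves, where the local identification $\omega_{C/Z}\simeq\pi^*\cL$ would need extra care at the singular points but Riemann--Roch for integral curves still applies. The only step in your argument requiring justification beyond a citation is $\omega_{C/\PP^1}\simeq\pi^*\cL$, which you correctly flag and which follows, e.g., from the observation that $y\in\H^0(C,\pi^*\cL)$ cuts out the reduced ramification divisor.
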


Before continuing with the proof of \Cref{theo:univ-closed}, we recall why the genus $2$ case is special.

\begin{remark}	\label{rem:genus-2}
	If $g=2$, we need to prove that $\eta$ is an isomorphism.  In this situation, given a family $\pi:C\rightarrow S$ of $A_r$-stable curves, there is always a hyperelliptic involution. Indeed, one can consider the morphism $\phi_{|\omega_C^{\otimes 2}|}:C \rightarrow \PP(\pi_*\omega_C^{\otimes 2}) $. A straightforward computation proves that $\omega_C^{\otimes 2}$ is globally generated and the image $Z$ of the morphism associated to $\omega_C^{\otimes 2}$ is a family of genus $0$ curve. In fact we have a factorization of the morphism $\phi_{|\omega_C^{\otimes 2}|}$ 
	$$
	\begin{tikzcd}
	C \arrow[rd,"\pi"]  \arrow[r, "p"] & Z \arrow[d] \arrow[r, hook] & \PP(\pi_*\omega_C^{\otimes 2}) \arrow[ld]   \\
		& S              &                               
		\end{tikzcd}$$
	and one can construct an involution $\sigma$ of $C$ such that the quotient morphism is exactly $p$. 
	
	The same strategy cannot work for higher genus, as we know that $\eta$ is not surjective.  
\end{remark}

The idea is to use the canonical map to prove that the involution $\sigma'$ does not contract any one-dimensional subscheme of $C_R$, or equivalently any irreducible component of the special fiber $C_k:=C\otimes_R k$, where $k$ is the residue field of $R$. Indeed, $\sigma'$ commutes with the canonical map on a dense open, namely the generic fiber, because of \Cref{lem:can-com}. Reducedness and separatedness of $C_R$ imply that they commute wherever they are both defined. Let $\sigma'_k$ the restriction of $\sigma'$ to the special fiber and let $\Gamma$ be an irreducible component of $C_k$. Suppose we have the two following properties:
\begin{itemize}
	\item[1)] the open of definition of the canonical morphism of $C_k$ intersects $\Gamma$,
	\item[2)] the canonical morphism of $C_k$ does not contract $\Gamma$ to a point;
\end{itemize}
thus $\sigma'_k$ does not contract $\Gamma$ to a point, and neither does $\sigma'$. 

In the rest of the section, we describe the base point locus of $|\omega_C|$ and we prove that the canonical morphism contracts only a specific type of irreducible components. Then, we prove that $\sigma'$ does not contract these particular components using a variation of the canonical morphism. Thus we can apply \Cref{lem:contract} to get \Cref{theo:univ-closed}.

\begin{proposition}\label{prop:base-point-can}		
	Let $C/k$ be an $A_r$-stable genus $g$ curve over an algebraically closed field. The canonical map is defined on the complement of the set ${\rm SN}(C)$, which contains two type of closed points, namely:
	\begin{itemize}
		\item[$(1)$]   $p$ is a separating nodal point;
		\item[$(2)$]   $p$ belongs to an irreducible component of arithmetic genus $0$ which intersect the rest of curve in separating nodes.
	\end{itemize} 
\end{proposition}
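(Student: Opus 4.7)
The plan is to pin down the base locus of $|\omega_C|$ using Serre duality on the Gorenstein curve $C$ together with a residue analysis at its singularities, proving both that ${\rm SN}(C)$ is contained in the base locus and conversely that every base point lies in ${\rm SN}(C)$.

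For the first inclusion, suppose $p$ is a separating node. The partial normalization $\nu\colon \widetilde{C} = \widetilde{C}_1 \sqcup \widetilde{C}_2 \to C$ splits as a disjoint union with $g(\widetilde{C}_i) = g_i$ and $g_1 + g_2 = g$ by \Cref{rem:genus-count}. A global section of $\omega_C$ pulls back to a pair $\omega_i \in H^0(\widetilde{C}_i, \omega_{\widetilde{C}_i}(p_i))$ with opposite residues at the two preimages; but Riemann-Roch on $\widetilde{C}_i$ together with $h^1(\omega_{\widetilde{C}_i}(p_i)) = h^0(\cO_{\widetilde{C}_i}(-p_i)) = 0$ gives $h^0(\omega_{\widetilde{C}_i}(p_i)) = g_i = h^0(\omega_{\widetilde{C}_i})$, so the residue map vanishes identically, forcing every global section of $\omega_C$ to vanish at $p$. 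For case (2), let $\Gamma \simeq \PP^1$ be a genus-$0$ component meeting the rest of $C$ at separating nodes $q_1,\dots,q_m$ with connected complements $E_j$; the same computation on each $E_j$ forces the residue at $q_j$ (from the $E_j$-side, hence also from the $\Gamma$-side) of every global section of $\omega_C$ to be zero. The restriction $\omega_C|_\Gamma$ then lies in $H^0(\omega_\Gamma(\sum q_j))$ and is determined by its residues because $H^0(\omega_\Gamma) = 0$ on $\PP^1$; hence $\omega_C|_\Gamma \equiv 0$, making every point of $\Gamma$ a base point.

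For the reverse inclusion, take $p \notin {\rm SN}(C)$ and produce a section of $\omega_C$ not vanishing at $p$. When $p$ is smooth, Serre duality and the short exact sequence $0 \to \omega_C(-p) \to \omega_C \to \omega_C \otimes k(p) \to 0$ show that $p$ is a base point if and only if $h^0(\cO_C(p)) \geq 2$. Given such a non-constant $f \in H^0(\cO_C(p))$, the restriction $f|_{\Gamma'}$ is constant on each irreducible component $\Gamma' \neq \Gamma_0$ not containing $p$, and restricts on the component $\Gamma_0$ of $p$ to a degree-$1$ rational function with a single simple pole at $p$. Degree-$1$ injectivity on the normalization forces $\Gamma_0 \simeq \PP^1$ with no self-singularities; at any intersection point $q$ where $\Gamma_0$ meets another component $\Gamma'$, compatibility of $f$ in $\cO_{C,q}$ rules out both higher $A_r$-singularities (which would force $df|_{\Gamma_0}(q) = 0$, hence $f$ constant) and multiple intersections of $\Gamma_0$ with the same connected component of $C - \Gamma_0$ (which would force $f|_{\Gamma_0}$ to take equal values at distinct points). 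Therefore each intersection of $\Gamma_0$ with the rest of $C$ is a single reduced node, necessarily separating as it is the unique connection from $\Gamma_0$ to the connected subcurve on the opposite side, placing $p$ in ${\rm SN}(C)$ contrary to assumption.

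For $p$ singular but not in ${\rm SN}(C)$, I distinguish by the singularity type. If $p$ is a non-separating node, the partial normalization at $p$ is connected of arithmetic genus $g-1$, and the residue map $H^0(\omega_{\widetilde{C}}(p_1+p_2)) \to \{(r_1,r_2) : r_1+r_2 = 0\}$ is surjective by an analogous Riemann-Roch count, producing a global differential on $C$ with nonzero residue at $p$. For $p$ a higher $A_r$-singularity, I pull back via $\nu^*\omega_C = \omega_{\widetilde{C}}(J^\vee)$ and translate non-vanishing at $p$ into a condition on the leading behavior of the pullback at the conductor; global Riemann-Roch on $\widetilde{C}$ then produces the required section, except when $A_r$-stability of $C$ together with the constraint $p \notin {\rm SN}(C)$ cannot both be satisfied. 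The main technical obstacle is this last local analysis at the higher (unibranch) $A_r$-singularities, where the identification of the dualizing sheaf and the interpretation of ``value at $p$'' through the conductor require careful use of the explicit local models from \Cref{sec:1-1} combined with the genus formulas of \Cref{rem:genus-count}.
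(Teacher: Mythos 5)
Your overall strategy --- Rosenlicht-style residue conditions plus Riemann--Roch on partial normalizations --- is a legitimate self-contained route; the paper itself simply cites Theorem D of \cite{Cat}, so you are attempting strictly more than the paper records. The parts you actually carry out are correct: the forward inclusion for separating nodes and for genus-$0$ components attached by separating nodes, the characterization of smooth base points via $h^0(\cO_C(p))\geq 2$ (this is essentially the paper's \Cref{lem:char-type2}, and your argument for it is sound), and the surjectivity of the residue map at a non-separating node.

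The gap is the case you defer in your final paragraph: $p$ an $A_n$-singularity with $n\geq 2$. You assert that Riemann--Roch on $\widetilde{C}$ ``produces the required section, except when $A_r$-stability together with $p\notin{\rm SN}(C)$ cannot both be satisfied,'' and then label the local analysis ``the main technical obstacle.'' That is not a proof, and for this paper it is the case that matters: in $\Mtilde_3^7$ the singularities $A_2,\dots,A_7$ are exactly the ones the proposition must handle. Concretely, at an $A_{2h}$- or $A_{2h+1}$-point one must write $\nu^*\omega_C=\omega_{\widetilde{C}}(D)$ with $D$ cut out by the conductor ($D=2hq$, resp.\ $hq_1+hq_2$), identify ``non-vanishing at $p$'' with non-vanishing of the top polar coefficient(s) of the pullback, describe $H^0(C,\omega_C)$ inside $H^0(\widetilde{C},\omega_{\widetilde{C}}(D))$ by the $h$ (resp.\ $2h$) linear residue-pairing conditions coming from $\cO_{C,p}$, and then check by a Riemann--Roch count that the map to the space of polar parts hits an element with nonzero leading term --- using stability where needed to guarantee a pole of maximal order is achievable on each branch. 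Your parenthetical ``(unibranch)'' suggests you are not yet distinguishing the two-branch singularities $A_{2h+1}$, where the conditions couple the two branches and the separating case must be checked not to obstruct the leading coefficient. Either carry out this computation (the local models of \Cref{sec:1-1} together with \Cref{rem:genus-count} do suffice) or do what the paper does and invoke \cite[Theorem D]{Cat} directly.
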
 
\begin{proof}
	Follows from \cite[Theorem D]{Cat}.
\end{proof}

\Cref{prop:base-point-can} implies that there may be irreducible components of $C_k$ where the canonical map is not defined. We prove in \Cref{lem:not-poss-comp} that in fact they cannot occur in our situation.

Now we prove two lemmas which partially imply the previous one but we need them in this particular form. The first one is a characterization of the points of type (2) as in \Cref{prop:base-point-can}.

\begin{lemma}\label{lem:char-type2}
	Let $C$ be an $A_r$-prestable curve of genus $g\geq 1$ over $k$ and $p\in C$ be a smooth point. Then $\H^0(C,\cO(p))=2$ if and only if $p$ is of type $(2)$ as in \Cref{prop:base-point-can}.
\end{lemma}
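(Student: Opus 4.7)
The plan is to reduce the statement to whether $p$ lies in the base locus of the canonical linear system $|\omega_C|$, and then invoke \Cref{prop:base-point-can}. Since $C$ is $A_r$-prestable it is l.c.i.\ and hence Gorenstein, so $\omega_C$ is a line bundle and Serre duality is available. As $p$ is a smooth point of $C$, the sheaf $\cO_C(p)$ is a line bundle of degree $1$.

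First I would apply Riemann--Roch to $\cO_C(p)$ to get $\chi(C, \cO_C(p)) = 2 - g$, and then use Serre duality to rewrite $h^1(\cO_C(p)) = h^0(\omega_C(-p))$, yielding
$$h^0(C, \cO_C(p)) \;=\; h^0(C, \omega_C(-p)) + 2 - g.$$
By \Cref{prop:hodge-bundle} we have $h^0(C, \omega_C) = g$, and the short exact sequence
$$0 \to \omega_C(-p) \to \omega_C \to \omega_C|_p \to 0,$$
which is valid because $p$ is a smooth Cartier point, shows that $h^0(\omega_C(-p)) \in \{g-1,\,g\}$, with the value $g$ occurring precisely when every global section of $\omega_C$ vanishes at $p$, i.e.\ when $p$ lies in the base locus ${\rm SN}(C)$ of the canonical system.

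Combining the two observations, $h^0(C, \cO_C(p)) = 2$ if and only if $p \in {\rm SN}(C)$. Now \Cref{prop:base-point-can} identifies ${\rm SN}(C)$ as the union of separating nodes (type $(1)$) and points on a genus-$0$ irreducible component that meets the rest of $C$ only along separating nodes (type $(2)$). Since $p$ is smooth by hypothesis, it cannot be a separating node, so membership in ${\rm SN}(C)$ is equivalent to $p$ being of type $(2)$, giving the desired equivalence. The argument is essentially a formal consequence of Riemann--Roch together with the already proved \Cref{prop:base-point-can}, so I do not anticipate any real obstacle; the only point deserving minor care is the appeal to Serre duality, which is standard for Gorenstein curves.
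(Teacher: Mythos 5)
Your proof is correct, but it takes a genuinely different route from the paper's. The paper argues directly on components: from the injectivity of the restriction $\H^0(C,\cO(p))\to \H^0(\Gamma,\cO(p)\vert_\Gamma)$ it deduces that the component $\Gamma$ containing $p$ must be a $\PP^1$, and then uses that $\cO(p)$ is trivial on every other component to force each connected subcurve of $C-\Gamma$ to meet $\Gamma$ in a subscheme of length at most $1$, i.e.\ in separating nodes. You instead run Riemann--Roch and Serre duality (both valid here since an $A_r$-prestable curve is l.c.i., hence Gorenstein) to convert the condition $h^0(\cO(p))=2$ into ``$p$ is a base point of $|\omega_C|$,'' and then quote \Cref{prop:base-point-can}. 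Your computation is right ($h^0(\cO_C(p))=2-g+h^0(\omega_C(-p))$, and $h^0(\omega_C(-p))=g$ exactly when $p$ is a base point), and since $p$ is smooth it cannot be of type $(1)$, so the conclusion follows. The one thing your argument leans on is reading \Cref{prop:base-point-can} as an exact (``if and only if'') description of the canonical base locus --- in particular you need the converse direction, that a point of type $(2)$ really is a base point, which is not literally forced by the phrasing ``the canonical map is defined on the complement of ${\rm SN}(C)$'' but is the content of Catanese's Theorem~D and is how the paper uses that proposition elsewhere. Your approach buys brevity and a conceptual link to the canonical system; the paper's buys a self-contained elementary argument that does not depend on the precise strength of the base-point proposition.
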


\begin{proof}
	The \emph{if} part follows from an easy computation. Let us prove the \emph{only if} implication. Let $\Gamma$ be the irreducible component that contains $p$. Then $h^0(\Gamma,\cO(p))\geq h^0(C,\cO(p))$, which implies $h^0(\Gamma,\cO(p))=2$ or equivalently $\Gamma$ is a projective line. Because $\cO(p)\vert_{\tilde{\Gamma}}\simeq \cO_{\tilde{\Gamma}}$ for every irreducible component $\tilde{\Gamma}$ different from $\Gamma$, we have that $h^0(C,\cO(p))=2$ implies that for every connected subcurve $C'\subset C$ not cointaining $\Gamma$, we have that $C'\cap \Gamma$ is a $0$-dimensional subscheme of length at most $1$. This is equivalent to the fact that $\Gamma$ intersects $C-\Gamma$ only in separating nodes. 
\end{proof}

The second lemma helps us describe the canonical map of $C$ when restricted to its $A_1$-separating decomposition.

\begin{lemma}\label{lem:sep-decom-hodge}
Let $C/k$ be an $A_r$-stable curve of genus $g$ and let $\{C_i\}_{i \in I}$ its $A_1$-separating decomposition (see \Cref{def:sep-dec}). Then we have that 
$$\H^0(C,\omega_C)=\bigoplus_{i \in I}\H^0(C_i,\omega_{C_i}).$$	
\end{lemma}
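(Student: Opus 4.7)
The plan is to realize $\omega_C$ as a subsheaf of $b_*b^*\omega_C$ for the partial separating normalization $b$, observe that $\bigoplus_i \iota_{C_i,*}\omega_{C_i}$ naturally sits inside $\omega_C$, and then conclude by matching dimensions via the genus formula for separating nodes.

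First I would set $b\colon \widetilde{C} := \bigsqcup_{i\in I} C_i \to C$ to be the partial normalization at the separating nodes of $C$ defining the $A_1$-separating decomposition, and let $D \subset \widetilde{C}$ denote the reduced preimage of those nodes, with $D_i := D \cap C_i$ the sum of preimages of separating nodes lying on $C_i$. By the Noether formula recalled in \Cref{rem:genus-count}, $b^*\omega_C \simeq \omega_{\widetilde{C}}(D)$, so the projection formula gives $b_*b^*\omega_C \simeq \bigoplus_{i\in I} \iota_{C_i,*}\omega_{C_i}(D_i)$. Locally at a separating node $p = C_i \cap C_j$, the dualizing sheaf $\omega_C$ is precisely the subsheaf of pairs $(\alpha,\beta) \in \omega_{C_i}(D_i) \oplus \omega_{C_j}(D_j)$ whose residues at $p$ sum to zero, giving a short exact sequence
\[
0 \to \omega_C \to \bigoplus_{i\in I} \iota_{C_i,*}\omega_{C_i}(D_i) \xrightarrow{\rho} \bigoplus_{p} k(p) \to 0
\]
indexed over the separating nodes of the decomposition.

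Next, since a section of $\omega_{C_i}$ has no pole along $D_i$ (the points of $D_i$ are smooth on $C_i$, as $C_i$ has no separating nodes of its own), the natural inclusion $\bigoplus_i \iota_{C_i,*}\omega_{C_i} \hookrightarrow \bigoplus_i \iota_{C_i,*}\omega_{C_i}(D_i)$ lands in $\ker \rho$, hence factors through a sheaf injection $\bigoplus_i \iota_{C_i,*}\omega_{C_i} \hookrightarrow \omega_C$. Taking global sections yields the desired injection
\[
\bigoplus_{i\in I} \H^0(C_i,\omega_{C_i}) \hookrightarrow \H^0(C,\omega_C).
\]

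Finally, I would upgrade this to an isomorphism by matching dimensions. By \Cref{prop:hodge-bundle} both sides are vector spaces of dimension equal to the arithmetic genus of the underlying curve. A separating node contributes $0$ to the arithmetic genus (this is the case $r = 2h+1$ of \Cref{rem:genus-count} with $\widetilde{C}$ disconnected and $h = 0$), so iterating over the separating nodes of the decomposition gives $g(C) = \sum_{i\in I} g(C_i)$, forcing the injection to be an isomorphism. The only step requiring care is the local residue description of $\omega_C$ at a node as the kernel inside $\omega_{\widetilde{C}}(D)$, but this is the standard presentation of the dualizing sheaf on a Gorenstein nodal curve and follows from a direct computation in $k[[x,y]]/(xy)$; the rest is formal.
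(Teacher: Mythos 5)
Your argument is correct, and it rests on the same structural fact as the paper's proof: the presentation of $\omega_C$ at a separating node as the kernel of the residue map out of $\bigoplus_i \iota_{C_i,*}\omega_{C_i}(D_i)$. Where you diverge is in how you finish. The paper works one separating node at a time: from the exact sequence it deduces $\H^0(C,\omega_C)=\H^0(C_1,\omega_{C_1}(n))\oplus\H^0(C_2,\omega_{C_2}(n))$ using $h^1(\omega_C)=1$, and then separately proves that twisting by a smooth point adds no sections (via the vanishing of $\H^0(C_i,\cO(-n))$). You instead handle all separating nodes at once, exhibit the sheaf inclusion $\bigoplus_i\iota_{C_i,*}\omega_{C_i}\hookrightarrow\omega_C$ directly from the vanishing of residues, and conclude by comparing dimensions through the additivity $g(C)=\sum_i g(C_i)$, which indeed follows from the tree structure of the decomposition and the genus count of \Cref{rem:genus-count} (or just from $\chi(\cO_{\widetilde C})=\chi(\cO_C)+\#\{\text{sep.\ nodes}\}$). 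Both routes are equally elementary; yours trades the cohomological vanishing lemma for the combinatorial genus bookkeeping, and has the small advantage of producing the isomorphism as an explicit map of sheaves rather than only an equality of global-section spaces. One cosmetic point: \Cref{prop:hodge-bundle} is stated for $A_r$-\emph{stable} curves, and the pieces $C_i$ need not be stable on their own; you should instead invoke Serre duality, $h^0(C_i,\omega_{C_i})=h^1(C_i,\cO_{C_i})=g(C_i)$, which holds for any connected reduced projective Gorenstein curve and is all the dimension count needs.
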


\begin{proof}
	Let $n \in C$ be a separating nodal point and let $C_1$ and $C_2$ the two subcurves of $C$ such that $C=C_1\cup C_2$ and $C_1\cap C_2=\spec k(n)$. Then we have the exact sequence of coherent sheaves
	$$ 0 \rightarrow \omega_C \rightarrow \iota_{1,*}\omega_{C_1}(n)\oplus \iota_{2,*}\omega_{C_2}(n) \rightarrow k(n) \rightarrow 0$$
	where $\iota_i:C_i \hookrightarrow C$ is the natural immersion for $i=1,2$. Taking the global sections, $h^1(\omega_C)=1$ implies that $\H^0(C,\omega_C)=\H^0(C_1,\omega_{C_1}(n))\oplus\H^0(C_2,\omega_{C_2}(n))$. Finally, we claim that $\H^0(C,\omega_C(p))=\H^0(C,\omega_C)$ for every smooth point $p$ on a connected reduced Gorenstein curve $C$. The global sections of the following exact sequence 
	$$0 \rightarrow \cO_C(-p) \rightarrow \cO_C \rightarrow k(p) \rightarrow 0$$
	implies that the claim is equivalent to the vanishing of $\H^0(C,\cO_C(-p))$. This is an easy exercise.
\end{proof}
Let $\{C_i\}_{i \in I}$ be the $A_1$-separating decomposition of $C$ and $g_i$ be the genus of $C_i$. From now on, we suppose that there are no points of type $(2)$ as in \Cref{prop:base-point-can}. In particular, $g_i>0$ for every $i \in I$.
The previous result implies that the composite 
$$
\begin{tikzcd}
C_i \arrow[r, "\iota_i", hook] & C \arrow[r, "\phi_{|\omega_C|}", dashed] & \PP^{g-1}
\end{tikzcd}
$$
factors through a map $f:C_i\dashrightarrow \PP^{g_i-1}$ induced by the vector space $\H^0(C_i,\omega_{C_i})$ inside the complete linear system of $\omega_{C_i}(\Sigma_i)$, where $\Sigma_i$ is the Cartier divisor on $C_i$ defined by the intersection of $C_i$ with the curve $C-C_i$, or equivalently the restriction of the separating nodal points on $C_i$. This follows because the restriction
$$ \H^0(C,\omega_C)  \longrightarrow \H^0(C_i,\omega_{C_i})$$
is surjective as long as there are no points of type $(2)$ as in \Cref{prop:base-point-can}  on $C_i$. Notice that the rational map $f$ is defined on the open $C_i\setminus \Sigma_i$, and we have that $f\vert_{C_i\setminus \Sigma_i}\equiv \phi_{|\omega_{C_i}|}\vert_{C_i\setminus \Sigma_i}$.

We have two cases to analyze: $C_i$ is either an $A_r$-stable genus $g_i$ curve or only $A_r$-prestable. In the first case, $C_i$ is an $A_r$-stable curve without separating nodes, and therefore the canonical morphism $\phi_{|\omega_{C_i}|}$ is globally defined and finite, because $\omega_{C_i}$ is ample. Therefore the canonical morphism of $C$ does not contract any irreducible component of the subcurve $C_i$. 

Suppose now that $C_i$ is not $A_r$-stable but only prestable and let $\Gamma$ be an irreducible component where $\omega_{C_i}\vert_{\Gamma}$ is not ample, in particular $g_{\Gamma}\leq 1$. In this case, the canonical morphism is not helpful.

\begin{remark}
	Let $C$ be a ($A_1$)-stable genus $2$ curve such that it is constructed as intersecting two smooth genus $1$ curves in a separating node. One can prove easily that the canonical morphism is trivial. Similarly, if we have a genus $g$ stable curve constructed by intersecting smooth genus $1$ curves in separating nodes, the same is still true.
\end{remark}
 
The idea is to construct a variation of the canonical morphism which does not contract the component $\Gamma$ and it still commutes with the involution. The following lemma gives us a possible candidate.

\begin{lemma}\label{lem:var-can}
	Let $C_R\rightarrow \spec R$ be a generically smooth family of $A_r$-stable genus $g$ curves over $R$, a discrete valuation ring, $p \in C_k$ be a separating node of the special fiber of the family and $C_1$ and $C_2$ be the two subcurves of $C_k$ such that $C_k=C_1\cup C_2$ and $C_1\cap C_2=\{p\}$.  Then there exists an integer $m$ such that the followings are true:
	\begin{itemize}
		\item[(i)] $mC_1$ and $mC_2$ are Cartier divisors of $C_R$,
		\item[(ii)] (if we denote by $\cO(mC_1)$ and $\cO(mC_2)$ the induced line bundles) we have $\cO(mC_2)\vert_{C_1}=\cO_{C_1}(p)$  and $\cO(mC_2) \vert_{C_2}=\cO_{C_2}(-p)$, \item[(iii)] the line bundle $\omega_{C_R}(mC_i)$ verifies base change on $\spec R$.
	\end{itemize}
\end{lemma}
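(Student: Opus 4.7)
The plan is to take $m$ to be the \emph{thickness} of the singularity of $C_R$ at the separating node $p$. The key preliminary step is a local normal form: since $C_R \to \spec R$ is generically smooth and has a node at $p$ on the special fiber, the deformation theory of nodes furnishes, after a change of coordinates, an isomorphism
$$\widehat{\cO}_{C_R,p} \;\simeq\; R[[x,y]]/(xy - t^m)$$
for some positive integer $m$, where $t$ is a uniformizer of $R$ (the generic smoothness forces the right-hand side of the equation to be nonzero in $R$, and up to absorbing a unit into $x$ we may take it to be a power of $t$). This $m$ is the integer we claim satisfies (i)--(iii).

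For (i), I would observe that near $p$ the two branches of the node are the Weil divisors $V(x,t)$ and $V(y,t)$, which are the local traces of $C_1$ and $C_2$. At the generic point of $C_1$ the element $y$ is a unit, hence $x = t^m y^{-1}$ has valuation exactly $m$ along $C_1$. Thus $(x) = m C_1$ in a neighborhood of $p$, showing that $m C_1$ is Cartier at $p$ with defining equation $x$. Away from $p$, the total space $C_R$ is smooth along $C_1$, so $C_1$ itself is already Cartier there. Hence $m C_1$ is a global Cartier divisor on $C_R$, and symmetrically for $m C_2$.

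For (ii), the identity $m C_1 + m C_2 = \mathrm{div}(t^m)$ of Cartier divisors yields $\cO(m C_1) \otimes \cO(m C_2) \simeq \cO_{C_R}$. A local computation gives $(m C_2 \cdot C_1) = \mathrm{length}\bigl(\spec R[[x,y]]/(xy-t^m, x, y)\bigr) = 1$, and combined with $(m C_2 \cdot C_k) = 0$ this forces $(m C_2 \cdot C_2) = -1$. Since $m C_2$ meets $C_1$ (resp.\ $C_2$) only at $p$, we read off $\cO(m C_2)|_{C_1} \simeq \cO_{C_1}(p)$ and $\cO(m C_2)|_{C_2} \simeq \cO_{C_2}(-p)$, as required; the case of $m C_1$ is symmetric.

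For (iii), I would invoke Grauert's cohomology-and-base-change theorem: it suffices to show that $h^1(C_s, \omega_{C_R}(m C_i)|_{C_s})$ is constant for $s \in \spec R$. The generic fiber gives $h^1(C_K, \omega_{C_K}) = 1$, since $m C_i$ is disjoint from $C_K$. For the special fiber, Serre duality on the Gorenstein curve $C_k$ reduces the claim to $h^0(C_k, L_i^{\vee}) = 1$, where $L_i := \cO(m C_i)|_{C_k}$. Because $p$ is a \emph{separating} node, $C_1$ and $C_2$ are each connected projective curves; a direct check, using (ii), shows that any global section of $L_1^{\vee}$ must vanish on $C_2$ (as $\H^0(C_2, \cO_{C_2}(-p))=0$) and hence vanish at $p$ on $C_1$, identifying $\H^0(C_k, L_1^{\vee})$ with $\H^0(C_1, \cO_{C_1}) = k$. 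A symmetric argument handles $L_2^{\vee}$. The main obstacle in this plan is the local normal form at $p$; once it is in hand, the remaining verifications are local intersection-theoretic computations together with a standard application of Grauert's theorem.
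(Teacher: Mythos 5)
Your overall route is the same as the paper's: identify the complete local ring of $C_R$ at $p$ as $R[[x,y]]/(xy-t^m)$ (the paper phrases this via Du Val $A_{m-1}$-singularities, \'etale-locally $y^2+x^2+t^m$, which is the same singularity), read off (i) and (ii) from $\mathrm{div}(x)=mC_1$, $\mathrm{div}(y)=mC_2$ and $\mathrm{div}(t^m)=mC_1+mC_2$, and deduce (iii) from constancy of cohomology on the fibers plus Grauert. The only real divergence is in (iii): you dualize and check $h^0(C_k,\cO(-mC_i)\vert_{C_k})=1$, whereas the paper computes $h^0(C_k,\omega_{C_k}(mC_i)\vert_{C_k})=g$ directly from the Mayer--Vietoris sequence of $C_k=C_1\cup C_2$, using $\omega(mC_1)\vert_{C_1}\simeq\omega_{C_1}$ and $\omega(mC_1)\vert_{C_2}\simeq\omega_{C_2}(2p)$. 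Both computations are valid and of comparable length.

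Two of your local verifications are wrong as written, though both are repairable. First, in (i) you justify Cartier-ness of $C_1$ away from $p$ by asserting that $C_R$ is smooth along $C_1\setminus\{p\}$; this is false in general, since $C_1$ may pass through further singular points of $C_k$ (other nodes, cusps, tacnodes, and so on) at which the surface $C_R$ is again singular. The correct reason is that at any point of $C_1\setminus\{p\}$ no branch of $C_2$ is present, so there $C_1$ coincides with the whole special fiber, which is the Cartier divisor $\mathrm{div}(t)$ because $C_k$ is reduced; hence $mC_1=\mathrm{div}(t^m)$ locally, which is Cartier. Second, your intersection number is miscomputed: the ideal $(xy-t^m,x,y)$ equals $(x,y,t^m)$, so the length you wrote down is $m$, not $1$ (you have in effect computed $(mC_1\cdot mC_2)$, the intersection of the two Cartier divisors). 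To get $(mC_2\cdot C_1)=\deg\cO(mC_2)\vert_{C_1}$ you must cut $\mathrm{div}(y)$ against the reduced branch $C_1=V(x,t)$, i.e.\ compute $\mathrm{length}\,R[[x,y]]/(xy-t^m,x,t,y)=\mathrm{length}\,k=1$. With these two corrections your argument goes through.
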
 

\begin{proof}
	The existence of an integer $m$ that verifies (i) follows from the theory of Du Val singularities, as a separating node in a normal surface \'etale locally is of the form $y^2+x^2+t^m$ where $t$ is a uniformizer of the DVR. An \'etale-local computation shows that $\cO(mC_1)\vert_{C_2}=\cO_{C_2}(p)$ and because as line bundles $\cO(-mC_1-mC_2)$ is the pullback from the DVR of the ideal $(t^m)$ we get $\cO(mC_2)\vert_{C_2}=\cO_{C_2}(-p)$. 
	
	Because $R$ is reduced, to prove $(iii)$ it is enough to prove that $$\dim_k\H^0(C_k,\omega_{C/R}(mC_i)\vert_{C_k})=g$$
	and then use Grauert's theorem. 
	Without loss of generality, we can suppose $i=1$.
	Let us denote by $\cL$ the line bundle $\omega_{C/R}(mC_1)$ restricted to $C_k$. Then we have the usual exact sequence
	$$0\rightarrow \H^0(C_k,\cL)\rightarrow \H^0(C_1,\cL\vert_{C_1})\oplus \H^0(C_2,\cL\vert_{C_2}) \rightarrow \cL(p)$$ 
	where we have that $\cL\vert_{C_1}\simeq \omega_{C_1}$ while $\cL\vert_{C_2}\simeq \omega_{C_2}(2p)$. Because $\omega_{C_2}(2p)$ is globally generated in $p$, we get that $$h^0(\cL)=h^0(\omega_{C_1})+h^0(\omega_{C_2}(2p))-1=h^0(\omega_{C_1})+h^0(\omega_{C_2})=g_1+g_2$$ where $g_i$ is the arithmetic genus of $C_i$ for $i=1,2$. Because $p$ is a separating node, thus $g_1+g_2=g$ and we are done.
	
\end{proof}

\begin{remark}
In the situation of \Cref{lem:var-can}, the involution $\sigma'$ commutes with the rational map associated with the complete linear system of $\omega_{C_R}(mC_i)$ for $i=1,2$. This follows from the fact that $\omega_{C_R}(mC_i)$ restrict to the canonical line bundle in the generic fiber. 
\end{remark}

  If $g_{\Gamma}=1$, then $C_i=\Gamma$ (because it is connected). If $g_{\Gamma}=0$, we have that the intersection $\Gamma \cap (C_i-\Gamma)$ has length $2$ with $C_i-\Gamma$ connected subcurve of $C_i$ (because $g_{C_i}>0$). Let $p$ be a separating node in $\Gamma \cap (C-\Gamma)$. We apply \Cref{lem:var-can} and let us denote by $C_1$ the subcurve containing $\Gamma$. For the sake of notation, we define $\cL_{\Gamma}:=\omega_{C_R}(mC_2)$. 

\begin{proposition}
	In the situation above, the morphism $\phi_{\cL_{\Gamma}}$ induced by the complete linear system of $\cL_{\Gamma}$ does not contract $\Gamma$. 
\end{proposition}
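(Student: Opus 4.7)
My plan is to use the base change property from \Cref{lem:var-can}(iii), which identifies the restriction $\phi_{\cL_{\Gamma}}|_{C_{k}}$ with the morphism defined by the complete linear system of $\cL_{\Gamma}|_{C_{k}}$ on the special fiber, and then to exhibit two sections of $\cL_{\Gamma}|_{C_{k}}$ whose restrictions to $\Gamma$ are linearly independent in $\H^{0}(\Gamma, \cL_{\Gamma}|_{\Gamma})$.

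I would start by computing the restrictions explicitly: combining \Cref{lem:var-can}(ii) with the adjunction $\omega_{C_{k}}|_{C_{i}} \simeq \omega_{C_{i}}(p)$, one obtains $\cL_{\Gamma}|_{C_{1}} \simeq \omega_{C_{1}}(2p)$ and $\cL_{\Gamma}|_{C_{2}} \simeq \omega_{C_{2}}$; restricting further to $\Gamma \subseteq C_{1}$ (which does not touch $C_{1} - \Gamma$ at $p$) gives $\cL_{\Gamma}|_{\Gamma} \simeq \omega_{\Gamma}(D_{\Gamma} + 2p)$, where $D_{\Gamma} := \Gamma \cap (C_{1} - \Gamma)$. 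Then from the Mayer--Vietoris sequence
\[
0 \longrightarrow \cL_{\Gamma}|_{C_{k}} \longrightarrow \cL_{\Gamma}|_{C_{1}} \oplus \cL_{\Gamma}|_{C_{2}} \longrightarrow \cL_{\Gamma}|_{p} \longrightarrow 0,
\]
combined with $h^{0}(\cL_{\Gamma}|_{C_{k}}) = g$ (established in the proof of \Cref{lem:var-can}) and the Riemann--Roch computations $h^{0}(\omega_{C_{1}}(2p)) = g(C_{1}) + 1$, $h^{0}(\omega_{C_{2}}) = g(C_{2})$, I would deduce that each evaluation $\H^{0}(C_{i}, \cL_{\Gamma}|_{C_{i}}) \to \cL_{\Gamma}|_{p}$ is surjective and that any pair $(s_{1}, s_{2})$ with $s_{1}(p) = s_{2}(p)$ glues to a global section of $\cL_{\Gamma}|_{C_{k}}$.

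The two sections I would construct are as follows. For the first, pick $s_{1} \in \H^{0}(C_{1}, \omega_{C_{1}}(2p))$ with nonzero value in $\cL_{\Gamma}|_{p}$ and any $s_{2} \in \H^{0}(C_{2}, \omega_{C_{2}})$ with $s_{2}(p) = s_{1}(p)$; the resulting global section restricts on $\Gamma$ to an element of $\omega_{\Gamma}(D_{\Gamma} + 2p)$ with nonzero value at $p$ in the fiber. For the second, choose $s'_{1} \in \H^{0}(C_{1}, \omega_{C_{1}}(p))$ with $s'_{1}|_{\Gamma} \neq 0$ and extend it by zero on $C_{2}$; its restriction to $\Gamma$ lies in the subspace $\H^{0}(\Gamma, \omega_{\Gamma}(D_{\Gamma} + p)) \subset \H^{0}(\Gamma, \omega_{\Gamma}(D_{\Gamma} + 2p))$, hence vanishes at $p$ in the fiber of the larger bundle. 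Evaluating at $p$ separates the two sections, proving linear independence and therefore that $\phi_{\cL_{\Gamma}}$ does not contract $\Gamma$. The main obstacle is ensuring the existence of $s'_{1}$, which reduces to the strict inequality $g(C_{1}) > g(C_{1} - \Gamma)$; this is verified using the arithmetic-genus formula $g(C_{1}) = g(C_{1} - \Gamma) + g_{\Gamma} + |D_{\Gamma}| - c$ (with $c$ the number of connected components of $C_{1} - \Gamma$) together with the structural properties of the $A_{1}$-separating decomposition---for $g_{\Gamma} = 0$ the length-$2$ attachment $\Gamma \cap (C_{i} - \Gamma)$ with $C_{i} - \Gamma$ connected yields $|D_{\Gamma}| - c = 1$, while for $g_{\Gamma} = 1$ the genus of $\Gamma$ itself contributes the required unit.
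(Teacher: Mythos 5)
Your overall route is the same as the paper's: both arguments rest on the Mayer--Vietoris description of $\H^0(C_k,\cL_\Gamma\vert_{C_k})$ supplied by \Cref{lem:var-can}, together with the identifications $\cL_\Gamma\vert_{C_1}\simeq \omega_{C_1}(2p)$ and $\cL_\Gamma\vert_{C_2}\simeq \omega_{C_2}$; you merely replace the explicit identification of the image subsystem on $\Gamma$ by the construction of two global sections separated by evaluation at $p$. Most steps check out, including the existence of $s_1'$ (with the small caveat that $D_\Gamma=\Gamma\cap(C_1-\Gamma)$ can be strictly larger than $\Gamma\cap(C_i-\Gamma)$ when $\Gamma$ carries further separating nodes; each such node adds one both to the length of $D_\Gamma$ and to the number of components of $C_1-\Gamma$, so your inequality survives).

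There is, however, a genuine gap at the assertion that \emph{each} evaluation $\H^0(C_i,\cL_\Gamma\vert_{C_i})\to \cL_\Gamma\vert_p$ is surjective. The dimension count $h^0(\omega_{C_1}(2p))+h^0(\omega_{C_2})=g+1$ together with $h^0(\cL_\Gamma\vert_{C_k})=g$ only shows that the difference map $(s_1,s_2)\mapsto s_1(p)-s_2(p)$ is surjective; since the $C_1$-evaluation is already surjective (because $h^0(\omega_{C_1}(2p))>h^0(\omega_{C_1}(p))$), this gives no information whatsoever about the $C_2$-side. Surjectivity of $\H^0(C_2,\omega_{C_2})\to\omega_{C_2}\vert_p$ says exactly that $p$ is not a base point of $\omega_{C_2}$, and this can fail: if $p$ lies on a genus-$0$ component of $C_2$ meeting the rest of $C_2$ only in separating nodes, then by \Cref{lem:sep-decom-hodge} every section of $\omega_{C_2}$ vanishes identically on that component, hence at $p$; in that case every global section of $\cL_\Gamma\vert_{C_k}$ restricts on $\Gamma$ to a section vanishing at $p$, and your first section cannot be built, so the separation-by-value-at-$p$ argument collapses. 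This configuration is excluded precisely by the standing hypothesis that $C_k$ has no points of type $(2)$ of \Cref{prop:base-point-can}, which is legitimate here by \Cref{lem:not-poss-comp} and is what the paper invokes at the corresponding step; you must cite that assumption rather than the dimension count.
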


\begin{proof} 
	We need to prove that the morphism $\phi_{\cL_{\Gamma}}$ restricted to the special fiber does not contract $\Gamma$. Using the description of $\H^0(C_k,\cL_{\Gamma}\vert_{C_k})$ of \Cref{lem:var-can}, we can prove an explicit description of $\phi_{\cL_{\Gamma}}$ in both cases. 
	
	If $g_{\Gamma}=1$, then the restriction of $\phi_{\cL_{\Gamma}}$ to $\Gamma$ is the map induced by the vector space $\H^0(\Gamma,\omega_{\Gamma}(2p))$ inside the complete linear system of $\omega_{\Gamma}(2p + (\Sigma_{\Gamma}-p))$ where $\Sigma_{\Gamma}$ is the Cartier divisor on $\Gamma$ associated with the intersection $\Gamma \cap (C-\Gamma)$. We are using that there are no points of type $(2)$ as in \Cref{prop:base-point-can}, which implies that the morphism 
	$$ \H^0(C,\phi_{\cL}) \rightarrow \H^0(\Gamma,\phi_{\cL_{\Gamma}})$$
	is surjective. Similarly as we have done before, we have that $\phi_{\cL_{\Gamma}}\vert_{\Gamma}$ is defined everywhere except in support of the Cartier divisor $\Sigma_{\Gamma}-p$ and it coincides with $\omega_{\Gamma}(2p)$ where it is defined . This implies $\Gamma$ is not contracted to a point. 
	
	Finally, if $g_{\Gamma}=0$, let $D$ be the Cartier divisor on $\Gamma$ associated with the intersection $\Gamma\cap(C_i-\Gamma)$ of length $2$ and by ${\rm res}_D$ one of the following morphisms: 
	\begin{itemize}
		\item ${\rm res}_{q_1}+{\rm res}_{q_2}$ when $D=q_1+q_2$, 
		\item ${\rm res}_q$ when $D=2q$.
	\end{itemize}
	We have that $\phi_{\cL_{\Gamma}}$ restricted to $\Gamma$ is the morphism induced by kernel $V$ of 
	$$ {\rm res}_{D}(2p+(\Sigma-p)):\H^0(\Gamma,\omega_{\Gamma}(2p+(\Sigma_{\Gamma}-p)+D)) \longrightarrow k$$
    where the morphism of vector spaces ${\rm res}_{D}(2p+(\Sigma-p))$ is the tensor of the morphism ${\rm res}_D$ by the line bundle $\cO(2p+(\Sigma-p))$. This again is a conseguence of \Cref{lem:not-poss-comp}. As before, we can prove that $\phi_V$ is defined everywhere except in the support of $\Sigma-p$ and it coincides with the morphism associated with the kernel $W$ of the morphism 
    $$ {\rm res}_{D}(2p):\H^0(\Gamma,\omega_{\Gamma}(2p+D)) \longrightarrow k$$
	where the morphism of vector spaces ${\rm res}_{D}(2p)$ is the tensor of the morphism ${\rm res}_D$ by the line bundle $\cO(2p)$. A straightforward computation shows that $\phi_W$ does not contract $\Gamma$.
\end{proof}

	We have assumed that there are no points of type $(2)$ as in \Cref{prop:base-point-can}. The last lemma explain why this assumption can be made. We decided to leave it as the last lemma because it is the only statement where we used a result for the $A_1$-stable case which we cannot prove indipendently for the $A_r$-stable case. 
	
	\begin{lemma}\label{lem:not-poss-comp}
		Let $C\rightarrow \spec R$ be an $A_r$-stable curve of genus $g$ and suppose the generic fiber is smooth and hyperelliptic. Then if $C_k$ is the special fiber, there are no point $p\in C_k$ of type $(2)$ as in \Cref{prop:base-point-can}. 
	\end{lemma}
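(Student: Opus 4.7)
I would proceed by reduction to the classical ($A_1$\dash stable) case via semistable reduction. Suppose for contradiction that a type $(2)$ point $p \in C_k$ exists, lying on a rational component $\Gamma \subset C_k$ which meets $C_k - \Gamma$ only in separating nodes; equivalently, by \Cref{lem:char-type2}, $h^0(C_k,\cO(p)) = 2$. After a finite base change $R \to R'$ (which does not affect the conclusion, since the hyperelliptic property of the generic fiber is preserved and type $(2)$ behavior is stable under such base change), I would invoke semistable reduction to obtain a semistable model $\cC^{ss} \to \spec R'$ of $C \otimes_R R'$, and then contract unstable components to produce a classical stable model $\pi^{st}\colon \cC^{st} \to \spec R'$ with the same smooth hyperelliptic generic fiber. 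This yields a common semistable model sitting above both $C \otimes_R R'$ and $\cC^{st}$, with contraction morphisms on either side.

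Next I would track the component $\Gamma$ through this diagram. Its preimage in $\cC^{ss}_k$ is a connected tree of rational curves (since $\Gamma \simeq \PP^1$ and each resolution of an interior $A_h$\dash singularity of $\Gamma$ contributes only further rational components), and since the separating property of a node is purely topological, this tree meets its complement in $\cC^{ss}_k$ again in separating nodes only. The image of this tree in $\cC^{st}_k$ under the stable contraction remains a (possibly smaller) non-empty tree of rational components $\Gamma^{st} \subset \cC^{st}_k$ meeting $\cC^{st}_k - \Gamma^{st}$ in separating nodes; non-emptiness follows because $\Gamma$ has at least one outer separating node (and in fact enough outer intersections, after counting, to survive the stability contraction). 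By \Cref{lem:char-type2} applied to the $A_1$\dash stable curve $\cC^{st}_k$, any general smooth point $p'$ on $\Gamma^{st}$ is a type $(2)$ point of the stable special fiber.

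Finally I would invoke the classical fact that the special fiber of a generically smooth hyperelliptic stable family over a DVR cannot contain a type $(2)$ point. This is the standard description of $\Hbar_g \subset \Mbar_g$ in terms of admissible double covers: the canonical image of a limit hyperelliptic curve is still a rational normal curve of degree $g-1$, and this forces the absence of rational components attached only through separating nodes (equivalently, of smooth points $p'$ with $h^0(\cO(p'))=2$) in the stable limit. This contradicts the previous step and gives the lemma.

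The main obstacle is the bookkeeping in the middle step: one must verify carefully that the tree of rational curves lying over $\Gamma$ is not entirely collapsed by the stable contraction, so that the type $(2)$ property genuinely transfers to $\cC^{st}_k$. This requires a precise accounting of special points on the rational subcurve, using that $\Gamma$'s outer separating nodes produce enough marked points on each rational component of the resolution to prevent full contraction. Everything else is a routine consequence of semistable reduction together with \Cref{lem:char-type2} and the classical structure theorem for the stable hyperelliptic locus, which the author explicitly acknowledges as the one external input.
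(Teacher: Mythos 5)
Your argument is essentially the paper's proof: both invoke stable reduction for $A_n$-singularities together with the classical fact that the stable limit of a smooth hyperelliptic curve is a stable hyperelliptic curve (hence has no rational component attached only through separating nodes), and derive a contradiction by transferring the type $(2)$ component to the stable model. The bookkeeping worry you flag at the end resolves itself: stability of $C_k$ forces $\Gamma$ to carry at least three separating nodes, so $\Gamma$ survives (with chains inserted only at its nodes and modifications happening only away from it) and cannot be contracted when passing to the stable model.
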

\begin{proof} 
	The proof relies on the fact that the stable reduction for the $A_n$-singularities is well-know, see \cite{Hass} and \cite{CasMarLaz}. Let $C_K$ be the generic fiber, then we know that there exists, a family $\widetilde{C}$ such that the special fiber $\widetilde{C}_k$ is stable. The explicit description of the stable reduction implies that if there is a component $\Gamma$ in $C_k$ of genus $0$ intersecting the rest of curve only in separating nodes, then the same component should appear in the stable reduction. But this is absurd, as the stable limit of a smooth hyperelliptic curve is a stable hyperelliptic curve, which does not have such irreducible component.
\end{proof}

\chapter{The Chow ring of $\Mtilde_3^7$}\label{chap:2}

In this chapter we explain the stratey used for the computation of the Chow ring of $\Mtilde_3^7$. The idea is to use a gluing lemma, whose proof is an exercise in homological algebra.

Let $i:\cZ\hookrightarrow\cX$ be a closed immersion of smooth global quotient stacks over $\kappa$ of codimension $d$ and let $\cU:=\cX\setminus \cZ$ be the open complement and $j:\cU \hookrightarrow \cX$ be the open immersion. It is straightforward to see that the pullback morphism $i^*:\ch(\cX)\rightarrow \ch(\cZ)$ induces a morphism $ \ch(\cU) \rightarrow \ch(\cZ)/(c_d(N_{\cZ|\cX}))$, where $N_{\cZ|\cX}$ is the normal bundle of the closed immersion. This morphism is denoted by $i^*$ by abuse of notation. 

Therefore, we have the following commutative diagram of rings:
$$
\begin{tikzcd}
\ch(\cX) \arrow[d, "j^*"] \arrow[rr, "i^*"] &  & \ch(\cZ) \arrow[d, "q"]     \\
\ch(\cU) \arrow[rr, "i^*"]                  &  & \frac{\ch(\cZ)}{(c_d(N_{\cZ|\cX}))}
\end{tikzcd}
$$
where $q$ is just the quotient morphism.

\begin{lemma}\label{lem:gluing}
  In the situation above, the induced map 
  $$\zeta: \ch(\cX)\longrightarrow \ch(\cZ)\times_\frac{\ch(\cZ)}{(c_d(N_{\cZ|\cX}))} \ch(\cU)$$
  is surjective and $\ker \zeta= i_* {\rm Ann}(c_d(N_{\cZ|\cX}))$. In particular, if $c_d(N_{\cZ|\cX})$ is a non-zero divisor in $\ch(\cZ)$, then $\zeta$ is an isomorphism. 
 \end{lemma}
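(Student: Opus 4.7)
The plan is to exploit two standard facts about the localization sequence for smooth global quotient stacks: exactness of
\[
\ch(\cZ) \xrightarrow{i_*} \ch(\cX) \xrightarrow{j^*} \ch(\cU) \to 0
\]
and the self-intersection formula $i^*i_*\gamma = c_d(N_{\cZ|\cX})\cdot \gamma$. Before anything else, I would check that the induced map $i^*\colon \ch(\cU)\to \ch(\cZ)/(c_d(N_{\cZ|\cX}))$ appearing in the diagram is well defined: given $\beta\in\ch(\cU)$, pick any lift $\widetilde\beta\in \ch(\cX)$ via $j^*$ and set $i^*\beta := [i^*\widetilde\beta]$. Two lifts differ by $i_*\gamma$ for some $\gamma\in\ch(\cZ)$, and $i^*i_*\gamma = c_d(N_{\cZ|\cX})\gamma$ lies in the ideal modded out, so the class is independent of the choice. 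This makes the diagram genuinely commute and the fibre product well posed.

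For surjectivity of $\zeta$, I would take $(\alpha,\beta)$ in the fibre product. Choose a lift $\widetilde\beta\in \ch(\cX)$ with $j^*\widetilde\beta=\beta$. By the compatibility condition defining the fibre product, $i^*\widetilde\beta$ and $\alpha$ have the same image in $\ch(\cZ)/(c_d(N_{\cZ|\cX}))$, hence $i^*\widetilde\beta-\alpha = c_d(N_{\cZ|\cX})\cdot\gamma$ for some $\gamma\in \ch(\cZ)$. Then $x := \widetilde\beta - i_*\gamma$ satisfies $j^*x = \beta$ (since $j^*i_* = 0$ by exactness) and $i^*x = i^*\widetilde\beta - c_d(N_{\cZ|\cX})\gamma = \alpha$, so $\zeta(x)=(\alpha,\beta)$.

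For the kernel computation, suppose $\zeta(x)=0$. Then $j^*x=0$, so by exactness $x = i_*y$ for some $y\in \ch(\cZ)$, and the condition $i^*x = 0$ in $\ch(\cZ)$ (not just in the quotient — this is the ``strong'' component of $\zeta(x)=0$) together with the self-intersection formula reads $c_d(N_{\cZ|\cX})\cdot y = 0$, i.e.\ $y\in \mathrm{Ann}(c_d(N_{\cZ|\cX}))$. Hence $\ker\zeta \subseteq i_*\mathrm{Ann}(c_d(N_{\cZ|\cX}))$. Conversely, if $y\in \mathrm{Ann}(c_d(N_{\cZ|\cX}))$ then $i^*i_*y = c_d(N_{\cZ|\cX})y = 0$ and $j^*i_*y = 0$, so $i_*y\in\ker\zeta$. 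The final assertion is now immediate: if $c_d(N_{\cZ|\cX})$ is a non-zero divisor in $\ch(\cZ)$ then $\mathrm{Ann}(c_d(N_{\cZ|\cX})) = 0$, and surjectivity plus trivial kernel give an isomorphism.

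There is no real obstacle here beyond bookkeeping; the only subtle point worth flagging in the write-up is the well-definedness of $i^*\colon \ch(\cU)\to \ch(\cZ)/(c_d(N_{\cZ|\cX}))$, because without it the ``compatibility'' in the fibre product is vacuous and the surjectivity argument collapses. Everything else is a direct application of the localization sequence and the self-intersection formula, both of which are available in the setting of smooth global quotient stacks by Edidin–Graham.
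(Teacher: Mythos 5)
Your proof is correct and is exactly the argument the paper has in mind: the paper states \Cref{lem:gluing} without proof, calling it ``an exercise in homological algebra,'' and your write-up supplies that exercise via the two standard inputs (exactness of the localization sequence $\ch(\cZ)\xrightarrow{i_*}\ch(\cX)\xrightarrow{j^*}\ch(\cU)\to 0$ and the self-intersection formula $i^*i_*=c_d(N_{\cZ|\cX})\cdot(-)$), both valid for smooth global quotient stacks by Edidin--Graham. Your remark on the well-definedness of the induced map $i^*\colon\ch(\cU)\to\ch(\cZ)/(c_d(N_{\cZ|\cX}))$ is the right point to flag, since the paper asserts this without comment just before the lemma.
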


From now on, we refer to the condition \emph{$c_d(N_{\cZ|\cX})$ is not a zero divisor} as the gluing condition.

\begin{remark}
	Notice that if $\cZ$ is a Deligne-Mumford separated stack, the gluing condition is never satisfied because the rational Chow ring of $\cZ$ is isomorphic to the one of its moduli space (see \cite{Vis1} and \cite{EdGra}).
	
	However, there is hope that the positive dimensional stabilizers  we have in $\Mtilde_g^r$ allow the gluing condition to occur. For instance, consider $\Mtilde_{1,1}^2$, or the moduli stack of genus $1$ marked curves with at most $A_2$-singularities. We know that 
	$$ \Mtilde_{1,1}^2 \simeq [\AA^2/\gm]$$ 
	therefore its Chow ring is a polynomial ring, which has plenty of non-zero divisors. See \cite{DiLorPerVis} for the description of $\Mtilde_{1,1}$ as a quotient stack.
\end{remark}

This is the reason why we introduced $\Mtilde_g^r$, which is a non-separated stacks. We are going to compute the Chow ring of $\Mtilde_3^r$ for $r=7$ using a stratification for which we can apply \Cref{lem:gluing} iteratively.

Now we introduce the stratification and we describe the strata as quotient stacks to compute their Chow rings. For the rest of the chapter, we denote by $\Mtilde_{g,n}$ the stack $\Mtilde_{g,n}^{2g+1}$ which is the largest moduli stack of $n$-pointed $A_r$-stable curves of genus $g$ we can consider (see \Cref{rem: max-sing}). We denote by $\Ctilde_{g,n}$ the universal curve of $\Mtilde_{g,n}$. 

Every Chow ring is considered with $\ZZ[1/6]$-coefficients unless otherwise stated. This assumption is not necessary for some of the statements, but it makes our computations easier. We do not know if the gluing condition still holds with integer coefficients. 

First of all, we recall the definitions of some substacks of $\Mbar_3$:
\begin{itemize}
	\item $\overline{\Delta}_1$ is the codimension 1 closed substack of $\Mbar_3$ classifying stable curves with a separating node, 
	\item $\overline{\Delta}_{1,1}$ is the codimension 2 closed substack of $\Mbar_3$ classifying stable curves with two separating nodes, 
	\item $\overline{\Delta}_{1,1,1}$ is the codimension 3 closed substack of $\Mbar_3$ classifying stable curves with three separating nodes. 
\end{itemize}

An easy combinatorial computation shows that we cannot have more than $3$ separating nodes for genus $3$ stable curves. The same substacks can be defined for $\Mtilde_3^r$ for any $r$, but we need to prove that they are still closed inside $\Mtilde_3^r$. This is a conseguence of \Cref{lem:sep-sing}.

We denote by $\Detilde_{1,1,1}\subset \Detilde_{1,1} \subset \Detilde_1$ the natural generalization of $\Debar_{1,1,1} \subset \Debar_{1,1}\subset \Debar_1$ in $\Mtilde_3$.

We also consider $\Htilde_3^7$ as a stratum of the stratification. The following diagram
$$
\begin{tikzcd}
	&                            & \Htilde_3^7 \arrow[rd] &             \\
	{\Detilde_{1,1,1}} \arrow[r] & {\Detilde_{1,1}} \arrow[r] & \Detilde_1 \arrow[r]   & \Mtilde_3^7
\end{tikzcd}
$$
represents the poset associated to the stratification. As before, we write $\Htilde_3$ instead of $\Htilde_3^7$.

Now, we describe the strategy used to compute the Chow ring of $\Mtilde_3$. Our approach focuses firstly on the computation of the Chow ring of $\Mtilde_3 \setminus \Detilde_1$. This is the most difficult part as far as computations is concerned. We first compute the Chow ring of $\Htilde_3 \setminus \Detilde_1$, which can be done without the gluing lemma. Then we apply the gluing lemma to $\Mtilde_3 \setminus (\Htilde_3 \cup \Detilde_1)$ and $\Htilde_3 \setminus \Detilde_1$ to get a description for the Chow ring of $\Mtilde_3 \setminus \Detilde_1$.

Notice that neither $\Detilde_1$ and $\Detilde_{1,1}$ are smooth stacks therefore we cannot use \Cref{lem:gluing}. Nevertheless, both  $\Detilde_1 \setminus \Detilde_{1,1}$ and $\Detilde_{1,1} \setminus \Detilde_{1,1,1}$ are smooth, therefore we apply \Cref{lem:gluing} to $\Detilde_1 \setminus \Detilde_{1,1}$ and $\Mtilde_3\setminus \Detilde_1$ to describe the Chow ring of $\Mtilde_3 \setminus \Detilde_{1,1}$, and then apply it again to $\Mtilde_3 \setminus \Detilde_{1,1}$ and $\Detilde_{1,1} \setminus\Detilde_{1,1,1}$. Finally, the same procedure allows us to glue also $\Detilde_{1,1,1}$ and get the description of the Chow ring of $\Mtilde_3$.
 
In this chapter, we describe the following strata and their Chow rings:
\begin{itemize}
	\item $\Htilde_3\setminus \Detilde_1$,
	\item $\Mtilde_3\setminus (\Htilde_3 \cup \Detilde_1)$,
	\item $\Detilde_1\setminus \Detilde_{1,1}$,
	\item $\Detilde_{1,1}\setminus \Detilde_{1,1,1}$,
	\item $\Detilde_{1,1,1}$,
\end{itemize}
and in the last section we give the complete description of the Chow ring of $\Mtilde_3$.
\section{Chow ring of $\Htilde_3 \setminus \Detilde_1$}\label{sec:H3tilde}
 
In this section we are going to describe $\Htilde_3 \setminus \Detilde_1$ and compute its Chow ring.

Recall that we have an isomorphism (see \Cref{prop:descr-hyper}) between $\Htilde_3^7$ and $\Ctilde_3^7$. We are going to describe $\Htilde_3\setminus \Detilde_1$ as a subcategory of $\Ctilde_3^7$ through the isomorphism cited above.

Consider the natural morphism (see the proof of \Cref{prop:smooth-hyp})
$$ \pi_3: \Htilde_3 \setminus \Detilde_1 \rightarrow \cP_3$$ 
where $\cP_3$ is the moduli stack parametrizing pairs $(Z/S,\cL)$ where $Z$ is a twisted curve of genus $0$ and $\cL$ is a line bundle on $Z$. The idea is to find the image of this morphism. First of all, we can restrict to the open of $\cP_3$ parametrizing pairs $(Z/S,\cL)$ such that $Z/S$ is an algebraic space, because we are removing $\Detilde_1$. In fact, if there are no separating points, $Z$ coincides with the geometric quotient of the involution (see the proof of \Cref{prop:cyclic-covers}). Moreover, we prove an upper bound to the number of irreducible components of $Z$.

\begin{lemma}\label{lem:max-comp}
	Let $(C,\sigma)$ be a hyperelliptic $A_r$-stable curve of genus $g$ over an algebraically closed field and let $Z:=C/\sigma$ be the geometric quotient. If we denote by $v$ the number of irreducible components of $Z$, then we have $v\leq 2g-2$.  Furthermore, if $C$ has no separating nodes, we have that $v\leq g-1$.
\end{lemma}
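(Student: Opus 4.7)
The plan is to use the equivalence $\Htilde_g^r \simeq \cC_g^r$ from \Cref{prop:descr-hyper} to represent $(C,\sigma)$ by a triple $(\cZ,\cL,i)$ with $\cZ$ a twisted nodal curve of arithmetic genus $0$, so that $\cZ$ has $v-1$ nodes (its dual graph is a tree). Let $e_3$ be the number of stacky nodes of $\cZ$; a local analysis at each node together with \Cref{prop:description-quotient} identifies the stacky nodes with case (n3), and hence with the separating $A_1$-singularities of $C$. I partition the components of $\cZ$ into $V_A$, $V_B$, $V_C$ according to whether the integer $g_\Gamma$ from \Cref{def:hyp-A_r} equals $-1$, $0$, or is at least $1$.

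First I would compute $\chi(\cO_C)$ by Mayer-Vietoris on $C=\bigcup_\Gamma C_\Gamma$, where $C_\Gamma:=\pi^{-1}(\Gamma)_{\rm red}$. The scheme-theoretic intersection $C_{\Gamma_1}\cap C_{\Gamma_2}$ over a node of $\cZ$ has length $2$ in cases (n1), (n2) and length $1$ in case (n3). Combining with $\chi(\cO_{C_\Gamma})=1-g_\Gamma$ (using $g_\Gamma=-1$ for $V_A$) yields the identity
\begin{equation*}
\sum_{\Gamma\in V_C} g_\Gamma + v_B + v_C = g + 1 + e_3,
\end{equation*}
and since $g_\Gamma\geq 1$ on $V_C$ this gives $v_B + 2v_C \leq g + 1 + e_3$.

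Second, the stability condition of \Cref{def:hyp-A_r} forces $m_\Gamma\geq 3$ on $V_A$ and $m_\Gamma\geq 2$ on $V_B$. Summing over the dual tree ($\sum_\Gamma m_\Gamma = 2(v-1)$) yields $3v_A + 2v_B + v_C \leq 2(v-1)$, equivalently $v_A + 2 \leq v_C$ whenever $v\geq 2$; the case $v=1$ is trivial from $g\geq 2$. Combining with the previous step,
\begin{equation*}
v = v_A + v_B + v_C \leq (v_C - 2) + v_B + v_C = v_B + 2v_C - 2 \leq g - 1 + e_3.
\end{equation*}

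Finally I would bound $e_3\leq g-1$. Every separating $A_1$-singularity of $C$ is fixed by $\sigma$, so $\sigma$ restricts to an involution of each of the $e_3+1$ connected components $\widetilde{C}_i$ of the partial normalization of $C$ at its separating nodes, preserving each attaching point. A case analysis using that an involution of $\PP^1$ has at most two fixed points, together with the fact that smooth rational subcurves with one or two $\sigma$-fixed attaching points are never $A_r$-stable, forces $p_a(\widetilde{C}_i)\geq 1$ for every $i$; summing gives $e_3 + 1 \leq g$. This proves $v\leq 2g-2$, and when $e_3=0$ (equivalently, when $C$ has no separating nodes) the bound improves to $v\leq g-1$. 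The main technical obstacle is this last step, where the hyperelliptic structure is crucial for excluding rational subcurves with too many $\sigma$-fixed marked points from the separating-node decomposition.
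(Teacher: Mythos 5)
Your argument is correct, but it reaches the two bounds by a genuinely different route than the paper. For $v\leq 2g-2$ the paper sums the single per-component stability inequality $2g_\Gamma-2+e_\Gamma+s_\Gamma>0$ against the genus identity $g=\sum_\Gamma(g_\Gamma+s_\Gamma/2)$ and is done in three lines; your Euler-characteristic identity is equivalent to that genus identity and your stability inputs ($m_\Gamma\geq 3$ on $V_A$, $2m_\Gamma-n_\Gamma\geq 3$ on $V_B$) are the same, but by partitioning the components you extract the sharper intermediate statement $v\leq g-1+e_3$, which the paper never isolates. The real divergence is in the second bound: the paper proves $v\leq g-1$ by factoring the canonical morphism through $Z$ and noting that $\cO_Z(1)$ has total degree $g-1$ with positive degree on each component (which requires knowing that $\sigma$ commutes with the canonical map, justified via density of the smooth locus), whereas you get it for free from $v\leq g-1+e_3$ once $e_3=0$, and you recover $v\leq 2g-2$ from the separate combinatorial bound $e_3\leq g-1$. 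Your route is more elementary and self-contained, and the inequality $v\leq g-1+e_3$ interpolating between the two statements is a nice byproduct; the paper's canonical-map argument is shorter but imports more machinery.

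One step deserves more care than your sketch gives it: the bound $e_3\leq g-1$. You need that every stacky node of $\cZ$ lies under a separating node of $C$ that is \emph{fixed by $\sigma$ with both branches preserved} (this is what makes $\sigma$ preserve each piece $\widetilde{C}_i$ and fix its attaching points), and you need to rule out genus-$0$ pieces that are trees of rational curves rather than a single $\PP^1$. Both points are handled by first observing that for $g\geq 2$ every separating node of $C$ is of type (n3) in \Cref{prop:description-quotient} --- a separating node whose branches are exchanged, or which is not fixed, would force the two sides to be swapped and hence of genus $0$ by \Cref{lem:subcurve}, contradicting $g\geq 2$ --- so that your decomposition coincides with the paper's $A_1$-separating decomposition (\Cref{def:sep-dec}); its pieces have no separating nodes, so a genus-$0$ piece is honestly $\PP^1$ with at least three $\sigma$-fixed attaching points, which is impossible. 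With that supplied, the proof is complete.
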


\begin{proof}
	Let $\Gamma$ be an irreducible component of $Z$. We denote by $g_{\Gamma}$ the genus of the preimage $C_{\Gamma}$ of $\Gamma$ through the quotient morphism, by $e_{\Gamma}$ the number of nodes lying on $\Gamma$ and by $s_{\Gamma}$ the number of nodes on $\Gamma$ such that the preimage is either two nodes or a tacnode (see \Cref{prop:description-quotient}). We claim that the stability condition on $C$ implies that
	$$ 2g_{\Gamma}-2+e_{\Gamma}+s_{\Gamma}>0$$
	for every $\Gamma$ irreducible component of $Z$. If $C_{\Gamma}$ is integral, then it is clear. Otherwise, $C_{\Gamma}$ is a union of two projective line meeting on a subscheme of length $n$. In this situation, $s_{\Gamma}=e_{\Gamma}:=m$ and the inequality is equivalent to $n+m\geq 3$, which is the stability condition for the two projective lines. Using the identity  $$g=\sum_{\Gamma}(g_{\Gamma}+\frac{s_{\Gamma}}{2}),$$
	 a simple computation using the claim gives us the thesis.
	 Suppose $(C,\sigma)$ is not in $\Detilde_1$. 
	 Notice that the involution $\sigma$ commutes with the canonical morphism, because it does over the open dense substack of $\Htilde_g$ parametrizing smooth curves. If we consider the factorization of the canonical morphism  
	 $$ C \rightarrow Z \rightarrow \PP(\H^0(C,\omega_C))=\PP^{g-1}$$ 
	 we know that $\cO_Z(1)$ has degree $g-1$ because the quotient morphism $C\rightarrow Z$ has degree $2$ and $Z\rightarrow \PP^{g-1}$ is finite. It follows that $v\leq g-1$. 
\end{proof}

\begin{remark}
	The first inequality is sharp even for ($A_1$-)stable hyperelliptic curves. Let $v:=2m$ be an even number and let $(E_1,e_1),(E_2,e_2)$ be two smooth elliptic curves and $P_1,\dots,P_{2m-2}$ be $2m-2$ projective lines. We glue $P_{2i-1}$ to $P_{2i}$ in $0$ and $\infty$ for every $i=1,\dots,m-1$ and we glue $P_{2i}$ to $P_{2i+1}$ in $1$ for every $i=1,\dots,m-2$. Finally we glue $(E_1,e_1)$ to $(P_1,1)$ and $(E_2,e_2)$ to $(P_{2m-2},1)$. It is clear that the curve is $A_1$-stable hyperelliptic of genus $m+1$ and its geometric quotient has $2m$ components. The odd case can be dealt similarly.
	\begin{figure}[H]
		\centering
		\includegraphics[width=1\textwidth]{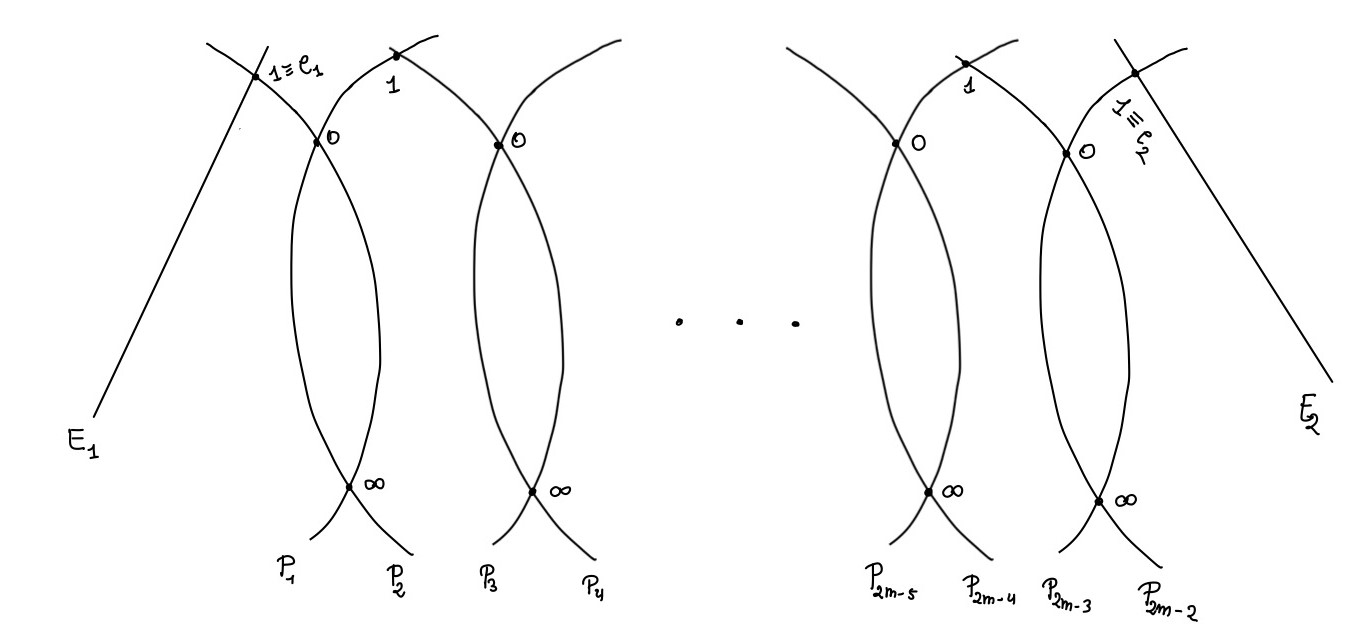}
		\label{fig:Contro}
	\end{figure}
	The same is true for the second inequality. Suppose $v:=2m$ is an even positive integer. Let $(E_1,e_1,f_1),(E_2,e_2,f_2)$ be two $2$-pointed smooth genus $1$ curves and $P_1,\dots,P_{2m-2}$ be $2m-2$ projective lines. Now we glue $P_{2i-1}$ to $P_{2i}$ in $0$ and $\infty$ for every $i=1,\dots,m-1$ and we glue $P_{2i}$ to $P_{2i+1}$ in $1$ and $-1$ for every $i=1,\dots,v-2$. Finally we glue $(E_1,e_1,f_1)$ to $(P_1,1,-1)$ and $(E_2,e_2,f_2)$ to $(P_{2m-2},1,-1)$. It is clear that the curve is $A_1$-stable hyperelliptic of genus $2m+1$ and its geometric quotient has $2m$ components. The odd case can be dealt similarly.
	\begin{figure}[H]
		\centering
		\includegraphics[width=1\textwidth]{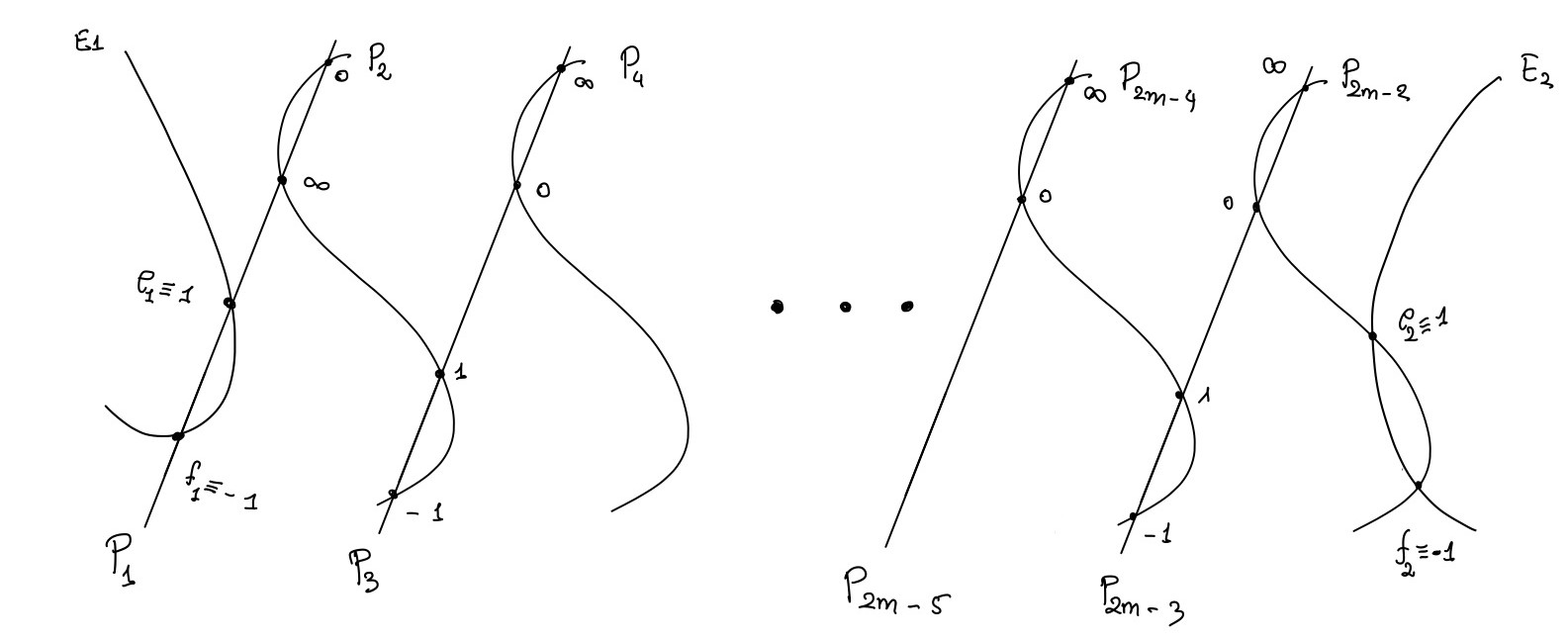}
		\label{fig:Contro1}
	\end{figure}
\end{remark}

\Cref{lem:max-comp} assures us that the geometric quotient $Z$ has at most two irreducible components if $C$ has genus $3$ and does not have separating nodes. Therefore, the datum $(Z/S,\cL,i)$ is in the image of $\pi_3$ only if the fiber $Z_s$ has at most $2$ irreducible components for every geometric point $s \in S$. Moreover, we know that not all the pairs $(\cL,i)$ give us a $A_r$-stable hyperelliptic curve of genus $3$. We need to translate the conditions in \Cref{def:hyp-A_r} in our setting. Because there are no stacky points in $Z$, the conditions in (a) are empty. Furthermore, if $Z$ is integral, it is immediate that there is nothing to prove as we are considering $r=7$. Therefore we can suppose $Z$ is the union of two irreducible components $\Gamma_1$ and $\Gamma_2$  intersecting in a single node. Let us call $n_1$ and $n_2$ the degrees of the restrictions of $\cL$ to $\Gamma_1$ and $\Gamma_2$ respectively. Thus (b1)+(c1) implies that $n_1\leq -2$ and $n_2\leq -2$. Finally the condition $\chi(\cL)=-3$ gives us $n_1=n_2=-2$. 

We denote by $\cM_0^{\leq 1}$ the moduli stack parametrizing  families $Z/S$ of genus $0$ nodal curves with at most $1$ node and $\cM_0^{1}$ the closed substack of $\cM_0^{\leq 1}$ parametrizing families $Z/S$ with exactly $1$ node. We follows the notation as in \cite{EdFul2}. 

Finally, let us denote by $\cP_3'$ the moduli stack parametrizing pairs $(Z/S,\cL)$ where $Z/S$ is a family of genus $0$ nodal curves with at most 1 node and $\cL$ is a line bundle that restricted to the geometric fibers over every point $s \in S$ has degree $-4$ if $Z_s$ is integral or bidegree $(-2,-2)$ if $Z_s$ is reducible.

Therefore $\Htilde_3\setminus\Detilde_1$ can be seen as an open substack of the vector bundle $\pi_*\cL^{\otimes -2}$ of $\cP_3'$, where $(\pi:\cZ \rightarrow \cP_3',\cL)$ is the universal object of $\cP_3'$. We can explicitly describe the complement of $\Htilde_3 \setminus \Detilde_1$ inside $\VV(\pi_{*}\cL^{\otimes-2})$. The only conditions the section $i$ has to verify in this particular case are (b1) and (b2) of \Cref{def:hyp-A_r}. The closed substack of $\VV(\pi_*\cL^{\otimes -2})$ that parametrizes sections which do not verify (b1) or (b2) can be described  as the union of two components. The first one is the zero section of the vector bundle itself. The second component $D$ parametrizes sections $h$ supported on the locus $\cM_0^{\geq 1}$ of reducible genus $0$ curves which vanish at the node $n$ and such that the vanishing locus $\VV(h)_n$ localized at the node is either positive dimensional or it has length different from $2$.

\begin{remark}
	The $\gm$-gerbe $\cP_3'\rightarrow \cM_0^{\leq 1}$ is trivial, as there exists a section defined by the association $Z/S \mapsto \omega_{Z/S}^{\otimes 2}$. Therefore, $\cP_3' \simeq \cM_0^{\leq 1}\times \cB\gm$.
\end{remark} 

We have just proved the following description.

\begin{proposition}
	The stack $\Htilde_3 \setminus \Detilde_1$ is isomorphic to the stack $\VV(\pi_*\omega_{\cZ/\cP_3'}^{\otimes -4})\setminus (D \cup 0)$, where $0$ is the zero section of the vector bundle and $D$ parametrizes sections $h$ supported on $\cM_0^1 \times \cB\gm$ such that the vanishing locus $\VV(h)$ localized at the node does not have length $2$.
\end{proposition}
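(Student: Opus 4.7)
The plan is to follow the thread already laid out in the preceding paragraphs and make it into a self-contained argument, organized around the equivalence $\Htilde_3 \simeq \Ctilde_3$ of \Cref{prop:cyclic-covers,prop:descr-hyper}. The strategy is to identify the image of the forgetful map $\pi_3 \colon \Htilde_3 \setminus \Detilde_1 \to \cP_3$, $(\cZ,\cL,i) \mapsto (\cZ,\cL)$, then realize $\Htilde_3 \setminus \Detilde_1$ as an open substack of the total space of a vector bundle over that image, and finally pinpoint the complement.

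First I would pin down the target. Given $(C,\sigma)\in (\Htilde_3\setminus\Detilde_1)(k)$, by the proof of \Cref{prop:cyclic-covers} the associated twisted quotient $\cZ$ has no stacky structure (these would come from points where $C$ acquires a separating node that is not exchanged by $\sigma$, which are excluded outside $\Detilde_1$), so $\cZ$ is a genuine nodal genus $0$ curve. \Cref{lem:max-comp} gives $v\leq g-1=2$ irreducible components, so $\cZ$ lies in $\cM_0^{\leq 1}$. The condition $\chi(\cL)=-g=-3$ together with \refpart{def:hyp-A_r}{b1}+\refpart{def:hyp-A_r}{c1} forces $\deg\cL\vert_\Gamma\leq -2$ on every component; combined with $\chi(\cL)=-3$ this pins the degrees to $-4$ in the integral case and to bidegree $(-2,-2)$ in the reducible case. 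Thus $\pi_3$ factors through $\cP_3'$. The next small step is to observe that $\cL$ on the universal curve may be chosen canonically as $\omega_{\cZ/\cP_3'}^{\otimes 2}$, which has the right degrees on each fiber; this choice trivializes the $\gm$-gerbe $\cP_3'\to \cM_0^{\leq 1}$ and identifies $\cP_3'\simeq \cM_0^{\leq 1}\times B\gm$.

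Next I would package the remaining data. A point of $\Htilde_3 \setminus \Detilde_1$ over $S$ is the triple $(\cZ,\cL,i)$ with $(\cZ,\cL)\in \cP_3'(S)$ and a morphism $i\colon \cL^{\otimes 2}\to \cO_\cZ$; equivalently, a section of $\cL^{\otimes -2}$. Since $\cL^{\otimes-2}$ has non-negative degree on every component in the $\cP_3'$-family and $\H^1$ vanishes by \Cref{lem:hyp-line-bun}, the pushforward $\pi_*\cL^{\otimes -2}=\pi_*\omega_{\cZ/\cP_3'}^{\otimes-4}$ is a vector bundle whose formation commutes with base change. Hence forming the total space yields
\[
\VV(\pi_*\omega_{\cZ/\cP_3'}^{\otimes -4})\simeq \{(\cZ,\cL,i)\,:\,(\cZ,\cL)\in\cP_3',\ i\colon\cL^{\otimes 2}\to\cO_\cZ\}
\]
and $\Htilde_3 \setminus \Detilde_1$ embeds as an open substack by \Cref{prop:openness} and \Cref{lem:conn-comp}: it is the locus cut out by \refpart{def:hyp-A_r}{b1} (equivalently, $i$ injective) and \refpart{def:hyp-A_r}{b2} (the correct vanishing at a non-stacky node), the other conditions in \Cref{def:hyp-A_r} being automatic or already enforced by working inside $\cP_3'$.

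Finally I would translate the failure of these two conditions. Condition \refpart{def:hyp-A_r}{b1} fails exactly when $i^\vee\equiv 0$, i.e.\ on the zero section $0$. Condition \refpart{def:hyp-A_r}{b2} is empty when $\cZ$ is integral, so any failure is supported on the closed substack $\cM_0^1\times B\gm \subset \cP_3'$; when $\cZ$ is reducible with node $n$, the condition reads ``$\VV(i^\vee)_n$ is a length $2$ Cartier divisor,'' and its failure yields precisely the locus $D$ as described (vanishing at $n$ with length $\neq 2$ after localization, or worse, positive-dimensional). Taking the complement of $D\cup 0$ in the vector bundle gives the claimed isomorphism. The only nontrivial pieces of the argument are the factorization through $\cP_3'$ (which uses \Cref{lem:max-comp} and the degree bookkeeping above) and the explicit identification of the boundary, both of which I expect to be straightforward once the setup is in place; there is no serious obstacle.
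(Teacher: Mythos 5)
Your proposal is correct and follows essentially the same route as the paper: identify the image of $\pi_3$ in $\cP_3'$ via \Cref{lem:max-comp} and the degree bookkeeping from \refpart{def:hyp-A_r}{b1}+\refpart{def:hyp-A_r}{c1} together with $\chi(\cL)=-3$, realize the remaining datum $i$ as a section of the vector bundle $\pi_*\cL^{\otimes -2}$, and identify the closed complement as the zero section (failure of (b1)) union the locus $D$ over $\cM_0^1\times\cB\gm$ (failure of (b2)), with the trivialization of the gerbe by $\omega_{\cZ/S}^{\otimes 2}$ giving $\cP_3'\simeq\cM_0^{\leq 1}\times\cB\gm$. The only cosmetic difference is that you state explicitly that (b3) and the (c)-conditions are automatic, which the paper leaves implicit.
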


Lastly, we state a fact that is well-known to experts, which is very helpful for our computations. It is one of the reasons why we choose to do the computations inverting $2$ and $3$ in the Chow rings. Consider a morphism $f:\cX \rightarrow \cY$ which is representable, finite and flat of degree $d$ between quotient stacks and consider the cartesian diagram of $f$ with itself: 
$$
 \begin{tikzcd}
\cX \times_{\cY} \cX \arrow[r, "p_1"] \arrow[d, "p_1"] & \cX \arrow[d, "f"] \\
\cX \arrow[r, "f"]                             & \cY.               
\end{tikzcd}
$$
 
\begin{lemma}\label{lem:chow-tor}
	In the situation above, the diagram in the category of groups
	$$ 
	\begin{tikzcd}
	\ch(\cY)[1/d] \arrow[r, "f^*"] & \ch(\cX)[1/d] \arrow[r, "p_1^*"', bend right] \arrow[r, "p_2^*", bend left] & \ch(\cX \times_{\cY} \cX)[1/d]
	\end{tikzcd}
	$$
	is an equalizer.
\end{lemma}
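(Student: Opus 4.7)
The plan is to exhibit an explicit retraction of $f^*$ after inverting $d$, using the push-pull identity together with flat base change. Since $\cX$ and $\cY$ are quotient stacks and $f$ is representable, finite and flat, both pushforward $f_*$ and pullback $f^*$ are available at the level of $\ch(-)[1/d]$, and the equivariant intersection theory of Edidin--Graham supplies the usual formalism (projection formula, flat base change, $f_*f^* = \deg(f)\cdot \id$) just as for schemes.

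First I would establish injectivity of $f^*\colon \ch(\cY)[1/d] \arr \ch(\cX)[1/d]$. Since $f$ is finite and flat of constant degree $d$, the projection formula gives
\[
 f_* f^* \alpha \;=\; \alpha \cdot f_*(1_{\cX}) \;=\; d\cdot \alpha
\]
for every $\alpha \in \ch(\cY)$, so after inverting $d$ the map $\tfrac{1}{d}f_*$ is a left inverse of $f^*$; in particular $f^*$ is split injective.

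Next I would check that the image of $f^*$ is exactly the equalizer of $p_1^*$ and $p_2^*$. One inclusion is automatic: for any $\alpha \in \ch(\cY)[1/d]$, commutativity of the pullback square yields $p_1^* f^* \alpha = p_2^* f^* \alpha$. Conversely, take $\beta \in \ch(\cX)[1/d]$ with $p_1^*\beta = p_2^*\beta$ and set $\alpha := \tfrac{1}{d} f_*\beta \in \ch(\cY)[1/d]$. Applying flat base change to the cartesian square (here using that $f$, hence $p_1$, is representable and flat, so $p_{1,*}$ exists and the base change formula $f^* f_* = p_{1,*}\, p_2^*$ holds), together with the fact that $p_1$ is again finite flat of degree $d$, I compute
\[
 f^* \alpha \;=\; \tfrac{1}{d}\, f^* f_* \beta \;=\; \tfrac{1}{d}\, p_{1,*} p_2^* \beta \;=\; \tfrac{1}{d}\, p_{1,*} p_1^* \beta \;=\; \tfrac{1}{d}\cdot d\,\beta \;=\; \beta.
\]
Hence $\beta$ lies in the image of $f^*$, which concludes the verification of the equalizer property.

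The only subtle point, and the main thing to be careful about, is the availability of flat base change for the pushforward $f_*$ and for $p_{1,*}$ in the equivariant setting: this is the step that actually uses the representability and flatness of $f$ (so that the pushforwards preserve the appropriate degrees and commute with pullback along the representable flat map $p_2$). Once this is in hand, the remainder is a formal two-line argument, and no new ingredient beyond $f_*f^* = d\cdot\id$ is needed.
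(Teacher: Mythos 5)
Your proof is correct and takes essentially the same route as the paper, which simply cites the identity $g_*g^*(-)=d\cdot(-)$ for finite flat representable morphisms of degree $d$; you have just made explicit the two applications of that identity (to $f$ for injectivity and to $p_1$ for surjectivity onto the equalizer) together with the flat base change $f^*f_*=p_{1,*}p_2^*$ that the paper leaves implicit.
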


\begin{proof}
	This is just an application of the formula $g_*g^*(-)=d(-)$ for finite flat representable morphisms of degree $d$. 
\end{proof}

\begin{remark}
	
	We are interested in the following application: if $f:\cX \rightarrow \cY$ is $G$-torsor where $G$ is a costant group, we have an honest action of $G$ over $\ch(\cX)$ and the lemma tells us that the pullback $f^*$ is isomorphism between $\ch(\cY)$ and $\ch(\cX)^{G-{\rm inv}}$, if we invert the order of $G$ in the Chow groups.
\end{remark}

\subsection*{Relation coming from the zero-section}
By a standard argument in intersection theory, we know that 
$$ \ch(\VV\setminus 0)=\ch(\cX)/(c_{\rm top}(\VV))$$
for a vector bundle $\VV$ on a stack $\cX$, where $c_{\rm top}(-)$ is the top Chern class. 

To compute the Chern class, first we need to describe the Chow ring of $\cP_3'$. In \cite{EdFul2}, they compute the Chow ring of $\cM_{0}^{\leq 1}$ with integral coefficients, and their result implies that 
$$ \ch(\cP_3')=\ch(\cM_0^{\leq 1}\times \cB\gm)=\ZZ[1/6,c_1,c_2,s]$$
where $c_i:=c_i\Big(\pi_*\omega_{\cZ/\cM_0^{\leq 1}}^{\vee}\Big)$ for $i=1,2$ and $s:=c_1\Big(\pi_*(\cL\otimes\omega_{\cZ/\cM_0^{\leq 1}}^{\otimes -2})\Big)$.

\begin{remark}
	We are using the fact that $\ch(\cX\times \cB\gm)\simeq \ch(\cX)\otimes \ch(\cB\gm)$ for a smooth quotient stack $\cX$. This follows from Lemma 2.12 of \cite{Tot}.
\end{remark}

We need to compute $c_{9}(\pi_*\cL^{\otimes -4})$. If we denote by $\cS$ the line bundle on $\cP_3'$ such that $\pi^*(\cS):=\cL\otimes\omega_{\cZ/\cM_0^{\leq 1}}^{\otimes -2}$, we have that
$$ \pi_*\cL^{\otimes -4} = \pi_*\omega_{\cZ}^{\otimes -4} \otimes \cS^{\otimes -2}.$$ 

\begin{proposition}
	We have an exact sequence of vector bundles over $\cP_3'$
	$$
	0\rightarrow \det (\pi_*\omega_{\cZ}^{\vee})\otimes {\rm Sym}^2 \pi_*\omega_{\cZ}^{\vee} \rightarrow {\rm Sym}^4 \pi_*\omega_{\cZ}^{\vee} \rightarrow \pi_*\omega_{\cZ}^{\otimes -4} \rightarrow 0.$$ 
\end{proposition}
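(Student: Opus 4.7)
The plan is to realize $\cZ$ over $\cP_3'$ as a relative conic in a $\PP^2$-bundle via the anticanonical embedding, and then to obtain the stated sequence by twisting and pushing forward the ideal-sheaf sequence of the conic.

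\smallskip
\textit{Step 1 (Anticanonical embedding).} Set $E := \pi_*\omega_{\cZ/\cP_3'}^{\vee}$. For every geometric fibre $Z$ one has $h^0(\omega_Z^{\vee})=3$ and $h^1(\omega_Z^{\vee})=0$, so by cohomology and base change $E$ is locally free of rank $3$ and its formation commutes with arbitrary base change. The evaluation map $\pi^*E\to \omega_{\cZ/\cP_3'}^{\vee}$ is surjective: in the smooth case this is the very ampleness of $\cO_{\PP^1}(2)$, and in the reducible case $Z=Z_1\cup_p Z_2$ it follows from the nodal adjunction $\omega_Z|_{Z_i}\simeq \omega_{Z_i}(p)$, which gives $\omega_Z^{\vee}|_{Z_i}\simeq\cO_{Z_i}(1)$. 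Hence the anticanonical map induces a closed immersion $\cZ\hookrightarrow \PP(E)$ over $\cP_3'$ (in the reducible case each component is sent to a line, so the image is a degenerate plane conic). In particular $\cZ$ is a relative effective Cartier divisor of fibrewise degree $2$ in $\PP(E)$.

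\smallskip
\textit{Step 2 (Identifying the twist).} Since $\cZ$ has fibrewise degree $2$, one has $\cO_{\PP(E)}(\cZ)\simeq \cO_{\PP(E)}(2)\otimes \pi_{\PP}^*N$ for a unique line bundle $N$ on $\cP_3'$, where $\pi_{\PP}:\PP(E)\arr \cP_3'$. The claim is $N\simeq (\det E)^{\vee}$. The cleanest way to see this is to push forward the twisted ideal sequence
\[
0\arr \cO_{\PP(E)}(-3)\otimes \pi_{\PP}^*N^{-1} \arr \cO_{\PP(E)}(-1) \arr \omega_{\cZ/\cP_3'}\arr 0,
\]
using $\omega_{\cZ/\cP_3'}\simeq \cO_{\PP(E)}(-1)|_{\cZ}$. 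The middle term has vanishing $R\pi_{\PP,*}$; the right-hand term has $R^1\pi_{\PP,*}\omega_{\cZ/\cP_3'}\simeq \cO_{\cP_3'}$ by relative Serre duality for $\cZ/\cP_3'$; and $R^2\pi_{\PP,*}\cO_{\PP(E)}(-3)\simeq (\det E)^{\vee}$ by relative Serre duality for $\PP(E)/\cP_3'$. The resulting boundary isomorphism in the long exact sequence yields $\cO_{\cP_3'}\simeq N^{-1}\otimes(\det E)^{\vee}$, i.e.\ $N\simeq (\det E)^{\vee}$, so $I_{\cZ}\simeq \cO_{\PP(E)}(-2)\otimes \pi_{\PP}^*\det E$.

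\smallskip
\textit{Step 3 (Pushforward).} Tensor the ideal-sheaf sequence
\[
0\arr I_{\cZ}\arr \cO_{\PP(E)}\arr \cO_{\cZ}\arr 0
\]
by $\cO_{\PP(E)}(4)$ and apply $\pi_{\PP,*}$. Using Step~2, one obtains
\[
0\arr \cO_{\PP(E)}(2)\otimes \pi_{\PP}^*\det E \arr \cO_{\PP(E)}(4)\arr \omega_{\cZ/\cP_3'}^{\otimes -4}\arr 0.
\]
Since $R^1\pi_{\PP,*}\cO_{\PP(E)}(2)=0$ and $\pi_{\PP,*}\cO_{\PP(E)}(n)={\rm Sym}^n E$ for $n\geq 0$, the long exact sequence collapses to
\[
0\arr {\rm Sym}^2E\otimes \det E\arr {\rm Sym}^4 E\arr \pi_*\omega_{\cZ/\cP_3'}^{\otimes -4}\arr 0,
\]
which, after substituting $E=\pi_*\omega_{\cZ/\cP_3'}^{\vee}$, is the stated exact sequence.

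\smallskip
The main obstacle is Step~2: getting the twist by $\det E$ right requires a genuine computation rather than a fibrewise statement. Step~1 also contains a minor subtlety in the nodal case, namely checking that the image in $\PP(E)$ is cut out by a \emph{single} equation globally (not only fibrewise); this is automatic from the flatness of $\cZ\arr \cP_3'$ and the fact that $I_{\cZ}$ is an invertible $\cO_{\PP(E)}$-module, which one verifies fibre by fibre.
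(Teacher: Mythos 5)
Your proposal is correct and follows essentially the same route as the paper: embed $\cZ$ anticanonically as a relative conic in $\PP(\pi_*\omega_{\cZ}^{\vee})$, twist the ideal-sheaf sequence by $\cO(4)$, and push forward, using $R^1\pi_{\PP,*}\cO_{\PP(E)}(2)=0$ to keep the sequence exact. The only (welcome) divergence is your Step~2, where you derive the twist $I_{\cZ}\simeq\cO_{\PP(E)}(-2)\otimes\pi_{\PP}^*\det E$ directly by relative Serre duality on the pushforward of the $\cO(-1)$-twisted ideal sequence, whereas the paper imports the equivalent identification $\cO_{\PP(\cE)}(-Z)\simeq\omega_{\PP(\cE)/S}(1)$ from the proof of Proposition 6 of Edidin--Fulghesu.
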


\begin{proof}
	Let $S$ be a $\kappa$-scheme and let $(Z/S,L)$ be an object of $\cP_3'(S)$. Consider the $S$-morphism 
	$$
	\begin{tikzcd}
	Z \arrow[rd, "\pi"] \arrow[r, "i", hook] & \PP(\cE) \arrow[d, "p"] \\
	& S                      
	\end{tikzcd}
	$$
	induced by the complete linear system of the line bundle $\omega_{Z/S}^{\vee}$, namely $\cE:=(\pi_*\omega_{Z/S}^{\vee})^{\vee}$. Then $i$ is a closed immersion and we have the following facts:
	\begin{itemize}
		\item $i^*\cO_{\PP(\cE)}(1)=\omega_{Z/S}^{\vee}$,
		\item $\cO_{\PP(\cE)}(-Z)\simeq\omega_{\PP(\cE)/S}(1)$; 
	\end{itemize} 
   see the proof of Proposition 6 of \cite{EdFul2} for a detailed discussion. Because 
   $$ \pi_*\omega_{Z/S}^{\otimes -4}=p_*i_*(\omega_{Z/S}^{\otimes -4})=p_*i^*\cO_{\PP(\cE)}(4)$$
   we can consider the exact sequence
   $$ 0 \rightarrow \cO_{\PP(\cE)}(4-Z) \rightarrow \cO_{\PP(\cE)}(4) \rightarrow i^{*}\cO_{\PP(\cE)}(4) \rightarrow 0. $$ 
   If we do the pushforward through $p$, the sequence remain exact for every geometric fiber over $S$, because $Z$ is embedded as a conic. Therefore we get 
   $$ 0\rightarrow p_*(\cO_{\PP(\cE)}(5)\otimes \omega_{\PP(\cE)/S}) \rightarrow p_*\cO_{\PP(\cE)}(4) \rightarrow \pi_*\omega_{Z}^{\otimes -4} \rightarrow 0$$ 
   and using the formula $\omega_{\PP(\cE)}=\cO_{\PP(\cE)}(-2) \otimes p^*\det \cE^{\vee}$, we get the thesis.
\end{proof}

We have found the first relation in our strata, which is
$$c_9:=\frac{c_{15}(\cS^{\otimes -2}\otimes {\rm Sym}^4 \pi_*\omega_{\cZ}^{\vee} )}{c_6(\cS^{\otimes -2}\otimes\det (\pi_*\omega_{\cZ}^{\vee})\otimes {\rm Sym}^2 \pi_*\omega_{\cZ}^{\vee})}$$
and can be described completely in terms of the Chern classes $c_1,c_2$ of $\pi_*\omega_{\cZ}^{\vee}$ and  $s=c_1(\cS)$.

\subsection*{Relations from the locus $D$}

We concentrate now on the locus $D$. First of all notice that $D$ is contained in the restriction of $\pi_*\cL^{\otimes -4}$ to $\cM_0^{1}\times \cB\gm$.

\begin{remark}
	In \cite{EdFul2}, the authors describe the stack $\cM_0^{\leq 1}$ as the quotient stack $[S/\GL_3]$ where $S$ is an open of the six-dimensional $\GL_3$-representation of homogeneous forms in three variables, namely $x,y,z$, of degree $2$. The action can be described as 
	$$ A.f(x):=\det(A)f(A^{-1}(x,y,z))$$ 
	for every $A \in \GL_3$ and $f \in S$, and the open subscheme $S$ is the complement of the closed invariant subscheme parametrizing non-reduced forms. 
	
	The proof consists in using the line bundle $\pi_*\omega_{\cZ}^{\vee}$, which is very ample, to describe the $\cM_0^{\leq 1}$ as the locus of reduced conics in $\PP^2$ with an action of $\GL_3$. For a more detailed discussion, see \cite{EdFul2}. 
	
	In this setting, $\cM_0^1$ correspond to the closed locus $S^1$ of $S$ parametrizing reducible reduced conics in $\PP^2$. It is easy to see that the action of $\GL_3$ over $S^1$ is transitive, therefore $\cM_0^1 \simeq \cB H$, with $H$ the subgroup of $\GL_3$ defined as the stabilizers of the element $xy \in S^1$. A straightforward computation shows that $H\simeq (\gm\ltimes \ga)^2 \ltimes C_2$, where $C_2$ is the costant group with two elements.  
\end{remark}

As we are inverting $2$ in the Chow rings, we can use \Cref{lem:chow-tor} to describe Chow ring of $\cM_0^1$ as the invariant subring of $\ch(\cB\gm^2)$ of a specific action of $C_2$. The $\ga$'s do not appear in the computation of the Chow ring thanks to Proposition 2.3 of \cite{MolVis}. We can see that the elements of the form $(t_1,t_2,1)$ of $\gm^2 \ltimes C_2$ correspond to the matrices in $\GL_3$ of the form 
$$
\begin{pmatrix*}
	t_1 & 0 & 0 \\
	0 & t_2 & 0 \\
	0 & 0   &  1  
\end{pmatrix*};
$$
 the elements of the form $(t_1,t_2,-1)$ correspond to the matrices 
$$ 
\begin{pmatrix*}
0 & t_1 & 0 \\
t_2 & 0 & 0 \\
0 & 0   &  1  
\end{pmatrix*}.
$$
It is immediate to see that the action of $C_2$ over $\gm^2$ can be described as $(-1).(t_1,t_2)=(t_2,t_1)$. Therefore if we denote by $t_1$ and $t_2$ the generator of the Chow ring of the two copies of $\gm$ respectively, we get  
$$ \ch(\cB(\gm^2\ltimes C_2)) \simeq \ZZ[1/6, t_1+t_2, t_1t_2].$$ 

A standard computation shows the following result.

\begin{lemma}
	If we denote by $i:\cM_0^1 \into \cM_0^{\leq 1}$ the (regular) closed immersion, we have that $i^{*}(c_1)=t_1+t_2$ and $i^{*}(c_2)=t_1t_2$ and therefore $i^{*}$ is surjective. Moreover, we have the equality $[\cM_0^1]=-c_1$ in $\cM_0^{\leq 1}$. 
\end{lemma}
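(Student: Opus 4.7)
The plan is to identify the rank-$3$ bundle $\pi_*\omega_{\cZ/\cM_0^{\leq 1}}^\vee$ explicitly as a $\GL_3$-equivariant bundle on $[S/\GL_3]$, and then read off the Chern class restrictions via representation theory of $H$.

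First I would identify the bundle. Since $S \subset \mathrm{Sym}^2\cV^\vee \otimes \det\cV$ as a $\GL_3$-representation, the universal conic $\cZ \subset \PP(\cV)_{\cM_0^{\leq 1}}$ is cut out by a section of $\cO_{\PP(\cV)}(2)\otimes p^*\det\cV$, where $p$ denotes the structure morphism of the projective bundle. The Euler sequence yields $\omega_{\PP(\cV)/\cM_0^{\leq 1}} \simeq \cO(-3)\otimes p^*\det\cV^{-1}$, so by adjunction the determinantal twists cancel and $\omega_\cZ \simeq \cO_\cZ(-1)$. Pushing the short exact sequence
$$0 \to \cO_{\PP(\cV)}(-1)\otimes p^*\det\cV^{-1} \to \cO_{\PP(\cV)}(1) \to \cO_\cZ(1) \to 0$$
down to $\cM_0^{\leq 1}$ identifies $\pi_*\omega_\cZ^\vee \simeq p_*\cO_{\PP(\cV)}(1) \simeq \cV^\vee$, since the leftmost term has vanishing direct images in both degrees. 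In particular $c_1 = c_1(\cV^\vee)$ and $c_2 = c_2(\cV^\vee)$.

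To compute $i^*$, I restrict $\cV^\vee$ to $\cM_0^1 \simeq \cB H$. By Proposition 2.3 of \cite{MolVis} the unipotent factor $\ga^2$ of $H$ is invisible in $\ch$ after inverting $6$, so it suffices to compute the $(\gm^2 \ltimes C_2)$-character of $\cV^\vee$. Under the embedding $\mathrm{diag}(t_1, t_2, 1)$ of $\gm^2$, the representation $V^\vee$ has weights $-\chi_1, -\chi_2, 0$, where $\chi_i$ denotes projection onto the $i$th factor. With $t_1, t_2$ normalized as the Chern class generators of the two $\gm$ factors, this yields $i^*(c_1) = t_1+t_2$ and $i^*(c_2) = t_1 t_2$. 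These two classes generate the ring $\ch(\cM_0^1) = \ZZ[1/6, t_1+t_2, t_1t_2]$, so $i^*$ is surjective. For the fundamental class, the reducible locus $S^1 \subset S$ is cut out scheme-theoretically by the quadratic form discriminant; the identity $\mathrm{Disc}(A.f) = \det(A)\cdot \mathrm{Disc}(f)$ for $A \in \GL_3$ shows $\mathrm{Disc}$ is an equivariant section of the line bundle $\det\cV$ on $\cM_0^{\leq 1}$, hence $[\cM_0^1] = c_1(\det\cV) = c_1(\cV) = -c_1(\cV^\vee) = -c_1$.

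The main technical care lies in the first step: keeping track of the equivariant $\det\cV$ twist coming from the definition of $S$ and from the adjunction formula, so that the net cancellation yields $\omega_\cZ \simeq \cO_\cZ(-1)$ with no residual twist. Once this identification is secured, the remaining steps reduce to standard character computations on $\cB H$ together with the classical discriminant calculation.
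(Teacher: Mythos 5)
Your argument is correct and follows essentially the same route as the paper's: both use the presentation $\cM_0^{\leq 1}=[S/\GL_3]$, read off $i^*(c_1)$ and $i^*(c_2)$ by restricting to the stabilizer $H$ of $xy$ and computing torus characters, and obtain $[\cM_0^1]$ from the discriminant viewed as a $\GL_3$-equivariant section of the character $\det$; your explicit identification $\pi_*\omega_{\cZ}^{\vee}\simeq \cV^{\vee}$ via adjunction and the Euler sequence merely spells out what the paper delegates to the description in \cite{EdFul2}. One internal wrinkle to fix: if $\cV^{\vee}$ restricts to $\mathrm{diag}(t_1,t_2,1)$ with weights $-\chi_1,-\chi_2,0$ and $t_i$ denotes $c_1(\chi_i)$, then $c_1(\cV^{\vee})=-(t_1+t_2)$ rather than $t_1+t_2$, so the stated formula requires normalizing $t_i$ as $c_1(\chi_i^{-1})$ --- a harmless convention choice here, since the surjectivity of $i^*$ and the identity $[\cM_0^1]=-c_1$ are unaffected, but it should be made explicit.
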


\begin{proof}
	The description of $i^{*}(c_i)$ for $i=1,2$ follows from the explicit description of the inclusion 
	$$\cM_0^1 = [\{xy\}/H] \into [S/\GL_3]=\cM_0^{\leq 1}.$$
	
	Regarding the second part of the statement, it is enough to observe that $\cM_0^1=[S^1/\GL_3] \into [S/\GL_3]$ where $S^1$ is the hypersurface of $S$ described by the vanishing of the determinant of the general conic. A straightforward computation of the $\GL_3$-character associated to the determinant formula shows the result.
	\end{proof}
Finally, we focus on $D$. The vector bundle $\pi_*\cL^{\otimes -4}$ (or equivalently $\pi_*\omega_{\cZ}^{\otimes -4} \otimes \cS^{\otimes -2}$) can now be seen as a $9$-dimensional $H$-representation. Specifically, we are looking at sections of $\pi_*\omega_{\cZ}^{\otimes -4} \otimes \cS^{\otimes -2}$ on the curve $xy=0$, which are a $9$-dimensional vector space $\AA(4,4)$ parametrizing a pair of binary forms of degree $4$, which have to coincides in the point $x=y=0$. Let us denote by $\infty$ the point $x=y=0$, which is in common for the two components. With this notation, $D$ parametrizes pairs $(f,g)$ such that $f(\infty)=g(\infty)=0$ and either the coefficient of $xz^3$ or the one of $yz^3$ vanishes. 

\begin{remark}
	This follows from the local description of the $2:1$-cover. In fact, if $\infty$ is the intersection of the two components, we have that \'etale locally the double cover looks like
	$$ k[[x,y]]/(xy) \into k[[x,y,t]](t^2-h(x,y))$$ 
	where $h$ is exactly the section of $\pi_*\omega_{\cZ}^{\otimes -4} \otimes \cS^{\otimes -2}$. Because we can only allow nodes or tacnodes as fibers over a node in the quotient morphism by \Cref{prop:description-quotient}, we get that $h$ is either a unit (the quotient morphism is \'etale) or $h$ is of the form $xp(x)+yq(y)$ such that $p(0)\neq 0$ and $q(0)\neq 0$.  
\end{remark}

The action of $\gm^2\times \gm$ (where the second group of the product is the one whose generator is $s$) over the coefficient of $x^iz^{4-i}$ (respectively $y^iz^{4-i}$) can be described by the character $t_1^is^{-2}$ (respectively $t_2^is^{-2}$).

\begin{lemma}
	The ideal of relations coming from $D$ in $\VV(\pi_*\cL^{\otimes -2})\vert_{\cM_0^1 \times \cB\gm}$ is generated by the two classes $2s(4s-(t_1+t_2))$ and $2s(4s^2-2s(t_1+t_2)+t_1t_2))$. Therefore we have that the ideal of relations coming from $D$ in $\cP_3'$ is generated by the two relations $D_1:=2sc_1(c_1-4s)$ and $D_2:=2sc_1(4s^2-2sc_1+c_2)$. 
\end{lemma}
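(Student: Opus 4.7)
The plan is to pass to the \'etale double cover $\cB\gm^2 \times \cB\gm \to \cM_0^1 \times \cB\gm$ induced by the surjection $\gm^2 \ltimes C_2 \twoheadrightarrow C_2$. Since $2$ is inverted, \Cref{lem:chow-tor} identifies $\ch(\cM_0^1 \times \cB\gm)$ with the ring of $C_2$-invariants $\ZZ[1/6, t_1, t_2, s]^{C_2} = \ZZ[1/6, e_1, e_2, s]$, where $e_1 = t_1+t_2 = c_1$ and $e_2 = t_1t_2 = c_2$. Compatibility of the localization sequence with flat pullback then reduces the problem to computing the $C_2$-invariants of the ideal $I' \subset \ZZ[1/6, t_1, t_2, s]$ coming from the pullback $\tilde D$ of $D$.

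On the cover, using the coordinates $f_i, g_i$ on $\AA(4,4)$ (subject to $f_0 = g_0$), one would decompose $\tilde D = D_1'\cup D_2'$ with $D_1' = \{f_0 = f_1 = 0\}$ and $D_2' = \{f_0 = g_1 = 0\}$, two smooth codimension-two complete intersections swapped by $C_2$. Since $f_0$ has character $s^{-2}$ and $f_1$ (resp.\ $g_1$) has character $t_1 s^{-2}$ (resp.\ $t_2 s^{-2}$), the associated classes read $[D_1'] = (-2s)(t_1 - 2s) = 2s(2s - t_1)$ and $[D_2'] = 2s(2s - t_2)$. Because each $D_i'$ is smooth, the image of $\ch(D_i') \to \ch(\VV)$ is the principal ideal $([D_i'])$, so $I' = ([D_1'], [D_2'])$.

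The key step is to show that $I'^{C_2}$ is generated, as an ideal in $\ZZ[1/6, e_1, e_2, s]$, by $R_1 := [D_1'] + [D_2'] = 2s(4s - e_1)$ and $R_2 := 2s(2s - t_1)(2s - t_2) = 2s(4s^2 - 2se_1 + e_2)$, which are manifestly $C_2$-invariant elements of $I'$ (note $R_2 = (2s - t_2)[D_1']$). Given $f = a[D_1'] + b[D_2'] \in I'^{C_2}$, imposing $\sigma f = f$ and using coprimality of the linear forms $2s - t_1$ and $2s - t_2$ in the UFD $\ZZ[1/6, t_1, t_2, s]$ forces $b = \sigma a + (2s - t_1)c$ for some invariant $c$, whence $f = a[D_1'] + \sigma a[D_2'] + cR_2$. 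Splitting $a = a_+ + a_-$ into $C_2$-eigenparts (valid since $2$ is inverted) and writing $a_- = (t_1 - t_2)a_-'$ with $a_-'$ invariant, the remaining terms collapse to $a_+ R_1 - 2s(e_1^2 - 4e_2)a_-'$, and a direct verification of the algebraic identity
\[
-2s(e_1^2 - 4e_2) = (e_1 - 4s)\,R_1 + 4\,R_2
\]
will absorb the last piece into $(R_1, R_2)$. Finally, the ideal of relations in $\ch(\cP_3')$ is obtained by pushing forward along the regular closed immersion $\cM_0^1 \times \cB\gm \hookrightarrow \cP_3'$: the projection formula together with $[\cM_0^1] = -c_1$ turns $R_i$ into $-c_1 R_i$, yielding $D_1 = -c_1 R_1$ and $-D_2 = -c_1 R_2$ as generators.

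The main subtlety will be the identification of the second generator: the naive invariant $[D_1']\cdot[D_2']$ lies in the ideal but carries an extra factor of $2s$, so one must recognize the finer invariant $R_2 = (2s-t_j)[D_i']$ and verify the discriminant-type identity above in order to cover the anti-invariant-squared contributions of the form $(t_1 - t_2)^2\cdot(\text{invariant}) = (e_1^2 - 4e_2)\cdot(\text{invariant})$ that otherwise obstruct the reduction.
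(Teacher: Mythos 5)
Your proposal is correct and follows essentially the same route as the paper: pass to the $\gm^2$-equivariant setting via \Cref{lem:chow-tor}, identify the ideal upstairs as $(2s(2s-t_1),\,2s(2s-t_2))$ from the two components of $D$ and the listed characters, and then take $C_2$-invariants. The only difference is that the paper asserts the invariant ideal is generated by $2s(4s-(t_1+t_2))$ and $2s(2s-t_1)(2s-t_2)$ without proof, whereas you verify this step in detail (eigenpart decomposition plus the identity $-2s(e_1^2-4e_2)=(e_1-4s)R_1+4R_2$), which checks out.
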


\begin{proof}
	Because of \Cref{lem:chow-tor}, we can start by computing the ideal of relations in the $\gm^2$-equivariant setting (i.e. forgetting the action of $C_2$) and then considering the invariant elements (by the action of $C_2$). It is clear the ideal of relation $I$ in the $\gm^2$-equivariant setting is of the form $(2s(2s-t_1),2s(2s-t_2))$. Thus the ideal $I^{\rm inv}$ is generated by the elements $2s(4s-(t_1+t_2))$ and $2s(2s-t_1)(2s-t_2)$.
\end{proof}

As a corollary, we get the Chow ring of $\Htilde_3\setminus \Detilde_1$. Before describing it, we want to change generators. We can express $c_1$, $c_2$ and $s$ using the classes $\lambda_1$, $\lambda_2$ and $\xi_1$ where $\lambda_i$ as usual is the $i$-th Chern class of the Hodge bundle $\HH$ and $\xi_1$ is the fundamental class of $\Xi_1$, which is defined as the pullback
$$
\begin{tikzcd}
\Xi_1 \arrow[d] \arrow[r, hook]      & \Htilde_3\setminus \Detilde_1 \arrow[d] \\
\cM_0^1\times \cB\gm \arrow[r, hook] & \cP_3'=\cM_0^{\leq 1}\times \cB\gm.     
\end{tikzcd}
$$

\begin{lemma}\label{lem:lambda-class-H}
	In the situation above, we have that $s=(-\xi_1 - \lambda_1)/3$, $c_1=-\xi_1$ and $c_2= \lambda_2 - (\lambda_1^2 - \xi_1^2)/3$. Furthermore, we have the following relation
	$$\lambda_3=(\xi_1+\lambda_1)(9\lambda_2+(\xi_1+\lambda_1)(\xi_1-2\lambda_1))/27.$$
\end{lemma}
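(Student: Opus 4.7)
The plan is to identify the Hodge bundle on $\Htilde_3\setminus\Detilde_1$ in terms of bundles pulled back from $\cP_3'$, apply standard Chern class formulas, and invert the resulting equations using $\xi_1=-c_1$ (pulled back from the previously established identity $[\cM_0^1]=-c_1$). Under the description $\Htilde_3\setminus\Detilde_1\simeq\Ctilde_3^7$ as a moduli of degree-$2$ cyclic covers, let $f\colon C\to\cZ$ be the universal cover, so that $f_*\cO_C=\cO_{\cZ}\oplus\cL$. Grothendieck duality gives $f_*\omega_{C/S}=\omega_{\cZ/S}\oplus(\omega_{\cZ/S}\otimes\cL^{-1})$, and because $\cZ/S$ has genus-$0$ fibers, the first summand vanishes after pushing down to $S$. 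Unwinding $\pi^{*}\cS=\cL\otimes\omega_{\cZ/S}^{\otimes-2}$ (with $\pi\colon\cZ\to S$ the structure morphism) and applying the projection formula, the Hodge bundle becomes
\[
\htil_3 \;=\; E\otimes\cS^{-1}, \qquad E:=\pi_*\omega_{\cZ/S}^{\vee},
\]
a tensor of the rank-$3$ bundle $E$ (with Chern classes $c_1,c_2,c_3$) and the line bundle $\cS^{-1}$ of first Chern class $-s$.

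The standard formula for Chern classes of a rank-$3$ bundle twisted by a line bundle then yields
\[
\lambda_1=c_1-3s, \quad \lambda_2=c_2-2c_1 s+3s^2, \quad \lambda_3=c_3-c_2 s+c_1 s^2-s^3.
\]
Combined with $\xi_1=-c_1$, the first two equations solve by elementary algebra for $s=-(\xi_1+\lambda_1)/3$ and $c_2=\lambda_2-(\lambda_1^2-\xi_1^2)/3$, giving the first three identities. The fourth relation is then equivalent to the vanishing $c_3=0$ in $\ch(\Htilde_3\setminus\Detilde_1)[1/6]$: substituting $c_3=0$ and the above expressions for $c_1,c_2,s$ into the formula for $\lambda_3$ and factoring out $(\xi_1+\lambda_1)=-3s$ yields the stated relation after routine algebra.

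To prove $c_3=0$, I would argue already at the level of $\ch(\cM_0^{\leq 1})[1/6]$. Since both $\ch(\cM_0^{\leq 1})[1/6]=\ZZ[1/6,c_1,c_2]$ and $\ch(\cM_0^1)[1/6]=\ZZ[1/6,t_1+t_2,t_1 t_2]$ are polynomial rings on two generators of degrees $1$ and $2$, and the preceding lemma gives $i^*(c_1)=t_1+t_2$ and $i^*(c_2)=t_1 t_2$, the restriction $i^*$ is an isomorphism of graded rings; hence $c_3=0$ if and only if $i^*(c_3)=0$. The latter reduces to a weight computation on $\cB\gm^2$: the rank-$3$ bundle $E|_{\gm^2}$ has a multiset of three weights in $\ZZ t_1\oplus\ZZ t_2$ whose first two elementary symmetric polynomials must be $t_1+t_2$ and $t_1 t_2$, and a direct case analysis shows the unique such multiset is $\{t_1,t_2,0\}$, whence $c_3(E|_{\gm^2})=t_1\cdot t_2\cdot 0=0$. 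I expect the main delicate points to be this uniqueness of the weight multiset and the clean identification of $\htil_3$ via the double cover; the rest is symbolic manipulation.
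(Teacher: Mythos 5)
Your proof is correct and follows essentially the same route as the paper: Grothendieck duality applied to the degree-$2$ cover gives $\pi_{C,*}\omega_{C/S}\simeq \pi_{Z,*}(\omega_{Z/S}^{\vee})\otimes\cS^{\vee}$, after which everything is Chern-class algebra together with $\xi_1=-c_1$. The only difference is that you make explicit the vanishing $c_3(\pi_*\omega_{\cZ}^{\vee})=0$ (via injectivity of $i^*$ and the weight analysis on $\cB\gm^2$), a point the paper subsumes under ``simple computations with Chern classes''; your justification of it is sound.
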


\begin{proof}
	First of all, the relation $\xi_1=-c_1$ is clear from the construction of $\xi_1$, as we have already computed the fundamental class $\cM_0^1$ in $\ch(\cM_0^{\leq 1})$.
	
	Let $f:C\rightarrow Z$ be the quotient morphism of an object in $\Htilde_3\setminus \Detilde_1$ and let $\pi_C:C\rightarrow S$ and $\pi_Z:Z\rightarrow S$ be the two structural morphisms. Grothendieck duality implies that 
	$$ f_*\omega_{C/S} = \hom_Z(f_*\cO_C, \omega_{Z/S})$$ 
	but because $f$ is finite flat, we know that $f_*\cO_C=\cO_Z \oplus L$, i.e. $f_*\omega_{C}=\omega_{Z}\oplus (\omega_{Z}\otimes L^{\vee})$. Recall that $L\simeq \omega_{Z/S}^{\otimes 2} \otimes \pi_Z^*\cS$ for a line bundle $\cS$ on the base. Therefore if we consider the pushforward through $\pi_Z$, we get
$$ \pi_{C,*}\omega_{C/S}=\pi_{Z,*}(\omega_{Z/S}^{\vee})\otimes \cS^{\vee}$$
and the formulas in the statement follow from simple computations with Chern classes.
 \end{proof}
\begin{corollary}\label{cor:chow-hyper}
	We have the following isomorphism of rings:
	$$\ch(\Htilde_3)= \ZZ[1/6,\lambda_1,\lambda_2,\xi_1]/(c_9,D_1,D_2)$$
	where $D_1=2\xi_1(\lambda_1+\xi_1)(4\lambda_1+\xi_1)/9$, $D_2:=2\xi_1(\xi_1+\lambda_1)(9\lambda_2+(\xi_1+\lambda_1)^2)/27$ and $c_9$  is a polynomial in degree $9$.
\end{corollary}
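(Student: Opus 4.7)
The plan proceeds in two phases: first compute $\ch(\Htilde_3 \setminus \Detilde_1)$ by excision inside the ambient vector bundle constructed in this section, then apply Lemma \ref{lem:gluing} to the codimension-one closed stratum $\Htilde_3 \cap \Detilde_1 \hookrightarrow \Htilde_3$ to promote the open presentation to the full $\ch(\Htilde_3)$.

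For the open piece, use the identification $\Htilde_3 \setminus \Detilde_1 \simeq \VV(\pi_*\cL^{\otimes -2}) \setminus (0 \cup D)$ over $\cP_3' \simeq \cM_0^{\leq 1} \times \cB\gm$. The Edidin--Fulghesu description of $\ch(\cM_0^{\leq 1})$ together with the K\"unneth factor for $\cB\gm$ gives $\ch(\cP_3') = \ZZ[1/6, c_1, c_2, s]$, and pullback to the total space of the vector bundle is an isomorphism on Chow. Removing the zero section imposes the top Chern class $c_9$, which is computed by Whitney from the short exact sequence
\[
0 \to \det(\pi_*\omega_{\cZ}^{\vee}) \otimes \mathrm{Sym}^2 \pi_*\omega_{\cZ}^{\vee} \to \mathrm{Sym}^4 \pi_*\omega_{\cZ}^{\vee} \to \pi_*\omega_{\cZ}^{\otimes -4} \to 0
\]
arising from embedding each fiber as a conic in $\PP(\cE)$ and pushing forward $\cO(4-\cZ) \to \cO(4)$. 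For the locus $D$, restrict to $\cM_0^1 \times \cB\gm$, which by Lemma \ref{lem:chow-tor} (after inverting $2$) is $\cB(\gm^2 \rtimes C_2) \times \cB\gm$; there $D$ is the $C_2$-symmetrization of two linear subspaces of the 9-dimensional representation, so the $\gm^2$-equivariant ideal is $(2s(2s-t_1), 2s(2s-t_2))$, and taking $C_2$-invariants followed by pushforward to $\cP_3'$ (multiplication by $[\cM_0^1] = -c_1$) yields $D_1, D_2$. Finally Grothendieck duality applied to the double cover $f : C \to Z$ writes $f_*\omega_{C/S} \simeq \omega_{Z/S} \oplus (\omega_{Z/S} \otimes L^{\vee})$ with $L = \omega_{Z/S}^{\otimes 2} \otimes \pi_Z^* \cS$, converting $(c_1, c_2, s)$ into $(\lambda_1, \lambda_2, \xi_1)$ as in Lemma \ref{lem:lambda-class-H}.

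For the gluing phase, apply Lemma \ref{lem:gluing} to the closed immersion $\Htilde_3 \cap \Detilde_1 \hookrightarrow \Htilde_3$. The generators $\lambda_1, \lambda_2, \xi_1$ all extend globally: the $\lambda_i$ are Chern classes of the Hodge bundle by Proposition \ref{prop:hodge-bundle}, and $\xi_1$ is realized as the class of the closure of the reducible-quotient locus, so no new generators appear. The closed stratum $\Htilde_3 \cap \Detilde_1$ decomposes into a finite union of smooth global quotients, each expressible as a hyperelliptic gluing of lower-genus pieces (a genus-$2$ hyperelliptic component meeting a genus-$1$ piece at a Weierstrass-type point, possibly iterated), whose Chow ring is generated by restrictions of $\lambda_i$ and $\xi_1$. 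The main obstacle is verifying the gluing hypothesis, namely that the top Chern class of $N_{\Htilde_3 \cap \Detilde_1 | \Htilde_3}$ is a non-zero divisor in $\ch(\Htilde_3 \cap \Detilde_1)$ after inverting $6$; this is precisely where the non-separatedness of $\Mtilde_g^r$ is essential, the positive-dimensional stabilizers on the hyperelliptic boundary preventing the torsion obstruction of Edidin--Graham. Once this regularity is established, the fibered product given by Lemma \ref{lem:gluing} collapses onto the open presentation because (a) both $D_1$ and $D_2$ carry $\xi_1$ as a factor and thus restrict compatibly to the boundary, and (b) every boundary class is, by the decomposition above, a polynomial in $\lambda_1, \lambda_2, \xi_1$ already controlled by $c_9, D_1, D_2$.
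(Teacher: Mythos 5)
Your first phase is exactly the paper's argument, and it is complete as it stands: the identification $\Htilde_3 \setminus \Detilde_1 \simeq \VV(\pi_*\cL^{\otimes -2})\setminus (0\cup D)$ over $\cP_3'\simeq \cM_0^{\leq 1}\times \cB\gm$, the class $c_9$ from excising the zero section via the symmetric-power exact sequence, the ideal $(D_1,D_2)$ from the $\gm^2$-equivariant computation on $\cM_0^1\times\cB\gm$ followed by $C_2$-invariants and multiplication by $[\cM_0^1]=-c_1$, and the change of generators via Grothendieck duality for the double cover (Lemma~\ref{lem:lambda-class-H}) is precisely how the corollary is obtained. Note that the corollary, despite the notation $\ch(\Htilde_3)$ in the display, is a statement about $\Htilde_3\setminus\Detilde_1$: the sentence immediately preceding it reads ``As a corollary, we get the Chow ring of $\Htilde_3\setminus \Detilde_1$,'' the entire section works on that open stratum, and the change of variables you invoke (including the expression of $\lambda_3$ in terms of $\lambda_1,\lambda_2,\xi_1$) is proved only there, where the quotient map is finite flat of degree $2$. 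So your second phase is answering a question the statement does not ask.

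Moreover, the gluing phase as written has a genuine gap, so it could not be kept even if one wanted the full $\Htilde_3$. You assert that the top Chern class of $N_{\Htilde_3\cap\Detilde_1|\Htilde_3}$ is a non-zero divisor, offering only the heuristic that non-separatedness makes this possible; possibility is not verification, and the paper never checks (nor needs) a gluing condition for this particular immersion. You also do not establish that $\Htilde_3\cap\Detilde_1$ is smooth, which Lemma~\ref{lem:gluing} requires, nor that its Chow ring is generated by restrictions of $\lambda_1,\lambda_2,\xi_1$. Most importantly, the conclusion that the fibered product ``collapses onto the open presentation'' is a non-sequitur: in the paper's own gluing machinery (Proposition~\ref{prop:desc-gluing}) the closed stratum always contributes additional relations of the form $Zq_h$ and $Zv_h$ (pushforwards of boundary relations and of the kernel of the restriction map), and there is no reason these land in $(c_9,D_1,D_2)$; indeed in the global computation of $\ch(\Mtilde_3)$ the boundary strata produce relations such as $k_1(1)$, $k_1(2)$ that are not consequences of the open-stratum ideal. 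The correct reading is that your phase one alone is the proof, and phase two should be deleted.
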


\begin{remark}
    The polynomial $c_9$ has the following form:
    
    \begin{equation*}
    \begin{split}
    c_9 = & -\frac{16192}{19683}\lambda_1^9 - \frac{23200}{6561}\lambda_1^8\xi_1 - \frac{31040}{6561}\lambda_1^7\xi_1^2 +
    \frac{1376}{729}\lambda_1^7\lambda_2 - \frac{320}{6561}\lambda_1^6\xi_1^3 + \\& + \frac{4576}{243}\lambda_1^6\xi_1\lambda_2 +
    \frac{30784}{6561}\lambda_1^5\xi_1^4  + \frac{10144}{243}\lambda_1^5\xi_1^2\lambda_2 + \frac{3968}{81}\lambda_1^5\lambda_2^2 +
    \frac{16256}{6561}\lambda_1^4\xi_1^5 +  \\ & + \frac{15136}{729}\lambda_1^4\xi_1^3\lambda_2 + \frac{992}{27}\lambda_1^4\xi_1\lambda_2^2
    - \frac{320}{243}\lambda_1^3\xi_1^6 - \frac{5792}{243}\lambda_1^3\xi_1^4\lambda_2 -
    \frac{11072}{81}\lambda_1^3\xi_1^2\lambda_2^2 - \\ & - \frac{7264}{27}\lambda_1^3\lambda_2^3 - \frac{7360}{6561}\lambda_1^2\xi_1^7  -
    \frac{5216}{243}\lambda_1^2\xi_1^5\lambda_2 - \frac{11392}{81}\lambda_1^2\xi_1^3\lambda_2^2 -
    \frac{2848}{9}\lambda_1^2\xi_1\lambda_2^3 + \\ & + \frac{640}{6561}\lambda_1\xi_1^8 +  \frac{1952}{729}\lambda_1\xi_1^6\lambda_2 +
   \frac{832}{27}\lambda_1\xi_1^4\lambda_2^2 + \frac{1568}{9}\lambda_1\xi_1^2\lambda_2^3 +384\lambda_1\lambda_2^4 + \\ & +
    \frac{2912}{19683}\xi_1^9 + \frac{352}{81}\xi_1^7\lambda_2 + \frac{3808}{81}\xi_1^5\lambda_2^2 +
    \frac{5984}{27}\xi_1^3\lambda_2^3 + 384\xi_1\lambda_2^4.
    \end{split}
    \end{equation*}
    
\end{remark}

\subsection*{Normal bundle of $\Htilde_3\setminus \Detilde_1$ in $\Mtilde_3 \setminus \Detilde_1$}
We end up the section with the computation of the first Chern class of the normal bundle of the closed immersion $\Htilde_3\setminus \Detilde \into \Mtilde_3 \setminus \Detilde_1$. For the sake of notation, we denote the normal bundle by $N_{\cH|\cM}$.

\begin{proposition}
	The fundamental class of  $\Hbar_3$  in $\ch(\Mbar_3)$ is equal to $9\lambda_1-\delta_0-3\delta_1$.
\end{proposition}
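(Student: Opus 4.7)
The plan is to use the classical multiplication map on canonical sections, combined with an excess-multiplicity analysis along $\Debar_1$. On $\Mbar_3$, let $\pi\colon\Cbar_3\to\Mbar_3$ denote the universal curve, and consider the natural $\cO_{\Mbar_3}$-linear map
$$
\mu\colon \mathrm{Sym}^2(\pi_*\omega_\pi)\longrightarrow \pi_*\omega_\pi^{\otimes 2}.
$$
Since $g=3$, both source and target are vector bundles of rank $\binom{g+1}{2}=3g-3=6$. On the open locus of smooth non-hyperelliptic curves, $\mu$ is an isomorphism because such curves are canonically embedded as plane quartics. On $\Hbar_3$ the canonical map factors through a conic, so $\mu$ acquires a one-dimensional kernel. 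On $\Debar_1$, where $C=E\cup_p C'$ with $E$ elliptic and $C'$ of genus $2$, the restriction $H^0(\omega_C)\to H^0(\omega_E(p))$ hits a $1$-dimensional target, so two independent canonical sections must vanish on $E$, forcing $\mu$ to drop rank along $\Debar_1$ as well. Thus the degeneracy locus $D(\mu)$ is a Cartier divisor with $|D(\mu)|=\Hbar_3\cup\Debar_1$.

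The first step is to compute $[D(\mu)]$ as the difference of first Chern classes of the two rank-$6$ bundles. Mumford's formula from \cite{Mum},
$$
c_1(\pi_*\omega_\pi^{\otimes n})=(6n^2-6n+1)\lambda_1-\binom{n}{2}(\delta_0+\delta_1)\qquad(n\geq 2),
$$
applied with $n=2$, gives $c_1(\pi_*\omega_\pi^{\otimes 2})=13\lambda_1-\delta_0-\delta_1$. The identity $c_1(\mathrm{Sym}^2 E)=(\mathrm{rk}\,E+1)\,c_1(E)$ applied to the rank-$3$ Hodge bundle $\pi_*\omega_\pi$ yields $c_1(\mathrm{Sym}^2 \pi_*\omega_\pi)=4\lambda_1$. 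Subtracting,
$$
[D(\mu)]=(13\lambda_1-\delta_0-\delta_1)-4\lambda_1=9\lambda_1-\delta_0-\delta_1.
$$

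The second step, which I expect to be the main obstacle, is to determine the multiplicity $m$ of $D(\mu)$ along the smooth divisor $\Debar_1$, so that $[D(\mu)]=[\Hbar_3]+m\,\delta_1$ in codimension $1$. For this I would work locally on a one-parameter smoothing $\cC\to\spec R$ of a generic nodal curve $C=E\cup_p C'\in\Debar_1$, pick a basis $s_1,s_2,s_3$ of $H^0(\cC,\omega)$ with $s_2(0),s_3(0)$ vanishing on $E$, choose a compatible basis of $H^0(\cC,\omega^{\otimes 2})$, and track the order of vanishing of $\det\mu$ in the smoothing parameter $t$. The three symmetric products $s_2^2,s_2s_3,s_3^2$ vanish on $E$ at $t=0$; a direct local analysis should show that these force $\det\mu$ to vanish to order $2$ along $\Debar_1$, giving $m=2$.

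Putting the pieces together,
$$
[\Hbar_3]=[D(\mu)]-2\delta_1=9\lambda_1-\delta_0-\delta_1-2\delta_1=9\lambda_1-\delta_0-3\delta_1,
$$
as claimed. Everything outside the second step is a routine application of Mumford's tautological formulas together with the rank-equal degeneracy-locus formula; it is only the excess computation along $\Debar_1$ that requires a genuine smoothing analysis, and I view this as the sole nontrivial input.
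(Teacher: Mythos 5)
The paper does not actually prove this statement: the proof given is a one-line citation of Theorem~1 of Esteves \cite{Est} (with the remark that his argument only requires inverting $2$). So your proposal is best read as a reconstruction of the cited result, and the route you choose --- computing the degeneracy divisor of $\mu\colon \mathrm{Sym}^2(\pi_*\omega_\pi)\to\pi_*\omega_\pi^{\otimes 2}$ and correcting for excess along $\Debar_1$ --- is precisely the Porteous-type method of Esteves' paper. Your first step is correct: $c_1(\pi_*\omega_\pi^{\otimes 2})=13\lambda_1-\delta_0-\delta_1$, $c_1(\mathrm{Sym}^2\pi_*\omega_\pi)=4\lambda_1$, hence the degeneracy divisor has class $9\lambda_1-\delta_0-\delta_1$.

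The gap is in the second step, and your description of what degenerates along $\Debar_1$ is off in a way that would derail the local analysis you defer. For $C=E\cup_p C'$ generic in $\Debar_1$, with $s_1$ spanning $H^0(\omega_E)$ and $s_2,s_3$ spanning $H^0(\omega_{C'})$, the products $s_2^2,s_2s_3,s_3^2$ do vanish on $E$ but remain linearly independent in $H^0(\omega_C^{\otimes 2})$: their restrictions to $C'$ span the image of $\mathrm{Sym}^2H^0(\omega_{C'})$, which for genus $2$ is all of $H^0(\omega_{C'}^{\otimes 2})$. They contribute nothing to the kernel. What dies are the mixed products $s_1s_2$ and $s_1s_3$, which vanish identically on both components, so $\mu$ has corank exactly $2$ at such a curve. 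That corank-$2$ drop is why $\det\mu$ vanishes to order at least $2$ along $\Debar_1$; showing the order is exactly $2$ (i.e.\ that each of $s_1s_2,s_1s_3$ acquires a nonzero first-order term in the smoothing parameter) is the real content, and you have only asserted it. Two further points are left implicit: the multiplicity along $\Hbar_3$ is $1$ only after a first-order transversality check (the corank there is $1$ by Noether, so a priori you get order $\geq 1$), and the support statement $|D(\mu)|=\Hbar_3\cup\Debar_1$ needs a Noether-type surjectivity statement for singular stable curves outside these loci (this is where Catanese's results, cited elsewhere in the paper, would enter). None of these steps is false, but as written your argument only establishes $[\Hbar_3]=9\lambda_1-\delta_0-(1+m)\delta_1$ for some unspecified $m\geq 2$.
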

\begin{proof}
	This is Theorem 1 of \cite{Est}. It is important to notice that in the computations the author just need to invert $2$ in the Picard group to get the result.
\end{proof}

\begin{remark}
	As in the ($A_1$-)stable case, we define by $\Delta_0$ the closure of the substack of $\Mtilde_3$ which parametrizes curves with a non-separating node. Alternately, we can consider the stack $\Delta$ in the universal curve $\Ctilde_3$ of $\Mtilde_3$, defined as the vanishing locus of the first Fitting ideal of $\Omega_{\Ctilde_3|\Mtilde_3}$. If we take a connected component $\Sigma \subset \Delta$, one can see that the induced morphism $\Sigma \rightarrow \Mtilde_3$ is a closed embedding. For a more detailed discussion, see Appendix A of \cite{DiLorVis}.
	
	We denote by $\Detilde$ the image with its natural stacky structure and by $\Detilde_0$ the complement of the inclusion $\Detilde_1 \subset \Detilde$. Thanks to \Cref{lem:sep-sing}, we know that $\Detilde_1\into \Detilde$ is also open, therefore we get that $\Detilde_0$ is a closed substack of $\Mtilde_3$. We denote by $\delta_0$ its fundamental class in the Chow ring of $\Mtilde_3$.
\end{remark}

Because $\Mtilde_3\setminus\Mbar_3$ has codimension $2$, we get that the same formula works in our context. Because $\delta_1$ is defined as the fundamental class of $\Detilde_1$, we just need to compute $\delta_0$ restricted to $\Htilde_3\setminus \Detilde_1$ to get the description we want. To do so, we compute the restriction of $\delta_0$ to $\Htilde_3 \setminus (\Detilde_1 \cup \Xi_1)$ and to $\Xi_1\setminus \Detilde_1$ and then glue the informations together.

First of all, notice that $\Htilde_3 \setminus (\Detilde_1 \cup \Xi_1)$ is an open inside a $9$-dimensional representation $V$ of $\PGL_2\times \gm$, as we have $2:1$-covers of $\PP^1$. Unwinding the definitions, we get the following result.

\begin{lemma}\label{lem:ar-vis}
	The representation $V$ of $\PGL_2 \times \gm$ above coincides with the one given by Arsie and Vistoli in Corollary 4.6 of \cite{ArVis}.  
\end{lemma}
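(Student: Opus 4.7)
The plan is to make $V$ concrete and compare its $(\PGL_2 \times \gm)$-action directly with the one in \cite{ArVis}; both descriptions are essentially unwindings of the cyclic cover construction, so the proof should reduce to a character computation.

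First, I would identify $V$ intrinsically from the excerpt's construction. On $\Htilde_3 \setminus (\Detilde_1 \cup \Xi_1)$, Proposition~\ref{prop:descr-hyper} gives an equivalence with the stack of triplets $(\PP^1, \cL, i)$ with $\chi(\cL) = -3$, which forces $\cL \simeq \cO_{\PP^1}(-4)$. After choosing a trivialization $\cL \simeq \cO(-4)$, the section $i$ becomes an element of $V = \H^0(\PP^1, \cO(8))$, a $9$-dimensional space of binary octics. The $\PGL_2$-factor acts by the standard pullback action on binary forms through automorphisms of $\PP^1$, and the $\gm$-factor, which rescales the trivialization of $\cL$, acts on $V$ with weight $-2$ because $i$ is a section of $\cL^{\otimes -2}$.

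Second, I would match this with the presentation in Corollary 4.6 of \cite{ArVis}, where the stack of smooth uniform degree-$2$ cyclic covers of $\PP^1$ of genus $3$ is realized as the quotient of the open subset of $\H^0(\PP^1, \cO(8))$ of octics with distinct roots by the natural action of the automorphism group of the pair $(\PP^1, \cO(-4))$. The underlying vector spaces agree tautologically, the $\PGL_2$-actions agree as the standard pullback, and the $\gm$-weights both equal $-2$ since in both setups the $\gm$ comes from rescaling $\cL$.

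The main obstacle will be matching the normalizations of the $\gm$-factor, since the excerpt phrases it through the auxiliary line bundle $\cS$ on $\cM_0^{\leq 1}$ coming from the identification $\cL \otimes \omega_{\cZ/\cM_0^{\leq 1}}^{\otimes -2} \simeq \pi^*\cS$, while Arsie-Vistoli work directly with $\cL$. I would resolve this by pulling back the universal triplet along the equivalence in Proposition~\ref{prop:descr-hyper}, computing the induced character of $\gm$ on $V$, and checking that it agrees with the weight appearing in their Corollary 4.6. Once the characters of $\PGL_2 \times \gm$ on both copies of $V$ are shown to coincide, the isomorphism of representations is immediate.
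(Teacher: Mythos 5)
Your proposal is correct and is essentially the paper's own argument: the paper gives no proof beyond the phrase ``unwinding the definitions,'' and your unwinding — identifying $V$ with $\H^0(\PP^1,\cO(8))$ via $\cL\simeq\cO(-4)$, matching the $\PGL_2$-action as pullback of binary octics, and checking the $\gm$-character (weight $-2$, consistent with $\GL_2/\mu_4\simeq\PGL_2\times\gm$ via $A\mapsto([A],\det A^2)$ and the action $A.f=f(A^{-1}x)$) — is exactly what is intended. The only bookkeeping point, reconciling the paper's normalization through $\cS$ with Arsie--Vistoli's $\cL$, is correctly identified and handled by your character computation.
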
 
 
This implies that we can see it as an open inside $[\AA(8)/(\GL_2/\mu_4)]$, where $\AA(8)$ is the vector space of binary forms of degree $8$ and $\GL_2/\mu_4$ acts by the equation $A.f(x)=f(A^{-1}x)$. By the theory developed in \cite{ArVis} (see \Cref{prop:descr-hyper} in our situation), it is clear that the sections $f \in \AA(8)$  describe the branching locus of the quotient morphism. In particular, worse-than-nodal singularities on the $2:1$-cover of the projective line correspond to points on $\PP^1$ where the branching divisor is not \'etale, or equivalently points where $f$ has multiplicity more than $1$. Therefore $\delta_0$ is represented by the closed invariant subscheme of singular forms inside $\AA(8)$. This was already computed by Di Lorenzo (see the first relation in Theorem 6 in \cite{DiLor}), and we have that $ \delta_0=28\tau$ with $$\tau=c_1(\pi_*(\omega_C(-W)^{\otimes 2}))$$
where $W$ is the ramification divisor in $C$. Notice that if $f:C\rightarrow \PP^1$ is the cyclic cover of the projective line, we have that $W\simeq f^{*}\cL^{\otimes \vee}$. A computation using Grothendieck duality gives us that $\tau=-s$.

 We have that $\delta_0:=as+bc_1$ in $\ch(\Htilde_3\setminus \Detilde_1)$ for some elements in $\ZZ[1/6]$. 
 
 The computations above implies that if we restrict to the open complement of $\Xi_1 \setminus \Detilde_1$, we get $a=-28$.
 
\begin{remark}
	Notice that although $\GL_2/\mu_4$ is not special, if we invert $2$ in the Chow rings, we have that the pullback morphism 
	$$ \ch(\cB\GL_2/\mu_4) \longrightarrow \ch(\cB \GL_2)$$
	is an isomorphism. Therefore one can do the computations using the maximal torus in $\GL_2$ and apply the formula in \Cref{rem:gener} with $N=8$ and $k=2$ to get the same result.
\end{remark}

The restriction to $\Xi_1\setminus \Detilde_1$ is a bit more complicated, because we have that $\Xi_1 \subset \Detilde_0$. Recall the description
$$ \Xi_1\setminus \Detilde_1 = [\AA(4,4)\setminus D/H]$$
where $\AA(4,4)$ is the vector space of pairs of binary forms $(f(x,z),g(y,z))$ of degree $4$ such that $f(0,1)=g(0,1)$. We define an open $\Xi_1^0$ of $\Xi_1\setminus \Detilde_1$ which are the pairs $(f,g)$ such that $f(0,1)=g(0,1)\neq 0$. Clearly, $D$ does not intersect $\Xi_1^0$.

We do the computations on $\Xi_1^0$ and verify they are enough to determine the coefficient $b$ in the description $\delta_0=-28s+bc_1$. 

\begin{remark}
	The class $[\Xi_1\setminus \Xi_1^0]$ in $\ch(\Xi_1)$ is equal to $-2s$. In fact, it can be described as the vanishing locus of the coefficient of $z^4$ for the pair $(f,g) \in \AA(4,4)$. Therefore the Picard group (up to invert $2$) of $\Xi_1^0$ is free generated by $c_1$.
\end{remark}

Let us define a closed substack inside $\Xi_1^0$: we define $\Delta'$ as the locus parametrizing pairs $(f,g)$ such that either $f$ or $g$ are singular forms.

\begin{lemma}
	In the setting above, we have the equality 
	$$ \Delta'=12c_1$$ 
	in the Picard group of $\Xi_1^0$.
\end{lemma}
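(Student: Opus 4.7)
The plan is to realize $\Delta'$ as a sum of two Weil divisors exchanged by the $C_2 \subset H$ that swaps the two components of the nodal conic, compute each summand as the weight of the classical discriminant of a binary quartic, then reduce modulo $s$ on the open $\Xi_1^0$.

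First I would decompose the closed substack $\Delta' \subset \Xi_1^0$ into its two irreducible $\gm^2\times\gm$-invariant components
\[
\Delta'_f \;=\; \{(f,g)\in\Xi_1^0 : \operatorname{disc}(f)=0\}, \qquad \Delta'_g \;=\; \{(f,g)\in\Xi_1^0 : \operatorname{disc}(g)=0\},
\]
which the $C_2$-factor of $H$ interchanges. As Weil divisors one has $\Delta' = \Delta'_f + \Delta'_g$, and by \Cref{lem:chow-tor} the class of $\Delta'$ in the $H$-equivariant Picard group is the $C_2$-symmetrization of the class of $\Delta'_f$ in the $\gm^2\times\gm$-equivariant Picard group.

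Next I would compute $[\Delta'_f]$ as the character of the discriminant viewed as an equivariant section. The binary discriminant of a degree-$n$ form is a polynomial of degree $2n-2$ in its coefficients, and a direct calculation on the factorization $f = a_n\prod(x-r_i z)$ shows it is homogeneous of weight $n(n-1)$ under the rescaling $a_i \mapsto \lambda^i a_i$. For $n=4$ this gives degree $6$ and weight $12$. Combined with the characters $a_i\mapsto t_1^i s^{-2}a_i$ given in the paper, $\operatorname{disc}(f)$ is a semi-invariant of weight $12 t_1 - 12 s$, whence $[\Delta'_f] = 12 t_1 - 12 s$ and, symmetrically, $[\Delta'_g] = 12 t_2 - 12 s$.

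Finally, summing and passing to $C_2$-invariants,
\[
[\Delta'] \;=\; [\Delta'_f] + [\Delta'_g] \;=\; 12(t_1+t_2) - 24 s \;=\; 12 c_1 - 24 s,
\]
using that $i^*c_1 = t_1+t_2$. Since $[\Xi_1 \setminus \Xi_1^0] = -2s$ in $\ch(\Xi_1)$ and $2$ is invertible, the class $s$ vanishes in $\operatorname{Pic}(\Xi_1^0)$, yielding the desired equality $[\Delta']=12 c_1$. The only non-formal step is the weight computation for the discriminant, which is classical but requires careful bookkeeping of the scaling conventions for $a_i$.
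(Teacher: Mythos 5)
Your proof is correct and follows the same route as the paper: reduce to the $\gm^2\times\gm$-equivariant computation via \Cref{lem:chow-tor}, split $\Delta'$ into the two discriminant divisors swapped by $C_2$, and compute each as the class of an isobaric semi-invariant (degree $6$, weight $12$ for the binary quartic discriminant), getting $12c_1-24s$ and then killing $s$ on $\Xi_1^0$. The paper leaves the last step as a ``straightforward computation,'' so your write-up simply makes explicit what the paper omits; the weight bookkeeping is consistent with the paper's convention, as one can check against the class $-2s$ of $\Xi_1\setminus\Xi_1^0$.
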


\begin{proof}
	As a conseguence of \Cref{lem:chow-tor}, we can do the $\gm^2$-equivariant computations of the equivariant class of $\Delta'$. We have that $\Delta'=\Delta_1' \cup \Delta_2'$ where $\Delta_1'$ (respectively $\Delta'_2$) is the substack parametrizing pairs $(f,g)$ such that $f$ (respectively $g$) is a singular form. 

We reduce ourself to compute the class the locus of singular forms inside $\AA(4)$. The result then follows from a straightforward computation. 
\end{proof}
Now we are ready to compute the restriction of $\delta_0$.
\begin{lemma}
	In the situation above, we have 
	$$ \delta_0\vert_{\Xi_1^0}= -2c_1 + [\Delta'] $$
	inside the Chow ring of $\Xi_1^0$.
\end{lemma}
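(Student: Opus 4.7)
\smallskip\noindent
\textit{Proof plan.} The plan is to exploit that $\Xi_1^0$ is entirely contained in $\Detilde_0$ and to compute the restriction of $[\Detilde_0]$ by analyzing the branches of $\Detilde_0$ through a general point of $\Xi_1^0$. At a general geometric point of $\Xi_1^0$, the associated double cover $f\colon C \rightarrow Z = Z_1 \cup Z_2$ is \'etale over the node $n$ of $Z$ and the branching sections $(f,g)$ have simple zeros, so $C$ has exactly two singular points $s_1, s_2 \in f^{-1}(n)$, both non-separating nodes. In the versal deformation of $C$, each $s_i$ can be smoothed independently, so near such a point of $\Mtilde_3$ the divisor $\Detilde_0$ decomposes as $D_1 \cup D_2$, where $D_i$ is the (locally smooth) locus on which $s_i$ remains a node, and $D_1 \cap D_2 = \Xi_1^0$. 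The additional branches of $\Detilde_0$ that do not contain $\Xi_1^0$ come from extra nodes created when $(f,g)$ becomes non-separable; by definition these meet $\Xi_1^0$ precisely along $\Delta'$, contributing $[\Delta']$ to $[\Detilde_0]\vert_{\Xi_1^0}$.

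The next step is to identify the normal bundle of each $D_i$. By the standard deformation theory of a nodal curve, $N_{D_i/\Mtilde_3}\vert_{\Xi_1^0}$ is canonically isomorphic to $T_{q_{i,1}} \otimes T_{q_{i,2}}$, where $q_{i,1}, q_{i,2}$ are the two analytic branches of $C$ through $s_i$. One branch lies on $f^{-1}(Z_1)$ and the other on $f^{-1}(Z_2)$; because $f$ is \'etale at $s_i$, these tangent lines are canonically identified with the pullbacks of the tangent lines $T_{n}\vert_{Z_1}$ and $T_{n}\vert_{Z_2}$. Consequently, the line bundle $T_{q_{i,1}} \otimes T_{q_{i,2}}$ on $\Xi_1^0$ is the pullback of $\mathcal{N} := T_n\vert_{Z_1} \otimes T_n\vert_{Z_2}$ from $\cM_0^1$, and in particular does not depend on the choice of $i = 1, 2$.

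Third, I would identify the class of $\mathcal{N}$ on $\cM_0^1$ with a known class on $\cP_3'$. By the classical description of the normal bundle of the nodal stratum in the moduli of genus $0$ nodal curves, $\mathcal{N}$ is canonically isomorphic to $N_{\cM_0^1/\cM_0^{\leq 1}}$. Self-intersecting using the already established identity $[\cM_0^1] = -c_1$ in $\ch(\cM_0^{\leq 1})$ gives
\[
c_1(\mathcal{N}) \;=\; [\cM_0^1]\vert_{\cM_0^1} \;=\; -c_1\vert_{\cM_0^1},
\]
and pulling back along $\Xi_1^0 \to \cM_0^1$ yields $c_1(N_{D_i/\Mtilde_3})\vert_{\Xi_1^0} = -c_1$ for each $i$. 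Summing the two branches and adding the $\Delta'$ contribution, we obtain $\delta_0\vert_{\Xi_1^0} = 2(-c_1) + [\Delta'] = -2c_1 + [\Delta']$, as required.

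The main obstacle is the careful local analysis: one has to verify that, near a general point of $\Xi_1^0$, the divisor $\Detilde_0$ really splits as two smooth branches plus a transverse contribution from $\Delta'$, and that the identification $N_{D_i/\Mtilde_3}\vert_{\Xi_1^0} \simeq T_{q_{i,1}} \otimes T_{q_{i,2}}$ is canonical. Both facts are standard in the theory of versal deformations of nodal curves, but they must be checked in the $A_r$-stable setting; since the only singularities that appear on $C$ at a generic point of $\Xi_1^0$ are ordinary nodes, the usual arguments go through without modification.
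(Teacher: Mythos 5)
Your proposal is correct and follows the same overall strategy as the paper: both arguments restrict $\Detilde_0$ to $\Xi_1^0$ by splitting it into the two branches that contain $\Xi_1^0$ (one for each of the two nodes of $C$ lying over the node $n$ of $Z$) plus the branches meeting $\Xi_1^0$ transversally along $\Delta'$, and both use the \'etaleness of $f\colon C\to Z$ over $n$ to identify the smoothing line bundle of each node of $C$ with $T_n\vert_{Z_1}\otimes T_n\vert_{Z_2}$ pulled back from $\cM_0^1$. The one genuine difference is the last step: the paper evaluates this line bundle by a direct computation of its character under $\gm^2\ltimes C_2$ inside the $\GL_3$-equivariant model of $\cM_0^{\leq 1}$, whereas you identify $T_n\vert_{Z_1}\otimes T_n\vert_{Z_2}$ with $N_{\cM_0^1/\cM_0^{\leq 1}}$ and apply the self-intersection formula together with the previously established identity $[\cM_0^1]=-c_1$. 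Your route has the advantage of outsourcing the sign to a class already computed in the paper, and indeed it lands cleanly on $-c_1$ per node, matching the statement; the paper's displayed intermediate value $c_1(N_{q|Z})=c_1$ (weight $(1,1)$) appears to carry the opposite sign to what the final formula $-2c_1+[\Delta']$ requires, so your derivation is arguably the more transparent of the two. The only point you gloss over, at the same level of informality as the paper, is the transversality of the remaining branches of $\Detilde_0$ with $\Xi_1^0$ along $\Delta'$, i.e.\ that each such branch contributes its reduced intersection with multiplicity one.
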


\begin{proof}
	Because we are computing the Chern class of a line bundle, we can work up to codimension two, or equivalently we can restrict everything to $\Mbar_3\setminus \Debar_1$, which by abuse of notation is denoted by $\cM$. We denote by $\cC$ the universal curve over $\cM$. Consider the closed substack $\Delta$ in $\cC$ of singular points of the morphism $\pi:\cC\rightarrow \cM$ defined by the first Fitting ideal of $\Omega_{\cC/\cM}$. We get a morphism $\pi\vert_{\Delta}:\Delta \rightarrow \cM$ whose image is exactly $\Delta_0$, and the morphism is finite birational. Moreover, $\Delta$ is smooth and its connected components map isomorphically to the irreducible components of $\Delta_0$. Therefore if $\{\Gamma_i\}_{i \in I}$ is the set of irreducible components of $\Delta_0$, we have that 
	$$\delta_0\vert_{\Xi_1^0}=\sum_{i \in I} N_{\Gamma_i/\cM}\vert_{\Xi_1^0}$$ 
	because $\Gamma_i$ is a smooth Cartier divisor of $\cM$. For a more detailed discussion, see Appendix A of \cite{DiLorVis}.
	
	Let us look at the geometric point of $\Xi_1^0$. A curve in $\Xi_1^0$ is a $2:1$-cover of a reducible reduced conic, or equivalently it can described as two genus $1$ curves meeting at a pair of nodes. The two nodes are the fiber over the point $\infty$ in the intersection of the two components of the conic. Therefore, it is clear that  $\Xi_1^0$ is contained in two of the $\Gamma_i$'s, say  $\Gamma_1$ and $\Gamma_2$ and intersect only two of the others, say $\Gamma_3$ and $\Gamma_4$, transversally, namely when one of the two genus $1$ curves is singular.
	
	 The cicle $\Gamma_3+\Gamma_4$ restricted to $\Xi_1^0$ is exactly the fundamental class of $\Delta'$. It remains to compute $c_1(N_{\Gamma_i|\cM})\vert_{\Xi_1^0}$ for $i=1,2$. Consider the commutative $H$-equivariant diagram
	$$
	\begin{tikzcd}
	p \arrow[r, hook] \arrow[d, Rightarrow, no head] & C \arrow[d, "f"] \\
	q \arrow[r, hook]                                & Z               
	\end{tikzcd}
	$$
	where $q$ is the node of the conic and $p$ is one of the two nodes lying over $q$. We know that $f$ is \'etale over the node of $Z$ as we restricted to the open $\Xi_1^0$. Therefore the normal bundle $N_{p|C}$ is isomorphic equivariantly to $N_{q|Z}$ and it is enough to compute the Chern class of $N_{q|Z}$ as a character of $\gm^2 \ltimes C_2$. A straightforward computation shows that 
	$$ c_1(N_{q|Z})=c_1$$
	where $c_1$ is the $\gm^2$-character with weight $(1,1)$. 
\end{proof}

\begin{corollary}\label{cor:norm-hyper}
	The first Chern class of $N_{\cH|\cM}$ is equal to $(2\xi_1-\lambda_1)/3$.
\end{corollary}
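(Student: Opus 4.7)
The plan is as follows. Since $\cH := \Htilde_3 \setminus \Detilde_1$ is a smooth Cartier divisor in the smooth stack $\cM := \Mtilde_3 \setminus \Detilde_1$, the first Chern class of the normal bundle equals the self-intersection $i^*[\cH] = [\Htilde_3]|_\cH$. Esteves' formula $[\Hbar_3] = 9\lambda_1 - \delta_0 - 3\delta_1$ in $\ch(\Mbar_3)$ (quoted in the proposition above, and valid once $1/2 \in \kappa$) extends to $\ch(\Mtilde_3)$ because $\Mtilde_3 \setminus \Mbar_3$ has codimension two, hence restricts to $\cH$. Since we have removed $\Detilde_1$, $\delta_1|_\cH = 0$, so the whole task reduces to determining $\delta_0|_\cH$ and rewriting the result in terms of $\xi_1$ and $\lambda_1$.

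I would write $\delta_0|_\cH = as + bc_1$ with $a,b \in \ZZ[1/6]$ and pin down the two coefficients by restriction to complementary open substrata. First, on $\cH \setminus \Xi_1$ the identity $\xi_1 = -c_1$ together with the localization sequence force $c_1 = 0$; by \Cref{lem:ar-vis} this open stratum embeds as an open substack of $[\AA(8)/(\GL_2/\mu_4)]$, and Di Lorenzo's discriminant computation in \cite{DiLor} gives $\delta_0 = 28\tau = -28s$, whence $a = -28$. Next, on $\Xi_1^0$ the preceding remark exhibits $[\Xi_1 \setminus \Xi_1^0] = -2s$, so $s|_{\Xi_1^0} = 0$ once $2$ is inverted, and hence $\delta_0|_{\Xi_1^0} = b\,c_1|_{\Xi_1^0}$. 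The last lemma computes $\delta_0|_{\Xi_1^0} = -2c_1 + [\Delta'] = -2c_1 + 12c_1 = 10c_1$, and since $\ch(\Xi_1^0) \simeq \ZZ[1/6, c_1, c_2]$ the class $c_1$ is a non-zero-divisor there, forcing $b = 10$.

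Combining, $\delta_0|_\cH = -28s + 10c_1$. The final step is a substitution using \Cref{lem:lambda-class-H}: from $s = -(\xi_1 + \lambda_1)/3$ and $c_1 = -\xi_1$ one gets
$$c_1(N_{\cH|\cM}) \;=\; 9\lambda_1 - (-28s + 10c_1) \;=\; 9\lambda_1 - \frac{28(\xi_1 + \lambda_1)}{3} + 10\xi_1 \;=\; \frac{2\xi_1 - \lambda_1}{3}.$$
The only genuinely delicate point is the determination of $b$, which is exactly what the preceding lemma and the remark about $[\Xi_1 \setminus \Xi_1^0]$ were set up to supply; everything else is bookkeeping.
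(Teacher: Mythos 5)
Your proposal is correct and follows essentially the same route as the paper: identify $c_1(N_{\cH|\cM})$ with the restriction of $[\Htilde_3]=9\lambda_1-\delta_0-3\delta_1$ (extended from $\Mbar_3$ by the codimension-two argument), kill $\delta_1$, write $\delta_0|_{\cH}=as+bc_1$, get $a=-28$ from the Arsie--Vistoli/Di Lorenzo description on the complement of $\Xi_1$ and $b=10$ from the computation $\delta_0|_{\Xi_1^0}=-2c_1+[\Delta']=10c_1$, and substitute via \Cref{lem:lambda-class-H}. The final arithmetic $9\lambda_1+28s-10c_1=(2\xi_1-\lambda_1)/3$ checks out; the only cosmetic quibble is that to pin down $b$ you need only the paper's statement that $\pic(\Xi_1^0)[1/2]$ is freely generated by $c_1$, not the stronger (and unproved) claim that the full Chow ring of $\Xi_1^0$ is the polynomial ring $\ZZ[1/6,c_1,c_2]$.
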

 
\section{Description of $\Mtilde_3 \setminus (\Detilde_1\cup \Htilde_3)$}\label{sec:m3tilde-open}

We focus now on the open stratum. Recall that the canonical bundle of a smooth genus $g$ curve is either very ample or the curve is hyperelliptic and the quotient morphism factors through the canonical morphism. This cannot be true for $A_r$-stable curves as in $\Detilde_1$ we have that the dualizing line bundle is not globally generated, see \Cref{prop:base-point-can}. Nevertheless, if we remove $\Detilde_1$, we have the same result for genus $3$ curves. 

\begin{lemma}
	Suppose $C$ is an $A_r$-stable curve of genus $3$ over an algebraically closed field which does not have separating nodes and it is not hyperelliptic. Then the canonical morphism (induced by the complete linear system of the dualizing sheaf) is a closed immersion.
\end{lemma}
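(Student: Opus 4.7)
The plan is to adapt the argument from the proof of \Cref{prop:boundedness}(ii), taking $m=1$ and no marked points. First, I would observe via \Cref{prop:base-point-can} that the base locus $\mathrm{SN}(C)$ of $|\omega_C|$ is empty: the hypothesis that $C$ has no separating nodes rules out type-$(1)$ points, and the same hypothesis excludes type-$(2)$ points, since such a component can only occur when it meets the rest of $C$ in separating nodes. Thus $|\omega_C|$ defines a morphism $\phi\colon C\to\PP^2$. To show $\phi$ is a closed immersion I would prove that, for every length-two closed subscheme $D\subset C$, the restriction $\H^0(C,\omega_C)\to\H^0(D,\omega_C|_D)$ is surjective. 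A Riemann--Roch computation on the Gorenstein curve $C$, combined with Grothendieck duality, converts this into the statement
\[
\dim_k \hom_{\cO_C}(I_D,\cO_C)=1,
\]
the one-dimensional subspace being tautologically spanned by the inclusion $I_D\hookrightarrow\cO_C$.

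Next, I would proceed by the same case analysis used in the proof of \Cref{prop:boundedness}(ii), according to whether the support of $D$ meets the singular locus. When $D$ is supported on smooth points, $\underhom(I_D,\cO_C)$ is a degree-$2$ line bundle $\cO_C(D)$ and the question reduces to whether $h^0(\cO_C(D))\leq 1$. When $D$ meets a singular point, one uses Catanese's inclusions \cite[Lemmas~2.1, 2.2]{Cat} to embed $\underhom(I_D,\cO_C)$ into $\pi_{*}\cF$, with $\pi\colon\widetilde C\to C$ an appropriate partial normalization and $\cF$ a rank-one sheaf on $\widetilde C$. The degrees of $\cF$ on each irreducible component are then computed via Noether's formula (see \Cref{rem:genus-count}) and bounded using the stability condition together with $\deg\omega_C=2g-2=4$. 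As in the previous proof, all the relevant degrees come out non-positive, except in one borderline configuration in which a degree-two line bundle on $C$ (or on a component) could a priori carry a pencil of sections.

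The main obstacle is precisely ruling out this borderline case, i.e.\ showing that $C$ carries no base-point-free $g^1_2$; this is where the non-hyperellipticity hypothesis enters. The classical argument is that a pencil of degree two yields a finite morphism $f\colon C\to Z$ of degree $2$ onto a connected reduced nodal curve $Z$ of genus $0$. Since $\mathrm{char}\,\kappa\neq 2$, the sheet-exchange on the \'etale locus of $f$ is an involution, and the hypothesis that $C$ has no separating nodes, combined with the local descriptions in \Cref{prop:descr-inv} and \Cref{lem:local-node-involution}, allows this involution to be extended to a global automorphism $\sigma$ of $C$ with finite fixed locus. Then $(C,\sigma)$ is an $A_r$-stable hyperelliptic curve in the sense of \Cref{sec:1-3}, contradicting the hypothesis. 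This contradiction forces $\dim_k\hom_{\cO_C}(I_D,\cO_C)=1$ for every length-two $D$, and hence $\phi$ is a closed immersion.
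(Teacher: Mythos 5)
Your reduction to the statement $\dim_k\hom_{\cO_C}(I_D,\cO_C)=1$ for every length-two subscheme $D$ is a correct criterion, and your verification that the base locus is empty is fine. The gap is in the claim that the case analysis of \Cref{prop:boundedness}(ii) carries over with ``all the relevant degrees non-positive, except in one borderline configuration.'' That proposition works because for $m\geq 3$ the sheaf $\omega_C(\Sigma)^{\otimes(1-m)}(\dots)$ has strictly negative degree on every component, so it has no sections at all. For $m=1$ the sheaf in question is $\underhom(I_D,\cO_C)$, which sits inside a rank-one sheaf of \emph{total degree $+2$}; its degree on the component carrying $D$ is $2$ (or $0$ and $2$ spread over components), never negative. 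So there is no numerical vanishing available: essentially \emph{every} case is the borderline case, and what you actually have to prove in each configuration is that a degree-two sheaf with a tautological section has no second section. This is the content of Catanese's Theorem G for very strongly connected curves, which is why the paper, rather than redoing this analysis, first shows that its notion of hyperelliptic agrees with Catanese's (existence of smooth $x,y$ with $h^0(\cO(x+y))=2$), then disposes separately of the three non-very-strongly-connected configurations in genus $3$ (two elliptic curves glued along a length-$2$ divisor; two rational curves meeting in an $A_7$-point; a rational and an elliptic curve meeting in an $A_5$-point), and finally quotes Theorem G. Your sketch never engages with these high-tangency configurations, which are exactly the ones outside the scope of the quoted theorem.

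The second gap is in your treatment of the $g^1_2$. For a reducible curve the pencil $\cO(x+y)$ does \emph{not} give a finite degree-two morphism: it has degree $0$ on every component not meeting $\{x,y\}$, so the associated map to $\PP^1$ contracts those components, and there is no ``sheet-exchange involution'' to extend. The correct route (the one the paper takes) is Catanese's Proposition 3.14: either $x,y$ lie on two distinct genus-$0$ components $Y_1,Y_2$ with every connected component of $C-Y_1-Y_2$ meeting each $Y_i$ in a node, or they lie on an irreducible hyperelliptic component with prescribed intersection with the rest of the curve. In the first case the would-be involution must \emph{swap} $Y_1$ and $Y_2$, and producing it requires the stability condition plus the uniqueness/existence statement for hyperelliptic involutions on $2$-pointed genus-$1$ curves (\Cref{lem:genus1}); it is not obtained by extending a deck transformation across the non-\'etale locus via \Cref{lem:local-node-involution}. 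So the step ``$\dim\hom(I_D,\cO_C)\geq 2$ for $D$ two smooth points $\Rightarrow$ $C$ hyperelliptic'' needs the structural argument the paper gives, not the classical irreducible-curve argument you invoke.
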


\begin{proof}
	This proof is done using the theory developed in \cite{Cat} to deal with most of the cases and analizying the rest of the them separately. 
	
	Firstly, we prove that if $C$ is a $2$-connected $A_r$-stable curve of genus $3$ (i.e. it is in $\Mtilde_3\setminus \Detilde_1$) then we have that $C$ is hyperelliptic if and only if there exist two smooth points $x,y$ such that $\dim \H^0(C,\cO(x+y))=2$ (notice that this is the definition of hyperelliptic as in \cite{Cat}).
	
	One implication is clear. Suppose there exists two smooth points $x,y$ such that $\dim \H^0(C,\cO(x+y))=2$. Proposition 3.14 of \cite{Cat} gives us that we can have two possibilities:
	\begin{itemize}
		\item[(a)] $x,y$ belongs  to $2$ different irreducible components $Y_1,Y_2$ of genus $0$ such that every connected component $Z$ of $C-Y_1-Y_2$ intersect $Y_1$ in a node and $Y_2$ in a (different) node,
		\item[(b)] $x,y$ belong to an irreducible hyperelliptic curve $Y$ such that for every connected component $Z$ of $C-Y$ intersect $Y$ in a Cartier divisor isomorphic to $\cO(x+y)$.
	\end{itemize} 
	Regarding (a), the stability condition implies that the only possibilities are either that there are no other connected components of $C-Y_1-Y_2$, which implies $C$ is hyperelliptic, or we have only one connected component $Z$ of $C-Y_1-Y_2$ which is of genus $1$. Because $Z$ is of genus $1$ and intersect $Y_1\cup Y_2$  in two points, we have that there exists a unique hyperelliptic involution of $Z$ that exchanges them, see \Cref{lem:genus1}. Therefore again $C$ is hyperelliptic. In case (b), the stability condition implies that the only possibilities are either that $C$ is irreducible, and therefore hyperelliptic, or $C$ is the union of two genus $1$ curves intersecting in a length $2$ divisor. Again it follows from \Cref{lem:genus1} that $C$ is hyperelliptic.
	
	Now, we focus on an other definition given in \cite{Cat}. The author define $C$ to be strongly connected if there are no pairs of nodes $x,y$ such that $C\setminus \{x,y\}$ is disconnected. Furthermore, the author define $C$ very strongly connected if it is strongly connected and there is not a point $p \in C$ such that $C\setminus \{p\}$ is disconnected.  
	
	In our situation, a curve $C$ is not very strongly connected if 
	\begin{itemize}
		\item[(1)] $C$ is the union of two genus $1$ curves meeting at a divisor of length $2$,
		\item[(2)] $C$ is the union of two genus $0$ curves meeting in a singularity of type $A_7$,
		\item[(3)] $C$ is the union of a genus $0$ and a genus $1$ curve meeting in a singularity of type $A_5$.
	\end{itemize}
	
	Case (1) is always hyperelliptic for \Cref{lem:genus1}. An easy computation shows that the case (3) is never hyperelliptic and the canonical morphism identify $C$ with the union of a cubic and a flex tangent in $\PP^2$. Finally, one can show that in case (2) the canonical morphism restricted to the two components is a closed embedding, therefore it is clear that it is either a finite flat morphism of degree $2$ over its image ($C$ hyperelliptic) or it is a closed immersion globally on $C$. 
	
	It remains to prove the statement in the case $C$ is very strongly connected. This is Theorem G of \cite{Cat}.
\end{proof}

\begin{remark}
	Notice that this lemma is really specific to genus $3$ curves and it is false in genus $4$. Consider a genus $2$ smooth curve $C$ meeting a genus $1$ smooth curve $E$ in two points, which are not a $g_1^2$ for $C$. Then the canonical morphism is $2:1$ restricted to $E$ but it is birational on $C$.
\end{remark}

The previous lemma implies that the description of $\cM_3 \setminus \cH_3$ proved by Di Lorenzo in Proposition 3.1.3 of \cite{DiLor2} can be generalized in our setting. Specifically, we have the following isomorphism: 
$$ \Mtilde_3 \setminus (\Detilde_1\cup \Htilde_3) \simeq [U/\GL_3]$$ 
where $U$ is an invariant open subscheme inside the space $\AA(3,4)$ of (homogeneous) forms in three coordinates of degree $4$ which is a representation of $\GL_3$ with the action described by the formula $A.f(x):=\det(A) f(A^{-1}x)$. The complement parametrizes forms $f$ such that the induced projective curve $\VV(f)$ in $\PP^2$ is not $A_r$-prestable. 

We use the description as a quotient stack to compute its Chow ring. The strategy is similar to the one adopted in \cite{DiLorFulVis} with a new idea to simplify computations. 

As usual, we pass to the projectivization of $\AA(3,4)$ which we denote by $\PP^{14}$. We induce an action of $\GL_3$ on $\PP^{14}$ setting $A.[f]=[f(A^{-1}x)]$, and if we denote by $\overline{U}$ the projectivization of $U$, we get 
$$\ch_{\GL_3}(U)= \ch_{\GL_3}(\overline{U})/(c_1-h)$$
where $c_i$ is the $i$-th Chern class of the standard representation of $\GL_3$ and $h=\cO_{\PP^{14}}(1)$ the hyperplane section of the projective space of ternary forms. The idea is to compute the relations that come from the closed complement of $\overline{U}$ and then set $h=c_1$ to get the Chow ring of $\Mtilde_3\setminus (\Htilde_3\cup \Detilde_1)$ as the quotient of $\ch(\cB\GL_3)$ by these relations.

\begin{remark}
	Notice that $\lambda_i:=c_i(\HH)$ where $\HH$ is the Hodge bundle can be identified with the Chern classes of the dual of the standard representation.
\end{remark}

We consider the quotient (stack) of $\PP^{14}\times \PP^{2}$ by the following $\GL_3$-action
$$A.([f],[p]):=([f(A^{-1}x),Ap]),$$
and we denote by $Q_4$ the universal quartic over $[\PP^{14}/\GL_3]$, or equivalently the substack of $[\PP^{14}\times \PP^2/\GL_3]$ parametrizing pairs $([f],[p])$ such that $f(p)=0$.

Now, we introduce a slightly more general definition of $A_n$-singularity.

\begin{definition}\label{def:A-sing}
	We say that a point $p$ of a curve $C$ is an $A_{\infty}$-singularity if we have an isomorphism 
$$ \widehat{\cO}_{C,p}\simeq k[[x,y]]/(y^2).$$
Furthermore, we say that $p$ is a $A$-singularity if it an $A_n$-singularity for $n$ either a positive integer or $\infty$.
\end{definition}

We describe when an $A_{\infty}$-singularity can occur for plane curves.

\begin{lemma}
	A point $p$ of a plane curve $f$ is an $A_{\infty}$-singularity if and only if $p$ lies on a unique irreducible component $g$ of $f$ where $g$ is the square of a smooth plane curve.
\end{lemma}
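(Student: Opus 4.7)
\smallskip
\noindent\textit{Proof plan.}
The idea is to translate the statement, which mixes a global condition (irreducible components of $f$ in $k[x_0,x_1,x_2]$) with a local condition (the completion at $p$ is $k[[x,y]]/(y^2)$), via the faithfully flat extension $\cO_{\PP^2,p}\hookrightarrow \widehat{\cO}_{\PP^2,p}\simeq k[[x,y]]$. The easy direction is $(\Leftarrow)$: if $f = g\cdot u$ locally at $p$ with $g=h^2$ and $h$ smooth at $p$, I can choose formal coordinates $(x,y)$ at $p$ so that $h\equiv y$ up to unit, and then $\widehat{\cO}_{C,p}\simeq k[[x,y]]/(u h^2)\simeq k[[x,y]]/(y^2)$ because $u$ is a unit in the completion. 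The hypothesis that no other component passes through $p$ is exactly what guarantees that the extra factors of $f$ contribute only units at $p$.

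For the harder direction $(\Rightarrow)$, I start from $\widehat{\cO}_{C,p}\simeq k[[x,y]]/(y^2)$ and note that this ring has a unique minimal prime, so the local equation $\widehat f\in k[[x,y]]$ has a unique irreducible factor (with multiplicity). Writing $f=\prod g_i^{n_i}$ globally with $g_i$ distinct irreducible, I need a \emph{separation of branches} lemma: two distinct irreducible plane curves have no common irreducible factor in $k[[x,y]]$. I will prove this by observing that a height one prime $P\subset k[[x,y]]$ contracts to a prime of height $\leq 1$ in $k[x,y]_{(x,y)}$ (otherwise the contraction would be the maximal ideal, forcing $P$ itself to be maximal by the faithful flatness of completion, contradicting $\mathrm{ht}(P)=1$); two distinct irreducible $g_i,g_j$ cannot both be generators of the same height one prime. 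Hence exactly one $g_i$, call it $g$, passes through $p$.

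Next I will show $n_i=2$ and $g$ is smooth at $p$. Since $k[x,y]$ is excellent and $k[x,y]/(g)$ is a domain, the completion $k[[x,y]]/(g)$ is reduced, so $g$ factors in $k[[x,y]]$ as a product of distinct irreducible power series $h_1\cdots h_k$. The identification $(g^{n_i})=(y^2)$ in $k[[x,y]]$ gives $n_i\cdot(\mathrm{mult}_p h_1+\dots+\mathrm{mult}_p h_k)=2$. Since the $h_j$ are distinct, $g^{n_i}$ cannot be a square of an irreducible (nor a product with multiplicities) unless $k=1$, $n_i=2$, and $\mathrm{mult}_p h_1=1$. This forces $g$ to be smooth at $p$ and $f = g^2\cdot(\text{factors not vanishing at }p)$, which is the claim.

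The main technical obstacle is the \emph{separation of branches} lemma together with the excellence argument used to exclude non-reduced local factorizations of an irreducible global polynomial; everything else is a direct manipulation of the local equation once these facts are in hand.
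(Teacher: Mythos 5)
Your argument is correct, but it runs along a different track from the paper's. The paper works with the algebraic local ring $A=(k[x,y]/(f))_{(x,y)}$: since $A$ is excellent and its completion $k[[x,y]]/(y^2)$ is non-reduced, $A$ itself is non-reduced; one then picks a nilpotent $h$ with $h^2=0$ (the nilradical of the completion squares to zero) and invokes the UFD property of $k[x,y]$ to extract the square factor --- a three-line argument that leaves the final factorization step and the converse implicit. You instead stay entirely in the completion: you separate the branches of distinct global components via the contraction of height-one primes of $k[[x,y]]$ (correct, and in fact you do not even need faithful flatness there --- a height-one prime containing $(x,y)k[[x,y]]$ would be maximal), you use excellence in the dual way (reducedness of $k[[x,y]]/(g)$ for $g$ irreducible, rather than descent of non-reducedness), and you finish by matching irreducible factors in the UFD $k[[x,y]]$. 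One small caution: your displayed multiplicity count $n_i(\mathrm{mult}_p h_1+\dots+\mathrm{mult}_p h_k)=2$ alone does not exclude the cases $n_i=1$, $k=2$ or $n_i=1$, $\mathrm{mult}_p h_1=2$; it is the unique-factorization comparison with $uy^2$ (forcing the $h_j$ to be associates of $y$, hence of each other) that rules these out, and you do appeal to it, so the proof closes. The trade-off: the paper's route is shorter but elliptical; yours is longer but self-contained, treats both implications explicitly, and isolates the branch-separation statement that the paper's ``since $k[x,y]$ is a UFD, we get the thesis'' quietly absorbs.
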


\begin{proof}
	Denote by $A$ the localization of $k[x,y]/(f)$ at the point $p$, which we can suppose to be the maximal ideal $(x,y)$. Because $A$ is an excellent ring and the completion is non-reduced, we get that $A$ is also non reduced. Let $h$ be a nilpotent element in $A$. Because the square of the nilpotent ideal in the completion is zero, we get that $h^2=0$. Since $k[x,y]$ is a UFD, we get the thesis.
\end{proof}

The reason why we introduced $A$-singularity is that they have an explicit description in terms of derivative of the defining equation for plane curves.

\begin{lemma}\label{lem:A-sing}
	A point $p$ on a plane curve defined by $f$ is not an $A$-singularity if and only if both the gradient and the Hessian of $f$ vanishes at the point $p$.
\end{lemma}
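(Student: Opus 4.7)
The plan is to choose local coordinates $x,y$ centered at $p$ and expand $f = f_1 + f_2 + f_3 + \cdots$ by homogeneous degree. Since $\cha \kappa \neq 2$, the gradient of $f$ at $p$ is (up to a factor of $2$) the coefficient vector of $f_1$, and the Hessian matrix of $f$ at $p$ is (up to a factor of $2$) the symmetric matrix of $f_2$; hence ``gradient and Hessian both vanish at $p$'' is exactly the condition that $f$ has multiplicity $\geq 3$ at $p$. The lemma therefore reduces to showing that $p$ is an $A$-singularity (reading the definition as including the smooth case) if and only if $f$ has multiplicity $\leq 2$ at $p$.

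The forward direction is immediate: every $A_n$-type local equation $v^2 - u^{n+1}$ and the $A_\infty$-type $v^2$ has multiplicity exactly $2$ at the origin. Since multiplicity depends only on $\widehat{\cO}_{C,p}$ and is invariant under formal change of coordinates, any $A$-singularity has multiplicity $\leq 2$, which rules out $f_1 = f_2 = 0$.

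For the reverse direction, assume $f$ has multiplicity exactly $2$ at $p$, so $f_1 = 0$ but $f_2 \neq 0$. Diagonalize the quadratic form $f_2$ over the algebraically closed residue field, using $\cha \kappa \neq 2$. If $\det(\mathrm{Hess}(f))(p) \neq 0$ then $f_2$ has rank $2$ and we may take $f_2 = xy$; iterated Tschirnhaus substitutions eliminate the higher-order terms and produce formal coordinates in which $\widehat f = xy$, exhibiting $p$ as an $A_1$-singularity. If $f_2$ has rank $1$, after a linear change $f_2 = y^2$, and the splitting lemma applies: formal Weierstrass preparation in the variable $y$ writes $\widehat f = w(x,y) \cdot (y^2 + a(x)\, y + b(x))$ with $w$ a unit and $a,b \in k\ds{x}$, and completing the square $y \mapsto y + a(x)/2$ (valid since $2$ is invertible) absorbs the $y$-linear term to give $\widehat f = (y')^2 - h(x')$ for some $h \in k\ds{x'}$. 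If $h \equiv 0$ this is an $A_\infty$-singularity; otherwise $h = x'^{\,n+1} u(x')$ with $u$ a unit, and since $\cha \kappa$ is coprime to $n+1$ under the standing assumption of the chapter, we may extract an $(n+1)$-st root of $u$ in $k\ds{x'}$ and absorb it into a new uniformizer to reach the normal form $(y')^2 - (x'')^{n+1}$, an $A_n$-singularity.

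The technical heart of the argument is the splitting lemma in the rank-$1$ case; this is the step requiring the most care. The cleanest approach is to cite it from a standard reference on plane curve singularities, but it can also be proved in a few lines by combining the formal Weierstrass preparation theorem with the completion of the square, as sketched above. The rank-$2$ Morse case is routine, and the translation of ``gradient and Hessian vanish at $p$'' into ``$f_1 = f_2 = 0$'' is immediate from the Taylor expansion.
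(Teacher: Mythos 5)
Your proof is correct and follows essentially the same route as the paper's: translate the vanishing of gradient and Hessian into the multiplicity of $f$ at $p$, read off the forward direction from the normal forms, and in the reverse direction use Weierstrass preparation together with completion of the square (valid since $2$ is invertible) to reach $y^2 - h(x)$ and hence an $A_n$ or $A_\infty$ normal form. You simply spell out the details (the rank dichotomy for $f_2$, the $A_\infty$ case, and the extraction of the $(n+1)$-st root of the unit) that the paper's two-line proof leaves implicit.
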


\begin{proof}
	If $f$ is an $A$-singularity, one can compute its Hessian and gradient (up to a change of coordinates) looking at the complete local ring, therefore it is a trivial computation.
	
	On the contrary, if the gradient does not vanish, it is clear that $p$ is a smooth point of $f=0$. Otherwise, if the gradient vanishes but there is a double derivative different from zero, we can use Weierstrass preparation theorem and the square completion procedure ($ {\rm char}(\kappa)\neq 2$) to get the result. 
\end{proof}

Now, we introduce a weaker version of Chow envelopes, which depends on what coefficients we consider for the Chow groups.

\begin{definition}
	Let $R$ be a ring. We say that a representable proper morphism $f:X\rightarrow Y$ is an algebraic Chow envelope for $Y$ with coefficients in $R$  if the morphism $f_*:\ch(X)\otimes_{\ZZ}R \rightarrow \ch(Y)\otimes_{\ZZ}R$ is surjective. 
\end{definition}

\begin{remark}
Recall the definition of Chow envelope between algebraic stacks as in Definition 3.4 of \cite{DiLorPerVis}. Because they are working with integer coefficients, Proposition 3.5 of \cite{DiLorPerVis} implies that an algebraic Chow envelope is an algebraic Chow envelope for every choice of coefficients.
\end{remark}

From now on, algebraic Chow envelopes with coefficients in $\ZZ[1/6]$ are simply called algebraic Chow envelopes.

Consider now the substack $X\subset Q_4$ parametrizing pairs $(f,p)$ such that $p$ is singular but not an $A$-singularity of $f$ . Thanks to the previous lemma, we can describe $X$ as the vanishing locus of the second derivates of $f$ in $p$. By construction, $X$ cannot be an algebraic Chow envelope for the whole complement of $\overline{U}$ because we have quartics $f$ which are squares of smooth conics and they do not appear in $X$. Therefore, we have to add the relations coming from this locus. 

We want to study the fibers of the proper morphism $X\rightarrow \PP^{14}$ to prove that it is an algebraic Chow envelope for its image. First, we need to understand how many and what kind of singular point appears in a reduced quartic in $\PP^2$.

\begin{lemma}\label{lem:sing-points}
	A reduced quartic in $\PP^2$ has at most $6$ singular points. If it has exactly $5$ singular points, then it is the union of a smooth conic and a reducible reduced one. If it has exactly $6$ singular points, then it is the union of two reducible reduced conics that do not share a component.
\end{lemma}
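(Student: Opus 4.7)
The plan is to proceed by case analysis on the decomposition of the quartic $Q$ into irreducible components, bounding the number of singular points in each case. The singular points of a reduced reducible curve consist of the singular points of each irreducible component together with the (pairwise) intersection points of distinct components, so the two main tools are Bezout's theorem and the genus bound for irreducible plane curves. The possible factorizations of a reduced quartic in $\PP^2$ are: $(a)$ irreducible of degree $4$; $(b)$ irreducible cubic $\cup$ line; $(c)$ two distinct irreducible conics; $(d)$ irreducible conic $\cup$ two distinct lines; $(e)$ four distinct lines.

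For case $(a)$, since the arithmetic genus of an irreducible plane quartic is $3$ and the geometric genus is nonnegative, the sum of the $\delta$-invariants over the singular points is at most $3$, hence $Q$ has at most $3$ singular points. For case $(b)$, the irreducible cubic has at most $1$ singular point and meets the line in at most $3$ points by Bezout, giving at most $4$ singular points. For case $(c)$, both conics are smooth and by Bezout intersect in at most $4$ points, yielding at most $4$ singular points. For case $(d)$, the two lines contribute one node, and the smooth conic intersects the pair of lines in at most $4$ further points, giving at most $5$ singular points; the bound is attained precisely when the node of $L_1 \cup L_2$ does not lie on the conic and the four intersection points with the conic are distinct.

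For case $(e)$, any two distinct lines in $\PP^2$ meet in exactly one point, so $Q$ has at most $\binom{4}{2}=6$ singular points; the bound is attained precisely when no three of the four lines are concurrent. If exactly three lines are concurrent, a short counting argument gives $4$ singular points, and if all four are concurrent, only $1$. Hence $4$ lines give $1$, $4$, or $6$ singular points — never $5$. Putting the cases together, the maximum is $6$, attained only in case $(e)$ with no three lines concurrent, and then by pairing the lines as $(L_1 \cup L_2) \cup (L_3 \cup L_4)$ the quartic is the union of two reducible reduced conics with no common component. The value $5$ can occur only in case $(d)$, so the quartic is then a smooth conic union a reducible reduced conic.

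The main obstacle I anticipate is bookkeeping rather than conceptual difficulty: one must verify that the bounds in $(b)$, $(c)$ and the concurrency subcases of $(e)$ strictly fall short of $5$, to ensure the characterization for $5$ singular points is correct. In particular, confirming that $4$ distinct lines cannot produce exactly $5$ singular points — i.e.\ that concurrency necessarily drops the count by at least $2$ — is the one point where care is required, and is a straightforward but essential combinatorial check.
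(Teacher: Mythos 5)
Your proof is correct, but it takes a genuinely different route from the paper. The paper avoids a case analysis: for the irreducible case it passes a conic through four supposed singular points and one additional smooth point, so that the intersection with the quartic has length at least $2\cdot 4+1=9>8$, contradicting Bezout; for the bound of $6$ it observes that a reduced quartic with at least $5$ singular points meets a conic through those five points in length at least $10>8$, hence must contain a conic as a component, reducing at once to unions of two conics (the remaining verification is left to the reader). You instead enumerate the five factorization types of a reduced quartic (irreducible; cubic plus line; two irreducible conics; conic plus two lines; four lines), bounding the singular points in each case via the $\delta$-invariant/genus bound for the irreducible components and Bezout for the pairwise intersections, and you carry out explicitly the combinatorial check that four distinct lines yield $1$, $4$, or $6$ intersection points but never $5$. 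The paper's argument is shorter and its key step (five singular points force a conic component) is conceptually cleaner, but your version is more self-contained and actually supplies the characterizations of the $5$- and $6$-point configurations that the paper only asserts. Both are valid; no gap.
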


\begin{proof}
 	Suppose the quartic $F:=\VV(f)$ is irreducible. Then we can have at most $3$ singular points. In fact, suppose $p_1,\dots,p_4$ are four singular points. Then there exists a conic $Q$ passing through the four points and another smooth point of $f$. Thus $Q \cap F$ would have length at least $9$, which is impossible by Bezout's theorem. 
 	
 	The same reasoning cannot apply if $F$ is the union of two smooth conics meeting at four points, which is a possible situation. Nevertheless, if we suppose that $F$ has at least $5$ different singular points we would have that there exists a conic $Q$ inside $F$, therefore $F=Q\cup Q'$ with $Q'$ another conic because $F$ is a quartic. It is then clear that the singular points are at most $6$ and one can prove the rest of the statement.
\end{proof}

 We denote by $Z_{\{2\}}^{[2]}$ the substack of quartics which are squares of smooth conics and by $\overline{Z}_{\{2\}}^{[2]}$ its closure in $\PP^{14}$. We use the same notation as in \cite{DiLorFulVis}.

Let us denote by $z_2$ the fundamental class of $\overline{Z}_{\{2\}}^{[2]}$ in $\ch_{\GL_3}(\PP^{14})$. We also denote by $\rho$ the morphism $X\rightarrow \PP^{14}$and by $i_T:T\into \PP^{14}$ the closed complement of $\overline{U}$ in $\PP^{14}$. We are ready for the main proposition.

\begin{proposition}
	The ideal generated by $\im{i_{T,*}}$ is equal to the ideal generated by $\im{\rho_{*}}$ and by $z_2$.
\end{proposition}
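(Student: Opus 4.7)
The plan is to produce an algebraic Chow envelope of $T$ from $X \sqcup \overline{Z}_{\{2\}}^{[2]}$, and then use the projection formula for the regular closed immersion $\overline{Z}_{\{2\}}^{[2]} \hookrightarrow \PP^{14}$ to recognize $\im(i_{T,*})$ as the claimed ideal. The inclusion $(\im \rho_*, z_2) \subseteq (\im i_{T,*})$ is free: $\rho$ factors through $T$ (every quartic with a non-$A$-singularity fails to be $A_7$-prestable, by \Cref{lem:A-sing}), and $z_2$ is the pushforward of $[\overline{Z}_{\{2\}}^{[2]}]$ through $i_T$ since $\overline{Z}_{\{2\}}^{[2]} \subseteq T$.

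The first step for the reverse inclusion is set-theoretic: I would check that $\rho(X) \cup \overline{Z}_{\{2\}}^{[2]} = T$. A quartic $f$ fails to define an $A_7$-stable curve exactly when it either has a non-$A$ singularity or is non-reduced. The bound $\delta(A_n) = \lceil n/2 \rceil$ combined with Bezout rules out $A_n$ singularities with $n \geq 8$ on plane quartics, so these are the only two options. Non-$A$ singularities put $f$ in $\rho(X)$ by \Cref{lem:A-sing}. If $f$ is non-reduced, then it has a squared irreducible component; a direct local computation shows that every point at which two distinct components (possibly one squared) meet, or at which three lines of a quartic of the form $L^4$, $L_1^2 L_2^2$, or $L^2 q$ intersect, is a non-$A$ singularity of $f$. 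The only non-reduced quartics with no non-$A$ singularity are therefore squares of smooth conics, that is, elements of $Z_{\{2\}}^{[2]} \subseteq \overline{Z}_{\{2\}}^{[2]}$.

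With $T$ set-theoretically covered, I would stratify $T$ and argue that $\rho \sqcup i_{T_2}$ is an algebraic Chow envelope by descending induction on dimension. Let $T_0 = T \setminus \overline{Z}_{\{2\}}^{[2]}$, which by the above lies in $\rho(X)$. Using the localization sequence
\[
\ch(\overline{Z}_{\{2\}}^{[2]}) \longrightarrow \ch(T) \longrightarrow \ch(T_0) \longrightarrow 0,
\]
every class in $\ch(T)$ is, modulo the image from $\overline{Z}_{\{2\}}^{[2]}$, pulled back from $\ch(T_0)$. On the open stratum of $T_0$ where $f$ has a single transverse non-$A$ singularity, $\rho$ restricts to an isomorphism and so gives an honest section; on the deeper strata (multiple non-$A$ singular points as classified by \Cref{lem:sing-points}, or non-reduced but non-$Z_{\{2\}}^{[2]}$ loci), $\rho$ has finite fibers of order dividing a power of $6$ after passing to appropriate substacks of $X$, so the classical Chow envelope argument (\cite[Proposition 3.5]{DiLorPerVis}) produces the required surjectivity on $\ch(T_0) \otimes \ZZ[1/6]$. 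Pushing forward from $X$ to $\PP^{14}$ converts these classes into elements of $\im \rho_*$. Finally, $\overline{Z}_{\{2\}}^{[2]}$ is (the closure of) the image of $\PP^5$ under $q \mapsto q^2$, so its Chow ring is generated by restrictions from $\PP^{14}$; the projection formula then gives $\im(i_{T_2,*}) = (z_2)$ as an ideal, and the proof concludes.

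The main obstacle is the combinatorial analysis of $T_0$ along the non-reduced locus that is not $\overline{Z}_{\{2\}}^{[2]}$: over a fourth power of a line, or over $L_1^2 L_2^2$, the scheme of non-$A$ singular points of $f$ is positive-dimensional, so $\rho$ is not finite there and one must further stratify $X$ (or introduce an auxiliary marking stack or blowup) so that every stratum of $T_0$ admits either a section of $\rho$ or a finite generically étale cover of bounded degree; verifying that these finer pieces still provide surjectivity on Chow groups with $\ZZ[1/6]$-coefficients is the most delicate point.
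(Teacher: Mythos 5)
Your overall strategy is the same as the paper's: cover $T$ by the non-$A$-singularity incidence variety $X$ together with the locus of squares of smooth conics, check that this union is an algebraic Chow envelope stratum by stratum, and reduce the contribution of the double-conic locus to the single class $z_2$ via the projection formula. For that last step the paper works with the squaring map $\beta\colon\PP^5\to\PP^{14}$ rather than with $\overline{Z}_{\{2\}}^{[2]}$ itself, because the identity $\beta^*(h_{14})=2h_5$ makes the statement "the Chow ring upstairs is generated by restrictions from $\PP^{14}$ after inverting $2$" immediate; your version needs exactly this detour to be justified. The set-theoretic covering of $T$ and the localization/lifting mechanism are both correct.

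The genuine gap is the step you yourself flag as "the most delicate point": the fibers of $\rho$ over the non-reduced quartics $L^4$ and $L^3M$. No further stratification of $X$, auxiliary marking stack, or blowup is needed there. In each of these two cases the fiber is (with its reduced structure) a line $\PP^1_L\subseteq\PP^2_L$, and a projective space over $L$ is automatically an algebraic Chow envelope of $\spec L$ — push forward the class of an $L$-rational point — so the fiberwise criterion of Proposition~3.5 of \cite{DiLorPerVis} is satisfied on the nose and the induction goes through. Two further inaccuracies should be repaired. First, over $L_1^2L_2^2$ the scheme of non-$A$-singularities is \emph{not} positive-dimensional: away from $L_1\cap L_2$ the local equation is $y^2u=0$ with $u$ a unit, i.e.\ an $A_\infty$-point, which \emph{is} an $A$-singularity in the sense of \Cref{def:A-sing}, so the fiber is the single rational point $L_1\cap L_2$. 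Second, your assertion that the remaining finite fibers have order dividing a power of $6$ silently skips the degree-$5$ configuration of \Cref{lem:sing-points} (smooth conic plus reducible conic); there one must note that the fiber contains a rational point (the node of the reducible conic), which is how the paper disposes of it. With these three points supplied your argument closes and coincides with the paper's proof.
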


\begin{proof}
 Consider $\PP^5$ the space of conics in $\PP^2$ with the action of $\GL_3$ defined by the formula $A.f(x):=f(A^{-1}x)$ and the equivariant morphism $\beta:\PP^5 \rightarrow \PP^{14}$ defined by the association $f \mapsto f^2$. We are going to prove that 
 $$ \rho \sqcup \beta : X \sqcup \PP^5 \longrightarrow \PP^{14}$$
 is an algebraic Chow envelope for $T$ and then prove that the only generator we need for the image of $\beta_*$ is the fundamental class $\beta_*(1)$, which coincides with $z_2$.
 
 Let $L/\kappa$ be a field extension. First of all, \Cref{lem:sing-points} tells us that a reduced quartic in $\PP^2$ has at most $6$ singular points. Therefore if $f$ is an $L$-point of $\PP^{14}$ which represents a reduced quartic, the fiber $\rho^{-1}(f)\rightarrow \spec L$ is a finite flat morphism of degree at most $6$. As we are inverting $2$ and $3$ in the Chow rings, the only case we need to worry is when the morphism has degree $5$, i.e. when $f$ is union of a singular conic and a smooth conic. However, in that situation we have a rational point, namely the section that goes to the intersection of the two lines that form the singular conic. This prove that $\rho$ is an algebraic Chow envelope for the open of reduced curves in $T$.
 
 Consider a $L$-point $f$ of $\PP^{14}$ which represents a non-reduced quartic. Then $f$ is one of the following:
 \begin{enumerate}
 	\item[(1)] $f$ is the product of a double line and a reduced conic that does not contain the line,
 	\item[(2)] $f$ is the product of a triple line and a different line,
 	\item[(3)] $f$ is the product of two different double lines,
 	\item[(4)] $f$ is the fourth power of a line,
  	\item[(5)] $f$ is a double smooth conic.
 \end{enumerate}
For a more detailed discussion, see Section 1 of \cite{DiLorFulVis}.

We are going to prove that in situations from (1) to (4), the fiber $\rho^{-1}(f)$ is an algebraic Chow envelope for $\spec L$. In cases (1) and (3), we have that the fiber is finite of degree respectively 2 and 1, therefore an algebraic Chow envelope. In cases (2) and (4) the fiber is a line, which is a projective bundle and therefore an algebraic Chow envelope (we don't have to worry about non-reduced structures).

Clearly the fiber in the case (5) is empty, as $f$ has only $A_{\infty}$-singularities as closed points. Therefore we really need the morphism $\beta$ which is an algebraic Chow envelope for case (5). 

It remains to prove the image of $\beta_*$ is generated by $\beta_*(1)$. This follows from the fact that $\beta^{*}(h_{14})=2h_{5}$, where $h_{14}$ (respectively $h_5$) is the hyperplane section of $\PP^{14}$ (respectively $\PP^5$).

\end{proof}

We conclude this section computing explicitly the relations we need and finally getting the Chow ring of $\Mtilde_3\setminus (\Detilde_1 \cup \Htilde_3)$.

The computation for the class $z_2$ can be done using an explicit localization formula. As a matter of fact, the exact computation was already done in Proposition 4.6 of \cite{DiLorFulVis}, although it was not shown as it was not relevant for their computations.

\begin{remark}
	After the identification $h=-\lambda_1$, the localization formula gives us
	\begin{equation*}
	\begin{split}
	z_2=-1152\lambda_1^3\lambda_3^2 + 256\lambda_1^2\lambda_2^2\lambda_3 + 5824\lambda_1\lambda_2\lambda_3^2 - 1152\lambda_2^3\lambda_3 - 10976\lambda_3^3.
	\end{split}
	\end{equation*}
\end{remark}

In order to compute the ideal generated by $\im{\rho_*}$, we introduce a simplified description of $Q_4$ and of $X$. This description is not completely necessary for this specific computation, but it turns out very useful for the computations of the $A_n$-strata in \Cref{chap:3}.

\begin{lemma}
	The universal quartic $Q_4$ is naturally isomorphic to the quotient stack $[\PP^{13}/H]$ where $H$ is a parabolic subgroup of $\GL_3$.
\end{lemma}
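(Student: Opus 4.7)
The plan is to exploit the fact that $\GL_3$ acts transitively on $\PP^2$. Fix the base point $p_0 := [0:0:1] \in \PP^2$ and let $H \subset \GL_3$ be its stabilizer, namely the parabolic subgroup
$$
H = \left\{ A \in \GL_3 \,:\, a_{13} = a_{23} = 0 \right\} = \left\{ \begin{pmatrix} * & * & 0 \\ * & * & 0 \\ * & * & * \end{pmatrix} \right\}.
$$
The orbit–stabilizer identification yields an isomorphism $\GL_3/H \simeq \PP^2$ of $\GL_3$-schemes, which in turn gives an isomorphism of quotient stacks $[\PP^2/\GL_3] \simeq BH$.

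Next, I would factor the structure morphism $Q_4 \to [\PP^{14}/\GL_3]$ by first using the projection
$$
Q_4 \hookrightarrow [\PP^{14}\times\PP^2/\GL_3] \xrightarrow{\;\mathrm{pr}_2\;} [\PP^2/\GL_3] \simeq BH.
$$
Pulling back along the canonical atlas $\spec \kappa \to BH$ (which presents $\GL_3 \to \PP^2$, sending the identity coset to $p_0$), the fibre of $Q_4$ identifies with the closed subscheme
$$
V := \{\, [f] \in \PP^{14} \,:\, f(p_0) = 0 \,\} \subset \PP^{14}.
$$
The condition $f(p_0) = 0$ is a single nonzero linear equation on the space of ternary quartics (it only forces the coefficient of $z^4$ to vanish), so $V \simeq \PP^{13}$. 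Moreover $H$, being the stabilizer of $p_0$, preserves this linear condition and therefore acts linearly on the underlying $14$-dimensional vector space, inducing an action on $V \simeq \PP^{13}$.

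Since the pullback $Q_4 \times_{BH} \spec \kappa \simeq \PP^{13}$ is an $H$-torsor over $Q_4$ (as $\spec \kappa \to BH$ is itself an $H$-torsor), descent along this torsor gives the desired isomorphism $Q_4 \simeq [\PP^{13}/H]$, compatible with the morphisms down to $[\PP^{14}/\GL_3]$ via the induced map $[\PP^{13}/H] \to [\PP^{14}/\GL_3]$ coming from the $H$-equivariant inclusion $\PP^{13} \hookrightarrow \PP^{14}$. The only real thing to check is that the projection $Q_4 \to BH$ agrees under the identification $[\PP^2/\GL_3] \simeq BH$ with $\mathrm{pr}_2$, which is a direct verification of $2$-functoriality; the remaining assertions are formal consequences of descent.
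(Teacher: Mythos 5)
Your proposal is correct and follows essentially the same route as the paper: both use transitivity of the $\GL_3$-action on $\PP^2$ to identify $[\PP^{14}\times\PP^2/\GL_3]$ with $[\PP^{14}/H]$ for $H$ the stabilizer of $[0:0:1]$, and then observe that the incidence condition $f([0:0:1])=0$ cuts out an $H$-invariant hyperplane $\PP^{13}\subset\PP^{14}$. Your descent-along-the-torsor phrasing is just a more explicit spelling-out of the paper's one-line slice argument.
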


\begin{proof}
	The action of $\GL_3$ is transitive over $\PP^2$, therefore we can consider the subscheme $\PP^{14}\times \{[0:0:1]\}$ in $\PP^{14}\times \PP^2$. If we denote by $H$ the stabilizers of the point $[0:0:1]$ in $\PP^{2}$, we get that 
	$$[\PP^{14}\times \PP^2/\GL_3]\simeq [\PP^{14}/H].$$
	
	The statement follows from noticing that the equation $f([0:0:1])=0$ determines a hyperplane in $\PP^{14}$.
\end{proof}

\begin{remark}\label{rem:H}
	The group $H$ can be described as the subgroup of $\GL_3$ of matrices of the form
	$$
	\begin{pmatrix}
	a & b & 0 \\
	c & d & 0 \\
	f & g & h \\
	\end{pmatrix}.
	$$
	Notice that the submatrix
	$$
	\begin{pmatrix}
	a & b \\
	c & d
	\end{pmatrix}
	$$
	is in $\GL_2$, and in fact we can describe $H$ as $(\GL_2\times \gm)\ltimes \ga^2$.
\end{remark}

This description essentially centers our focus in the point $[0:0:1]$. This implies that the coordinates of the space $\PP^{14}$ can be identified to the coefficient of the Taylor expansion of $f$ in $[0:0:1]$ and this is very useful for computations. \Cref{lem:A-sing} implies that $X$ can be described inside $[\PP^{14}/H]$ as a projective subbundle of codimension $6$. Namely, $X$ is the substack described by the equations $$a_{00}=a_{10}=a_{01}=a_{20}=a_{11}=a_{02}=0$$
where $a_{ij}$ is the coefficient of the monomial $x^iy^jz^{4-i-j}$. One can verify easily that this set of equations is invariant for the action of $H$. The fact that $X$ is a projective subbundle implies that we can repeat the exact same strategy adopted in Section 5 of \cite{FulVis} to prove our version of Theorem 5.5 as in \cite{FulVis}.

For the sake of notation, we denote by $h_2$ (respectively $h_{14}$) the hyperplane section of $\PP^2$ (respectively $\PP^{14}$).

\begin{proposition}
	There exists a unique polynomial $p(h_{14},h_{2})$ with coefficients in $\ch(\cB \GL_3)$ such that 
	\begin{itemize}
		\item $p(h_{14},h_{2})$ represent the fundamental class of $X$ in the $\GL_3$-equivariant Chow ring of $\PP^{14}\times \PP^2$,
		\item the degree of $p$ with respect to the variable $h_{14}$ is strictly less than $15$,
		\item the degree of $p$ with respect to the variable $h_{2}$ is strictly less than $3$.
	\end{itemize} 
	Furthermore, if $p$ is of the form $$p_2(h_{14})h_2^2+p_1(h_{14})h_2+p_0(h_{14})$$
	 with $p_0,p_1,p_2 \in \ch_{\GL_3}(\PP^{14})$ and $\deg p_i \leq 14$, we have that $\im{\rho_*}$ is equal to the ideal generated by $p_0$, $p_1$ and $p_2$ in $\ch_{\GL_3}(\PP^{14})$. 
\end{proposition}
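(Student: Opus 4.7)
The existence and uniqueness of $p$ is a consequence of the projective bundle formula applied twice. More precisely, $\PP^{14}$ is the projectivization of the $\GL_3$-representation $\AA(3,4)$ of ternary quartics and $\PP^2$ is the projectivization of the standard representation, so
\[
\ch_{\GL_3}(\PP^{14}\times \PP^2)\simeq \ch(\cB\GL_3)[h_{14},h_2]/(F_{14}(h_{14}),F_2(h_2)),
\]
where $F_{14}$ and $F_2$ are the (Chern) relations of degrees $15$ and $3$ respectively. As a $\ch(\cB\GL_3)$-module this ring is free with basis $\{h_{14}^i h_2^j\mid 0\le i\le 14,\ 0\le j\le 2\}$, so the fundamental class $[X]$ admits a unique expression $p(h_{14},h_2)$ with the stated degree bounds. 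Decomposing $p=p_2h_2^2+p_1h_2+p_0$ is then purely formal.

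Next I factor $\rho=\pi_1\circ\iota$, where $\iota:X\hookrightarrow \PP^{14}\times\PP^2$ is the closed immersion and $\pi_1:\PP^{14}\times \PP^2\to \PP^{14}$ is the first projection. The key geometric input is that $\iota^\ast$ is surjective: indeed, via the isomorphism $[\PP^{14}\times\PP^2/\GL_3]\simeq [\PP^{14}/H]$ and the identification $\PP^2=\GL_3/H$, the locus $X$ is cut out inside $[\PP^{14}/H]$ by six $H$-linear equations (the vanishing of $a_{00}$ together with the five partial derivatives of orders $\le 2$), so $X\simeq [\PP^8/H]$ is a linear subbundle of $[\PP^{14}/H]$. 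Consequently $\ch(X)$ is generated, as a $\ch(\cB\GL_3)$-algebra, by $\iota^\ast h_{14}$ and $\iota^\ast h_2$, proving surjectivity. The projection formula then gives
\[
\iota_{\ast}(\ch(X))=\iota_{\ast}\iota^{\ast}\ch_{\GL_3}(\PP^{14}\times\PP^2)=[X]\cdot \ch_{\GL_3}(\PP^{14}\times\PP^2).
\]

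Finally I compute $\pi_{1,\ast}$. Since $\pi_1$ is a $\PP^2$-bundle, $\pi_{1,\ast}$ is $\ch_{\GL_3}(\PP^{14})$-linear and sends the basis $1, h_2, h_2^2$ to $0,0,1$. For any $q=q_0+q_1h_2+q_2h_2^2\in \ch_{\GL_3}(\PP^{14}\times\PP^2)$, expanding $p\cdot q$ and reducing $h_2^3,h_2^4$ using $F_2$ shows that $\pi_{1,\ast}(p\cdot q)$ is a $\ch_{\GL_3}(\PP^{14})$-linear combination of $p_0,p_1,p_2$; conversely, specializing $q=1,h_2,h_2^2$ produces $p_2$, $p_1+c_1p_2$, and $p_0+c_1p_1+(c_1^2-c_2)p_2$, which span the ideal $(p_0,p_1,p_2)$. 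Hence $\im\rho_{\ast}=(p_0,p_1,p_2)$.

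The main obstacle is the surjectivity of $\iota^\ast$; it rests on the concrete identification of $X$ with $[\PP^{8}/H]$ as a linear subbundle of $[\PP^{14}/H]$, which in turn relies on the explicit description of the equations cutting out $X$ in the Taylor coordinates $a_{ij}$ afforded by the change of presentation $Q_4\simeq[\PP^{13}/H]$. Once this is in hand, the rest is bookkeeping with the projective bundle formula and the projection formula.
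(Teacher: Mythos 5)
Your argument is correct and is essentially the same as the paper's, which simply defers to Section 5 of \cite{FulVis}: the key point there is exactly that $X$ is a projective subbundle of $[\PP^{14}/H]$ (cut out by the six linear conditions $a_{00}=a_{10}=a_{01}=a_{20}=a_{11}=a_{02}=0$), so that $\iota^*$ is surjective, the projection formula gives $\iota_*\ch(X)=[X]\cdot\ch_{\GL_3}(\PP^{14}\times\PP^2)$, and pushing forward along the $\PP^2$-bundle extracts the ideal $(p_0,p_1,p_2)$ via the unitriangular change of generators you exhibit. Nothing further is needed.
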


\begin{proof}
	For a detailed proof of the proposition see Section 5 of \cite{FulVis}.
\end{proof}

A computation shows the description of the Chow ring of $\Mtilde_3\setminus (\Detilde_1 \cup \Htilde_3)$.

\begin{corollary}\label{cor:chow-quart}
	We have an isomorphism of rings
	$$ \ch\Big(\Mtilde_3\setminus (\Detilde_1 \cup \Htilde_3)\Big) \simeq \ZZ[\lambda_1,\lambda_2,\lambda_3]/(z_2,p_0,p_1,p_2)$$
	where the generators of the ideal can be described as follows:
	\begin{itemize}
		\item $p_2=12\lambda_1^4 - 44\lambda_1^2\lambda_2 + 92\lambda_1\lambda_3$,
		\item $p_1=-14\lambda_1^3\lambda_2 + 2\lambda_1^2\lambda_3 + 48\lambda_1\lambda_2^2 - 96\lambda_2\lambda_3$,
		\item $p_0=15\lambda_1^3\lambda_3 - 52\lambda_1\lambda_2\lambda_3 + 112\lambda_3^2$,
		\item $z_2=-1152\lambda_1^3\lambda_3^2 + 256\lambda_1^2\lambda_2^2\lambda_3 + 5824\lambda_1\lambda_2\lambda_3^2 - 1152\lambda_2^3\lambda_3 - 10976\lambda_3^3.$
	\end{itemize}
\end{corollary}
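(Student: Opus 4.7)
The plan is to piece together the explicit presentation from two inputs: the general quotient structure of $[U/\GL_3]$ (combined with the preceding proposition that pins down the ideal of relations), and two separate but standard equivariant intersection-theoretic computations, one for the class $z_2$ and one for the polynomial $p(h_{14},h_{2})$.

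First I would set up the ambient presentation. From the description $\Mtilde_3 \setminus (\Detilde_1 \cup \Htilde_3) \simeq [U/\GL_3]$ and the passage to the projectivization $\overline{U} \subset \PP^{14}$, the preceding proposition gives
\[
\ch\bigl([\overline{U}/\GL_3]\bigr) \;=\; \ch_{\GL_3}(\PP^{14})/(z_2,\,p_0,\,p_1,\,p_2),
\]
and the $\gm$-torsor relation $c_1 = h$ yields
\[
\ch\bigl(\Mtilde_3\setminus(\Detilde_1\cup\Htilde_3)\bigr) \;=\; \ch(\cB\GL_3)/(z_2,p_0,p_1,p_2)\bigr|_{h = c_1}.
\]
Under the substitution $\lambda_i = c_i(\HH) = (-1)^i c_i(V)$, this becomes the desired quotient of $\ZZ[1/6][\lambda_1,\lambda_2,\lambda_3]$. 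So it remains only to compute the four generators.

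Next I would compute $p_0, p_1, p_2$ by exploiting the simplified description $Q_4 \simeq [\PP^{13}/H]$ and, more generally, $[\PP^{14}\times\PP^2/\GL_3] \simeq [\PP^{14}/H]$ with $H = (\GL_2\times\gm)\ltimes\ga^2$ as in \Cref{rem:H}. Inside $[\PP^{14}/H]$, the substack $X$ is defined by the six linear equations $a_{00}=a_{10}=a_{01}=a_{20}=a_{11}=a_{02}=0$ read off from the Taylor expansion of $f$ at $[0{:}0{:}1]$, exactly as in \Cref{lem:A-sing}. Each coefficient $a_{ij}$ is a section of $\cO_{\PP^{14}}(1)$ twisted by an explicit $H$-character (depending on $i,j$ and the weights of the $\GL_2\times\gm$ factor), so the fundamental class of $X$ is the product of the six corresponding first Chern classes. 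Transporting this back to $\ch_{\GL_3}(\PP^{14}\times\PP^2)$, reducing the resulting polynomial modulo the tautological relation of $\PP^2 = \PP(V)$ (which expresses $h_{2}^3$ in terms of $c_1,c_2,c_3$ and lower powers of $h_2$), and writing the remainder as $p_2 h_2^2 + p_1 h_2 + p_0$ produces the three classes. By the argument of Section~5 of \cite{FulVis} adapted to our setting, $\im \rho_*$ is then generated by these three coefficients, and substitution $h = c_1 = -\lambda_1$ gives the explicit expressions of the statement. Independently, $z_2$ is computed by the equivariant localization argument of \cite{DiLorFulVis} applied to the squaring map $\beta\colon\PP^5\to\PP^{14}$, whose result $z_2 = \beta_*(1)$ is exactly the degree-$9$ polynomial displayed.

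The main obstacle is purely computational, namely the careful bookkeeping in paragraph two: identifying the six $H$-characters correctly, multiplying the six linear forms in $\ch_H(\PP^{14})$, and performing the reduction modulo the cubic Chern relation for $\PP(V)$ while tracking the $c_i\leftrightarrow\lambda_i$ sign conventions. Once the bookkeeping is done, extracting the coefficients of $h_2^0, h_2^1, h_2^2$ and substituting $h = -\lambda_1$ is mechanical and yields the stated $p_0, p_1, p_2$. No further relation is needed, because the tautological Chern-polynomial relation of $\PP^{14}$ becomes redundant modulo the ideal $(z_2,p_0,p_1,p_2)$ after $h=c_1$ is imposed, so the final presentation is exactly as claimed.
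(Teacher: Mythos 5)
Your proposal follows the paper's own route essentially step for step: the same reduction to $[\overline{U}/\GL_3]$ with the $\gm$-torsor relation $h=c_1$, the same identification of $X$ as a codimension-$6$ projective subbundle of $[\PP^{14}/H]$ cut out by the six Taylor coefficients (so that $[X]$ is a product of six linear classes, reduced to the normal form $p_2h_2^2+p_1h_2+p_0$ as in Section~5 of \cite{FulVis}), and the same localization computation of $z_2=\beta_*(1)$ from the squaring map $\PP^5\to\PP^{14}$. The remaining work is exactly the bookkeeping the paper subsumes under ``a computation shows,'' so the proposal is correct and matches the paper's argument.
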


\begin{remark}
	We remark that $z_2$ is not in the ideal generated by the other relations, meaning that it was really necessary to introduce the additional stratum of non-reduced curves for the computations.
\end{remark}

\section{Description of $\Detilde_1 \setminus \Detilde_{1,1}$}\label{sec:detilde-1}

To describe $\Detilde_1\setminus \Detilde_{1,1}$, we recall the gluing morphism in the case of stable curves:
$$ \Mbar_{1,1} \times \Mbar_{2,1} \longrightarrow \Debar_1,$$ 
which is an isomorphism outside the $\Debar_{1,1}$. The same is true for the $A_r$-case. 

Thus, we need to describe the preimage of $\Detilde_{1,1}$ through the morphism. Denote by $\ThTilde_1\subset \Mtilde_{2,1}$ the pullback of $\Detilde_1 \subset \Mtilde_2$ through the morphism $\Mtilde_{2,1} \rightarrow \Mtilde_2$ which forgets the section. Thus one can easily prove that  
$$ \Mtilde_{1,1}\times (\Mtilde_{2,1}\setminus \ThTilde_1) \simeq \Detilde_1 \setminus \Detilde_{1,1}.$$

It remains to describe the stack $\Mtilde_{2,1}\setminus \ThTilde_1$. We start by describing $\Ctilde_2 \setminus \ThTilde_1$, the universal curve over $\Mtilde_2\setminus \Detilde_1$. 

\begin{proposition}
	We have the following isomorphism
	$$  \Ctilde_2 \setminus \ThTilde_1 \simeq [\widetilde{\AA}(6)\setminus 0/B_2],$$ 
	where $B_2$ is the Borel subgroup of lower triangular matrices inside $\GL_2$ and $\widetilde{\AA}(6)$ is a $7$-dimensional $B_2$-representation.
\end{proposition}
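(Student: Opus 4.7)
The plan is to combine the hyperelliptic description of Section~\ref{sec:1-3} with the fact that every genus-$2$ curve is hyperelliptic. Indeed, $\Htilde_2^r$ is closed in $\Mtilde_2^r$ by \Cref{prop:open-closed-imm} and contains the dense open $\cH_2=\cM_2\subset\Mtilde_2^r$, so $\Htilde_2^r=\Mtilde_2^r$; the cyclic-cover description $\Htilde_2^r\simeq\Ctilde_2^r$ from \Cref{prop:descr-hyper} then presents $\Mtilde_2\setminus\Detilde_1$ as the stack of triples $(\cZ,\cL,i)$ as in \Cref{def:hyp-A_r}, restricted to those for which the associated double cover $C$ has no separating nodes.

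By \Cref{lem:max-comp}, if $C$ has no separating nodes then the quotient $\cZ$ has at most $g-1=1$ irreducible component, so $\cZ$ is irreducible. Moreover, the stacky nodes of $\cZ$ correspond precisely to the non-flat-quotient case $(c_2)$ of \Cref{prop:descr-inv}, that is, to separating singularities of $C$, so $\cZ$ carries no stacky structure. Thus $\cZ\cong\PP^1$, the line bundle $\cL$ has degree $-3$ (from $\chi(\cL)=-g=-2$ and Riemann--Roch on $\PP^1$), and $i^{\vee}$ is a nonzero binary sextic whose vanishing orders are at most $r+1$. After the standard rigidification this identifies
$$\Mtilde_2\setminus\Detilde_1 \simeq [\AA(6)\setminus 0/\GL_2]$$
with $\AA(6)$ the $7$-dimensional $\GL_2$-representation of binary sextics appearing in \Cref{lem:ar-vis}, and the removed origin corresponding to $i^{\vee}\ne 0$, i.e.\ to reducedness of $C$.

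To obtain $\Ctilde_2\setminus\ThTilde_1$ I add the data of a section $p$ of $C$. Composing with $C\to\PP^1$ yields a section of the $\PP^1$-bundle; since $\GL_2$ acts transitively on $\PP^1$ with stabilizer the Borel $B_2$, the universal $\PP^1$-bundle over $[\AA(6)\setminus 0/\GL_2]$ is $[\AA(6)\setminus 0/B_2]$. The additional datum of a lift of this section to $C$ is a ``square-root'' choice, because locally $C$ is cut out by $y^2=i^{\vee}(x,z)$ inside the total space of $\cL^{\vee}$; over the $B_2$-fixed point $[0{:}1]$ this becomes $y^2=a_6(f)$, where $a_6(f)$ is the coefficient of $z^6$ in the sextic $f$.

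Accordingly I set $\widetilde\AA(6):=\AA^1_y\oplus V$, where $V\subset\AA(6)$ is the six-dimensional $B_2$-subrepresentation of sextics vanishing at $[0{:}1]$ and $\AA^1_y$ carries the unique $B_2$-character whose square is the character by which $B_2$ acts on the coefficient of $z^6$. The squaring map $(y,g)\mapsto (y^2 z^6+g,y)$ then identifies the universal curve over $[\AA(6)\setminus 0/B_2]$ with $[\widetilde\AA(6)\setminus 0/B_2]$, the origin removed on the source corresponding to $(y,g)\ne 0$ (equivalently $f\ne 0$). The principal obstacle is pinning down the correct $B_2$-weight on $\AA^1_y$: one must check it agrees with the character through which $B_2$ acts on the fibre of $\cL^{\vee}$ at $[0{:}1]$, so that the squaring map is $B_2$-equivariant and genuinely realizes the universal double cover.
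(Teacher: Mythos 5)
Your route is the intended one: the paper's own proof is a one-line reference to Proposition 3.1 of \cite{DiLorPerVis}, and what you reconstruct (genus $2$ curves are all hyperelliptic, so $\Mtilde_2\setminus\Detilde_1$ is the stack of double covers of $\PP^1$ branched along a nonzero sextic, and the universal curve adds the choice of a square root of the value of the sextic at the $B_2$-fixed point) is exactly that argument. The first half is fine modulo small citation slips: \Cref{prop:open-closed-imm} only gives closedness of $\Htilde_2$ in the inertia stack, not in $\Mtilde_2$ (the closedness of $\eta$ is the content of \Cref{sec:1-4}, and for $g=2$ \Cref{rem:genus-2} gives directly that $\eta$ is an isomorphism); and \Cref{lem:ar-vis} concerns $\AA(8)$ and $\GL_2/\mu_4$ in genus $3$, so you should instead specify the twisted action $A.f=\det(A)^2f(A^{-1}x)$ on $\AA(6)$, which is what makes the generic stabilizer $\mu_2$, i.e.\ the hyperelliptic involution.

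The genuine gap is in the last step. You define $\widetilde{\AA}(6):=\AA^1_y\oplus V$ as a \emph{direct sum} of $B_2$-representations and claim the squaring map $(y,g)\mapsto(y^2z^6+g,y)$ realizes the universal curve; but this map is not $B_2$-equivariant for any choice of character on $\AA^1_y$, because $z^6$ does not span a $B_2$-subrepresentation. Concretely, for the lower-triangular Borel fixing $[0{:}1]$ the unique $B_2$-eigenline of $\AA(6)$ is spanned by $x^6$, which lies in $V$ and has the wrong $T$-weight, so the extension $0\to V\to\AA(6)\to\chi\to 0$ (with $\chi$ the character on the coefficient of $z^6$) does \emph{not} split; equivalently, $b\cdot z^6=\chi(b)z^6+v_b$ with $v_b\in V$ generally nonzero, and your map fails equivariance by the term $y^2v_b$. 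The correct object, as in the paper, is the $B_2$-invariant subvariety $\widetilde{\AA}(6)=\{(f,s)\in\AA(6)\times\AA^1:\ f(0,1)=s^2\}$ with the restricted action: this is abstractly a $7$-dimensional affine space, it is an equivariant affine bundle over $\AA^1_s$ with fibre a torsor under $V$, and for all the subsequent Chow-ring computations it behaves exactly like $V\oplus\chi'$ (e.g.\ the class of the origin is $c_1(\chi')\cdot c_6(V)$, obtained by first intersecting with $\{s=0\}=V$). So the statement survives, but the isomorphism must be set up as a fibre product rather than via your direct-sum model, since the latter presents a genuinely different $B_2$-variety.
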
 

\begin{proof}
	This is a straightforward generalization of Proposition 3.1 of \cite{DiLorPerVis}.We remove the $0$-section because we cannot allow non-reduced curve to appear (condition (b1) in \Cref{def:hyp-A_r}). 
\end{proof}

 The representation $\widetilde{\AA}(6)$ can be described as follows: consider the $\GL_2$-representation $\AA(6)$ of binary forms with coordinates $(x_0,x_1)$ of degree $6$ with an action described by the formula
$$ A.f(x_0,x_1):=\det(A)^{2}f(A^{-1}x),$$  
and consider also the $1$-dimensional $B_2$-representation $\AA^1$ with an action described by the formula 
$$ A.s:=\det(A)a_{22}^{-3}s$$
where
$$
A:=\begin{pmatrix}
	a_{11} & a_{12} \\
	0 & a_{22}
\end{pmatrix}.
$$

We define $\widetilde{\AA}(6)$ to be the invariant subscheme  of $\AA(6)\times \AA^1$ defined by the pairs $(f,s)$ that satisfy the equation $f(0,1)=s^2$. If we use $s$ instead of the coefficient of $x_1^6$ in $f$ as a coordinate, it is clear that $\widetilde{\AA}(6)$ is isomorphic to a $7$-dimensional $B_2$-representation.

The previous proposition combined with \Cref{prop:contrac} gives us the following description of the Chow ring of $\Mtilde_{2,1}\setminus \ThTilde_1$.

\begin{corollary}\label{cor:mtilde_21}
	$\Mtilde_{2,1}\setminus \ThTilde_1$ is the quotient by $B_2$ of the complement of the subrepresentation of $\widetilde{\AA}(6)$ described by the equations $s=a_5=a_4=a_3=0$ where $a_i$ is the coefficient of $x_0^{6-i}x_1^i$.
\end{corollary}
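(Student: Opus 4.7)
The plan is to combine the preceding proposition with \Cref{prop:contrac} applied to $g=2$, $n=0$. The latter gives an open immersion $\gamma : \Mtilde_{2,1} \hookrightarrow \Ctilde_2$ whose image consists of pairs $(C, q)$ where the section $q$ is a (not necessarily smooth) point of $C$ that is an $A_h$-singularity for some $h \leq 2$. Restricting to the complement of $\Detilde_1$ produces an open immersion $\Mtilde_{2,1}\setminus\ThTilde_1 \hookrightarrow \Ctilde_2\setminus\ThTilde_1$; via the preceding proposition this identifies $\Mtilde_{2,1}\setminus\ThTilde_1$ with the open $B_2$-invariant substack of $[\widetilde{\AA}(6)\setminus 0 / B_2]$ where the tautological section is an $A_h$-singularity with $h \leq 2$. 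The tautological section is the point on the double cover $y^2 = f$ lying over $(x_0 : x_1) = (0 : 1)$ with $y = s$; it is well defined on the quotient because the defining relation $s^2 = f(0, 1) = a_6$ of $\widetilde{\AA}(6)$ places it on the curve, and because $B_2$ stabilizes this point of $\PP^1$.

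The main step is a local computation at this section. In the affine chart $x_1 = 1$ with local coordinate $x := x_0$, the curve is cut out by $y^2 = a_6 + a_5 x + a_4 x^2 + a_3 x^3 + a_2 x^4 + a_1 x^5 + a_0 x^6$ with the section at $(x, y) = (0, s)$. Substituting $u := y - s$ and using $s^2 = a_6$ transforms this into
\[
u(u + 2s) = a_5 x + a_4 x^2 + a_3 x^3 + a_2 x^4 + a_1 x^5 + a_0 x^6.
\]
If $s \neq 0$, the implicit function theorem gives a smooth point. If $s = 0$, the equation reduces to $u^2 = a_5 x + a_4 x^2 + a_3 x^3 + \dots$, and by the Weierstrass preparation argument of \Cref{sec:1-1} the singularity type at the origin is $A_{k-1}$, where $k$ is the order of vanishing of the right-hand side. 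Thus the section is an $A_h$-singularity with $h \geq 3$ precisely when $s = a_5 = a_4 = a_3 = 0$.

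It remains to observe that the linear subspace $Z := \{s = a_3 = a_4 = a_5 = 0\}$ is a $B_2$-subrepresentation; this is forced by its geometric meaning, since the singularity order at the tautological section is invariant under isomorphisms of $(C, q)$ and hence under the $B_2$-action. Since $Z$ already contains the zero section of $\widetilde{\AA}(6)$, removing $Z$ achieves both the exclusion of non-reduced curves and the exclusion of tacnodes-or-worse at the section, yielding $\Mtilde_{2,1}\setminus\ThTilde_1 \simeq [(\widetilde{\AA}(6)\setminus Z)/B_2]$. I do not foresee any serious obstacle: the argument is a direct unwinding of the coordinate chart on $\widetilde{\AA}(6)$, relying only on the singularity classification already established in \Cref{sec:1-1} and on \Cref{prop:contrac}.
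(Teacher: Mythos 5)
Your proposal is correct and follows essentially the same route as the paper: both invoke \Cref{prop:contrac} to identify $\Mtilde_{2,1}\setminus\ThTilde_1$ with the open locus of $\Ctilde_2\setminus\ThTilde_1$ where the tautological section is an $A_h$-singularity with $h\le 2$, and then use the explicit cyclic-cover description of $[\widetilde{\AA}(6)\setminus 0/B_2]$ to translate the condition ``$A_n$-singularity with $n\ge 3$ at the section'' into the vanishing $s=a_5=a_4=a_3=0$. The only difference is cosmetic: you write out the local Weierstrass computation at $(x,y)=(0,s)$ that the paper compresses into the one-line criterion that $\infty$ be a root of $f$ of multiplicity at least $4$.
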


\begin{proof}
	\Cref{prop:contrac} gives that $\Mtilde_{2,1}\setminus \ThTilde_1$ is isomorphic to the open of $\Ctilde_2 \setminus \ThTilde_1$ parametrizing pairs $(C/S,p)$ such that $C$ is a $A_5$-stable genus $2$ curve and $p$ is a section whose geometric fibers over $S$ are $A_n$-singularity for $n\leq 2$. In particular, we need to describe the closed invariant subscheme $D_3$ of $\widetilde{\AA}(6)$ that parametrizes pairs $(C,p)$ such that $p$ is a $A_n$-singularities with $n\geq 3$. 
	
	To do so, we need to explicit the isomorphism 
	$$ \Ctilde_2 \setminus \ThTilde_1 \simeq [\widetilde{\AA}(6)\setminus 0/B_2]$$
	as described in Proposition 3.1 of \cite{DiLorPerVis}. Given a pair $(f,s) \in \widetilde{\AA}(6)\setminus 0$, we construct a curve $C$ as $\spec_{\PP^1}(\cA)$ where $\cA$ is the finite locally free algebra of rank two over $\PP^1$ defined as $\cO_{\PP^1}\oplus \cO_{\PP^1}(-3)$. The multiplication is induced by the section $f:\cO_{\PP^1}(-6)\into \cO_{\PP^1}$. The section $p$ of $C$ can be defined seeing $s$ as a section of  $\H^0(\cO_{\PP^1}(-3)\vert_{\infty})$, where $\infty \in \PP^1$. This implies that the point $p$ is a $A_n$-singularity for $n\geq 3$ if and only if $\infty$ is a root of multiplicity at least $4$ for the section $f$.  The statement follows.  
\end{proof}
\Cref{cor:mtilde_21} gives us the description of the Chow ring of $\Detilde_{1}\setminus \Detilde_{1,1}$. We denote by $t$ the generator of the Chow ring of $\Mtilde_{1,1}$ defined as $t:=c_1(p^*\cO(p))$ for every object $(C,p)\in \Mtilde_{1,1}$. This is the $\psi$-class of $\Mtilde_{1,1}$.

\begin{proposition}\label{prop:descr-detilde-1}
	We have the following isomorphism
	$$ \ch(\Detilde_1 \setminus \Detilde_{1,1}) \simeq \ZZ[1/6,t_0,t_1,t]/(f)$$
	where $f \in \ZZ[1/6,t_0,t_1]$ is the polynomial:
	$$f=2t_1(t_0+t_1)(t_0-2t_1)(t_0-3t_1).$$
\end{proposition}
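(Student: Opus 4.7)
The plan is to use the product decomposition $\Detilde_1\setminus\Detilde_{1,1}\simeq \Mtilde_{1,1}\times(\Mtilde_{2,1}\setminus\ThTilde_1)$ established just before the statement, and compute the Chow ring of each factor separately. Both factors will be presented as quotient stacks of (opens in) representations of special groups, so the external product of Chow rings gives the Chow ring of the product, and this reduces the problem to two independent computations.

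For the first factor, the isomorphism $\Mtilde_{1,1}\simeq[\AA^2/\gm]$ already recalled in the preceding discussion yields $\ch(\Mtilde_{1,1})=\ZZ[1/6][t]$, where $t$ is the $\psi$-class of the marked point (equivalently, the Chern class of the standard character of $\gm$). For the second factor I will use \Cref{cor:mtilde_21}, which realizes $\Mtilde_{2,1}\setminus\ThTilde_1$ as the quotient by $B_2$ of the complement in $\widetilde{\AA}(6)$ of the closed invariant subrepresentation $W:=\{s=a_3=a_4=a_5=0\}$. Because $[\widetilde{\AA}(6)/B_2]$ is a vector bundle over $\cB B_2$, its Chow ring equals $\ch(\cB B_2)=\ZZ[1/6][t_0,t_1]$ with $t_0,t_1$ the characters of the maximal torus; and because $W$ is a sub\emph{bundle}, the excision sequence gives $\ch(\Mtilde_{2,1}\setminus\ThTilde_1)\cong \ZZ[1/6][t_0,t_1]/(f)$ where $f$ is the fundamental class $[W]$, which by the self-intersection formula equals the top Chern class of the normal bundle $N$ of $W$ inside $\widetilde{\AA}(6)$.

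The heart of the computation is therefore identifying $N$ as a $B_2$-representation and writing down $c_{\mathrm{top}}(N)$. The normal directions are exactly the four coordinates $a_3,a_4,a_5,s$, so $N$ splits as a direct sum of four characters. From the action $A.f(x)=\det(A)^2 f(A^{-1}x)$ with $A=\operatorname{diag}(\alpha_0,\alpha_1)$, a direct weight calculation shows that $e_i:=x_0^{6-i}x_1^i$ has weight $(i-4,2-i)$; together with the prescribed action $A.s=\det(A)a_{22}^{-3}s$ giving weight $(1,-2)$ for $s$, the four first Chern classes are $-t_0-t_1$, $-2t_1$, $t_0-3t_1$, $t_0-2t_1$. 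Their product is
\[
(-t_0-t_1)(-2t_1)(t_0-3t_1)(t_0-2t_1)=2t_1(t_0+t_1)(t_0-2t_1)(t_0-3t_1),
\]
which is exactly the relation $f$ in the statement.

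No single step is a real obstacle: the main work is the weight bookkeeping of the previous paragraph, which is purely mechanical once one fixes the convention that the first Chern class of the line bundle associated to a character equals that character in $\ch^{1}(\cB B_2)$. The only conceptual checks are (i) that after removing the degenerate locus the stack is still the complement of a sub\emph{bundle} over $\cB B_2$ (so excision collapses to quotienting by a single fundamental class rather than by the image of a more complicated pushforward), and (ii) that the external product of Chow rings is an isomorphism here, which holds because both factors are quotient stacks of opens in representations of special groups and their integral Chow rings are quotients of polynomial rings on the characters.
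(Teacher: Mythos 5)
Your proposal is correct and follows essentially the same route as the paper: the product decomposition $\Mtilde_{1,1}\times(\Mtilde_{2,1}\setminus\ThTilde_1)$, the Künneth factorization through $\cB\gm$, and the identification of $f$ with the fundamental class of the invariant subrepresentation $\{s=a_3=a_4=a_5=0\}$ of $\widetilde{\AA}(6)$ from \Cref{cor:mtilde_21}. The paper leaves the torus-equivariant weight computation as "straightforward"; your explicit bookkeeping of the characters of $a_3,a_4,a_5,s$ carries it out and reproduces the stated relation.
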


\begin{proof}
	Because $\Mtilde_{1,1}$ is a vector bundle over $\cB \gm$, the morphism 
	$$ \ch(\Mtilde_{2,1}\setminus \ThTilde_1)\otimes \ch(\cB\gm) \rightarrow \ch(\Detilde_1 \setminus \Detilde_{1,1})$$
	is an isomorphism. Therefore it is enough to describe the Chow ring of $\Mtilde_{2,1}\setminus \ThTilde_1$. The previous corollary gives us that if $T_2$ is the maximal torus of $B_2$ and $t_0,t_1$ are the two generators for the character group of $T_2$, we have that 
	$$\ch(\Mtilde_{2,1}\setminus \ThTilde_1)\simeq \ZZ[1/6,t_0,t_1]/(f)$$
	where $f$ is the fundamental class associated with the vanishing of the coordinates $s,a_5,a_4,a_3$ of $\widetilde{\AA}(6)$. Again, we are using Proposition 2.3 of \cite{MolVis}.  This computation in the $\gm^2$-equivariant setting is straightforward and it gives us the result.
\end{proof}

Once again, we use the results in Appendix A of \cite{DiLorVis} to describe the first Chern of the normal bundle of $\Detilde_1 \setminus \Detilde_{1,1}$ in $\Mtilde_3 \setminus \Detilde_{1,1}$.

\begin{proposition}\label{prop:relation-detilde-1}
	The closed immersion $\Detilde_1\setminus \Detilde_{1,1} \into \Mtilde_3 \setminus \Detilde_{1,1}$ is regular and the first Chern class of the normal bundle is of equal to $t+t_1$. Moreover, we have the following equalities in the Chow ring of $\Detilde_{1}\setminus \Detilde_{1,1}$:
	\begin{itemize}
		\item[(1)] $\lambda_1=-t-t_0-t_1$,
		\item[(2)] $\lambda_2=t_0t_1+t(t_0+t_1)$,
		\item[(3)] $\lambda_3=-t_0t_1t$,
		\item[(4)] $[H]\vert_{\Detilde_{1}\setminus \Detilde_{1,1}}=t_0-2t_1$.
	\end{itemize} 
\end{proposition}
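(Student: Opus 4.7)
The regularity of the closed immersion is immediate from smoothness: both $\Detilde_1\setminus\Detilde_{1,1} \cong \Mtilde_{1,1}\times(\Mtilde_{2,1}\setminus\ThTilde_1)$ and $\Mtilde_3\setminus\Detilde_{1,1}$ are smooth (cf.\ \Cref{theo:descr-quot} and \Cref{prop:descr-detilde-1}), so a closed immersion between them is automatically regular, here of codimension~$2$.

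For the Hodge-class identities (1)--(3), the plan is to decompose the Hodge bundle along the gluing. By \Cref{lem:sep-decom-hodge}, for $C=E\cup_n D$ with $E$ of genus $1$ and $D$ of genus $2$, one has $\HH_C \cong \HH_E \oplus \HH_D$. On $\Mtilde_{1,1}$ the bundle $\HH_E$ is a line bundle with $c_1(\HH_E)=-t$ via the identification $\HH_E \cong \omega_E|_p \cong T_pE^{\vee}$. For $\HH_D$ I would work in the quotient description of \Cref{cor:mtilde_21}: Grothendieck duality applied to the double cover $\pi\colon D\to\PP^1$ gives $\pi_{*}\omega_D$ as a $B_2$-equivariant sheaf on $\PP^1$ whose global sections, once the equivariant twist built into the paper's $\det^{2}$-normalization of the $B_2$-action on $\widetilde{\AA}(6)$ is tracked, form the dual standard representation $V^{*}$ of $\GL_2$ with characters $-t_0,-t_1$. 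Hence $c(\HH_D)=(1-t_0)(1-t_1)$, and combining with $\HH_E$ yields $c(\HH_C)=(1-t)(1-t_0)(1-t_1)$; expanding gives the three asserted equalities.

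For the hyperelliptic class (5), a genus $3$ reducible curve $C=E\cup_n D$ is hyperelliptic if and only if there is an involution of $C$ restricting to the elliptic involution of $E$ (the unique involution fixing the marked origin) and to the unique hyperelliptic involution of $D$; this forces $n$ to be a fixed point of the latter, i.e.\ a Weierstrass point of $D$. In the description of \Cref{cor:mtilde_21}, the marked section lies above $\infty\in\PP^1$ with $y$-value $s$, so it is a ramification point precisely when $s=0$. Thus $[H]\vert_{\Detilde_1\setminus\Detilde_{1,1}}$ equals the class of the hyperplane $\{s=0\}\subset\widetilde{\AA}(6)$, which is the $B_2$-weight of the $s$-line, namely $t_0-2t_1$.

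Finally, for the normal bundle, standard deformation theory of separating nodes identifies $N_{\Detilde_1|\Mtilde_3}$ at $C=E\cup_n D$ with $T_nE\otimes T_nD$; since $c_1(T_nE)=t$, it suffices to compute $\tau:=c_1(T_nD)$. Applying Riemann--Hurwitz to $\pi\colon D\to\PP^1$ gives $\omega_D\simeq\pi^{*}\omega_{\PP^1}\otimes\cO_D(R)$ with $R$ the ramification divisor, hence $T_nD \simeq T_\infty\PP^1\otimes\cO_D(-R)|_n$. The Euler sequence on $\PP^1=\PP(V)$ gives $T\PP^1\simeq\cO(2)\otimes\det V$, so $T_\infty\PP^1$ has weight $t_0-t_1$. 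The key observation is that the ramification divisor $R=\{y=0\}$ meets the marked section exactly along $\{s=0\}$, since $y$ restricts to $s$ along the section; hence $n^{*}\cO_D(R)\cong\cO(\{s=0\})$, whose first Chern class is $t_0-2t_1$ by (5). Combining gives $\tau=(t_0-t_1)-(t_0-2t_1)=t_1$, so $c_1(N)=t+t_1$. The main obstacle is tracking the equivariant twist on $\cO_{\PP^1}(-3)$ (built into the paper's $\det^{2}$ convention) consistently across the Grothendieck duality and Riemann--Hurwitz steps; the cleanest way to sidestep this bookkeeping is precisely the reduction of $n^{*}\cO_D(R)$ to $\{s=0\}$, re-using the class computed independently in part (5).
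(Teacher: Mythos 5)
Your proof is correct and follows essentially the same route as the paper: regularity from smoothness of both stacks, the splitting of the Hodge bundle at the separating node combined with the equivariant weights of $\pi_*\omega_{C_0}$ in the $B_2$-presentation of \Cref{cor:mtilde_21}, the identification of the hyperelliptic locus with the transverse divisor $\{s=0\}$ of weight $t_0-2t_1$, and the normal bundle of the boundary divisor as the tensor product of the two tangent lines at the node. The only (harmless) deviations are that you derive $c_1(T_nD)=t_1$ via Riemann--Hurwitz together with the identification $n^*\cO_D(R)\cong\cO(\{s=0\})$, where the paper directly identifies $N_{p_0/C_0}$ with $\cO_{\PP^1}(1)^{\vee}\vert_{\infty}=E_1$ (both give weight $t_1$), and that you label item (4) as (5).
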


\begin{proof}
The closed immersion is regular because the two stacks are smooth.  Because of Appendix A of \cite{DiLorVis}, we know that the normal bundle is the determinant of a vector bundle $N$ of rank $2$ tensored by a $2$-torsion line bundle whose first Chern class vanishes when we invert $2$ in the Chow ring. Moreover, we can describe $N$ in the following way. Suppose $(C/S,p)$ is an object of $(\Detilde_{1}\setminus \Detilde_{1,1})(S)$, where $p$ is the section whose geometric fibers over $S$ are separating nodes. Then $N$ is the normal bundle of $p$ inside $C$, which has rank $2$. If we decompose $C$ as a union of an elliptic curve $(E,e)\in \Mtilde_{1,1}$ and a $1$-pointed genus $2$ curve $(C_0,p_0) \in \Mtilde_{2,1}$, it is clear that $N=N_{e/E}\oplus N_{p_0/C_0}$. Therefore it is enough to compute the two line bundles separately.
	
  The first Chern class of the line bundle $N_{e/E}$ is exactly the $\psi$-class of $\Mtilde_{1,1}$, which is $t$ by definition. Similarly, the line bundle $N_{p_0/C_0}$ is the $\psi$-class of $\Mtilde_{2,1}$. Using the description of $\Ctilde_2$ as a quotient stack (see the proof of \Cref{cor:mtilde_21}), one can prove that 
	  \begin{itemize}
	  	\item $N_{p_0/C_0}=\cO_{\PP^1}(1)^{\vee}\vert_{p_{\infty}}=E_1$, where $E_1$ is the character of $B_2$ whose Chern class is $t_1$;
	  	\item $\pi_*\omega_{C_0}=E_0^{\vee}\oplus E_1^{\vee}$, where $E_0$ is the character of $B_2$ whose Chern class is $t_0$.
	\end{itemize}

We now prove the description of the fundamental class of the hyperelliptic locus. We have that $\Htilde_3$ intersects transversely $\Detilde_1\setminus \Detilde_{1,1}$ in the locus where the section of $\Mtilde_{2,1}$ is a Weierstrass point for the (unique) involution of a genus $2$ curve. The description of $\Ctilde_2 \setminus \ThTilde_1$ as a quotient stack (see the proof of \Cref{cor:mtilde_21}) implies that $[H]\vert_{\Detilde_1\setminus \Detilde_{1,1}}$ is equal to the class of the vanishing locus $s=0$ in $\widetilde{\AA}(6)$, which is easily computed as we know explicitly the action of $B_2$. The only thing that remains to do is the descriptions of the $\lambda$-classes. They follow from the following lemma.
\end{proof}

\begin{lemma}\label{lem:hodge-sep}
	Let $C/S$ be an $A_r$-stable curve over a base scheme $S$ and $p$ a separating node (i.e. a section whose geometric fibers over $S$ are separating nodes). Consider $b:\widetilde{C}\arr C$ the blowup of $C$ in $p$ and denote by $\widetilde{\pi}$ the composition $\pi \circ b$. Then we have $$\pi_*\omega_{C/S}\simeq \widetilde{\pi}_*\omega_{\widetilde{C}/S}.$$
\end{lemma}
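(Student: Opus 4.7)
The strategy is to produce a natural inclusion $b_*\omega_{\widetilde{C}/S} \hookrightarrow \omega_{C/S}$ whose cokernel is a skyscraper along the section $p$, and then to check that applying $\pi_*$ turns this inclusion into an isomorphism of locally free sheaves of the same rank; the separating hypothesis enters exactly in the fibrewise dimension count.

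First I would construct the short exact sequence
\[
0 \longrightarrow b_*\omega_{\widetilde{C}/S} \longrightarrow \omega_{C/S} \longrightarrow \iota_{p,*}\cO_S \longrightarrow 0
\]
by applying $\curshom_{\cO_C}(-,\omega_{C/S})$ to the partial-normalization sequence $0 \to \cO_C \to b_*\cO_{\widetilde{C}} \to \iota_{p,*}\cO_S \to 0$. Grothendieck duality for the finite morphism $b$ identifies $\curshom_{\cO_C}(b_*\cO_{\widetilde{C}},\omega_{C/S})$ with $b_*\omega_{\widetilde{C}/S}$, and since $\omega_{C/S}$ is invertible at the section a local $\mathrm{Ext}^1$ computation contributes the skyscraper $\iota_{p,*}\cO_S$ while the corresponding local $\mathrm{Hom}$ vanishes. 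This yields the displayed sequence.

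Next I would push this sequence forward to $S$ and analyse the resulting map $\alpha\colon \widetilde{\pi}_*\omega_{\widetilde{C}/S}\to \pi_*\omega_{C/S}$. By \Cref{prop:hodge-bundle}, $\pi_*\omega_{C/S}$ is locally free of rank $g$ with formation commuting with base change, and the same proof (smoothness of the versal deformation of $A_r$-prestable curves together with Grauert's theorem) applies verbatim to $\widetilde{\pi}_*\omega_{\widetilde{C}/S}$ once I know $h^0(\widetilde{C}_s,\omega_{\widetilde{C}_s})$ is constant on $S$. Here I invoke the separating case of \Cref{rem:genus-count}, which gives $g(\widetilde{C}_s) = g(C_s) = g$, so $\widetilde{\pi}_*\omega_{\widetilde{C}/S}$ is also locally free of rank $g$ and commutes with base change. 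Because $\iota_{p,*}\cO_S$ is $S$-flat, the displayed sequence stays short exact after every base change, so $\alpha$ is injective fibrewise.

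Finally I would promote this to a global isomorphism. On each geometric fibre the source and target of $\alpha$ have dimension $g$ and $\alpha$ is injective, hence bijective; Nakayama's lemma applied to the cokernel then upgrades fibrewise surjectivity to surjectivity over $S$. The separating hypothesis is indispensable and is used only in the dimension count: for a non-separating node the left hand side would drop to rank $g-1$ and the conclusion would fail. I do not foresee a serious obstacle, since the duality input is standard and everything else is a routine cohomology-and-base-change argument.
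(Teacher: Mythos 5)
Your proof is correct, and it reaches the conclusion by a mildly but genuinely different route from the paper's. The paper tensors the partial-normalization sequence $0 \to \cO_C \to b_*\cO_{\widetilde{C}} \to Q \to 0$ with $\omega_{C/S}$ and uses the Noether formula $b^*\omega_{C/S}=\omega_{\widetilde{C}/S}(D)$ to sandwich both sheaves inside the auxiliary sheaf $\widetilde{\pi}_*\omega_{\widetilde{C}/S}(D)$, proving that each of the two inclusions $\pi_*\omega_{C/S}\into\widetilde{\pi}_*\omega_{\widetilde{C}/S}(D)$ and $\widetilde{\pi}_*\omega_{\widetilde{C}/S}\into\widetilde{\pi}_*\omega_{\widetilde{C}/S}(D)$ is an isomorphism by restricting to geometric fibres and invoking \Cref{lem:sep-decom-hodge}. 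You instead apply $\curshom_{\cO_C}(-,\omega_{C/S})$ to the same sequence, obtaining the trace inclusion $b_*\omega_{\widetilde{C}/S}\into\omega_{C/S}$ with $S$-flat skyscraper cokernel, so that there is a single direct comparison map $\alpha$ whose bijectivity follows from a rank count; this avoids both the twist by $D$ and the appeal to \Cref{lem:sep-decom-hodge}, at the cost of the local $\mathcal{E}xt$ computation at the node (which is harmless here because a separating node is \'etale-locally the constant family $xy=0$ by \Cref{lem:sep-sing}, so the cokernel really is $\iota_{p,*}\cO_S$). The base-change inputs (Grauert plus smoothness of the versal deformation) are identical in the two arguments. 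One small point of care: since $\widetilde{C}_s$ is disconnected, its arithmetic genus in the convention $g=1-\chi(\cO)$ is $g-1$, not $g$; what your rank count actually needs is $h^0(\widetilde{C}_s,\omega_{\widetilde{C}_s})=h^1(\widetilde{C}_s,\cO_{\widetilde{C}_s})=g_1+g_2=g$, which holds because $h^0(\cO_{\widetilde{C}_s})=2$. The conclusion of your dimension count is therefore right, but the justification should go through $h^1(\cO)$ rather than through the phrase ``$g(\widetilde{C}_s)=g$''.
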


\begin{proof}
	Denote by $D$ the dual of the conductor ideal, see \Cref{lem:blowup}. We know by the Noether formula that $b^*\omega_{C/S}=\omega_{\widetilde{C}/S}(D)$, therefore if we tensor the exact sequence 
	$$0 \rightarrow \cO_C \rightarrow f_*\cO_{\widetilde{C}} \rightarrow Q \rightarrow 0$$ 
	by $\omega_{C/S}$, we get the following injective morphism 
	$$ \omega_{C/S} \into f_*\omega_{\widetilde{C}/S}(D). $$
	As usual, the smoothness of $\Mtilde_g^r$ implies that we can apply Grauert's theorem to prove that the line bundles $\omega_{\widetilde{C}}$ and $\omega_{\widetilde{C}}(D)$ satisfy base change over $S$. Consider now the morphism on global sections
	$$ \pi_*\omega_{C/S} \into\widetilde{\pi}_*\omega_{\widetilde{C}/S}(D),$$
	because they both satisfy base change, we can prove the surjectivity restricting to the geometric points of $S$. The statement for algebraically closed fields has already been proved in \Cref{lem:sep-decom-hodge}. 
	
	Finally, given the morphism 
	$$ \omega_{\widetilde{C}} \into \omega_{\widetilde{C}}(D) $$ 
	we consider the global sections 
	$$ \widetilde{\pi}_*\omega_{\widetilde{C}/S} \into \widetilde{\pi}_*\omega_{\widetilde{C}/S}(D)$$ 
	and again the surjectivity follows restricting to the geometric fibers over $S$. 
\end{proof}

\section{Description of $\Detilde_{1,1} \setminus \Detilde_{1,1,1}$}\label{sec:detilde-1-1}

Let us consider the following morphism 
$$ \Mtilde_{1,2} \times \Mtilde_{1,1} \times \Mtilde_{1,1} \longrightarrow \Detilde_{1,1}$$ 
defined by the association $$(C,p_1,p_2),(E_1,e_1),(E_2,e_2) \mapsto E_1 \cup_{e_1\equiv p_1} C \cup_{p_2 \equiv e_2} E_2$$
where the cup symbol represent the gluing of the two curves using the two points specified. The operation is obviously non commutative.

The preimage of $\Detilde_{1,1,1}$ through the morphism is the product $\Mtilde_{1,1}\times \Mtilde_{1,1}\times \Mtilde_{1,1}$ where $\Mtilde_{1,1}\subset \Mtilde_{1,2}$ is the universal section of the natural functor $\Mtilde_{1,2}\rightarrow \Mtilde_{1,1}$.

\begin{lemma}\label{lem:detilde-1-1}
	The morphism 
	$$ \pi_2:(\Mtilde_{1,2}\setminus \Mtilde_{1,1}) \times \Mtilde_{1,1} \times \Mtilde_{1,1} \longrightarrow \Detilde_{1,1}\setminus \Detilde_{1,1,1}$$ 
	described above is a $C_2$-torsor.
\end{lemma}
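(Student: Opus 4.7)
The plan is to exhibit a free $C_2$-action on the source whose quotient is exactly $\Detilde_{1,1}\setminus\Detilde_{1,1,1}$. The generator of $C_2$ is the automorphism
\[
\bigl((C,p_1,p_2),(E_1,e_1),(E_2,e_2)\bigr)\longmapsto \bigl((C,p_2,p_1),(E_2,e_2),(E_1,e_1)\bigr),
\]
which makes sense on $\Mtilde_{1,2}\setminus\Mtilde_{1,1}$ since swapping the two distinct marked points is a legal automorphism in $\Mtilde_{1,2}$. The morphism $\pi_2$ is manifestly invariant under this action, because the two data produce literally the same pushout curve $E_1\cup_{p_1=e_1} C\cup_{p_2=e_2}E_2\simeq E_2\cup_{p_2=e_2} C\cup_{p_1=e_1}E_1$.

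For the converse, given a family $C_0/S$ in $\Detilde_{1,1}\setminus\Detilde_{1,1,1}$, I would reconstruct the triple as follows. The locus $N\subset C_0$ of separating nodes is closed in $C_0$ (cf.\ \Cref{lem:sep-sing}) and finite \'etale of degree $2$ over $S$. \'Etale-locally on $S$ we may label these sections $q_1,q_2$, and blowing $C_0$ up along $q_1\sqcup q_2$ (see Appendix B and \Cref{lem:hodge-sep}) yields a family $\widetilde{C}_0\to S$ whose geometric fibers split into three connected components in a chain. The genus count $g_1+g_{\mathrm{mid}}+g_2=3$, combined with the absence of a third separating node (so each piece has positive geometric genus and the middle piece does not lie in $\Mtilde_{1,1}\subset\Mtilde_{1,2}$), forces the type $(1,1,1)$ of \Cref{lem:detilde-1-1}. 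This recovers the triple $((C,p_1,p_2),(E_1,e_1),(E_2,e_2))$, with the ordering of the two outer elliptic components depending precisely on the initial labeling of $q_1,q_2$.

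To promote this to a $C_2$-torsor structure, I would verify representability of $\pi_2$ and \'etaleness. Representability follows from a simple automorphism analysis: an automorphism of the glued curve preserves the three-component decomposition and restricts to automorphisms of each factor (the middle component is intrinsically distinguished as the one incident to both separating nodes), so no automorphism of the glued curve is lost under $\pi_2$ beyond the $C_2$-swap. Smoothness of source and target of the same dimension, together with the fibre description above, then gives that $\pi_2$ is \'etale; combined with the freeness of the $C_2$-action and the fact that the relative diagonal of $\pi_2$ coincides with the graph of $C_2$, we conclude that $\pi_2$ is a $C_2$-torsor.

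The main obstacle I anticipate is not the set-theoretic picture but its verification in families: showing that the sections $q_1,q_2$ exist \'etale-locally (rather than merely at geometric points), that the partial normalization along these sections produces a flat family whose connected components are themselves flat over $S$, and that the ``middle component'' and the two ``leaf components'' are each Cartier divisors on the normalization. These are precisely the technical points addressed by \Cref{lem:sep-sing}, the deformation-theoretic description of separating $A_1$-singularities from \Cref{lem:local-node-involution}, and the functoriality of blowups and pushouts in families from Appendix B.
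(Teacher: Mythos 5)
Your proof rests on the same key idea as the paper's: the swap action of $C_2$ on the triple, together with the observation that the fibre of $\pi_2$ over a geometric point consists of exactly the two orderings of the separating nodes. Where you differ is in how the torsor condition is verified. The paper works over the \emph{source}: it builds the map $\eta:\cX\times C_2\to\cX\times_{\cY}\cX$ from the swap, checks it is an isomorphism on geometric fibres over $\cX$, and then uses constancy of the fibre length together with smoothness (hence reducedness) of the source to get flatness of $\cX\times_{\cY}\cX\to\cX$ and conclude that $\eta$ is an isomorphism. You instead work over the \emph{target}, producing \'etale-local sections of $\pi_2$ by labelling the two separating nodes and reconstructing the triple via partial normalization; this is a legitimate alternative and has the merit of making surjectivity and the local triviality of the cover explicit, at the cost of the extra verifications in families you correctly list at the end. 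One caveat: your appeal to ``smoothness of source and target of the same dimension'' to get \'etaleness is mildly circular, since in the paper the smoothness of $\Detilde_{1,1}\setminus\Detilde_{1,1,1}$ is recorded only as a corollary of this very lemma (via \Cref{lem:mtilde-12}). That step is, however, redundant in your own argument: once you have the \'etale-local sections identifying $\pi_2$ with the trivial double cover, the torsor property follows directly and no smoothness of the target is needed.
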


\begin{proof}
	First of all, let us denote by $\cX\rightarrow \cY$ the morphism of algebraic stacks in the statement. It is easy to verify that it is representable. 
	
	Consider the fiber product $\cX \times_{\cY} \cX$. We can construct a morphism $\eta: \cX \times C_2 \rightarrow \cX \times_{\cY} \cX$ over $\cX$ using the following $C_2$-action on the objects of $\cX$:
	$$ \Big( (C,p_1,p_2), (E_1,e_1), (E_2,e_2) \Big) \mapsto \Big((C,p_2,p_1), (E_2,e_2), (E_1,e_1)\Big)$$
	whereas on morphism it is defined in the natural way. We use the definition of action over a stack as in \cite{Rom}.
	
	It is easy to see that $\eta$ is an isomorphism when restricted to the geometric fibers over $\cX$. This implies that $\cX \times_{\cY} \cX$ is quasi-finite and representable over $\cX$ and the length of the geometric fibers is constant. Because $\cX$ is smooth, we get that $\cX \times_{\cY} \cX$ is flat over $\cX$ and therefore that $\eta$ is an isomorphism.
\end{proof}

Denote by $\Ctilde_{1,1}$ the universal curve over $\Mtilde_{1,1}$. It naturally comes with a universal section $\Mtilde_{1,1}\into \Ctilde_{1,1}$.

\begin{lemma}\label{lem:mtilde-12}
	We have the isomorphism 
	$$ \Mtilde_{1,2}\setminus\Mtilde_{1,1} \simeq \Ctilde_{1,1}\setminus \Mtilde_{1,1}$$
	and therefore we have the following isomorphism of rings
	$$ \ch( \Mtilde_{1,2}\setminus\Mtilde_{1,1})\simeq \ZZ[1/6,t]. $$  
\end{lemma}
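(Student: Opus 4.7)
The plan is to identify $\Mtilde_{1,2}$ with $\Ctilde_{1,1}$ via the contraction morphism of Proposition~\ref{prop:contrac}, after which both the asserted isomorphism and the Chow ring calculation become concrete. First I would specialize Proposition~\ref{prop:contrac} to $g=n=1$, obtaining an open immersion $\gamma\colon \Mtilde_{1,2}\hookrightarrow \Ctilde_{1,1}$ whose image consists of triples $(E,e,q)$ with $q$ an $A_h$-singularity for $h\le 2$. To promote this to an isomorphism, I must verify that every point of a $1$-pointed $A_r$-stable genus $1$ curve $(E,e)$ is automatically an $A_h$-singularity with $h\le 2$. By Remark~\ref{rem:genus-count}, an $A_h$-singularity with $h\ge 3$ on a genus $1$ curve forces $E$ to be a union of two rational components meeting at the singular point (the only way to realize extra genus through a higher $A$-singularity without raising the arithmetic genus above $1$); but the Catanese-type formula $\deg\omega_E|_\Gamma = -2 + k$ for each $\PP^1$-component, where $k\ge 1$ is the genus contribution of the singularity, forces $\deg\omega_E(e)|_\Gamma \le 0$ on whichever rational component does not contain $e$, contradicting ampleness of $\omega_E(e)$. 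Hence $\gamma$ is surjective and thus an isomorphism.

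Next I would identify the two ``universal section'' substacks. The universal section $\Mtilde_{1,1}\hookrightarrow \Ctilde_{1,1}$ is the tautological $(E,e)\mapsto (E,e,e)$. The universal section $\Mtilde_{1,1}\hookrightarrow \Mtilde_{1,2}$, being characterized as a section of the forgetful morphism $\Mtilde_{1,2}\to \Mtilde_{1,1}$ (which under $\gamma$ becomes the structural projection $\Ctilde_{1,1}\to \Mtilde_{1,1}$), must correspond to the tautological section under $\gamma$. Concretely, the degenerate locus in $\Mtilde_{1,2}$---pairs $(C,p_1,p_2)$ with $C = E'\cup \PP^1$ a rational bridge carrying both markings---is contracted by $\gamma$ to $(E',n',n')$, where $n'$ is the image of the collapsed $\PP^1$. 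The first isomorphism in the statement follows at once by restricting $\gamma$ to the complements.

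For the Chow ring calculation I would use the Weierstrass presentation $\Mtilde_{1,1}\simeq [\AA^2_{g_2,g_3}/\gm]$ with weights $(4,6)$, as in \cite{DiLorPerVis}. Under this, $\Ctilde_{1,1}$ is the projective family $y^2 = x^3 + g_2 x + g_3$ inside $\PP^2\times \AA^2$ modulo $\gm$, with universal section at infinity $[0:1:0]$. Removing this section yields the affine hypersurface $H := \{y^2 = x^3 + g_2 x + g_3\}\subset \AA^4_{x,y,g_2,g_3}$ with $\gm$-action of weights $(2,3,4,6)$. The key observation is that the $\gm$-equivariant projection $H\to \AA^3_{x,y,g_2}$, $(x,y,g_2,g_3)\mapsto (x,y,g_2)$, is an isomorphism, since $g_3$ is determined by $y^2 - x^3 - g_2 x$. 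Therefore $\Ctilde_{1,1}\setminus \Mtilde_{1,1}\simeq [\AA^3/\gm]$, and equivariant contractibility of $\AA^3$ gives $\ch([\AA^3/\gm])=\ch(\cB\gm)=\ZZ[1/6,t]$.

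The main obstacle is the classification step in the first paragraph: ruling out $A_h$-singularities with $h\ge 3$ on $1$-pointed $A_r$-stable genus $1$ curves requires a careful case analysis of reducible configurations against the ampleness condition, although in genus $1$ the possibilities are limited enough that the argument is elementary.
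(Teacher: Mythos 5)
Your proof is correct and takes essentially the same route as the paper: the isomorphism is deduced from the contraction morphism of \Cref{prop:contrac} (your surjectivity check amounts to the observation, recorded in the remark following the lemma, that $1$-pointed genus $1$ $A_r$-stable curves carry at most cusps), and the Chow ring computation is exactly the Weierstrass-presentation argument that the paper outsources to Lemma 3.2 of \cite{DiLorPerVis}. One terminological quibble: the $\PP^1$ carrying both markings that is contracted onto the tautological section is a rational \emph{tail} (one attaching node), not a bridge --- the two-node configuration is already excluded by stability, as your own degree count shows.
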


\begin{proof}
 The isomorphism is a corollary of \Cref{prop:contrac}. The computation of the Chow ring of $\Ctilde_{1,1}\setminus \Mtilde_{1,1}$  is Lemma 3.2 of \cite{DiLorPerVis}.
\end{proof}

\begin{remark}
	It is important to notice that $\Mtilde_{1,2}$ has at most $A_3$-singularities while $\Ctilde_{1,1}$ has at most cusps, because it is the universal curve of $\Mtilde_{1,1}$.
\end{remark}
\begin{corollary}
	The algebraic stack $\Detilde_{1,1}\setminus \Detilde_{1,1,1}$ is smooth.
\end{corollary}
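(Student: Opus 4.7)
The plan is to deduce smoothness directly from the $C_2$-torsor constructed in \Cref{lem:detilde-1-1}, by arguing that the source of that torsor is already known to be smooth, and that smoothness descends along an étale cover.

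First I would verify that the source
\[
\cX := (\Mtilde_{1,2}\setminus \Mtilde_{1,1}) \times \Mtilde_{1,1} \times \Mtilde_{1,1}
\]
is a smooth algebraic stack over $\kappa$. The factor $\Mtilde_{1,1}$ is smooth by \Cref{theo:descr-quot}, and $\Mtilde_{1,2}\setminus \Mtilde_{1,1}$ is an open substack of the smooth stack $\Mtilde_{1,2}$ (again by \Cref{theo:descr-quot}), hence smooth; alternatively one can invoke the explicit description $\Mtilde_{1,2}\setminus\Mtilde_{1,1}\simeq \Ctilde_{1,1}\setminus \Mtilde_{1,1}$ of \Cref{lem:mtilde-12}, which exhibits it as an open in the universal curve $\Ctilde_{1,1}$, smooth over $\Mtilde_{1,1}$. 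Products of smooth stacks are smooth, so $\cX$ is smooth.

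Next, by \Cref{lem:detilde-1-1}, the morphism
\[
\pi_2: \cX \longrightarrow \Detilde_{1,1}\setminus \Detilde_{1,1,1}
\]
is a $C_2$-torsor. Since $\cha\,\kappa \neq 2$, the group scheme $C_2$ is étale over $\kappa$, and therefore every $C_2$-torsor is representable, finite, étale and surjective. In particular $\pi_2$ is a smooth surjection.

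The final step is to apply descent of smoothness: if $f:\cY \arr \cZ$ is a smooth surjective morphism of algebraic stacks and $\cY$ is smooth over $\kappa$, then $\cZ$ is smooth over $\kappa$. Applying this with $f=\pi_2$, $\cY=\cX$ and $\cZ=\Detilde_{1,1}\setminus\Detilde_{1,1,1}$ yields the conclusion. There is no serious obstacle here; the only thing to check carefully is that the product $\cX$ really is smooth, which in turn reduces to the smoothness of $\Mtilde_{1,1}$ and $\Mtilde_{1,2}$ already established.
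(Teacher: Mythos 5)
Your proof is correct and follows exactly the route the paper intends: the corollary is stated right after \Cref{lem:detilde-1-1} and \Cref{lem:mtilde-12} precisely so that smoothness descends along the finite étale $C_2$-torsor from the smooth product $(\Mtilde_{1,2}\setminus\Mtilde_{1,1})\times\Mtilde_{1,1}\times\Mtilde_{1,1}$. Nothing to add.
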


 \Cref{lem:chow-tor} implies that 
$$\ch(\Detilde_{1,1}\setminus \Detilde_{1,1,1})\simeq \ZZ[1/6,t,t_1,t_2]^{\rm inv},$$
where the action of $C_2$ is defined on object by the following association 
$$ \Big( (C,p_1,p_2), (E_1,e_1), (E_2,e_2) \Big) \mapsto \Big((C,p_2,p_1), (E_2,e_2), (E_1,e_1)\Big).$$

A computation shows that the involution acts on the Chow ring of the product in the following way
$$ (t,t_1,t_2) \mapsto (t,t_2,t_1)$$
and therefore we have the description we need.

\begin{proposition}
	In the situation above, we have the following isomorphism:
	$$ \ch(\Detilde_{1,1}\setminus \Detilde_{1,1,1})\simeq \ZZ[1/6,t,c_1,c_2]$$ 
	where $c_1:=t_1+t_2$ and $c_2:=t_1t_2$.
\end{proposition}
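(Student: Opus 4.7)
The plan is to combine the $C_2$-torsor structure established in \Cref{lem:detilde-1-1} with the descent result \Cref{lem:chow-tor}. Since we have inverted $6$ (in particular $2$) in the Chow rings, the isomorphism
\[
\ch(\Detilde_{1,1}\setminus \Detilde_{1,1,1})\simeq \ch\bigl((\Mtilde_{1,2}\setminus \Mtilde_{1,1})\times \Mtilde_{1,1}\times \Mtilde_{1,1}\bigr)^{C_2}
\]
is automatic, so the task reduces to computing the Chow ring of the cover together with the explicit $C_2$-action on generators.

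First, I would identify the Chow ring of the cover. By \Cref{lem:mtilde-12} we have $\ch(\Mtilde_{1,2}\setminus\Mtilde_{1,1})\simeq\ZZ[1/6,t]$, and by the usual description of $\Mtilde_{1,1}$ as a rank-two vector bundle over $\cB\gm$ one has $\ch(\Mtilde_{1,1})\simeq \ZZ[1/6,t_i]$ for $i=1,2$ on each factor (here $t_i$ is the $\psi$-class of the $i$-th factor). Since each factor is a quotient of an open subscheme of a representation by a special group, the exterior product induces an isomorphism
\[
\ch(\Mtilde_{1,2}\setminus\Mtilde_{1,1})\otimes \ch(\Mtilde_{1,1})\otimes \ch(\Mtilde_{1,1})\simeq \ch\bigl((\Mtilde_{1,2}\setminus \Mtilde_{1,1})\times\Mtilde_{1,1}\times\Mtilde_{1,1}\bigr),
\]
yielding $\ZZ[1/6,t,t_1,t_2]$ (this is the Künneth-type argument used earlier via Lemma 2.12 of \cite{Tot}).

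Next, I would trace through the $C_2$-action defined in the proof of \Cref{lem:detilde-1-1}, namely
\[
\bigl((C,p_1,p_2),(E_1,e_1),(E_2,e_2)\bigr)\longmapsto \bigl((C,p_2,p_1),(E_2,e_2),(E_1,e_1)\bigr).
\]
The class $t$ on $\Mtilde_{1,2}\setminus\Mtilde_{1,1}$ is symmetric in $p_1,p_2$ because it is pulled back from $\Mtilde_{1,1}\simeq\Ctilde_{1,1}/\text{(forget marked point)}$ via contraction (see \Cref{lem:mtilde-12}), so it is $C_2$-invariant. On the two copies of $\Mtilde_{1,1}$ the action simply swaps the factors, hence $(t_1,t_2)\mapsto (t_2,t_1)$. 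Thus the $C_2$-action on $\ZZ[1/6,t,t_1,t_2]$ is $(t,t_1,t_2)\mapsto (t,t_2,t_1)$.

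Finally, the invariant subring under this transposition is the polynomial ring on $t$ and on the elementary symmetric functions $c_1=t_1+t_2$ and $c_2=t_1t_2$; since $2$ is invertible, this is a standard fact (every polynomial splits uniquely into symmetric and anti-symmetric parts, and the anti-symmetric part is divisible by $t_1-t_2$, which together with averaging gives the symmetric-function description). The only subtle point is verifying that $t$ really does come from $\Mtilde_{1,1}$ via the contraction morphism and is therefore invariant under the swap of $p_1,p_2$ --- this is the one step worth checking carefully, but it follows directly from the identification $\Mtilde_{1,2}\setminus\Mtilde_{1,1}\simeq\Ctilde_{1,1}\setminus\Mtilde_{1,1}$ used in \Cref{lem:mtilde-12}, since the generator $t$ is the $\psi$-class of the underlying elliptic curve, which does not see which of $p_1,p_2$ is ``first''.
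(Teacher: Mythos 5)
Your proposal is correct and follows essentially the same route as the paper: the paper likewise combines the $C_2$-torsor structure of \Cref{lem:detilde-1-1} with \Cref{lem:chow-tor} to identify $\ch(\Detilde_{1,1}\setminus\Detilde_{1,1,1})$ with $\ZZ[1/6,t,t_1,t_2]^{C_2}$, checks that the involution acts by $(t,t_1,t_2)\mapsto(t,t_2,t_1)$, and concludes by taking invariants. Your extra care about the invariance of $t$ under the swap of $p_1,p_2$ is a point the paper only addresses later (via the observation that the swap autoequivalence of $\Mtilde_{1,2}$ is isomorphic to the identity by uniqueness of the elliptic involution), but it is consistent with the paper's argument.
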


It remains to describe the normal bundle of the closed immersion $\Detilde_{1,1}\setminus \Detilde_{1,1,1} \into \Mtilde_{3}\setminus\Detilde_{1,1,1}$ and the other classes. As usual, we denote by $\delta_1$ the fundamental class of $\Detilde_1$ in $\Mtilde_3$.

\begin{proposition}\label{prop:relat-detilde-1-1}
	We have the following equalities in the Chow ring of $\Detilde_{1,1}\setminus \Detilde_{1,1,1}$:
	\begin{itemize}
		\item[(1)] $\lambda_1=-t-c_1$,
		\item[(2)] $\lambda_2=c_2+tc_1$,
		\item[(3)] $\lambda_3=-tc_2$,
		\item[(4)] $[H]\vert_{\Detilde_{1,1}\setminus \Detilde_{1,1,1}} = -3t$,
		\item[(5)] $\delta_1\vert_{\Detilde_{1,1}\setminus \Detilde_{1,1,1}} = 2t+c_1$.
	\end{itemize}
	Furthermore, the second Chern class of the normal bundle of the closed immersion $\Detilde_{1,1}\setminus \Detilde_{1,1,1} \into \Mtilde_{3}\setminus\Detilde_{1,1,1}$ is equal to $c_2+tc_1+t^2$.
\end{proposition}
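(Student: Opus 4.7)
The proof relies on the degree-$2$ \'etale cover $\pi_2: X := (\Mtilde_{1,2}\setminus\Mtilde_{1,1}) \times \Mtilde_{1,1} \times \Mtilde_{1,1} \to \Detilde_{1,1}\setminus\Detilde_{1,1,1}$ from \Cref{lem:detilde-1-1}. By \Cref{lem:chow-tor}, it suffices to compute the pullbacks $\pi_2^*\lambda_i$, $\pi_2^*[H]$, $\pi_2^*\delta_1$, and $\pi_2^*c_2(N)$ inside $\ch(X)\simeq \ZZ[1/6,t,t_1,t_2]$ and check that they are the claimed $C_2$-invariant polynomials in $t$, $c_1=t_1+t_2$, $c_2=t_1t_2$.

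For (1)--(3), I would iterate \Cref{lem:hodge-sep} at the two separating nodes to split the pullback of the Hodge bundle as the direct sum of the Hodge bundles of the three genus-$1$ components. Each summand is a line bundle whose first Chern class equals minus the $\psi$-class on the corresponding factor: on $\Mtilde_{1,1}$ this follows from $\pi_*\omega_{E/S}\simeq e^*\omega_{E/S}$, and on $\Mtilde_{1,2}\setminus\Mtilde_{1,1}\simeq\Ctilde_{1,1}\setminus\Mtilde_{1,1}$ (\Cref{lem:mtilde-12}) from the fact that the Hodge bundle pulls back along the contraction morphism of \Cref{prop:contrac}. The Chern roots are therefore $-t,-t_1,-t_2$, and the elementary symmetric polynomials give the stated formulas for $\lambda_1,\lambda_2,\lambda_3$.

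For (5) and the normal bundle, the crucial observation is that along $\Detilde_{1,1}\setminus\Detilde_{1,1,1}$ the divisor $\Detilde_1$ is \'etale-locally the transverse union of two smooth branches $B_1,B_2$, one for each of the two separating nodes. Hence $\pi_2^*\delta_1 = c_1(N_{B_1/\Mtilde_3})|_X + c_1(N_{B_2/\Mtilde_3})|_X$, and by transversality the rank-two normal bundle $N$ of $\Detilde_{1,1}\setminus\Detilde_{1,1,1}$ in $\Mtilde_3\setminus\Detilde_{1,1,1}$ is $N_{B_1/\Mtilde_3}|_{\Detilde_{1,1}}\oplus N_{B_2/\Mtilde_3}|_{\Detilde_{1,1}}$. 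Applying \Cref{prop:relation-detilde-1} to each branch, $c_1(N_{B_i/\Mtilde_3})$ is the sum of the $\psi$-class at the $i$-th node on both sides: the $E_i$ side contributes $t_i$, and the $C$ side contributes $t$ (the $\psi$-class at either section of $\Mtilde_{1,2}\setminus\Mtilde_{1,1}$, which coincides with the generator $t$ via the comparison $\psi_{p_i,\Mtilde_{1,2}}=f^*\psi_{\Mtilde_{1,1}}+[\text{diagonal}]$ once the diagonal $\Mtilde_{1,1}$ has been removed). Summing gives $\pi_2^*\delta_1=2t+c_1$, and the product $(t+t_1)(t+t_2)=t^2+tc_1+c_2$ yields $c_2(N)$.

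For (4), I would invoke $[H] = 9\lambda_1-\delta_0-3\delta_1$, valid in $\Pic(\Mtilde_3)$ because $\Mtilde_3\setminus\Mbar_3$ has codimension at least two. \Cref{lem:hodge-sep} together with the Mumford relation $\delta_0=12\lambda_1$ on each $\Mtilde_{1,1}$ factor (which persists with $\ZZ[1/6]$-coefficients once the cuspidal locus is absorbed into $\delta_0$) give $\pi_2^*\delta_0=-12(t+c_1)$. Combined with (1) and (5), this yields $\pi_2^*[H]=9(-t-c_1)+12(t+c_1)-3(2t+c_1)=-3t$. The main obstacle I foresee is the careful verification of the two inputs about the middle factor: the identification of both $\psi_{p_1}$ and $\psi_{p_2}$ with the generator $t$ on $\Mtilde_{1,2}\setminus\Mtilde_{1,1}$, and the extension of Mumford's $\delta_0=12\lambda_1$ across the cuspidal locus of $\Mtilde_{1,1}$; both are routine but require tracking $\psi$-class comparison formulas and boundary contributions that almost cancel on the open stratum.
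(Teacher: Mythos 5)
Your proof is correct, and for (1)--(3), (5) and the normal bundle it follows essentially the paper's route: \Cref{lem:hodge-sep} splits the Hodge bundle into the three genus-one summands with Chern roots $-t,-t_1,-t_2$, and the normal bundle is $(N_{p_1|C}\otimes N_{e_1|E_1})\oplus(N_{p_2|C}\otimes N_{e_2|E_2})$, the key input being that both $\psi$-classes of $\Mtilde_{1,2}\setminus\Mtilde_{1,1}$ equal the generator $t$ (the paper establishes this via the swap autoequivalence being isomorphic to the identity, by uniqueness of the involution of \Cref{lem:genus1}; that argument is cleaner than the forgetful-map comparison formula you sketch, which as stated only handles the non-moving point). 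Where you genuinely diverge is (4): the paper computes $[H]\vert_{\Detilde_{1,1}\setminus\Detilde_{1,1,1}}$ directly, observing that hyperellipticity imposes no condition on the elliptic tails and on the middle component amounts to both sections being fixed by an involution, which in the Weierstrass presentation $W=\{y^2=x^3+\alpha x+\beta\}$ of $\Ctilde_{1,1}\setminus\Mtilde_{1,1}$ is the hypersurface $y=0$, of equivariant class $-3t$. You instead restrict the divisor identity $[H]=9\lambda_1-\delta_0-3\delta_1$ and feed in $\delta_0\vert=12\lambda_1\vert$ via Mumford's relation on each factor; the arithmetic $9(-t-c_1)+12(t+c_1)-3(2t+c_1)=-3t$ checks out, and the identity $\delta_0=12\lambda_1$ on $\Mtilde_{1,1}$ and its pullback to $\Ctilde_{1,1}\setminus\Mtilde_{1,1}$ are exactly what the paper itself uses in the proof of \Cref{prop:rho-3-surj}, so the inputs are legitimate. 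Your route trades an explicit weight computation for a reliance on Esteves' formula, and it has one step you should make explicit: that $i^*\delta_0$ really is the reduced sum of the three component discriminants, i.e.\ that $\Detilde_0$ meets $\Detilde_{1,1}$ generically transversally; this follows from smoothness of the deformation space and the fact that the local equation of $\Detilde_0$ (the smoothing parameter of the non-separating node) is independent of the two coordinates cutting out $\Detilde_{1,1}$ (the smoothing parameters of the separating nodes). With that remark added, both proofs are complete and give the same answer.
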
 

\begin{proof}
	The restriction of the $\lambda$-classes can be computed using \Cref{lem:hodge-sep}. The proof of formula (5) is exactly the same of the computation of the normal bundle of $\Detilde_{1}\setminus \Detilde_{1,1}$ in $\Mtilde_3 \setminus \Detilde_{1,1}$. The only thing to remark is that the two $\psi$-classes of $\Mtilde_{1,2}\setminus \Mtilde_{1,1}$ coincide with the generator $t$ of $\ch(\Mtilde_{1,2}\setminus \Mtilde_{1,1})$.
	
	As far as the fundamental class of the hyperelliptic locus is concerned, it is clear that it coincides with the fundamental class of the locus in $\Mtilde_{1,2}\setminus \Mtilde_{1,1}$ parametrizing $2$-pointed stable curves of genus $1$ such that the two sections are both fixed points for an involution. This computation can be done using the description of $\Mtilde_{1,2}\setminus \Mtilde_{1,1}$ as $\Ctilde_{1,1}\setminus \Mtilde_{1,1}$, in particular as in Lemma 3.2 of \cite{DiLorPerVis}. In fact, they proved that $\Ctilde_{1,1}\setminus \Mtilde_{1,1}$ is an invariant subscheme $W$ of a $\gm$-representation $V$ of dimension $4$, where the action can be described as $$t.(x,y,\alpha,\beta)=(t^{-2}x,t^{-3}y,t^{-4}\alpha,t^{-6}\beta)$$
	for every $t \in \gm $ and $(x.y.\alpha,\beta) \in V$. Specifically, $W$ is the hypersurface in $V$ defined by the equation $y^2=x^3+\alpha x+\beta$, which is exactly the dehomogenization of the Weierstrass form of an elliptic curve with a flex at the infinity. A straightforward computation shows that the hyperelliptic locus is defined by the equation $y=0$. 
	
	Finally, the normal bundle of the closed immersion can be described using the theory developed in Appendix A of \cite{DiLorVis} as the sum 
	$$(N_{p_1|C}\otimes N_{e_1|E_1})\oplus (N_{p_2|C}\otimes N_{e_2|E_2})$$
	for every element $[(C,p_1,p_2), (E_1,e_1), (E_2,e_2)]$ in $\Detilde_{1,1}\setminus \Detilde_{1,1,1}$. 
\end{proof}

\section{Description of $\Detilde_{1,1,1}$}\label{sec:detilde-1-1-1}

Finally, to describe the last strata, we use the morphism introduced in the previous paragraph to define
$$c_6:\Mtilde_{1,1} \times \Mtilde_{1,1}\times \Mtilde_{1,1} \rightarrow \Detilde_{1,1,1}$$
which can be described as taking three elliptic (possibly singular) curves $(E_1,e_1)$, $(E_2,e_2)$, $(E_3,e_3)$ and attach them to a projective line with three distinct points $(\PP^1,0,1,\infty)$ using the order $e_1\equiv 0$, $e_2\equiv 1$, $e_3 \equiv \infty$. We denote by $S_3$ the group of permutation of a set of three elements.

\begin{lemma}\label{lem:descr-delta-1-1-1}
	The morphism $c_6$ is a $S_3$-torsor.
\end{lemma}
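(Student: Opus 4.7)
The plan is to adapt the proof strategy of \Cref{lem:detilde-1-1} to this $S_3$-case. Set $\cX := \Mtilde_{1,1}\times\Mtilde_{1,1}\times\Mtilde_{1,1}$ and $\cY := \Detilde_{1,1,1}$. First I would verify that $c_6$ is representable. An automorphism of a glued curve over its image in $\cY$ must preserve the $\PP^1$-spine together with its three marked attachment points $\{0,1,\infty\}$, hence acts trivially on the spine; the automorphism then restricts to automorphisms of each $(E_i,e_i)$, recovering an automorphism of the chosen triple in $\cX$. So $c_6$ is faithful and in particular representable.

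Next, I would equip $\cX$ with the $S_3$-action permuting the three factors, and construct a morphism
\[
\eta: \cX \times S_3 \longrightarrow \cX \times_{\cY} \cX
\]
over the first projection. The key input is that $\PGL_2$ acts $3$-transitively on $\PP^1$, so for each $\sigma \in S_3$ there is a unique $\phi_\sigma \in \PGL_2$ realizing the permutation of $\{0,1,\infty\}$ prescribed by $\sigma$; using $\phi_\sigma$ as a reidentification of the spine, the curve obtained by gluing $((E_1,e_1),(E_2,e_2),(E_3,e_3))$ is canonically isomorphic to the one obtained by gluing $((E_{\sigma(1)},e_{\sigma(1)}),(E_{\sigma(2)},e_{\sigma(2)}),(E_{\sigma(3)},e_{\sigma(3)}))$. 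This isomorphism, functorially in $S$, defines $\eta$ on objects; on morphisms it is the natural choice.

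The core step is to show $\eta$ is an isomorphism. I would check this on geometric fibers over $\cX$: for a geometric point $[C]\in\cY(k)$ with $k$ algebraically closed, \Cref{lem:sep-sing} together with the genus and stability count forces the dual graph of $C$ to consist of a unique $\PP^1$-spine meeting three $1$-pointed elliptic tails at its three attachment points, so $c_6^{-1}([C])$ is the set of orderings of these tails, of cardinality exactly $6=|S_3|$. Thus $\eta$ is a quasi-finite representable morphism which induces a bijection on geometric fibers over $\cX$; since the left-hand side is finite \'etale over $\cX$ and $\cX$ is smooth, constancy of fiber length forces the fiber product $\cX\times_\cY\cX$ to be flat over $\cX$, and therefore $\eta$ is an isomorphism, exactly as in the proof of \Cref{lem:detilde-1-1}. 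Combined with the essential surjectivity of $c_6$ on geometric points (every point of $\Detilde_{1,1,1}$ arises this way), this yields that $c_6$ is an $S_3$-torsor.

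The main obstacle I expect is the functorial definition of $\eta$, i.e.\ encoding the ``spine reidentification by $\phi_\sigma$'' at the level of arbitrary $S$-points so that it genuinely produces an isomorphism in $\cY(S)$ and not merely a fiberwise bijection; once this bookkeeping is done, the geometric-fiber plus flatness argument is routine.
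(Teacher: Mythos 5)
Your proposal is correct and follows essentially the same route as the paper, which simply states that the proof of \Cref{lem:detilde-1-1} adapts perfectly to this case: you construct $\eta:\cX\times S_3\to\cX\times_{\cY}\cX$ from the permutation action (using simple $3$-transitivity of $\PGL_2$ on $\{0,1,\infty\}$ to reidentify the spine), check it is a bijection on geometric fibers, and conclude by flatness over the smooth source. The only added content in your write-up is the explicit verification that every geometric point of $\Detilde_{1,1,1}$ has the star-shaped dual graph, which the paper leaves implicit.
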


\begin{proof}
	The proof of \Cref{lem:detilde-1-1} can be adapted perfectly to this statement. 
\end{proof}

The previous lemma implies that we have an action of $S_3$ on $\ch(\Mtilde_{1,1}^{\times 3})$. Therefore it is clear that 
$$ \ch(\Detilde_{1,1,1})=\ZZ[1/6,c_1,c_2,c_3]$$
where $c_i$ is the $i$-th symmetric polynomial in the variables $t_1,t_2,t_3$, which are the generators of the Chow rings of the factors.. 

\begin{proposition}
	We have that the following equalities in the Chow ring of $\Detilde_{1,1,1}$:
	\begin{itemize}
		\item[(1)] $\lambda_1=-c_1$,
		\item[(2)] $\lambda_2=c_2$,
		\item[(3)] $\lambda_3=-c_3$,
		\item[(4)] $[H]\vert_{\Detilde_{1,1,1}}=0$,
		\item[(5)] $\delta_1\vert_{\Detilde_{1,1,1}}=c_1$,
		\item[(6)] $\delta_{1,1}\vert_{\Detilde_{1,1,1}}=c_2$.
	\end{itemize}
	Furthermore, the third Chern class of the normal bundle of the closed immersion $\Detilde_{1,1,1}\into \Mtilde_3$ is equal to $c_3$.
\end{proposition}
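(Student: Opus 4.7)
The plan is to use the $S_{3}$-torsor $c_{6}\colon \Mtilde_{1,1}^{\times 3}\arr \Detilde_{1,1,1}$ of \Cref{lem:descr-delta-1-1-1} and \Cref{lem:chow-tor} to identify $\ch(\Detilde_{1,1,1})$ with the $S_{3}$-invariants inside $\ZZ[1/6,t_{1},t_{2},t_{3}]$, so that every class is determined by its $c_{6}^{*}$-pullback, and I will simply compute that pullback in each case. Throughout I write $n_{i}=e_{i}\equiv p_{i}$, with $p_{1},p_{2},p_{3}\in\{0,1,\infty\}$ the three (fixed) attachment points on the central~$\PP^{1}$.

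For the Hodge classes, I iterate \Cref{lem:hodge-sep} at the three separating nodes. Blowing them up produces the disjoint union $E_{1}\sqcup E_{2}\sqcup E_{3}\sqcup \PP^{1}$, and since $\pi_{*}\omega_{\PP^{1}/S}=0$ one obtains an isomorphism $\pi_{*}\omega_{C/S}\simeq \HH_{1}\oplus \HH_{2}\oplus \HH_{3}$, where $\HH_{i}$ is the Hodge bundle pulled back from the $i$-th factor $\Mtilde_{1,1}$. As in the proof of \Cref{prop:relation-detilde-1}, $c_{1}(\HH_{i})=-t_{i}$, so the total Chern polynomial is $\prod_{i}(1-t_{i})=1-c_{1}+c_{2}-c_{3}$, giving (1), (2), (3). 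The vanishing $[H]\vert_{\Detilde_{1,1,1}}=0$ will follow from showing $\Htilde_{3}\cap\Detilde_{1,1,1}=\emptyset$ on geometric points: the dual graph of a curve in $\Detilde_{1,1,1}$ is a claw, so a putative hyperelliptic involution $\sigma$ fixes the central vertex and either fixes or transposes two of its three leaves. If $\sigma$ fixes all three leaves it must fix the three nodes $n_{1},n_{2},n_{3}$ on the central $\PP^{1}$, forcing $\sigma\vert_{\PP^{1}}=\id$, contrary to the finiteness of the fixed locus; if $\sigma$ swaps two leaves, the corresponding elliptic components are identified in the quotient, producing a genus~$1$ component in $C/\sigma$, contradicting the genus $0$ requirement in the definition of a hyperelliptic $A_{r}$-stable curve.

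For (5), (6) and the claim on the normal bundle, I invoke the setup of Appendix~A of \cite{DiLorVis}, exactly as in the proofs of \Cref{prop:relation-detilde-1} and \Cref{prop:relat-detilde-1-1}. The universal separating-node locus $\Sigma\subset\Ctilde_{3}$ has, over $\Detilde_{1,1,1}$, three connected components $\Sigma_{1},\Sigma_{2},\Sigma_{3}$, one for each node $n_{i}$; each $\Sigma_{i}$ maps isomorphically onto a smooth Cartier divisor $D_{i}\subset \Mtilde_{3}$ in a neighbourhood of $\Detilde_{1,1,1}$, and the three $D_{i}$ meet transversally along $\Detilde_{1,1,1}$ (their defining ideals are independent smoothing parameters). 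The normal bundle of $D_{i}$ at $\Detilde_{1,1,1}$ is the smoothing line $N_{e_{i}/E_{i}}\otimes N_{p_{i}/\PP^{1}}$; the second factor is trivial because the central $3$-pointed $\PP^{1}$ is constant over $\Mtilde_{1,1}^{\times 3}$, whereas $c_{1}(N_{e_{i}/E_{i}})=t_{i}$ on the $i$-th factor. Hence $c_{6}^{*}[D_{i}]\vert_{\Detilde_{1,1,1}}=t_{i}$, and
\[
\delta_{1}\vert_{\Detilde_{1,1,1}}=\sum_{i}[D_{i}]\vert_{\Detilde_{1,1,1}}=c_{1},
\qquad
\delta_{1,1}\vert_{\Detilde_{1,1,1}}=\sum_{i<j}[D_{i}]\cdot[D_{j}]\vert_{\Detilde_{1,1,1}}=c_{2},
\]
the second using the transversality of the $D_{i}$'s. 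Finally, the normal bundle of $\Detilde_{1,1,1}\subset \Mtilde_{3}$ splits, along the $S_{3}$-cover, as $\bigoplus_{i}N_{D_{i}/\Mtilde_{3}}\vert_{\Detilde_{1,1,1}}$, so its total Chern class is $\prod_{i}(1+t_{i})$ and in particular $c_{3}\bigl(N_{\Detilde_{1,1,1}/\Mtilde_{3}}\bigr)=t_{1}t_{2}t_{3}=c_{3}$.

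The only delicate step is the last one: I must justify rigorously that near $\Detilde_{1,1,1}$ the divisor $\Detilde_{1}$ really is the union of three smooth transverse branches $D_{i}$, and that the normal line of each $D_{i}$ is the smoothing parameter at the $i$-th node. This is precisely what the universal separating-node machinery of Appendix~A of \cite{DiLorVis} provides, and once this is in place all four items fall out by the elementary character-theoretic computation above.
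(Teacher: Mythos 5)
Your proposal is correct and follows essentially the same route as the paper: the $S_3$-torsor of \Cref{lem:descr-delta-1-1-1} together with \Cref{lem:chow-tor}, \Cref{lem:hodge-sep} for the $\lambda$-classes, the non-hyperellipticity argument via the three fixed points on the central $\PP^1$, and the Appendix A machinery of \cite{DiLorVis} giving $N_{\Detilde_{1,1,1}|\Mtilde_3}=N_{e_1|E_1}\oplus N_{e_2|E_2}\oplus N_{e_3|E_3}$. You merely write out in full the details the paper delegates to ``adapt the proof of \Cref{prop:relat-detilde-1-1}'' (in particular the triviality of the $N_{p_i|\PP^1}$ factors and the transversality of the three branches of $\Detilde_1$), and your case analysis for $[H]=0$ additionally handles the leaf-swapping involution, which the paper's one-line argument implicitly excludes via \Cref{lem:subcurve}.
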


\begin{proof}
	We can use \Cref{lem:hodge-sep} to get the description of  the $\lambda$-classes. To compute $\delta_1$ and $\delta_{1,1}$ and the third Chern class of the normal bundle one can use again the results of Appendix A of \cite{DiLorVis} and adapt the proof of \Cref{prop:relat-detilde-1-1}. Notice that in this case $$N_{\Detilde_{1,1,1}|\Mtilde_3}=N_{e_1|E_1}\oplus N_{e_2|E_2}\oplus N_{e_3|E_3}.$$ Finally, we know that there are no hyperelliptic curves of genus $3$ with three separating nodes. In fact, any involution restricts to the identity over the projective line, because it fixes three points. But then its fixed locus is not finite, therefore it is not hyperelliptic. 
\end{proof}

\section{The gluing procedure and the Chow ring of $\Mtilde_3$}\label{sec:chow-m3tilde}

In the last section of this chapter, we explain how to calculate explicitly the Chow ring of $\Mtilde_3$. It is not clear a priori how to describe the fiber product that appears in \Cref{lem:gluing}.

Let $\cU$, $\cZ$ and $\cX$ as in \Cref{lem:gluing} and denote by $i:\cZ \into \cX$ the closed immersion and by $j:\cU \into \cZ$ the open immersion.  Let us set some notations.
\begin{itemize}
\item $ \ch(\cU)$ is generated by the elements $x'_1,\dots,x'_r$ and let $x_1,\dots,x_r$ be some liftings of $x_1',\dots,x_r'$ in $\ch(\cX)$; we denote by $\eta$ the morphism 
$$ \ZZ[1/6,X_1,\dots,X_r,Z] \longrightarrow \ch(\cX)$$
where $X_h$ maps to $x_h$ for $h=1,\dots,r$ and $Z$ maps to $[\cZ]$, the fundamental class of $\cZ$; we denote by $\eta_{\cU}$ the composite 
$$ \ZZ[1/6,X_1,\dots,X_r] \into \ZZ[1/6,X_1,\dots,X_r,Z] \longrightarrow \ch(\cX)\rightarrow \ch(\cU).$$ 
Furthermore, we denote by $p_1'(X), \dots, p_n'(X)$ a choice of generators for $\ker(\eta_{\cU})$, where $X:=(X_1,\dots,X_r)$.
\item  $ \ch(\cZ)$ is generated by elements $y_1,\dots,y_s \in \ch(\cZ)$; we denote by $a$ the morphism 
$$ \ZZ[1/6,Y_1,\dots,Y_s] \longrightarrow \ch(\cZ)$$ 
where $Y_h$ maps to $y_h$ for $h=1,\dots,s$. Furthermore, we denote by $q_1'(Y), \dots, q_m'(Y)$ a choice of generators for $\ker(a)$, where $Y:=(Y_1,\dots,Y_s)$.
\end{itemize}

Because $a$ is surjective, there exists a morphism
$$ \eta_{\cZ}: \ZZ[1/6,X_1,\dots,X_r,Z] \longrightarrow \ZZ[1/6,Y_1,\dots,Y_r]$$
which is a lifting of the morphism $i^*$, i.e. it makes the following diagram
$$
\begin{tikzcd}
	{\ZZ[1/6,X_1,\dots,X_r,Z]} \arrow[d, "\eta"] \arrow[r, "\eta_{\cZ}", dotted] & {\ZZ[1/6,Y_1,\dots,Y_s]} \arrow[d, "a"] \\
	\ch(\cX) \arrow[r, "i^*"]                                                    & \ch(\cZ)                               
\end{tikzcd}
$$
commute.

The cartesianity of the diagram in \Cref{lem:gluing} implies the following lemma.

\begin{lemma}\label{lem:gluing-sur}
	In the situation above, suppose that the morphism $\eta_{\cZ}$ is surjective. Then $\eta$ is surjective.
\end{lemma}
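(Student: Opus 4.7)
The plan is a direct diagram chase using the localization sequence
$\ch(\cZ)\xrightarrow{i_*}\ch(\cX)\xrightarrow{j^*}\ch(\cU)\to 0$
together with the projection formula for the regular closed immersion $i$. The key structural fact we have at our disposal is that by construction $\eta_{\cZ}$ is a lifting of $i^*$, i.e.\ the square
\[
\begin{tikzcd}
{\ZZ[1/6,X,Z]} \arrow[d,"\eta"'] \arrow[r,"\eta_{\cZ}"] & {\ZZ[1/6,Y]} \arrow[d,"a"] \\
\ch(\cX) \arrow[r,"i^*"] & \ch(\cZ)
\end{tikzcd}
\]
commutes, so $i^*\circ \eta = a\circ \eta_{\cZ}$.

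Given $w\in \ch(\cX)$, I would first handle its restriction to the open. Since $\ch(\cU)$ is generated by $x'_1,\dots,x'_r$, write $j^*(w) = P(x'_1,\dots,x'_r)$ for some polynomial $P(X) \in \ZZ[1/6,X_1,\dots,X_r]$. Because $j^*(x_h) = x'_h$, the element $w - \eta(P(X))$ lies in $\ker j^* = \im i_*$, so pick $v\in \ch(\cZ)$ with $w - \eta(P(X)) = i_*(v)$.

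Next I would lift $v$ all the way to $\ZZ[1/6,X,Z]$. Since $a$ is surjective, $v = a(Q(Y))$ for some $Q \in \ZZ[1/6,Y]$; using the hypothesis that $\eta_{\cZ}$ is surjective, pick $R(X,Z)\in \ZZ[1/6,X,Z]$ with $\eta_{\cZ}(R) = Q$. Then by commutativity of the square,
\[
i^*(\eta(R)) = a(\eta_{\cZ}(R)) = a(Q) = v.
\]
Now apply the projection formula: $i_*(v) = i_*\bigl(i^*(\eta(R))\bigr) = \eta(R)\cdot i_*(1) = \eta(R)\cdot [\cZ] = \eta(R\cdot Z)$, using $\eta(Z) = [\cZ]$. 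Combining,
\[
w = \eta(P(X)) + i_*(v) = \eta\bigl(P(X) + R\cdot Z\bigr),
\]
so $w\in \im \eta$, completing the argument.

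There is no real obstacle here; the only mildly subtle point is the projection formula step, which requires $i$ to be a regular closed immersion so that $i_*(1) = [\cZ]$ as a class in $\ch(\cX)$ — this is guaranteed because both $\cZ$ and $\cX$ are smooth (the standing assumption in Lemma \ref{lem:gluing}). Everything else is a purely formal diagram chase, and notably one does not need the gluing hypothesis that $c_d(N_{\cZ|\cX})$ be a non-zero divisor: surjectivity of $\zeta$ alone (which always holds) is the content being used.
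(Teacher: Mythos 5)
Your argument is correct. The paper's own proof is a one-line appeal to the cartesian square of \Cref{lem:gluing}: since $j^*\circ\eta$ is automatically surjective and the hypothesis makes $i^*\circ\eta$ surjective, surjectivity of $\eta$ is read off from the identification of $\ch(\cX)$ with (a surjection onto) the fiber product. Your route replaces the fiber product with the localization sequence $\ch(\cZ)\xrightarrow{i_*}\ch(\cX)\xrightarrow{j^*}\ch(\cU)\to 0$ and then makes explicit the step that the paper leaves implicit, namely that $\im i_*\subseteq\im\eta$: any $i_*(v)$ is hit because $v=i^*(\eta(R))$ for some $R$ and the projection formula gives $i_*(v)=\eta(R)\cdot[\cZ]=\eta(RZ)$. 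This is exactly the computation that also underlies the paper's version (without it, surjectivity of the two components onto $\ch(\cZ)$ and $\ch(\cU)$ separately would not by itself give surjectivity onto the fiber product, and one would still need $\ker\zeta=i_*\operatorname{Ann}(c_d(N_{\cZ|\cX}))\subseteq\im\eta$). So your proof is slightly more self-contained, and your closing observation is a genuine, if minor, gain: the lemma holds without the gluing condition on $c_d(N_{\cZ|\cX})$, which the paper only needs later in \Cref{prop:desc-gluing} to control the kernel of $\eta$.
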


\begin{proof}
	This follows because $ j^*\circ \eta$ is always surjective and the hypothesis implies that also $i^*\circ \eta$ is surjective.
\end{proof}

In the hypothesis of the lemma, we can also describe explicitly the relations, i.e. the kernel of $\eta$. First of all, denote by $q_h(X,Z)$ some liftings of $q_h'(Y)$ through the morphism $\eta_{\cZ}$ for $h=1,\dots,m$. A straightforward computation shows that we have $Zq_h(X,Z) \in \ker \eta$ for $h=1,\dots,m$.
We have found our first relations. We refer to these type of relations as liftings of the closed stratum's relations. 

Another important set of relations comes from the kernel of $\eta_{\cZ}$. In fact, if $v \in \ker\eta_{\cZ}$, then a simple computation shows that $\eta(Zv)=0$. Therefore if $v_1,\dots,v_l$ are generators of $\ker\eta_{\cZ}$, we get that $Zv_h \in \ker\eta$ for $h=1,\dots,l$.

Finally, the last set of relations are the relations that come from $\cU$, the open stratum. The element $p_h(X)$ in general is not a relation as its restriction to $\ch(\cZ)$ can fail to vanish. We can however do the following procedure to find a modification of $p_h$ which still vanishes restricted to the open stratum and it is in the kernel of $\eta$. Recall that we have a well-defined morphism
$$ i^*:\ch(\cU) \longrightarrow \ch(\cZ)/(c_{\rm top}(N_{\cZ|\cX}))$$ 
which implies that $\eta_{\cZ}(p_h) \in (q'_1,\dots,q'_m,c_{\rm top}(N_{\cZ|\cX}))$. We choose an element $g'_h \in \ZZ[1/6,Y_1,\dots,Y_s]$ such that 
$$ \eta_{\cZ}(p_h) + g_h'c_{\rm top}(N_{\cZ|\cX}) \in (q'_1,\dots,q'_m)$$
and consider a lifting $g_h$ of $g'_h$ through the morphism $\eta_{\cZ}$. A straightforward computation shows that $p_h+Zg_h \in \ker \eta$ for every $h=1,\dots,n$.

\begin{proposition}\label{prop:desc-gluing}
	In the situation above, $\ker\eta$ is generated by the elements $Zq_1,\dots,Zq_m,Zv_1,\dots,Zv_l,p_1+Zg_1,\dots,p_n+Zg_n$.
\end{proposition}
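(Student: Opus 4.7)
The plan is to prove the non-trivial inclusion $\ker\eta \subseteq I$, where $I$ denotes the ideal generated by the three families $Zq_j$, $Zv_k$, $p_h+Zg_h$ displayed in the statement. The reverse inclusion $I\subseteq\ker\eta$ was established in the paragraphs preceding the proposition and ultimately rests on the fact that $\zeta$ is injective under the gluing condition. I would proceed by a three-stage reduction that peels off generators of type $(3)$, then $(1)$, then $(2)$, in that order.

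Stage one: given $f\in\ker\eta$, decompose $f(X,Z)=f_0(X)+Zf_1(X,Z)$ with $f_0(X):=f(X,0)$. Applying $j^*\circ\eta$ to the identity $\eta(f)=\eta(f_0)+[\cZ]\cdot\eta(f_1)=0$ and using $j^*[\cZ]=0$ forces $\eta_{\cU}(f_0)=0$, so $f_0=\sum_h a_h\,p_h(X)$ for some coefficients $a_h\in\ZZ[1/6,X]$. Subtracting $\sum_h a_h(p_h+Zg_h)\in I$ reduces $f$ modulo $I$ to an element of the form $Z\tilde f_1$ with $\tilde f_1:=f_1-\sum_h a_h g_h$, and this element still lies in $\ker\eta$.

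Stage two: from $\eta(Z\tilde f_1)=[\cZ]\cdot\eta(\tilde f_1)=i_*(i^*\eta(\tilde f_1))=0$, I would invoke the self-intersection formula $i^*i_*=c_{\rm top}(N_{\cZ|\cX})\cdot(-)$ together with the gluing condition (which forces $i_*$ to be injective) to conclude $i^*\eta(\tilde f_1)=a(\eta_{\cZ}(\tilde f_1))=0$ in $\ch(\cZ)$. Hence $\eta_{\cZ}(\tilde f_1)\in\ker a=(q'_1,\dots,q'_m)$, and one can write $\eta_{\cZ}(\tilde f_1)=\sum_j r'_j q'_j$. Lifting each $r'_j$ through the surjection $\eta_{\cZ}$ (whose surjectivity is the hypothesis of \Cref{lem:gluing-sur}) to $r_j\in\ZZ[1/6,X,Z]$, the difference $\tilde f_1-\sum_j r_j q_j$ is killed by $\eta_{\cZ}$. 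Therefore it belongs to $\ker\eta_{\cZ}=(v_1,\dots,v_l)$, so $\tilde f_1=\sum_j r_j q_j+\sum_k s_k v_k$ for suitable $s_k$. Multiplying by $Z$ gives
\[
Z\tilde f_1=\sum_j r_j(Zq_j)+\sum_k s_k(Zv_k)\in I,
\]
which together with stage one yields $f\in I$.

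The main obstacle is the step inside stage two where one passes from $i_*(i^*\eta(\tilde f_1))=0$ to $i^*\eta(\tilde f_1)=0$. This injectivity of $i_*$ is precisely where the gluing condition is indispensable: if $c_{\rm top}(N_{\cZ|\cX})$ is a zero-divisor in $\ch(\cZ)$, then by \Cref{lem:gluing} the kernel $i_*\operatorname{Ann}(c_{\rm top}(N_{\cZ|\cX}))$ of $\zeta$ is nonzero, and one would need to supplement $I$ by lifts of annihilator relations; the three displayed families would no longer suffice. Thus the whole argument rests on being able to check the gluing condition in the concrete stratifications considered in the subsequent sections.
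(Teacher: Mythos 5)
Your proof is correct and follows essentially the same route as the paper's: decompose $f=f_0(X)+Zf_1$, use $j^*[\cZ]=0$ to handle $f_0$ via the $p_h+Zg_h$, then use the gluing condition to reduce the remaining $Z$-multiple to the $Zq_j$ and $Zv_k$. Your appeal to injectivity of $i_*$ (via the self-intersection formula) is just a repackaging of the paper's step "divide by $c_{\rm top}(N_{\cZ|\cX})$, which is a non-zero divisor in $\ch(\cZ)$", so the two arguments coincide.
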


\begin{proof}
	Consider the following commutative diagram
	$$
	\begin{tikzcd}
		{\ZZ[1/6,X_1,\dots,X_r,Z]} \arrow[r, "\eta"] \arrow[d, "\eta_{\cZ}"] & \ch(\cX) \arrow[d, "i^*"] \arrow[r, "j^*"] & \ch(\cU) \arrow[d, "i^*"]           \\
		{\ZZ[1/6,Y_1,\dots,Y_s]} \arrow[r, "a"]                              & \ch(\cZ) \arrow[r, "b"]                    & \ch(\cZ)/(c_{\rm top}(N_{\cZ|\cX}))
	\end{tikzcd}
	$$
	where $b$ is the quotient morphisms. Recall that the right square is cartesian. Notice that because $\eta_{\cZ}$ is surjective, all the other morphisms are surjectives.
	
	We denote by $c$ the top Chern class of the normal bundle $N_{\cZ|\cX}$. Let $p(X)+Zg(X,Z)$ be an element in $\ker \eta$. Because we have $$(j^*\circ \eta)(p(X)+Zg(X,Z))=0,$$ thus $p \in (p_1',\dots,p_n')$ which implies there exists $b_1,\dots,b_n \in \ZZ[1/6,X_1,\dots,X_r]$ such that 
	$$ p = \sum_{h=1}^n b_hp_h . $$
	Now, we pullback the element to $\cZ$, thus we get
	$$ (i^*\circ \eta)(p+Zg)=a\Big(\sum_{h=1}^n \eta_{\cZ}(b_h) \eta_{\cZ}(p_h) + c \eta_{\cZ}(g)\Big)$$ 
	or equivalently   
	$$\sum_{h=1}^n \eta_{\cZ}(b_h) \eta_{\cZ}(p_h) + c \eta_{\cZ}(g) \in (q_1',\dots, q_m').$$
	By construction $\eta_{\cZ}(p_h)=-g_h'c + (q_1',\dots,q_m')$ and $\eta_{\cZ}(g_h)=g_h'$, therefore we get 
	$$ c  \eta_{\cZ}\Big(g-\sum_{h=1}^nb_hg_h\Big) \in (q_1',\dots,q_m'). $$
	Because $c$ is a non-zero divisor in $\ch(\cZ)$ we have that
	$$ \eta_{\cZ}\Big(g-\sum_{h=1}^nb_hg_h\Big) \in (q_1',\dots,q_m')$$ 
	or equivalently 
	$$g= \sum_{h=1}^n b_hg_h + t$$
	with $t \in \ker(a \circ \eta_{\cZ})$. Therefore we have that
	$$ p+Zg=\sum_{h=1}^n b_h(p_h+Zg_h)+Zt$$
	with $t \in \ker(a \circ \eta_{\cZ})$. One can check easily that $\ker(a \circ \eta_{\cZ})$ is generated by $(v_1,\dots,v_l,q_1,\dots,q_m)$.
\end{proof}

Now, we sketch how to apply this procedure to the first two strata, namely $\Htilde_3\setminus \Detilde_{1}$ and $\Mtilde_3 \setminus (\Htilde_3 \cup \Detilde_1)$, to get the Chow ring of $\Mtilde_3 \setminus \Detilde_1$. This is the most complicated part as far as computations is concerned. The other gluing procedures are left to the curious reader as they follows the exact same ideas. 

In our situation, we have $\cU:=\Mtilde_{3}\setminus (\Htilde_3 \cup \Detilde_1)$ and $\cZ:= \Htilde_3 \setminus \Detilde_1$. We know the description of their Chow rings thanks to \Cref{cor:chow-hyper} and \Cref{cor:chow-quart}. Let us look at the generators we need. The Chow ring of $\cU$ is generated by $\lambda_1$, $\lambda_2$ and $\lambda_3$. The Chow ring of $\cZ$ is generated by $\lambda_1$, $\lambda_2$ and $\xi_1$. Therefore the morphism 
$$\eta_{\cZ} : \ZZ[1/6,\lambda_1,\lambda_2,\lambda_3, H] \longrightarrow \ZZ[1/6,\lambda_1,\lambda_2,\xi_1]$$ 
is surjective because $\eta_{\cZ}(H)=(2\xi_1-\lambda_1)/3$. We can also describe the $\ker \eta_{\cZ}$ which is generated by any lifting of the description of $\lambda_3$ in $\cZ$ (see \Cref{lem:lambda-class-H}). This gives us our first relation (after multiplying it by $H$, the fundamental class of the hyperelliptic locus). Furthermore, we can consider the ideal of relations in $\Htilde_{3}\setminus \Detilde_1$ which is generated by the relations $c_9$, $D_1$ and $D_2$ we described in \Cref{cor:chow-hyper}. Therefore we have other three relations. 

Lastly, we consider the four relations as in \Cref{cor:chow-quart} and compute their image through $\eta_{\cZ}$. The hardest part is to find a description of these elements in terms of the generators of the ideal of relations of $\Htilde_3 \setminus \Detilde_1$ and the top Chern class of the normal bundle of the closed immersion. We do not go into details about the computations, but the main idea is to notice that every monomial of the polynomials we need to describe can be written in terms of the relations and the top Chern class.

We state our theorem, which gives us the description of the Chow ring of $\Mtilde_3$. We write the explicit relations in \Cref{rem:relations-Mtilde}.

\begin{theorem}
	We have the following isomorphism 
	$$ \ch(\Mtilde_3)\simeq \ZZ[1/6,\lambda_1,\lambda_2,\lambda_3,H,\delta_1,\delta_{1,1},\delta_{1,1,1}]/I$$
	where $I$ is generated by the following relations:
	\begin{itemize}
		\item $k_h$, which comes from the generator of $\ker i_H^*$, where $i_H: \Htilde_3\setminus \Detilde_1 \into \Mtilde_3\setminus \Detilde_1$;
		\item $k_{1}(1)$ and $k_1(2)$, which come from the two generators of $\ker i_{1}^*$ where $i_{1}: \Detilde_1\setminus \Detilde_{1,1} \into \Mtilde_3\setminus \Detilde_{1,1}$;
		\item $k_{1,1}(1)$, $k_{1,1}(2)$ and $k_{1,1}(3)$, which come from the three generators of $\ker i_{1,1}^*$ where $i_{1,1}: \Detilde_{1,1}\setminus \Detilde_{1,1,1} \into \Mtilde_3\setminus \Detilde_{1,1,1}$;
		\item $k_{1,1,1}(1)$, $k_{1,1,1}(2)$, $k_{1,1,1}(3)$ and $k_{1,1,1}(4)$, which come from the four generators of $\ker i_{1,1,1}^*$ where $i_{1,1,1}: \Detilde_{1,1,1} \into \Mtilde_3$;
		\item $m(1)$, $m(2)$, $m(3)$ and $r$, which are the litings of the generators of the relations of the open stratum $\Mtilde_3\setminus (\Htilde_3 \cup \Detilde_1)$;
		\item $h(1)$, $h(2)$ and $h(3)$, which are the liftings of the generators of the relations of the stratum $\Htilde_3 \setminus \Detilde_1$; 
		\item $d_1(1)$, which is the lifting of the generator of the relations of the stratum $\Detilde_1\setminus \Detilde_{1,1}$.
	\end{itemize}
	Furthemore, $h(2)$, $h(3)$ and $d_1(1)$ are in the ideal generated by the other relations. 
\end{theorem}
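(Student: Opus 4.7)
The plan is to apply \Cref{lem:gluing} and the explicit description of its kernel given in \Cref{prop:desc-gluing} iteratively, working outward through four successive gluings: first combining $\Htilde_3\setminus\Detilde_1$ with $\Mtilde_3\setminus(\Htilde_3\cup\Detilde_1)$ to get $\Mtilde_3\setminus\Detilde_1$, then gluing in $\Detilde_1\setminus\Detilde_{1,1}$, then $\Detilde_{1,1}\setminus\Detilde_{1,1,1}$, and finally $\Detilde_{1,1,1}$. At each step the three families of relations produced by \Cref{prop:desc-gluing} (liftings of open-stratum relations; liftings of closed-stratum relations multiplied by the class of the closed stratum; and lifts of elements of $\ker\eta_{\cZ}$ multiplied by the same class) correspond to one of the three families listed in the theorem. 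The surjectivity hypothesis of \Cref{lem:gluing-sur} must be checked at each step, so that one obtains an honest isomorphism $\ch(\cX)\simeq \ch(\cZ)\times_{\ch(\cZ)/(c_{\rm top})} \ch(\cU)$ rather than only a subring.

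For the first step, take $\cX=\Mtilde_3\setminus\Detilde_1$, $\cZ=\Htilde_3\setminus\Detilde_1$, $\cU=\Mtilde_3\setminus(\Htilde_3\cup\Detilde_1)$. The top Chern class of the normal bundle is $(2\xi_1-\lambda_1)/3$ by \Cref{cor:norm-hyper}, and I would verify that it is a non-zero divisor in the presentation of $\ch(\cZ)$ from \Cref{cor:chow-hyper} by a direct inspection of the three relations $c_9,D_1,D_2$ (none of them admits a factor of the form $2\xi_1-\lambda_1$ up to units, and the quotient remains a free $\ZZ[1/6]$-module in each degree). Surjectivity of $\eta_{\cZ}\colon\ZZ[1/6,\lambda_1,\lambda_2,\lambda_3,H]\to\ch(\cZ)$ is immediate once $H$ is sent to $(2\xi_1-\lambda_1)/3$, since the generator $\xi_1$ is then recovered from $H$ and $\lambda_1$. \Cref{prop:desc-gluing} then produces: the four liftings $m(1),m(2),m(3),r$ of the open relations $p_2,p_1,p_0,z_2$ of \Cref{cor:chow-quart}; the relation $k_h$ coming from the single generator of $\ker\eta_{\cZ}$, obtained by lifting the expression for $\lambda_3$ in \Cref{lem:lambda-class-H} to a polynomial in $\lambda_1,\lambda_2,H$; and the relations $h(1),h(2),h(3)$ obtained by lifting $c_9,D_1,D_2$ to the larger ring. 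The correction terms $g_h$ needed to turn each $p_h$ into an actual relation $p_h+H g_h$ are found by solving the congruence $\eta_{\cZ}(p_h)+g_h'\,c_{\rm top}\in(c_9,D_1,D_2)$, which is a polynomial-arithmetic problem in $\ZZ[1/6,\lambda_1,\lambda_2,\xi_1]$.

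Next I would iterate the gluing along the closed strata, using the presentations of \Cref{prop:descr-detilde-1}, \Cref{prop:relat-detilde-1-1} and \Cref{lem:descr-delta-1-1-1} and the restriction formulas for $\lambda_i,H,\delta_1,\delta_{1,1}$ established in those propositions. At each step the closed stratum is smooth and its Chow ring is a quotient of a polynomial ring with at most one explicit relation; surjectivity of $\eta_{\cZ}$ follows from the cited restriction formulas, which show that the free generators $\{t_0,t_1,t\}$, then $\{t,c_1,c_2\}$, then $\{c_1,c_2,c_3\}$ of the successive closed strata can be solved for in terms of $\lambda_1,\lambda_2,\lambda_3,H,\delta_1,\delta_{1,1},\delta_{1,1,1}$ restricted to the stratum. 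The gluing condition at each step amounts to checking that the top Chern classes $t+t_1$, $c_2+tc_1+t^2$, $c_3$ of the respective normal bundles are non-zero divisors in their Chow rings; since each of these rings is a (quotient of a) polynomial ring and the class in question is monic in at least one free generator, this is straightforward. \Cref{prop:desc-gluing} then produces the remaining families $\{k_1(1),k_1(2),d_1(1)\}$, $\{k_{1,1}(1),k_{1,1}(2),k_{1,1}(3)\}$, $\{k_{1,1,1}(1),\dots,k_{1,1,1}(4)\}$, noting that the strata $\Detilde_{1,1}\setminus\Detilde_{1,1,1}$ and $\Detilde_{1,1,1}$ have polynomial Chow rings, so the middle family (kernel-of-$a$ relations) is empty for them.

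The main obstacle is not conceptual but computational: each application of \Cref{prop:desc-gluing} requires solving an explicit congruence inside a multigraded polynomial ring to produce the correcting polynomials $g_h$, and the resulting relations grow in size at each stage. The final assertion that $h(2),h(3),d_1(1)$ are redundant amounts, in each case, to exhibiting an explicit polynomial combination of the remaining relations that equals it; this is finite-dimensional linear algebra in each fixed codimension and is most naturally verified by a computer algebra system. The explicit expressions for all fifteen relations are then recorded in \Cref{rem:relations-Mtilde}.
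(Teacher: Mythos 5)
Your proposal follows essentially the same route as the paper: iterate \Cref{lem:gluing} together with \Cref{prop:desc-gluing} over the stratification $\Detilde_{1,1,1}\subset\Detilde_{1,1}\subset\Detilde_1$ and $\Htilde_3$, checking surjectivity of $\eta_{\cZ}$ via the restriction formulas and the gluing condition via the computed top Chern classes of the normal bundles, and collecting the three families of relations at each stage exactly as the paper does in \Cref{sec:chow-m3tilde}. The only point to flag is that your justification of the non-zero-divisor property for $(2\xi_1-\lambda_1)/3$ in $\ch(\Htilde_3\setminus\Detilde_1)$ is asserted rather than proved, but this is a finite computational verification of the same kind the paper itself relegates to direct computation.
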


\begin{remark}\label{rem:relations-Mtilde}
	We write explicitly the relations. 
	\begin{itemize}
		\item[]
		\begin{equation*}
		\begin{split}
		k_h& =\frac{1}{8}\lambda_1^3H + \frac{1}{8}\lambda_1^2H^2 + \frac{1}{4}\lambda_1^2H\delta_1 - \frac{1}{2}\lambda_1\lambda_2H - \frac{1}{8}\lambda_1H^3 +
		\frac{7}{8}\lambda_1H\delta_1^2 + \\ & + \frac{3}{2}\lambda_1\delta_1\delta_{1,1} - \frac{1}{2}\lambda_2H^2 + \lambda_3H - \frac{1}{8}H^4 - \frac{1}{4}H^3\delta_1 +
		\frac{1}{8}H^2\delta_1^2 + \frac{3}{4}H\delta_1^3 + \\ & + \frac{3}{2}\delta_1^2\delta_{1,1}
		\end{split}
		\end{equation*}
		
		\item[] 
		\begin{equation*}
		\begin{split}
		k_1(1)=\frac{1}{4}\lambda_1^2\delta_1 + \frac{1}{2}\lambda_1H\delta_1 + 2\lambda_1\delta_1^2 + \lambda_2\delta_1 + \frac{1}{4}H^2\delta_1 + H\delta_1^2 + \frac{7}{4}\delta_1^3 -\delta_1\delta_{1,1}
		\end{split}
		\end{equation*}
		
		\item[] 
		\begin{equation*}
		\begin{split}
		k_1(2)& =\frac{1}{4}\lambda_1^3\delta_1 + \frac{1}{2}\lambda_1^2H\delta_1 + \frac{5}{4}\lambda_1^2\delta_1^2 + \frac{1}{4}\lambda_1H^2\delta_1 + \frac{3}{2}\lambda_1H\delta_1^2 +
		\frac{7}{4}\lambda_1\delta_1^3 + \\ & + \lambda_1\delta_1\delta_{1,1} - \lambda_1\delta_{1,1,1} + \lambda_3\delta_1+  \frac{1}{4}H^2\delta_1^2 + H\delta_1^3 + \frac{3}{4}\delta_1^4 +
		\delta_1^2\delta_{1,1}
		\end{split}
		\end{equation*}
		
		\item[] 
		\begin{equation*}
		\begin{split}
		k_{1,1}(1)=-\lambda_1^2\delta_{1,1} - 2\lambda_1\delta_1\delta_{1,1} - \lambda_2\delta_{1,1} - \delta_1^2\delta_{1,1} + \delta_{1,1}^2
		\end{split}
		\end{equation*}
		
		\item[] 
		\begin{equation*}
		\begin{split}
		k_{1,1}(2)=3\lambda_1\delta_{1,1} + H\delta_{1,1} + 3\delta_1\delta_{1,1}
		\end{split}
		\end{equation*}
		
		\item[] 
		\begin{equation*}
		\begin{split}
		k_{1,1}(3)&=2\lambda_1^3\delta_{1,1} + 5\lambda_1^2\delta_1\delta_{1,1} + \lambda_1\lambda_2\delta_{1,1} + 4\lambda_1\delta_1^2\delta_{1,1} + \lambda_2\delta_1\delta_{1,1} + \\ & + \lambda_2\delta_{1,1,1} + \lambda_3\delta_{1,1} +
		\delta_1^3\delta_{1,1}
		\end{split}
		\end{equation*}
		
		\item[] 
		\begin{equation*}
		\begin{split}
		k_{1,1,1}(1)=\lambda_1\delta_{1,1,1} + \delta_1\delta_{1,1,1}
		\end{split}
		\end{equation*}
		
		\item[] 
		\begin{equation*}
		\begin{split}
		k_{1,1,1}(2)=\lambda_2\delta_{1,1,1} - \delta_{1,1}\delta_{1,1,1}
		\end{split}
		\end{equation*}
		
		\item[] 
		\begin{equation*}
		\begin{split}
		k_{1,1,1}(3)=\lambda_3\delta_{1,1,1} + \delta_{1,1,1}^2
		\end{split}
		\end{equation*}		
	
		\item[] 
		\begin{equation*}
		\begin{split}
		k_{1,1,1}(4)=H\delta_{1,1,1}
		\end{split}
		\end{equation*}
		
		\item[] 
		\begin{equation*}
		\begin{split}
		m(1)&=12\lambda_1^4 - \frac{7}{3}\lambda_1^3H + 27\lambda_1^3\delta_1 - 44\lambda_1^2\lambda_2 - \frac{706}{9}\lambda_1^2H^2 - \frac{65}{2}\lambda_1^2H\delta_1 + \\ & 
		+ 84\lambda_1^2\delta_1^2 - 32\lambda_1^2\delta_{1,1}  - 38\lambda_1\lambda_2H + 92\lambda_1\lambda_3 - \frac{715}{9}\lambda_1H^3 - \\ & -
		\frac{1340}{9}\lambda_1H^2\delta_1 - 25\lambda_1H\delta_1^2 + 69\lambda_1\delta_1^3  - 130\lambda_1\delta_1\delta_{1,1} + 92\lambda_1\delta_{1,1,1} + \\ & +
		6\lambda_2H^2 - \frac{46}{3}H^4 - \frac{1205}{18}H^3\delta_1 - \frac{562}{9}H^2\delta_1^2 - \frac{101}{6}H\delta_1^3 -
		54\delta_1^2\delta_{1,1}
		\end{split}
		\end{equation*}
		
		\item[] 
		\begin{equation*}
		\begin{split}
		m(2)&=-\frac{55}{18}\lambda_1^4H + \frac{9}{2}\lambda_1^4\delta_1 - 14\lambda_1^3\lambda_2 - \frac{31}{3}\lambda_1^3H^2 - \frac{58}{9}\lambda_1^3H\delta_1 +
		69\lambda_1^3\delta_1^2 - \\ & - \frac{272}{3}\lambda_1^3\delta_{1,1} -  \frac{173}{9}\lambda_1^2\lambda_2H + 2\lambda_1^2\lambda_3 - \frac{137}{18}\lambda_1^2H^3
		- \frac{167}{4}\lambda_1^2H^2\delta_1 + \\ & + \frac{1831}{36}\lambda_1^2H\delta_1^2 + \frac{459}{2}\lambda_1^2\delta_1^3 - 
		\frac{461}{3}\lambda_1^2\delta_1\delta_{1,1} + 2\lambda_1^2\delta_{1,1,1} + 48\lambda_1\lambda_2^2 + \\ & + \frac{1}{9}\lambda_1\lambda_2H^2 - \frac{1}{3}\lambda_1H^4 -
		\frac{605}{18}\lambda_1H^3\delta_1 - \frac{955}{18}\lambda_1H^2\delta_1^2 +  139\lambda_1H\delta_1^3 + \\ & + 291\lambda_1\delta_1^4 -
		49\lambda_1\delta_1^2\delta_{1,1} + 48\lambda_2^2H - 96\lambda_2\lambda_3 + \frac{16}{3}\lambda_2H^3 + 48\lambda_2\delta_1\delta_{1,1} - \\ & - 96\lambda_2\delta_{1,1,1}
		- \frac{241}{36}H^4\delta_1 - \frac{1111}{36}H^3\delta_1^2 - \frac{63}{4}H^2\delta_1^3 + \frac{367}{4}H\delta_1^4 + 126\delta_1^5
		\end{split}
		\end{equation*}
		
		\item[] 
		\begin{equation*}
		\begin{split}
		m(3)&=\frac{419}{648}\lambda_1^5H - 9\lambda_1^5\delta_1 + \frac{17903}{972}\lambda_1^4H^2 - \frac{19763}{648}\lambda_1^4H\delta_1 -
		\frac{285}{4}\lambda_1^4\delta_1^2 + \\ & + \frac{57344}{27}\lambda_1^4\delta_{1,1} - \frac{401}{162}\lambda_1^3\lambda_2H + 15\lambda_1^3\lambda_3 +
		\frac{100795}{1944}\lambda_1^3H^3 - \\ & - \frac{16057}{972}\lambda_1^3H^2\delta_1 - \frac{100555}{648}\lambda_1^3H\delta_1^2 -
		\frac{861}{4}\lambda_1^3\delta_1^3 + \frac{614635}{54}\lambda_1^3\delta_1\delta_{1,1}  + \\ & + 15\lambda_1^3\delta_{1,1,1} - \frac{6433}{81}\lambda_1^2\lambda_2H^2 -
		32\lambda_1^2\lambda_2\delta_{1,1} + \frac{11561}{216}\lambda_1^2H^4 + \\ & + \frac{12349}{324}\lambda_1^2H^3\delta_1 +
		\frac{559}{12}\lambda_1^2H^2\delta_1^2  - \frac{120883}{324}\lambda_1^2H\delta_1^3 - \frac{1263}{4}\lambda_1^2\delta_1^4 + \\ & +
		\frac{198799}{9}\lambda_1^2\delta_1^2\delta_{1,1} - 22\lambda_1\lambda_2^2H - 52\lambda_1\lambda_2\lambda_3 - 151\lambda_1\lambda_2H^3  + \\ & +
		54\lambda_1\lambda_2\delta_1\delta_{1,1} - 52\lambda_1\lambda_2\delta_{1,1,1} + \frac{2845}{162}\lambda_1H^5 + \frac{19415}{324}\lambda_1H^4\delta_1 + \\ & +
		\frac{59303}{324}\lambda_1H^3\delta_1^2 + \frac{10946}{243}\lambda_1H^2\delta_1^3 - \frac{66367}{162}\lambda_1H\delta_1^4 -
		\frac{903}{4}\lambda_1\delta_1^5 + \\ & + 18519\lambda_1\delta_1^3\delta_{1,1} - 22\lambda_2^2H^2 - \frac{1333}{18}\lambda_2H^4  +
		86\lambda_2\delta_1^2\delta_{1,1} + 112\lambda_3^2 + \\ & + 112\lambda_3\delta_{1,1,1} - \frac{407}{216}H^6 + \frac{14521}{648}H^5\delta_1 +
		\frac{40205}{648}H^4\delta_1^2 + \frac{147917}{972}H^3\delta_1^3 - \\ & - \frac{8563}{243}H^2\delta_1^4 -
		\frac{104075}{648}H\delta_1^5 - 63\delta_1^6 + \frac{11377}{2}\delta_1^4\delta_{1,1}
		\end{split}
		\end{equation*}
		
		\item[] 
		\begin{equation*}
			\begin{split}
				h(1)&=\frac{3}{4}\lambda_1^3H + \frac{13}{4}\lambda_1^2H^2 + \frac{9}{4}\lambda_1^2H\delta_1 + \frac{13}{4}\lambda_1H^3 +  \frac{13}{2}\lambda_1H^2\delta_1 +
				\frac{9}{4}\lambda_1H\delta_1^2 + \\ & + \frac{3}{4}H^4 + \frac{13}{4}H^3\delta_1+ \frac{13}{4}H^2\delta_1^2 + \frac{3}{4}H\delta_1^3
			\end{split}
		\end{equation*}
	
		\item[] 
		\begin{equation*}
		\begin{split}
		r&=-\frac{7}{81}\lambda_1^8H + \frac{247145}{2916}\lambda_1^7H^2 - \frac{39125}{81}\lambda_1^7H\delta_1 - 20800\lambda_1^7\delta_{1,1} - \\ & -
		\frac{1286}{243}\lambda_1^6\lambda_2H + \frac{1573727}{8748}\lambda_1^6H^3  -\frac{400579}{162}\lambda_1^6H^2\delta_1 -
		\frac{618187}{162}\lambda_1^6H\delta_1^2 + \\ & + \frac{31710800}{81}\lambda_1^6\delta_1\delta_{1,1} - \frac{736943}{729}\lambda_1^5\lambda_2H^2  -
		24288\lambda_1^5\lambda_2\delta_{1,1} - \frac{2193853}{8748}\lambda_1^5H^4 - \\ & - \frac{4516739}{729}\lambda_1^5H^3\delta_1 -
		\frac{35110427}{2916}\lambda_1^5H^2\delta_1^2  - \frac{3448919}{243}\lambda_1^5H\delta_1^3 + \\ & +
		\frac{253855904}{81}\lambda_1^5\delta_1^2\delta_{1,1} - \frac{136}{3}\lambda_1^4\lambda_2^2H - \frac{2054561}{729}\lambda_1^4\lambda_2H^3+ \\ & +
		133280\lambda_1^4\lambda_2\delta_1\delta_{1,1} - \frac{2986483}{2916}\lambda_1^4H^5 - \frac{40469125}{4374}\lambda_1^4H^4\delta_1 - \\ & -
		\frac{52534855}{2916}\lambda_1^4H^3\delta_1^2- \frac{33507299}{1458}\lambda_1^4H^2\delta_1^3 -
		\frac{17079953}{486}\lambda_1^4H\delta_1^4 + \\ & + \frac{246525952}{27}\lambda_1^4\delta_1^3\delta_{1,1} + \frac{26584}{9}\lambda_1^3\lambda_2^2H^2  
		- 9248\lambda_1^3\lambda_2^2\delta_{1,1} - \frac{364370}{243}\lambda_1^3\lambda_2H^4 + \\ & + 911024\lambda_1^3\lambda_2\delta_1^2\delta_{1,1} -
		1152\lambda_1^3\lambda_3^2 - 1152\lambda_1^3\lambda_3\delta_{1,1,1}  - \frac{315269}{324}\lambda_1^3H^6 - \\ & -
		\frac{6063527}{729}\lambda_1^3H^5\delta_1 - \frac{80352425}{4374}\lambda_1^3H^4\delta_1^2 -
		\frac{8710724}{2187}\lambda_1^3H^3\delta_1^3  - \\ & - \frac{26273782}{729}\lambda_1^3H^2\delta_1^4 -
		\frac{13515761}{243}\lambda_1^3H\delta_1^5 + \frac{41134658}{3}\lambda_1^3\delta_1^4\delta_{1,1} + 896\lambda_1^2\lambda_2^3H + \\ & +
		256\lambda_1^2\lambda_2^2\lambda_3  + \frac{225704}{27}\lambda_1^2\lambda_2^2H^3 - 11680\lambda_1^2\lambda_2^2\delta_1\delta_{1,1} +
		256\lambda_1^2\lambda_2^2\delta_{1,1,1} + \\ & + \frac{19484}{9}\lambda_1^2\lambda_2H^5 + 1716232\lambda_1^2\lambda_2\delta_1^3\delta_{1,1} -
		\frac{70385}{324}\lambda_1^2H^7 - \frac{716609}{162}\lambda_1^2H^6\delta_1 - \\ & - \frac{3175405}{243}\lambda_1^2H^5\delta_1^2 +
		\frac{14917697}{2187}\lambda_1^2H^4\delta_1^3 + \frac{27927485}{1458}\lambda_1^2H^3\delta_1^4 - \\ & -
		\frac{24157867}{486}\lambda_1^2H^2\delta_1^5 - \frac{12315655}{243}\lambda_1^2H\delta_1^6 +
		11373175\lambda_1^2\delta_1^5\delta_{1,1}  + \\ & + 1664\lambda_1\lambda_2^3H^2 - 1152\lambda_1\lambda_2^3\delta_{1,1} +
		192920/27\lambda_1\lambda_2^2H^4 + 7328\lambda_1\lambda_2^2\delta_1^2\delta_{1,1} + \\ & + 5824\lambda_1\lambda_2\lambda_3^2+
		5824\lambda_1\lambda_2\lambda_3\delta_{1,1,1} + \frac{66191}{27}\lambda_1\lambda_2H^6 + 1353816\lambda_1\lambda_2\delta_1^4\delta_{1,1} + \\ & +
		\frac{12985}{108}\lambda_1H^8 - \frac{104593}{81}\lambda_1H^7\delta_1  - \frac{1752295}{324}\lambda_1H^6\delta_1^2 +
		\frac{906349}{729}\lambda_1H^5\delta_1^3 + \\ & + \frac{57919459}{2187}\lambda_1H^4\delta_1^4 + \frac{6920350}{729}\lambda_1H^3\delta_1^5
		- \frac{53724649}{1458}\lambda_1H^2\delta_1^6 - \\ & - \frac{5743493}{243}\lambda_1H\delta_1^7 + 4958470\lambda_1\delta_1^6\delta_{1,1} -
		1152\lambda_2^3\lambda_3 + 768\lambda_2^3H^3 - \\ & - 1152\lambda_2^3\delta_1\delta_{1,1} - 1152\lambda_2^3\delta_{1,1,1} +
		\frac{16064}{9}\lambda_2^2H^5 + 9760\lambda_2^2\delta_1^3\delta_{1,1} + \frac{5399}{9}\lambda_2H^7 + \\ & + 391040\lambda_2\delta_1^5\delta_{1,1}  -
		10976\lambda_3^3 - 10976\lambda_3^2\delta_{1,1,1} + \frac{171}{4}H^9 - \frac{7903}{54}H^8\delta_1 - \\ & -
		\frac{304223}{324}H^7\delta_1^2 - \frac{365225}{486}H^6\delta_1^3  + \frac{4136302}{729}H^5\delta_1^4 +
		\frac{43734445}{4374}H^4\delta_1^5 - \\ & - \frac{3256102}{2187}H^3\delta_1^6 - \frac{14121601}{1458}H^2\delta_1^7 -
		\frac{1042615}{243}H\delta_1^8 + 887989\delta_1^7\delta_{1,1}
		\end{split}
		\end{equation*}

	\end{itemize}
\end{remark}
\chapter{The Chow ring of $\Mbar_3$}\label{chap:3}

This chapter is dedicated to the computation of the Chow ring of $\Mbar_3$ and the comparison with the result of Faber. 

The first part focuses on describing the strata of $A_r$-singularities that we eventually remove from $\Mtilde_3$ to get the Chow ring of $\Mbar_3$.

The second part focuses on the abstract computations, namely on finding the generators of the ideal of relations coming from the closed stratum of singularity of type $A_r$ with $r\geq 2$. 

In the third part, we describe how to compute these relations in the Chow ring of $\Mtilde_3$: the idea is to use the stratification introduced in \Cref{chap:2}. In fact, we compute every relation restricting it to every strata and then gluing the informations to get an element in $\Mtilde_3$ .

In the four part, we compare our description to the one in \cite{Fab}.

\section{The substack of $A_r$-singularity}\label{sec:3-1}
In this section we describe the closed substack of $\Mtilde_g^r$ which parametrizes $A_r$-stable curves with at least a singularity of type $A_h$ with $h\geq 2$. We do so by stratifying this closed substack considering singularities of type $A_h$ with a fixed $h$ greater than $2$.

Let $g\geq 2$ and $r\geq 1$ be two integers and $\kappa$ be the base field of characteristic greater than $2g+1$. We recall the sequence of open subset (see \Cref{rem: max-sing})
$$ \Mtilde_g^0 \subset \Mtilde_g^1 \subset \dots \subset \Mtilde_g^r$$ 
and we define $\widetilde{\cA}_{\geq n}:=\Mtilde_g^r\setminus \Mtilde_g^{n-1}$ for $n=0,\dots,r+1$ setting $\Mtilde_g^{-1}:=\emptyset$.

We now introduce an alternative to $\widetilde{\cA}_{\geq n}$ which is easier to describe. Suppose $n$ is a positive integer smaller or equal than $r$ and let $\cA_{\geq n}$ be the substack of the universal curve $\Ctilde_g^r$ of $\Mtilde_g^r$ parametrizing pairs $(C/S,p)$ where $p$ is a section whose geometric fibers over $S$ are $A_r$-singularities for $r\geq n$. We give to $\cA_{\geq n}$ the structure of closed substack of $\Ctilde_g^r$ inductively on $n$. Clearly if $n=0$ we have $\cA_{\geq 0}=\Ctilde_g^r$. To define $\cA_{\geq 1}$, we need to find the stack-theoretic structure of the singular locus of the natural morphism $\Ctilde_g^r \rightarrow \Mtilde_g^r$. This is standard and it can be done by taking the zero locus of the $1$-st Fitting ideal of $\Omega_{\Ctilde_g^r|\Mtilde_g^r}$. We have that $\cA_{\geq 1}\rightarrow  \Mtilde_g^r$ is now finite and it is unramified over the nodes, while it ramifies over the more complicated singularities. Therefore, we can denote by $\cA_{\geq 2}$ the substack of $\cA_{\geq 1}$ defined by the $0$-th Fitting ideal of $\Omega_{\cA_{\geq 1}|\Mtilde_g^r}$. A local computation shows us that $\cA_{\geq 2} \rightarrow \Mtilde_g^r$ is unramified over the locus of $A_2$-singularities and ramified elsewhere. Inductively, we can iterate this procedure considering the $0$-th Fitting ideal of $\Omega_{\cA_{\geq n-1}|\Mtilde_g^r}$ to define $\cA_{\geq n}$. 

A local computation shows that the geometric points of $\cA_{\geq n}$ are exactly the pairs $(C,p)$ such that $p$ is an $A_{n'}$-singularity for $n\leq n'\leq r$.

Let us define $\cA_n:=\cA_{\geq n}\setminus \cA_{\geq n+1}$ for  $n=0,\dots,r-1$. We have a stratification of $\cA_{\geq 2}$
$$ \cA_{r}=\cA_{\geq r} \subset \cA_{\geq r-1} \subset \dots \subset \cA_{\geq 2}$$ 
where the $\cA_n$'s are the associated locally closed strata for $n=2,\dots, r$. 

The first reason we choose to work with $\cA_{\geq n}$ instead of $\widetilde{\cA}_{\geq n}$ is the smoothness of the locally closed substack $\cA_n$ of $\Ctilde_g^r$.

\begin{proposition}
	The stack $\cA_n$ is smooth.
\end{proposition}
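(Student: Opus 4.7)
The plan is to reduce to an étale-local computation near a geometric point $(C_0, p_0)\in \cA_n$ and to exhibit $\cA_{\geq n}$ (and hence its open subscheme $\cA_n$) as the vanishing of a regular sequence that is part of a regular system of parameters on a smooth ambient scheme. Since $\Mtilde_g^r$ is smooth by \Cref{theo:descr-quot} and the miniversal deformation of an $A_n$-singularity is the standard unfolding $y^2 = x^{n+1} + a_{n-1}x^{n-1} + \cdots + a_1 x + a_0$, an étale neighborhood of $C_0$ in $\Mtilde_g^r$ can be written as $\spec A$ with $A$ a smooth $\kappa$-algebra admitting a regular system of parameters $(a_0,\ldots,a_{n-1},t_1,\ldots,t_{d-n})$, $d=\dim \Mtilde_g^r$, such that the pullback of $\Ctilde_g^r$ is étale-locally around $p_0$ the hypersurface $y^2 = f(x,a)$ with $f(x,a):=x^{n+1}+\sum_{i=0}^{n-1} a_i x^i$, sitting inside the smooth scheme $W:=\spec A[x,y]$.

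Next I would compute the iterated Fitting-ideal stratification in these coordinates. The sheaf $\Omega_{\Ctilde_g^r/\Mtilde_g^r}$ is locally presented by the row $(-\partial_x f,\,2y)$, so its first Fitting ideal (with $2$ invertible) is $(y,\partial_x f)$; inductively, if $\cA_{\geq k}=V(y,f,\partial_x f,\ldots,\partial_x^{k-1}f)$ in $W$, then $\Omega_{\cA_{\geq k}/\Mtilde_g^r}$ is a cyclic module on $dx$ with relations $\partial_x f,\ldots,\partial_x^{k}f$, so its $0$-th Fitting ideal is $(\partial_x f,\ldots,\partial_x^{k}f)$, producing $\cA_{\geq k+1}=V(y,f,\partial_x f,\ldots,\partial_x^{k}f)$. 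By induction, $\cA_{\geq n}$ is cut out in $W$ by the $n+2$ elements $y,f,\partial_x f,\ldots,\partial_x^{n}f$.

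Finally I would analyse these generators at $p_0$, where $x=y=0$ and $a_0=\cdots=a_{n-1}=0$. An elementary calculation gives
\[
\partial_x^{k} f(x,a) \;=\; \sum_{i=k}^{n-1} \frac{i!}{(i-k)!}\,a_i\,x^{i-k} + \frac{(n+1)!}{(n+1-k)!}\,x^{n+1-k},
\]
so the linear part of $\partial_x^{k}f$ at $p_0$ is $k!\,a_k$ for $0\le k<n$, and $\partial_x^{n}f=(n+1)!\,x$ identically. Since $\chara\kappa>2g+1\ge n+1$, every relevant factorial is a unit, and the ideal $(y,f,\partial_x f,\ldots,\partial_x^{n}f)$ in $\cO_{W,p_0}$ coincides with $(y,x,a_0,\ldots,a_{n-1})$, which is part of a regular system of parameters on $W$. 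Hence $\cA_{\geq n}$ is smooth of codimension $n+2$ in $W$ at $p_0$, equivalently smooth of codimension $n+1$ in $\Ctilde_g^r$; since $\cA_n=\cA_{\geq n}\setminus\cA_{\geq n+1}$ is open in $\cA_{\geq n}$ and $p_0$ was an arbitrary geometric point, $\cA_n$ is smooth.

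The only step that is genuinely more than linear algebra is the inductive description of the iterated Fitting ideals, where one must verify that no embedded components appear at each stage. The main conceptual input is the availability of the standard versal unfolding of an $A_n$-singularity together with smoothness of $\Mtilde_g^r$: together they let us realise the base $\spec\kappa[a_0,\ldots,a_{n-1}]$ of the versal deformation as part of a regular system of parameters on $\Mtilde_g^r$, which is what makes the final regularity check work.
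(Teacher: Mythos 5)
Your proof is correct and takes essentially the same route as the paper: both use the smoothness of $\Mtilde_g^r$ together with the miniversal unfolding of an $A_n$-singularity to produce an \'etale-local model in which $\cA_n$ is cut out by the deformation parameters, which form part of a regular system of parameters. The paper's proof is a two-line sketch deferring to Proposition 1.6 of \cite{DiLorPerVis}; you have simply filled in the details it leaves implicit (the explicit iterated Fitting-ideal computation and the Jacobian criterion check), and these details check out.
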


\begin{proof}
	We can adapt the proof of Proposition 1.6 of \cite{DiLorPerVis} perfectly. The only thing to point out is that the \'etale model induced by the deformation theory of the pair $(C,p)$  would be $y^2=x^n+a_{n-2}x^{n-2}+\dots+a_1x+a_0$, thus the restriction to $\cA_n$ is described by the equation $a_{n-2}=\dots=a_1=a_0$. The smoothness of $\Mtilde_g^r$ implies the statement.
\end{proof}

Before going into details for the odd and even case, we describe a way of desingularize a $A_n$-singularity.

\begin{lemma}\label{lem:blowup-an}
	Let $(C,p) \in \cA_n(S)$, then the ideal $I_p$ associated to the section $p$ verifies the hypothesis of \Cref{lem:blowup}. If we denote by $b:\widetilde{C}\rightarrow C$ the blowup morphism and by $D$ the preimage $b^{-1}(p)$, then $D$ is finite flat of degree $2$ over $S$.
	\begin{itemize}
		\item If $n=1$, $D$ is a Cartier divisor of $\widetilde{C}$ \'etale of degree $2$ over $S$.	
		\item If $n\geq 2$, the $0$-th Fitting ideal of $\Omega_{D|S}$ define a section $q$ of $D\subset \widetilde{C}\rightarrow S$ such that $\widetilde{C}$ is an $A_r$-prestable curve and $q$ is an $A_{n-2}$-singularity of $\widetilde{C}$. 
	\end{itemize} 
\end{lemma}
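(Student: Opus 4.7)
The plan is to reduce every assertion to an explicit computation in an \'etale-local model. On $\cA_n$ the deformation theory of the pair $(C,p)$ admits a versal deformation of the form $y^2=x^{n+1}+\sum_{i=0}^{n-1}a_ix^i$, where $a_0,\dots,a_{n-1}$ is a regular system of parameters, and the substack $\cA_n\subset \cA_{\geq n}$ is cut out by $a_0=\cdots=a_{n-1}=0$ (this is exactly what was used in the previous proposition to establish smoothness of $\cA_n$). Hence, for every object $(C/S,p)\in\cA_n(S)$, one can work \'etale locally on $S$ and assume that $C$ is isomorphic to $\spec \cO_S[x,y]/(y^2-x^{n+1})$ with $p$ corresponding to the ideal $I_p=(x,y)$. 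Since blowups along finitely presented ideals, Fitting ideals, and the $A_r$-prestable property are all preserved under \'etale base change, every claim in the lemma can be checked on this local model.

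The hypotheses of \Cref{lem:blowup} for $I_p$ are then immediate: $I_p=(x,y)$ is a finitely presented ideal defining a section lying in the smooth locus of the ambient $\AA^2_S\to S$, and $C\subset\AA^2_S$ is a relative effective Cartier divisor, which is precisely the setting in which the blowup of $C$ along $I_p$ commutes with arbitrary base change on $S$. The blowup is computed in the two standard affine charts. In the chart $y=xy'$ the defining equation becomes $x^2(y'^2-x^{n-1})$, so the proper transform $\widetilde C$ is cut out by $y'^2=x^{n-1}$ and the preimage $D=b^{-1}(p)$ is the vanishing locus of $x$ on $\widetilde C$. In the chart $x=yx'$ the equation becomes $y^2(1-y^{n-1}x'^{n+1})$, whose proper transform does not meet the exceptional locus $y=0$ when $n\geq 2$, and for $n=1$ meets it in the same two points already produced in the first chart.

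In both cases $D\to S$ is finite flat of degree $2$. For $n=1$, the vanishing locus of $x$ on $\{y'^2=1\}$ is $\spec\cO_S[y']/(y'^2-1)$, which splits as $\spec\cO_S\sqcup\spec\cO_S$ because $2\in\cO_S^\times$; thus $D$ is \'etale over $S$, and since $\widetilde C$ is smooth along $D$ one obtains that $D$ is a Cartier divisor in $\widetilde C$. For $n\geq 2$, $D$ equals $\spec\cO_S[y']/(y'^2)$, and the proper transform $\widetilde C$ acquires at $(x,y')=(0,0)$ the equation $y'^2=x^{n-1}$, i.e., an $A_{n-2}$-singularity; all singularities of $C$ away from $p$ are untouched, so $\widetilde C$ remains $A_r$-prestable. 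To identify the section $q$, set $R=\cO_S[y']/(y'^2)$; a direct computation gives $\Omega_{R/\cO_S}=R\,dy'/(2y'\,dy')$, and since $2$ is invertible this is isomorphic as an $R$-module to $R/(y')\cong\cO_S$. Hence the $0$-th Fitting ideal of $\Omega_{D/S}$ is $(y')\subset R$; it defines the section $q$ corresponding to $(x,y')=(0,0)$, which is exactly the new $A_{n-2}$-singular point of $\widetilde C$.

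The main obstacle in executing this plan is the initial \'etale-local trivialization: one needs to combine the smoothness of $\cA_n$ with Artin approximation applied to the versal deformation of an $A_n$-singularity with marked singular point, so as to realize $(C,p)$ \'etale locally around the section as the constant family $y^2=x^{n+1}$ over $\cO_S$. Once this reduction is in place, the compatibility of blowups and Fitting ideals with \'etale base change, together with the explicit calculation on the local model, yields every statement in the lemma.
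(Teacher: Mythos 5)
Your proof is correct, and it takes a slightly different route from the paper's. The paper uses the smoothness of $\cA_n$ only to reduce to the case of a \emph{reduced} base $S$ (an object of $\cA_n(S)$ is \'etale-locally pulled back from a smooth, hence reduced, scheme); for reduced $S$ it then verifies the hypothesis of \Cref{lem:blowup} by checking that the geometric fibers of $\cO_C/I_p^m$ have constant length, and finally checks the remaining assertions fiberwise over geometric points by a standard blowup computation. You push the reduction further: using that $\cA_n$ is cut out in the miniversal deformation of the germ by $a_0=\dots=a_{n-1}=0$, the deformation of the singularity along the section is formally trivial, and Artin approximation then trivializes the pair $(C,p)$ \'etale-locally near the section as the constant family $y^2=x^{n+1}$ over $\cO_S$. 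This buys you a single explicit computation over $S$ itself (charts of the blowup, the basis of $\cO_C/I_p^m$, the Fitting ideal of $\Omega_{D/S}$) in place of the paper's fiberwise arguments, at the cost of having to justify the \'etale trivialization, which you correctly identify as the main point requiring care. One small imprecision: the hypothesis of \Cref{lem:blowup} is the $S$-flatness of $\cO_C/I_p^m$ for all $m$, not merely that the blowup commutes with base change; on your local model this is still immediate, since $\cO_C/I_p^m$ is free over $\cO_S$ on an explicit set of monomials $x^iy^j$ ($j\le 1$), but it should be stated as such rather than deduced from the Cartier-divisor setup.
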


\begin{proof}
	Suppose that the statement is true when $S$ is reduced. Because $\cA_n$ is smooth, we know that up to an \'etale cover of $S$, our object $(C,p)$ is the pullback of an object over a smooth scheme, therefore reduced. All the properties in \Cref{lem:blowup-an} are stable by base change and satisfies \'etale descent, therefore we have the statement for any $S$. 
	
	Assume that $S$ is reduced. To prove the first part of the statement, it is enough to prove that the geometric fiber $\cO_{X_s}/I_s^m$ has constant length over every point $s \in S$. This follows from a computation with the complete ring in $p_s$. Regarding the rest of the statement, we can restrict to the geometric fibers over $S$ and reduce to the case $S=\spec k$ where $k$ is an algebraically closed field over $\kappa$. The statement follows from a standard blowup computation.
\end{proof}

Let us start with $(C,p)\in \cA_n(S)$. We can construct a (finite) series of blowups which desingularize the family $C$ in the section $p$. 

Suppose $n$ is even. If we apply \Cref{lem:blowup-an} iteratively, we get the successive sequence of blowups over $S$
$$ 
\begin{tikzcd}
\widetilde{C}_m \arrow[r, "b_m"] & \widetilde{C}_{m-1} \arrow[r, "b_{m-1}"] & \dots \arrow[r, "b_1"] & \widetilde{C}_0:=C
\end{tikzcd}
$$
with sections $q_h:S \rightarrow \widetilde{C}_h$ where $m:=n/2$, the morphism $b_h:\widetilde{C}_m:={\rm Bl}_{q_{h-1}}\widetilde{C}_{h-1}\rightarrow \widetilde{C}_{h-1}$ is the blowup of $\widetilde{C}_{h-1}$ with center $q_{h-1}$ ($q_0:=p$) and $q_h$ is the section of $\widetilde{C}_h$ over $q_{h-1}$ as in \Cref{lem:blowup-an}. We have that $\widetilde{C}_m$ is an $A_r$-prestable curve of genus $g-m$ and $q_m$ is a smooth section. 

On the contrary, if $n:=2m-1$ is odd, the same sequence of blowups gives us a curve $\widetilde{C}_m$  which has arithmetic genus either $g-m$ or $g-m+1$ depending on whether the geometric fibers of $\widetilde{C}_m$ are connected or not and an  \'etale Cartier divisor $D$ of degree $2$ over $S$.

\begin{definition}
	Let $(C,p)$ be an object of $\cA_n(S)$ with $n=2m$ or $n=2m-1$ for $S$ any scheme. The composition of blowups $b_m\circ b_{m-1} \circ \dots \circ b_{1}$ described above is called the relative $A_n$-desingularization and it is denoted as $b_{C,p}$. By abuse of notation, we refer to the source of the relative $A_n$-desingularization as relative $A_n$-desingularization. We also denote by $J_b$ the conductor ideal associated to it. 
	
	 We say that an object $(C/S,p)$ of $\cA_n$ is a separating $A_n$-singularity if the geometric fibers over $S$ of the relative $A_n$-desingularization are not connected. 
\end{definition}
\begin{remark}
	By construction, the relative $A_n$-desingularization is compatible with base change.
\end{remark}

\begin{lemma}\label{lem:conductor}
	Let $(C,p)$ be an object of $\cA_n(S)$ with $n=2m$ or $n=2m-1$ and let $b_{C,p}:\widetilde{C}\rightarrow C$ the relative $A_n$-desingularization. We have that $J_b$ is flat over $S$ and its formation is compatible with base change over $S$. Furthermore, we have that $J_b=I_{b^{-1}(p)}^m$ as an ideal of $\widetilde{C}$, where $I_{b^{-1}(p)}$ is the ideal associated to the preimage of $p$ through $b$.
\end{lemma}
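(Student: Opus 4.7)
The plan is to produce an explicit flat sub-ideal $K := I_{b^{-1}(p)}^{m}$ of $J_{b}$, prove via the local criterion of flatness that $\cO_{C}/K$ is $\cO_{S}$-flat of constant fibre length $m$, and then conclude $K = J_{b}$ from a fibrewise length comparison and Nakayama's lemma. The main obstacle is that the base-change behaviour of the conductor $J_{b}$ is not transparent from its definition; it is circumvented by working with $K$ instead.

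First, $b_{C,p}$ commutes with arbitrary base change on $S$ by induction on $m$ from \Cref{lem:blowup-an}, and hence so do $I_{b^{-1}(p)}$ and $K$. By the explicit models in \Cref{rem:norm}, near the preimage of $p$ the ideal $I_{b^{-1}(p)}$ is principal with non-zero-divisor generator on each geometric fibre: $t^{2}$ in the even case, $t_{i}$ at each of the two preimage points in the odd case. Hence $K$ is an invertible ideal sheaf of $\cO_{\widetilde{C}}$, its generator is a non-zero-divisor on every fibre, and $\cO_{\widetilde{C}}/K$ is $S$-flat of constant fibrewise length $2m$. The inclusion $K\subset \cO_{C}$ inside $b_{*}\cO_{\widetilde{C}}$ is also a fibrewise check: in the even case $t^{2m}\cdot k[[t]] \subset k[[t^{2},t^{2m+1}]] = A_{2m}$, since elements of $t^{2m}\cdot k[[t]]$ have $t$-valuation $\geq 2m$ and so no forbidden odd-degree terms below $t^{2m+1}$; in the odd case, pairs of valuation $\geq m$ in each component trivially satisfy the congruence defining $A_{2m-1}$ inside $k[[t_{1}]]\oplus k[[t_{2}]]$. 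Thus $K\subset J_{b}$ as ideals of $\cO_{C}$.

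To see that $\cO_{C}/K$ is $\cO_{S}$-flat, I apply the local criterion of flatness. From the short exact sequence $0 \to K \to \cO_{C} \to \cO_{C}/K \to 0$ with $K$ and $\cO_{C}$ both $\cO_{S}$-flat, one has $\mathrm{Tor}_{1}^{\cO_{S}}(k(s),\cO_{C}/K) = \ker\bigl(K\otimes k(s) \to \cO_{C}\otimes k(s)\bigr)$. The composition $K\otimes k(s) \to \cO_{C}\otimes k(s) \hookrightarrow b_{*}\cO_{\widetilde{C}}\otimes k(s)$ is the base change of the inclusion $K \hookrightarrow \cO_{\widetilde{C}}$, which remains injective fibrewise because its local generator is a non-zero-divisor on every geometric fibre. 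So the kernel vanishes, $\cO_{C}/K$ is $\cO_{S}$-flat, and a direct computation in the local ring shows its geometric fibres have length $m$; hence it is $\cO_{S}$-locally free of rank $m$.

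Finally, the inclusion $K\subset J_{b}$ yields a surjection $\cO_{C}/K \twoheadrightarrow \cO_{C}/J_{b}$, which on each fibre factors as $(\cO_{C}/K)\otimes k(s) \twoheadrightarrow (\cO_{C}/J_{b})\otimes k(s) \twoheadrightarrow \cO_{C_{s}}/J_{b_{s}}$, the last arrow being surjective since the image of $J_{b}\otimes k(s)$ in $\cO_{C_{s}}$ is contained in the fibre conductor $J_{b_{s}}$. By the local ring calculation, $\cO_{C_{s}}/J_{b_{s}}$ has length $m$, so $(\cO_{C}/J_{b})\otimes k(s)$ has length at least $m$; being a quotient of the rank-$m$ locally free sheaf $\cO_{C}/K$, it must have length exactly $m$ and the surjection is an isomorphism on every geometric fibre. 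By Nakayama the kernel of $\cO_{C}/K \to \cO_{C}/J_{b}$ vanishes, so $K = J_{b}$, and the flatness and base-change compatibility of $J_{b}$ are inherited from those of $K$.
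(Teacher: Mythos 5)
Your overall route is the same as the paper's: exhibit $K:=I_{b^{-1}(p)}^{m}$ as an ideal of $\cO_{C}$ by a fibrewise computation in the local models, invoke the characterization of $J_{b}$ as the largest ideal of $\cO_{C}$ that is also an ideal of $\cO_{\widetilde{C}}$ to get $K\subset J_{b}$, and then force equality by comparing with the known fibrewise answer. The added detail (invertibility of $K$, the local criterion of flatness, the explicit length counts) is correct and welcome.

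There is, however, a genuine gap in the last step. From the right-exact sequence $(J_{b}/K)\otimes k(s)\to(\cO_{C}/K)\otimes k(s)\to(\cO_{C}/J_{b})\otimes k(s)\to 0$, the fact that the second arrow is an isomorphism only tells you that $(J_{b}/K)\otimes k(s)$ maps to \emph{zero} in $(\cO_{C}/K)\otimes k(s)$; it does not give $(J_{b}/K)\otimes k(s)=0$, which is what Nakayama needs. Indeed, a surjection of coherent sheaves that is an isomorphism on every geometric fibre need not be an isomorphism over a non-reduced base: for $\cO_{S}=k[\epsilon]/(\epsilon^{2})$ the surjection $k[\epsilon]\twoheadrightarrow k[\epsilon]/(\epsilon)$ is an isomorphism on the unique fibre but has non-zero kernel. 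The same issue infects your earlier fibrewise verification that $K\subset\cO_{C}$ inside $b_{*}\cO_{\widetilde{C}}$: a section of a locally free $\cO_{S}$-module vanishing on all fibres is only forced to be zero when $S$ is reduced. The paper closes exactly this hole by the reduction carried out in \Cref{lem:blowup-an}: since $\cA_{n}$ is smooth, every object of $\cA_{n}(S)$ is, \'etale-locally on $S$, pulled back from a smooth (hence reduced) base, and all the assertions at stake are stable under base change and \'etale-local. Over a reduced $S$ your argument does go through, because a submodule of a finite locally free $\cO_{S}$-module whose image in every fibre vanishes lies in $\bigcap_{s}\frkm_{s}\cdot(\cO_{C}/K)=\mathrm{nil}(\cO_{S})\cdot(\cO_{C}/K)=0$. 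So you should either prepend this reduction to the reduced case or otherwise establish the flatness of $\cO_{C}/J_{b}$ before the fibrewise comparison; as written, the conclusion $K=J_{b}$ is not justified for arbitrary $S$.
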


\begin{proof}
	The flatness and compatibility with base change are standard. If $S$ is the spectrum of an algebraically closed field, we know that the equality holds. We can consider the diagram
	$$
	\begin{tikzcd}
	&                       & f_*I_{b^{-1}(p)}^m \arrow[d]                       &             &   \\
	0 \arrow[r] & \cO_C \arrow[r, hook] & f_*\cO_{\widetilde{C}} \arrow[r, two heads] & Q \arrow[r] & 0
	\end{tikzcd}
	$$ 
	and show that the composite morphism $f_*I_{b^{-1}(p)}^m\rightarrow Q$ is the zero map, restricting to the geometric fibers over $S$ (in fact they are both finite and flat over $S$). Therefore $I_{b^{-1}(p)}^m$ can be seen also as an ideal of $\cO_{C}$. Because the conductor ideal is the largest ideal of $\cO_{C}$ which is also an ideal of $\cO_{\widetilde{C}}$, we get an inclusion $I_{b^{-1}(p)}^m\subset J_f$ whose surjectivity can be checked on the geometric fibers over $S$.
\end{proof}

\begin{remark}\label{rem:stab}
	The stability condition for $\widetilde{C}$ can be described using the Noether formula (see Proposition 1.2 of \cite{Cat}). We have that $\omega_{C/S}$ is ample if and only if $\omega_{\widetilde{C}/S}(J_b^{\vee})$ is ample.
\end{remark}

 Lastly, we prove that the stack parametrizing separating $A_n$-singularities for a fixed positive integer $n$ is closed inside $\Ctilde_g^r$. 
  
\begin{lemma}\label{lem:sep-sing}
	Let $C\rightarrow \spec R$ a family of $A_r$-prestable curves over a DVR and denote by $K$ the function field of $R$. Suppose there exists a generic section $s_K$ of the morphism such that its image is a separating $A_{r_0}$-singularity (with $r_0\leq r$). Then the section $s_R$ (which is the closure of $s_K$) is still a separating $A_{r_0}$-singularity. 
\end{lemma}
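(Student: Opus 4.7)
My plan is to combine a closedness argument (giving a lower bound on the singularity type at $s_R(0)$) with a conductor computation via scheme-theoretic closures of the two branches (giving an upper bound and forcing the separating property). Note first that separating necessarily forces $r_0$ to be odd, say $r_0 = 2h+1$: for even $A_{2m}$ the local ring has a unique minimal prime, so the relative $A_n$-desingularization of a connected curve remains connected.

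I would begin by observing that $\cA_{\geq r_0}$ is a closed substack of $\Ctilde_g^r$ (by the iterated Fitting-ideal construction from earlier in this section). The section $s_K$ factors through $\cA_{\geq r_0}$, and since $s_R$ is the unique extension of $s_K$ to $\spec R$, it too factors through $\cA_{\geq r_0}$; hence $(C_0, s_R(0))$ is an $A_{r'_0}$-singularity with $r'_0 \geq r_0$. Next I would decompose the generic fiber using the separating hypothesis: the $A_{r_0}$-desingularization of $(C_K, s_K)$ splits as $\widetilde{C}_K^1 \sqcup \widetilde{C}_K^2$, and projecting back yields two closed subcurves $C_K^1, C_K^2 \subset C_K$ with $C_K = C_K^1 \cup C_K^2$ (scheme-theoretically, since $A_{2h+1}$ is reduced with exactly two minimal primes $(y \pm x^{h+1})$), no shared global irreducible component, and $C_K^1 \cap C_K^2 \simeq \spec k[[x]]/(x^{h+1})$ of length $h+1$ at $s_K$. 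Setting $C_R^i := \overline{C_K^i} \subset C_R$, the closures are $R$-flat; since $\cI_{C_R^1} \cap \cI_{C_R^2}$ is $R$-torsion-free in $\cO_{C_R}$ (a subsheaf of an $R$-flat sheaf over a DVR) and vanishes generically, it vanishes entirely, so $C_R = C_R^1 \cup C_R^2$ scheme-theoretically. The resulting short exact sequence
\[
0 \to \cO_{C_R} \to \cO_{C_R^1} \oplus \cO_{C_R^2} \to \cO_{C_R^1 \cap C_R^2} \to 0
\]
has $R$-flat left-hand terms, so $\cO_{C_R^1 \cap C_R^2}$ is $R$-flat; being a coherent sheaf on a proper $R$-scheme with $0$-dimensional generic fiber, it defines a finite $R$-scheme free of rank $h+1$, hence every fiber has total length $h+1$.

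Finally I would analyze the special-fiber intersection $(C_R^1)_0 \cap (C_R^2)_0$ of total length $h+1$ point by point. At any meeting point $p$ both $(C_R^i)_0$ contain $p$ with local dimension $1$; if $p$ were smooth of $C_0$ or an even $A_{2k}$-singularity, the unique local branch of $C_0$ at $p$ would lie in both subcurves, and its closure would be a $1$-dimensional irreducible component contained in $(C_R^1)_0 \cap (C_R^2)_0$, contradicting the $0$-dimensionality of the intersection. Hence $p$ is an odd $A_{2k+1}$-singularity of $C_0$ whose two local branches are separated between the two subcurves, a separating configuration contributing local intersection length $k+1$. At $p = s_R(0)$ the bound $r'_0 \geq 2h+1$ gives local length $\geq h+1$; since the total is exactly $h+1$, equality forces $r'_0 = 2h+1$ and excludes any other meeting point, so $(C_0, s_R(0))$ is a separating $A_{r_0}$-singularity as required. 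The main obstacle is the flatness and finiteness of $\cO_{C_R^1 \cap C_R^2}$ over $R$; once this is in hand, the rigidity of the conductor length together with the local branch dichotomy at $A_n$-singularities makes the conclusion essentially combinatorial.
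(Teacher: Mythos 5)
Your route is genuinely different from the paper's (which iterates the blow-up of \Cref{lem:blowup-an} along $s_R$, using flatness of $\cO_C/I_R^h$ to make the blow-ups commute with base change, and then invokes local constancy of the number of connected components of the geometric fibers of the resulting flat proper family), and most of your steps check out: the closedness of $\cA_{\geq r_0}$ gives the lower bound $r'_0\geq r_0$, the scheme-theoretic decomposition $C_K=C_K^1\cup C_K^2$ with intersection of length $h+1$ is correct because the two minimal primes of $A_{2h+1}$ intersect in $(0)$, and the closures $C_R^i$ are $R$-flat with $\cI_{C_R^1}\cap\cI_{C_R^2}=0$ by the torsion-free argument. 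The final fiberwise length count is also sound, provided the intersection scheme really is finite flat of rank $h+1$ over $R$.

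That flatness, however, is exactly where your written argument has a genuine gap. From the short exact sequence
$$0 \to \cO_{C_R} \to \cO_{C_R^1}\oplus\cO_{C_R^2} \to \cO_{C_R^1\cap C_R^2}\to 0$$
you conclude that the quotient is $R$-flat because the sub and the middle term are. This implication is false in general: over a DVR the sequence $0\to R\xrightarrow{\;\pi\;} R\to R/\pi\to 0$ has flat sub and middle term but non-flat quotient. Since you yourself identify this flatness as ``the main obstacle,'' the proof as written does not close. The statement is nonetheless true and the fix is short: tensoring the sequence with the residue field $k$ and using flatness of the middle term identifies $\mathrm{Tor}_1^R\bigl(\cO_{C_R^1\cap C_R^2},k\bigr)$ with the kernel of $\cO_{C_0}\to \cO_{(C_R^1)_0}\oplus\cO_{(C_R^2)_0}$, i.e.\ with the ideal of the closed subscheme $(C_R^1)_0\cup(C_R^2)_0$ of $C_0$. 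That union has the same support as $C_0$ because $C_R=C_R^1\cup C_R^2$, and $C_0$ is a reduced curve, so the ideal vanishes; hence the $\mathrm{Tor}_1$ vanishes and the quotient is flat (and then finite free of rank $h+1$ as you say). Note that reducedness of the special fiber is genuinely needed here, so it must be cited explicitly. Two further cosmetic points: the local contribution at an odd point $A_{2k+1}$ should be stated as length $\geq k+1$ (you only know $J_1+J_2\subseteq(y,x^{k+1})$), which is all your counting argument uses; and at the very end you should spell out that the two proper transforms of $(C_R^1)_0$ and $(C_R^2)_0$ in the partial normalization at $s_R(0)$ are disjoint closed subsets covering it, which is what makes the singularity separating.
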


\begin{proof}
	Because $s_K$ is a separating $A_{r_0}$-singularity, then $r_0$ is necessarily odd. Furthemore, we have that the special fiber $s_k:=s_R\otimes_R k$ is an $A_{r_1}$-singularity with $r_1\geq r_0$.
	
	Let us call $m_R$ the ideal associated with the section $s_R$. Because $C/\spec R$ is $A_r$-prestable, we can compute $\dim_L(\cO_C/I_R^h\otimes_R L)$ when $L$ is the algebraic closure of either the function field $K$ or the residue field $k$. We have that 
	$$ \dim_L L[[x,y]]/(y^2-x^n,m^h)=2h-1$$ 
	for every $h\geq 1$ and every $n\geq 2$, where $m=(x,y)$. 
	
	Therefore, $\dim_L(\cO_C/I_R^h\otimes_R L)$ is constant on the geometric fibers over $\spec R$ and we get that $\cO_C/I_R^h$ is $R$-flat  because $R$ is reduced.  Consider now $\widetilde{C}_1:={\rm Bl}_{I_R}C$, which is still $R$-flat, proper and finitely presented thanks to \Cref{lem:blowup} and commutes with base change. We denote by $b:\widetilde{C}_1 \rightarrow C$ the blowup morphism. We have that a local computations shows that if $r_0\geq 3$ we have $b^{-1}(s_R)_{\rm red}=\spec R$ and thus it defines a section $q_1$ of $\widetilde{C}_1$ which is a $A_{r_0-2}$-singularity at the generic fiber and a $A_{r_1-2}$-singularity at the special fiber. We can therefore iterate this procedure $r_0/2$ times until we get $\widetilde{C}_{r_0/2}\rightarrow \spec R$ which is a flat proper finitely presented morphism whose generic fiber is not geometrically connected. Therefore the special fiber is not geometrically connected as it is geometrically reduced. This clearly implies that $r_1=r_0$ and that $s_R$ is a separating section.
\end{proof}

\subsection*{Description of $\cA_n$ for the even case}

Let $n:=2m$ be an even number with $m\geq 1$. Firstly, we study the $A_{2m}$-singularity locally, and then we try to describe everything in families of projective curves.

Let $(C,p)$ be a $1$-dimensional reduced $1$-pointed scheme of finite type over an algebraically closed field where $p$ is an $A_{2m}$-singularity. Consider now the partial normalization in $p$ which gives us a finite birational morphism 
$$b:\widetilde{C} \longrightarrow C$$ 
which is infact an homeomorphism. This implies that the only think we need to understand is how the structural sheaf changes through $b$. We have the standard exact sequence
$$ 0 \rightarrow \cO_C \rightarrow \cO_{\widetilde{C}} \rightarrow Q \rightarrow 0$$
where $Q$ is a coherent sheaf on $C$ with support on the point $p$. Consider now the conductor ideal $J_b$ of the morphism $b$, which is both an ideal of $\cO_C$ and of $\cO_{\widetilde{C}}$.  Consider the morphism of exact sequences
$$
\begin{tikzcd}
0 \arrow[r] & \cO_C \arrow[r] \arrow[d, two heads] & \cO_{\widetilde{C}} \arrow[r] \arrow[d, two heads] & Q \arrow[r] \arrow[d, Rightarrow, no head] & 0 \\
0 \arrow[r] & \cO_C/J_b \arrow[r]                    & \cO_{\widetilde{C}}/J_b \arrow[r]                    & Q \arrow[r]                                & 0,
\end{tikzcd}
$$
it is easy to see that the vertical morphism on the right is an isomorphism. Therefore \Cref{lem:cond-diag} and \Cref{lem:conductor} imply that to construct an $A_{2m}$-singularity we need the partial normalization $\widetilde{C}$, the section $q$ and a subalgebra of $\cO_{\widetilde{C}}/m_q^{2m}$. Notice that not every subalgebra works. 

\begin{remark}
	First of all, a local computation shows that the extension $\cO_{C}/J_b \into \cO_{\widetilde{C}}/m_q^{2m}$ is finite flat of degree $2$. Luckily, the converse is also true: if $B=k[[t]]$ and $I=(t^{2m})$, then the subalgebras $C$ such that $C \into B/J_b$ is finite flat of degree $2$ are exactly the ones whose pullback through the projection $B \rightarrow B/J_b$ is an $A_{2m}$-singularity. This should serve as a motivation for the alternative description we are going to prove for $\cA_{2m}$.
\end{remark}  

The idea now is to prove that the same exact picture works for families of curves. The result holds in a greater generality, but we describe directly the case of families of $A_r$-prestable curves. 

We want to construct an algebraic stack whose objects are triplets $(\widetilde{C}/S,q,A)$ where $(\widetilde{C}/S,q)$ is a $A_r$-prestable $1$-pointed curve of genus $g-m$ with some stability condition (see \Cref{rem:stab}) and $A\subset \cO_{\widetilde{C}}/I_q^{2m}$ is a finite flat extension  of degree $2$ of flat $\cO_S$-algebras, where $I_q$ is the ideal sheaf associated to the section $q$. 

Firstly, we introduce the stack $\Mtilde_{h,[l]}^r$ parametrizing $A_r$-prestable 1-pointed curves $(\widetilde{C},q)$ such that $\omega_{\widetilde{C}}(lq)$ is ample. This is not difficult to describe, in fact we have a natural inclusion
$$ \Mtilde_{h,[l+1]}^r\subset \Mtilde_{h,[l]}^r$$ 
which is an equality for $l\geq 2$ if $h\geq 1$ and for $l\geq 3$ if $h=0$. We have that the only curves that live in $\Mtilde_{h,[l]}^r\setminus \Mtilde_{h,1}$ are curves that have one (irreducible) tail of genus $0$ and the section lands on the tail. We have the following result.

\begin{proposition}\label{prop:desc-str}
	In the situation above, if $h\geq 1$ we have an isomorphism 
	$$ \Mtilde_{h,[l]}^r \simeq \Mtilde_{h,1}^r\times [\AA^1/\gm]$$ 
	for $l\geq 2$. If $h=0$, we have an isomorphism 
	$$ \Mtilde_{0,[l]}\simeq \cB(\gm \rtimes \ga)$$ 
	for $l\geq 3$.
\end{proposition}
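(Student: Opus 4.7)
The plan is to treat the cases $h\geq 1$ and $h=0$ separately, starting with stabilization.

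\textbf{Step 1 (Stabilization).} I would first show that $\Mtilde_{h,[l]}^r$ is independent of $l$ in the claimed range. Given $(\widetilde C,q)$, ampleness of $\omega_{\widetilde C}(lq)$ on each irreducible component $\Gamma$ becomes $\deg(\omega_{\widetilde C}(lq)|_\Gamma)>0$. For $\Gamma$ not containing $q$ this is independent of $l$ and coincides with the $\Mtilde_{h,1}^r$-stability; for $\Gamma$ containing $q$ it reads $2g_\Gamma-2+\Gamma\cdot(\widetilde C-\Gamma)+l>0$. Using this, together with the numerics of $A_r$-singularities and \Cref{rem:genus-count}, one checks: for $h\geq 1$ the sole extra object permitted by $l\geq 2$ is a rational tail meeting $\widetilde C-\Gamma$ in one separating node and carrying $q$; for $h=0$ all components must be $\PP^1$'s meeting in nodes of genus-$0$ type, and stability forces $\widetilde C=\PP^1$, requiring $l\geq 3$.

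\textbf{Step 2 (Construction for $h\geq 1$).} Following the pattern of \Cref{prop:contrac}, I construct a contraction $c\colon \Mtilde_{h,[2]}^r\arr \Mtilde_{h,1}^r$ by taking relative $\proj$ of $\bigoplus_n\widetilde\pi_*\omega_{\widetilde C/S}^{\otimes n}$, with the unpointed analogues of \Cref{prop:boundedness} guaranteeing base change and global generation for large~$n$. The image is an $A_r$-stable pair $(C,p)$, with $p$ the image of~$q$. The locus $Z\subset \Mtilde_{h,[2]}^r$ of curves with a rational tail is a smooth Cartier divisor: it is closed by \Cref{lem:sep-sing} (the tail attaches at a separating node) and smooth of codimension~$1$ by deformation theory of separating nodes. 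The pair $(\cO(Z),s_Z)$ then defines a morphism $\delta\colon \Mtilde_{h,[2]}^r\arr [\AA^1/\gm]$, and together with $c$ I obtain $\Phi=(c,\delta)\colon \Mtilde_{h,[2]}^r\arr \Mtilde_{h,1}^r\times[\AA^1/\gm]$.

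\textbf{Step 3 (Isomorphism and the $h=0$ case).} I would prove $\Phi$ is an isomorphism by checking it stratum by stratum and on tangent spaces. On the open complement of $Z$ both sides coincide with $\Mtilde_{h,1}^r$ and $\Phi$ is the identity; on $Z$ the contraction realises an equivalence $Z\arr \Mtilde_{h,1}^r\times\cB\gm$, since any rational tail is isomorphic to $(\PP^1, \text{node}, q)$ with stabilizer $\gm$. A tangent-space check at a point of $Z$ matches the normal bundle of $Z$ in $\Mtilde_{h,[2]}^r$ (the smoothing direction of the separating node) with the normal bundle of $\Mtilde_{h,1}^r\times\cB\gm$ in the product, so $\Phi$ is étale; combined with smoothness of both stacks and bijectivity on closed points this gives the isomorphism. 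For $h=0$, Step~1 shows $\Mtilde_{0,[l]}$ has the unique geometric point $(\PP^1,q)$ with $\aut(\PP^1,q)=\gm\rtimes\ga$, the affine group of $\AA^1=\PP^1\setminus\{q\}$. Since $(\PP^1,q)$ has trivial deformations, $\Mtilde_{0,[l]}$ is smooth of dimension $-2$ with a single point and stabilizer $\gm\rtimes\ga$, hence $\Mtilde_{0,[l]}\simeq \cB(\gm\rtimes\ga)$.

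\textbf{Main obstacle.} The delicate point is ensuring that $\Phi$ produces a genuine product, rather than a $[\AA^1/\gm]$-fibration twisted by some line bundle (such as the $\psi$-class on $\Mtilde_{h,1}^r$). Concretely, one must verify that the natural $\gm$-equivariant normal bundle of $Z$ in $\Mtilde_{h,[2]}^r$ (morally $\psi_p\otimes T_{\mathrm{node}}\PP^1$) pulls back compatibly with the tautological line bundle on $[\AA^1/\gm]$ under $\delta$. This is the step where one either constructs an explicit quasi-inverse by blowing up the universal curve of $\Mtilde_{h,1}^r$ along a codimension-two ideal built from the pair $(L,s)$ and picking a new section $q$ using~$s$, or carries out a careful Picard- and tangent-space computation to rule out nontrivial twisting.
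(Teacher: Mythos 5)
Your proposal is correct in outline but runs in the opposite direction from the paper's proof, and the comparison is instructive. The paper constructs the map \emph{from} the product: it blows up $\Ctilde_{h,1}^r\times[\AA^1/\gm]$ along the universal section over $\cB\gm\subset[\AA^1/\gm]$, thereby manufacturing the rational tail, and obtains $\varphi\colon \Mtilde_{h,1}^r\times[\AA^1/\gm]\arr\Mtilde_{h,[l]}^r$; it then proves $\varphi$ is an isomorphism by showing it is an equivalence on geometric points, representable and quasi-finite, checking separatedness by the valuative criterion, and invoking Zariski's Main Theorem together with smoothness of the target. You instead construct the map \emph{to} the product via a contraction $c$ and the divisor class $\delta=(\cO(Z),s_Z)$, and argue strata-by-strata plus a tangent-space computation. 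Both routes work; yours avoids ZMT but must carry out the \'etale/injectivity check globally (an isomorphism on each stratum plus \'etaleness does not suffice without full faithfulness on all geometric points, which you do assert), while the paper's route defers the geometry to the valuative criterion --- where, amusingly, it secretly uses both of your ingredients: the partial inverse $\phi_R$ is built exactly from the contraction and from the local equation $xy=s$ of the separating node. Also, the ``main obstacle'' you flag is not one: a $[\AA^1/\gm]$-fibration of the form $[\VV(\cL)/\gm]$ for a line bundle $\cL$ on $X$ is always isomorphic to $X\times[\AA^1/\gm]$ (tensor the pair $(M,s\colon M\to\cL)$ by $\cL^{\vee}$), so even if $\delta$ differs from the naive projection by a twist by $\psi$, the target is still the trivial product; your fallback of building an explicit quasi-inverse by blowing up is precisely the paper's proof.

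One concrete correction: the contraction in your Step 2 should be the relative $\proj$ of $\bigoplus_n\widetilde\pi_*\omega_{\widetilde C/S}(q)^{\otimes n}$, not of $\bigoplus_n\widetilde\pi_*\omega_{\widetilde C/S}^{\otimes n}$. On the rational tail $\omega_{\widetilde C}$ has degree $-1$, so $\omega^{\otimes n}$ has no sections there and the ``unpointed analogue of \Cref{prop:boundedness}'' you invoke does not apply ($(\widetilde C,q)$ is not stable as an unpointed curve); with the twist by $q$ the degree on the tail is $0$, the bundle is semiample, base change holds by \Cref{lem:knutsen}-type arguments, and the tail is contracted to the attaching node as intended. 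This is the line bundle the paper itself uses (there with exponent $3$).
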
 

\begin{proof}
	The case $h=0$ is straightforward. Clearly for $h\geq 1$ it is enough to construct the isomorphism for $l=2$.
	
	In the proof of Theorem 2.9 of \cite{DiLorPerVis}, they proved the description for $r=2$, but the proof can be generalized easily for any $r$. We sketch an alternative proof. First of all, it is easy to see that $\Mtilde_{h,[l]}^r$ is smooth using deformation theory.
	
	We follow the construction introduced in Section 2.3 of \cite{DiLorPerVis}: consider $\Ctilde_{h,1}^r$ the universal curve of $\Mtilde_{h,1}^r$ and define $\cD_{h,1}$ to be the blowup of $\Ctilde_{h,1}^r \times [\AA^1/\gm]$ in the center $\Mtilde_{h,1}^r\times \cB\gm$, where $\Mtilde_{h,1}^r\hookrightarrow \Ctilde_{h,1}^r$ is the universal section. If we denote by $q$ the proper transform of the closed substack
	$$\cM_{h,1}^r \times [\AA^1/\gm] \into \Ctilde_{h,1}^r \times [\AA^1/\gm]$$
	we get that $(\cD_{h,1}\rightarrow \Mtilde_{h,1}^r \times [\AA^1/\gm],q)$ define a morphism 
	$$ \varphi:\Mtilde_{h,1}^r \times [\AA^1/\gm] \longrightarrow \Mtilde_{h,[l]}^r$$
	which by construction is birational, as it is an isomorphism restricted to $\Mtilde_{h,1}^r$. Furthermore, we have that it is an isomorphism on geometric points, i.e. 
	$$ \varphi(k): (\Mtilde_{h,1}^r \times [\AA^1/\gm])(k) \longrightarrow (\Mtilde_{h,[l]}^r)(k) $$
	is an equivalence of groupoids for any $k$ algebraically closed field over $\kappa$. See Proposition 2.5 of \cite{DiLorPerVis}. This implies that $\varphi$ is representable by algebraic spaces and quasi-finite.
	
	Suppose that $\varphi$ is separated.  Because $\Mtilde_{h,[l]}^r$ is smooth, we can use the Zariski Main Theorem for algebraic spaces to prove that $\varphi$ is an isomorphism.
	To check separatedness, we can use the valuative criterion. Suppose we are given a commutative diagram 
	$$
	\begin{tikzcd}
		\spec K \arrow[d] \arrow[r]                                & \spec R \arrow[d, "y"] \arrow[ld, "x_1"', shift right] \arrow[ld, "x_2", shift left] \\
		{\Mtilde_{h,1}^r \times [\AA^1/\gm]} \arrow[r, "\varphi"'] & {\Mtilde_{h,[l]}^r}                                                            
	\end{tikzcd}
	$$
	where $R$ is a DVR and $K$ is its fraction field. We need to prove that $x_1\simeq x_2$ as objects of $\Mtilde_{h,1}^r\times [\AA^1/\gm]$. First of all, one can prove easily that $$\varphi\vert_{\Mtilde_{h,1}^r\times \cB\gm}: \Mtilde_{h,1}^r\times \cB\gm \longrightarrow \Mtilde_{h,[l]}^r$$ 
	is a closed immersion. Because $\varphi\vert_{\Mtilde_{h,1}^r}$ is an isomorphism, it is enough to prove the statement for the full subcategory $\cM_R$ of $\Mtilde_{h,1}^r\times [\AA^1/\gm](\spec R)$ of morphisms $x:\spec R \rightarrow \Mtilde_{h,1}^r\times [\AA^1/\gm]$ such that $\spec K$ factors through $\Mtilde_{h,1}^r\into \Mtilde_{h,1}^r\times [\AA^1/\gm]$ and $x$ lands in $\Mtilde_{h,1}\times \cB\gm$ when restricted to the residue field of $R$. Let $\cM_R'$ be the image $\varphi(R)(\cM_R)$ in $\Mtilde_{h,[l]}^r(R)$. We can define a functor
	$$ \phi_R :\cM_R \longrightarrow (\Mtilde_{h,1}^r \times [\AA^1/\gm])(\spec R)$$
	such that $\phi_R(\varphi(R)(x))\simeq x$ for every $x \in \cM_R$. We construct the morphism $\phi_R$ and leave it to the reader to prove that it is a left inverse of $\varphi(R)$. To do so, we first define the morphism 
	$$ \cM_R' \longrightarrow [\AA^1/\gm](\spec R)$$ 
	in the following way. An object of $\cM_R$ is a  $1$-pointed $A_r$-prestable curve $(\widetilde{C}_R,q)$ over $R$ such that the generic fiber is a $1$-pointed $A_r$-stable curve of genus $h$ while the special fiber has a rational tail $\Gamma$ which contains the section and intersects the rest of the curve in a separating node $n$. Because \'etale locally the node $n$ has equation $xy=s$ where $s \in R$, we have a morphism $s:\spec R \rightarrow \AA^1$ which is well-defined up to an invertible element of $R$. We have defined an object in $[\AA^1/\gm](R)$.
	
	Finally, we define a morphism $\Mtilde_{h,[l]}^r \rightarrow \Mtilde_{h,1}^r$, which in particular gives us a functor $\cM_R' \rightarrow \Mtilde_{h,1}^r(\spec R)$ and it is straightforward to prove that factors through $\cM_R$. Consider the universal curve $\Ctilde_{h,[l]}^r$ over $\Mtilde_{h,[l]}^r$ with the section $p$.  Then we can consider the morphism induced by the complete linear system of $\omega_{\Ctilde_{h,[l]}^r|\Mtilde_{h,[l]}^r}(p)^{\otimes 3}$ and we denote by $\Ctilde'$ its stacky image. This morphism contracts the tails of genus $0$. One can prove using the results in \cite{Knu} that $\Ctilde'\rightarrow \Mtilde_{h,[l]}^r$ defines a morphism of stacks
	$$ p: \Mtilde_{h,[l]}^r \rightarrow \Mtilde_{h,1}^r$$
	such that the composition 
	$$p \circ \varphi: \Mtilde_{h,1}^r\times [\AA^1/\gm] \longrightarrow \Mtilde_{h,1}^r$$ 
	is the natural projection. 
	
\end{proof}

\begin{remark}
	In the proof above, one can prove that the morphism $\phi_R$ is also a right-inverse of $\varphi(R)$ restricted to $\cM_R$. This implies that $\varphi$ is proper and thus there is no need to use the Zariski Main theorem.
\end{remark}

Finally we are ready to describe $\cA_n$. In Appendix A, we define the stack $\cF_n^c$ which parametrizes pointed finite flat curvilinear algebras of length $n$ and $\cE_{m,d}^c$ which parametrizes finite flat curvilinear extensions of degree $d$ of pointed finite flat curvilinear algebras of degree $m$. We prove that the natural morphism 
$$ \cE_{m,d}^c \longrightarrow \cF_{md}^c$$
defined by the association $(A\into B) \mapsto B$ is an affine bundle. See \Cref{lem:descr-affine-bundle}. 

Let 
$$ \cE_{m,2}^c\longrightarrow \cF_{2m}^c$$
as above and consider the morphism of stacks
$$ \Mtilde_{g-m,[2m]}^r\longrightarrow \cF_{2m}^c$$ 
defined by the association $(\widetilde{C},q) \mapsto \cO_{\widetilde{C}}/I_q^{2m}$ where $I_q$ is the ideal defined by the section $q$. 

We denote by $\cA'_{2m}$ the fiber product $\cE_{m,2}^c \times_{\cF_{2m}^c} \Mtilde_{g-m,[2m]}^r$. By definition, an object of $\cA_{2m}'$ over $S$ is of the form $(\widetilde{C},q, A \subset \cO_{\widetilde{C}}/I_q^{2m})$ where $A \subset \cO_{\widetilde{C}}/I_q^{2m}$ is a finite flat extension of algebras of degree $2$. Given two objects $(\pi_1:\widetilde{C}_1/S,q_1, A_1 \subset \cO_{\widetilde{C}_1}/I_{q_1}^2m)$ and $(\pi_2:\widetilde{C}_2/S,q_2, A_2 \subset \cO_{\widetilde{C}}/I_{q_2}^2m)$, a morphism over $S$ between them is a pair $(f,\alpha)$ where $f:(\widetilde{C}_1,q_1)\rightarrow (\widetilde{C}_2,q_2)$  is a morphism in $\Mtilde_{g-m,[2m]}^r(S)$ while $\alpha:A_2 \rightarrow A_1$ is an isomorphism of finite flat algebras over $S$ such that the diagram 
$$
\begin{tikzcd}
A_2 \arrow[r, "\alpha"] \arrow[d, hook]            & A_1 \arrow[d, hook]                        \\
\pi_{2,*}(\cO_{\widetilde{C}_2}/I_{q_2}^{2m}) \arrow[r] & \pi_{1,*}(\cO_{\widetilde{C}_1}/I_{q_1}^{2m})
\end{tikzcd}
$$
is commutative. 

We want to construct a morphism from $\cA'_{2m}$ to $\cA_{2m}$. Let $S$ be a scheme and $(\widetilde{C},q, A \subset \cO_{\widetilde{C}}/I_q^{2m})$ be an object of $\cA'_{2m}(S)$. We consider the diagram 
$$
\begin{tikzcd}
\spec_{\cO_S}(\cO_{\widetilde{C}}/I_q^{2m}) \arrow[d, "2:1"] \arrow[r, hook] & \widetilde{C} \arrow[dd, bend left] \\
\spec_{\cO_S}(A) \arrow[rd]                                                  &                                     \\
& S                                  
\end{tikzcd}
$$
and complete it with the pushout (see \Cref{lem:pushout}), which has a morphism over $S$:
$$
\begin{tikzcd}
\spec_{\cO_S}(\cO_{\widetilde{C}}/I_q^{2m}) \arrow[d, "2:1"] \arrow[r, hook] & \widetilde{C} \arrow[dd, bend left] \arrow[d, dotted] \\
\spec_{\cO_S}(A) \arrow[rd] \arrow[r, dotted]                                & C \arrow[d, dotted]                                   \\
& S.                                                    
\end{tikzcd}
$$
In this case, $\widetilde{C}$ and $C$ share the same topological space, whereas the structural sheaf $\cO_C$ of the pushout is the fiber product 
$$
\begin{tikzcd}
\cO_C:=\cO_{\widetilde{C}}\times_{\cO_{\widetilde{C}}/I_q^{2m}} A \arrow[d, two heads] \arrow[r, hook] & \cO_{\widetilde{C}} \arrow[d, two heads] \\
A \arrow[r, hook]                                                                                 & \cO_{\widetilde{C}}/I_q^{2m};         
\end{tikzcd}
$$
we define $I_p:=I_q\vert_{\cO_C}$ which induces a section $p:S \rightarrow C$ of $C\rightarrow S$.

\begin{lemma}
	In the situation above, $C/S$ is an $A_r$-stable curve of genus $g$ and $p_s$ is a $A_{2m}$-singularity for every geometric point $s \in S$. Furthermore, the formation of the pushout commutes with base change over $S$.
\end{lemma}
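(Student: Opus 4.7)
The plan is to first establish base change compatibility of the construction, which reduces all remaining assertions to a check on geometric fibers, and then to perform a local analysis at the new singular point using the curvilinear hypothesis on $A \hookrightarrow \cO_{\widetilde{C}}/I_q^{2m}$.

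I would begin by verifying that the formation of $\cO_C = \cO_{\widetilde{C}} \times_{\cO_{\widetilde{C}}/I_q^{2m}} A$ commutes with arbitrary base change $S' \to S$. By construction $(\widetilde{C}, q) \in \Mtilde_{g-m,[2m]}^r(S)$ has a section landing in the smooth locus, so $\cO_{\widetilde{C}}/I_q^{2m}$ is a locally free $\cO_S$-algebra of rank $2m$; by the definition of $\cE^c_{m,2}$ the subalgebra $A$ is locally free of rank $m$ and the inclusion is fibrewise injective. A Milnor-type fiber product of this form commutes with base change, and the general pushout results of Appendix~B then show that $C \to S$ is proper, flat and finitely presented and that the whole formation commutes with base change.

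Next I would reduce to $S = \spec k$ for an algebraically closed extension of $\kappa$. Then $b \colon \widetilde{C} \to C$ is an isomorphism away from $q$, so the only new singularity lies at $p$. The main point is the isomorphism $\widehat{\cO}_{C,p} \cong k[[x,y]]/(y^2 - x^{2m+1})$. Since completion commutes with the fiber product of local rings, one has $\widehat{\cO}_{C,p} = k[[t]] \times_{k[[t]]/(t^{2m})} \widehat{A}$, where $\widehat{\cO}_{\widetilde{C},q} \cong k[[t]]$. The curvilinear hypothesis on the extension means $\widehat{A}$ is curvilinear of length $m$ and its image in $k[[t]]/(t^{2m})$ is a curvilinear subalgebra of length $m$; after an automorphism of $k[[t]]$ lifting a suitable automorphism of $k[[t]]/(t^{2m})$, one may assume the image of $\widehat{A}$ is $k[t^2]/(t^{2m})$, and the fiber product becomes the subring $k[[t^2, t^{2m+1}]] \subset k[[t]]$, which is exactly $k[[x,y]]/(y^2 - x^{2m+1})$. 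Once the local structure is pinned down, the genus computation follows from \Cref{rem:genus-count} applied to the partial normalization at an $A_{2m}$-singularity ($r = 2m$, $h = m$), giving $g(C) = g(\widetilde{C}) + m = g$; and for stability, \Cref{lem:conductor} gives $J_b = I_{b^{-1}(p)}^m$, while locally $I_{b^{-1}(p)} = (t^2)$, so $J_b^\vee \cong \cO_{\widetilde{C}}(2m q)$, whence \Cref{rem:stab} reduces ampleness of $\omega_{C/S}$ to ampleness of $\omega_{\widetilde{C}/S}(2m q)$, which is the defining condition of $\Mtilde_{g-m,[2m]}^r(S)$.

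The hard part will be the local identification at $p$. Without the curvilinear condition, degree-$2$ finite flat subalgebras of $k[[t]]/(t^{2m})$ can produce non-$A_n$ singularities in the pushout, or $A_n$-singularities for the wrong value of $n$. The stack $\cE^c_{m,2}$ from Appendix~A is designed precisely so that this pathology does not arise, and the key technical input is that curvilinearity of both $\widehat{A}$ and $k[[t]]/(t^{2m})$, combined with fiber-flatness of the inclusion, forces the embedding into the normal form above up to an automorphism of the ambient ring.
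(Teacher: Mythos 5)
Your proof is correct and follows essentially the same route as the paper: base change and flatness from the pushout lemma of Appendix B, reduction to geometric fibers, and then completion at $p$ together with the curvilinear normal form of \Cref{lem:triv-ext} (the map $t\mapsto t^2p(t)$ with $p(0)$ a unit, normalized to $t\mapsto t^2$ by an automorphism) to identify $\widehat{\cO}_{C,p}$ with $k[[t^2,t^{2m+1}]]\cong k[[x,y]]/(y^2-x^{2m+1})$. You are in fact slightly more complete than the paper in spelling out the genus count and the stability condition via \Cref{rem:genus-count}, \Cref{lem:conductor} and \Cref{rem:stab}, which the paper leaves implicit.
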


\begin{proof}
	The fact that $C/S$ is flat, proper and finitely presented is a consequence of \Cref{lem:pushout}. The same is true for the compatibility with base change over $S$. We only need to check that $C_s$ is an $A_r$-stable curve for every geometric point $s \in S$. Therefore we can assume $S=\spec k$ with $k$ an algebraically closed field over $\kappa$. Connectedness is trivial as the topological space is the same. We need to prove that $p$ is an $A_{2m}$-singularity. Consider the cartesian diagram of local rings
	$$
	\begin{tikzcd}
	{\cO_{C,p}:=\cO_{\widetilde{C},q}\times_{\cO_{\widetilde{C},p}/m_q^{2m}} A} \arrow[d, two heads] \arrow[r, hook] & {\cO_{\widetilde{C},q}} \arrow[d, two heads] \\
	A \arrow[r, hook]                                                                                     & {\cO_{\widetilde{C},q}/m_q^{2m}}            
	\end{tikzcd}
	$$
	and pass to the completion with respect to $m_p$, the maximal ideal of $\cO_{C,p}$. Because the extensions are finite, we get the following cartesian diagram of rings
	$$
	\begin{tikzcd}
	B \arrow[d, two heads] \arrow[r, hook]   & {k[[t]]} \arrow[d, two heads] \\
	{k[[t]]/(t^m)} \arrow[r, "\phi_2", hook] & {k[[t]]/(t^{2m})};            
	\end{tikzcd}
	$$
	using the description of $\phi_2$ as in \Cref{lem:triv-ext}, it is easy to see that it is defined by the association $t \mapsto t^2$ up to an isomorphism of $k[[t]]/(t^{2m})$. This concludes the proof.
\end{proof}

Therefore we have constructed a morphism of algebraic stacks
$$F:\cA'_{2m} \longrightarrow \cA_{2m}$$ 
defined on objects by the association
$$(\widetilde{C},q,A\subset \cO_{\widetilde{C}}) \mapsto (\widetilde{C}\bigsqcup_{\spec (\cO_{\widetilde{C}}/I_q^{2m})} \spec A, p) $$
and on morphisms in the natural way using the universal property of the pushout. 

To construct the inverse, we use the relative $A_n$-desingularization which is compatible with base change. \Cref{lem:conductor} implies that we can define a functor $G:\cA_{2m}\rightarrow \cA_{2m}'$ on objects 
$$ (C/S,p) \mapsto (\widetilde{C},q, \cO_C/J_b \subset \cO_{\widetilde{C}}/I_q^{2m}) $$
where $b:\widetilde{C}\rightarrow C$ is the relative $A_{2m}$-desingularization, $J_b$ is the conductor ideal relative to $b$ and $q$ is the smooth section of $\widetilde{C}\rightarrow S$ defined as the vanishing locus of the $0$-th Fitting ideal of $\Omega_{b^{-1}(p)|S}$. It is defined on morphisms in the obvious way. 

\begin{proposition}\label{prop:descr-an-pari}
	The two morphisms $F$ and $G$ are quasi-inverse of each other.
\end{proposition}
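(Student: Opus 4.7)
The plan is to verify the two natural isomorphisms $G\circ F\simeq \id_{\cA'_{2m}}$ and $F\circ G\simeq \id_{\cA_{2m}}$ separately, using in an essential way \Cref{lem:conductor} (which pins down the conductor ideal as $I_{b^{-1}(p)}^m$), \Cref{lem:pushout} on the existence and base-change compatibility of pushouts in families, and \Cref{lem:cond-diag} (which says that the conductor square is both cartesian and cocartesian). Since all formations commute with base change on $S$, it is enough to check the natural isomorphisms fibrewise over geometric points, and one can then bootstrap to the family setting.

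For $F\circ G\simeq \id$, start with $(C/S,p)\in\cA_{2m}(S)$, take the relative $A_{2m}$-desingularisation $b\colon\widetilde C\to C$ with smooth section $q$, and form the pair $(\cO_C/J_b\subset \cO_{\widetilde C}/I_q^{2m})$. By \Cref{lem:conductor} the conductor coincides with $I_{b^{-1}(p)}^m$ and is flat over $S$, so the extension $\cO_C/J_b\hookrightarrow \cO_{\widetilde C}/J_b=\cO_{\widetilde C}/I_q^{2m}$ is finite flat of degree $2$, hence really defines an object of $\cA'_{2m}$. The conductor square
\[
\begin{tikzcd}
\cO_C \arrow[r,hook]\arrow[d,two heads] & \cO_{\widetilde C}\arrow[d,two heads]\\
\cO_C/J_b\arrow[r,hook] & \cO_{\widetilde C}/I_q^{2m}
\end{tikzcd}
\]
is cartesian (this is exactly \Cref{lem:cond-diag}), which means that the pushout in the sense of \Cref{lem:pushout} of $\widetilde C\leftarrow \spec(\cO_{\widetilde C}/I_q^{2m})\to \spec(\cO_C/J_b)$ recovers $C$ over $S$, together with its singular section $p$. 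Naturality in morphisms of $\cA_{2m}$ follows from the universal property of the pushout.

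For $G\circ F\simeq \id$, start with $(\widetilde C,q,A\subset \cO_{\widetilde C}/I_q^{2m})$, form $C$ as the pushout, and recover the section $p$. The content to check is that the relative $A_{2m}$-desingularisation of $(C,p)$ is canonically isomorphic to $(\widetilde C,q)$, with conductor ideal yielding $\cO_C/J_b=A$. Since the statement is local around $p$ and both constructions commute with base change, one may assume $S$ is a geometric point and carry out the $m$-fold blowup explicitly on the complete local ring $\widehat{\cO}_{C,p}\simeq k[[x,y]]/(y^2-x^{2m+1}\cdot\text{unit})$ obtained from $A\subset k[[t]]/(t^{2m})$; an iterative application of the normal form from \Cref{lem:blowup-an} (which identifies each blow-up stratum with a singularity of type $A_{2m-2i}$) shows that the $m$-th blowup is $\widetilde C$ with section $q$. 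The equality $\cO_C/J_b=A$ then follows from $J_b=I_q^m$ (\Cref{lem:conductor}) and the very construction of $\cO_C$ as the fibre product defining the pushout.

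The main obstacle, and the only non-routine point, is the verification that iterating the blowup construction of \Cref{lem:blowup-an} on the pushout $C$ actually reproduces $\widetilde C$ rather than some further modification: one must check that the ideal $I_p\subset \cO_C$ has $b$-pullback equal to $I_q\cdot(\text{something invertible})$ on the first blowup, so that $\mathrm{Bl}_{I_p}C=\mathrm{Bl}_{I_q^?}\widetilde C$ up to canonical identification, and similarly at each subsequent step. This can be done by a direct calculation in the conductor square, but it is the step where one really uses that $A\hookrightarrow \cO_{\widetilde C}/I_q^{2m}$ is finite flat of degree $2$ (rather than some arbitrary subalgebra). Once this local computation is in hand, both natural isomorphisms are automatic on families by base change, and the $2$-functoriality of the pushout (respectively the universal property of the blowup) takes care of morphisms.
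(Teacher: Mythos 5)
Your proposal is correct, and half of it coincides with the paper's argument: the paper's proof is a one-line reduction to \Cref{prop:pushout-blowup} and \Cref{prop:blowup-pushout}, and your treatment of $F\circ G\simeq \id$ via the cartesianity of the conductor square is exactly the content (and the proof) of \Cref{prop:blowup-pushout}. The genuine difference is in $G\circ F\simeq\id$. The paper handles it with \Cref{prop:pushout-blowup}: the \emph{single} blowup of the pushout $C$ along $D=\spec(A)$ recovers $\widetilde C$, proved by the observation that $I_D=I_{\widetilde D}$ is an invertible ideal of $\cO_{\widetilde C}$, so the Rees algebra is $\cO_C\oplus t\,\cO_{\widetilde C}[t]$ and its Proj is $\widetilde C$ --- no induction, no complete local rings. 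You instead carry out the $m$-fold iterated blowup of \Cref{lem:blowup-an} explicitly on $\widehat{\cO}_{C,p}$. Both routes close the argument; yours is more faithful to the definition of $G$ (the $m$-step relative $A_{2m}$-desingularization rather than the one-step blowup of $V(J_b)$), and thus makes explicit an identification the paper leaves tacit, namely that these two finite birational modifications of $C$ coincide (uniqueness of the partial normalization, checked fibrewise using flatness). What the paper's route buys is precisely the elimination of your self-declared ``main obstacle''. Two small corrections to your sketch: the pullback of $I_p$ to $\widetilde C$ is $I_q^2$ times a unit, not $I_q$ times a unit (still invertible, which is all the universal property of the blowup requires, but the exponent matters when you iterate); and the fibrewise verification presupposes a comparison morphism defined over all of $S$ --- the universal property of the blowup supplies it once that invertibility is established, after which flatness lets you check isomorphy on geometric fibers.
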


\begin{proof}
	The statement is a conseguence of \Cref{prop:pushout-blowup} and \Cref{prop:blowup-pushout}.
\end{proof}

\begin{corollary}\label{cor:descr-an-pari}
	$\cA_{2m}$ is an affine bundle of dimension $(m-1)$ over the stack $\Mtilde^r_{g-m,[2m]} $ for $m\geq 1$.
\end{corollary}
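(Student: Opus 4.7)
The plan is to reduce the statement to the result in Appendix~A and transfer it through the equivalence $\cA_{2m}\simeq \cA'_{2m}$. By \Cref{prop:descr-an-pari}, the functors $F$ and $G$ give an equivalence $\cA_{2m}\simeq \cA'_{2m}$, so it suffices to show that the projection
\[
\cA'_{2m}=\cE_{m,2}^c\times_{\cF_{2m}^c}\Mtilde_{g-m,[2m]}^r \longrightarrow \Mtilde_{g-m,[2m]}^r
\]
is an affine bundle of rank $m-1$.

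First I would recall the morphism $\Mtilde_{g-m,[2m]}^r\to \cF_{2m}^c$ sending $(\widetilde C,q)$ to $\cO_{\widetilde C}/I_q^{2m}$ and observe that $\cA'_{2m}\to \Mtilde_{g-m,[2m]}^r$ is obtained by base change from $\cE_{m,2}^c\to \cF_{2m}^c$. Since affine bundles are stable under arbitrary base change, the asserted structure on $\cA'_{2m}\to \Mtilde_{g-m,[2m]}^r$ follows once we invoke \Cref{lem:descr-affine-bundle} from Appendix~A, which states that $\cE_{m,d}^c\to \cF_{md}^c$ is an affine bundle. This gives, essentially for free, that $\cA_{2m}$ is an affine bundle over $\Mtilde_{g-m,[2m]}^r$, together with smoothness as a sanity check against the direct deformation-theoretic smoothness already established.

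The only remaining point — and the one I expect to require a small local calculation — is the identification of the rank as $m-1$. Working \'etale-locally we may assume the fiber of $\cF_{2m}^c$ is the standard pointed curvilinear algebra $B=k[[t]]/(t^{2m})$, in which case the fiber of $\cE_{m,2}^c\to\cF_{2m}^c$ parametrizes subalgebras $A\subset B$ making $A\hookrightarrow B$ a degree~$2$ flat extension, modulo automorphisms preserving the pointed curvilinear structure. After fixing a square-completion normal form (which is possible because $2$ is invertible in $\kappa$), such $A$ corresponds to data of the form $A\simeq k[[s]]/(s^m)$ with $B=A[u]/(u^2-h(s))$ for some $h\in A$ with $h(0)\neq 0$; after rescaling $u$ one may further normalize $h$ so that only $m-1$ of its Taylor coefficients survive as independent moduli. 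This matches the dimension of the relative tangent space of $\cA_{2m}\to \Mtilde_{g-m,[2m]}^r$ predicted by the deformation theory of an $A_{2m}$-singularity (whose $2m$-dimensional versal family, after restricting to the stratum of $A_{2m}$-singularities and quotienting by reparametrizations of the $A_0$-part, has dimension $m-1$), so everything lines up.

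Thus the proof will be essentially two lines appealing to \Cref{prop:descr-an-pari} and \Cref{lem:descr-affine-bundle}, with the rank computation being the only substantive content; the main obstacle is the bookkeeping in the appendix lemma, which I expect to be the delicate step rather than anything in the corollary itself.
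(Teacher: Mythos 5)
Your proposal is correct and follows the same route as the paper: the paper's proof of this corollary is precisely the observation that $\cA_{2m}'\simeq\cE_{m,2}^c\times_{\cF_{2m}^c}\Mtilde_{g-m,[2m]}^r$ together with \Cref{lem:descr-affine-bundle}, which already supplies the rank $(m-1)(d-1)=m-1$ for $d=2$. Your additional local normal-form computation of the rank is a harmless sanity check but is not needed, since the appendix lemma gives it directly.
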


\begin{proof}
	Because $\cA_{2m}'$ is constructed as the fiber product $\cE_{m,2}^c \times_{\cF_{2m}^c} \Mtilde_{g-m,[2m]}^r$, the statement follows from \Cref{lem:descr-affine-bundle}.
\end{proof}

\subsection{Description of $A_n$ for the odd case}

Let $n:=2m-1$ and $(C,p)$ be a $1$-dimensional reduced $1$-pointed scheme of finite type over an algebraically closed field and $p$ is an $A_{2m-1}$-singularity. Consider now the partial normalization in $p$ which gives us a finite birational morphism 
$$b:\widetilde{C} \longrightarrow C$$ 
which is not an homeomorphism, as we know that $f^{-1}(p)$ is a reduced divisor of $\widetilde{C}$ of length $2$. We can use \Cref{lem:cond-diag} to prove that the extension $\cO_C \hookrightarrow f_*\cO_{\widetilde{C}}$ can be constructed pulling back a subalgebra of $f_*\cO_{\widetilde{C}}/J_b$ through the quotient $f_*\cO_{\widetilde{C}}\rightarrow f_*\cO_{\widetilde{C}}/J_b$, as in the even case. We can describe the subalgebra in the following way.

Consider the divisor $b^{-1}(p)$ which is the disjoint union of two closed points, namely $q_1$ and $q_2$. Then the composition
$$
\begin{tikzcd}
\cO_C/J_b \arrow[r] & f_*\cO_{\widetilde{C}}/J_b=\cO_{mq_1}\oplus \cO_{mq_2} \arrow[r] & \cO_{mq_i}
\end{tikzcd}
$$
is an isomorphism for $i=1,2$, where $\cO_{mq_i}$ is the structure sheaf of the support of the Cartier divisor $mq_i$ for $i=1,2$ and the right map is just the projection. Therefore the subalgebra $\cO_C/J_b$ of $f_*\cO_{\widetilde{C}}/J_b$ is determined by an isomorphism between $\cO_{mq_1}$ and $\cO_{mq_2}$. Recall that \Cref{rem:genus-count} implies that we can have two different situation: either $\widetilde{C}$ is connected and its genus is $g-m$  or it has two connected components of total genus $g-m+1$ and the two points lie in different components. 
\begin{definition}
	Let $0\leq i\leq (g-m+1)/2$. We define $\cA_{2m-1}^{i}$ to be the substack of $\cA_{2m-1}$ parametrizing $1$-pointed curves $(C/S,p)$ such that the geometric fibers over $S$ of the relative $A_{2m-1}$-desingularization are the disjoint union of two curves of genus $i$ and $g-m-i+1$.
	
	Furthermore, we define $\cA_{2m-1}^{\rm ns}$ the substack of $\cA_{2m-1}$ parametrizing curves $(C/S,p)$ such that the geometric fibers of the relative $A_{2m-1}$-desingularization are connected of genus $g-m$. 
\end{definition}

\begin{proposition}
	The algebraic stack $\cA_{2m-1}$ is the disjoint union of $\cA_{2m-1}^{\rm ns}$ and $\cA_{2m-1}^i$ for every $0 \leq i\leq  (g-m+1)/2$.
\end{proposition}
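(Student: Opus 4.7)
The plan is to verify two things: first, that the substacks $\cA_{2m-1}^{\rm ns}$ and $\{\cA_{2m-1}^i\}_{0\leq i \leq (g-m+1)/2}$ cover $\cA_{2m-1}$ and are pairwise disjoint at the level of geometric points; second, that each of them is open and closed in $\cA_{2m-1}$, so that the decomposition is a disjoint union of substacks.

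The pointwise covering follows directly from \Cref{rem:genus-count}: over an algebraically closed field, the normalization $\widetilde C$ of $C$ at an $A_{2m-1}$-singularity is either connected of genus $g-m$ (giving $\cA_{2m-1}^{\rm ns}$) or disconnected with two components of total genus $g-m+1$ (giving one of the $\cA_{2m-1}^i$'s). Disjointness is immediate since the two alternatives are exclusive, and the unordered pair of genera is a well-defined discrete invariant.

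For openness and closedness I would argue as follows. Recall that the relative $A_{2m-1}$-desingularization $b:\widetilde C\to C\to S$ is constructed as an iterated blowup and so commutes with base change, and $\widetilde C\to S$ is proper, flat and finitely presented with geometrically reduced fibres. Consequently the function $s\mapsto \pi_0(\widetilde C_s)$ is locally constant on $S$ (the Stein factorization of $\widetilde C\to S$ is finite étale since fibres are geometrically reduced), which shows that the locus where $\widetilde C_s$ is connected and the locus where it is disconnected are both open and closed in $S$. This gives the decomposition $\cA_{2m-1}=\cA_{2m-1}^{\rm ns}\sqcup\cA_{2m-1}^{\rm sep}$, where $\cA_{2m-1}^{\rm sep}:=\bigsqcup_i\cA_{2m-1}^i$; an alternative, more geometric, proof of the closedness of $\cA_{2m-1}^{\rm sep}$ is already provided by \Cref{lem:sep-sing}.

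It remains to split $\cA_{2m-1}^{\rm sep}$ according to the genus invariant. Working on the étale double cover $b^{-1}(p)\to S$, which after pulling back becomes trivial, we can write $\widetilde C=\widetilde C_1\sqcup\widetilde C_2$ with $q_j\in\widetilde C_j$, and then $\chi(\widetilde C_{j,s})$ is locally constant on $S$ by flatness and the invariance of Euler characteristics in flat families. The étale-local decomposition descends to the fact that the \emph{unordered} pair $\{g(\widetilde C_{1,s}),g(\widetilde C_{2,s})\}$ is locally constant on $S$ (the possible monodromy of $b^{-1}(p)\to S$ may swap the two components but cannot change the unordered pair), so each $\cA_{2m-1}^i$ is open and closed in $\cA_{2m-1}^{\rm sep}$. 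The only mild subtlety is exactly this monodromy point—making sure the genus labelling is well defined as an unordered invariant when $b^{-1}(p)\to S$ is a non-trivial étale double cover—but this is handled by working with the unordered multiset, which is what the definition of $\cA_{2m-1}^i$ (with $0\leq i\leq (g-m+1)/2$) already does.
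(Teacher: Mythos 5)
Your proposal is correct and follows essentially the same route as the paper: local constancy of the number of geometric connected components of the fibres of the relative $A_{2m-1}$-desingularization separates $\cA_{2m-1}^{\rm ns}$ from the separating locus, and then an \'etale-local (in the paper, strictly henselian) trivialization of $b^{-1}(p)\to S$ together with flatness shows the (unordered) pair of genera of the two components is locally constant, splitting off each $\cA_{2m-1}^i$. The only cosmetic difference is that you invoke the Stein factorization where the paper cites EGA IV 15.5.7, 15.6.5 and 15.6.8.
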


\begin{proof}
	Let $(C/S,p)$ be an object of $\cA_{2m-1}$ and consider the relative \\ $A_n$-desingularization $$b:(\widetilde{C},q)\rightarrow (C,p)$$
	over $S$. Because $\widetilde{C}\rightarrow S$ is flat, proper, finitely presented and the fibers over $S$ are geometrically reduced, we have that the number of connected components of the geometric fibers of $\widetilde{C}$ over $S$ is locally constant. See Proposition 15.5.7 of \cite{EGA}. Therefore we have that $\cA_{2m-1}^{\rm ns}$ is open and closed inside $\cA_{2m-1}$. Furthermore, we have that the objects of the complement $\cA_{2m-1}^{\rm s}$ of $\cA_{2m-1}^{\rm ns}$ are pairs $(C/S,p)$ such that the geometric fibers of $\widetilde{C}$ over $S$ have two connected components. Suppose that $S=\spec R$ with $R$ a strictly henselian ring over $\kappa$. We know that $b^{-1}(p)$ is \'etale of degree $2$ over $R$, thus it is the disjoint union of two copies of $R$. We denote by $q$ one of the two sections of $\widetilde{C}\rightarrow \spec R$ and we denote by $C_s^0$ the connected component of the fiber $\widetilde{C}_s$ which contains $q_s$ for every point $s \in \spec R$. Proposition 15.6.5 and Proposition 15.6.8 in \cite{EGA} imply that the set-theoretic union 
	$$ C_0:=\bigcup_{s \in S} C_s^0$$
	is a closed and open subscheme of $\widetilde{C}$, therefore the morphism $C_0\rightarrow S$ is still proper, flat and finitely presented. In particular, the arithmetic genus of the fibers is locally constant. It is easy to see that in particular $\cA_{2m-1}^{\rm s}$ is the disjoint union of $\cA_{2m-1}^{\rm i}$ for $0\leq i\leq (g-m+1)/2$.
\end{proof}

We start by describing $\cA_{2m-1}^{\rm ns}$. Let $\Mtilde_{h,2[l]}^r$ be a fibered category in groupoid over the category of schemes whose objects are of the form $(\widetilde{C}/S,q_1,q_2)$ where $\widetilde{C}\rightarrow S$ is a flat, proper, finitely presented morphism of scheme, $q_1$ and $q_2$ are smooth sections, $\widetilde{C}_s$ is an $A_r$-prestable curve of genus $h$ for every geometric point $s \in S$ and $\omega_{\widetilde{C}}(l(q_1+q_2))$ is relatively ample over $S$. The morphisms are defined in the obvious way. 

\begin{proposition}
	We have an isomorphism  of fibered category
	$$\Mtilde_{h,2[l]}^r\simeq  \Mtilde_{h,2}^r \times [\AA^1/\gm] \times [\AA^1/\gm]$$
	for $l\geq 2$ and $h\geq 1$. Furthermore, we have 
	$$ \Mtilde_{0,2[l]}\simeq \cB\gm \times [\AA^1/\gm]$$
	for $l\geq 3$.
\end{proposition}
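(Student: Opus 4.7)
The plan is to mimic the strategy used in the proof of \Cref{prop:desc-str}, treating the two marked points symmetrically and exploiting the fact that each marked point can independently either sit on the ``main'' component or on a rational tail meeting the main component in a separating node. For $h\geq 1$, I would first check that $\Mtilde_{h,2[l]}^r$ is smooth (via deformation theory of $A_r$-prestable curves, which is unobstructed) and then construct an explicit morphism
$$\varphi\colon \Mtilde_{h,2}^r\times[\AA^1/\gm]\times[\AA^1/\gm]\longrightarrow \Mtilde_{h,2[l]}^r.$$
The universal curve is obtained from $\Ctilde_{h,2}^r\times [\AA^1/\gm]^2$ by performing two successive blowups, centered respectively in the first universal section times $\cB\gm\times [\AA^1/\gm]$ and in the second section times $[\AA^1/\gm]\times \cB\gm$, and then taking the proper transforms of the two sections. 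The two blowups are supported on disjoint loci in the universal curve, so they commute and each independently introduces a possible rational tail at the corresponding marked point.

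Next I would show that $\varphi$ is birational (it restricts to the identity on $\Mtilde_{h,2}^r$), bijective on geometric points (every $A_r$-prestable $2$-pointed curve with $\omega(l(q_1+q_2))$ ample and $h\geq 1$ is obtained from its contraction either by leaving each marked point where it is or by attaching a rational tail at it, in an essentially unique way), and hence representable and quasi-finite. To conclude with Zariski's Main Theorem for algebraic spaces, I would verify separatedness via the valuative criterion. The key input is the existence of the inverse data: a contraction morphism $\Mtilde_{h,2[l]}^r\to \Mtilde_{h,2}^r$ obtained from the complete linear system of $\omega(q_1+q_2)^{\otimes k}$ for large $k$ (using \Cref{prop:contrac} and the arguments in \cite{Knu}), which contracts each rational tail to a smooth point, together with two morphisms $\Mtilde_{h,2[l]}^r\to [\AA^1/\gm]$ given by the smoothing parameters of the two potential separating nodes. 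The smoothing parameter at each node is well defined up to a unit, by the \'etale-local description $xy=s$ of the separating node.

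For the genus zero case $h=0$, $l\geq 3$, the only configurations compatible with the ampleness of $\omega(l(q_1+q_2))$ are either a smooth $(\PP^1,q_1,q_2)$ (with automorphism group $\gm$, contributing the factor $\cB\gm$), or a chain of two $\PP^1$'s meeting in a single separating node, with $q_1$ and $q_2$ on different components. A chain with three or more components would force a middle component carrying only two special points, contradicting ampleness of $\omega(l(q_1+q_2))$. Consequently only one rational tail can appear at a time, which accounts for the single $[\AA^1/\gm]$ factor. I would then repeat the construction of $\varphi$ using a single blowup of $\cB\gm\times[\AA^1/\gm]$, and argue along the same lines as above; note that here $\cB\gm$ plays the role that $\Mtilde_{h,2}^r$ played for $h\geq 1$.

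The main obstacle is establishing separatedness of $\varphi$, i.e.\ showing that an object of $\Mtilde_{h,2[l]}^r$ over a DVR $R$ with generic fiber in $\Mtilde_{h,2}^r$ determines uniquely its smoothing parameters in $[\AA^1/\gm](R)$. This requires an \'etale-local analysis of the separating node analogous to the one carried out in the proof of \Cref{prop:desc-str}, showing that two liftings differing only by the choice of local equation for the node give isomorphic objects in $\Mtilde_{h,2}^r\times[\AA^1/\gm]^2$. The genus zero case needs the additional care of ruling out configurations with a contracted middle component, for which the ampleness hypothesis $l\geq 3$ is exactly what is needed.
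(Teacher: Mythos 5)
Your proposal is correct and follows essentially the same route as the paper, which simply carries out the two\nobreakdash-marked\nobreakdash-point adaptation of the proof of \Cref{prop:desc-str} (two independent blowups at the two sections, bijectivity on geometric points, separatedness via the valuative criterion and the \'etale\nobreakdash-local equation $xy=s$ of the separating node, then Zariski's Main Theorem on the smooth target). The only small imprecision is your closing remark on the genus~$0$ case: a middle component of a chain carries two nodes and no markings, so it is excluded for every $l$, not specifically by the hypothesis $l\geq 3$ (which is inherited from the one\nobreakdash-pointed statement rather than forced by the two\nobreakdash-pointed configurations).
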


\begin{proof}
	The proof is an adaptation of the one of \Cref{prop:desc-str}.
\end{proof}

We want to construct an algebraic stack with a morphism over $\Mtilde_{h,2[m]}^r$ whose fibers parametrize the isomorphisms between the two finite flat $S$-algebras $\cO_{mq_1}$ and $\cO_{mq_2}$.

\begin{remark}
	Recall that we have a smooth stack $\cE_{m,d}^c$ (see Appendix A for a detailed discussion) which parametrizes finite flat extensions $A\into B$ of degree $d$ with $B$ curvilinear of length $m$. If $d=1$, the stack $\cE_{m,1}^c$ parametrizes isomorphisms of finite flat algebras of length $m$. We also have a map 
	$$ \cE_{m,1}^c \longrightarrow \cF_m^c \times \cF_m^c$$ 
	defined on objects by the association $(A\into B) \mapsto (A,B)$, which is a $\gm \times \ga^{m-2}$-torsor. See Appendix A for a more detailed discussion.
\end{remark}

Consider the morphism
$$\Mtilde_{g-m,2[m]}^r \longrightarrow \cF_{m}^c \times \cF_m^c$$
defined by the association 
$$(\widetilde{C},q_1,q_2)\mapsto (\cO_{\widetilde{C}}/I_{q_1}^m,\cO_{\widetilde{C}}/I_{q_2}^m);$$
and let $I_{2m-1}^{\rm ns}$ be the fiber product $\Mtilde_{h,2[m]}^r \times_{(\cF_{m}^c\times\cF_{m}^c)} \cE_{m,1}^c$. It parametrizes objects $(C,q_1,q_2,\phi)$ such that $(C,q_1,q_2) \in \Mtilde_{h,2[m]}^r$ and an isomorphism $\phi$ between $\cO_{mq_1}$ and $\cO_{mq_2}$ as $\cO_S$-algebras which commutes with the sections.

We can construct a morphism 
$$ I_{2m-1}^{\rm ns} \longrightarrow \cA_{2m-1}^{\rm ns}$$ 
in the following way: let $(\widetilde{C},q_1,q_2,\phi) \in I_{2m-1}^{\rm ns}(S)$, then we have the diagram
$$
\begin{tikzcd}
\spec_S(\cO_{mq_1})\bigsqcup \spec_S(\cO_{mq_2}) \arrow[d, "{(\id,\phi)}"] \arrow[r, hook] & \widetilde{C} \\
\spec_S(\cO_{mq_1})                                                                     &              
\end{tikzcd}
$$
where the morphism $(\id,\phi)$ is \'etale of degree $2$. We denote by $C$ the pushout of the diagram and $p$ the image of $q$. We send $(\widetilde{C},q_1,q_2,\phi)$ to $(C,q)$. Notice that because both $q_1$ and $q_2$ are smooth sections, we have that $\cO_{mq_i}$ is the scheme-theoretic support of a Cartier divisor of $\widetilde{C}$, therefore it is flat for $i=1,2$. \Cref{lem:pushout} assures us that this construction is functorial and commutes with base change. 

\begin{proposition}\label{prop:descr-an-odd-ns}
	The pushout functor
	$$F^{\rm ns}: I_{2m-1}^{\rm ns} \longrightarrow \cA_{2m-1}^{\rm ns}$$ 
	is representable finite \'etale of degree $2$.
\end{proposition}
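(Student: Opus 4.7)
The plan is to construct a quasi-inverse to $F^{\rm ns}$ after a canonical degree-$2$ \'etale base change, and then identify the fibres of $F^{\rm ns}$ with the ordering torsor on the preimage divisor. The construction mirrors the even case treated in \Cref{prop:descr-an-pari}, the only new feature being that the two sections lying over the singular section cannot be distinguished globally.

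First, I would define a candidate inverse. Given $(C/S,p)\in\cA_{2m-1}^{\rm ns}(S)$, the relative $A_{2m-1}$-desingularization $b\colon\widetilde{C}\arr C$ produces a connected $A_r$-prestable $S$-curve $\widetilde{C}$ together with the Cartier divisor $D:=b^{-1}(p)$, which is \'etale of degree $2$ over $S$ by the odd-case discussion preceding the proposition. The conductor identity $J_b=I_D^m$ from \Cref{lem:conductor} gives a canonical identification of $\cO_{\widetilde{C}}/J_b$ with the $m$-th infinitesimal neighbourhood of $D$ in $\widetilde{C}$. Working \'etale locally on $S$, where $D$ splits as the disjoint union of two sections $q_1,q_2$, this sheaf decomposes as $\cO_{mq_1}\oplus \cO_{mq_2}$, and the inclusion $\cO_C/J_b \into \cO_{\widetilde{C}}/J_b$ is a finite flat extension of degree $2$ whose two projections onto the summands are isomorphisms (since $D\arr S$ is \'etale); composing the inverse of the first projection with the second yields an isomorphism $\phi\colon \cO_{mq_1}\simeq \cO_{mq_2}$ of $\cO_S$-algebras compatible with the sections. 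The stability condition needed for the triple $(\widetilde{C},q_1,q_2)$ to land in $\Mtilde_{g-m,2[m]}^r$ follows from \Cref{rem:stab} applied to $(C,p)$.

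Second, I would verify that $F^{\rm ns}$ and this construction are mutually inverse up to the ordering of the two sections. The relation $F^{\rm ns}\circ G^{\rm ns}\simeq \id$ amounts to the statement that the pushout of $\widetilde{C}$ along the identification of $\cO_{mq_1}$ with $\cO_{mq_2}$ via $\phi$ recovers $C$, which is an instance of the pushout/blowup compatibility \Cref{prop:blowup-pushout} from Appendix B, exactly as in \Cref{prop:descr-an-pari}. The dual relation $G^{\rm ns}\circ F^{\rm ns}\simeq \id$ is \Cref{prop:pushout-blowup}, expressing that the relative desingularization of the pushout recovers the original $\widetilde{C}$ together with its two sections and that the induced $\phi$ coincides with the original one.

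Finally, I would conclude the representability and the \'etale-degree-$2$ claim by identifying fibres. For $(C/S,p)\in\cA_{2m-1}^{\rm ns}(S)$, the previous step shows that the objects of $I_{2m-1}^{\rm ns}(T)$ mapping to $(C_T,p_T)$ for each $T\arr S$ correspond naturally to orderings of the \'etale $T$-scheme $D_T$, since once an ordering is chosen everything else is canonically determined by $(C,p)$. Hence there is a canonical isomorphism
\[
I_{2m-1}^{\rm ns}\times_{\cA_{2m-1}^{\rm ns}} S \;\simeq\; \underisom_S\bigl(\{1,2\}_S,\,D\bigr),
\]
which is a $C_2$-torsor over $S$ and in particular a finite \'etale $S$-scheme of degree $2$; this yields both the representability and the claimed finiteness and \'etaleness of $F^{\rm ns}$. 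The main obstacle is the rigorous matching between the pushout defining $F^{\rm ns}$ and the blowup defining $G^{\rm ns}$, namely checking in families that the fibre-product sheaf $\cO_{\widetilde{C}}\times_{\cO_{\widetilde{C}}/J_b}(\cO_C/J_b)$ really is $\cO_C$ and that the subsequent desingularization recovers the pair of sections and the isomorphism $\phi$; this should reduce cleanly to the functorial blowup/pushout results of Appendix B, just as in the even case.
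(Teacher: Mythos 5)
Your proposal is correct and follows essentially the same route as the paper: both reduce the statement to the pushout/blowup compatibility results \Cref{prop:pushout-blowup} and \Cref{prop:blowup-pushout} of Appendix B, so that $(C,p)$ is determined by the relative $A_{2m-1}$-desingularization together with the identification of the two branches along the $m$-thickening of $b^{-1}(p)$, and the degree-$2$ \'etale property then comes from $b^{-1}(p)\arr S$ being finite \'etale of degree $2$. Your explicit identification of the fibre with the torsor of orderings of $b^{-1}(p)$ is a slightly more detailed packaging of the same argument.
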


\begin{proof}
	
	It is a direct conseguence of both \Cref{prop:pushout-blowup} and \Cref{prop:blowup-pushout}. In fact  the two propositions assure us that the object $(C,p)$ is uniquely determined by the relative $A_{2m-1}$-desingularization $b:\widetilde{C}\rightarrow C$, the fiber $b^{-1}(p)$ and an automorphism of the $m$-thickening of $b^{-1}(p)$ which restricted to the geometric fibers over $S$ acts exchanging the two points of $b^{-1}(p)$. Because $b^{-1}(p)$ is finite \'etale of degree $2$, it is clear that $F^{\rm ns}$ is a finite \'etale morphism of degree $2$.
\end{proof}
Let $0\leq i\leq (g-m+1)/2$. In the same way, we can define morphisms 
$$  \Mtilde_{i,[m]}\times \Mtilde_{g-i-m+1,[m]} \longrightarrow \cF_{m}^c \times \cF_m^c$$ 
defined by the association $$\Big((\widetilde{C}_1,q_1),(\widetilde{C}_{2},q_{2})\Big) \mapsto \Big((\cO_{\widetilde{C_1}}/I_{q_1}^m,\cO_{\widetilde{C}_{2}}/I_{q_{2}}^m)\Big)$$
and we denote by $I_{2m-1}^{i}$ the fiber product $$(\Mtilde_{i,[m]}\times \Mtilde_{g-m-i+1,[m]})\times_{(\cF_{m}^c\times\cF_{m}^c)} \cE_{m,1}^c.$$

Similarly to the previous case, we can construct a functor 
$$ F^i: I_{2m-1}^i \longrightarrow \cA_{2m-i}^i$$ 
using the pushout construction. Again, we have the following result.

\begin{proposition}\label{prop:descr-an-odd-i}
	The functor $F^i$ is an isomorphism for $i \neq (g-m+1)/2$ whereas is finite \'etale of degree $2$ for $i = (g-m+1)/2$.
\end{proposition}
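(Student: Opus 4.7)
The plan is to mimic the proof of \Cref{prop:descr-an-odd-ns}, the corresponding statement for $F^{\rm ns}$, which rests on the pushout--blowup duality of \Cref{prop:pushout-blowup} and \Cref{prop:blowup-pushout}. More precisely, given $(C/S,p) \in \cA_{2m-1}^i(S)$, the relative $A_{2m-1}$-desingularization $b\colon (\widetilde{C},D)\arr (C,p)$ is canonically determined, and by the pushout--blowup correspondence the datum $(C,p)$ is in turn recovered from the triple consisting of $\widetilde{C}$, the \'etale degree $2$ divisor $D=b^{-1}(p)$, and an isomorphism of the $m$-th thickenings of the two points of $D$ which swaps them on geometric fibers. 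Therefore to prove the proposition it suffices to analyse the ``labelling'' ambiguity of the two points of $D$ in the separating case.

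By definition of $\cA_{2m-1}^i$, the curve $\widetilde{C}$ splits Zariski-locally on $S$ as a disjoint union $\widetilde{C}_1\sqcup \widetilde{C}_2$ with $\widetilde{C}_1$ of genus $i$ and $\widetilde{C}_2$ of genus $g-m-i+1$, and the two points of $D$ lie respectively on $\widetilde{C}_1$ and $\widetilde{C}_2$. An object of $I_{2m-1}^i(S)$ consists of exactly this datum together with a chosen ordering of the pair $((\widetilde{C}_1,q_1),(\widetilde{C}_2,q_2))$ and an isomorphism $\phi\colon \cO_{mq_1}\arr \cO_{mq_2}$. The forgetful functor thus realises $F^i$ as the quotient by the natural $S_2$-action that simultaneously swaps the two factors and replaces $\phi$ by $\phi^{-1}$.

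When $i\neq g-m-i+1$, i.e.\ $i\neq (g-m+1)/2$, the two components can be distinguished by their arithmetic genus, so the $S_2$-action has no effect: a $(C,p)\in\cA_{2m-1}^i(S)$ admits a canonical labelling of the two branches, and $F^i$ is fully faithful and essentially surjective, hence an isomorphism. When $i=(g-m+1)/2$, instead, the two components of $\widetilde{C}$ have the same genus and the $S_2$-action on $I_{2m-1}^i$ is free (the swap is never an automorphism of the labelled triple, essentially because exchanging the two marked points of a pointed curve is not an isomorphism of pointed curves in general). Therefore $F^i$ is a free $S_2$-quotient morphism, that is, a finite \'etale morphism of degree $2$.

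The main delicate point will be checking that the $S_2$-action in the case $i=(g-m+1)/2$ is genuinely free, so that the quotient is \'etale and not only a tame gerbe. This amounts to saying that an object of $I_{2m-1}^i(S)$ is never isomorphic to its swap: this can be verified geometric-fibrewise, where it reduces to the fact that a smooth pointed curve $(\widetilde{C}_1,q_1)$ together with an isomorphic copy $(\widetilde{C}_2,q_2)$ and a gluing $\phi$ produces an $A_{2m-1}$-singular curve on which the swap-involution would have to act non-trivially, so it does not descend to an automorphism of the pushout. Once this is established, the \'etaleness and degree~$2$ follow formally, exactly as in the proof of \Cref{prop:descr-an-odd-ns}.
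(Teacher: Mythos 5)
Your overall route is the same as the paper's: the proof is reduced, via the pushout--blowup correspondence of \Cref{prop:pushout-blowup} and \Cref{prop:blowup-pushout}, to the observation that an object of $I_{2m-1}^i$ is the same as an object $(C,p)$ of $\cA_{2m-1}^i$ together with an ordering of the two points of the finite \'etale degree~$2$ divisor $b^{-1}(p)$, exactly as in \Cref{prop:descr-an-odd-ns}. Your treatment of the case $i\neq (g-m+1)/2$ (the ordering is canonical because the two branches are distinguished by the genus of the component carrying them) is correct.

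There is, however, a genuine flaw in your argument for the case $i=(g-m+1)/2$: the claim that ``an object of $I_{2m-1}^i(S)$ is never isomorphic to its swap'' is false. Take $(\widetilde{C}_1,q_1)=(\widetilde{C}_2,q_2)=(E,q)$ and a gluing $\phi$ with $\phi=\phi^{-1}$ under this identification; then the object is literally equal to its swap, and the induced curve $C$ carries a nontrivial automorphism exchanging its two branches at $p$. (Your parenthetical ``so it does not descend to an automorphism of the pushout'' has it backwards: the swap \emph{does} descend, to a nontrivial automorphism of $(C,p)$.) Freeness of the $S_2$-action on isomorphism classes is neither true nor what is needed. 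The correct criterion --- the one used implicitly in \Cref{prop:descr-an-odd-ns} and explicitly in \Cref{lem:detilde-1-1} --- is that the natural map $I_{2m-1}^i\times C_2\to I_{2m-1}^i\times_{\cA_{2m-1}^i}I_{2m-1}^i$ is an isomorphism. For this the relevant point is that no isomorphism $x\to\sigma(x)$ in $I_{2m-1}^i$ can lie over the \emph{identity} of $F^i(x)=(C,p)$: any such isomorphism swaps the two components of the desingularization, hence induces on $(C,p)$ an automorphism exchanging the two points of $b^{-1}(p)$, which the identity does not do. Equivalently, $I_{2m-1}^i$ is identified with the \'etale double cover of $\cA_{2m-1}^i$ classifying orderings of $b^{-1}(p)$, and \'etaleness of degree~$2$ follows because $b^{-1}(p)\to S$ is finite \'etale of degree~$2$. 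With this correction the argument closes; as written, the freeness step would not survive scrutiny.
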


\begin{proof}
	The proof is exactly the same as \Cref{prop:descr-an-odd-ns}.
\end{proof}

\section{The relations from the $A_n$-strata}\label{sec:3-2}

In this section, we are going to describe the image of the pushforward of the closed immersion $\widetilde{\cA}_{\geq 2} \into \Mtilde_3$ in $\ch(\Mtilde_3)$. First of all, we explain why we can reduce to study the stacks $\cA_n$.  

We have the following result.

\begin{proposition}
	The functor forgetting the section gives us a natural morphism 
	$$ \cA_{\geq n} \rightarrow \widetilde{\cA}_{\geq n}$$
	which is finite birational and it is surjective at the level of Chow groups.
\end{proposition}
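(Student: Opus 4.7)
The plan is to verify the three claims in order: the existence and properness of the forgetful morphism, its finiteness, and its birationality; surjectivity at the level of Chow groups will then be a formal consequence.

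First I would construct the morphism. Since $\cA_{\geq n}$ is by definition a closed substack of the universal curve $\Ctilde_g^r$, composing the inclusion with the structure morphism $\pi\colon \Ctilde_g^r \arr \Mtilde_g^r$ yields $\cA_{\geq n}\arr \Mtilde_g^r$. A curve $C$ lies in the image exactly when it admits a closed point $p$ that is an $A_h$-singularity for some $h\geq n$, i.e.\ when $C\in \widetilde{\cA}_{\geq n}(k)$; this is immediate from the local characterization coming from the iterated Fitting-ideal construction of $\cA_{\geq n}$. Hence the morphism factors through the closed substack $\widetilde{\cA}_{\geq n}\into \Mtilde_g^r$. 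Properness follows from properness of $\pi$ together with the closedness of $\cA_{\geq n}\subset \Ctilde_g^r$.

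Next I would establish finiteness. By the criterion ``proper $+$ quasi-finite $=$ finite'', it suffices to bound the fibers. The geometric fiber over $[C]\in\widetilde{\cA}_{\geq n}(k)$ is the finite set of closed points of $C$ whose completed local ring is of the form $k[[x,y]]/(y^2-x^{h+1})$ with $n\leq h\leq r$; finiteness follows since the total length of the non-smooth locus of a curve of arithmetic genus $g$ is bounded in terms of $g$.

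The main point is birationality. Here I would exploit the smooth stratification $\cA_r\subset \cA_{\geq r-1}\subset\dots\subset\cA_{\geq n}$ constructed in \Cref{sec:3-1}, whose bottom open stratum $\cA_n$ is smooth and dense in $\cA_{\geq n}$. Under the forgetful morphism, $\cA_n$ lands in the open locus of $\widetilde{\cA}_{\geq n}$ consisting of curves whose worst singularity is of type $A_n$. A standard dimension count using the versal deformation $y^2=x^{n+1}+a_{n-1}x^{n-1}+\dots+a_0$ at each such singularity shows that the locus of curves carrying $k\geq 2$ distinct singularities of type at least $A_n$ has codimension at least $n$ inside $\widetilde{\cA}_{\geq n}$, so it is nowhere dense. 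Hence on a dense open $U\subset \widetilde{\cA}_{\geq n}$ a curve $C$ carries a \emph{unique} point of type $A_h$ with $h\geq n$, and the forgetful morphism is an isomorphism over $U$. This shows the morphism is birational, and—being finite—it has a well-defined generic degree equal to $1$ on each irreducible component.

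Finally, surjectivity on Chow groups is a formal consequence: a proper birational morphism $f\colon X\arr Y$ of integral Artin stacks satisfies $f_*f^*=\id$ on $\ch_*$, so $f_*\colon \ch(X)\arr \ch(Y)$ is surjective; the same argument applies componentwise to the (possibly reducible) stacks above. The only genuine obstacle in this plan is the dimension-count estimate that rules out generic configurations with several bad singularities; once this is in place, everything else is formal.
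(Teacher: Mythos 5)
Your construction of the morphism, the finiteness argument (proper plus quasi-finite), and the birationality via the dimension count on the versal deformations are all fine, and are in fact more detailed than what the paper records. The gap is in the last step. Surjectivity on Chow groups is \emph{not} a formal consequence of ``proper and birational''. The identity $f_*f^*=\id$ that you invoke presupposes that $f^*$ is defined, but $\widetilde{\cA}_{\geq n}$ is not smooth (it is singular along the locus where two $A_{\geq n}$-points collide) and $f$ is neither flat nor l.c.i., so no such pullback is available. The correct argument is cycle-by-cycle: for an integral closed substack $Z\subset \widetilde{\cA}_{\geq n}$ one picks a component $Z'$ of $f^{-1}(Z)$ dominating $Z$, so that $f_*[Z']=e[Z]$ for some degree $e\geq 1$, and one must then invert $e$. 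Birationality only controls $e$ on the dense open where $f$ is an isomorphism; on the deeper strata $e$ equals the degree of a component of the (multi-point) fiber over the generic point of $Z$, which need not be $1$.

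This is exactly where the paper's one-line proof does its work: since the Chow rings carry $\ZZ[1/6]$-coefficients, one needs every such $e$ to be invertible in $\ZZ[1/6]$, and the bound ``an $A_r$-stable genus $3$ curve has at most three singularities of type $A_n$ with $n\geq 2$'' forces $e\leq 3$, hence $e\in\{1,2,3\}$ is a unit. Your proposal never establishes (or uses) any such bound for the Chow-theoretic step; the bound ``in terms of $g$'' you mention for finiteness is not enough, since for larger genus a fiber of cardinality $5$ or $7$ would give a component with $e$ not invertible in $\ZZ[1/6]$ and the surjectivity claim would genuinely fail. So you should replace the final paragraph by: (i) note the statement is for $g=3$ and prove the bound of three $A_{\geq 2}$-singularities (via \Cref{rem:genus-count}); (ii) deduce that every dominating component of $f^{-1}(Z)$ has degree at most $3$ over $Z$; (iii) conclude surjectivity of $f_*$ with $\ZZ[1/6]$-coefficients by noetherian induction or directly cycle by cycle.
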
 

\begin{proof}
	It follows from the fact that every $A_r$-stable genus $3$ curve has at most three singularity of type $A_n$ for $n\geq 2$. 
\end{proof}

Consider now the proper morphisms 
$$ \rho_{\geq n}:\cA_{\geq n} \longrightarrow \Mtilde_g^r$$ 
and their restrictions to $\cA_n$
$$ \rho_n :\cA_n \longrightarrow \Mtilde_g^r\setminus \widetilde{\cA}_{\geq n+1}$$
which is still proper; let $\{f_i\}_{i \in I_n}$ be a set of elements of $\ch(\cA_n)$ indexed by some set $I_n$ such that $\im{\rho_{n,*}}$ is generated by the set $\{\rho_{n,*}(f_i)\}_{i \in I_n}$. We choose a lifting $\tilde{f}_i$ of every $f_i$ to the Chow group of $\cA_{\geq n}$ for every $n=2,\dots,r$ and every $i \in I_n$. We have the following result.

\begin{lemma}\label{lem:strata}
	In the setting above, we have that $\im{\rho_{\geq 2,*}}$ is generated by $\{\rho_{\geq n,*}(\tilde{f}_i)\}_{\forall n, \forall i \in I_n}$
\end{lemma}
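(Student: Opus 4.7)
The plan is to argue by descending induction on $n$, establishing the following stronger claim: for every $n$ with $2 \leq n \leq r$, the image $\im{\rho_{\geq n, *}}$ is contained in the subgroup (equivalently, the ideal) of $\ch(\Mtilde_g^r)$ generated by $\{\rho_{\geq m, *}(\tilde{f}_i)\}_{m \geq n,\, i \in I_m}$; specializing to $n=2$ then yields the lemma. The base case $n=r$ is immediate: $\cA_{\geq r} = \cA_r$, the liftings $\tilde{f}_i$ coincide with the $f_i$, and the hypothesis on the $f_i$ says exactly that $\im{\rho_{\geq r, *}} = \im{\rho_{r, *}}$ is generated by the $\rho_{\geq r, *}(\tilde{f}_i)$.

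For the inductive step, fix $\beta \in \ch(\cA_{\geq n})$ and consider the localization exact sequence
$$\ch(\cA_{\geq n+1}) \xrightarrow{i_*} \ch(\cA_{\geq n}) \xrightarrow{j^*} \ch(\cA_n) \longrightarrow 0,$$
where $i$ is the closed immersion and $j$ the open one. Then $j^*\beta \in \ch(\cA_n)$, and its pushforward $\rho_{n,*}(j^*\beta) \in \ch(\Mtilde_g^r\setminus\widetilde{\cA}_{\geq n+1})$ can be written as $\sum_i c_i \rho_{n,*}(f_i)$ by the defining property of the $f_i$'s, where $c_i \in \ch(\Mtilde_g^r\setminus\widetilde{\cA}_{\geq n+1})$. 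Lift each $c_i$ to some $\tilde{c}_i \in \ch(\Mtilde_g^r)$ via the surjection $\ch(\Mtilde_g^r) \twoheadrightarrow \ch(\Mtilde_g^r\setminus\widetilde{\cA}_{\geq n+1})$. Flat base change along the open immersion $\Mtilde_g^r\setminus\widetilde{\cA}_{\geq n+1} \hookrightarrow \Mtilde_g^r$ then guarantees that $\rho_{\geq n,*}(\tilde{f}_i)$ restricts to $\rho_{n,*}(f_i)$, so the element
$$\rho_{\geq n, *}(\beta) - \sum_i \tilde{c}_i \cdot \rho_{\geq n, *}(\tilde{f}_i) \;\in\; \ch(\Mtilde_g^r)$$
vanishes on $\Mtilde_g^r\setminus\widetilde{\cA}_{\geq n+1}$, hence is supported on $\widetilde{\cA}_{\geq n+1}$ in the sense of lying in the image of $\iota_*:\ch(\widetilde{\cA}_{\geq n+1}) \to \ch(\Mtilde_g^r)$.

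To conclude, one needs that every class of $\ch(\Mtilde_g^r)$ supported on $\widetilde{\cA}_{\geq n+1}$ already lies in $\im{\rho_{\geq n+1,*}}$; this is where the proposition stated at the beginning of this section intervenes, guaranteeing that the forgetful morphism $\cA_{\geq n+1} \to \widetilde{\cA}_{\geq n+1}$ is finite birational and surjective on Chow groups, so that every class supported on the closed substack $\widetilde{\cA}_{\geq n+1}$ is of the form $\rho_{\geq n+1,*}(\gamma)$ for some $\gamma \in \ch(\cA_{\geq n+1})$. The inductive hypothesis then places this difference inside the subgroup generated by $\{\rho_{\geq m,*}(\tilde{f}_i)\}_{m \geq n+1}$, completing the induction. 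The main technical point to verify carefully, beyond these standard localization-sequence manipulations, is the base-change identity $\rho_{\geq n,*}(\tilde{f}_i)\vert_{\Mtilde_g^r\setminus\widetilde{\cA}_{\geq n+1}} = \rho_{n,*}(f_i)$; this follows from flat base change once one interprets $\rho_n$ as the restriction of $\rho_{\geq n}\vert_{\cA_n}$ to the preimage of the open stratum $\Mtilde_g^r\setminus\widetilde{\cA}_{\geq n+1}$, where the $f_i$ are supposed to live.
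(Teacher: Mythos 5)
Your proof is correct: the paper disposes of this lemma in a single line by citing Lemma 3.3 of \cite{DiLorFulVis}, and your descending induction --- localization for $\cA_{\geq n+1}\subset\cA_{\geq n}$ and for $\widetilde{\cA}_{\geq n+1}\subset\Mtilde_g^r$, together with the surjectivity of $\ch(\cA_{\geq n+1})\to\ch(\widetilde{\cA}_{\geq n+1})$ supplied by the preceding proposition --- is precisely the standard argument behind that citation, so the two proofs coincide in substance. Your closing caveat, that $\rho_n$ must be read as the restriction of $\rho_{\geq n}$ over the open stratum $\Mtilde_g^r\setminus\widetilde{\cA}_{\geq n+1}$ (so that proper pushforward commutes with the open restriction), correctly resolves the only ambiguity in the paper's setup.
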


\begin{proof}
	This is a direct conseguence of Lemma 3.3 of \cite{DiLorFulVis}.
\end{proof}

The previous lemma implies that we need to focus on finding the generators of the relations coming from the strata $\cA_n$ for $n=2,\dots,r$. Therefore in the remaining part of the section we study the morphism $\rho_n$ and we prove that $\rho_n^*$ is surjective at the level of Chow rings for every $n\geq 3$. The same is not true for $n=2$ but we describe geometrically the generators of the image of $\rho_{2,*}$. 

\subsection*{Generators for the image of $\rho_{n,*}$ if $n$ is even}

Recall that we are interested in the case $r=7$ and $g=3$ and therefore the characteristic of $\kappa$ is greater than $7$. We are going to prove that the morphism 
$$ \rho_n^*: \ch(\Mtilde_3^7) \longrightarrow \ch(\cA_n)$$ 
is surjective for $n=4,6$.

\begin{proposition}\label{prop:rho-6-surj}
	The morphism $\rho_6^*$ is surjective.
\end{proposition}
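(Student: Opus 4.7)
By Corollary~\ref{cor:descr-an-pari} with $m=3$ the stack $\cA_6$ is an affine bundle of rank $2$ over $\Mtilde_{0,[6]}^{7}$, and by Proposition~\ref{prop:desc-str} we have $\Mtilde_{0,[6]}^{7}\simeq \cB(\gm\rtimes\ga)$. Since affine bundles and the unipotent factor $\ga$ do not affect Chow rings (Proposition~2.3 of \cite{MolVis}), we obtain
\[
\ch(\cA_6)\simeq \ZZ[1/6][\psi],
\]
where $\psi$ is the first Chern class of the tautological character of $\gm$, identified with the cotangent line $T_q^{\vee}\widetilde C$ at the marked point of the desingularization $\widetilde C\simeq \PP^1$. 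Thus surjectivity of $\rho_6^{*}$ is equivalent to $\psi\in\im\rho_6^{*}$, and it is enough to exhibit some degree-one class on $\Mtilde_{3}^{7}$ whose pullback is a unit multiple of $\psi$.

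The plan is to compute $\rho_6^{*}\lambda_1$ on the standard $\gm$-equivariant fiber. Model $\widetilde C$ as $\PP^1$ with affine coordinate $z$, put $q=\{z=0\}$, and let $\gm$ act by $z\mapsto \lambda z$; the standard $A_6$-subalgebra of $\cO_{\widetilde C}/I_q^{6}$ is $k[z^2,z^7]/(z^{12})$, corresponding to the singularity $y^2=x^7$ via $x=z^2$, $y=z^7$. Then $\omega_C$ is locally generated at $p$ by $dx/(2y)=dz/z^{6}$. A basis of $H^{0}(\widetilde C,\omega_{\widetilde C}(6q))$ is $\{dz/z^{k}\}_{k=2}^{6}$, and a direct adjoint-ideal computation (write a regular germ as $A(x)+B(x)y$ and pull back $A(x)\,dx/(2y)+B(x)\,dx/2$) identifies the image of $H^{0}(C,\omega_C)\hookrightarrow H^{0}(\widetilde C,\omega_{\widetilde C}(6q))$ as the three-dimensional subspace
\[
\langle\, dz/z^{2},\; dz/z^{4},\; dz/z^{6}\,\rangle.
\]
Under the $\gm$-action these forms have weights $-1,-3,-5$ respectively. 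Hence the Hodge bundle is $\gm$-equivariantly the sum of the characters of weights $-1,-3,-5$, giving
\[
\rho_6^{*}\lambda_1 \;=\; -(1+3+5)\,\psi \;=\; -9\psi.
\]
Since $9$ is a unit in $\ZZ[1/6]$, we conclude $\psi=-\tfrac{1}{9}\rho_6^{*}\lambda_1\in \im\rho_6^{*}$, and the surjectivity follows.

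The only nontrivial input is the identification of the adjoint subspace cutting out $H^{0}(C,\omega_C)$ inside $H^{0}(\widetilde C,\omega_{\widetilde C}(6q))$; because the $A_6$-singularity is planar and explicitly parametrized this is a routine computation in a power series ring, and the resulting description is manifestly $\gm$-equivariant. Everything else is formal once the affine-bundle structure of Corollary~\ref{cor:descr-an-pari} is in hand.
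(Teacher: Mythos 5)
Your argument is correct, and its skeleton is the same as the paper's: both proofs use Corollary~\ref{cor:descr-an-pari} together with Proposition~\ref{prop:desc-str} to identify $\ch(\cA_6)$ with a polynomial ring $\ZZ[1/6][s]$ in one degree-one generator, and then exhibit a single degree-one class on $\Mtilde_3^7$ whose pullback is a unit multiple of that generator. The difference lies in the witness class. The paper makes the presentation $\cA_6\simeq[\AA^1/\gm]$ (with $\gm$ acting with weight $-3$, the coordinate $\lambda$ recording the subalgebra $t\mapsto t^2+\lambda t^5$) explicit and pulls back the hyperelliptic class $H$: the preimage of $\Htilde_3$ is exactly $\{\lambda=0\}$, since that is the condition for invariance under $t\mapsto -t$, so $\rho_6^*(H)=-3s$. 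You instead pull back $\lambda_1$ and compute the $\gm$-weights of $\H^0(C,\omega_C)$ inside $\H^0(\PP^1,\omega_{\PP^1}(6q))$ at the $\gm$-fixed fiber; your adjoint computation is right ($\omega_C$ is generated at the cusp by $dx/2y=dz/z^6$, and membership of $h(z)\,dz/z^6$ in $\omega_C$ forces $h\in k[[z^2,z^7]]$, killing $dz/z^5$ and $dz/z^3$ and leaving the three forms of weights $-1,-3,-5$), and restricting to the origin of $[\AA^1/\gm]$ is legitimate since that restriction is an isomorphism on Chow rings. The resulting $\rho_6^*(\lambda_1)=\pm 9\psi$ suffices because $9$ is invertible in $\ZZ[1/6]$ (the overall sign depends on conventions and is immaterial). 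The paper's choice is slightly more economical, as the hyperelliptic locus is visible directly in the affine-bundle coordinates; yours avoids needing the explicit coordinate $\lambda$ at the cost of the Hodge-weight computation. One cosmetic slip: the standard subalgebra of $\cO_{\widetilde C}/I_q^{6}\simeq k[z]/(z^6)$ should be written as the image of $k[z^2,z^7]$, namely $k[z^2]/(z^6)$, rather than ``$k[z^2,z^7]/(z^{12})$''; this does not affect the argument.
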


\begin{proof}
	We start by considering the isomorphism proved in \Cref{cor:descr-an-pari}. Because in this situation $g=3$ and $m=3$ we have that 
	$$\cA_6 \simeq [V/\gm \ltimes \ga]$$
	where $V$ is the vector bundle considered in \Cref{lem:descr-affine-bundle}. As a matter of fact, we proved that the following commutative diagram of stacks
	$$
	\begin{tikzcd}
		\cA_6 \arrow[d] \arrow[r]                         & {\cE_{3,2}^c\simeq [V/G_{6}]} \arrow[d] \\
		{\Mtilde_{0,[3]}\simeq \cB(\gm \ltimes \ga)} \arrow[r, "\cB f"] & \cF_{6}^c:=\cB G_6                     
	\end{tikzcd}
	$$
	is cartesian. The morphism $\cB f$ can be described as the morphism  of classifying stacks induced by the morphism of groups schemes
	$$ f:\gm \ltimes \ga\simeq \aut(\PP^1,\infty) \longrightarrow G_6$$
	defined by the association $\phi \mapsto \phi\otimes_{\cO_{\PP^1}} \cO_{\PP^1}/m_{\infty}^6$, where $m_{\infty}$ is the maximal ideal of the point $\infty$. For the definition of $G_6$, see \Cref{cor:descr-finite-alg}. A simple computation, using the explicit formula of the action of $G_6$ on $V$, shows that 
	$$ \cA_6\simeq [V/\gm \ltimes \ga] \simeq [\AA^1/\gm]$$
	where the action of $\gm$ on $\AA^1$ has weigth $-3$. More explicitly, if we identify $\cO_{\PP^1}/m_{\infty}^6$ with the algebra $\kappa[t]/(t^6)$, an element $\lambda \in \AA^1$ is equivalent to the inclusion of algebras $$\kappa[t]/(t^3) \into \kappa[t]/(t^6)$$ defined by the association $t \mapsto t^2+\lambda t^5$.  
	
	We want to understand the pullback of the hyperelliptic locus, i.e. $\rho_6^*(H)$. It is clear that the locus $\rho_6^{-1}(\Htilde_3)$ is the locus in $[\AA^1/\gm]$ such that the involution $t \mapsto -t$ fixes the inclusion $t \mapsto t^2+\lambda t^5$. This implies $\lambda=0$ and therefore $\rho_6^*(\lambda)=-3s$ where $s$ is the generator of $\ch([\AA^1/\gm])$. 
\end{proof}

\begin{proposition}
	The morphism $\rho_4^*$ is surjective.
\end{proposition}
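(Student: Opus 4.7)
The plan is to replicate the strategy of \Cref{prop:rho-6-surj}. By \Cref{cor:descr-an-pari} applied with $g=3$ and $m=2$, the stack $\cA_4$ is an affine bundle of dimension~$1$ over $\Mtilde_{1,[4]}^7$, and by \Cref{prop:desc-str} (the case $l=4\geq 2$, $h=1$) this base splits as $\Mtilde_{1,1}^7 \times [\AA^1/\gm]$. Since $1$-pointed genus-$1$ $A_r$-stable curves only admit $A_1$- and $A_2$-singularities, $\Mtilde_{1,1}^7 = \Mtilde_{1,1}^2$, whose Chow ring is the polynomial ring $\ZZ[1/6,t]$ in the $\psi$-class $t$ by Theorem~2.9 of \cite{DiLorPerVis}. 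Since affine bundles preserve Chow groups, we obtain
\[
\ch(\cA_4) \simeq \ZZ[1/6,t,s],
\]
where $s$ is the generator pulled back from the $[\AA^1/\gm]$-factor. To prove surjectivity of $\rho_4^*$ it therefore suffices to produce two classes in $\ch^1(\Mtilde_3^7)$ whose pullbacks together span the rank-two $\ZZ[1/6]$-module $\ch^1(\cA_4) = \ZZ[1/6]\cdot t \oplus \ZZ[1/6]\cdot s$; the natural candidates are $\lambda_1$ and $H$.

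To compute $\rho_4^*(\lambda_1)$, the plan is to use the identification $b_*\omega_{\widetilde C/S} = J_b\cdot \omega_{C/S}$ together with $J_b = I_q^{2}$ from \Cref{lem:conductor}, where $b\colon\widetilde C\to C$ denotes the relative $A_4$-desingularization; this yields a short exact sequence
\[
0 \to \widetilde\pi_*\omega_{\widetilde C/S} \to \pi_*\omega_{C/S} \to \cQ \to 0
\]
with $\cQ$ a rank-$2$ coherent sheaf supported on the section $q$ and expressible in terms of the second infinitesimal neighbourhood $\omega_{\widetilde C}/I_q^{2}\omega_{\widetilde C}$. Taking first Chern classes and tracking the $\gm$-weights from the identification in \Cref{cor:descr-an-pari} will give $\rho_4^*(\lambda_1) = a\,t + b\,s$ for explicit $a, b \in \ZZ[1/6]$.

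For $\rho_4^*(H)$ we would imitate the local argument in \Cref{prop:rho-6-surj}. Choosing a normal form $x\mapsto t^{2}+\mu t^{3}$ for the embedding $\kappa[x]/(x^{2})\hookrightarrow \kappa[t]/(t^{4})$ prescribed by the $\cE_{2,2}^{c}$-factor, a geometric fiber of $\rho_4$ lies over the hyperelliptic locus precisely when the local involution $t\mapsto -t$ (coming from the elliptic involution fixing $q$) preserves this embedding, which forces $\mu=0$. Thus $\rho_4^{-1}(\Htilde_3)$ is cut out in $\cA_4$ by the Cartier divisor $\{\mu=0\}$; its class is determined by the $\gm$-weight of $\mu$ under the affine-bundle structure plus a contribution from the $\psi$-class, and we obtain $\rho_4^*(H) = c\,t + d\,s$ for explicit $c,d \in \ZZ[1/6]$.

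Finally, we would verify that the $2\times 2$ matrix $\bigl(\begin{smallmatrix}a & b\\ c & d\end{smallmatrix}\bigr)$ has determinant a unit in $\ZZ[1/6]$; granted this, both $t$ and $s$ lie in the image of $\rho_4^*$, which is consequently surjective as a map of $\ZZ[1/6]$-algebras. The main obstacle is the careful bookkeeping of $\gm$-weights through the identifications of \Cref{cor:descr-an-pari} and \Cref{prop:desc-str}, needed both to compute the two rows of the matrix and to confirm that its determinant is invertible in $\ZZ[1/6]$. Should the combination of $\lambda_1$ and $H$ happen to span only a proper sub-$\ZZ[1/6]$-submodule of $\ch^1(\cA_4)$, the same strategy can be carried out with either class replaced by $\delta_1$, whose pullback to $\cA_4$ is accessible through essentially the same local computation.
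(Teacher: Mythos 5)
Your overall strategy coincides with the paper's: identify $\ch(\cA_4)\simeq\ZZ[1/6,t,s]$ via the affine-bundle structure of \Cref{cor:descr-an-pari} over $\Mtilde_{1,1}\times[\AA^1/\gm]$, and then exhibit divisor classes on $\Mtilde_3$ whose pullbacks generate. Your treatment of $H$ is exactly the paper's: the normal form $t\mapsto t^2+\mu t^3$ for the inclusion $\kappa[t]/(t^2)\into\kappa[t]/(t^4)$, together with the observation that the hyperelliptic involution forces $\mu=0$, gives $\rho_4^*(H)\equiv -2t$ modulo $s$. The divergence is in the second class. The paper takes $\delta_1$, for which the pullback is essentially free: a curve in $\cA_4$ has a separating node exactly when the underlying pointed genus-$1$ curve acquires a rational tail, i.e.\ over $\Mtilde_{1,1}\times\cB\gm$, so $\rho_4^*(\delta_1)=s$; since $2$ is invertible, $s$ together with $-2t+(\text{multiple of }s)$ already generates degree one, and no $s$-coefficient ever has to be computed. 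You instead take $\lambda_1$, and this is where the argument is incomplete.

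With $\lambda_1$ in place of $\delta_1$, restriction to the open locus $\{s=0\}$ gives no information beyond the $t$-coefficients, so surjectivity hinges entirely on the determinant $ad-bc$ being a unit in $\ZZ[1/6]$ --- and that is precisely the step you defer (``we would verify'') and even flag as possibly failing, with $\delta_1$ as a fallback; once the fallback is invoked, the proof becomes the paper's. As written, the proposition is therefore established only conditionally on an unperformed computation. Moreover, the computation of $\rho_4^*(\lambda_1)$ as you set it up contains an error that would corrupt the weight bookkeeping: \Cref{lem:conductor} gives $J_b=I_{b^{-1}(p)}^m$, and for $n=2m=4$ the preimage $b^{-1}(p)$ is a \emph{length-two} divisor supported at $q$, so $J_b=I_q^4$, not $I_q^2$ (locally, the conductor of $k[[t^2,t^5]]\subset k[[t]]$ is $(t^4)$). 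Consequently the quotient $\cQ\simeq\pi_*(\omega_C\otimes\cO_C/J_b)$ has graded pieces generated, in a local parameter $u$ at $q$, by $u^{-4}du$ and $u^{-2}du$ (degrees $0$ and $2$ of $\cO_C/J_b$), whereas $\omega_{\widetilde C}/I_q^2\omega_{\widetilde C}$ sits in degrees $0$ and $1$; these rank-two sheaves have different $\gm$-weights and hence different first Chern classes. I would recommend replacing $\lambda_1$ by $\delta_1$ outright, which collapses the whole verification to the single mod-$s$ computation of $\rho_4^*(H)$ that you have already carried out correctly.
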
 

\begin{proof}
	Again, \Cref{cor:descr-an-pari} shows that $\cA_{4}$ is an affine bundle of $\Mtilde_{1,1}\times [\AA^1/\gm]$ and therefore 
	$$ \ch(\cA_{4})\simeq \ZZ[1/6,t,s]$$ 
	where $s$ is the generator of the Picard group of $[\AA^1/\gm]$ and $t$ is the $\psi$-class of $\Mtilde_{1,1}$ (which is a generator of the Chow ring of $\Mtilde_{1,1}$). Exactly as it happens for the pinching morphism described in \cite{DiLorPerVis}, we have $\rho_4^*(\delta_1)=s$ (see Lemma 5.9 of \cite{DiLorPerVis}). We need to compute now $\rho_4^*(H)$. Consider now the open immersion 
	$$ \cA_4\vert_{\Mtilde_{1,1}} \into \cA_4$$
	induced by the open immersion $\Mtilde_{1,1} \into \Mtilde_{1,1} \times [\AA^1/\gm]$. We have that 
	$$ \ch(\cA_4\vert_{\Mtilde_{1,1}})=\ZZ[1/6,t,s]/(s)$$ 
	therefore it is enough to prove that $\rho_4^*(H)$ restricted to this open is of the form $-2t$ to have the surjectivity of $\rho_4^*$. 
	
	We know that $\Mtilde_{1,1}\simeq [\AA^2/\gm]$, therefore it is enough to restrict to $\cA_4\vert_{\cB\gm}$ because the pullback of the closed immersion $\cB\gm \into \Mtilde_{1,1}$ is an isomorphism of Chow rings. Similarly to the proof of \Cref{prop:rho-6-surj}, a simple computation shows that $\cA_4\vert_{\cB\gm}$ is isomorphic to $[\AA^1/\gm]$ where $\gm$ acts with weight $-2$. An element in $\lambda \in \AA^1$ is equivalent to the inclusion of algebras $\kappa[t]/(t^2) \into \kappa[t]/(t^4)$ defined by the association $t\mapsto t^2+\lambda t^3$.
	
	The locus $\Htilde_3$ coincides with the locus in $[\AA^1/\gm]$ described by the equation $\lambda=0$. Therefore $\rho_4^*(H)=-2s$ and we are done. 
\end{proof}

Before going to study the morphism $\rho_2$, we need to understand its source. We have that 
$$\cA_{2}\simeq \cA_{2}'\simeq \Mtilde_{2,1} \times [\AA^1/\gm].$$
Recall that $\Mtilde_{2,1}$ is an open substack of $\Ctilde_2$ thanks to \Cref{prop:contrac}. Therefore, the Chow ring of $\Mtilde_{2,1}$ is a quotient of the one of $\Ctilde_2$. 

\begin{lemma}\label{lem:chow-ring-C2}
	The Chow ring of $\Ctilde_2$ is a quotient of the polynomial ring generated by
	\begin{itemize}
		\item the $\lambda$-classes $\lambda_1$ and $\lambda_2$ of degree $1$ and $2$ respectively,
		\item the $\psi$-class $\psi_1$,
		\item two classes $\theta_1$ and $\theta_2$ of degree $1$ and $2$ respectively;
	\end{itemize}
	furthermore, the ideal of relations is generated by 
	\begin{itemize}
		\item  $\lambda_2-\theta_2-\psi_1(\lambda_1-\psi_1)$,
		\item  $\theta_1(\lambda_1+\theta_1)$,
		\item  $\theta_2\psi_1$,
		\item  $\theta_2(\lambda_1+\theta_1-\psi_1)$,
		\item  an homogeneous polynomial of degree $7$.
	\end{itemize}
\end{lemma}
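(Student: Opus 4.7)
My plan is to apply the gluing lemma \Cref{lem:gluing} to the stratification $\ThTilde_1\hookrightarrow \Ctilde_2$ with open complement $U:=\Ctilde_2\setminus \ThTilde_1$, computing the two strata separately and then combining. On the open stratum the isomorphism $U\simeq [\widetilde{\AA}(6)\setminus 0/B_2]$ (the proposition immediately preceding \Cref{cor:mtilde_21}) gives $\ch(U)\simeq \ZZ[1/6,t_0,t_1]/(c_7)$, where $t_0,t_1$ are the Chern classes of the characters of the maximal torus $T_2\subset B_2$ (the $\ga^2$-factor being discarded via \cite{MolVis}) and $c_7$ is the top $T_2$-equivariant Chern class of the $7$-dimensional representation $\widetilde{\AA}(6)$. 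On the closed stratum I would use the gluing description of $\Detilde_1\subset\Mtilde_2$: the latter is, modulo the $C_2$-swap, $\Mtilde_{1,1}\times \Mtilde_{1,1}$, and $\ThTilde_1$ is correspondingly the $C_2$-quotient of the pushout $(\Ctilde_{1,1}\times \Mtilde_{1,1})\cup_{\Mtilde_{1,1}\times \Mtilde_{1,1}}(\Mtilde_{1,1}\times \Ctilde_{1,1})$, whose Chow ring is accessible from $\ch(\Ctilde_{1,1})$ computed in Lemma~3.2 of~\cite{DiLorPerVis}.

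The second step is to match the five claimed generators with concrete classes. Pulling back from $\Mtilde_2$ gives $\lambda_1,\lambda_2$; the class $\psi_1$ is defined as the first Chern class of $\sigma^*\omega$, where $\omega$ is the relative dualizing sheaf of the universal curve over $\Ctilde_2$ and $\sigma$ is the diagonal section, which is well-defined on all of $\Ctilde_2$ because $\omega$ is an invertible sheaf on any $A_r$-stable curve. The class $\theta_1$ is the fundamental class of one of the two ``branches'' of $\ThTilde_1$ in the $\Mtilde_{1,1}\times\Mtilde_{1,1}$-chart (so that $[\ThTilde_1]$ descends to $\theta_1$ after the $C_2$-quotient), while $\theta_2$ is the pushforward along $\ThTilde_1\hookrightarrow \Ctilde_2$ of the $\psi$-class on the $\Ctilde_{1,1}$-factor carrying the section. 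Under the presentation of $U$, one reads off that $\lambda_1,\lambda_2,\psi_1$ are explicit polynomials in $t_0,t_1$ (of the shape forced by the cyclic-cover description, compare \Cref{lem:lambda-class-H}), whereas $\theta_1,\theta_2$ restrict to zero.

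With this dictionary fixed, the four low-degree relations follow by checking them on each stratum. The relation $\lambda_2=\theta_2+\psi_1(\lambda_1-\psi_1)$ restricts on $U$ to the identity $\lambda_2=\psi_1(\lambda_1-\psi_1)$ expressing the Chern-root splitting of the Hodge bundle of a double cover of $\PP^1$, and on $\ThTilde_1$ to $t_1t_2=\theta_2+\psi_1(t_1+t_2-\psi_1)$, which holds by construction of $\theta_2$. The relation $\theta_1(\lambda_1+\theta_1)=0$ is the self-intersection formula for the boundary divisor combined with $\lambda_1|_{\ThTilde_1}=t_1+t_2$ and with the decomposition of the normal bundle of $\ThTilde_1$ in $\Ctilde_2$; the two degree-$3$ relations $\theta_2\psi_1=0$ and $\theta_2(\lambda_1+\theta_1-\psi_1)=0$ record the vanishing of $\psi_1$ on the ``non-distinguished'' branch of $\ThTilde_1$ and the behaviour of $\lambda_1+\theta_1$ there. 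In each case the argument is the same: the class is supported on $\ThTilde_1$ and has the correct restriction to it, so the gluing lemma forces it to vanish.

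The main obstacle will be the degree-$7$ relation. It originates from $c_7\in\ch(U)$, and to lift it to $\ch(\Ctilde_2)$ I would follow the recipe of \Cref{prop:desc-gluing}: first express $c_7$ as a polynomial $p(\lambda_1,\lambda_2,\psi_1)$ modulo the four already-established degree-$\leq 3$ relations, then correct it by a class of the form $\theta_1\cdot g+\theta_2\cdot h$ chosen so that the sum restricts to zero on $\ThTilde_1$ modulo the top Chern class of the normal bundle $N_{\ThTilde_1|\Ctilde_2}$. The existence of the correction is abstractly guaranteed by \Cref{lem:gluing-sur}, but making $g$ and $h$ explicit is a laborious monomial bookkeeping; once it is done, \Cref{prop:desc-gluing} gives the complete list of relations and hence the claimed presentation.
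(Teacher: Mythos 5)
There is a genuine gap in your choice of stratification. You propose to apply \Cref{lem:gluing} once, to the closed immersion $\ThTilde_1\hookrightarrow\Ctilde_2$, but \Cref{lem:gluing} requires $\cZ$ to be a \emph{smooth} closed substack, and $\ThTilde_1$ is not smooth: it is the preimage of the divisor $\Detilde_1$ under the universal curve $\Ctilde_2\rightarrow\Mtilde_2$, and near the separating node the universal curve has local equation $xy=t$ with $\Detilde_1=\{t=0\}$, so $\ThTilde_1=\{xy=0\}$ is a normal-crossings divisor, singular exactly along the locus $\ThTilde_2$ of pairs $(C,p)$ with $p$ the separating node. (Your pushout description of $\ThTilde_1$ makes the same point: the pushout is singular along the gluing locus.) This is the same obstruction the paper flags for $\Detilde_1\subset\Mtilde_3$, and the fix is the same: the paper uses the three-step stratification $\ThTilde_2\subset\ThTilde_1\subset\Ctilde_2$, gluing the smooth strata $\Ctilde_2\setminus\ThTilde_1$, $\ThTilde_1\setminus\ThTilde_2\simeq(\Ctilde_{1,1}\setminus\Mtilde_{1,1})\times\Mtilde_{1,1}$ (no $C_2$-quotient is needed here, since the marked point singles out the component it lies on) and $\ThTilde_2\simeq\Detilde_1$, with Chow rings $\ZZ[1/6,t_0,t_1]/(f_7)$, $\ZZ[1/6,t,s]$ and $\ZZ[1/6,\lambda_1,\lambda_2]$ respectively.

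A second, related problem is your identification of $\theta_2$. In the statement $\theta_2$ is the fundamental class of the codimension-two locus $\ThTilde_2$, not the pushforward along $\ThTilde_1\hookrightarrow\Ctilde_2$ of a $\psi$-class; the latter is a class of the form $\theta_1\cdot\psi$, supported on all of $\ThTilde_1$. The two degree-three relations are exactly the statements $\theta_2\psi_1=i_{2,*}(\psi_1\vert_{\ThTilde_2})=0$ and $\theta_2(\lambda_1+\theta_1-\psi_1)=i_{2,*}\bigl((\lambda_1+\theta_1-\psi_1)\vert_{\ThTilde_2}\bigr)=0$, which follow from the restriction identities $\psi_1\vert_{\ThTilde_2}=0$ and $\theta_1\vert_{\ThTilde_2}=-\lambda_1$; likewise the degree-two relation needs $\theta_2\vert_{\ThTilde_2}=\lambda_2$. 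None of these can even be formulated, let alone verified, without the stratum $\ThTilde_2$, and for your candidate $\theta_2$ the product $\theta_2\psi_1$ would be $i_{1,*}(\psi_1^2\vert_{\ThTilde_1})$, which does not vanish. Your treatment of the open stratum and of the degree-seven relation (the top equivariant Chern class of $\widetilde{\AA}(6)$ coming from removing the zero section, lifted via \Cref{prop:desc-gluing}) does agree with the paper; the argument needs to be rerouted through the two-stage gluing before those parts can be assembled.
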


\begin{proof}
	We do not describe all the computation in details. The idea is to use the stratification introduced in Section 4 of \cite{DiLorPerVis},i.e. 
	$$\ThTilde_2 \subset \ThTilde_1 \subset \Ctilde_2$$ 
	where $\ThTilde_1$ is the pullback of $\Detilde_1$ through to morphism $\Ctilde_2 \rightarrow \Mtilde_2$ and $\ThTilde_2$ is the closed substack of $\Ctilde_2$ parametrizing pairs $(C,p)$ such that $p$ is a separating node. We denote by $\theta_1$ and $\theta_2$ the fundamental classes of $\ThTilde_1$ and $\ThTilde_2$. Notice that the only difference with our situation is in the open stratum $\Ctilde_2 \setminus \ThTilde_1$. In fact, the authors of \cite{DiLorPerVis} proved in Proposition 4.1 that  
	$$\Ctilde_2^2\setminus \ThTilde_1 \simeq [U/B_2]$$
	where $U$ is an open inside a $B_2$-representation $\widetilde{\AA}(6)$.
	The same proof generalizes in the case $r=7$ (see \Cref{cor:mtilde_21}) and it gives us that 
	$$ \Ctilde_2^7\setminus \ThTilde_1 \simeq [\widetilde{\AA}(6)\setminus 0/B_2]$$
	and therefore the zero section in $\widetilde{\AA}(6)$ gives us a relation of degree $7$.  We also have the following isomorphisms:
	\begin{itemize}
		\item $\ThTilde_1 \setminus \ThTilde_2 \simeq (\Ctilde_{1,1}\setminus \Mtilde_{1,1})\times \Mtilde_{1,1}$,
		\item $\ThTilde_2 \simeq \Detilde_1$;
	\end{itemize}
 thus we have the following descriptions of the Chow rings of the strata:
	\begin{itemize}
		\item $\ch(\Ctilde_2\setminus \ThTilde_1) \simeq \ZZ[1/6,t_0,t_1]/(f_7)$,
		\item $\ch(\ThTilde_1 \setminus \ThTilde_2
		) \simeq \ZZ[1/6,t,s]$,
		\item $\ch(\ThTilde_2) \simeq \ZZ[1/6,\lambda_1,\lambda_2]$
	\end{itemize}
where $f_7$ is an homogeneous polynomial of degree $7$. Finally, one can prove the following identities:
\begin{itemize}
	\item $\lambda_1\vert_{\Ctilde_2\setminus \ThTilde_1} = -t_0-t_1$, $\lambda_1\vert_{\ThTilde_1\setminus \ThTilde_2}=-t-s$;
	\item $\lambda_2\vert_{\Ctilde_2\setminus \ThTilde_1}=t_0t_1$, $\lambda_2\vert_{\ThTilde_1\setminus \ThTilde_2}=st$;
	\item $\psi_1\vert_{\Ctilde_2\setminus \ThTilde_1}=t_1$, $\psi_1\vert_{\ThTilde_1\setminus \ThTilde_2}=t$, $\psi_1\vert_{\ThTilde_2}=0$;
	\item $\theta_1\vert_{\ThTilde_1\setminus \ThTilde_2}=t+s$, $\theta_1\vert_{\ThTilde_2}=-\lambda_1$;
	\item $\theta_2\vert_{\ThTilde_2}=\lambda_2$.
\end{itemize}
The result follows from applying the gluing lemma.
\end{proof}

\begin{remark}
	Clearly, $\rho_2^*$ cannot be surjective at the level of Chow rings, as it not true even at the level of Picard groups. In fact, the Picard group of $\Mtilde_3$ is an abelian free group of rank $3$ while the Picard group of $\Mtilde_{2,1}\times [\AA^1/\gm]$ is an abelian free group of rank $4$. 
\end{remark}

We are ready for the proposition.
\begin{proposition}\label{prop:gener-rho-2}
	The image of the pushforward of
	$$\rho_2: \Mtilde_{2,1} \times [\AA^1/\gm] \simeq \cA_2 \longrightarrow \Mtilde_3\setminus \widetilde{\cA}_{\geq 3}$$
	is generated by the elements $\rho_{2,*}(1)$, $\rho_{2,*}(s)$ and $\rho_{2,*}(s\theta_1)$, where $s$ is the generator of the Chow ring of $[\AA^1/\gm]$ and $\theta_1$ is the fundamental class of the locus parametrizing curves with a separating node.
\end{proposition}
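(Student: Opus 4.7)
My strategy is the projection formula. For any $\alpha\in\ch(\cA_2)$ and $\beta\in\ch(\Mtilde_3\setminus\widetilde{\cA}_{\geq 3})$, we have $\rho_{2,*}(\rho_2^*(\beta)\cdot\alpha)=\beta\cdot\rho_{2,*}(\alpha)$, so $\im{\rho_{2,*}}$ is the ideal of $\ch(\Mtilde_3\setminus\widetilde{\cA}_{\geq 3})$ generated by $\{\rho_{2,*}(\alpha_i)\}$ for any set $\{\alpha_i\}$ generating $\ch(\cA_2)$ as a module over $\im{\rho_2^*}$. It therefore suffices to prove that $\{1,\,s,\,s\theta_1\}$ is such a generating set.

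To analyse $\ch(\cA_2)$ I use the product decomposition $\cA_2\simeq\Mtilde_{2,1}\times[\AA^1/\gm]$, which yields $\ch(\cA_2)=\ch(\Mtilde_{2,1})[s]$. Since the marked point of $\Mtilde_{2,1}$ is smooth, the class $\theta_2$ of \Cref{lem:chow-ring-C2} vanishes on $\Mtilde_{2,1}$; the surviving relations $\psi_1^2=\lambda_1\psi_1-\lambda_2$ and $\theta_1^2=-\lambda_1\theta_1$ then imply that $\ch(\Mtilde_{2,1})$ is generated as a $\ZZ[1/6,\lambda_1,\lambda_2]$-module by $\{1,\psi_1,\theta_1,\psi_1\theta_1\}$, so $\ch(\cA_2)$ is generated over $\ZZ[1/6,\lambda_1,\lambda_2,s]$ by the same four classes.

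The crucial inputs are the pullbacks through $\rho_2$ of the generators of $\ch(\Mtilde_3)$. Using the identification $b^*\omega_C=\omega_{\widetilde{C}}(2q)$ coming from Noether's formula applied to the cusp (\Cref{lem:conductor} and \Cref{rem:stab}), the long exact sequence associated to $0\to\omega_{\widetilde{C}}\to\omega_{\widetilde{C}}(2q)\to\omega_{\widetilde{C}}(2q)|_{2q}\to 0$ exhibits $\pi_*\omega_C$ as an extension of $\widetilde{\pi}_*\omega_{\widetilde{C}}$ by a line bundle $F$. Computing $c_1(F)$ via the filtration $0\to\cO_S\to\omega_{\widetilde{C}}(2q)|_{2q}\to N_{q/\widetilde{C}}\to 0$ yields $c_1(F)=-\psi_1$, and hence explicit polynomial formulas for $\rho_2^*(\lambda_i)$, $i=1,2,3$, in the Hodge classes of the universal genus-$2$ curve and $\psi_1$. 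Analogously, the locus of separating nodes on the universal cuspidal curve over $\cA_2$ decomposes into the separating nodes inherited from $\widetilde{C}$ (class $\theta_1$) and the new separating node created when the section of $\Mtilde_{2,[2]}$ lies on a rational tail (class $s$), giving $\rho_2^*(\delta_1)=\theta_1+s$.

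These identities imply $\theta_1\equiv -s\pmod{\im{\rho_2^*}}$ and $\psi_1^2=-\rho_2^*(\lambda_2)$, which together with the pullbacks of $H$, $\delta_{1,1}$, $\delta_{1,1,1}$ and the relations of \Cref{lem:chow-ring-C2} allow one to reduce the four module generators $\{1,\psi_1,\theta_1,\psi_1\theta_1\}$ together with all higher powers of $s$ to the span of $\{1,s,s\theta_1\}$ over $\im{\rho_2^*}$. The main obstacle is the careful explicit computation of these pullbacks and the verification that the resulting reductions are consistent with the degree-$7$ relation inherited from \Cref{lem:chow-ring-C2}, so that the generation collapses to exactly the three listed elements; these explicit polynomial computations are carried out in \Cref{sec:strategy}.
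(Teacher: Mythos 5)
Your overall architecture is the same as the paper's: by the projection formula the claim reduces to showing that $\{1,s,s\theta_1\}$ generates $\ch(\cA_2)$ as a module over $\im{\rho_2^*}$, and this is checked by computing the pullbacks under $\rho_2$ of the generators of $\ch(\Mtilde_3)$. However, there is a genuine gap at the start of your analysis of $\ch(\cA_2)$: the claim that $\theta_2$ vanishes on $\Mtilde_{2,1}$ is false. The identification of \Cref{prop:contrac} realizes $\Mtilde_{2,1}$ as the open substack of $\Ctilde_2$ where the section is an $A_h$-singularity with $h\le 2$; since a separating node is an $A_1$-singularity, $\ThTilde_2$ meets this open substack (under the contraction it corresponds to the smooth marked point lying on a rational bridge joining two genus-$1$ subcurves, which is a nonempty stable configuration). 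The smoothness of the marked point in the $\Mtilde_{2,1}$-description does not kill $\theta_2$, because $\theta_2$ is defined through the $\Ctilde_2$-description. Consequently your list of module generators of $\ch(\Mtilde_{2,1})$ over $\ZZ[1/6,\lambda_1,\lambda_2]$ is also incomplete ($\theta_2$, equivalently $\psi_1^2$, must be retained).

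This error is consequential rather than cosmetic. With $\theta_2=0$ the pullbacks become $\rho_2^*(\delta_{1,1})=s\theta_1$ and $\rho_2^*(\delta_{1,1,1})=0$, from which $s^2=s\,\rho_2^*(\delta_1)-\rho_2^*(\delta_{1,1})$ and one would conclude that $\{1,s\}$ already generate, making $\rho_{2,*}(s\theta_1)$ redundant --- contrary to the statement you are proving. The correct formulas are $\rho_2^*(\delta_{1,1})=\theta_2+s\theta_1$ and $\rho_2^*(\delta_{1,1,1})=s\theta_2$, and it is precisely the identity $s\theta_2=s(\rho_2^*(\delta_{1,1})-s\theta_1)=s\,\rho_2^*(\delta_{1,1})-s^2\rho_2^*(\delta_1)+s^3$ that expresses $s^3$ (hence all higher powers of $s$) in the module generated by $1,s,s^2$, while $s\theta_1=s(\rho_2^*(\delta_1)-s)$ lets one trade $s^2$ for $s\theta_1$; this is where the third generator comes from. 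Two further points: your Hodge-bundle computation produces formulas for $\rho_2^*(\lambda_i)$ with no dependence on $s$, whereas the correct pullbacks are $\rho_2^*(c_1(\HH))=\lambda_1+\psi_1-s$, $\rho_2^*(c_2(\HH))=\lambda_2+\lambda_1(\psi_1-s)$, $\rho_2^*(c_3(\HH))=\lambda_2(\psi_1-s)$ (restricting to $\Mtilde_{2,1}\times\cB\gm$ does not kill $s$, and the pinching direction contributes the $-s$); and the final reduction is asserted and deferred rather than carried out, when in fact, once $\theta_2$ is reinstated, the generation by $\{1,s,s\theta_1\}$ follows directly from the two displayed identities above without any appeal to the degree-$7$ relation of \Cref{lem:chow-ring-C2}.
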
 

\begin{proof}
	For this proof, we denote by $\lambda_1$ and $\lambda_2$ the Chern classes of the Hodge bundle of $\Mtilde_{2,1}$, whereas the $i$-th Chern class of the Hodge bundle of $\Mtilde_3$ is denoted by $c_i(\HH)$ for $i=1,2,3$.
	
	We need to describe the pullback of the generators of the Chow ring of $\Mtilde_3$ through $\rho_2$. By construction, it is easy to see that $\rho_2^*(\delta_1)=s+\theta_1$, $\rho_2^*(\delta_{1,1})=\theta_2+s\theta_1$ and $\rho_2^*(\delta_{1,1,1})=s\theta_2$. 
	
	Notice that $\rho_2^{-1}(\Htilde_3)$ is the fundamental class of the closed substack $\Mtilde_{2,\omega} \times [\AA^1/\gm]$, where $\Mtilde_{2,\omega}$ is the closed substack of $\Mtilde_{2,1}$ which parametrizes pairs $(C,p)$ such that $p$ is fixed by the (unique) involution of $C$. To compute its class, we need to use the stratification used in the proof of \Cref{lem:chow-ring-C2}.
	In the open stratum $\Mtilde_{2,1}\setminus \ThTilde_1$, \Cref{prop:relation-detilde-1} implies that the restriction of $[\Mtilde_{2,\omega}]$ is equal to $-\lambda_1-3 \psi_1$.
	In the stratum $\ThTilde_1\setminus \ThTilde_2$, we have that the restriction is of the form $-3\psi_1$. This implies that $\rho_2^*(H)=-\lambda_1-3\psi_1-\theta_1$.
	
	Finally, to compute the restriction of $c_i(\HH)$ for $i=1,2,3$, we can restrict to the closed substack $\Mtilde_{2,1}\times \cB\gm \into \Mtilde_{2,1} \times [\AA^1/\gm]$ as the pullback of the closed immersion is clearly an isomorphism because it is the zero section of a vector bundle. The explicit description of the isomorphism $\Mtilde_{2,1}\times [\AA^1/\gm] \simeq \cA_2$ (which was constructed in Section 2 of \cite{DiLorPerVis}) implies that the morphism $\rho_2\vert_{\Mtilde_{2,1}\times \cB\gm}$ maps an object $(\widetilde{C}/S,q)$ to the object $(C/S,p)$ in the following way: consider the projective bundle $\PP(N_{q}\oplus N_0)$ over $S$, where $N_q$ is the normal bundle of the section $q$ and $N_0$ is the pullback to $S$ of the $1$-dimensional representation of $\gm$ of weight $1$; we have two natural sections defined by the two subbundles $N_q$ and $N_0$ of $N_q\oplus N_0$, namely $\infty$ and $0$; the object $(C/S,p)$ is defined by gluing $\infty$ with $q$, pinching in $0$ and then setting $p:=0$. A computation identical to the one of Proposition 5.9 of \cite{DiLorPerVis} implies the following formulas:
	\begin{itemize}
		\item $\rho_2^*(c_1(\HH))=\lambda_1+\psi_1-s$,
		\item $\rho_2^*(c_2(\HH))=\lambda_2+\lambda_1(\psi_1-s)$,
		\item $\rho_2^*(c_3(\HH))=\lambda_2(\psi_1-s)$.
	\end{itemize}
	The description of the restrictions of the generators of $\ch(\Mtilde_3)$ gives us that the image of $\rho_{2,*}$ is the ideal generated by $\rho_{2,*}(s^i)$ for every $i$ non-negative integer. Moreover, we have that 
	$$\rho_2^*(\delta_{1,1,1})=s\theta_2=s(\rho_2^*(\delta_{1,1})-s(\rho_2^*(\delta_1)-s))$$
	which implies that $\rho_{2,*}(s^i)$ is in the ideal generated by $\rho_{2,*}(1)$, $\rho_{2,*}(s)$ and $\rho_2^*(s^2)$ for every $i\geq 3$. Finally, we have that 
	$$s\theta_1=s(\rho_2^*(\delta_1)-s)$$
	therefore we can use $s\theta_1$ as a generator with $\rho_{2,*}(1)$ and $\rho_{2,*}(s)$ instead of $\rho_{2,*}(s^2)$.
\end{proof}

\begin{remark}
	Notice that $\rho_{2,*}(s)$ is equal to the fundamental class of the image of the morphism $\Mtilde_{2,1} \times \cB\gm \rightarrow \Detilde_1 \into \Mtilde_{3}$.  We denote this closed substack $\Detilde_1^c$; it parametrizes curves $C$ obtained by gluing a genus $2$ curve with a genus $1$ cuspidal curve in a separating node.
	
	In the same way, $\rho_{2,*}(s\theta_1)$ is equal to the fundamental class of the image of the morphism $\ThTilde_1 \times \cB\gm \into \Detilde_{1,1} \into \Mtilde_3$. We denote this closed substack $\Detilde_{1,1}^c$; it parametrizes curves $C$ in $\Detilde_{1,1}$ such that one of the two elliptic tails is cuspidal.
\end{remark}

\subsection*{Generators for the image of $\rho_{n,*}$ if $n$ is odd}

Now we deal with the odd case. This is a bit more convoluted as we have several strata to deal with for every $n$. Let us recall the results we have proven.

First of all, $\cA_{2m-1}$ is the disjoint union of $\cA_{2m-1}^{\rm ns}$ and $\cA_{2m-1}^i$ for $0\leq i\leq (g-m+1)/2$. Because $g=3$, we have the following possibilities:
\begin{itemize}
	\item if $m=4$, we have only one component, namely $\cA_7^0$;
	\item if $m=3$, we have two components, namely $\cA_5^0$ and $\cA_5^{\rm ns}$;
	\item if $m=2$, we have three components, namely $\cA_5^0$, $\cA_5^1$ and $\cA_5^{\rm ns}$.
\end{itemize}
First of all, notice that $\cA_5^0$ is empty, due to the stability condition. Therefore we need to deal with $5$ components. 

We start with the case $m=4$.

\begin{proposition}\label{prop:rho-7-surj}
	The pullback of the morphism 
	$$\rho_7:\cA_7 \longrightarrow \Mtilde_3$$
	is surjective.
\end{proposition}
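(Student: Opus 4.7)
The plan is to give a direct computation of $\ch(\cA_7)$ and exhibit liftings of its generators, mirroring the strategy used in \Cref{prop:rho-6-surj}.

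First I would show that for $g=3$ and $m=4$ the only nonempty component of $\cA_7$ is $\cA_7^{0}$. Indeed the constraint $0\le i\le (g-m+1)/2=0$ forces $i=0$, while the non-separating stratum $\cA_7^{\rm ns}$ would correspond to a connected $A_7$-desingularization of arithmetic genus $g-m=-1$, which is impossible. By \Cref{prop:descr-an-odd-i} there is a finite étale double cover
\[
F^{0}\colon I_7^{0}\longrightarrow \cA_7^{0},
\]
and since $6$ is invertible in our coefficients, \Cref{lem:chow-tor} identifies $\ch(\cA_7)$ with the $C_2$-invariants of $\ch(I_7^{0})$ under the involution swapping the two branches.

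Next I would compute $\ch(I_7^{0})$. By construction
\[
I_7^{0}=\bigl(\Mtilde_{0,[4]}\times\Mtilde_{0,[4]}\bigr)\times_{\cF_{4}^{c}\times\cF_{4}^{c}}\cE_{4,1}^{c}.
\]
From \Cref{prop:desc-str} we have $\Mtilde_{0,[4]}\simeq \cB(\gm\ltimes\ga)$, whose Chow ring is the polynomial algebra $\ZZ[1/6,\psi]$. The morphism $\cE_{4,1}^{c}\to\cF_{4}^{c}\times\cF_{4}^{c}$ is a torsor under the automorphism group of a length-$4$ curvilinear algebra, and via Proposition~2.3 of \cite{MolVis} the unipotent part does not contribute; the $\gm$-character pulls back to the difference $\psi_{1}-\psi_{2}$ of the two $\psi$-classes on $\Mtilde_{0,[4]}^{\times 2}$. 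Hence the torsor relation identifies $\psi_{1}$ and $\psi_{2}$ up to a unit, yielding $\ch(I_7^{0})=\ZZ[1/6,\psi]$ for a single class $\psi$. The swap $C_2$-action fixes $\psi$, so $\ch(\cA_7)=\ZZ[1/6,\psi]$.

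Finally, I would identify $\rho_7^{*}(\delta_1)$ as a nonzero scalar multiple of $\psi$, following the same pinching argument as in \Cref{prop:rho-6-surj}: the pushout description of $F^{0}$ from \Cref{lem:pushout} realises the normal direction to $\Detilde_{1}$ at an $A_7$-stable curve as the infinitesimal datum at the two branches, whose first Chern class is precisely $\psi$ after symmetrization. This gives $\rho_7^{*}(\delta_1)=c\,\psi$ with $c\in\ZZ[1/6]^{\times}$, from which surjectivity follows. The main obstacle is the bookkeeping in the second step: one has to check that pulling back the automorphism-group torsor along $\Mtilde_{0,[4]}^{\times 2}\to(\cF_{4}^{c})^{\times 2}$ imposes \emph{only} a single linear relation between $\psi_{1}$ and $\psi_{2}$, with no extra constraints coming from the non-reductive part, which amounts to computing the induced map of character groups of $\gm\ltimes\ga\to G_{4}$.
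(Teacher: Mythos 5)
Your reduction to the single component $\cA_7^{0}$, the identification $\ch(\cA_7)\simeq\ch(I_7^0)^{C_2}$ via \Cref{prop:descr-an-odd-i} and \Cref{lem:chow-tor}, and the computation $\ch(I_7^0)\simeq\ZZ[1/6,s]$ (the paper gets $I_7^0\simeq[\AA^1/\gm]$, with the $\gm$-torsor class $\psi_1-\psi_2$ forcing $\psi_1=\psi_2$, exactly as you describe) all track the paper's argument correctly.

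The final step, however, fails. A geometric point of $\cA_7$ is a genus-$3$ curve with an $A_7$-singularity; since the $A_7$-point already contributes $\delta=4$ to the arithmetic genus and disconnects the normalization, the two branches are forced by stability to be irreducible rational curves, so the curve is exactly two copies of $\PP^1$ glued at the single $A_7$-point and has \emph{no} separating node. Hence $\cA_7\cap\Detilde_1=\emptyset$ and $\rho_7^*(\delta_1)=0$, not a unit multiple of $\psi$: there is no ``normal direction to $\Detilde_1$'' to compute, and the pinching argument you invoke (which in \Cref{prop:rho-6-surj} and the $\cA_4$ case produces $\rho^*(\delta_1)=s$ precisely because those strata carry an extra $[\AA^1/\gm]$ factor recording a degenerating separating node) has no analogue here. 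The class that actually generates the image is the hyperelliptic one: every such curve admits at most the involution swapping the two branches, and the condition that the gluing isomorphism $\phi\in\cE_{4,1}^c$ be compatible with $t\mapsto -t$ cuts out the origin of the weight-$2$ representation $\AA^1$, giving $\rho_7^*(H)=2s$; since $2$ is invertible in $\ZZ[1/6]$ this yields surjectivity. Replacing your last paragraph by this computation of $\rho_7^*(H)$ repairs the proof.
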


\begin{proof}
	The proof is similar to one of \Cref{prop:rho-6-surj}. First of all, we describe the Chow ring of $\cA_7^0$. We apply \Cref{prop:descr-an-odd-i} and \Cref{lem:chow-tor} and we get that 
	$$\ch(\cA_7^0) \simeq \ch(I_{7}^0)^{\rm inv}$$
 	where the invariants are taking with respect of the action of $C_2$ induced by the involution defined by the association $$\Big((C_1,p_1),(C_2,p_2),\phi\Big) \mapsto \Big((C_2,p_2),(C_1,p_1),\phi^{-1}\Big).$$
 	
 	By construction, we have that $I_7^0$ is the fiber product of the diagram
 	$$
 	\begin{tikzcd}
 		& {[E_{4,1}/G_4^{\times 2}]} \arrow[d] \\
 		\cB(\gm \ltimes \ga)^{\times 2} \arrow[r, "{\cB(f^{\times 2})}"] & \cB (G_4^{\times 2})                
 	\end{tikzcd}
 	$$
 	where the morphism $f$ is described in the proof of \Cref{prop:rho-6-surj}. A simple computation shows that 
 	$$ I_7^0 \simeq [\AA^1/\gm]$$
 	where $\AA^1$ is the $\gm$-representation with weight $2$. Furthermore, one can prove that $C_2$ acts trivially on $\gm$ and acts on $\AA^1$ by the rule $\lambda \mapsto -\lambda$. Therefore it is clear that
 	$$\ch(I_7^0) \simeq \ZZ[1/6,s]$$
 	where $s$ is the generator of the Chow ring of $\cB \gm$ and a simple computation shows the $\rho_7^{*}(H)=2s$.
\end{proof}

We now deal with the case $m=3$. We have to split it in two subcases, namely $\cA_5^0$ and $\cA_5^{\rm ns}$. We denote by $\rho_5^{0}$ and $\rho_5^{\rm ns}$ the restriction of $\rho_5$ to the two connected components $\cA_5^0$ and $\cA_5^{\rm ns}$ respectively.

\begin{proposition}\label{prop:rho-5-0-surj}
	The pullback of the morphism 
	$$\rho_5^{0}:\cA_5^0 \longrightarrow \Mtilde_3$$
	is surjective.
\end{proposition}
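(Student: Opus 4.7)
The plan is to mimic closely the strategy used for $\rho_7^0$ and $\rho_4$. First, I would apply \Cref{prop:descr-an-odd-i} with $g=3$, $m=3$, $i=0$: since $i=0 \neq (g-m+1)/2 = 1/2$, the functor $F^0$ is an isomorphism, so $\cA_5^0 \simeq I_5^0$, where
\[
I_5^0 = \bigl(\Mtilde_{0,[3]} \times \Mtilde_{1,[3]}\bigr) \times_{\cF_3^c \times \cF_3^c} \cE_{3,1}^c .
\]

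Next, I would describe the factors explicitly. By \Cref{prop:desc-str} we have $\Mtilde_{0,[3]} \simeq \cB(\gm \ltimes \ga)$ and $\Mtilde_{1,[3]} \simeq \Mtilde_{1,1} \times [\AA^1/\gm]$; calling $u$, $t$, $s$ the Chow generators of these three factors (where $t$ is the $\psi$-class of $\Mtilde_{1,1}$ and $s$ is the class of the boundary in $[\AA^1/\gm]$, as in \Cref{prop:descr-detilde-1} and \Cref{lem:mtilde-12}), the Chow ring of the product is $\ZZ[1/6,u,t,s]$. The map $\cE_{3,1}^c \to \cF_3^c \times \cF_3^c$ is a $\gm \times \ga$-torsor (Appendix A); discarding the $\ga$-part by Proposition 2.3 of \cite{MolVis}, we get a $\gm$-torsor whose Chern class, pulled back to $\Mtilde_{0,[3]} \times \Mtilde_{1,[3]}$, identifies the weight of the $\gm$-action on the two cotangent lines at the marked points. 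This yields the single relation $u = t$ (or $u+t=0$, depending on signs), and hence
\[
\ch(\cA_5^0) \simeq \ZZ[1/6, t, s].
\]

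To prove surjectivity of $\rho_5^{0*}$ it then suffices to hit both generators $t$ and $s$. Following the patterns of \Cref{prop:rho-7-surj} and the treatment of $\rho_4$, I would compute two explicit pullbacks. For $s$: the locus $s = 0$ in $[\AA^1/\gm]$ inside $\Mtilde_{1,[3]}$ is the one where the marked point of the elliptic factor lies on a rational tail attached at a separating node, so the corresponding curve in $\Mtilde_3$ acquires an extra separating node beyond the $A_5$-point; this shows that $\rho_5^{0*}(\delta_{1,1})$ (or, after restricting to the locus $t=0$ where the elliptic factor is rigid, $\rho_5^{0*}(\delta_1)$ itself) equals $s$ plus a multiple of $t$, giving $s$ modulo $t$. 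For $t$: I would restrict to the sublocus $\cA_5^0|_{\cB\gm}$ obtained by freezing the elliptic factor to $\cB\gm \subset \Mtilde_{1,1}$, where $s$ already vanishes, reducing to a one-parameter family $[\AA^1/\gm] \simeq \cA_5^0|_{\cB\gm}$ analogous to the one appearing for $\cA_7^0$; on this sublocus, the hyperelliptic condition reads ``the $3$-jet identification is $\pm 1$-equivariant'', which is a codimension-one linear condition cutting out the zero-section of the affine bundle. A direct character computation (identical in spirit to the one in the proof of \Cref{prop:rho-7-surj}) would then express $\rho_5^{0*}(H)$ restricted to $\cA_5^0|_{\cB\gm}$ as a nonzero integer multiple of $t$.

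The main obstacle is bookkeeping: one has to keep track of the $\gm$-weights on both sides of the jet identification and verify that the torsor relation really is $u=t$ with the correct sign, and then identify exactly which class from $\Mtilde_3$ hits $s$ (as opposed to $s + \text{multiple of } t$). Once those identifications are made, surjectivity follows immediately because $1$, $t$, $s$ generate $\ch(\cA_5^0)$ as a $\ZZ[1/6]$-algebra and are all in $\im \rho_5^{0*}$.
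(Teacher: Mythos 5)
Your setup is correct and matches the paper: by \Cref{prop:descr-an-odd-i} with $i=0\neq(g-m+1)/2$ one gets $\cA_5^0\simeq I_5^0$, the $\cB(\gm\ltimes\ga)$ factor is absorbed by the $\cE_{3,1}^c$-torsor, and $\cA_5^0\simeq\Mtilde_{1,1}\times[\AA^1/\gm]$ with $\ch(\cA_5^0)\simeq\ZZ[1/6,t,s]$; moreover $\rho_5^{0,*}(\delta_1)=s$ exactly, so the class $s$ is indeed hit.

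The gap is in your plan for producing $t$. You propose to get it from $\rho_5^{0,*}(H)$ by imposing a ``$\pm1$-equivariance of the $3$-jet identification'' on a residual affine parameter. This cannot work, for two reasons. First, $\cA_5^0\cap\Htilde_3=\emptyset$: by \Cref{prop:description-quotient}, an involution at an $A_5$-singularity whose quotient is nodal must exchange the two branches, and here the two branches lie on irreducible components of genus $0$ and genus $1$, which no automorphism can swap; hence $\rho_5^{0,*}(H)=0$ identically, and $H$ contributes nothing. Second, there is no residual jet parameter to impose a condition on: unlike the even strata $\cA_4$, $\cA_6$ (affine bundles of curvilinear subalgebras) and unlike $\cA_7^0$, the $A_5$-gluing of $(\PP^1,\infty)$ to $(E,e)$ is unique up to a unique isomorphism of $(\PP^1,\infty)$, so $\cA_5^0\vert_{\cB\gm}$ is $\cB\gm\times\cB\gm$ rather than a quotient of an affine line, and there is no zero-section to cut out. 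The correct source of $t$ is the Hodge bundle: restricting to $\cB\gm\into\Mtilde_{1,1}\times[\AA^1/\gm]$ and computing the $\gm$-weights on $\H^0(C,\omega_C)$ via the pushout sequence $0\to\cO_C\to\cO_E\oplus\cO_{\PP^1}\to Q\to0$ (with $Q$ controlled by $\cO_E/m_e^3$ and $\cO_{\PP^1}/m_\infty^3$) yields $\rho_5^{0,*}(\lambda_1)=-2t+ns$ for some integer $n$; since $2$ is inverted, $\lambda_1$ and $\delta_1$ together generate, and surjectivity follows. Without this (or some substitute class restricting to a unit times $t$), your argument does not close.
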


\begin{proof}
	In this case, \Cref{prop:descr-an-odd-i} tells us that $\cA_5^0$ is isomorphic to $I_5^0$. We have a commutative diagram
	$$
	\begin{tikzcd}
		& {[E_{3,1}/G_3\times G_3]} \arrow[d] \\
		{\cB(\gm \ltimes \ga)\times \Mtilde_{1,1}\times [\AA^1/\gm]} \arrow[r, "{(\cB f,g)}"] & \cB (G_3\times G_3)                
	\end{tikzcd}
	$$
	where the morphism $\cB f:\cB(\gm \ltimes \ga) \rightarrow \cB G_3$ is the same as in \Cref{prop:rho-7-surj}, whereas we recall that 
	$$ g:\Mtilde_{1,[3]} \longrightarrow G_3$$
	is defined by the association $(E,e)\mapsto \cO_{E}/m_e^3$. Because the morphism $f$ is injective (and therefore $\cB f$ is representable) and $E_{3,1}$ is a $\gm \ltimes \ga$-torsor, we have that $\cA_{5}^0 \simeq \Mtilde_{1,1}\times [\AA^1/\gm]$. Therefore we have an isomorphism 
	$$\ch(\cA_5^0) \simeq \ZZ[1/6,t,s]$$
	where $t$ (respectively $s$) is the generator of the Chow ring of $\Mtilde_{1,1}$ (respectively $[\AA^1/\gm]$). We can describe $\rho_5^0$ in the following way: if we have a geometric point $(E,e)$ in $\Mtilde_{1,[3]}$, the image is the genus $3$ curve $(C,p)$ obtained by gluing the projective line $\PP^1$ to $E$ identifying $\infty$ with $e$ in a $A_5$-singularity (using the pushout construction as we have defined earlier) and setting $p=e$. Notice that there is a unique way of creating the $A_5$-singularity up to a unique isomorphism of $(\PP^1,\infty)$.
	
	Clearly, $\rho_5^{0,*}(\delta_1)=s$. However, $\rho_5^{0,*}(H)=0$, therefore we need to understand the pullback of the Chern classes of the Hodge bundle. It is enough to prove that $\rho_5^{0,*}(\lambda_1)=-2t+ns$ for some integer $n$. Therefore we can restrict the computation to the open substack $\Mtilde_{1,1}\subset \Mtilde_{1,1}\times [\AA^1/\gm]$. Moreover, as the closed embedding $\cB\gm \into \Mtilde_{1,1}$ is a zero section of a vector bundle over $\cB\gm$, it is enough to do the computation restricting everything to $\cB\gm \into \Mtilde_{1,1}\times [\AA^1/\gm]$.  Therefore, suppose we have an elliptic cuspidal curve $(E,e)$ with $e$ a smooth point, the image through $\rho_5^0$ is a $1$-pointed genus $3$ curve $(C,p)$ constructed gluing the projective line $(\PP^1,\infty)$ and $(E,e)$ (identifying $e$ and $\infty$) in an $A_5$-singularity and setting $p:=e$. We need to understand the $\gm$-action over the vector space $\H^0(C,\omega_C)$. 
	Consider the exact sequence
	$$ \begin{tikzcd}
		0 \arrow[r] & \cO_C \arrow[r] & \cO_{E}\oplus \cO_{\PP^1} \arrow[r] & Q \arrow[r] & 0
	\end{tikzcd}$$
	and the induced long exact sequence on the global sections
	$$
	\begin{tikzcd}
		0 \arrow[r] & \kappa \arrow[r] & \kappa^{\oplus 2} \arrow[r] & Q \arrow[r] & {\H^1(C,\cO_C)} \arrow[r] & {\H^1(E,\cO_E)} \arrow[r] & 0.
	\end{tikzcd}
    $$
    We know that $\lambda_1=-c_1^{\gm}(\H^1(C,\cO_C))$ and $c_1^{\gm}(\H^1(E,\cO_E))=t$. It is enough to describe $c_1^{\gm}(Q)$. Recall that $Q$ fits in a exact sequence of $\gm$-representations
    $$
    \begin{tikzcd}
    	0 \arrow[r] & \cO_{3\infty}=\kappa[t]/(t^3) \arrow[r] & \cO_{3e}\oplus \cO_{3\infty}=\kappa[t]/(t^3)^{\oplus 2}\arrow[r] & Q \arrow[r] & 0
    \end{tikzcd}
	$$ 
	where as usual $\cO_{3e}$ (respectively $\cO_{3\infty}$) is the quotient $\cO_E/m_e^3$ (respectively $\cO_{\PP^1}/m_{\infty}^3)$. Therefore an easy computation shows that $c_1(Q)=-2t$ and thus the restriction of $\lambda_1$ to $\cB\gm$ is equal to $-2t$.
\end{proof}

Finally, a proof similar to the one of \Cref{prop:rho-7-surj} gives us the following result.

\begin{proposition}
	The pullback of the morphism 
	$$\rho_5^{\rm ns}:\cA_5^{\rm ns} \longrightarrow \Mtilde_3$$
	is surjective.
\end{proposition}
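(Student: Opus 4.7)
The plan is to follow the blueprint of \Cref{prop:rho-7-surj} and \Cref{prop:rho-5-0-surj}. By \Cref{prop:descr-an-odd-ns} there is a representable finite \'etale degree-$2$ morphism
$$F^{\rm ns}\colon I_5^{\rm ns}=\Mtilde_{0,2[3]}\times_{\cF_3^c\times \cF_3^c}\cE_{3,1}^c\longrightarrow \cA_5^{\rm ns},$$
so \Cref{lem:chow-tor} reduces the problem to showing that the $C_2$-invariant subring of $\ch(I_5^{\rm ns})$ is generated by pullbacks of classes from $\ch(\Mtilde_3)$, where $C_2$ swaps the two smooth sections of the relative $A_5$-desingularization together with the isomorphism identifying their $3$-thickenings.

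I would then present $I_5^{\rm ns}$ as an explicit quotient stack, as was done for $I_7^0$. Starting from $\Mtilde_{0,2[3]}\simeq\cB\gm\times [\AA^1/\gm]$ and using that $\cE_{3,1}^c\to\cF_3^c\times\cF_3^c$ is a $\gm\times\ga$-torsor (the $\ga$-factor being invisible to the Chow ring by Proposition 2.3 of \cite{MolVis}), one traces through the characters of the $3$-thickenings of the two smooth sections; a dimension count shows $\dim I_5^{\rm ns}=0$, so the outcome is a stack of the form $[\AA^1/\gm]$ analogous to $I_7^0\simeq[\AA^1/\gm]$. In particular $\ch(I_5^{\rm ns})\simeq\ZZ[1/6,s]$ for a single degree-$1$ generator, and the $C_2$-action on $s$ is either trivial or the sign involution, so the invariant Chow ring is a polynomial ring in one generator.

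Finally I would identify that invariant generator as a pullback. The universal curve over $\cA_5^{\rm ns}$ has one irreducible component (a $\PP^1$) self-glued at an $A_5$-point, so the image avoids $\Detilde_1$, forcing $\rho_5^{{\rm ns},*}(\delta_1)=\rho_5^{{\rm ns},*}(\delta_{1,1})=\rho_5^{{\rm ns},*}(\delta_{1,1,1})=0$. The nontrivial pullbacks to compute are $\rho_5^{{\rm ns},*}(\lambda_1)$, obtained from the normalization sequence $0\to\cO_C\to\pi_*\cO_{\widetilde{C}}\to Q\to 0$ with $Q$ supported on the two length-$3$ thickenings (exactly as in the proof of \Cref{prop:rho-5-0-surj}), and $\rho_5^{{\rm ns},*}(H)$, which measures when the gluing datum is compatible with the involution $t\mapsto 1/t$ of $\PP^1$ exchanging the two glue points. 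The main obstacle will be the bookkeeping of the $C_2$-equivariant characters of the two thickenings under the torsor structure, and showing that at least one of $\rho_5^{{\rm ns},*}(\lambda_1)$ and $\rho_5^{{\rm ns},*}(H)$ is a unit multiple of the invariant generator, which then gives surjectivity.
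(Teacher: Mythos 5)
Your outline follows exactly the route the paper intends (its own ``proof'' is a pointer to \Cref{prop:rho-7-surj} and \Cref{prop:rho-3-surj} with the details left to the reader): pass to $I_5^{\rm ns}=\Mtilde_{0,2[3]}\times_{\cF_3^c\times\cF_3^c}\cE_{3,1}^c$, use \Cref{lem:chow-tor} to reduce to $C_2$-invariants, compute the Chow ring from the torsor description, and identify the generator via $\rho_5^{\rm ns,*}(\lambda_1)$, $\rho_5^{\rm ns,*}(H)$ and the vanishing of the $\delta$'s. That skeleton is right, and the observation that the image avoids $\Detilde_1$ is correct (even on the boundary of $\Mtilde_{0,2[3]}$ the extra node of $C$ is non-separating).

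Two points need repair. First, the dimension count is wrong: $\Mtilde_{0,2[3]}\simeq\cB\gm\times[\AA^1/\gm]$ has dimension $-1$ and the torsor group $\gm\times\ga$ has dimension $2$, so $\dim I_5^{\rm ns}=1$, not $0$; concretely, over a fixed $(\PP^1,0,\infty)$ the gluing isomorphism of the two $3$-thickenings carries a genuine $1$-parameter modulus, so $I_5^{\rm ns}$ is not literally $[\AA^1/\gm]$ and the analogy with $I_7^0$ is misleading. The correct reason the Chow ring is $\ZZ[1/6,s]$ is that $\ch(I_5^{\rm ns})\simeq\ZZ[1/6,s_1,s_2]/(c)$ with $c$ the class of the $\gm$-torsor; restricting to the open locus $\cB\gm$ shows the $s_1$-coefficient of $c$ is $\pm 2$, a unit, so the quotient is a polynomial ring on the image of $s_2$ (essentially the class of the locus where the genus-$0$ quotient becomes nodal). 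Second, your endgame only closes if the $C_2$-action on that surviving generator is trivial: if it acted by $-1$ the invariant ring would be generated in degree $2$ and neither $\lambda_1$ nor $H$ could hit the generator, so you would have to pass to degree-$2$ pullbacks. You should therefore check (it is true, since $s$ is pulled back from the boundary divisor of $[\AA^1/\gm]$, which is preserved by the swap) that the action is trivial, and then the whole content of the proof is the one computation you defer: that $\rho_5^{\rm ns,*}(\lambda_1)$ or $\rho_5^{\rm ns,*}(H)$ equals a unit of $\ZZ[1/6]$ times $s$. Since every degree-$1$ class automatically restricts to a multiple of $s$ (the open stratum has trivial Chow ring in positive degrees with $2$ inverted), the only issue is invertibility of that coefficient, which must be computed as in \Cref{prop:rho-5-0-surj} and \Cref{prop:rho-3-surj}.
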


\begin{proof}
	 We leave to the reader to check the details. See also \Cref{prop:rho-3-surj} for a similar result in the non-separating case.
\end{proof}

It remains to prove the case for $m=2$, or the strata classifying tacnodes.

\begin{proposition}\label{prop: rho-3-1-surj}
	The pullback of the morphism 
	$$ \rho_3^{1}: \cA_3^1 \longrightarrow  \Mtilde_3$$
	is surjective.
\end{proposition}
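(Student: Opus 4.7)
The plan is to imitate the strategy used in \Cref{prop:rho-7-surj} and \Cref{prop:rho-5-0-surj}: realize $\cA_3^1$ as a quotient of an explicit fiber product whose Chow ring is computable, and then verify that the pullbacks of the generators $\lambda_1,\lambda_2,\lambda_3,H,\delta_1,\delta_{1,1},\delta_{1,1,1}$ already exhaust it. By \Cref{prop:descr-an-odd-i} applied with $g=3$, $m=2$, and $i=(g-m+1)/2=1$ (the borderline case), the pushout functor $F^1 \colon I_3^1 \to \cA_3^1$ is finite \'etale of degree $2$, with the Galois $C_2$ swapping the two pointed genus $1$ tails. By \Cref{lem:chow-tor} one then has $\ch(\cA_3^1)\simeq \ch(I_3^1)^{C_2}$, so it is enough to prove that the composition $\ch(\Mtilde_3)\xrightarrow{\rho_3^{1,*}}\ch(\cA_3^1)\xrightarrow{F^{1,*}}\ch(I_3^1)$ surjects onto the $C_2$-invariant subring.

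Next, I would describe $I_3^1$ explicitly. Since length-$2$ curvilinear algebras have automorphism group $G_2\simeq \gm$, we have $\cF_2^c\simeq \cB\gm$, the stack $\cE_{2,1}^c$ of isomorphisms of such algebras is equivalent to $\cB\gm$, and the forgetful map $\cE_{2,1}^c\to \cF_2^c\times\cF_2^c$ is the diagonal. Combining this with the isomorphism $\Mtilde_{1,[2]}\simeq \Mtilde_{1,1}\times[\AA^1/\gm]$ from \Cref{prop:desc-str}, whose Chow ring is $\ZZ[1/6,t,s]$, we obtain
$$ \ch(I_3^1)\simeq \ZZ[1/6,t_1,s_1,t_2,s_2]\big/\bigl(\tilde\psi(t_1,s_1)-\tilde\psi(t_2,s_2)\bigr), $$
where $\tilde\psi(t,s)$ is the explicit polynomial representing the pullback of the universal character of $\cF_2^c$ to $\Mtilde_{1,[2]}$ (i.e.\ the ``stacky'' $\psi$-class of the marked section). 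A straightforward computation on the two open strata of $\Mtilde_{1,[2]}$, with and without a rational bubble, determines $\tilde\psi$ and shows that, modulo this relation, $\ch(I_3^1)^{C_2}$ is generated by $t_1+t_2$, $t_1t_2$, $s_1+s_2$, $s_1s_2$, and the mixed class $t_1s_1+t_2s_2$.

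Finally, I would compute the pullbacks of the generators of $\ch(\Mtilde_3)$, using the description of $\rho_3^1$ as the tacnodal pushout of the two pointed elliptic curves and the Hodge-sheaf bookkeeping of \Cref{prop:rho-5-0-surj}, combined with the conductor description of \Cref{lem:conductor} (so $J_b^\vee=\cO(2q_1+2q_2)$ on the normalization). The expected formulas are
$$ \rho_3^{1,*}(\delta_1)=s_1+s_2,\qquad \rho_3^{1,*}(\delta_{1,1})=s_1s_2,\qquad \rho_3^{1,*}(\delta_{1,1,1})=0, $$
while $\rho_3^{1,*}(\lambda_k)$ should produce the symmetric functions in $t_1,t_2$ together with the mixed term $t_1s_1+t_2s_2$, and $\rho_3^{1,*}(H)$ the fundamental class of the hyperelliptic divisor on $\cA_3^1$ (parametrising those tacnodal configurations on which a global involution swaps the two tails). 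The principal obstacle will be the detailed Hodge-bundle calculation for a separating tacnode: one must track both the genus $1$ contributions and the rational-bubble contributions carefully enough to witness the mixed generator $t_1s_1+t_2s_2$ as a genuine pullback, rather than a product $(t_1+t_2)(s_1+s_2)$ of already accounted-for classes; this is the step that will use the full strength of the tacnodal exact sequence and the explicit form of $\tilde\psi$.
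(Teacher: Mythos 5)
Your proposal is correct and follows essentially the same route as the paper: realize $\cA_3^1$ as the $C_2$-quotient of the $\gm$-torsor $I_3^1$ over $\Mtilde_{1,[2]}^{\times 2}$, identify the torsor class as $(t_1-s_1)-(t_2-s_2)$, and check that $\delta_1\mapsto s_1+s_2$, $\delta_{1,1}\mapsto s_1s_2$ and $\lambda_1\mapsto (t_1-s_1)-(t_1+t_2)$ already generate the invariant ring. Your closing worry about the mixed class $t_1s_1+t_2s_2$ is moot: after imposing the torsor relation $b:=t_1-s_1=t_2-s_2$, the invariant ring is freely generated by $t_1+t_2$, $t_1t_2$ and $b$, so only the three pullbacks above are needed and no Hodge-bundle computation beyond $\lambda_1$ is required.
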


\begin{proof}
	Thanks to \Cref{prop:descr-an-odd-i}, we can describe $\cA_3^1$ using a $C_2$-action on $I_3^1$, which is a $\gm$-torsor over the stack $\Mtilde_{1,[2]}\times \Mtilde_{1,[2]}$ where $\Mtilde_{1,[2]}\simeq \Mtilde_{1,1}\times [\AA^1/\gm]$. In fact, \Cref{lem:chow-tor} implies that 
	$$ \ch(\cA_3^1)\simeq \ch(I_3^1)^{\rm inv}.$$
	Because the pullback of the closed immersion 
	$$ \Mtilde_{1,1} \times \cB\gm \into \Mtilde_{1,1} \times [\AA^1/\gm]$$ 
	is an isomorphism at the level of Chow rings, it is enough to understand the $\gm$-torsor when restricted to $(\Mtilde_{1,1}\times \cB\gm)^{\times 2}$. Let us denote by $t_i$ (respectively $s_i$) the generator of the Chow ring of $\Mtilde_{1,1}$ (respectively $\cB\gm$) seen as the $i$-th factor of the product for $i=1,2$. Exactly as we have done in \Cref{prop:gener-rho-2}, we can describe the objects in this product as  $$\Big((E_1,e_1), (\PP(N_{e_1}\oplus N_{s_1}),0,\infty), (E_2,e_2), (\PP(N_{e_2}\oplus N_{s_2}),0,\infty)\Big).$$ We recall that $N_{e_i}$ is the normal bundle of the section $e_i$ and $N_{s_i}$ is the representation of $\gm$ (whose generator is $s_i$) with weight $1$ (for $i=1,2$). By construction, the first Chern class of $N_{e}$ is the $\psi$-class associated to an object $(E,e)$ in $\Mtilde_{1,1}$. 
	
	The $\gm$-torsor comes from identifying the two tangent spaces at $\infty$ of the two projective bundles. A computation shows that its class (namely the first Chern class of the line bundle associated to it) in the Chow ring of the product is of the form $t_1-s_1-t_2+s_2$. Therefore, if we denote by $b_i:=t_i-s_i$ for $i=1,2$, we have the following  description of the Chow ring of $I_3^1$:
	$$\ch(I_3^1)\simeq \ZZ[1/6,t_1,t_2,b_1,b_2]/(b_1-b_2).$$
	Furthermore, the $C_2$-action over $I_3^1$ translates into an action on the Chow ring  defined by the association 
	$$ (t_1,t_2,b_1,b_2) \mapsto (t_2,t_1,b_2,b_1)$$
	therefore it is easy to compute the ring of invariants. We have the following result:
	$$ \ch(\cA_3^1)\simeq \ZZ[1/6,d_1,d_2,b]$$
	where $d_1:=t_1+t_2$ and $d_2:=t_1t_2$. An easy computation shows that $\rho_{3}^{1,*}(\delta_1)=d_1-2b$ and $\rho_3^{1,*}(\delta_{1,1})=d_2-bd_1+b^2$. Finally, a computation identical to the one in the proof of \Cref{prop:rho-5-0-surj} for the $\lambda$-classes, shows us that $\rho_{3}^{1,*}(\lambda_1)=b-d_1$. The statement follows.
\end{proof}

Finally, we arrived at the end of this sequence of abstract computations.

\begin{proposition}\label{prop:rho-3-surj}
	The pullback of the morphism 
	$$\rho_3^{\rm ns}: \cA_3^{\rm ns} \longrightarrow \Mtilde_3$$
	is surjective.
\end{proposition}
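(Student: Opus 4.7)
The strategy mirrors the proofs of Propositions~\ref{prop:rho-5-0-surj} and~\ref{prop: rho-3-1-surj}, adapted to the non-separating tacnode. By Proposition~\ref{prop:descr-an-odd-ns}, the morphism $F^{\rm ns}:I_3^{\rm ns}\to\cA_3^{\rm ns}$ is finite \'etale of degree $2$, so Lemma~\ref{lem:chow-tor} identifies $\ch(\cA_3^{\rm ns})$ with the $C_2$-invariant subring of $\ch(I_3^{\rm ns})$, where $C_2$ acts by swapping the two preimages of the tacnode. It therefore suffices to write the pullbacks of the generators of $\ch(\Mtilde_3)$ to $\ch(I_3^{\rm ns})^{C_2\text{-inv}}$ and verify that they generate it.

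By construction $I_3^{\rm ns}\simeq\Mtilde_{1,2[2]}^r\times_{\cF_2^c\times\cF_2^c}\cE_{2,1}^c$, and the two-pointed analogue of Proposition~\ref{prop:desc-str} gives $\Mtilde_{1,2[2]}^r\simeq\Mtilde_{1,2}^r\times[\AA^1/\gm]^{\times 2}$. By the results of Appendix~A, $\cE_{2,1}^c\to\cF_2^c\times\cF_2^c$ is a $\gm$-torsor (up to additive factors which, by Proposition~2.3 of~\cite{MolVis}, are invisible to the Chow ring), so $I_3^{\rm ns}\to\Mtilde_{1,2[2]}^r$ is itself a $\gm$-torsor whose first Chern class $b$ measures the discrepancy between the two length-$2$ cotangent spaces at the marked points. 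I would then pull back along the zero sections of the two $[\AA^1/\gm]$ factors (which induce Chow-ring isomorphisms), and restrict $\Mtilde_{1,2}^r$ to a classifying-stack-of-tori substack coming from the cuspidal locus inside $\Mtilde_{1,1}$ (exactly as in the proof of Proposition~\ref{prop: rho-3-1-surj}, where $\Mtilde_{1,1}\simeq[\AA^2/\gm]$ is replaced by its closed point $\cB\gm$). After these reductions, the Chow ring becomes a polynomial ring in four generators: the two $\psi$-classes $\psi_1,\psi_2$ at the marked points, a Hodge-type class $t$ coming from the elliptic component, and the torsor class $b$; the $C_2$-invariant subring is then a polynomial ring in $\psi_1+\psi_2$, $\psi_1\psi_2$, $t$, and $b$.

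Finally I would compute the pullbacks of the seven generators of $\ch(\Mtilde_3)$. The three boundary classes vanish, $\rho_3^{{\rm ns},*}(\delta_1)=\rho_3^{{\rm ns},*}(\delta_{1,1})=\rho_3^{{\rm ns},*}(\delta_{1,1,1})=0$, since a non-separating tacnode is disjoint from the generic point of each $\Detilde_{\bullet}$. The $\lambda$-classes are computed through the long exact sequence on cohomology associated to
\[
0\longrightarrow\cO_C\longrightarrow\cO_{\widetilde C}\longrightarrow Q\longrightarrow 0,
\]
exactly as in Proposition~\ref{prop:rho-5-0-surj}, which expresses them as polynomials in $\psi_1+\psi_2$, $\psi_1\psi_2$, $t$, and $b$, with $b$ already appearing in $\rho_3^{{\rm ns},*}(\lambda_1)$. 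The pullback $\rho_3^{{\rm ns},*}(H)$ is the class of the substack where the local involution $y\mapsto -y$ of the $A_3$-singularity extends to the whole curve, a codimension-one condition inside the torsor cut out by an explicit linear combination of $b$ and $\psi_1+\psi_2$. The main obstacle is purely computational: to verify that the resulting four pullbacks $\rho_3^{{\rm ns},*}(\lambda_1)$, $\rho_3^{{\rm ns},*}(\lambda_2)$, $\rho_3^{{\rm ns},*}(\lambda_3)$, $\rho_3^{{\rm ns},*}(H)$ span the four-variable invariant polynomial ring, i.e.\ that the corresponding $4\times 4$ transition matrix is invertible after inverting $6$. This is a finite linear-algebra check, entirely parallel to the one concluding the proof of Proposition~\ref{prop: rho-3-1-surj}, and its success yields the surjectivity of $\rho_3^{{\rm ns},*}$.
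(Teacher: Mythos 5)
Your overall framework (reduce to the $C_2$-invariants of the Chow ring of the torsor $I_3^{\rm ns}$ over $\Mtilde_{1,2}\times[\AA^1/\gm]^{\times 2}$, then check that the pullbacks of the generators of $\ch(\Mtilde_3)$ span) is the same as the paper's, but two of your concrete claims are wrong, and one of them breaks the surjectivity argument.

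First, the target ring is not a polynomial ring in $\psi_1+\psi_2$, $\psi_1\psi_2$, $t$, $b$. The paper identifies $\Mtilde_{1,2}\simeq\Ctilde_{1,1}$ via the contraction morphism, whose Chow ring is $\ZZ[1/6,\lambda_1,\mu_1]/(\mu_1(\lambda_1+\mu_1))$; here $\mu_1$ is the class of the locus where the two marked points collide, and the two $\psi$-classes are not independent --- in fact $\psi_1=\psi_2$, because the swap autoequivalence of $\Mtilde_{1,2}$ is isomorphic to the identity by uniqueness of the elliptic involution. That equality is precisely what lets one pin down the torsor class as $s_1-s_2$ (so that after taking $C_2$-invariants one is left with a single class $s=s_1=s_2$). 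Your count of generators of the invariant ring is therefore off, and the ``$4\times 4$ transition matrix'' you propose to invert is not the right object.

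Second, and more seriously, your assertion that $\rho_3^{{\rm ns},*}(\delta_1)=0$ is false. A genus $3$ curve with a non-separating tacnode can perfectly well carry a separating node: when the two points of the genus $1$ normalization collide, stability forces a rational bubble, and gluing the two points into a tacnode on that bubble produces a separating node where the bubble meets the elliptic component. The paper computes $\rho_3^{{\rm ns},*}(\delta_1)=\mu_1$, and this is essential: the classes $c_1(\HH)$ and $H$ only account for $s$ and $\lambda_1$ modulo the ideal $(\mu_1,s)$ (the paper gets $\rho_3^{{\rm ns},*}(H)=-12\lambda_1$ modulo $(\mu_1,s)$ via $H=9\lambda_1-\delta_0-3\delta_1$ and $\delta_0=12\lambda_1$ on $\Mtilde_{1,1}$), so without $\delta_1\mapsto\mu_1$ the generator $\mu_1$ is not visibly in the image and your argument does not close. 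You should also not expect $H$ to be computed as a linear combination of $b$ and $\psi_1+\psi_2$ cutting the torsor; the paper instead restricts to the open locus $\Mtilde_{1,2}\setminus\Mtilde_{1,1}$ where the torsor is trivial and uses Esteves' formula for the hyperelliptic class.
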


\begin{proof}
	For this proof, we denote by $\lambda_i$ the Chern classes of the Hodge bundle of $\Mtilde_{1,2}$, while the Chern classes of the Hodge bundle of $\Mtilde_3$ is denoted by $c_i(\HH)$.
	
	Thanks to \Cref{prop:descr-an-odd-ns}, we have that the Chow ring of $\cA_3^{\rm ns}$ can be recovered by the Chow ring of the $\gm$-torsor $I_3^{\rm ns}$ over $\Mtilde_{1,2}\times [\AA^1/\gm]\times [\AA^1/\gm]$.
	
	First of all, we know that $\Mtilde_{1,2}\simeq \Ctilde_{1,1}$ thanks to \Cref{prop:contrac} and we know the Chow ring of $\Ctilde_{1,1}$ is isomorphic to 
	$$\ZZ[1/6,\lambda_1,\mu_1]/(\mu_1(\lambda_1+\mu_1));$$
	see Proposition 3.3 of \cite{DiLorPerVis}. It is important to remark that $\mu_1$ is the fundamental class of the locus in $\Ctilde_{1,1}$ parametrizing $(E,e_1,e_2)$ such that the two sections coincide.
	
	 We need to understand the class of the $\gm$-torsor $I_3^{\rm ns}$ over $\Mtilde_{1,2}\times [\AA^1/\gm] \times [\AA^1/\gm]$. As usual, we reduce to the closed substack $\Mtilde_{1,2} \times \cB\gm \times \cB\gm$.  If we denote by $s_1$ (respectively $s_2$) the generator of the Chow ring of the first $\cB\gm$ (respectively the second $\cB\gm$) in the product, we have that the same description we used in \Cref{prop: rho-3-1-surj} for the $\gm$-torsor applies here and therefore we only need to understand the description of the two $\psi$-classes in $\ch(\Mtilde_{1,2})$, namely $\psi_1$ and $\psi_2$. We claim that $\psi_1=\psi_2$. Consider the autoequivalence 
	$$ \Mtilde_{1,2} \longrightarrow \Mtilde_{1,2},$$ 
	which is defined by the association $(E,e_1,e_2) \mapsto (E,e_2,e_1)$ (and therefore acts on the Chow rings sending $\psi_1$ in $\psi_2$ and viceversa). It is easy to see that is isomorphic to the identity functor because of the unicity of the involution, see for instance \Cref{lem:genus1}.
	
	This implies that the class associated to the torsor $I_3^{\rm ns}$ is of the form $s_1-s_2$. Because the action of $C_2$ on $I_3^1$ translates into the involution 
	$$ (\lambda_1,\mu_1,s_1,s_2) \mapsto (\lambda_1,\mu_1,s_2,s_1)$$
	of the Chow ring, we finally have
	$$ \ch(\cA_3^{\rm ns})\simeq \ZZ[1/6, \lambda_1,\mu_1, s]/(\mu_1(\lambda_1+\mu_1))$$
	where $s:=s_1=s_2$. It is easy to see that $\rho_3^{\rm ns,*}(\delta_1)=\mu_1$. Moreover, the same ideas for the computations of the $\lambda$-classes used in \Cref{prop:rho-5-0-surj} gives us that $\rho_3^{\rm ns,*}(c_1(\HH))=-s$. Finally, it is enough to prove that $\rho_3^{\rm ns,*}(H)=-12\lambda_1$ modulo the ideal $(\mu_1,s)$, therefore we can restrict our computation to $\Mtilde_{1,2}\setminus \Mtilde_{1,1}\subset \Mtilde_{1,2}\times [\AA^1/\gm] \times [\AA^1/\gm]$. Notice that in this situation the $\gm$-torsor is trivial. Recall that we have the formula $H=9c_1(\HH)-\delta_0-3\delta_1$ by \cite{Est}. Therefore it follows that $\rho_3^{\rm ns,*}(H)=-\delta_0$. To compute $\delta_0$ in $\Mtilde_{1,2}\setminus \Mtilde_{1,1}$, we can consider the natural morphism 
	$$ \Ctilde_{1,1} \longrightarrow \Mtilde_{1,1}$$
    and use the fact that $\delta_0 = 12\lambda_1$ in $\ch(\Mtilde_{1,1})$.
\end{proof}

\Cref{lem:strata} implies that the image of $\rho_{\geq 2,*}$ in $\ch(\Mtilde_3)$ is generated by the following cycles:
\begin{itemize}
	\item the fundamental classes of $\Detilde_1^c$ and $\Detilde_{1,1}^c$;
	\item the fundamental classes of the images of $\rho_7$, $\rho_5^0$ and $\rho_3^1$, which are closed inside $\Mtilde_3$ because of \Cref{lem:sep-sing}; by abuse of notation, we denote these closed substack by $\cA_7$, $\cA_5^0$ and $\cA_3^1$ respectively;
	\item the fundamental classes of the closure of the images of $\rho_6$, $\rho_5^{\rm ns}$, $\rho_4$, $\rho_3^{\rm ns}$ and $\rho_2$; by abuse of notation, we denote these closed substack as $\cA_6$, $\cA_5$, $\cA_4$, $\cA_3$ and $\cA_2$ respectively.
\end{itemize}
 
\begin{remark}
	Notice that $\cA_6$, $\cA_4$ and $\cA_2$ are the stacks we previously denoted by $\widetilde{\cA}_{\geq 6}$, $\widetilde{\cA}_{\geq 4}$ and $\widetilde{\cA}_{\geq 2}$ respectively. Moreover, the stacks $\cA_5^{\rm ns}$ and $\cA_3^{\rm ns}$ are substacks of $\widetilde{\cA}_{\geq 5}$ and $\widetilde{\cA}_{\geq 3}$ respectively.
\end{remark}

\begin{corollary}\label{cor:relations}
	The Chow ring of $\Mbar_3$ is the quotient of the Chow ring of $\Mtilde_3$ by the fundamental classes of $\cA_7$, $\cA_6$, $\cA_5^0$, $\cA_5^{\rm ns}$, $\cA_4$, $\cA_3^1$, $\cA_3^{\rm ns}$, $\cA_2$, $\Detilde_1^c$ and $\Detilde_{1,1}^c$.
\end{corollary}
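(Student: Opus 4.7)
The plan is to combine the localization sequence for the open immersion $\Mbar_3 \subset \Mtilde_3$ with the stratum-by-stratum analysis carried out in this section. Since $\Mbar_3 = \Mtilde_3 \setminus \widetilde{\cA}_{\geq 2}$, the localization sequence gives
$$ \ch(\Mbar_3) \simeq \ch(\Mtilde_3)/\im(i_*)$$
where $i:\widetilde{\cA}_{\geq 2}\into \Mtilde_3$ is the closed immersion. So it suffices to identify a set of generators of $\im(i_*)$.

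First I would invoke \Cref{lem:strata} (with $\rho_{\geq 2}$ replacing $i$, which is justified because $\cA_{\geq 2} \to \widetilde{\cA}_{\geq 2}$ is finite and birational and hence surjective on Chow groups). This reduces the problem to exhibiting, for each $n=2,\dots,7$ (and each connected component in the odd cases), a set of generators of $\im(\rho_{n,*})$, pulling these back to classes on $\cA_{\geq n}$, and then pushing forward to $\Mtilde_3$.

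Next, for every stratum different from $\cA_2$, I would appeal to Propositions \ref{prop:rho-6-surj}--\ref{prop:rho-3-surj}, which establish that the pullback $\rho_n^*:\ch(\Mtilde_3)\to \ch(\cA_n)$ is surjective (respectively for the connected components in the odd cases). Granted this, the projection formula yields
$$ \rho_{n,*}(\alpha) = \rho_{n,*}(\rho_n^*(\beta)) = \rho_{n,*}(1)\cdot \beta$$
for any $\alpha = \rho_n^*(\beta)$, so $\im(\rho_{n,*})$ is the principal ideal generated by $\rho_{n,*}(1)$, i.e.\ the fundamental class of the image. The images of $\rho_7$, $\rho_5^{0}$ and $\rho_3^{1}$ are closed in $\Mtilde_3$ by \Cref{lem:sep-sing}, so their fundamental classes are literally $[\cA_7]$, $[\cA_5^0]$ and $[\cA_3^1]$; for $\rho_6$, $\rho_5^{\rm ns}$, $\rho_4$ and $\rho_3^{\rm ns}$ the fundamental class of the (scheme-theoretic) image differs from the fundamental class of the closure only by a positive integer, so the generated ideal is the same and can be represented by $[\cA_6]$, $[\cA_5^{\rm ns}]$, $[\cA_4]$, $[\cA_3^{\rm ns}]$.

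Finally, for $\cA_2$ the pullback $\rho_2^*$ is not surjective (the Picard rank jumps), so the argument above fails; this is the only genuinely non-formal case. Here I would apply \Cref{prop:gener-rho-2}, which exhibits the three generators $\rho_{2,*}(1)$, $\rho_{2,*}(s)$, $\rho_{2,*}(s\theta_1)$ of $\im(\rho_{2,*})$, and then identify them geometrically with the fundamental classes $[\cA_2]$, $[\Detilde_1^c]$ and $[\Detilde_{1,1}^c]$ as described in the remark following that proposition. Combining these three generators with the eight fundamental classes coming from the surjective strata produces exactly the list in the statement, concluding the proof.
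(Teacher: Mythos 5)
Your proposal is correct and follows essentially the same route as the paper: the corollary is exactly the combination of \Cref{lem:strata}, the surjectivity of $\rho_n^*$ for $n\geq 3$ (giving $\im\rho_{n,*}=(\rho_{n,*}(1))$ by the projection formula), and \Cref{prop:gener-rho-2} together with the remark identifying $\rho_{2,*}(s)$ and $\rho_{2,*}(s\theta_1)$ with $[\Detilde_1^c]$ and $[\Detilde_{1,1}^c]$. The only cosmetic difference is your remark about the image differing from its closure "by a positive integer"; since each $\rho_n$ is generically injective onto its image, the pushforward of $1$ is literally the fundamental class of the closure, which is how the paper phrases it.
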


\section{Explicit description of the relations}\label{sec:strategy}

We illustrate the strategy to compute the explicit description of the relations listed in \Cref{cor:relations}. Suppose we want to compute the fundamental class of a closed substack $X$ of $\Mtilde_3$. First of all, we need to compute the classes of the restrictions of $X$ on every stratum, namely 
$$ X\vert_{\Mtilde_3\setminus (\Htilde_3 \cup \Detilde_1)}, X\vert_{\Htilde_3\setminus \Detilde_1}, X\vert_{\Detilde_1\setminus \Detilde_{1,1}}, X\vert_{\Detilde_{1,1}\setminus \Detilde_{1,1,1}}, X\vert_{\Detilde_{1,1,1}}.$$
Once we have the explicit descriptions, we need to patch them together. Let us show how to do it for the first two strata, i.e. $\Mtilde_3\setminus(\Htilde_3 \cup \Detilde_1)$ and $\Htilde_3 \setminus \Detilde_1$. Suppose we have the description of $X^q:=X\vert_{\Mtilde_3 \setminus (\Detilde_{1}\cup \Htilde_3)}$ and of $X^h:=X\vert_{\Htilde_3 \setminus \Detilde_1}$ in their respective Chow rings. Then we can compute $X\vert_{\Mtilde_{3}\setminus \Detilde_1}$ using \Cref{lem:gluing}. Suppose we are given
$$ X\vert_{\Mtilde_3\setminus \Detilde_1}=p + Hq  \in \ch(\Mtilde_3\setminus \Detilde_1)$$
an expression of $X$, we need to compute the two polynomials $p$ and $q$. If we restrict $X$ to $\Mtilde_3\setminus (\Htilde_3\cup \Detilde_1)$, we get that the polynomial $p$ can be chosen to be just any lifting of $X^q$. Now if we restrict to $\Htilde_3 \setminus \Detilde_1$ we get that
$$ i_h^*p + c_{\rm top}(N_{\cH|\cM})q = X^h$$
where as usual $i_h:\Htilde_3 \setminus \Detilde_1 \into \Mtilde_3 \setminus \Detilde_1$ is the closed immersion of the hyperelliptic stratum and $N_{\cH|\cM}$ is the normal bundle of this immersion. Because of the commutativity of the diagram in \Cref{lem:gluing}, we have that $X^h-i_h^*p$ is in the ideal in $\ch(\Htilde_3 \setminus \Detilde_1)$ generated by $c_{\rm top}(N_{\cH|\cM})$. However the top Chern class is a non-zero divisor, thus we have that we can choose $q$ to be just a lifting of $\widetilde{q}$, where $\widetilde{q}$ is an element in $\ch(\Htilde_3 \setminus \Detilde_1)$ such that $X^h-i_h^*p=c_{\rm top}(N_{\cH|\cM})\widetilde{q}$. Although we have done a lot of choices, it is easy to see that the presentation of $X\vert_{\Mtilde_3 \setminus \Detilde_1}$ is unique in the Chow ring of $\Mtilde_3\setminus \Detilde_1$, i.e. two different presentations differ by a relation in the Chow ring.

We show how to apply this strategy firstly for the computations of the fundamental classes $\delta_{1}^c$ and $\delta_{1,1}^c$ of $\Detilde_1^c$ and $\Detilde_{1,1}^c$ respectively.
\begin{proposition}
	We have the following description:
	$$ \delta_1^c = 6\big(\delta_1(H+\lambda_1+3\delta_1)^2+4\delta_{1,1}(\lambda_1-H-2\delta_1)+12\delta_{1,1,1}\big)$$
	and 
	$$ \delta_{1,1}^c = 24(\delta_{1,1}(\delta_1+\lambda_1)^2+\delta_{1,1,1}\delta_1)$$
	in the Chow ring of $\Mtilde_3$. 
\end{proposition}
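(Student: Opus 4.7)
The plan is to apply the stratum-by-stratum verification procedure outlined at the beginning of this section. Since $\Detilde_1^c\subset \Detilde_1$ and $\Detilde_{1,1}^c\subset \Detilde_{1,1}$, both classes restrict to zero on $\Mtilde_3\setminus \Detilde_1$ and on $\Htilde_3\setminus \Detilde_1$, so the ``open'' contribution $p$ in the gluing lemma is already $0$. The problem therefore reduces to computing the pullbacks of $[\Detilde_1^c]$ and $[\Detilde_{1,1}^c]$ to the three smooth locally closed strata $\Detilde_1\setminus \Detilde_{1,1}$, $\Detilde_{1,1}\setminus \Detilde_{1,1,1}$, and $\Detilde_{1,1,1}$, and then invoking Proposition~\ref{prop:desc-gluing} (iteratively along $\Detilde_{1,1,1}\subset \Detilde_{1,1}\subset \Detilde_1\subset \Mtilde_3$) to produce a unique lift.

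The key geometric input is the class of the cuspidal locus inside $\Mtilde_{1,1}$. Using the presentation $\Mtilde_{1,1}\simeq [\AA^2/\gm]$ via the Weierstrass equation $y^2=x^3+ax+b$, where $\gm$ acts with weights $(4,6)$, the cuspidal stratum $\cB\gm$ is the zero section of the rank-two bundle, so its class in $\ch^2(\Mtilde_{1,1})$ equals $24\,t^2$, with $t=\lambda_1$ the Hodge/$\psi$ generator. On $\Detilde_1\setminus \Detilde_{1,1}\simeq \Mtilde_{1,1}\times(\Mtilde_{2,1}\setminus \ThTilde_1)$, the scheme-theoretic intersection $\Detilde_1^c\cap (\Detilde_1\setminus\Detilde_{1,1})$ is cut out by the cuspidal locus in the $\Mtilde_{1,1}$ factor. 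Applying the self-intersection formula $\iota^*\iota_*(\alpha)=c_1(N)\cdot \alpha$ for the codimension-one embedding $\iota\colon \Detilde_1\setminus\Detilde_{1,1}\hookrightarrow \Mtilde_3\setminus\Detilde_{1,1}$ with $c_1(N_\iota)=\delta_1|_{\text{stratum}}=t+t_1$ (Proposition~\ref{prop:relation-detilde-1}), one gets $\iota^*[\Detilde_1^c]=24\,t^2(t+t_1)$. Solving for $t$ using the three relations $\lambda_1=-t-t_0-t_1$, $H=t_0-2t_1$, $\delta_1=t+t_1$ yields $t=(H+\lambda_1+3\delta_1)/2$, and the leading term $6\,\delta_1(H+\lambda_1+3\delta_1)^2$ of $\delta_1^c$ is recovered.

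The correction terms supported on $\Detilde_{1,1}$ and $\Detilde_{1,1,1}$ are computed analogously. On $\Detilde_{1,1}\setminus\Detilde_{1,1,1}\simeq [(\Mtilde_{1,2}\setminus \Mtilde_{1,1})\times \Mtilde_{1,1}^{\times 2}]/C_2$, both $\Detilde_1^c$ and $\Detilde_{1,1}^c$ meet the stratum in the locus where one of the two outer elliptic tails is cuspidal; the $C_2$-invariant naive class is a multiple of $t_1^2+t_2^2=c_1^2-2c_2$. Using the excess-intersection/self-intersection identities for the codimension-two embedding (with $c_1(N)=2t+c_1$ and $c_2(N)=c_2+tc_1+t^2$ from Proposition~\ref{prop:relat-detilde-1-1}) and the explicit dictionary $\lambda_1=-t-c_1$, $\delta_1=2t+c_1$, $H=-3t$, one identifies the coefficient of $\delta_{1,1}$ in each gluing expression. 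On the deepest stratum $\Detilde_{1,1,1}=\Mtilde_{1,1}^{\times 3}/S_3$, the formulas from \S\ref{sec:detilde-1-1-1} together with the $S_3$-symmetric class of ``at least one cuspidal tail'' determine the $\delta_{1,1,1}$ coefficient.

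The main obstacle is the excess-intersection bookkeeping, since neither $\Detilde_1^c$ nor $\Detilde_{1,1}^c$ is transverse to the deeper strata (they all lie inside $\Detilde_1$), so each pullback acquires correction factors coming from the Chern classes of the normal bundles of the stratum embeddings. These must be tracked carefully as one descends through the tower and then reassembled via the gluing procedure. Once each stratum restriction of the proposed polynomial is checked to agree with the geometric restriction computed above, the uniqueness statement in Proposition~\ref{prop:desc-gluing} produces the asserted formulas. The computation for $\delta_{1,1}^c$ is shorter because its restriction to $\Detilde_1\setminus\Detilde_{1,1}$ already vanishes, so only the last two strata contribute.
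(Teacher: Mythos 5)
Your overall route is the same as the paper's: express the class as $\delta_1p_2+\delta_{1,1}p_1+\delta_{1,1,1}p_0$, restrict to the strata of the $\Detilde_\bullet$ tower, and reassemble with the gluing procedure. Your computation on $\Detilde_1\setminus\Detilde_{1,1}$ is exactly the paper's: the self-intersection formula gives $24t^2(t+t_1)$, and $H+\lambda_1+3\delta_1=2t$ recovers the leading term $6\delta_1(H+\lambda_1+3\delta_1)^2$.

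There is, however, a genuine gap in your treatment of $\Detilde_{1,1}\setminus\Detilde_{1,1,1}$. You describe the restriction of $\delta_1^c$ there as ``the $C_2$-invariant naive class, a multiple of $t_1^2+t_2^2=c_1^2-2c_2$,'' corrected by the Chern classes $c_1(N_{i_{1,1}})$, $c_2(N_{i_{1,1}})$ of the stratum embedding. This cannot work: $\delta_1^c$ is not a pushforward from $\Detilde_{1,1}$, so no self-intersection identity for $i_{1,1}$ applies, and the paper explicitly observes that the naive computation fails --- $(\delta_1^c-\delta_1p_2)\vert_{\Detilde_{1,1}\setminus\Detilde_{1,1,1}}$ built from the naive class is \emph{not} divisible by $c_2(N_{i_{1,1}})=c_2+tc_1+t^2$. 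The correct tool is the excess-intersection formula for the cartesian square formed by the resolution $\rho_2^c:(\Mtilde_{2,1}\setminus\ThTilde_2)\times\cB\gm\to\Mtilde_3\setminus\Detilde_{1,1,1}$ of $\Detilde_1^c$ and the embedding $i_{1,1}$; the fiber product is $(\Mtilde_{1,2}\setminus\Mtilde_{1,1})\times\Mtilde_{1,1}\times\cB\gm$ and the rank-one excess bundle $\rho_{1,1}^*N_{i_{1,1}}/N_{\mu_1\times\id}$ has class $t+t_2$, which is \emph{asymmetric}: it only sees the tail being pinched. Pushing forward through the $C_2$-torsor gives $24\bigl(t(c_1^2-2c_2)+c_1^3-3c_1c_2\bigr)$, whose second summand $c_1(c_1^2-3c_2)$ is not a multiple of $c_1^2-2c_2$. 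So any recipe of the form (symmetric naive class)$\times$(single excess factor) produces the wrong restriction and hence the wrong coefficient $p_1=24(\lambda_1-H-2\delta_1)$. The same issue recurs, with the normal bundle $N_{\mu_2}$ of $\ThTilde_2\into\Mtilde_{2,1}$ entering the excess class, in the restriction to $\Detilde_{1,1,1}$. You correctly flag ``excess-intersection bookkeeping'' as the obstacle, but the missing ingredient is the normal bundle of the intersection locus inside the \emph{resolution} of $\Detilde_1^c$ (not just the normal bundle of the stratum), and without it the argument does not close.
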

\begin{proof}
First of all, we have that $\Detilde_1^c\subset \Detilde_1$, therefore the generic expression of the class $\delta_1^c$ is of the form 
$$ \delta_1 p_2 + \delta_{1,1} p_1 + \delta_{1,1,1} p_0$$ 
where $p_0,p_1,p_2$ are homogeneous polynomial in $\ZZ[1/6,\lambda_1,\lambda_2,\lambda_3, \delta_1, \delta_{1,1},\delta_{1,1,1}, H]$ of degree respectively $0,1,2$. 
We start by restricting $\delta_1^c$ to $\Mtilde_3\setminus \Detilde_{1,1}$. Here we have the sequence of embeddings 
$$ \Detilde_1^c\cap (\Detilde_1 \setminus \Detilde_{1,1}) \into (\Detilde_1 \setminus \Detilde_{1,1}) \into \Mtilde_3 \setminus \Detilde_{1,1}$$ 
which implies that $\delta_1^c$ restricted to $\ch(\Detilde_{1}\setminus \Detilde_{1,1})$ is equal $24t^2(t+t_1)$, where $24t^2$ is the fundamental class of the closed embedding $\Detilde_1^c \into \Detilde_{1}$ (restricted to the open $\Detilde_1 \setminus \Detilde_{1,1}$) while $(t+t_1)$ is the normal bundle of the closed embedding $i_{1}:\Detilde_1 \setminus \Detilde_{1,1} \into \Mtilde_3 \setminus \Detilde_{1,1}$. Because $t+t_1$ is not a zero divisor in the Chow ring, we have that $i_1^*(p_2)=24t^2$ which implies $p_2=6(H+\lambda_1+3\delta_1)^2$.

Now we have to compute the restriction of $\delta_1^c$ to $\Detilde_{1,1}\setminus \Detilde_{1,1,1}$. This is not trivial, because $\Detilde_1^c$ is contained in $\Detilde_1$ but the closed immersion $\Detilde_{1,1} \setminus \Detilde_{1,1,1} \into \Detilde_{1}\setminus \Detilde_{1,1,1}$ is not regular. As a matter of fact, one can prove that doing the naive computation does not work, i.e. the difference $\delta_1^c-\delta_1p_2$ restricted to $\Detilde_{1,1}\setminus \Detilde_{1,1,1}$ is not divisible by the top Chern class of the normal bundle of $i_{1,1}:\Detilde_{1,1}\setminus \Detilde_{1,1,1}\into \Mtilde_3 \setminus \Detilde_{1,1,1}$.  To do it properly, we can consider the following cartesian diagram
$$
\begin{tikzcd}
	{(\Mtilde_{2,1}\setminus \ThTilde_2)\times \cB\gm} \arrow[r, "\rho_2^c"]                                                          & {\Mtilde_3 \setminus \Detilde_{1,1,1}}                                  \\
	{(\Mtilde_{1,2}\setminus \Mtilde_{1,1})\times \Mtilde_{1,1}\times \cB\gm} \arrow[u, hook, "\mu_1\times \id"] \arrow[r, "\rho_{1,1}"] & {\Detilde_{1,1}\setminus \Detilde_{1,1,1}} \arrow[u, "{i_{1,1}}", hook]
\end{tikzcd}
$$
where 
$$\mu_1:(\Mtilde_{1,2}\setminus \Mtilde_{1,1})\times \Mtilde_{1,1}\simeq (\ThTilde_1\setminus \ThTilde_2) \into \Ctilde_2\setminus \ThTilde_2 \simeq \Mtilde_{2,1} \setminus \ThTilde_2$$
is described in the proof of \Cref{lem:chow-ring-C2} and $\rho_2^c$ is the restriction of $\rho_2$ to the closed substack $\Detilde_1^c$. Notice that $\mu_1\times \id$ is a regular embedding of codimension $1$ whereas $i_{1,1}$ is regular of codimension $2$. Excess intersection theory implies that 
$$ \delta_1^c=\rho_{2,*}^c(1)=\rho_{(1,1),*}(c_1(\rho_{1,1}^*N_{i_{1,1}}/N_{\mu_1\times \id}));$$
the normal bundle $N_{\mu_1}$ was described in Proposition 4.9 of \cite{DiLorPerVis} while $N_{i_{1,1}}$ was described in \Cref{prop:relat-detilde-1-1} (see also the proof of \Cref{lem:chow-ring-C2}). A computation shows that 
$$ c_1(\rho_{1,1}^*N_{i_{1,1}}/N_{\mu\times \id})=(t+t_2)$$
where $t$ is the generator of $\ch(\Mtilde_{1,2}\setminus \Mtilde_{1,1})$ while $t_2$ is the generator of $\ch(\cB\gm)$. Finally, we need to compute $\rho_{(1,1),*}(t+t_2)$. This can be done by noticing that $\rho_{1,1}$ factors through the morphism described in \Cref{lem:detilde-1-1}, i.e. the diagram 
$$ 
\begin{tikzcd}
	{(\Mtilde_{1,2}\setminus \Mtilde_{1,1})\times \Mtilde_{1,1}\times \cB\gm} \arrow[r, "\tilde{\rho}_{1,1}"] \arrow[rd, "\rho_{1,1}"] & {(\Mtilde_{1,2}\setminus \Mtilde_{1,1})\times \Mtilde_{1,1} \times \Mtilde_{1,1}} \arrow[d, "\pi_2"] \\
	& {\Detilde_{1,1}\setminus \Detilde_{1,1,1}}                                                 
\end{tikzcd}
$$
is commutative, where $\tilde{\rho}_{1,1}$ is induced by the zero section $\cB\gm \into [\AA^2/\gm]\simeq \Mtilde_{1,1}$. A simple computation using the fact that $\pi_2$ is a $C_2$-torsor gives us $$\delta_1^c\vert_{\Detilde_{1,1}\setminus \Detilde_{1,1,1}}=24(c_1t^2-2c_2t+c_1^3-3c_1c_2).$$
We can now divide $(\delta_1^c-\delta_1p_2)\vert_{\Detilde_{1,1}\setminus \Detilde_{1,1,1}}$ by $c_2+tc_1+t^2$, which is the $2$nd Chern class of the normal bundle of $i_{1,1}$, and get $i_{1,1}^*p_1=24(-2t-3c_1)$. Therefore we have $p_1=24(\lambda_1-H-2\delta_1)$.

Finally, we have to restrict $\delta_1^c$ to $\Detilde_{1,1,1}$. Again we can consider the following diagram
$$
\begin{tikzcd}
	{\Mtilde_{1,1}\times \Mtilde_{1,1}\times \cB\gm} \arrow[r, "c_2\times \id"] \arrow[d, "\tilde{\rho}_2", hook] & \ThTilde_2 \times \cB\gm \arrow[r, "\mu_2\times \id", hook] \arrow[d, "\alpha"] & {\Mtilde_{2,1} \times \cB\gm} \arrow[d] \\
	{\Mtilde_{1,1}\times \Mtilde_{1,1}\times \Mtilde_{1,1}} \arrow[r, "c_6"]                            & {\Detilde_{1,1,1}} \arrow[r, "{i_{1,1,1}}", hook]                     & \Mtilde_3                              
\end{tikzcd}
$$
where the right square is cartesian, the morphism $\mu_2:\ThTilde_2 \into \Ctilde_2\simeq \Mtilde_{2,1}$ is the closed immersion described in Section 4 of \cite{DiLorPerVis} (see also the proof of \Cref{lem:chow-ring-C2}), the morphism $c_2:\Mtilde_{1,1}\times \Mtilde_{1,1}\rightarrow \Detilde_1\simeq \ThTilde_2$ is the gluing morphism (see proof of Lemma 4.8 of \cite{DiLorPerVis}) and $c_6$ is the morphism described in \Cref{lem:descr-delta-1-1-1}. 
Excess intersection theory implies that 
$$\delta_1^c\vert_{\Detilde_{1,1,1}}= \alpha_*(N_{\mu_2});$$ 
moreover we have that
$$N_{\mu_2}= \frac{(c_2\times \id)_*\tilde{\rho}_2^*(t_1+t_2)}{2}$$ 
where $t_i$ is the generator of the $i$-th factor of the product $\Mtilde_{1,1}^{\times 3}$ for $i=1,2,3$. Therefore
$$\delta_{1}^c\vert_{\Detilde_{1,1,1}}=c_{6,*}(12t_3^2(t_1+t_2))=24(c_1c_2-3c_3)$$
in the Chow ring of $\Detilde_{1,1,1}$.
This leads finally to the description. The same procedure can be used to compute the class of $\delta_{1,1}^c$.
	
\end{proof}

\begin{remark}
The relation $\delta_1^c$ gives us that we do not need the generator $\delta_{1,1,1}$  in $\ch(\Mbar_3)$.
\end{remark}

\subsection*{The fundamental class of $\cA_5^0$ and $\cA_3^1$}

Now we concentrate on describing two of the strata of separating singularities, namely $\cA_{3}^1$ and $\cA_5^1$. Let us start with $\cA_3^1$.

\begin{proposition}
	We have the following description 
	$$ [\cA_3^1]=\frac{1}{2}(H+\lambda_1+\delta_1)(3H+\lambda_1+\delta_1) $$
	in the Chow ring of $\Mtilde_3$.
\end{proposition}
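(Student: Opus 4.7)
The proof proceeds by the stratum-by-stratum strategy described at the start of the section. The central geometric observation is that $\cA_3^1 \subseteq \Htilde_3$: an object of $\cA_3^1$ is the union of two pointed elliptic curves $(E_1, e_1),(E_2, e_2)$ glued at a separating tacnode, and the canonical hyperelliptic involution $\sigma_i$ on each $(E_i, e_i)$ fixes $e_i$. By \Cref{prop:descr-inv} applied to $r = 3$, an $A_3$-singularity admits an involution of type $(b_1)$ acting locally as $x\mapsto -x$, $y\mapsto y$, and this is precisely how $\sigma_1,\sigma_2$ extend compatibly across the tacnode. A local check (as in the corollary after \Cref{prop:descr-inv}) shows that the invariant subalgebra at the tacnode is a node, so the quotient $C/\sigma$ is a chain of two rational curves, confirming that $C$ is hyperelliptic. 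Consequently the restriction of $[\cA_3^1]$ to the open stratum $\Mtilde_3 \setminus (\Htilde_3 \cup \Detilde_1)$ vanishes, and we may seek a representative of $[\cA_3^1]$ as an element of the ideal generated by $H$, $\delta_1$, $\delta_{1,1}$, $\delta_{1,1,1}$.

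The main computation is the restriction to $\Htilde_3 \setminus \Detilde_1$. Using the presentation of \Cref{sec:H3tilde} as an open subset of the vector bundle $\VV(\pi_* \cL^{\otimes -4})$ over $\cP_3'$, the locus of separating tacnodes of type $(1,1)$ corresponds to sections $i^\vee \in \H^0(Z, \cL^{\otimes -2})$ having a root of exact multiplicity $4$ at a smooth point of the base twisted curve $Z$; this is a degeneracy locus for the $4$-jet evaluation map of $\cL^{\otimes -2}$. I would compute its class by a Porteous-type formula involving the principal-part bundles, express the answer in the generators $\xi_1,\lambda_1,\lambda_2$ of $\ch(\Htilde_3\setminus\Detilde_1)$ from \Cref{cor:chow-hyper}, and divide by the first Chern class $(2\xi_1 - \lambda_1)/3$ of the normal bundle $N_{\cH|\cM}$ (\Cref{cor:norm-hyper}) to recover the coefficient of $H$. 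The factor $\tfrac12$ reflects the generic $C_2$-gerbe structure $\cA_3^1 \simeq I_3^1/C_2$ from \Cref{prop: rho-3-1-surj}, which corresponds to swapping the two elliptic components.

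For the $\delta$-contributions I would follow the pattern of the proof of $[\Detilde_1^c]$ above. On $\Detilde_1\setminus \Detilde_{1,1}$, the intersection with $\cA_3^1$ is pulled back to the product $\Mtilde_{1,1}\times (\Mtilde_{2,1}\setminus\ThTilde_1)$: it parametrizes degenerations where the genus-$2$ component has acquired a further tacnode at the section, a divisor whose class can be written in the generators $t, t_0, t_1$ of \Cref{prop:descr-detilde-1}. Division by the normal-bundle factor $t+t_1$ produces the $\delta_1$-coefficient. On $\Detilde_{1,1}\setminus\Detilde_{1,1,1}$ the embedding is not regular, so one uses excess intersection through the factorization of \Cref{lem:detilde-1-1} by $\pi_2$, with normal-bundle data from \Cref{prop:relat-detilde-1-1}; restriction to $\Detilde_{1,1,1}$ is analogous, using the $S_3$-torsor $c_6$. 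Patching the four restrictions via the gluing procedure of \Cref{prop:desc-gluing} and simplifying modulo the relations on the deeper $\Detilde$-strata yields the stated expression.

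The principal obstacle is the Porteous computation on $\Htilde_3\setminus\Detilde_1$: one must identify the correct $4$-jet degeneracy bundle, carefully separate the contribution from $\Xi_1$ (where a root of $i^\vee$ could coincide with a node of $Z$, producing a tacnode that is not of "smooth-point" type and hence does not lie in $\cA_3^1$), and then translate the resulting Chern-class polynomial back to the $(H,\lambda_1,\delta_1)$-basis using \Cref{lem:lambda-class-H}. Once this step is handled, the remaining gluing is routine bookkeeping.
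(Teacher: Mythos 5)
Your overall architecture (restriction to each stratum, then gluing via \Cref{prop:desc-gluing}) matches the paper, and your opening observation that $\cA_3^1$ lands inside the hyperelliptic locus with nodal genus-$0$ quotient is exactly the paper's first step. But the central computation on $\Htilde_3\setminus\Detilde_1$ is based on a wrong identification of the locus, and this is a genuine gap. A separating tacnode joining two genus-$1$ components lies over the \emph{node} of the quotient curve $Z$: this is case (n2) of \Cref{prop:description-quotient}, and the remark following it shows that every singularity of $C$ sitting over a node of $Z$ is separating. Conversely, a root of $i^{\vee}$ of multiplicity $4$ at a \emph{smooth} point of $Z$ produces a tacnode whose partial normalization is still connected (for $Z=\PP^1$ it is the double cover branched at the four remaining simple roots, a connected genus-$1$ curve), i.e.\ a point of $\cA_3^{\rm ns}$, not of $\cA_3^1$. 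You have this exactly backwards: you propose a Porteous computation for the $4$-jet degeneracy locus at smooth points and explicitly discard the tacnodes over the node of $Z$ as ``not lying in $\cA_3^1$.'' Carried out, your computation would produce (the hyperelliptic part of) $[\cA_3^{\rm ns}]$ and would assign the class $0$ to the actual separating locus.

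Once the locus is identified correctly, no Porteous formula is needed: $\cA_3^1\setminus\Detilde_1$ sits inside $\Xi_1\setminus\Detilde_1$ as the divisor where the branch section vanishes at the node, so its class in $\Htilde_3\setminus\Detilde_1$ is the product $(-2s)\cdot[\Xi_1]=(-2s)(-c_1)$ of two divisor classes already computed in \Cref{sec:H3tilde}, and excess intersection along $\cA_3^1\subset\Xi_1\subset\Htilde_3\subset\Mtilde_3$ multiplies this by $c_1(N_{\cH|\cM})=(2\xi_1-\lambda_1)/3$. Two smaller points: the factor $\tfrac12$ in the final formula is not explained by the $C_2$-torsor $I_3^1\to\cA_3^1$ (that torsor plays no role in the class computation; the coefficient simply falls out of rewriting $\frac{2}{9}\xi_1(\xi_1+\lambda_1)(2\xi_1-\lambda_1)$ and the other restrictions in the $(H,\lambda_1,\delta_1)$-generators); and on $\Detilde_1\setminus\Detilde_{1,1}$ the relevant locus is where the marked point of the genus-$2$ component sits on an almost-bridge, cut out by $s=a_5=a_4=0$ in the coordinates of \Cref{cor:mtilde_21}, which is more specific than the description you give. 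The $\delta$-strata bookkeeping you outline is otherwise the right procedure.
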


\begin{proof}
	 A generic object in $\cA_3^1$ can be described as two genus $1$ curves intersecting in a separating tacnode. We claim that the morphism $\cA_3^1 \rightarrow \Mtilde_3$ (which is proper thanks to \Cref{lem:sep-sing}) factors thorough $\Xi_1\subset\Htilde_3 \into \Mtilde_3$. In fact, given an element $((E_1,e_1),(E_2,e_2),\phi)$ in $\cA_3^1$ with $E_1$ and $E_2$ smooth genus $1$ curves, we can consider the hyperelliptic involution of $E_1$ (respectively $E_2$) induced by the complete linear sistem of $\cO(2e_1)$ (respectively $\cO(2e_2)$). It is easy to see that their differentials commute with the isomorphism $\phi$ and therefore we get an involution of the genus $3$ curve whose quotient is a genus $0$ curve with one node. Because we have considered a generic element and the hyperelliptic locus is closed, we get the claim. In particular, $\cA_3^1\vert_{\Mtilde_3 \setminus (\Htilde_3 \cup \Detilde_1)}$ is zero. 
	
	Consider now the description of $\Htilde_3\setminus \Detilde_1$. Because $\cA_3^1\setminus \Detilde_1 \into \Xi_1\setminus \Detilde_1 \into \Htilde_3 \setminus \Detilde_1$, we have that excess intersection theory gives us the equalities
	$$ [\cA_3^1]\vert_{\Htilde_3 \setminus \Detilde_1} = c_1(N_{\cH|\cM})[a_3]=\frac{2\xi_1-\lambda_1}{3}[a_3]$$ 
	where $a_3$ is the class of $\cA_3^1$ as a codimension $2$ closed substack in $\Htilde_3\setminus \Detilde_1$. Suppose we have an element $(Z/S,L,f)$ in $\Htilde_3 \setminus \Detilde_1$; first of all we have that $\cA_3^1\subset \Xi_1$, therefore $Z/S \in \cM_0^{1}$. Furthermore, because we have a separating tacnode between two genus $1$ curve, it is clear that the nodal section $n$ in $Z$ has to be in the branching locus, or equivalently $f(n)=0$. The converse is also true thanks to \Cref{prop:description-quotient}. Due to the description of $\Xi_1$, we have that 
	$$ a_3 = (-2s)[\Xi_1] = (-2s)(-c_1) $$
	and therefore using $\lambda$-classes as generators
	$$ [\cA_3^1]\vert_{\Htilde_3 \setminus \Detilde_1}=\frac{1}{3}a_3 (2\xi_1-\lambda_1) = \frac{2}{9}(\xi_1+\lambda_1)(2\xi_1-\lambda_1)\xi_1.$$ 
	
	Let us focus on the restriction to $\Detilde_1 \setminus \Detilde_{1,1} \simeq (\Mtilde_{2,1}\setminus \ThTilde_1) \times \Mtilde_{1,1}$. It is easy to see that the only geometric objects that are in $\cA_3^1$ are of the form $((C,p),(E,e))$ where $p$ lies in an almost-bridge of the genus $2$ curve $p$, i.e. $p$ is a smooth point in a projective line that intersects the rest of the curve in a tacnode. Recall that we have an open immersion
	$$\Mtilde_{2,1}\setminus \ThTilde_1 \simeq \Ctilde_{2}\setminus \ThTilde_1$$
	described in \Cref{prop:contrac}. Through this identification, $(C,p)$ corresponds to a pair $(C',p')$ where $C'$ is an ($A_r$-)stable genus $2$ curve and $p'$ is a cuspidal point. Therefore it is easy to see that in the notation of \Cref{cor:mtilde_21}, we have that the fundamental class of the separating tacnodes is described by the equations $s=a_5=a_4=0$.  We get the following expression
	$$ [\cA_3^1]\vert_{\Detilde_{1}\setminus\Detilde_{1,1}} = -2t_1(t_0-2t_1)(t_0-3t_1) \in \ch(\Detilde_1 \setminus \Detilde_{1,1}).$$ 
	
	The same idea works for $\Detilde_{1,1}\setminus \Detilde_{1,1,1}$ (using \Cref{lem:mtilde-12}) and gives us the following description 
	$$ [\cA_3^1]\vert_{\Detilde_{1,1}\setminus \Detilde_{1,1,1}}= -24t^3 \in \ch(\Detilde_{1,1} \setminus \Detilde_{1,1,1}).$$
	
	Finally, it is clear that $\cA_3^1 \cap \Detilde_{1,1,1} = \emptyset$. 
\end{proof}

Now we focus on the fundamental class of $\cA_5^0$. A generic object in $\cA_5^0$ can be described as a genus $1$ curve and a genus $0$ curve intersecting in a $A_5$-singularity. Notice that this implies that $\cA_5^0 \cap \Htilde_3=\emptyset$ because the only possible involution of an $A_5$-singularity has to exchange the two irreducible components (see \Cref{prop:description-quotient}). In the same way, it is easy to prove that $\cA_5^0 \cap \Detilde_{1,1} = \emptyset$. 

The intersection of $\cA_5^0$ with $\Detilde_{1}$ is clearly transversal, therefore
$$[\cA_5^0]\vert_{\Detilde_{1}\setminus \Detilde_{1,1}}=[\cA_5^0 \cap \Detilde_1].$$

\begin{lemma}\label{lem:a-5-0}
	We have the following equality
	$$ [\cA_5^0]\vert_{\Detilde_{1}\setminus \Detilde_{1,1}} = 72(t_0+t_1)^3t_0t_1 - 384(t_0+t_1)(t_0t_1)^2$$
	in the Chow ring of $\Detilde_1 \setminus \Detilde_{1,1}$.
\end{lemma}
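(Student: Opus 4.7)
\smallskip

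The plan is to identify $\cA_5^0 \cap (\Detilde_1\setminus\Detilde_{1,1})$ geometrically, reduce the computation to the genus $2$ factor, and then carry out the computation of an explicit equivariant class inside $[\widetilde{\AA}(6)/B_2]$. First, I would argue as in the proof for $\cA_3^1$ that the intersection is transverse. Indeed, $\dim\cA_5^0 = \dim(\Mtilde_{1,1}\times[\AA^1/\gm]) = 1$, so $\cA_5^0$ has codimension $5$ in $\Mtilde_3$; its intersection with the codimension-$1$ stratum $\Detilde_1\setminus\Detilde_{1,1}$ therefore should have dimension $0$, and the zero section $\Mtilde_{1,1}\times\cB\gm \subset \Mtilde_{1,1}\times[\AA^1/\gm]=\cA_5^0$ (which corresponds to acquiring a rational bridge between the elliptic tail and the $\PP^1$ glued via the $A_5$-singularity) does have dimension $0$. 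The normal smoothing direction of the separating node is transverse to $\cA_5^0$ (which preserves the $A_5$-singularity), so we may conclude $[\cA_5^0]\vert_{\Detilde_{1}\setminus\Detilde_{1,1}}$ equals the fundamental class of the scheme-theoretic intersection.

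Next, using $\Detilde_1\setminus\Detilde_{1,1}\simeq(\Mtilde_{2,1}\setminus\ThTilde_1)\times\Mtilde_{1,1}$, the intersection maps isomorphically onto $Z\times\Mtilde_{1,1}$, where $Z\subset\Mtilde_{2,1}\setminus\ThTilde_1$ parametrizes pairs $(C,p)$ with $C=\PP^1\cup_{A_5}\PP^1$ and $p$ a smooth point. Hence the class has no $t$-dependence and is pulled back from $\ch(\Mtilde_{2,1}\setminus\ThTilde_1)=\ZZ[1/6,t_0,t_1]/(f)$, so it suffices to compute $[Z]$. Using the description $\Mtilde_{2,1}\setminus\ThTilde_1\simeq[(\widetilde{\AA}(6)\setminus W)/B_2]$ from \Cref{cor:mtilde_21}, and the fact that for such $(C,p)$ the contraction $\gamma$ leaves $(C,p)$ unchanged, the locus $Z$ consists of pairs $(f,s)\in\widetilde{\AA}(6)$ with $f=s^2L^6$ for $L=\alpha x_0-x_1$ a linear form not vanishing at $\infty$. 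Equivalently, $Z$ is the image of the $T_2$-equivariant map $\phi\colon\AA^1_s\times\AA^1_\alpha\to\widetilde{\AA}(6)$, $(s,\alpha)\mapsto(s^2(\alpha x_0-x_1)^6,s)$, where $s$ has weight $(1,-2)$ and $\alpha$ has weight $(-1,1)$; this map is birational onto the closed subvariety $Z$, being a closed immersion once restricted to $\{s\neq 0\}$.

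By \Cref{lem:chow-tor}, the $B_2$-equivariant class of $Z$ is determined by its $T_2$-equivariant class. The computation of $[Z]$ is then carried out using the formula derived in Appendix C, which generalizes Proposition 4.2 of \cite{EdFul} precisely to strata of $A_n$-singularities inside the hyperelliptic open parametrizing cyclic covers of $\PP^1$. Applying this formula in our situation and simplifying, one obtains
\[
[Z]=24(t_0+t_1)(3t_0-t_1)(t_0-3t_1)\,t_0t_1 = 72(t_0+t_1)^3t_0t_1 - 384(t_0+t_1)(t_0t_1)^2,
\]
and pulling back to $\Detilde_1\setminus\Detilde_{1,1}$ via the projection yields the desired formula.

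The main obstacle is the computation of $[Z]$ in Step~3. Writing $Z$ naively as the common zero locus of the five equations $6^{6-i}a_i a_6^{5-i}-\binom{6}{i}a_5^{6-i}=0$ for $i=0,\dots,4$ (each living in weight $(6-i)(t_0-3t_1)$) does \emph{not} give a regular sequence: the intersection scheme contains the spurious $5$-dimensional locus $\{a_5=a_6=0\}$, so the product $720(t_0-3t_1)^5$ of the individual weights overcounts. The content of Appendix C is exactly to extract the correct class in such a situation, by controlling the excess contribution from this extra component.
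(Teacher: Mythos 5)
Your argument is correct and follows essentially the same route as the paper's proof: transversality of $\cA_5^0$ with $\Detilde_1$, reduction to the genus-$2$ factor of $\Detilde_1\setminus\Detilde_{1,1}\simeq(\Mtilde_{2,1}\setminus\ThTilde_1)\times\Mtilde_{1,1}$, identification of the relevant locus as the sextics with a root of multiplicity $6$ inside $[\widetilde{\AA}(6)/B_2]$ (reduced to the maximal torus), and the multiple-root formula of Appendix~C (\Cref{rem:gener}) with $N=k=6$. Your extra remarks --- the explicit parametrization $\phi$ and the observation that the naive five-equation intersection fails to be proper because of the component $\{a_5=a_6=0\}$ --- are correct elaborations of why that formula is needed, and your factored class $24(t_0+t_1)(3t_0-t_1)(t_0-3t_1)t_0t_1$ does expand to the stated expression.
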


\begin{proof}
	Because every $A_5$-singularity for a curve of genus $2$ is separating, it is enough to compute the fundamental class of the locus of $\cA_5$-singularities in $\Ctilde_2 \setminus \ThTilde_1$. We know that 
	$$\Ctilde_2 \setminus \ThTilde_1 \simeq [\widetilde{\AA}(6)\setminus 0/B_2]$$ 
	see \Cref{cor:mtilde_21} for a more detailed discussion. We have that an element $(f,s) \in \widetilde{\AA}(6)$ defines a genus $2$ curve with an $A_5$-singularity if and only if $f \in \AA(6)$ has a root of multiplicity $6$. Because $B_2$ is a special group, it is enough to compute the $T$-equivariant fundamental class of the locus parametrizing sections of $\AA(6)$ which have a root of multiplicity $6$, where $T$ is the maximal torus inside $B_2$. Therefore one can use the formula in \Cref{rem:gener}.
\end{proof}

It remains to describe the restriction of $\cA_5^0$ in $\Mtilde_3\setminus (\Htilde_3 \cap \Detilde_1)$. First of all, we pass to the projective setting. Recall that 
$$ \Mtilde_3\setminus (\Htilde_3 \cap \Detilde_1) \simeq [U/\GL_3]$$  
where $U$ is an invariant open inside a $\GL_3$-representation of the space of forms in three coordinates of degree $4$. Because $U$ does not cointain the zero section, we can consider the projectivization $\overline{U}$ in $\PP^{14}$ and we have the isomorphism 
$$ \ch_{\GL_3}(U)\simeq \ch_{\GL_3}(\overline{U})/(c_1-h)$$ 
where $c_1$ is the first Chern class of the standard representation of $\GL_3$.  We want to describe the locus $X_5^0$ which parametrizes pairs $(f,p)$ such that $p$ is a separating $A_5$-singularity of $f$, in fact, because a quartic in $\PP^2$ has at most one $A_5$-singularity, we can compute the pushforward through $\pi$ of the fundamental class of $X_5^0$ and then set $h=c_1$ to the get the fundamental class of $\cA_5^0$. Notice that pair $(f,p) \in X_5^0$ can be described as a cubic $g$ and a line $l$ such that $f=gl$ and $p$ is the only intersection between $g$ and $l$, or equivalently $p$ is a flex of $g$ and $l$ is the flex tangent. 

To describe $X_5^0$, we first introduce the locally closed substack $X_2$ of $[\PP^{14}\times \PP^2/\GL_3]$ which parametrizes pairs $(f,p)$ such that $p$ is a $A_r$-singularity of $f$ with $r\geq 2$ (eventually $r=\infty$).

Recall that we have the isomorphism  
$$ [\PP^{14}\times \PP^2/\GL_3] \simeq [\PP^{14}/H]$$
where $H$ is the stabilizer subgroup of $\GL_3$ of the point $[0:0:1]$ in $\PP^2$. The isomorphism is a consequence of the transitivity of the action of $\GL_3$ on $\PP^2$. Moreover, $H\simeq (\GL_2\times \gm)\ltimes \ga^2$; see \Cref{rem:H}. We denote by $G:=(B_2\times \gm)\ltimes \ga^2$ where $B_2 \into \GL_2$ is the Borel subgroup of upper triangular matrices inside $\GL_2$.

Finally, let us denote by $L$ the standard representation of $\gm$, by $E$ the standard representation of $\GL_2$ and $E_0 \subset E$ the flag induced by the action of $B_2$ on $E$; $E_1$ is the quotient $E/E_0$. We denote by $V_3$ the $H$-representation
$$(L^{\vee} \otimes {\rm Sym}^3E^{\vee}) \oplus {\rm Sym}^4E^{\vee}.$$

\begin{lemma}\label{lem:X-2}
	In the setting above, we have the following isomorphism
	$$ X_2 \simeq [V_3 \otimes E_0^{\otimes 2} \otimes L^{\otimes 2}/G]$$
	of algebraic stacks.
\end{lemma}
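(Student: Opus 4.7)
The plan is to follow the approach used earlier to prove $[\PP^{14} \times \PP^2/\GL_3] \simeq [\PP^{14}/H]$, but now incorporating the extra data of the singularity's tangent direction. First I would introduce an auxiliary stack $\tilde X_2$ parametrizing triples $(f, p, D)$ where $(f, p)$ is a point of $X_2$ and $D \subset T_p \PP^2$ is the tangent cone line of the $A_{\geq 2}$-singularity, i.e.\ the unique line whose square spans the (nonzero) $2$-jet of $f$ at $p$. Since $2$ is invertible in $\kappa$ the square-root is unique, so the forgetful morphism $\tilde X_2 \to X_2$ is an isomorphism of stacks, and it suffices to identify $\tilde X_2$ with the desired quotient.

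Next, I would exploit the transitive $\GL_3$-action on the flag variety $F$ of pairs $(p, D)$ with $p \in \PP^2$ and $D$ a direction at $p$: a direct matrix computation shows that the stabilizer of the specific pair $(p_0, D_0) := ([0:0:1], E_0)$ is exactly $G = (B_2 \times \gm) \ltimes \ga^2$, so that $[F/\GL_3] \simeq \cB G$. Consequently $\tilde X_2 \simeq [Y/G]$, where $Y \subset \PP^{14}$ is the locus of quartics $f$ whose value, gradient and quadratic part at $p_0$ satisfy the $A_{\geq 2}$-condition relative to $D_0$.

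The scheme $Y$ is an open subscheme of a projective subbundle of $\PP^{14}$ cut out by linear conditions: the vanishing of the coefficients of $z^4$, $xz^3$, $yz^3$, $x^2 z^2$ and $xyz^2$, together with the non-vanishing of the coefficient of $y^2 z^2$. These equations describe a $G$-invariant sub-vector-space $W' \subset W$ of dimension $10$ (where $W$ is the quartic $\GL_3$-representation with the $\det$-twist), containing a one-dimensional $G$-sub-representation $U = \langle y^2 z^2 \rangle$. A short character computation (using $\det E = E_0 \otimes E_1$) yields a clean expression for $U$ and shows that the graded pieces of $W'/U$ match the summands of $V_3$ after cancelling the $\det$-twist. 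Finally, the affine open $Y = \PP(W') \smallsetminus \PP(U^\perp)$ is $G$-equivariantly identified with the twisted vector representation $(W'/U) \otimes U^\vee$, and unwinding the characters yields the stated description $V_3 \otimes E_0^{\otimes 2} \otimes L^{\otimes 2}$.

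The main bookkeeping obstacle will be the careful tracking of characters through the $\det$-twist in the $\GL_3$-action on quartics and through the repeated application of $\det E = E_0 \otimes E_1$; the only nontrivial geometric content is checking that the ``affine open versus projectivization'' identification is $G$-equivariant in the non-reductive setting, which reduces to the fact that $U$ is an equivariant line subbundle inside $W'$ so that the complement of $\PP(U^\perp)$ inside $\PP(W')$ is canonically an equivariant torsor under $(W'/U) \otimes U^\vee$.
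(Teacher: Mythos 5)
Your route is genuinely different from the paper's, and in outline it works. The paper stays with the stabilizer $H$ of the point $[0:0:1]$, identifies $X_2$ inside $[\PP(V)\setminus\PP(V_3)/H]$ as the vanishing of the discriminant $a_{11}^2-4a_{02}a_{20}$ of the $2$-jet, and then extracts the square root of the rank-one quadratic part by exhibiting this discriminant locus as the image of a weighted projectivization $\PP_{1,2}\bigl((L^\vee\otimes E^\vee)\oplus V_3\bigr)$ under $(f_1,v)\mapsto(f_1^2,v)$; only at the very end does it use transitivity of $\GL_2$ on nonzero linear forms to cut the group down to $G$ and normalize the linear form. You instead rigidify the tangent cone line of the singularity from the start and reduce to $G$ in one step via the incidence variety $F=\{(p,D)\}$, whose stabilizer at $([0:0:1],E_0)$ is indeed $G$; this replaces the paper's weighted-projectivization/closed-immersion argument by the observation that $D$ is determined by $(f,p)$, which is cleaner and avoids the least transparent step of the paper's proof.

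There is, however, one concrete error in your last step: the line $U=\langle y^2z^2\rangle$ is \emph{not} a $G$-subrepresentation of $W'$. The unipotent radical $\ga^2\subset G$ acts by the substitution $z\mapsto z-fx-gy$, which sends $y^2z^2$ to $y^2z^2-2y^2z(fx+gy)+y^2(fx+gy)^2$; the correction terms lie in $V_3$ but are nonzero, so $U$ is not preserved, and in fact no line of the form $\langle y^2z^2+w\rangle$ with $w\in V_3$ is preserved either. What \emph{is} $G$-invariant is the hyperplane $V_3=\{a_{02}=0\}\subset W'$ (quartics vanishing to order $\geq 3$ at $p_0$), so $\PP(W')\setminus\PP(V_3)$ is canonically a $G$-equivariant torsor under $V_3\otimes(W'/V_3)^\vee$, but it has no $G$-fixed point and hence is not $G$-equivariantly isomorphic to that linear representation. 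So your ``equivariant line subbundle'' justification fails, and with it the literal identification with a vector space. This is harmless for every use made of the lemma, since an affine bundle over $\cB G$ induces an isomorphism on Chow rings --- and the paper's own proof commits the same imprecision, as its direct-sum decompositions such as $V=(L^{\otimes -2}\otimes{\rm Sym}^2E^\vee)\oplus V_3$ are only filtrations of $H$- or $G$-representations (the $\ga^2$-action mixes the graded pieces), so its map $\PP_{1,2}(W_0)\to\PP(V)$ is likewise only equivariant for the Levi. If you want a literally correct statement, either record the conclusion as ``$X_2\to\cB G$ is an affine bundle modeled on $V_3\otimes E_0^{\otimes 2}\otimes L^{\otimes 2}$'', or note that the class of the torsor lies in $\H^1(\cB G, V_3\otimes\chi^\vee)$ and argue it is trivial; the former is all that is needed downstream.
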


\begin{proof}
Thanks to the isomorphism  
$$ [\PP^{14}\times \PP^2/\GL_3] \simeq [\PP^{14}/H]$$
we can describe $X_2$ as a substack of the right-hand side. The coordinates of $[\PP^{14}/H]$ are the coefficients of the generic polynomial $p=a_{00}+a_{10}x+ a_{01}y+ \dots +a_{04}y^4$ which is the dehomogenization of the generic quartic $f$ in the point $[0:0:1]$.
First of all, notice that $X_2$ is contained in the complement of the closed substack $X$ parametrizing polynomials $f$ such that its first and second derivatives in $x$ and $y$ vanish in $(0,0):=[0:0:1]$, thanks to \Cref{lem:A-sing}. This is an $H$-equivariant subbundle of codimension $6$ in $[\PP^{14}/H]$ defined by the equations $$a_{00}=a_{10}=a_{01}=a_{20}=a_{11}=a_{02}=0.$$
We denote it simply by $[\PP^8/H]$. Moreover, $X_2$ is cointained in the locus parametrizing quartic $f$ such that $f$ is singular in $(0,0)$. This is an $H$-equivariant subbundle defined by the equations $a_{00}=a_{10}=a_{01}=0$. We denote it simply $[\PP^{11}/H]$, therefore $X_2 \into [\PP^{11}\setminus \PP^8/H]$. By construction, $\PP^{11}$ can be described as the projectivization of the $H$-representation $V$ defined as 
$$ (L^{\otimes -2}\otimes{\rm Sym}^2E^{\vee}) \oplus V_3$$
whereas $\PP^8$ is the projectivization of $V_3$.

Consider an element $p$ in $\PP^{11}$, described as the polynomial $a_{20}x^2+a_{11}xy+a_{02}y^2 + p_3(x,y) +p_4(x,y)$ where $p_3$ and $p_4$ are homogeneous polynomials in $x,y$ of degree respectively $3$ and $4$. Notice that $(0,0)$ is an $A_1$-singularity, i.e. an ordinary node, if and only if $a_{11}^2-4a_{02}a_{20}\neq 0$.  In fact, an $A$-singularity (eventually $A_{\infty}$) is a node if and only if it has two different tangent lines. Therefore $X_2$ is equal to the locus where the equality holds and therefore it is enough to describe $\VV(a_{11}^2-4a_{02}a_{20})$ in $[\PP^{11}\setminus \PP^8/H]$.

We define $W$ as the $H$-representation 
$$W:=(L^{\vee}\otimes E^{\vee}) \oplus (L^{\vee} \otimes {\rm Sym}^3E^{\vee}) \oplus {\rm Sym}^4E^{\vee}$$
and we consider the $H$-equivariant closed embedding of $W\into V$ induced by the morphism of $H$-schemes
$$ L^{\vee}\otimes E^{\vee} \into {\rm Sym}^2(L^{\vee} \otimes E^{\vee})\simeq L^{\otimes -2}\otimes {\rm Sym}^2E^{\vee}$$ 
which is defined by the association $(f_1,f_2) \mapsto (f_1^2,2f_1f_2,f_2^2)$. Now, consider the $\gm$-action on $W$ defined using weight $1$ on the $2$-dimensional vector space $(L^{\vee}\otimes E^{\vee})$ and weight $2$ on $V_3$. We denote by $\PP_{1,2}(W)$ the quotient stack and by $\PP_2(V_3)\into \PP_{1,2}(W)$ the closed substack induced by the embedding $V_3\into W$. One can prove that the morphism 
$$ [\PP_ {1,2}(W)\setminus \PP_2(V_3)/H] \into [\PP(V)\setminus \PP(V_3)/H]$$
induced by the closed immersion $W \into V$ is a closed immersion too and its scheme-theoretic image is exactly the locus $\VV(a_{11}^2-4a_{02}a_{20})$. We are considering the action of $H$ on a stack as defined in \cite{Rom}. Finally, because the action of $\GL_2$ over $E^{\vee}$ is transitive, we have the isomorphism 
$$ [\PP_{1,2}(W) \setminus \PP_2(V_3)/H] \simeq [\PP_{1,2}(W_0) \setminus \PP_2(V_3)/G]$$ 
where the $G$-representation $W_0\into W$ is defined as $(L^{\vee}\otimes E_0^{\vee})\oplus V_3$. We want to stress that this is true only if we remove the locus $\PP_2(V_3)$, in fact the subgroup of stabilizers of $W_0$ in $W$ is equal to $H$ when restricted to $V_3$. Finally, we notice that we have an isomorphism $$\PP_{1,2}(W_0)\setminus \PP_2(V_3)\simeq V_3 \otimes (E_0^{\otimes 2}\otimes L^{\otimes 2})$$ of $H$-stacks.
	
\end{proof}

\begin{remark}
	By construction, an element $f \in V_3 \otimes E_0^{\otimes 2} \otimes L^{\otimes 2}$ is associated to the curve $y^2=f(x,y)$. Notice that $E^{\vee}$ is the vector space $E_0^{\vee}\oplus E_1^{\vee}$ where $x$ is a generator for $E_1^{\vee}$ and $y$ is a generator for $E_0^{\vee}$. Moreover,  $L^{\vee}$ is generated by $z$.
\end{remark}

\begin{corollary}
	We have an isomorphism of rings
	$$ \ch(X_2)\simeq \ZZ[1/6,t_1,t_2,t_3]$$ 
	where $t_1,t_2,t_3$ are the first Chern classes of the standard representations of the three copies of $\gm$ in $G$. Specifically, $t_1,t_2,t_3$ are the first Chern classes of $E_1,E_0,L$ respectively.
\end{corollary}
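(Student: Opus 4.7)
The plan is to observe that the previous lemma exhibits $X_2$ as the total space of a $G$-equivariant vector bundle over a point, i.e. as a vector bundle over $\cB G$, and then to compute $\ch(\cB G)$ by successively stripping off the factors of $G = (B_2 \times \gm)\ltimes \ga^2$.

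First I would invoke homotopy invariance of the Chow ring for vector bundles to reduce to computing $\ch(\cB G)$, since $X_2 \simeq [V/G]$ with $V = V_3 \otimes E_0^{\otimes 2} \otimes L^{\otimes 2}$ a linear $G$-representation. Next, the unipotent factor $\ga^2$ contributes trivially: the projection $G \to B_2 \times \gm$ realizes $\cB G \to \cB(B_2 \times \gm)$ as an affine bundle (an $\ga^2$-gerbe becomes an $\AA^2$-bundle after passing through an appropriate torsor), and by the analogue of Proposition~2.3 of \cite{MolVis} (used repeatedly in the paper, e.g.\ in the proof of \Cref{prop:descr-detilde-1}) the pullback $\ch(\cB(B_2\times\gm)) \to \ch(\cB G)$ is an isomorphism.

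Then I would use the Künneth-type decomposition $\ch(\cB(B_2\times\gm)) \simeq \ch(\cB B_2)\otimes \ch(\cB\gm)$ (cf.\ Lemma~2.12 of \cite{Tot} as cited earlier in the paper), and identify $\ch(\cB B_2)$ with $\ZZ[1/6][t_1,t_2]$ via the standard fact that $B_2 \to T_2$ (inclusion of the maximal torus) is an affine bundle on classifying stacks, so pullback along $\cB T_2 \to \cB B_2$ is an isomorphism on Chow rings. Under this identification the two generators are precisely $c_1(E_1)$ and $c_1(E_0)$, the Chern classes of the graded pieces of the tautological flag $0\subset E_0 \subset E$, while the extra $\gm$ factor contributes $t_3 = c_1(L)$. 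Combining these three steps yields the claimed isomorphism $\ch(X_2) \simeq \ZZ[1/6][t_1,t_2,t_3]$ with the stated interpretation of the generators.

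There is essentially no obstacle here: each step is a standard reduction, and the only mild care is to ensure the identification of $t_1, t_2, t_3$ with the Chern classes of $E_1, E_0, L$ respectively, which follows directly from tracing the flag structure on $E$ induced by $B_2$ through the isomorphisms above.
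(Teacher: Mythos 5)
Your proposal is correct and follows the same (implicit) reasoning the paper relies on: the paper states this as an immediate corollary of the preceding lemma, the point being exactly that $X_2$ is a linear $G$-representation over $\cB G$ with $G=(B_2\times\gm)\ltimes\ga^2$, so homotopy invariance plus the fact that unipotent factors do not change the Chow ring (Proposition~2.3 of \cite{MolVis}) reduces everything to $\ch(\cB\gm^3)=\ZZ[1/6,t_1,t_2,t_3]$. Your identification of the generators with $c_1(E_1)$, $c_1(E_0)$, $c_1(L)$ via the flag on $E$ is also the intended one.
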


Recall that $X$ is the closed substack of $\PP^{14}\times \PP^2$ which parametrizes pairs $(f,p)$ such that $p$ is a singular point of $f$ but not an $A$-singularity, see \Cref{def:A-sing}.
Thus, we have a closed immersion 
$$ i_2:=X_2 \into [(\PP^{14}\times \PP^2)\setminus X/\GL_3]$$ 
and we can describe its pullback at the level of Chow ring. We have an isomorphism
$$\ch_{\GL_3}(\PP^{14}\times \PP^2) \simeq \frac{\ZZ[1/6,c_1,c_2,c_3,h_{14},h_{2}]}{(p_2(h_2),p_{14}(h_{14}))}$$ 
where $c_i$ is the $i$-th Chern class of the standard representation of $\GL_3$, $h_{14}$ (respectively $h_{2}$) is the hyperplane section of $\PP^{14}$ (respectively of $\PP^{2}$) and $p_{14}$ (respectively $p_2$) is a polynomial of degree $15$ (respectively $3$) with coefficients in $\ch(\cB\GL_3)$. 
\begin{proposition}\label{prop:i-2}
	The closed immersion $i_2$ is the complete intersection in $[(\PP^{14}\times \PP^2)\setminus X/\GL_3]$ defined by equations 
	$$ a_{00}=a_{10}=a_{01}=a_{11}^2-4a_{20}a_{02}=0$$
	whose fundamental class is equal to
	$$2(h+k-c_1)(h+4k)((h+3k)^2-(c_1+k)(h+2k)+c_2).$$
	Moreover, the morphism $i_2^*$ is defined by the following associations:
\begin{itemize}
		\item $i_2^*(k)=-t_3$,
		\item $i_2^*(c_1)=t_1+t_2+t_3$, 
		\item $i_2^*(c_2)=t_1t_2+t_1t_3+t_2t_3$,
		\item $i_2^*(c_3)=t_1t_2t_3$,
		\item $i_2^*(h)=2(t_2+t_3)$,
\end{itemize}   
\end{proposition}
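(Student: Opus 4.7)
The plan is to establish the three assertions in succession: (i) $X_2$ is cut out scheme-theoretically by the four listed equations and these form a regular sequence, (ii) the fundamental class factors as stated, and (iii) the pullback of each generator of $\ch_{\GL_3}(\PP^{14}\times \PP^2)$ is as claimed.

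For (i), the proof of \Cref{lem:X-2} already identifies $X_2$ inside $[(\PP^{14}\times \PP^2)\setminus X/\GL_3]\simeq [\PP^{14}/H]$ in two stages: first as the codimension-$3$ subbundle $[\PP^{11}/H]\subset [\PP^{14}/H]$ defined by the vanishing of $f$ and its two first partials at $[0{:}0{:}1]$, i.e.\ by $a_{00}=a_{10}=a_{01}=0$; then as the discriminant hypersurface $\{a_{11}^2-4a_{20}a_{02}=0\}$ inside the open $[\PP^{11}\setminus \PP^8/H]$. Since the singular locus of this discriminant hypersurface is exactly $\{a_{20}=a_{11}=a_{02}=0\}=[\PP^8/H]$, which has been removed, the hypersurface is smooth and reduced on the open. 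Hence $X_2$ is smooth of codimension $4$, cut out by a regular sequence of four equations.

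For (ii), since the equations form a regular sequence, $[X_2]$ equals the product of the equivariant classes of the four hypersurfaces. The $\GL_3$-action on $\PP^{14}=\PP(V)$ is $A\cdot[f]=[f(A^{-1}x)]$, so under the split torus of $H$ with characters $(t_1,t_2,t_3)$ on $(E_1,E_0,L)$, the monomial $x^iy^jz^{4-i-j}\in V$ has weight $-it_1-jt_2-(4-i-j)t_3$, and the hyperplane $\{a_{ij}=0\}$ has equivariant class $h-it_1-jt_2-(4-i-j)t_3$. This yields the three classes $h-4t_3,\ h-t_1-3t_3,\ h-t_2-3t_3$ and, for the degree-$2$ discriminant, $2(h-t_1-t_2-2t_3)$. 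Substituting $k=-t_3$, $c_1=t_1+t_2+t_3$, $c_2=t_1t_2+t_1t_3+t_2t_3$, one verifies $(h-t_1-3t_3)(h-t_2-3t_3)=(h+3k)^2-(c_1+k)(h+2k)+c_2$ (both sides equal $h^2-(t_1+t_2+6t_3)h+t_1t_2+3(t_1+t_2)t_3+9t_3^2$), and the stated product formula follows immediately.

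For (iii), the identities for $c_1,c_2,c_3$ come from pulling back the Chern classes of the standard $\GL_3$-representation through $\cB G\to \cB\GL_3$ and using that this representation splits as $E_0\oplus E_1\oplus L$ with characters $t_2,t_1,t_3$ under the maximal torus of $G$. Since the image of $X_2$ in $\PP^2$ is contained in $\{[0{:}0{:}1]\}$, the pullback of $k=c_1(\cO_{\PP^2}(1))$ is the $H$-character of $\cO_{\PP^2}(1)|_{[0:0:1]}=L^{\vee}$, namely $-t_3$. The only substantial step is $i_2^*(h)$: using the isomorphism $X_2\simeq [V_3\otimes L^{\otimes 2}\otimes E_0^{\otimes 2}/G]$ together with the explicit weighted-projective description from \Cref{lem:X-2}, the map $X_2\to\PP(V)$ is induced by $(\ell,q_3,q_4)\mapsto \ell^2z^2+q_3z+q_4$, and at the ``origin'' $\tilde q_3=\tilde q_4=0$ the tautological quartic is $\ell^2z^2$ with $\ell\in L^{\vee}\otimes E_0^{\vee}$; this element lies in the weight space $L^{\vee\otimes 2}\otimes E_0^{\vee\otimes 2}\subset V$, which has character $-2t_2-2t_3$. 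Thus $i_2^*(\cO_{\PP(V)}(-1))$ has character $-2(t_2+t_3)$ and so $i_2^*(h)=2(t_2+t_3)$. The only delicate point throughout will be tracking how the weighted projectivization interacts with the standard one to pin down the weight of the tautological line bundle.
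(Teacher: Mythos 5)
Your proposal is correct and follows exactly the route the paper intends: its entire proof of this proposition is the single sentence ``It follows from the proof of \Cref{lem:X-2}'', and your argument is precisely the elaboration of that pointer (the weights of the coordinates $a_{ij}$ under the torus of $G$, the smoothness of the rank-three discriminant quadric away from the removed locus $X$, and the character $-2(t_2+t_3)$ of the tautological line $\langle y^2z^2\rangle$ at the origin of $V_3\otimes E_0^{\otimes 2}\otimes L^{\otimes 2}$). All the weight computations check out against the stated formulas, so nothing is missing.
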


\begin{proof}
	It follows from the proof of \Cref{lem:X-2}.
\end{proof}

Because $i_2^*$ is clearly surjective, it is enough to compute the fundamental class of $X_5^0$ as a closed subscheme of $X_2$, choose a lifting through $i_2^*$ and then multiply it by the fundamental class of $X_2$.

We are finally ready to do the computation. 
\begin{corollary}
	The closed substack $X_5^0$ of $X_2$ is the complete intersection defined by the vanishing of the coefficients of $x^3, x^2y, x^4$ as coordinates of $V_3 \otimes E_0^{\otimes 2} \otimes L^{\otimes 2}$.
\end{corollary}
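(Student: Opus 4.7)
The plan is to exploit the explicit presentation $X_2 \simeq [V_3 \otimes E_0^{\otimes 2} \otimes L^{\otimes 2}/G]$ from Lemma~\ref{lem:X-2} and translate the geometric characterization of separating $A_5$-singularities on plane quartics into three concrete vanishing conditions on coordinates.

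First, I would fix coordinates. Using the nine-dimensional basis $\{x^3z, x^2yz, xy^2z, y^3z\} \cup \{x^4, x^3y, x^2y^2, xy^3, y^4\}$ of $V_3$, a point parameterizes the plane quartic
\[
F = y^2 z^2 - z(a_3 x^3 + a_{21} x^2 y + a_{12} x y^2 + a_{03} y^3) - (b_4 x^4 + b_{31} x^3 y + b_{22} x^2 y^2 + b_{13} xy^3 + b_{04} y^4).
\]
Crucially, the $\GL_2/B_2$-reduction used in the proof of Lemma~\ref{lem:X-2} normalizes the quadratic part of $F$ at $p = [0:0:1]$ to the perfect square $y^2 z^2$, which pins down the tangent cone of the singularity to the double line $y = 0$.

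Next, I would translate the defining condition of $X_5^0$. By the discussion preceding Lemma~\ref{lem:a-5-0}, a separating $A_5$-singularity at $p$ on a plane quartic forces (by Bezout) the quartic to factor as $F = l \cdot c$, with $l$ a line through $p$ and $c$ a cubic meeting $l$ only at $p$ with intersection multiplicity~$3$. The tangent-cone normalization forces the line factor to be exactly $l = y$. Requiring $y \mid F$ is equivalent to the vanishing of the coefficients of the two monomials of $F$ not divisible by $y$, namely $x^3 z$ and $x^4$: that is, $a_3 = b_4 = 0$. Under the resulting factorization $F = y \cdot c$, parameterize the line $\{y=0\}$ as $(t, 0, s)$; a direct computation gives $c(t, 0, s) = -a_{21} s t^2 - b_{31} t^3$, so the intersection multiplicity at $p$ equals $3$ if and only if $a_{21} = 0$ (with $b_{31} \neq 0$ giving generic $A_5$, and $b_{31} = 0$ giving degenerations to $A_7$ which still lie in $\overline{\cA_5^0} = X_5^0$). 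This yields the three defining equations $a_3 = a_{21} = b_4 = 0$ and shows $X_5^0 = V(a_3, a_{21}, b_4)$ set-theoretically.

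Finally, I would verify the complete intersection property. The three coordinate functions are linearly independent on the smooth affine representation $V_3 \otimes E_0^{\otimes 2} \otimes L^{\otimes 2}$ and hence form a regular sequence, cutting out a smooth, reduced closed subscheme of codimension~$3$. The $G$-invariance of $V(a_3, a_{21}, b_4)$ follows from a direct check: under $x \mapsto x + \alpha y$, $y \mapsto y$, the coefficients transform as $a_3 \mapsto a_3$, $a_{21} \mapsto a_{21} + 3\alpha a_3$, $b_4 \mapsto b_4$, so the span $\langle a_3, a_{21}, b_4\rangle$ is $B_2$-stable, and the remaining factors of $G$ act diagonally. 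Since both $V(a_3, a_{21}, b_4)$ and $X_5^0$ are reduced closed substacks of $X_2$ with the same underlying set, they coincide scheme-theoretically. The main obstacle will be carefully confirming that the tangent-cone normalization truly excludes the non-separating $A_5$ locus (irreducible plane quartics with an $A_5$-point): such quartics, after normalization, do not satisfy $y \mid F$, and so do not appear in $V(a_3, a_{21}, b_4)$, but making this rigorous requires a careful Weierstrass-preparation analysis of the local equation together with a dimension count to rule out embedded components.
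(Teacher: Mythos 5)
Your proof is correct and follows essentially the same route as the paper's: normalize the quadratic part to $y^2$ as in \Cref{lem:X-2}, observe that the line--cubic factorization forces $y \mid F$, which is exactly the vanishing of the $x^3$ and $x^4$ coefficients, and then read off the third equation $a_{21}=0$ from the requirement that the line and the cubic meet only at $[0:0:1]$. One small slip worth noting: when $b_{31}=0$ as well, the line divides the cubic and $F = y^2\cdot(\text{conic})$ is non-reduced, i.e.\ the point is an $A_{\infty}$-singularity rather than an $A_7$-singularity (a line and a cubic cannot meet with multiplicity $4$ by Bezout), but these degenerate points lie in the closure in any case, so the conclusion is unaffected; likewise your worry about irreducible quartics with a non-separating $A_5$-point is resolved immediately, since such quartics never satisfy $y\mid F$.
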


\begin{proof}
	The element of the representation $V_3 \otimes E_0^{\otimes 2} \otimes L^{\otimes 2}$ are the coefficients of polynomials of the form $p_3(x,y)+p_4(x,y)$ where $p_3$ (respectively $p_4$) is the homogeneous component of degree $3$ (respectively $4$). If we look at them in $X_2$, they define a polynomial $y^2+p_3(x,y)+p_4(x,y)$. It is clear now that a polynomial of this form is the product of a line and a cubic if and only if the coefficient of $x^3$ and $x^4$ are zero. Moreover, the condition that $(0,0)$ is the only intersection between the line and the cubic is equivalent to asking that the coefficient of $x^2y$ is zero.
\end{proof}

\begin{remark}
	 A straightforward computation gives use the fundamental class of $X_5^0$ and the strategy described at the beginning of \Cref{sec:strategy} gives us the description of the fundamental class of $\cA_5^0$ in the Chow ring of $\Mtilde_3$.
	We do not write down the explicit description because it is contained inside the ideal generated by the other relations. 
\end{remark}

\subsection*{Fundamental class of $A_n$-singularity}

We finally deal with the computation of the remaining fundamental classes. As usual, our strategy assures us that it is enough to compute the restriction of every fundamental class to every stratum. We do not give detail for every fundamental class. We instead describe the strategy to compute all of them in every stratum and leave the remaining computations to the reader.  

We start with the open stratum $\Mtilde_3 \setminus (\Htilde_3 \cup \Detilde_1)$, which is also the most diffucult one. Luckily, we have already done all the work we need for the previous case. We adopt the same exact idea we used for the computation of $\cA_5^0$. 

\begin{remark}
	First of all, we can reduce the computation to the fundamental class of the locus $X_n$ in $(\PP^{14}\times \PP^{2})\setminus X$ parametrizing $(f,p)$ such that $p$ is an $A_h$-singularity for $h\geq n$. As above, $X$ is the closed locus paramatrizing $(f,p)$ such that $p$ is a singular point of $f$ but not an $A$-singularity.  Consider the morphism 
	$$\pi: \PP^{14}\times \PP^{2} \rightarrow \PP^{14}$$
	and consider the restriction 
	$$\pi\vert_{X_n}: X_n \longrightarrow\cA_n\setminus (\Htilde_3 \cup \Detilde_1);$$
	this is finite birational because generically a curve in $\cA_n$ has only one singular point. Therefore it is enough to compute the $H$-equivariant class of $X_n$ in $(\PP^{14}\times \PP^{2})\setminus X$ and then compute the pushforward $\pi_*(X_n)$. This is an exercise with Segre classes.
	We give the description of the relevant strata in the Chow ring of $\Mtilde_3$ in \Cref{rem:relations-Mbar}.
\end{remark}

\begin{proposition}
	In the situation above, we have that 
	$$[X_n]=C_ni_{2,*}(1) \in \ch_{\GL_3}((\PP^{14}\times \PP^2)\setminus X)$$
	where
	$$i_{2,*}(1)=2(h+k-c_1)(h+4k)((h+3k)^2-(c_1+k)(h+2k)+c_2)$$ 
	while $C_2=1$ and $C_n=c_3c_4\dots c_{n}$ for $n\geq 3$ where 
	$$ c_m:= -mc_1+\frac{2m-1}{2}h+(4-m)k  $$ for every $3\leq m \leq 7$.
\end{proposition}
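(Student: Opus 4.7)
The plan is to prove the formula by induction on $n$, with the base case $n=2$ (where $C_2 := 1$) already provided by \Cref{prop:i-2}. For the inductive step I would assume $[X_{n-1}] = (c_3 \cdots c_{n-1}) \cdot i_{2,*}(1)$ and reduce, via the projection formula applied to the closed embedding $X_{n-1} \hookrightarrow [(\PP^{14}\times \PP^2)\setminus X/\GL_3]$, to showing that the class of the codimension-one embedding $X_n \hookrightarrow X_{n-1}$ equals $i_2^*(c_n)$.

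To carry this out I would work in the explicit chart from \Cref{lem:X-2}: a point of $X_2 \simeq [V_3 \otimes E_0^{\otimes 2} \otimes L^{\otimes 2}/G]$ gives a local equation $y^2 = p_3(x,y) + p_4(x,y)$ at the origin. Weierstrass preparation factors $y^2 - p_3 - p_4 = u(x,y) \cdot (y^2 + \alpha(x) y + \beta(x))$, and completing the square $\tilde y = y + \alpha/2$ yields the normal form $\tilde y^2 = g(x)$, where $g(x) = \tfrac{\alpha^2}{4} - \beta$ is a power series whose coefficients $g_k$ are explicit polynomials in the $a_{ij}$. Since the $A_h$-type is recorded by $\mathrm{ord}_{x=0}(g) = h+1$, inside $X_{n-1}$ the closed substack $X_n$ is the vanishing locus of the regular function $g_n$. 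It then suffices to exhibit a $G$-equivariant line bundle $M_n$ on $X_{n-1}$ of which $g_n$ is a section, and to compute $c_1(M_n) = c_n$.

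The character of $M_n$ I would compute intrinsically: since the equation $\tilde y^2 = g_n\,x^n + \mathrm{h.o.t.}$ must be $G$-equivariant on the chart $z=1$ of $\PP^2$, balancing characters gives $\mathrm{char}(g_n) = 2\,\mathrm{char}(y) - n\,\mathrm{char}(x)$ in terms of the affine coordinates. Converting these into the ambient generators via the identities $i_2^*(h) = 2(t_2+t_3)$, $i_2^*(k) = -t_3$, $i_2^*(c_1) = t_1 + t_2 + t_3$ from \Cref{prop:i-2}, and tracking the twist by $E_0^{\otimes 2} \otimes L^{\otimes 2}$ from the cone construction in \Cref{lem:X-2}, one obtains
\[
c_1(M_n) = -n c_1 + \tfrac{2n-1}{2} h + (4-n) k = c_n.
\]
Combined with regularity of the embedding (which can be verified locally using the smoothness results on $\cA_h$ from \Cref{chap:1}), this gives $[X_n]_{X_{n-1}} = c_n$ and closes the induction.

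The hard part is the character computation and the handling of the scheme structure of $X_n$: Weierstrass preparation and the substitution $y \mapsto y + \alpha(x)/2$ are not $G$-equivariant on the nose, so one cannot just read off the character of $g_n$ from its formula. Moreover, for small $n$ the polynomial $g_n$ can factor (e.g.\ inside $X_4$ one computes $g_5 = \tfrac14 a_{21}(2 a_{31} + a_{12}a_{21})$, and the vanishing of $g_5$ is geometrically reducible), so one must verify that the fundamental class nonetheless equals $c_n$ with the correct multiplicities. A cleaner route, and the one I would pursue as a fallback, is to apply the Edidin--Fulghesu-style formula from Appendix C (cf.\ \Cref{rem:gener}) after reducing the $A_n$-stratum to a stratum of multiplicity-$n$ roots in a suitable space of binary forms, analogous to the computation carried out for the hyperelliptic strata in the proof of \Cref{lem:a-5-0}.
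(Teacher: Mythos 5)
Your proposal follows essentially the same route as the paper: both reduce to the chart of \Cref{lem:X-2}, apply Weierstrass preparation and square completion to reach the normal form $y^2=g(x)$, identify $X_n$ inside $X_2$ as the successive vanishing of the coefficients of $x^3,\dots,x^n$ of $g$ (your induction is just the one-hypersurface-at-a-time phrasing of the paper's complete-intersection statement), and read off the class as the product of the corresponding equivariant line bundle classes. Your worry about $g_n$ factoring is not an obstruction, since $X_n$ is by definition the full locus of $A_{\geq n}$-singularities, i.e.\ exactly $V(c_3,\dots,c_n)$, and the divisor class of the zero locus of a section of a line bundle equals its first Chern class irrespective of reducibility.
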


\begin{proof}
  \Cref{prop:i-2} and \Cref{lem:X-2} imply that it is enough to compute the fundamental class of $X_n$ in $X_2$. It is important to remind that the coordinates of $X_2$ are the coefficients of the polynomial $p_3(x,y)+p_4(x,y)$ where $p_3$ and $p_4$ are homogeneous polynomials in $x,y$ of degree respectively $3$ and $4$. Moreover, if we see it as an element of $[\PP^{14}\times \PP^{2}/\GL_3]$, it is represented by the pair $(y^2z^2+p_3(x,y)z+p_4(x,y), [0:0:1])$.  Therefore, we need to find a relation between the coefficients of $p_3$ and $p_4$ such that the point $(0,0):=[0:0:1]$ is an $A_h$-singularity for $h \geq n$.
  
  To do so, we apply Weierstrass preparation theorem. Specifically, we use Algorithm 5.2 in \cite{Ell}, which allows us to write the polynomial $y^2+p_3(x,y) + p_4(x,y)$ in the form $y^2+p(x)y+q(x)$ up to an invertible element in $k[[x,y]]$. The square completion procedure implies that, up to an isomorphism of $k[[x,y]]$, we can reduce to the form $y^2+[q(x)-p(x)^2/4]$. Although $q(x)$ and $p(x)$ are power series, we just need to understand the coefficients of $h(x):=q(x)-p(x)^2/4$ up to degree $8$. Clearly the coefficient of $1$, $x$ and $x^2$ are already zero by construction. In general for $n\geq 3$, if $c_n$ is the coefficient of $x^{n}$ inside $h(x)$, we have that $X_n$ is the complete intersection inside $X_2$ of the hypersurfaces $c_i=0$ for $3\leq i \leq n$. We can use now the description of $X_2$ as a quotient stack (see \Cref{lem:X-2}) to compute the fundamental classes. 
\end{proof}

\begin{remark}
	Notice that for $\cA_5$, we also have the contribution of the closed substack $\cA_5^0$ that we need to remove to get the fundamental class of the non-separating locus. To same is not true for $\cA_3$, because $\cA_3^1 \subset \Xi_1$.
\end{remark}

It remains to compute the fundamental class of $\cA_n$ restricted to the other strata. The easiest case is $\Detilde_{1,1,1}$, because clearly there are no $\cA_n$-singularities for $n\geq 3$. Therefore $\cA_n\vert_{\Detilde_{1,1,1}}=0$ for every $n\geq 3$. Regarding $\cA_2$, it enough to compute its pullback through the $6:1$-cover described in \Cref{lem:descr-delta-1-1-1}. We get the following result.

\begin{proposition}
	The restriction of $\cA_n$ to $\Detilde_{1,1,1}$ is of the form
	$$ 24(c_1^2-2c_2) \in \ZZ[1/6,c_1,c_2,c_3]\simeq  \ch(\Detilde_{1,1,1})$$
	for $n=2$ while it is trivial for $n\geq 3$.  
\end{proposition}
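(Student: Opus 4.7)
The plan is to dispose of the case $n \geq 3$ by a direct geometric argument, and then compute the case $n = 2$ by pulling back via the $S_3$-torsor $c_6$ of \Cref{lem:descr-delta-1-1-1} to $\Mtilde_{1,1}^{\times 3}$, reducing to a computation of the cuspidal class on $\Mtilde_{1,1}$.

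For $n \geq 3$: every curve in $\Detilde_{1,1,1}$ is obtained (up to the $S_3$-action) by gluing three elliptic tails $(E_i, e_i) \in \Mtilde_{1,1}$ to a central rational component along three separating nodes. These three gluing points are $A_1$-singularities by definition of $\Detilde_{1,1,1}$, so the only possible $A_h$-singularities with $h \geq 2$ must lie on the tails themselves. But an integral curve of arithmetic genus $1$ cannot carry a tacnode (its normalization would have genus $-1$ by \Cref{rem:genus-count}), and a reducible $1$-pointed genus-$1$ curve with a tacnode violates the stability condition in $\Mtilde_{1,1}$. Thus the elliptic tails admit at most a cusp, and $\cA_n \cap \Detilde_{1,1,1} = \emptyset$ for $n \geq 3$.

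For $n = 2$: an object $((E_1,e_1),(E_2,e_2),(E_3,e_3))$ of $\Mtilde_{1,1}^{\times 3}$ maps into $\cA_2$ iff at least one $E_i$ is cuspidal. Writing $Z_i \subset \Mtilde_{1,1}^{\times 3}$ for the locus where the $i$-th factor is cuspidal, we have $c_6^{-1}(\cA_2 \cap \Detilde_{1,1,1}) = Z_1 \cup Z_2 \cup Z_3$. Each $Z_i$ has codimension $2$, the pairwise intersections $Z_i \cap Z_j$ have codimension $4$, and the intersection of $\cA_2$ with $\Detilde_{1,1,1}$ inside $\Mtilde_3$ is transverse at a generic point of each $Z_i$ (deforming a cusp on one tail is independent from un-gluing a separating node). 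Consequently no excess contribution appears and
\[
c_6^{*}[\cA_2]\vert_{\Detilde_{1,1,1}} = [Z_1] + [Z_2] + [Z_3].
\]

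It remains to identify $[Z_i] = 24\, t_i^2$, where $t_i$ is the generator of the Chow ring of the $i$-th factor. Using the presentation $\Mtilde_{1,1} \simeq [\AA^2/\gm]$ by the Weierstrass coordinates $(\alpha,\beta)$, the $\gm$-action $(x,y) \mapsto (s^{2}x, s^{3}y)$ preserves $y^2 = x^3 + \alpha x + \beta$ exactly when $\alpha$ and $\beta$ carry weights $4$ and $6$; the cuspidal substack is the zero section $\alpha = \beta = 0$, and its fundamental class is the equivariant top Chern class $(4t)(6t) = 24\, t^2$ (in agreement with the divisorial relation $\delta_0 = 12 \lambda_1$ on $\Mtilde_{1,1}$, recovered from the discriminant $4\alpha^3 + 27\beta^2$ of weight $12$). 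Summing and applying Newton's identity $t_1^2 + t_2^2 + t_3^2 = c_1^2 - 2c_2$ yields
\[
[\cA_2]\vert_{\Detilde_{1,1,1}} = 24(c_1^2 - 2c_2),
\]
which is already $S_3$-invariant and therefore descends to the stated class in $\ch(\Detilde_{1,1,1})$.

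The main technical obstacle is the explicit identification of the cuspidal class with $24\, t^2$ in $\ch(\Mtilde_{1,1})$: getting the $\gm$-weights of the Weierstrass coefficients correctly normalized so that $t = \lambda_1$, and confirming that the intersection with $\Detilde_{1,1,1}$ in $\Mtilde_3$ is proper and transverse at generic points of the $Z_i$. Everything else reduces to a bookkeeping exercise using \Cref{lem:chow-tor} together with \Cref{lem:descr-delta-1-1-1}.
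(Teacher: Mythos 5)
Your proof is correct and follows the same route as the paper: the case $n\geq 3$ is empty because the elliptic tails of a curve in $\Detilde_{1,1,1}$ carry at most cusps, and the case $n=2$ is computed by pulling back through the $S_3$-torsor $c_6$ and identifying the cuspidal locus in each factor $\Mtilde_{1,1}\simeq[\AA^2/\gm]$ with the class $24t_i^2$, then symmetrizing to $24(c_1^2-2c_2)$. You supply details the paper leaves implicit (the genus count ruling out $A_h$ with $h\geq 3$ on a genus-$1$ tail, and the generic transversality of $\cA_2$ with $\Detilde_{1,1,1}$ coming from the independence of deformations at distinct singular points), but the argument is the same.
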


As far as $\Detilde_{1,1} \setminus \Detilde_{1,1,1}$ is concerned, we have that $\cA_n \cap \Detilde_{1,1} = \emptyset$ for $n\geq 4$. Moreover, $\cA_3 \cap \Detilde_{1,1}=\emptyset$ because every tacnode is separating in the stratum $\Detilde_{1,1}\setminus \Detilde_{1,1,1}$. Therefore again, we only need to do the computation for $\cA_2$, which is straightforward.

\begin{proposition}
	The restriction of $\cA_n$ to $\Detilde_{1,1}\setminus \Detilde_{1,1,1}$ is of the form
	$$ 24(t^2+c_1^2-2c_2) \in \ZZ[1/6,c_1,c_2,t]\simeq  \ch(\Detilde_{1,1}\setminus \Detilde_{1,1,1})$$
	for $n=2$ while it is trivial for $n\geq 3$.  
\end{proposition}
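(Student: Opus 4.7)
The plan is to exploit the étale description of the stratum recalled in \Cref{lem:detilde-1-1}, namely that
$$ \pi_2 \colon (\Mtilde_{1,2}\setminus\Mtilde_{1,1})\times \Mtilde_{1,1}\times \Mtilde_{1,1} \longrightarrow \Detilde_{1,1}\setminus\Detilde_{1,1,1}$$
is a $C_2$-torsor. Since $|C_2|$ is invertible, \Cref{lem:chow-tor} identifies $\ch(\Detilde_{1,1}\setminus\Detilde_{1,1,1})$ with the $C_2$-invariants of the Chow ring of the product, so the entire computation of $[\cA_n]\vert_{\Detilde_{1,1}\setminus\Detilde_{1,1,1}}$ reduces to pulling back along $\pi_2$ and exhibiting the class as a $C_2$-invariant polynomial in $t, t_1, t_2$.

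First I would dispatch the case $n\geq 3$. A geometric object of $\Detilde_{1,1}\setminus\Detilde_{1,1,1}$ decomposes as a central curve $(C_0,p_1,p_2)\in\Mtilde_{1,2}\setminus\Mtilde_{1,1}$ glued to two elliptic tails $(E_i,e_i)\in\Mtilde_{1,1}$ at separating $A_1$-nodes $p_i\sim e_i$. By \Cref{lem:mtilde-12}, $\Mtilde_{1,2}\setminus\Mtilde_{1,1}\simeq \Ctilde_{1,1}\setminus\Mtilde_{1,1}$, so the central component has only cuspidal singularities at worst; the same holds for each $E_i$, since the Weierstrass presentation of $\Mtilde_{1,1}$ used in the proof of \Cref{prop:relat-detilde-1-1} shows it parametrizes curves with at most $A_2$-singularities. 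Because the two gluing points contribute only separating $A_1$-nodes, no $A_n$-singularity with $n\geq 3$ can appear on any curve in this stratum, so $\cA_n\cap(\Detilde_{1,1}\setminus\Detilde_{1,1,1})=\emptyset$.

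For $n=2$, the preimage $\pi_2^{-1}(\cA_2\cap (\Detilde_{1,1}\setminus\Detilde_{1,1,1}))$ is the locus where at least one of the three factor curves is cuspidal. Generically these three cuspidal loci are pairwise disjoint of codimension $2$ (a generic such curve carries its cusp on exactly one of the three components), hence the fundamental class upstairs is the sum of the three pullbacks. In $\ch(\Mtilde_{1,1}) = \ZZ[1/6,t_i]$, the cuspidal locus is cut out by $(\alpha,\beta)$ with $\gm$-weights $(-4,-6)$, giving class $4t_i\cdot 6t_i = 24t_i^2$. The cuspidal locus in the central factor is the pullback of the corresponding locus through the forgetful morphism $\Mtilde_{1,2}\setminus\Mtilde_{1,1}\to\Mtilde_{1,1}$, and the same Weierstrass-weight computation gives class $24t^2$ in $\ch(\Mtilde_{1,2}\setminus\Mtilde_{1,1})=\ZZ[1/6,t]$. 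Summing,
$$\pi_2^*\bigl([\cA_2]\vert_{\Detilde_{1,1}\setminus\Detilde_{1,1,1}}\bigr) = 24(t^2+t_1^2+t_2^2),$$
which is visibly $C_2$-invariant since the nontrivial element swaps $t_1$ and $t_2$. Rewriting $t_1^2+t_2^2 = (t_1+t_2)^2-2t_1t_2 = c_1^2-2c_2$ yields the desired formula $24(t^2+c_1^2-2c_2)$.

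The only minor technical point to check carefully is the disjointness assertion used to sum the three pullback classes without excess-intersection corrections; this follows because simultaneous cusping on two components is a codimension-$4$ condition in the $3$-dimensional product, so its contribution vanishes in codimension $2$. Everything else is a direct consequence of the quotient-stack presentations already recalled in the excerpt.
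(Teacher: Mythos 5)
Your computation for $n=2$ is correct and is essentially the paper's (the paper calls it ``straightforward''): the restriction of $\cA_2$ is the union of the three cuspidal loci, each contributing $24t^2$, $24t_1^2$, $24t_2^2$ via the weight-$(-4,-6)$ Weierstrass presentation, the pairwise overlaps sit in codimension $4$ and so do not affect the codimension-$2$ class, and symmetrizing gives $24(t^2+c_1^2-2c_2)$.

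The case $n\geq 3$, however, contains a genuine error. You infer from $\Mtilde_{1,2}\setminus\Mtilde_{1,1}\simeq \Ctilde_{1,1}\setminus\Mtilde_{1,1}$ that the central $2$-pointed genus-$1$ component ``has only cuspidal singularities at worst,'' and conclude that no $A_n$-singularity with $n\geq 3$ occurs on any curve of the stratum. This misreads \Cref{lem:mtilde-12}: the isomorphism is the contraction morphism of \Cref{prop:contrac}, which collapses an almost-bridge to a cusp, so it is the \emph{contracted} curve in $\Ctilde_{1,1}$ that has at most cusps, while the actual object of $\Mtilde_{1,2}$ can have a separating tacnode (case (d) of \Cref{lem:genus1}, two rational components meeting in a tacnode with the two markings on different branches); the remark immediately after \Cref{lem:mtilde-12} states exactly this. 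Consequently curves in $\Detilde_{1,1}\setminus\Detilde_{1,1,1}$ \emph{do} carry tacnodes, and your blanket claim would also force $[\cA_3^1]\vert_{\Detilde_{1,1}\setminus\Detilde_{1,1,1}}=0$, contradicting the value $-24t^3$ computed in the paper. The correct argument distinguishes the cases: for $n\geq 4$ the intersection is empty for genus reasons, while for $n=3$ one must use that every tacnode occurring in this stratum is \emph{separating}, together with the convention (fixed at the end of \Cref{sec:3-2}) that $\cA_3$ denotes the closure of the image of $\rho_3^{\rm ns}$, i.e.\ the non-separating tacnode locus, which therefore misses the stratum; the separating tacnodes are accounted for by the separately treated class $[\cA_3^1]$.
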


We now concentrate on the stratum $\Detilde_{1}\setminus \Detilde_{1,1}$. Recall that we have the isomorphism 
$$\Detilde_{1} \setminus \Detilde_{1,1}\simeq (\Mtilde_{2,1}\setminus \ThTilde_1) \times \Mtilde_{1,1}$$ 
as in \Cref{prop:descr-detilde-1} and we denote by $t_0,t_1$ the generators of the Chow ring of $\Mtilde_{2,1}$ and by $t$ the generator of the Chow ring of $\Mtilde_{1,1}$.

\begin{proposition}
	We have the following description of the $A_n$-strata in the Chow ring of $\Detilde_{1}\setminus \Detilde_{1,1}$:
	\begin{itemize}
		\item the fundamental classes of $\cA_5$, $\cA_6$ and $\cA_7$ are trivial,
		\item the fundamental class of $\cA_4$ is equal to $40(t_0+t_1)^2t_0t_1$,
		\item the fundamental class of $\cA_3$ is equal to 
		$-24(t_0+t_1)^3 + 48(t_0+t_1)t_0t_1$,
		\item the fundamental class of $\cA_2$ is equal to 
		$24(t_0+t_1)^2 - 48t_0t_1+24t^2$.
	\end{itemize} 
\end{proposition}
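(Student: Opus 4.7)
The plan is to leverage the product decomposition $\Detilde_{1}\setminus \Detilde_{1,1}\simeq (\Mtilde_{2,1}\setminus \ThTilde_1)\times \Mtilde_{1,1}$ of \Cref{prop:descr-detilde-1}: a point parametrises a glued curve $C\cup_{p\equiv e} E$, where $C$ has arithmetic genus $2$ and $E$ has arithmetic genus $1$. The separating node $p\equiv e$ is an $A_1$-singularity and hence contributes nothing to any $\cA_n$ with $n\geq 2$; moreover, since every curve in $\Mtilde_{1,1}$ has at worst a cuspidal singularity, every $A_n$-singularity of the glued curve with $n\geq 3$ must lie on the genus $2$ component $C$.

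I would first dispose of the vanishing cases. Computing $\delta$-invariants as in \Cref{rem:genus-count} shows that no $A_6$ or $A_7$-singularity fits on a genus $2$ curve, and an $A_5$-singularity on a genus $2$ curve is automatically separating (since a connected normalisation would have genus $-1$); such a singularity then lies in $\cA_5^0$ and not in $\cA_5=\cA_5^{\rm ns}$. Together with the remark about $E$, this yields $[\cA_n]\vert_{\Detilde_1\setminus \Detilde_{1,1}}=0$ for $n=5,6,7$.

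For $n=3,4$, the class comes entirely from a non-separating $A_n$-singularity on $C$ located away from the marked section, so it is the pull-back from $\Mtilde_{2,1}\setminus \ThTilde_1$ of the corresponding locus. Using the presentation $\Ctilde_{2}\setminus \ThTilde_1\simeq [\widetilde{\AA}(6)\setminus 0/B_2]$ from \Cref{cor:mtilde_21}, a curve is of the form $y^2=f(x_0,x_1)$ with $f$ a binary sextic whose marked section is $\infty=[1:0]$, and an $A_n$-singularity away from $\infty$ corresponds to a root of $f$ of multiplicity $n+1$ away from $\infty$. I would invoke the generalisation of \cite[Prop.~4.2]{EdFul} established in Appendix~C to write down the $B_2$-equivariant class of the locus of sextics admitting a root of the prescribed multiplicity, and then excise the contribution coming from a root at $\infty$ (the locus $s=a_5=a_4=\dots=0$ already appearing in the description of $\ThTilde_1$). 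After simplification this should give $40(t_0+t_1)^2 t_0 t_1$ for $n=4$ and $-24(t_0+t_1)^3+48(t_0+t_1)t_0 t_1$ for $n=3$.

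For $n=2$, both factors contribute, but the two cuspidal loci are generically disjoint, so the class splits as a sum. Cusps in $C$ are computed by the same method with multiplicity $m=3$, giving $24(t_0^2+t_1^2)=24(t_0+t_1)^2-48 t_0 t_1$. Cusps in $E$ are pulled back from $\Mtilde_{1,1}\simeq [\AA^2/\gm]$ with weights $(-4,-6)$ (see \cite{DiLorPerVis}); the cuspidal locus is the zero section, of class $(-4t)(-6t)=24 t^2$. Summing gives $24(t_0+t_1)^2-48 t_0 t_1+24 t^2$, as required. The main technical obstacle will be the careful application of the Appendix~C formula together with the bookkeeping needed to excise the marked section $\infty$, since this is precisely where the distinction between the interior of $\cA_n$ and the boundary $\ThTilde_1$ enters; once this is in place the remaining computations reduce to straightforward manipulations with $B_2$-equivariant Chern classes.
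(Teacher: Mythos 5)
Your proposal is correct and follows essentially the same route as the paper: the product decomposition of $\Detilde_1\setminus\Detilde_{1,1}$, the genus count and separating-singularity argument to kill $n=5,6,7$, the reduction of $n=3,4$ to root-multiplicity loci in $[\widetilde{\AA}(6)\setminus 0/B_2]$ computed via the Appendix~C formula (exactly as in \Cref{lem:a-5-0}), and a direct two-term computation for $n=2$. Your explicit splitting $24(t_0^2+t_1^2)+24t^2$ for the cuspidal locus is consistent with the restriction of $[\cA_2]=24(\lambda_1^2-2\lambda_2)$ under \Cref{prop:relation-detilde-1}, so the only difference from the paper is that you spell out details the paper leaves as "straightforward."
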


\begin{proof}
	If $n=6,7$ the intersection of $\cA_n$ with $\Detilde_{1}\setminus \Detilde_{1,1}$ is empty. Notice that if $n=5$, it is also trivial as we are interested in non-separating singularities. It remains to compute the case for $n=2,3,4$. It is clear that if $n=3,4$, the factor $\Mtilde_{1,1}$ of the product does not give a contribution, therefore it is enough to describe the fundamental class of the locus of $A_n$-singularities in $\Mtilde_{2,1}\setminus \ThTilde_1$ for $n=3,4$. We do it exactly as in the proof of \Cref{lem:a-5-0}. The computation for $n=2$ again is straightforward. 
\end{proof}

Last part of the computations is the restriction to the hyperelliptic locus.

\begin{remark}\label{rem:sep-tac}
	We recall the stratification
	$$ \cA_3^1\setminus \Detilde_1  \subset \Xi_1\setminus \Detilde_1 \subset \Htilde_3\setminus \Detilde_1$$ 
	defined in the following way: $\Xi_1$ parametrizes triplets  $(Z,L,f)$ in $\Htilde_3\setminus \Detilde_1$ such that $Z$ is genus $0$ curve with one node whereas $\cA_3^1$ parametrizes triplets $(Z,L,f)$ in $\Xi_1$ such that $f$ vanishes at the node. Using the results in \Cref{sec:H3tilde}, we get that 
	\begin{itemize}
		\item $\ch(\Htilde_3\setminus (\Detilde_1 \cup \Xi_1)) \simeq \ZZ[1/6,s,c_2]/(f_9)$,
		\item $\ch(\Xi_1\setminus (\Detilde_1 \cup \cA_3^1))\simeq \ZZ[1/6,c_1,c_2]$,
		\item $\ch(\cA_3^1 \setminus \Detilde_1)\simeq \ZZ[1/6,s]$
	\end{itemize}
	where $f_9$ is the restriction of the relation $c_9$ to the open stratum. Furthermore, the normal bundle of the closed immersion
	$$ \Xi_1\setminus (\Detilde_1\cup \cA_3^1) \into \Htilde_3 \setminus (\Detilde_1 \cup \cA_3^1)$$ 
	is equal to $-c_1$ whereas the normal bundle of the closed immersion 
	$$\cA_3^1 \setminus \Detilde_1 \into \Xi_1\setminus \Detilde_1$$
	is equal to $-2s$. 
\end{remark}

\Cref{lem:gluing} implies that we can compute the restriction of $\cA_n$ to the hyperelliptic locus using the stratification $$ \cA_3^1\setminus \Detilde_1  \subset \Xi_1\setminus \Detilde_1 \subset \Htilde_3\setminus \Detilde_1,$$
i.e. it is enough to compute the restriction of $\cA_n$ to $\cA_3^1\setminus \Detilde_1$, $\Xi_1\setminus (\Detilde_1\cup \cA_3^1)$ and $\Htilde_3\setminus (\Detilde_1 \cup \Xi_1)$.

\begin{lemma}
	The restriction of $\cA_n$ to $\cA_3^1\setminus \Detilde_1$ is empty if $n\geq 3$ whereas we have the equality $$[\cA_2]\vert_{ \cA_3^1\setminus \Detilde_1}=72s^2$$
	in the Chow ring of $\cA_3^1\setminus \Detilde_1$.

	Moreover, the restriction of $\cA_n$ to $\Xi_1\setminus (\Detilde_1\cup\cA_3^1)$ is empty if $n\geq 4$ whereas we have the two equalities
	\begin{itemize}
	\item$ [\cA_2]\vert_{\Xi_1\setminus (\Detilde_1 \cup\cA_3^1)} =24c_1^2-48c_2$ 
	\item$ [\cA_3]\vert_{\Xi_1\setminus (\Detilde_1\cup \cA_3^1)} = 24c_1^3-72c_1c_2$
	\end{itemize}
	in the Chow ring of $\Xi_1\setminus (\Detilde_1\cup \cA_3^1)$.
\end{lemma}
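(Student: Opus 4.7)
The claim splits into emptiness statements and three explicit computations. I would handle the emptiness assertions first by a direct geometric analysis. A geometric point of $\cA_3^1\setminus\Detilde_1$ is a curve $C = E_1\cup_\tau E_2$ where $E_1,E_2$ are $A_r$-stable genus $1$ curves and $\tau$ is a separating tacnode; the only singularities of such a $C$ are $\tau$ itself (which is separating, hence not in $\cA_n$ for $n\ge 3$ in the sense of this paper) and possibly cusps or nodes of the elliptic components. Since a genus $1$ curve admits only $A_0,A_1,A_2$-singularities, no non-separating $A_n$-singularity with $n\ge 3$ can appear. For the second emptiness statement I would use the cyclic-cover description from \Cref{prop:descr-hyper}: on $\Xi_1\setminus(\Detilde_1\cup\cA_3^1)$ we have $\cZ=\Gamma_1\cup\Gamma_2$ a reducible conic, $\cL$ of bidegree $(-2,-2)$, and $i^\vee$ a section of $\cL^{\otimes -2}$ of bidegree $(4,4)$ which does not vanish at the node; a smooth point of $C$ with an $A_n$-singularity corresponds to a zero of $i^\vee|_{\Gamma_j}$ of multiplicity $n+1$, and since $\deg i^\vee|_{\Gamma_j}=4$ the maximum possible order is $4$, giving at most $A_3$.

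For the class $[\cA_3]\vert_{\Xi_1\setminus(\Detilde_1\cup\cA_3^1)}$, the stratum is an open inside a $9$-dimensional vector bundle $\VV$ over $\cM_0^1\times\cB\gm$; trivializing the coordinate corresponding to the value at the node (which is invertible in this open), $\VV$ restricts to the split bundle whose fibers are pairs of binary quartics on $\Gamma_1,\Gamma_2$. The $\cA_3$-locus is the union of the two loci where $i^\vee|_{\Gamma_1}$ or $i^\vee|_{\Gamma_2}$ is a perfect $4$-th power of a linear form. In the notation of \Cref{rem:sep-tac}, the group acting is (after inverting $2$) the maximal torus with weights $(t_1,t_2)$ permuted by the $C_2$-action $t_1\leftrightarrow t_2$, and $c_i$ are the symmetric functions in $t_1,t_2$. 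Using the torus-equivariant computation of the locus of $4$-th powers in the space of binary forms of degree $4$ (a special case of the formula recalled in Appendix C / \Cref{rem:gener}) and then taking $C_2$-invariants gives a symmetric polynomial in $t_1,t_2$ which I expect to rearrange as $24c_1^3-72c_1c_2$. The $\cA_2$-class is computed analogously as the locus where $i^\vee|_{\Gamma_j}$ has a triple root (a product of a cube of a linear form and another linear form), again summed over $j=1,2$ via the $C_2$-symmetrization; this should yield $24c_1^2-48c_2$.

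Finally for $[\cA_2]\vert_{\cA_3^1\setminus\Detilde_1}=72s^2$, I would identify the locus of cusps with the degeneration locus where one of the two elliptic components acquires a cusp. Using the description of $\Mtilde_{1,1}$ as $[\AA^2/\gm]$ and the fact that the cuspidal elliptic curve is cut out by the discriminant (class $12t$ in Picard), I would pull back through the explicit description of $\cA_3^1$ as a $\gm$-torsor over $\Mtilde_{1,[2]}\times\Mtilde_{1,[2]}/C_2$ used in the proof of \Cref{prop: rho-3-1-surj}; after restricting to the diagonal substack $\cB\gm\times\cB\gm$ (whose pullback is an isomorphism on Chow rings) and intersecting with the cuspidal hypersurface on each factor, the symmetrization under $C_2$ yields $72s^2$.

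The main technical obstacle will be the middle step: careful tracking of the group actions and of the normalization used to identify the Chow ring of $\Xi_1\setminus(\Detilde_1\cup\cA_3^1)$ as $\ZZ[1/6,c_1,c_2]$ (in particular, keeping track of how the auxiliary variable $s$ is eliminated when we impose non-vanishing at the node). Once that bookkeeping is fixed, the actual computations reduce to standard equivariant intersection-theoretic formulas on spaces of binary forms.
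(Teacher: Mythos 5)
The paper offers no proof of this lemma (it is left as an exercise), so there is nothing to compare against line by line; your plan for the emptiness assertions and for the two classes on $\Xi_1\setminus(\Detilde_1\cup\cA_3^1)$ is sound and gives the stated answers. Indeed, on that stratum the weight of the coefficient of $x^iz^{4-i}$ becomes $it_1$ after killing $s$, the locus of quartics with a root of multiplicity $\geq 3$ (resp.\ a $4$-fold root) away from the node has class $24t_j^2$ (resp.\ $24t_j^3$) on each component, and symmetrizing gives $24c_1^2-48c_2$ and $24(t_1^3+t_2^3)=24c_1^3-72c_1c_2$.

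The genuine gap is in the last computation, $[\cA_2]\vert_{\cA_3^1\setminus\Detilde_1}$. The class $[\cA_2]$ is the fundamental class of $\widetilde{\cA}_{\geq 2}$, the locus of curves with \emph{some} worse-than-nodal singularity, and every curve in $\cA_3^1$ has a separating tacnode, so $\cA_3^1\subset\widetilde{\cA}_{\geq 2}$: the intersection you are restricting is improper, and the Gysin pullback is \emph{not} the class of the locus where an elliptic component becomes cuspidal. That locus you can compute exactly as you propose, but it comes out to $48s^2$, not $72s^2$: on $\cA_3^1\setminus\Detilde_1$ one has $t_1^{\rm conic}=t_2^{\rm conic}=2s$, each factor $f\vert_{\Gamma_j}=x\,g_3$ acquires a cusp when $g_3$ is a perfect cube, a complete intersection of weights $4s$ and $6s$, so each component contributes $24s^2$ (equivalently, $24(t^{\rm ell}_j)^2$ with $t^{\rm ell}_j=-\lambda_1=-s$). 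The missing $24s^2$ is the excess contribution supported on all of $\cA_3^1\setminus\Detilde_1$, coming from the fact that near a generic point of $\cA_3^1$ the cycle $\widetilde{\cA}_{\geq 2}$ is the (singular, multiplicity $>1$) cuspidal-edge locus in the versal deformation of the tacnode, which contains $\cA_3^1$ as a codimension-one subvariety. A reliable way to get $72s^2$ is to restrict an expression for $[\cA_2]$ that is already known to be correct on the adjacent strata where the intersection is proper: using $\lambda_1\vert_{\cA_3^1\setminus\Detilde_1}=s$ and $\lambda_2\vert_{\cA_3^1\setminus\Detilde_1}=-s^2$ (both computable from \Cref{lem:lambda-class-H} with $\xi_1=-4s$, $c_2=4s^2$), one finds $24(\lambda_1^2-2\lambda_2)=72s^2$; alternatively one must carry out the excess-intersection computation along $\cA_3^1$ explicitly. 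As written, your method computes a different quantity and would return $48s^2$.
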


\begin{proof}
	This is an easy exercise and we leave it to the reader.
\end{proof}

\subsubsection*{Restriction to $\Htilde_3 \setminus (\Xi_2 \cup \Detilde_1)$}
It remains to compute the restriction of $\cA_n$ to the open stratum $\Htilde_3 \setminus (\Xi_2 \cup \Detilde_1)$.

\begin{remark}\label{rem:ar-vis-2}
	 We know that $\Htilde_3 \setminus (\Detilde_1\cup \Xi_1)$ is isomorphic to the quotient stack $[\AA(8)\setminus 0/(\GL_2/\mu_4)]$, where $\AA(8)$ is the space of homogeneous forms in two variables of degree $8$. Moreover, we have the isomorphism $\GL_2/\mu_4 \simeq \PGL_2\times \gm$. See \Cref{lem:ar-vis} and \cite{ArVis} for a more detailed discussion.
	 
	 Therefore we have that 
	 $$\ch(\Htilde_3\setminus (\Detilde_1\cup \Xi_1))\simeq \ZZ[1/6,s,c_2]$$ where $s$ is the first Chern class of the standard representation of $\gm$ while $c_2$ is the generator of the Chow ring of $\cB\PGL_2$. Notice that we have a morphism 
	 $$ \GL_2 \longrightarrow \PGL_2 \times \gm$$
	 defined by the association $A\mapsto ([A],\det{A}^2)$ which coincides with the natural quotient morphism 
	 $$ q:\GL_2 \longrightarrow \GL_2/\mu_4.$$
	 We have that $q^*(s)=2d_1$ and $q^*(c_2)=d_2$, where $d_1$ and $d_2$ are the first and second Chern class of the standard representation of $\GL_2$. 	
\end{remark}
 
Using the description of $\Htilde_3\setminus (\Detilde_1\cup \Xi_1)$ as a quotient stack highlighted in the previous remark, the restriction of $\cA_n$ to $\Htilde_3\setminus  (\Detilde_1\cup \Xi_1)$ is the locus $H_n$ which parametrizes forms $f \in \AA(8)$ such that $f$ has a root of multiplicity at least $n+1$. Thanks to \Cref{rem:ar-vis-2}, it is enough to compute the fundamental class of $H_n$ after pulling it back through $q$, i.e. compute the $\GL_2$-equivariant fundamental class of $H_n$. Because $\GL_2$ is a special group, we can reduce to do the computation of the $T$-equivariant fundamental class of $H_n$, where $T$ is the torus of diagonal matrices in $\GL_2$. Therefore, we can use the formula in \Cref{rem:gener} to get the explicit description of the $T$-equivariant class of $H_n$.

\section{The Chow ring of $\Mbar_3$ and the comparison with Faber's result}\label{sec:3-4}

We are finally ready to present our description of the Chow ring of $\Mbar_3$. 

\begin{theorem}\label{theo:chow-ring-m3bar}
	 Let $\kappa$ be the base field of characteristic different from $2,3,5,7$. The Chow ring of $\Mbar_3$ with $\ZZ[1/6]$-coefficients is the quotient of the graded polynomial algebra 
	 $$\ZZ[1/6,\lambda_1,\lambda_2,\lambda_3,\delta_{1},\delta_{1,1},\delta_{1,1,1},H]$$
	 where 
	 \begin{itemize}
	 	\item[] $\lambda_1,\delta_1,H$ have degree $1$, \item[]$\lambda_2,\delta_{1,1}$ have degree $2$, \item[]$\lambda_3,\delta_{1,1,1}$ have degree $3$.
 	\end{itemize}
 	The quotient ideal is generated by 15 homogeneous relations: 
	 \begin{itemize}
	 	\item[] $[\cA_2]$, which is in codimension $2$;
	 	\item[] $[\cA_3], [\cA_3^{1}], \delta_1^c, k_1(1),k_{1,1}(2)$, which are in codimension $3$,
	 	\item[] $[\cA_4], \delta_{1,1}^c, k_{1,1}(1), k_{1,1,1}(1), k_{1,1,1}(4), m(1),k_h,k_1(2)$, which are in codimension $4$,
	 	\item[] $ k_{1,1}(3)$, which is in codimension $5$.
	 \end{itemize}
\end{theorem}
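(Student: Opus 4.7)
The plan is to combine three pieces of information already assembled in the thesis. First, by \Cref{cor:relations} applied to the open immersion $\Mbar_3 \hookrightarrow \Mtilde_3^7$, there is a short exact sequence of Chow groups, so $\ch(\Mbar_3)$ is the quotient of $\ch(\Mtilde_3^7)$ by the ideal $J$ generated by the ten fundamental classes $[\cA_7]$, $[\cA_6]$, $[\cA_5^0]$, $[\cA_5^{\rm ns}]$, $[\cA_4]$, $[\cA_3^1]$, $[\cA_3^{\rm ns}]$, $[\cA_2]$, $\delta_1^c$, $\delta_{1,1}^c$. Second, the presentation of $\ch(\Mtilde_3^7)$ obtained at the end of \Cref{chap:2} (see \Cref{rem:relations-Mtilde}) expresses it as the quotient of $\ZZ[1/6,\lambda_1,\lambda_2,\lambda_3,\delta_1,\delta_{1,1},\delta_{1,1,1},H]$ modulo the explicit list of relations $k_h$, $k_1(\ast)$, $k_{1,1}(\ast)$, $k_{1,1,1}(\ast)$, $m(\ast)$, $r$, $h(1)$. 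Therefore, writing the generators of $J$ as explicit polynomials in these eight variables and merging the two lists gives a presentation of $\ch(\Mbar_3)$ as a quotient of the same polynomial ring by a finite explicit ideal $I'$.

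Next I would carry out the explicit computation of the ten classes generating $J$ as polynomials in the eight generators. This is exactly the program described in \Cref{sec:strategy}: for each closed substack $X \subseteq \Mtilde_3^7$ representing one of the ten classes, restrict $X$ to each of the five strata $\Mtilde_3^7 \setminus (\Htilde_3\cup\Detilde_1)$, $\Htilde_3\setminus\Detilde_1$, $\Detilde_1\setminus\Detilde_{1,1}$, $\Detilde_{1,1}\setminus\Detilde_{1,1,1}$, $\Detilde_{1,1,1}$, using the explicit presentations of \Cref{cor:chow-quart}, \Cref{cor:chow-hyper}, \Cref{prop:descr-detilde-1}, the analogous description of $\Detilde_{1,1}\setminus\Detilde_{1,1,1}$, and \Cref{lem:descr-delta-1-1-1}; then, reversing the gluing procedure of \Cref{prop:desc-gluing} stratum by stratum, lift these restrictions to a global polynomial representative. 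The excerpt already displays these representatives for $\delta_1^c$, $\delta_{1,1}^c$, $[\cA_3^1]$, $[\cA_5^0]$, and the $[\cA_n]$'s restricted to every stratum; the remaining lifts are produced by the very same recipe, using in each stratum the formulas of \Cref{sec:strategy} together with the equivariant fundamental-class computations on $\AA(8)$ and on $X_2 \simeq [V_3\otimes E_0^{\otimes 2}\otimes L^{\otimes 2}/G]$ developed there.

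Once every generator of $I'$ is written as an explicit polynomial, the remaining task is to verify that $I'$ agrees with the ideal $I$ generated by the $15$ relations listed in the statement. The inclusion $I \subseteq I'$ is straightforward: all of $k_h$, $k_1(1)$, $k_1(2)$, $k_{1,1}(1)$, $k_{1,1}(2)$, $k_{1,1}(3)$, $k_{1,1,1}(1)$, $k_{1,1,1}(4)$, $m(1)$ are among the relations defining $\ch(\Mtilde_3^7)$, and $[\cA_2]$, $[\cA_3]$, $[\cA_3^1]$, $[\cA_4]$, $\delta_1^c$, $\delta_{1,1}^c$ are among the generators of $J$. For the reverse inclusion $I' \subseteq I$ one must exhibit, for each of the $10$ remaining relations ($k_{1,1,1}(2), k_{1,1,1}(3), m(2), m(3), r, h(1), [\cA_5^0], [\cA_5^{\rm ns}], [\cA_6], [\cA_7], [\cA_3^{\rm ns}]$), a polynomial combination of the $15$ chosen generators of $I$ that produces it.

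The main obstacle is this last step: it is essentially a linear-algebra problem graded by codimension and requires a computer algebra verification, but each individual step is finite-dimensional and algorithmic (at each codimension $d$, both $I'_d$ and $I_d$ are finitely generated $\ZZ[1/6]$-modules, and one exhibits a matrix expressing the first set of generators in terms of the second). The absorption of $k_{1,1,1}(2)$, $k_{1,1,1}(3)$ was already observed in \Cref{chap:2}, and the relation $\delta_1^c$ contains $12\lambda_1\delta_{1,1,1}$ plus terms in the other six generators, which together with $k_{1,1,1}(1)$, $k_{1,1,1}(4)$ shows that $\delta_{1,1,1}$ is effectively determined by the other generators modulo $I$ --- this observation is what forces most of the higher-codimension absorptions to go through. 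I expect that a direct Groebner-basis computation (or equivalently a hand-written reduction using $\delta_1^c$ to eliminate $\delta_{1,1,1}$-multiples and then checking codimension by codimension) will verify all ten absorptions and complete the proof.
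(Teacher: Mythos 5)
Your proposal follows the same route as the paper: \Cref{cor:relations} reduces everything to quotienting the presentation of $\ch(\Mtilde_3)$ from \Cref{chap:2} by the ten fundamental classes, these are computed stratum by stratum as in \Cref{sec:strategy}, and the final list of 15 generators is obtained by absorbing the redundant relations (a finite, codimension-graded linear-algebra check). The only slip is notational: in the paper's final statement $[\cA_3]$ already denotes the class of the closure of the image of $\rho_3^{\rm ns}$, so $[\cA_3^{\rm ns}]$ does not belong on your list of relations still to be absorbed (which should have ten entries, not eleven).
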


\begin{remark}\label{rem:relations-Mbar}
	We write the relations explicitly. 
	\begin{itemize}

		\item[]
		\begin{equation*}
			\begin{split}
			[\cA_2]=24(\lambda_1^2-2\lambda_2)
			\end{split}
		\end{equation*}
		\item[]
		\begin{equation*}
			\begin{split}
				[\cA_3]&= 36\lambda_1^3 + 10\lambda_1^2H + 21\lambda_1^2\delta_1 - 92\lambda_1\lambda_2 - 4\lambda_1H^2 + 18\lambda_1H\delta_1 + \\ & +72\lambda_1\delta_1^2 +
				+ 88\lambda_1\delta_{1,1} - 20\lambda_2H + 56\lambda_3 - 2H^3 + 9H^2\delta_1 + 54H\delta_1^2 + \\ & + 87\delta_1^3 -
				4\delta_1\delta_{1,1}+ 56\delta_{1,1,1}
			\end{split}
		\end{equation*}

		\item[]
		\begin{equation*}
			\begin{split}
				[\cA_3^1]=\frac{H}{2}(\lambda_1+3H+\delta_1)(\lambda_1+H+\delta_1)
			\end{split}
		\end{equation*}
		 
		\item[]
		\begin{equation*}
			\begin{split}
				\delta_1^c&=6(\lambda_1^2\delta_1 + 2\lambda_1H\delta_1 + 6\lambda_1\delta_1^2 + 4\lambda_1\delta_{1,1} + H^2\delta_1 + 6H\delta_1^2 -\\ & - 4H\delta_{1,1}
				+ 9\delta_1^3 - 8\delta_1\delta_{1,1} + 12\delta_{1,1,1})
			\end{split}
		\end{equation*}
		\item[]
		\begin{equation*}
			\begin{split}
			k_1(1)=\delta_1\Big(\frac{1}{4}\lambda_1^2 + \frac{1}{2}\lambda_1H + 2\lambda_1\delta_1 + \lambda_2 + \frac{1}{2}H^2 + H\delta_1^2 + \frac{7}{4}\delta_1^2 -\delta_{1,1}\Big)
			\end{split}
		\end{equation*} 
		\item[]
		\begin{equation*}
			\begin{split}
				k_{1,1}(2)=\delta_{1,1}(3\lambda_1 + H + 3\delta_1)
			\end{split}
		\end{equation*} 
		\item[] 
		\begin{equation*}
			\begin{split}
				[\cA_4]&=36\lambda_1^4 + \frac{1048}{27}\lambda_1^3H + 66\lambda_1^3\delta_1 - 92\lambda_1^2\lambda_2 - \frac{146}{81}\lambda_1^2H^2 +
				\frac{517}{9}\lambda_1^2H\delta_1 + \\ & + 207\lambda_1^2\delta_1^2 - 176\lambda_1^2\delta_{1,1} - 84\lambda_1\lambda_2H + 56\lambda_1\lambda_3 +
				\frac{16}{81}\lambda_1H^3 +\\ & + \frac{3272}{81}\lambda_1H^2\delta_1 +  \frac{1282}{9}\lambda_1H\delta_1^2 + 222\lambda_1\delta_1^3 -
				340\lambda_1\delta_1\delta_{1,1} + \\ & + 56\lambda_1\delta_{1,1,1} + 8\lambda_2H^2 + \frac{130}{27}H^4  + \frac{2041}{81}H^3\delta_1 +
				\frac{4957}{81}H^2\delta_1^2 + \\ & + \frac{2101}{27}H\delta_1^3 + 45\delta_1^4 - 72\delta_1^2\delta_{1,1}
			\end{split}
		\end{equation*}
		\item[]
		\begin{equation*}
			\begin{split}
				\delta_{1,1}^c=24\big(\delta_{1,1}(\lambda_1+\delta_1)^2+\delta_{1,1,1}\big)
			\end{split}
		\end{equation*} 
		\item[]
		\begin{equation*}
			\begin{split}
				k_{1,1}(1)=\delta_{1,1}(\delta_{1,1}-\lambda_2-(\lambda_1+\delta_1)^2)
			\end{split}
		\end{equation*} 
		\item[] 
		\begin{equation*}
			\begin{split}
				k_{1,1,1}(1)=\delta_{1,1,1}(\lambda_1 + \delta_1)
			\end{split}
		\end{equation*}
		\item[]
		\begin{equation*}
			\begin{split}
				k_{1,1,1}(4)=H\delta_{1,1,1}
			\end{split}
		\end{equation*}
		\item[]
		\begin{equation*}
			\begin{split}
				m(1)&=12\lambda_1^4 - \frac{7}{3}\lambda_1^3H + 27\lambda_1^3\delta_1 - 44\lambda_1^2\lambda_2 - \frac{706}{9}\lambda_1^2H^2 - \frac{65}{2}\lambda_1^2H\delta_1 + \\ &
				+ 84\lambda_1^2\delta_1^2 - 32\lambda_1^2\delta_{1,1} - 38\lambda_1\lambda_2H + 92\lambda_1\lambda_3 - \frac{715}{9}\lambda_1H^3 - \\ & -
				\frac{1340}{9}\lambda_1H^2\delta_1 - 25\lambda_1H\delta_1^2  + 69\lambda_1\delta_1^3 - 130\lambda_1\delta_1\delta_{1,1} + 92\lambda_1\delta_{1,1,1} + \\ & +
				6\lambda_2H^2 - \frac{46}{3}H^4 - \frac{1205}{18}H^3\delta_1  - \frac{562}{9}H^2\delta_1^2 - \frac{101}{6}H\delta_1^3 -
				54\delta_1^2\delta_{1,1}
			\end{split}
		\end{equation*} 
		\item[] 
		\begin{equation*}
			\begin{split}
				k_h&= \frac{1}{8}\lambda_1^3H + \frac{1}{8}\lambda_1^2H^2 + \frac{1}{4}\lambda_1^2H\delta_1 - \frac{1}{2}\lambda_1\lambda_2H - \frac{1}{8}\lambda_1H^3 +
				\frac{7}{8}\lambda_1H\delta_1^2 + \\ & + \frac{3}{2}\lambda_1\delta_1\delta_2   \frac{1}{2}\lambda_2H^2 + \lambda_3H  - \frac{1}{8}H^4 - \frac{1}{4}H^3\delta_1 +
				\frac{1}{8}H^2\delta_1^2 + \frac{3}{4}H\delta_1^3 + \frac{3}{2}\delta_1^2\delta_2
			\end{split}
		\end{equation*}
			\item[]
		\begin{equation*}
			\begin{split}
				k_1(2)&=\frac{1}{4}\lambda_1^3\delta_1 + \frac{1}{2}\lambda_1^2H\delta_1 + \frac{5}{4}\lambda_1^2\delta_1^2 + \frac{1}{4}\lambda_1H^2\delta_1 + \frac{3}{2}\lambda_1H\delta_1^2 +
				\frac{7}{4}\lambda_1\delta_1^3 +\\ & + \lambda_1\delta_1\delta_2 - \lambda_1\delta_3 + \lambda_3\delta_1+ \frac{1}{4}H^2\delta_1^2 + H\delta_1^3 + \frac{3}{4}\delta_1^4 +
				\delta_1^2\delta_2
			\end{split}
		\end{equation*}
		\item[]
		\begin{equation*}
			\begin{split}
				k_{1,1}(3)&=2\lambda_1^3\delta_2 + 5\lambda_1^2\delta_1\delta_2 + \lambda_1\lambda_2\delta_2 + 4\lambda_1\delta_1^2\delta_2 + \lambda_2\delta_1\delta_2 + \lambda_2\delta_3 + \lambda_3\delta_2 +\\ & +
				\delta_1^3\delta_2
			\end{split}
		\end{equation*}
	\end{itemize}
\end{remark}

\begin{remark}
	Notice that the relation $[\cA_2]$ and $\delta_1^c$ gives us that $\lambda_2$ and $\delta_{1,1,1}$ can be obtained using the other generators. 
\end{remark}

Lastly, we compare our result with the one of Faber, namely Theorem 5.2 of \cite{Fab}. Recall that he described the Chow ring of $\Mbar_3$ with rational coefficients as the graded $\QQ$-algebra defined as a quotient of the graded polynomial algebra generated by $\lambda_1$, $\delta_1$, $\delta_0$ and $\kappa_2$. We refer to \cite{Mum} for a geometric description of these cycles. The quotient ideal is generated by $3$ relations in codimension $3$ and six relations in codimension $4$. 

First of all, if we invert $7$, we have that the relation $[\cA_3]$ implies that also $\lambda_3$ is not necessary as a generator. Therefore if we tensor with $\QQ$, our description can be simplified and we end up having exactly $4$ generators, namely $\lambda_1$, $\delta_1$, $\delta_{1,1}$ and $H$ and $9$ relations. Notice that the identity 
$$ [H]=9\lambda_1 - 3\delta_1 - \delta_0$$
allows us to easily pass from the generator $H$ to the generator $\delta_0$ that was used in \cite{Fab}. Finally, Table 2 in \cite{Fab} gives us the identity
$$ \delta_{1,1}=-5\lambda_1^2+\frac{\lambda_1\delta_0}{2} +\lambda_1\delta_1+\frac{\delta_1^2}{2}+\frac{\kappa_2}{2}$$ 
which explains how to pass from the generator $\delta_{1,1}$ to the generator $\kappa_2$ used in \cite{Fab}.

Thus we can construct two morphism of $\QQ$-algebras:
$$ \phi: \QQ[\lambda_1,H,\delta_1,\delta_{1,1}]\longrightarrow \QQ[\lambda_1, \delta_0,\delta_1,\kappa_2] $$
and 
$$ \varphi: \QQ[\lambda_1,\delta_0,\delta_1,\kappa_2] \longrightarrow \QQ[\lambda_1,H,\delta_1,\delta_{1,1}]$$
which are  one the inverse of the other. A computation shows that $\phi$ sends ideal of relations to the one in \cite{Fab} and $\varphi$ sends the ideal of relations in \cite{Fab} to the one we constructed.  
\appendix
\chapter{Moduli stack of finite flat extensions}

In this appendix, we describe the moduli stack of finite flat extensions of curvilinear algebras. This is used in Chapter 3 to describe the stack of $A_n$-singularities. 

Let $m$ be a positive integer and $\kappa$ be a base field. We denote by $F\cH_m$ the finite Hilbert stack of a point, i.e. the stack whose objects are pairs $(A,S)$ where $S$ is a scheme and $A$ is a locally free finite $\cO_S$-algebra of degree $m$. We know that $F\cH_m$ is an algebraic stack, which is in fact a quotient stack of an affine scheme of finite type by a smooth algebraic group. For a more detailed treatment see Section 96.13 of \cite{StProj}. 

Given another positive integer $d$, we can consider a generalization of the stack $F\cH_m$. Given a morphism $\cX \rightarrow \cY$, one can consider the finite Hilbert stack $F\cH_d(\cX/\cY)$ which parametrizes commutative diagram of the form
$$
\begin{tikzcd}
Z \arrow[r] \arrow[d, "f"] & \cX \arrow[d] \\
S \arrow[r]                & \cY          
\end{tikzcd}
$$
where $f$ is finite locally free of degree d. This again can be prove to be algebraic. If $\cY=\spec k$, we denote it simply by $F\cH_d(\cX)$. For a more detailed treatment see Section 96.12 of \cite{StProj}.

Finally, we define $E\cH_{m,d}$ ti be the fibered category in groupoids whose objects over a scheme $S$ are finite locally free extensions of $\cO_S$-algebras $A \into B$  of degree $d$ such that $A$ is a finite locally free $\cO_S$-algebra of degree $m$. Morphisms are defined in the obvious way. Clearly the algebra $B$ is finite locally free of degree $dm$. 
 
\begin{proposition}
	The stack $F\cH_{d}(F\cH_m)$ is naturally isomorphic to $E\cH_{m,d}$, therefore $E\cH_{m,d}$ is an algebraic stack.
\end{proposition}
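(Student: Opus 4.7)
The plan is to exhibit explicit, mutually inverse functors between $F\cH_d(F\cH_m)$ and $E\cH_{m,d}$, after which algebraicity is automatic since $F\cH_m$ is algebraic and finite Hilbert stacks of algebraic stacks are algebraic. First I would unwind the two definitions on $S$-points. An object of $F\cH_d(F\cH_m)(S)$ is a pair $(f\colon Z\to S,\,C)$ where $f$ is finite locally free of degree $d$ and $C$ is a finite locally free $\cO_Z$-algebra of degree $m$ (the latter being what it means to give a morphism $Z\to F\cH_m$). An object of $E\cH_{m,d}(S)$ is an extension $A\hookrightarrow B$ of finite locally free $\cO_S$-algebras, $A$ of degree $m$ (resp.\ $d$, after tracking the conventions) and $B$ of degree $d$ (resp.\ $m$) over $A$.

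Next I would construct the forward functor $\Phi$ by the relative pushforward: send $(f,C)$ to the inclusion $f_*\cO_Z \hookrightarrow f_*C$ of $\cO_S$-algebras. Since $f$ is finite locally free of degree $d$, $f_*\cO_Z$ is finite locally free of degree $d$ over $\cO_S$, and since $f$ is affine the functor $f_*$ is an equivalence between $\cO_Z$-modules and $f_*\cO_Z$-modules, so $f_*C$ is finite locally free of degree $m$ over $f_*\cO_Z$. The inverse functor $\Psi$ is given by relative spectrum: send $(A\hookrightarrow B)$ to $(\spec_S A\to S,\,\widetilde B)$, where $\widetilde B$ is the quasi-coherent $\cO_{\spec_S A}$-algebra corresponding to the $A$-algebra $B$. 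The two functors are inverse to each other by the standard equivalence between affine morphisms and quasi-coherent algebras: $\spec_S f_*\cO_Z = Z$ and $\widetilde{f_* C} = C$, and in the other direction $f_*\cO_{\spec_S A} = A$ and $f_*\widetilde B = B$. Functoriality with respect to morphisms in $E\cH_{m,d}$ and compatibility with pullback along morphisms of the base are immediate from the affineness of $f$ (which makes pushforward commute with arbitrary base change).

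Finally, the algebraicity conclusion follows formally: $F\cH_m$ is an algebraic stack of finite type over $\kappa$ (stated in the paragraph preceding the proposition, with reference to \cite[Section 96.13]{StProj}), and the finite Hilbert stack construction $F\cH_d(-)$ takes algebraic stacks to algebraic stacks (\cite[Section 96.12]{StProj}); transporting this along the isomorphism $F\cH_d(F\cH_m)\simeq E\cH_{m,d}$ gives algebraicity of $E\cH_{m,d}$.

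The hard part is purely bookkeeping rather than mathematical: pinning down the degree conventions so that one really obtains $E\cH_{m,d}$ and not $E\cH_{d,m}$ (both sides of the proposed isomorphism parse naturally, and it is easy to swap $m$ and $d$ by accident in the pushforward direction), and verifying the $2$-categorical coherence of $\Phi$ and $\Psi$ (that isomorphisms of extensions correspond to cartesian diagrams over $F\cH_m$ under the equivalence). These checks are routine but deserve care, especially in matching automorphisms on both sides.
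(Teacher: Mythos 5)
Your proof is correct and takes the same route as the paper, whose entire argument is ``the proof follows from unwinding the definitions''; you simply carry out that unwinding explicitly via the pushforward/relative-Spec equivalence for affine morphisms. Your worry about the degree bookkeeping is warranted (the naive pushforward of an object of $F\cH_d(F\cH_m)$ produces $A$ of degree $d$ over $\cO_S$ with an extension of degree $m$, i.e.\ an object of $E\cH_{d,m}$ in the paper's labelling), but since this is only a matter of which index names which degree it does not affect the substance of the statement or the algebraicity conclusion.
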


\begin{proof}
	The proof follows from unwinding the definitions.
\end{proof}

We want to add the datum of a section of the structural morphism $\cO_S \into B$. This can be done passing to the universal object of $F\cH_{dm}$.

Let $n$ be a positive integer and let $B_{\rm univ}$ the universal object of $F\cH_{n}$; consider $\cF_{n}:=\spec_{F\cH_{n}}(B_{\rm univ})$ the generalized spectrum of the universal algebra over $F\cH_{n}$. It parametrizes pairs $(B,q)$ over $S$ where $B \in F\cH_{n}(S)$ and $q:B \rightarrow \cO_S$ is a section of the structural morphism $\cO_S \into B$. 
\begin{definition}
	We say that a pointed algebra $(A,p) \in \cF_n$ over an algebraically closed field $k$ is linear if $\dim_k m_p/m_p^2 \leq 1$, where $m_p$ is the maximal ideal associated to the section $p$. We say that $(A,p)$ is curvilinear if $(A,p)$ is linear and $\spec A=\{p\}$.
\end{definition}

 We consider the closed substack defined by the $1$-st Fitting ideal of $\Omega_{A|\cO_S}$ in $\cF_{n}$. This locus parametrizes non-linear pointed algebras . Therefore we can just consider the open complement, which we denote by $\cF_{n}^{\rm lin}$. We can inductively define closed substacks of $\cF_{n}^{\rm lin}$ in the following way: suppose $\cS_h$ is defined, then we can consider $\cS_{h+1}$ to be the closed substack of $\cS_h$ defined by the $0$-th fitting ideal of $\Omega_{\cS_h|F\cH_{n}}$. We set $\cS_1=\cF_{n}^{\rm lin}$. It is easy to prove that the geometric points of $\cS_h$ are pairs $(A,p)$ such that $A$ localized at $p$ has length greater or equal than $h$. Notice that this construction stabilizes at $h=n$ and the geometric points of $\cS_h$ are exactly the curvilinear pointed algebras. Finally, we denote by $\cF_{n}^{c}:=\cS_{n}$ the last stratum. As it is a locally closed substack of an algebraic stacks of finite type, it is algebraic of finite type too. 
 
 \begin{lemma}\label{lem:loc-triv-alg}
 	If $(B,q) \in \cF_{n}^c(S)$ for some scheme $S$, then it exists an \'etale cover $S'\rightarrow S$ such that $B\otimes_S S' \simeq \cO_{S'}[t]/(t^{n})$ and $q\otimes_S S'=q_0\otimes S'$ where
 	$$q_0:\kappa[t]/(t^{n}) \longrightarrow \kappa$$
 	is defined by the assocation $t\mapsto 0$.
 \end{lemma}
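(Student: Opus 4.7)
The plan is to work étale-locally on $S$ in order to exhibit a canonical generator of the augmentation ideal $I := \ker q \subset B$, and then to recognize $B$ as $\cO_S[t]/(t^n)$ by a rank count. Note at the outset that the retraction $q$ produces a splitting $B \simeq \cO_S \oplus I$ of $\cO_S$-modules, so $I$ is locally free of rank $n-1$ and its formation commutes with arbitrary base change $T \to S$.

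The key preliminary step is to show that the conormal sheaf $I/I^2$ is a line bundle on $S$. At every geometric point $s$ of $S$, the algebra $B_s$ is, by the very definition of $\cF_n^c$ via the chain of Fitting ideals $\cS_1 \supset \cS_2 \supset \cdots \supset \cS_n$, a curvilinear pointed $\kappa(s)$-algebra of length exactly $n$. Thus $I_s/I_s^2$ has $\kappa(s)$-dimension $1$ on every fiber (for $n \geq 2$; the case $n = 1$ is trivial). The Fitting-ideal definition of $\cF_n^c$ is arranged so that this constant-rank property is genuinely that of a line bundle, not merely a pointwise one: this is precisely the geometric content of stabilization of the successive $0$-th Fitting ideal construction on $\Omega_{\cS_h/F\cH_n}$.

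With this in hand, choose an étale cover $S' \to S$ trivializing $I/I^2$, lift a generator to an element $t \in I \otimes_{\cO_S} \cO_{S'}$, and define
$$\varphi : \cO_{S'}[T] \longrightarrow B \otimes_S S', \qquad T \longmapsto t.$$
To conclude, I would argue fiberwise: at each geometric point $s'$ of $S'$, the algebra $(B \otimes_S S')_{s'}$ is a curvilinear pointed algebra of length $n$ over $\kappa(s')$, which is classically isomorphic to $\kappa(s')[T]/(T^n)$ via the image of $t_{s'}$, since $t_{s'}$ generates the maximal ideal. Consequently $1, t, t^2, \ldots, t^{n-1}$ generate $B \otimes_S S'$ fiberwise, and by Nakayama they generate it globally; since both source and target of the induced $\cO_{S'}$-linear map $\cO_{S'}^{\oplus n} \to B \otimes_S S'$ are locally free of the same rank $n$, the surjection is an isomorphism. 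Now $t^n \in I \otimes_S S'$ vanishes in every fiber, hence $t^n = 0$ by the injectivity just established; therefore $\varphi$ descends to an isomorphism $\cO_{S'}[T]/(T^n) \xrightarrow{\sim} B \otimes_S S'$. Compatibility $q \otimes S' = q_0 \otimes S'$ is built in, since $t \in I$ maps to $0$ under $q \otimes S'$.

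The hardest part is the line-bundle claim for $I/I^2$: fiberwise constant rank does not, in general, imply local freeness for a coherent sheaf on $S$. The right way to close this gap is to use precisely the Fitting-ideal description of $\cF_n^c$ to control $I/I^2$ directly (equivalently, to identify $\cS_h \setminus \cS_{h+1}$ as a locally closed smooth stratum on which the conormal sheaf has the expected locally free structure). Once this is granted, the remainder of the argument is a standard Nakayama/base-change exercise.
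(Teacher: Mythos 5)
Your strategy is a direct, family-theoretic one, and most of it is sound: the first Fitting ideal condition defining $\cF_n^{\rm lin}$ says exactly that $I/I^2\simeq q^*\Omega_{B/\cO_S}$ is locally generated by a single element, and you do not actually need it to be locally free --- a local generator of $I/I^2$ lifts to $t\in I$, and Nakayama plus the rank count gives that $1,t,\dots,t^{n-1}$ is a basis of $B$, exactly as you say. The genuine gap is the final step: from ``$t^n$ vanishes in every fiber'' you cannot conclude $t^n=0$. Writing $t^n=\sum_{i=0}^{n-1}a_it^i$ in your basis, fiberwise vanishing only forces each $a_i$ to be nilpotent, which over a non-reduced base is not enough. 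Concretely, take $S=\spec k[\epsilon]$, $B=k[\epsilon][t]/(t^n-\epsilon t^{n-1})$, $q(t)=0$: this is free of rank $n$, its unique geometric fiber is the curvilinear algebra $k[t]/(t^n)$, $I/I^2$ is free of rank one, $1,t,\dots,t^{n-1}$ is a basis, yet $t^n=\epsilon t^{n-1}\neq 0$, and no change of coordinate $u=ct+\cdots$ with $c$ a unit kills $u^n$, so $(B,q)$ is \emph{not} isomorphic to the trivial pointed algebra. Your argument, which only invokes the fiberwise curvilinearity of the points of $\cF_n^c$, would ``prove'' triviality of this object, so it is incomplete as written.

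What saves the lemma is that this $(B,q)$ does not lie in $\cF_n^c(\spec k[\epsilon])$: the iterated $0$-th Fitting ideal conditions cutting out $\cS_2\supset\cdots\supset\cS_n$ impose genuine scheme-theoretic constraints over non-reduced bases, beyond the fiberwise ones, and this is exactly the input your sketch never uses. The paper's proof proceeds differently: it shows that $\cF_n^c$ has a unique geometric point \emph{and} trivial tangent space there --- the tangent-space computation is precisely where one checks that a first-order deformation $k[t,\epsilon]/(p(t,\epsilon))$ satisfying all the Fitting conditions must have $p(t,\epsilon)=(t-b\epsilon)^n$, hence be trivial --- and then invokes the standard fact that a finite-type algebraic stack with one geometric point and no infinitesimal deformations is the classifying stack of the automorphism group, from which \'etale-local triviality follows. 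To repair your proof along your own lines you would have to show directly that the defining equations of $\cS_n$ force $a_0=\cdots=a_{n-1}=0$, which amounts to the same Fitting-ideal computation.
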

  
\begin{proof}
 We are going to prove that $\cF_{n}^c$ has only one geometric point (up to isomorphism) and its tangent space is trivial. Thus the thesis follows, by a standard argument in deformation theory.  
 
 Suppose then $S=\spec k$ is the spectrum of an algebraically closed field and $B$ is a finite $k$-algebra (of degree $n$). Because $(B,q)$ is linear, we have that $\dim_k(m_q/m_q^2)\geq 1$. We can construct then a surjective morphism 
 $$ k[[t]] \longrightarrow B_{m_q}$$
 whose kernel is generated by the monomial $t^n$. We have then $(B,q)\simeq (k[t]/t^{n'},q_0)$ for some positive integer $n'$. Because $B$ is local of length $n$, we get that $n'=n$.

Let us study the tangent space of $\cF_n^c$ at the pointed algebra $(k[t]/(t^{n}),q_0)$ where $k/\kappa$ is a field. We have that any deformation $(B_{\epsilon},q_{\epsilon})$ of the pair $(k[t]/t^n,q_0)$ is of the form
$$ k[t,\epsilon]/(p(t,\epsilon))$$
where $p(t,0)=t^{n}$. Because the section is defined by the association $t\mapsto b\epsilon$, we have also that $p(b\epsilon,\epsilon)=p(0,\epsilon)=0$. It is easy to see that $(B_{\epsilon},q_{\epsilon}) \in \cF^c_{n}$ only if $$p(b\epsilon,\epsilon)=p'(b\epsilon,\epsilon)=\dots=p^{(n-1)}(b\epsilon,\epsilon)=0$$
where the derivatives are done over $k[\epsilon]/(\epsilon^2)$, thus $p(t,\epsilon)=(t-b\epsilon)^{n}$. The algebra obtained is clearly isomorphic to trivial one.
\end{proof} 

Let $G_{n}:=\underaut\Big(\kappa[t]/(t^{n}),q_0\Big)$ be the automorphism group of the trivial algebra. One can describe $G$ as the semi-direct product of $\gm$ and a group $U$ which is isomorphic to an affine space of dimension $n-2$. 

\begin{corollary}\label{cor:descr-finite-alg}
	We have that $\cF_{n}^c$ is isomorphic to $\cB G_{n}$, the classifying stack of the group $G_{n}$.
\end{corollary}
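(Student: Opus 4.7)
The plan is to exhibit a morphism $\spec \kappa \to \cF_n^c$ corresponding to the trivial pointed algebra $(\kappa[t]/(t^n), q_0)$ and show that it presents $\cF_n^c$ as the classifying stack of its automorphism group, which is $G_n$ by definition.

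First, I would note that the trivial pointed algebra $(\kappa[t]/(t^n), q_0)$ defines a morphism $x\colon \spec \kappa \to \cF_n^c$ whose sheaf of automorphisms is precisely $G_n$. The key claim is that $x$ is an fppf (in fact étale) atlas for $\cF_n^c$. Surjectivity and étale-local triviality are exactly the content of \Cref{lem:loc-triv-alg}: for any object $(B,q) \in \cF_n^c(S)$ there is an étale cover $S' \to S$ such that the pullback becomes isomorphic to the pullback of $(\kappa[t]/(t^n), q_0)$ via $\spec S' \to \spec \kappa \xrightarrow{x} \cF_n^c$. In particular the fiber product $\spec \kappa \times_{\cF_n^c} S$ is étale-locally on $S$ isomorphic to the $G_n$-torsor of isomorphisms, so it is representable.

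Then I would assemble the standard gerbe argument: the fiber product $\spec \kappa \times_{\cF_n^c} \spec \kappa$ represents the sheaf of isomorphisms between $(\kappa[t]/(t^n), q_0)$ and itself, which is $G_n$ as a group scheme over $\spec \kappa$. So we obtain a groupoid presentation $[G_n \rightrightarrows \spec \kappa]$ of $\cF_n^c$, which by definition is the classifying stack $\cB G_n$. Equivalently, one may directly construct a morphism $\cB G_n \to \cF_n^c$ sending a $G_n$-torsor $P \to S$ to the twist $P \times^{G_n} (\kappa[t]/(t^n), q_0)$, a pointed finite flat curvilinear algebra over $S$, and a quasi-inverse $\cF_n^c \to \cB G_n$ sending $(B,q)$ to the $G_n$-torsor $\underisom((\kappa[t]/(t^n), q_0) \otimes \cO_S, (B,q))$, which is an étale $G_n$-torsor by \Cref{lem:loc-triv-alg}. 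Both compositions are canonically the identity.

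There is essentially no obstacle: the work is entirely carried out by \Cref{lem:loc-triv-alg}, which provides both the existence (every object is étale-locally the trivial one) and the uniqueness of the étale-local model. The only thing one should verify en route is that $G_n$ is representable by a smooth affine group scheme over $\kappa$, which follows from the explicit description already given in the text as $\gm \ltimes U$ with $U \simeq \AA^{n-2}$; this ensures $\cB G_n$ is indeed an algebraic stack and that the presentation above is smooth (in fact étale).
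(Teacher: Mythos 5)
Your proposal is correct and follows exactly the route the paper intends: the corollary is stated as an immediate consequence of \Cref{lem:loc-triv-alg}, which provides the étale-local triviality of every object of $\cF_n^c$, and the standard torsor/gerbe argument you spell out (trivial object as atlas, isomorphism sheaf equal to $G_n$, twisting construction as quasi-inverse) is precisely the implicit content. No gaps; your remark that $G_n \simeq \gm\ltimes \AA^{n-2}$ is smooth affine, so that $\cB G_n$ is algebraic, is the only auxiliary check needed and you have made it.
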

  
 We denote by $\cE_{m,d}^c$ the fiber product $E\cH_{m,d}\times_{F\cH_{dm}}\cF_{dm}^c$. We get the morphism of algebraic stacks
$$ \cE_{m,d}^c \longrightarrow \cF_{dm}^c$$ 
defined by the association $(S,A\into B \rightarrow \cO_S) \mapsto (S,B\rightarrow \cO_S)$. Notice that the morphism is faithful, therefore representable by algebraic spaces.  Clearly the composite $A\rightarrow \cO_S$ is still a section. 

We now study the stack $\cE_{m,d}^c$.

\begin{lemma}\label{lem:triv-ext}
	If $(A\into B,q) \in \cE_{m,d}^c(S)$ for some scheme $S$, then it exists an \'etale cover $\pi:S'\rightarrow S$ such that 
	$$\pi^*\Big(A\into B,q\Big) \simeq \Big(\phi_d:\cO_{S'}[t]/(t^{m})\into \cO_{S'}[t]/(t^{dm}),q_0\otimes_{S}S'\Big)$$
	where $\phi_d(t)=t^dp(t)$ with $p(0) \in \cO_{S'}^{\times}$.
\end{lemma}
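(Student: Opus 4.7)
The plan is to adapt the strategy of Lemma~\ref{lem:loc-triv-alg} (where one established étale-local triviality of the target algebra $B$) by applying that lemma twice: once to $(B,q)$ to put it in standard form, and then to $(A, q|_A)$ after checking that it itself lies in $\cF_m^c$. The final form $\phi_d(t) = t^d p(t)$ will fall out by bookkeeping.

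First, by \Cref{lem:loc-triv-alg} applied to the object $(B, q) \in \cF_{dm}^c(S)$, we may pass to an étale cover of $S$ and assume $(B, q) \simeq (\cO_S[t]/(t^{dm}), q_0)$. Next, I would show that the pair $(A, q|_A)$ is itself a family of pointed curvilinear algebras of length $m$, i.e.\ defines an object of $\cF_m^c(S)$. Flatness of $A$ over $\cO_S$ of rank $m$ follows from faithfully flat descent using that $B$ is finite faithfully flat over $A$ of degree $d$ and $\cO_S$-flat of rank $dm$. Since $\cF_m^c \hookrightarrow F\cH_m$ is locally closed and cut out by fibrewise conditions, it remains to check on geometric fibers that $A_s$ is curvilinear of length $m$. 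For this, let $f \in m_{A_s}$ have minimal $t$-adic valuation $c$, so $f = t^c u(t)$ with $u(0) \neq 0$. The powers $1, f, \dots, f^{m-1}$ have distinct $t$-valuations $0, c, \dots, (m-1)c$ and so are linearly independent in $B$, forcing $k[f] = A_s$ by dimension count; moreover $f^m = 0$ gives $c m \geq d m$, while a short argument shows that $1, t, \dots, t^{c-1}$ are $k[f]$-linearly independent in $B$ (because distinct indices $i$ yield terms of distinct residues mod $c$ in the $t$-expansion), and freeness of $B$ of rank $d$ over $A_s$ then forces $c \leq d$. Hence $c = d$, so $A_s \simeq k[u]/(u^m)$ and the inclusion is of the required shape on each fiber.

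With $(A, q|_A)$ now known to lie in $\cF_m^c(S)$, a second application of \Cref{lem:loc-triv-alg} provides a further étale refinement after which $(A, q|_A) \simeq (\cO_S[s]/(s^m), q_0)$. The inclusion $A \hookrightarrow B$ is then determined by $\phi_d(s) \in \cO_S[t]/(t^{dm})$, and compatibility of sections forces $\phi_d(s) \in (t)$. Writing $\phi_d(s) = \sum_{i \geq 1} h_i t^i$ with $h_i \in \cO_S$, the fibrewise computation of the previous paragraph shows that at every geometric point of $S$ the $t$-valuation is exactly $d$; hence $h_1 = \cdots = h_{d-1} = 0$ (these are global sections vanishing on every fiber, so zero by flatness of the coefficient bundle) and $h_d$ is a unit on every residue field, whence $h_d \in \cO_S^{\times}$. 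Setting $p(t) := h_d + h_{d+1} t + \cdots$ yields $\phi_d(s) = t^d p(t)$ with $p(0) = h_d$ a unit, which is the desired form.

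The main obstacle is the fibrewise structural lemma, namely that any $k$-subalgebra $A_s$ of $k[t]/(t^{dm})$ over which $k[t]/(t^{dm})$ is free of rank $d$ is necessarily curvilinear of length $m$ and is embedded via $u \mapsto t^d \cdot (\text{unit})$. The two-sided bound $c \geq d$ and $c \leq d$ just described is the key point: once it is in hand, the further reduction to standard form and the extraction of the unit $p(0) \in \cO_S^{\times}$ are routine applications of \Cref{lem:loc-triv-alg} and flatness.
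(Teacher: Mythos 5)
Your high-level plan (trivialize $B$ by \Cref{lem:loc-triv-alg}, show $A$ is itself curvilinear, trivialize $A$, read off the shape of the inclusion) matches the paper's, but two of the steps have genuine gaps. First, the fibrewise structural argument is not sound as written. The claim that $1,f,\dots,f^{m-1}$ have distinct valuations presupposes that these powers are nonzero, i.e.\ $(m-1)c<dm$, which is not known at that point; and the claim that $1,t,\dots,t^{c-1}$ are $A_s$-linearly independent ``because distinct indices yield distinct residues mod $c$'' fails because of truncation at $t^{dm}$: the residue argument only gives $a_it^i=0$ for each $i$, and $a_it^i=0$ does not force $a_i=0$ (e.g.\ for $A_s=k\oplus kt^3\subset k[t]/(t^4)$ one has $t^3\cdot t=0$). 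Since that independence is exactly what fails in the non-free examples you are trying to exclude, the derivation of $c\le d$ is essentially circular. The paper's argument avoids this: freeness gives $\dim_k B_s/m_{A_s}B_s=d$, so the ideal $m_{A_s}B_s$ of $k[t]/(t^{dm})$ equals $(t^d)$; hence $m_{A_s}$ contains an element $a$ of valuation exactly $d$, and $\dim_k A_s/aA_s=\tfrac{1}{d}\dim_k B_s/aB_s=1$ forces $m_{A_s}=(a)$, so $A_s$ is curvilinear and the embedding has the required shape.

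The more serious gap is the passage from fibrewise statements to statements over $S$. A global section of $\cO_{S}$ that vanishes at every point is only nilpotent, not zero, unless $S$ is reduced; so your conclusion $h_1=\dots=h_{d-1}=0$ does not follow from vanishing on geometric fibers, and for the same reason membership of $(A,q|_A)$ in the locally closed substack $\cF^c_m$ cannot be verified on geometric fibers alone. This is precisely where the paper invests its real effort: following the template of \Cref{lem:loc-triv-alg}, it supplements the classification over algebraically closed fields with a first-order rigidity computation over $\spec k[\epsilon]$, identifying the relevant deformation map with $T^1(A/k,A)\simeq k[t]/(t^{m-1})\to T^1(A/k,B)\simeq k[t]/(t^{d(m-1)})$ (which is $\phi_d$, hence injective). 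Without this tangent-space/deformation step, or some substitute for it, your argument only establishes the lemma over reduced base schemes.
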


\begin{proof}
	First of all, an easy computation shows that any finite flat extension of pointed algebras of degree $d$ $$\cO_{S}[t]/(t^{m})\into \cO_{S}[t]/(t^{dm})$$ 
	is of the form $\phi_d$ for any scheme $S$. Therefore, it is enough to prove that if $A\into B$ is finite flat of degree $d$, then $B$ curvilinear implies $A$ curvilinear. In analogy with the proof of \Cref{lem:loc-triv-alg}, we prove the statement for $S=\spec k$ and then for $S=\spec k[\epsilon]/(\epsilon^2)$ with $k$ algebraically closed field.
	
	Firstly, suppose $S=\spec k$ with $k$ algebraically closed. By \Cref{lem:loc-triv-alg}, we know that $(B,q)\simeq (k[t]/(t^{dm}),q_0)$. We need to prove now that also $A$ is curvilinear. Clearly $A$ is local because of the going up property of flatness and we denote by $m_A$ its maximal ideal. 
	
	If we tensor $A\into B$ by $A/m_A$, we get $k\into k[t]/m_Ak[t]$. However flatness implies that the extension $k\into k[t]/(m_Ak[t])$ has degree $d$, therefore because $k[t]$ is a PID, it is clear that $m_Ak[t]\subset m_q^{d}$ and the morphism $m_A/m_A^2\rightarrow m_q^d/m_q^{d+1}$ is surjective. Let $a \in m_A$ be an element whose image is $t^d$ modulo $t^{d+1}$. If we now consider $A/a\into B/aB\simeq k[t]/(t^d)$, by flatness it still has length $d$, but this implies $A/a\simeq k$ or equivalently $m_A=(a)$. Therefore $A$ is curvilinear too. 
	
	Suppose now $S=\spec k[\epsilon]$.
    We know that, given a morphism of schemes $X\rightarrow Y$, we have the exact sequence
	$$ {\rm Def}_{X\rightarrow Y} \longrightarrow {\rm Def}_X \oplus {\rm Def}_Y \longrightarrow \ext^1_{\cO_X}(Lf^*{\rm NL}_Y,\cO_X)$$
	where ${\rm NL}_Y$ is the naive cotangent complex of $Y$. We want to prove that if $X\rightarrow Y$ is the spectrum of the extension $A\into B$,  then the morphism 
	$$ {\rm Def}_X \longrightarrow \ext^1_{\cO_X}(Lf^*{\rm NL}_Y,\cO_X)$$ 
    is injective. This implies the thesis because ${\rm Def}_Y=0$.

    We can describe the morphism
    $$
    {\rm Def}_X \longrightarrow \ext^1_{\cO_X}(Lf^*{\rm NL}_Y,\cO_X)
    $$ explicitly using the Schlessinger's functors $T^i$. More precisely, it can be described as a morphism
    $$T^1(A/k,A)\rightarrow T^1(A/k,B);$$
	an easy computation shows that $T^1(A/k,A)\simeq k[t]/(t^{m-1})$ and $T^1(A/k,B)\simeq k[t]/(t^{d(m-1)})$. Through these identifications, the morphism  $$T^1(A/k,A)\rightarrow T^1(A/k,B)$$ is exactly the morphism $\phi_d$ defined earlier, namely it is defined by the association $t\mapsto t^dp(t)$ with $p(0)\neq 0$. The injectivity is then straightforward.
\end{proof}

Now we are ready to describe the morphism $\cE_{m,d} \rightarrow \cF_{md}$. Let us define $A_0:=\kappa[t]/(t^m)$ and $B_0:=\kappa[t]/(t^{md})$. 
Let $E_{m,d}$ be the category fibered in groupoids whose objects are $$\Big(S,(A\into B,q) ,\phi_A:(A,q)\simeq (A_0\otimes S, q_0 \otimes S), \phi_B: (B,q )\simeq (B_0 \otimes S,q_0\otimes S)\Big)$$ where $(A\into B,q) \in \cE_{m,d}(S)$. The morphism are defined in the obvious way. It is easy to see that $E_{m,d}$ is in fact fibered in sets. As before, we set $G_m:=\underaut(A_0,q_0)$ and $G_{dm}:=\underaut(B_0,q_0)$. Clearly we have a right action of $G_{dm}$ and a left action of $G_m$ on $E_{m,d}$.

\begin{proposition}
	We have the follow isomorphism of fibered categories
	$$ \cE_{m,d} \simeq [E_{m,d}/G_m\times G_{md}]$$ 
	and through this identification the morphism $\cE_{m,d} \rightarrow \cF_{dm}$  is just the projection to the classifying space $\cB G_{md}$.
\end{proposition}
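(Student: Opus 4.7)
The plan is to exhibit the forgetful functor $\pi:E_{m,d}\arr \cE_{m,d}^c$ (given by $(S,(A\into B,q),\phi_A,\phi_B)\mapsto (S,(A\into B,q))$) as a $(G_m\times G_{md})$-torsor, where the group acts on the right on $E_{m,d}$ by $(g_m,g_{md})\cdot(\phi_A,\phi_B):=(g_m^{-1}\circ\phi_A,g_{md}^{-1}\circ\phi_B)$. Once this is established, the first claim follows from the standard identification $\cE_{m,d}^c\simeq[E_{m,d}/(G_m\times G_{md})]$. For the second claim, $\cF_{md}^c\simeq\cB G_{md}$ via \Cref{cor:descr-finite-alg}, and the forgetful morphism $\cE_{m,d}^c\arr\cF_{md}^c$ sending $(A\into B,q)\mapsto(B,q)$ corresponds, under the torsor presentation, to the $(G_m\times G_{md})$-equivariant map $E_{m,d}\arr\spec\kappa$ where the action on the target factors through the projection $G_m\times G_{md}\twoheadrightarrow G_{md}$.

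First I would check that the $(G_m\times G_{md})$-action on $E_{m,d}$ is well defined and preserves the fibers of $\pi$; this is essentially a bookkeeping check that the action affects only the trivializations $\phi_A,\phi_B$. Next, to see that $\pi$ is a torsor, I need to verify two things: (i) local triviality of $\pi$ in the \'etale topology, and (ii) that the action is simply transitive on the fibers of $\pi$. Point (i) is exactly the content of \Cref{lem:triv-ext}: given an object $(S,(A\into B,q))$ of $\cE_{m,d}^c$, there is an \'etale cover $S'\arr S$ over which both $A$ and $B$ admit trivializations $\phi_A,\phi_B$, hence $\pi$ admits \'etale-local sections. For (ii), freeness is automatic since composing a fixed trivialization with a nontrivial automorphism of $(A_0\otimes S,q_0\otimes S)$ (resp.\ $(B_0\otimes S,q_0\otimes S)$) yields a different trivialization; transitivity follows because any two trivializations of the same pointed algebra differ by an automorphism, i.e.\ by an element of $G_m(S)$ (resp.\ $G_{md}(S)$).

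Having established that $\pi$ is a $(G_m\times G_{md})$-torsor, the isomorphism $\cE_{m,d}^c\simeq[E_{m,d}/(G_m\times G_{md})]$ is formal. For the statement about the projection, I would argue as follows: the forgetful functor $\cE_{m,d}^c\arr\cF_{md}^c$, composed with the atlas $E_{m,d}\arr\cE_{m,d}^c$, sends $(S,(A\into B,q),\phi_A,\phi_B)$ to $(S,(B,q),\phi_B)$; this is a $(G_m\times G_{md})$-equivariant morphism from $E_{m,d}$ to the atlas $\spec\kappa\arr\cF_{md}^c\simeq\cB G_{md}$, with $G_m$ acting trivially on the target and $G_{md}$ acting in the standard way. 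This identifies the morphism with the one induced by the projection $G_m\times G_{md}\twoheadrightarrow G_{md}$ on classifying stacks, as claimed.

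The main (and essentially the only substantive) obstacle is the local triviality step, and this is already handled by \Cref{lem:triv-ext}; the rest of the argument is a formal verification that a faithful morphism of stacks with the expected \'etale-local structure is a torsor and realizes the quotient presentation.
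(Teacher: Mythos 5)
Your proposal is correct and follows the same route as the paper: the paper's proof is the single line ``It follows from \Cref{lem:triv-ext},'' and your argument is exactly the expansion of that line — local triviality of the forgetful map $E_{m,d}\to\cE_{m,d}^c$ comes from \Cref{lem:triv-ext}, simple transitivity of the $(G_m\times G_{md})$-action on trivializations is formal, and the identification of $\cE_{m,d}^c\to\cF_{md}^c$ with the projection to $\cB G_{md}$ follows by equivariance. No gaps; you have simply written out the verification the paper leaves implicit.
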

\begin{proof}
	It follows from \Cref{lem:triv-ext}.
\end{proof}

\Cref{lem:triv-ext} also tells us how to describe $E_{m,d}$: the map $\phi_d$  is completely determined by $p(t) \in \cO_S[t]/(t^{d(m-1)})$ with $p(0)\in \cO_S^{\times}$. Therefore, we have a morphism 
$$(\AA^1\setminus 0)\times \AA^{d(m-1)-1} \longrightarrow E_{m,d}$$ 
which is easy to see that it is an isomorphism. Consider now the subscheme of $E_{m,d}$ defined as $$V:=\{ f \in (\AA^1\setminus 0)\times \AA^{d(m-1)-1}|\, a_0=1, \, a_{kd}(f)=0\quad {\rm for }\quad k=1,\dots,m-2\}$$
where $a_{l}(f)$ is the coefficient of $t^{l}$ of the polynomial $f$. Clearly $V$ is an affine space of dimension $(m-1)(d-1)$.

\begin{lemma}\label{lem:descr-affine-bundle}
	In the situation above, we have the isomorphism

	$$ V \simeq [E_{m,d}/G_m],$$
	
	in particular the morphism $\cE_{m,d} \rightarrow \cF_{md}$ is an affine bundle of dimension $(m-1)(d-1)$.
\end{lemma}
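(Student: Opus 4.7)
The plan is to exhibit $V$ as a scheme-theoretic slice for the free $G_m$-action on $E_{m,d}$, giving $V \simeq [E_{m,d}/G_m]$ as schemes, and then combine with the previous proposition together with $\cF_{dm}^c \simeq \cB G_{dm}$ (from \Cref{cor:descr-finite-alg}) to read off the affine bundle structure of $\cE_{m,d}^c \to \cF_{dm}^c$.

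First I would write the $G_m$-action explicitly. If $\phi \in E_{m,d}$ corresponds to $\phi(t_A) = t^d p(t)$ with $p(t) = a_0 + a_1 t + \dots + a_{d(m-1)-1}\, t^{d(m-1)-1}$ and $a_0 \in \gm$, and if $g \in G_m$ has $g^{-1}(t_A) = \sum_{i=1}^{m-1} \beta_i t_A^{i}$ with $\beta_1 \in \gm$, then since $\phi$ is a ring homomorphism
\[
(g\cdot \phi)(t_A) \;=\; \phi(g^{-1}(t_A)) \;=\; \sum_{i=1}^{m-1} \beta_i\,(t^d p(t))^{i} \;=\; t^d\,\tilde p(t),\qquad \tilde p(t) = \sum_{i=1}^{m-1} \beta_i\,t^{(i-1)d}\,p(t)^{i}.
\]

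Next I would show that there is a unique $g \in G_m(T)$ sending a given $\phi \in E_{m,d}(T)$ into $V(T)$, and that this unique $g$ depends algebraically on $\phi$. Only the summands with $i \leq k+1$ contribute to $[t^{kd}]\tilde p$, and the summand $i = k+1$ contributes exactly $\beta_{k+1}\,a_0^{k+1}$ (the constant term of $p^{k+1}$ times $\beta_{k+1}$), so
\[
[t^{kd}]\tilde p \;=\; \beta_{k+1}\,a_0^{k+1} \,+\, P_k\bigl(\beta_1,\dots,\beta_k,\,a_0,\dots,a_{kd}\bigr)
\]
for a universal polynomial $P_k$. Setting $[t^0]\tilde p = 1$ yields $\beta_1 = a_0^{-1} \in \gm$, and then setting $[t^{kd}]\tilde p = 0$ for $k = 1,\dots,m-2$ yields $\beta_{k+1}$ inductively as a Laurent polynomial in $a_0^{\pm 1}, a_1,\dots,a_{kd}, \beta_1,\dots,\beta_k$, using only that $a_0$ is invertible. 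This triangular system defines a retraction $r \colon E_{m,d} \to V$ and, together with the action map, produces mutually inverse morphisms $G_m \times V \leftrightarrows E_{m,d}$. Hence $[E_{m,d}/G_m] \simeq V$, and a direct count shows $\dim V = (d(m-1)-1) - (m-2) = (m-1)(d-1)$.

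Combining with the isomorphism $\cE_{m,d}^c \simeq [E_{m,d}/(G_m \times G_{dm})]$ from the previous proposition (the two actions commute), the morphism $\cE_{m,d}^c \to \cF_{dm}^c \simeq \cB G_{dm}$ is identified with $[V/G_{dm}] \to \cB G_{dm}$, where the $G_{dm}$-action on $V$ is the one inherited through the isomorphism $V \simeq [E_{m,d}/G_m]$. Pulling back along the smooth cover $\spec \kappa \to \cB G_{dm}$ produces the affine space $V$, so $\cE_{m,d}^c \to \cF_{dm}^c$ is an affine bundle of rank $(m-1)(d-1)$. The main obstacle is verifying the triangular shape of the normalization equations; everything else is formal once the leading term $\beta_{k+1}\,a_0^{k+1}$ in $[t^{kd}]\tilde p$ is identified and $a_0$ is observed to be a unit.
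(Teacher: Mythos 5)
Your proposal is correct and takes essentially the same route as the paper: the paper's proof simply restricts the action map to $G_m \times V \to E_{m,d}$ and asserts that "a straightforward computation shows that it is an isomorphism," which is exactly the triangular system in the $\beta_i$'s that you solve explicitly (the leading term $\beta_{k+1}a_0^{k+1}$ in $[t^{kd}]\tilde p$ with $a_0$ a unit). Your write-up just supplies the details the paper leaves implicit.
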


\begin{proof}
	Consider the morphism 
	$$ G_m \times V \longrightarrow E_{m,d}$$
	which is just the restriction of the action of $G_m$ to $V$. A straightforward computation shows that it is an isomorphism. The statement follows.
\end{proof}

\chapter{Pushout and blowups in families}

In this appendix, we discussion two well-known constructions: pushout and blowup. Specifically, we study these two constructions in families and give conditions to get flatness and compatibility with base change. Moreover, we study when the two constructions are one the inverse of the other.

\begin{lemma}\label{lem:pushout}
	Let $S$ be a scheme. Consider three schemes $X$,$Y$ and $Y'$ which are flat over $S$. Suppose we are given $Y\hookrightarrow X$ a closed immersion and $Y\rightarrow Y'$ a finite flat morphism. Then the pushout $Y'\bigsqcup_Y X$ exists in the category of schemes, it is flat over $S$ and it commutes with base change. 
	
	Furthermore, if $X$ and $Y'$ are proper and finitely presented scheme over $S$, the same is true for $Y' \bigsqcup_Y X$.
\end{lemma}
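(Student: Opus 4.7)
The plan is to follow the framework of Ferrand's pincement construction: build $Z := Y' \sqcup_Y X$ locally by forming the fibered product of rings, glue, and then read off flatness, base change, and properness from a single short exact sequence encoding the structure sheaf. First I would reduce to $S = \spec R$ affine. For each affine open $\spec B' \subset Y'$, the preimage $\spec B := \pi^{-1}(\spec B') \subset Y$ is affine because $\pi$ is finite, and since $Y \into X$ is a closed immersion with $\spec B$ finite over $\spec B'$, the fibers of $\spec B$ over points of $\spec B'$ are finite sets in $X$; these can be engulfed in a common affine open $\spec A \subset X$ by the standard fact that finitely many points of a scheme admit a common affine neighborhood. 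Define the local pushout as $\spec(B' \times_B A)$ with its canonical maps from $\spec A$ and $\spec B'$; a direct check of the universal property shows this is the pushout in the category of $R$-schemes, and the local models glue along overlaps to give a global $S$-scheme $Z$.

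For flatness and compatibility with base change, the key is the short exact sequence of $R$-modules
\[ 0 \to B' \times_B A \to B' \oplus A \xrightarrow{\;(b',a) \mapsto \pi(b') - i^*(a)\;} B \to 0, \]
which is exact on the right because $i^* : A \twoheadrightarrow B$ is surjective (as $Y \into X$ is closed). Since $B$, $B'$ and $A$ are all $R$-flat, the long $\tor$ exact sequence forces $B' \times_B A$ to be $R$-flat. Moreover, for any morphism $R \to R'$, the vanishing of $\tor_1^R(B, R')$ keeps the sequence exact after tensoring with $R'$, and the leftmost term is then identified with both $(B' \times_B A) \otimes_R R'$ and the fibered product $(B' \otimes_R R') \times_{B \otimes_R R'} (A \otimes_R R')$, proving the base change claim.

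For the final assertion, finite presentation is local on $S$ and follows from the exact sequence: the ideal $\ker(B' \times_B A \to B') \cong \{0\} \oplus \ker(i^*)$ is finitely generated (since $i$ is a closed immersion of finitely presented $R$-schemes), and $B' \times_B A \twoheadrightarrow B'$ is surjective with finitely presented target, yielding finite presentation of $B' \times_B A$. For properness, observe that the natural morphism $q : X \sqcup Y' \to Z$ is a finite surjection: $X \to Z$ is a closed immersion and $Y' \to Z$ is finite because $B' \times_B A \twoheadrightarrow B'$ is surjective with finitely generated kernel. Since $X \sqcup Y' \to S$ is proper and $q$ is surjective, $Z \to S$ is universally closed; separatedness of $Z$ over $S$ follows from that of $X$ and $Y'$ by checking the diagonal through the pushout description.

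The main obstacle I anticipate lies in Step 1, specifically in verifying that the local pushouts glue canonically into a global scheme: one must check the universal property at overlaps and ensure that the common-affine-neighborhood lemma applies uniformly in families. All the subsequent assertions reduce, by the exact sequence above, to elementary commutative algebra manipulations, which is precisely the pattern worked out in detail by Ferrand's paper on pinchings.
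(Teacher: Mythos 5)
Your core argument for flatness and base change is correct and in fact slightly cleaner than the paper's. The paper also reduces to the affine fiber product $B'\times_B A$, but it extracts flatness from the sequence $0\to B'\times_B A\to A\to Q\to 0$ with $Q=\coker(B'\to B)$, which requires first knowing that $Q$ is $R$-flat --- a point that itself rests on the universal injectivity of the finite locally free map $B'\to B$. Your sequence $0\to B'\times_B A\to B'\oplus A\to B\to 0$ needs only that $B$, $B'$, $A$ are $R$-flat: the kernel of a surjection from a flat module onto a flat module is flat, and flatness of $B$ keeps the sequence exact after $-\otimes_R R'$, which identifies the base-changed fiber product. That is a genuinely tidier route to the same two conclusions. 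Your properness argument (a finite surjection $X\sqcup Y'\to Z$ from a proper scheme) is also the right idea, except that you have the two structure maps backwards: it is $Y'\to Z$ that is a closed immersion (dual to the surjection $B'\times_B A\twoheadrightarrow B'$) and $X\to Z$ that is finite but not closed (dual to the injective finite extension $B'\times_B A\hookrightarrow A$, whose cokernel is $B/B'$); this does not affect the argument, since both maps are finite.

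Two points are genuine gaps. First, the finite-presentation step runs the standard implication backwards: from ``$C\twoheadrightarrow B'$ with $B'$ finitely presented over $R$ and finitely generated kernel'' one cannot conclude that $C$ is finitely presented over $R$ --- that implication only goes from $C$ to $C/I$, and as stated it would let you conclude finite presentation of an arbitrary ring from one of its quotients. One must first show that $C=B'\times_B A$ is of finite type over $R$ (using that $A$ is finite over $C$ and of finite type over $R$, in the style of the Artin--Tate lemma, or by reducing to Noetherian $S$ as the paper does before invoking the Stacks Project). Second, the existence step you flag hides real content: an affine closed subscheme such as $\pi^{-1}(V')\subset X$ need not be contained in any affine open of an arbitrary scheme $X$ (doubled-origin examples), and this ``common affine neighbourhood'' condition is precisely the delicate hypothesis in Ferrand's theorem; one must also verify the universal property of $\spec(B'\times_B A)$ against arbitrary schemes, not just affine ones. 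The paper disposes of both by citing the Stacks Project, and in all its applications $X$ is projective over $S$, so nothing breaks --- but your sketch should either impose such a condition or cite the reference rather than assert the gluing.
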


\begin{proof}
	The existence of the pushout follows from Proposition 37.65.3 of \cite{StProj}. In fact, the proposition tells us that the morphism $Y'\rightarrow Y'\bigsqcup_Y X$ is a closed immersion and $X \rightarrow Y'\bigsqcup_Y X$ is integral, in particular affine. It is easy to prove that it is in fact finite and surjective because $Y\rightarrow Y'$ is finite and surjective.  Let us prove that $Y'\bigsqcup_Y X \rightarrow S$ is flat. Because flatness is local condition and all morphisms are affine, we can reduce to a statement of commutative algebra.
	
	Namely, suppose we are given $R$ a commutative ring and $A$,$B$ two flat algebras. Let $I$ be an ideal of $A$ such that $A/I$ is $R$-flat and $B\hookrightarrow A/I$ be a finite flat extension. Then $B\times_{A/I}A$ is $R$-flat. To prove this, we complete the fiber square with the quotients 
	$$
	\begin{tikzcd}
	0 \arrow[r] & B\times_{A/I}A \arrow[d, two heads] \arrow[r, hook] & A \arrow[d, two heads] \arrow[r] & Q' \arrow[d] \arrow[r] & 0 \\
	0 \arrow[r] & B \arrow[r, hook]                                   & A/I \arrow[r]                    & Q \arrow[r]                                 & 0
	\end{tikzcd}
	$$
	and we notice that $Q'\rightarrow Q$ is an isomorphism. Because the extension $B\hookrightarrow A/I$ is flat, then $Q$ (thus $Q'$) is $R$-flat and the $R$-flatness of $A$ and $Q'$ implies the flatness of $B\times_{A/I}A$.  
	
	Suppose now we have a morphism $T\rightarrow S$. This induces a natural morphism 
	$$\phi_T:Y'_T \bigsqcup_{Y_T} X_T \rightarrow (Y'\bigsqcup_Y X)_T$$ where by $(-)_T$ we denote the base change $(-)\times_S T$. Because being an isomorphism is a local condition, we can reduce to the following commutative algebra's statement. Suppose we have a morphism $R\rightarrow \widetilde{R}$; we can consider the same morphism of exact sequence of $R$-modules as above and tensor it with $\widetilde{R}$. We denote by $\widetilde{(-)}$ the tensor product $(-)\otimes_R \widetilde{R}$. The flatness of $Q$ implies that we get that the commutative diagram
	$$
	\begin{tikzcd}
	0 \arrow[r] & \widetilde{B\times_{A/I}A} \arrow[d, two heads] \arrow[r, hook] & \widetilde{A} \arrow[d, two heads] \arrow[r] & \widetilde{Q'}\arrow[d] \arrow[r] & 0 \\
	0 \arrow[r] & \widetilde{B} \arrow[r, hook]                                     & \widetilde{A/I} \arrow[r]                    & \widetilde{Q} \arrow[r]            & 0
	\end{tikzcd}
	$$
	is still a morphism of exact sequence. Because also $\widetilde{B} \times_{\widetilde{A/I}}  \widetilde{A}$ is the kernel of $\widetilde{A} \rightarrow \widetilde{Q'}$, we get that $\phi_T$ is an isomorphism. 
	
	Finally, using the fact that the pushout is compatible with base change, we get that we can apply the same strategy used in \Cref{lem:quotient} to reduce to the case $S=\spec R_0$ with $R_0$ of finite type over $k$. Thus we can use Proposition 37.65.5 of \cite{StProj} to prove that the pushout (in the situation when $Y'\rightarrow Y$ is flat) preserves the property of being proper and finitely presented.
\end{proof}

The second construction we want to analyze is the blow-up.

\begin{lemma}\label{lem:blowup}
	Let $X/S$ be a flat, proper and finitely presented morphism of schemes and let $I$ be an ideal of $X$ such that $\cO_{X}/I^{m}$ is flat over $S$ for every $m\geq 0$. Denote by ${\rm Bl}_IX\rightarrow X$ the blowup morphism, then ${\rm Bl}_IX \rightarrow S$ is flat, proper and  finitely presented and its formations commutes with base change.
\end{lemma}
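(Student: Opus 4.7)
\medskip\noindent\textit{Proof proposal.}

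The plan is to first extract flatness of the graded pieces of the Rees algebra from the hypothesis, then show that its formation commutes with base change, and finally transfer both properties to the Proj.

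\emph{Step 1: the Rees algebra is $S$-flat in each degree.} I would induct on $n$ using the pair of short exact sequences
\begin{equation*}
0\arr I^n/I^{n+1}\arr \cO_X/I^{n+1}\arr \cO_X/I^n\arr 0,
\qquad
0\arr I^n\arr \cO_X\arr \cO_X/I^n\arr 0.
\end{equation*}
The first, together with the flatness hypothesis on $\cO_X/I^m$, gives flatness of $I^n/I^{n+1}$ over $\cO_S$; the second then gives flatness of $I^n$ over $\cO_S$. Hence every graded piece of the Rees algebra $\cR:=\bigoplus_{n\ge 0}I^n$ is $S$-flat.

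\emph{Step 2: compatibility of $I^n$ with base change.} For any morphism $T\arr S$, I would use $S$-flatness of $\cO_X/I^n$ applied to the second sequence above to deduce that the natural map $I^n\otimes_{\cO_S}\cO_T\arr \cO_{X_T}$ is injective, with image the $n$-th power of $I\cdot\cO_{X_T}$. Consequently $\cR$ commutes with arbitrary base change in each degree, and so does the sheaf of graded algebras itself.

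\emph{Step 3: transfer to the blowup.} Working \'etale-locally on $X$ (which is allowed since ${\rm Bl}_IX$ is a Zariski-local construction on $X$ and $X/S$ is finitely presented, whence $I$ is locally of finite type on the relevant affine charts), the blowup is covered by affine opens of the form $D_+(f)=\spec \cR_{(f)}$ for sections $f\in I$. The ring $\cR_{(f)}$ can be written as the filtered colimit $\varinjlim_n I^n$ along multiplication by $f$; since each $I^n$ is $S$-flat by Step 1, $\cR_{(f)}$ is $S$-flat as well. For compatibility with base change, the identity $\cR_{(f)}\otimes_{\cO_S}\cO_T=(\cR\otimes_{\cO_S}\cO_T)_{(f)}$ reduces to Step 2 together with the fact that filtered colimits commute with tensor products. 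Properness and finite presentation of $\mathrm{Bl}_IX\arr X$ follow from the fact that, locally on $X$, $I$ is finitely generated, so $\cR$ is a finitely generated graded $\cO_X$-algebra and its Proj is projective and finitely presented over $X$; composing with $X\arr S$, which is proper and finitely presented by assumption, yields the conclusion.

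\emph{Main obstacle.} The delicate point is Step 3: although flatness and base change for each $I^n$ are formal from the hypothesis, showing that these properties pass through the $\Proj$ construction requires handling the filtered colimit description of the affine charts carefully, and verifying that $I$ is locally of finite type in the setting at hand (so that $\cR$ is finitely generated and the blowup is projective rather than merely proj-ective). Once this finiteness is secured, the rest is a formal combination of flatness of filtered colimits of flat modules and the compatibility of $\Proj$ with base change of graded algebras generated in degree one.
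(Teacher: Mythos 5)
Your proposal is correct and follows essentially the same route as the paper: flatness of each $I^m$ (and hence of the Rees algebra and its Proj) is extracted from the sequence $0\to I^m\to\cO_X\to\cO_X/I^m\to 0$, and base-change compatibility comes down to the vanishing of $\tor_1^{\cO_S}(\cO_X/I^m,-)$, which is exactly the paper's argument --- the paper phrases this as the equality $m_sI^m=m_s\cap I^m$ on closed fibers, whereas you verify arbitrary base change directly on the charts $D_+(f)$. Your extra care with the filtered-colimit description of $\cR_{(f)}$ and with local finite generation of $I$ only makes explicit what the paper delegates to ``classical results.''
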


\begin{proof}
	The flatness of $\cO_X/I^m$ implies the flatness of $I^m$, therefore $\oplus_{m\geq 0}I^m$ is clearly flat over $S$. The universal property of the blowup implies that it is enough to check that the formation of the blowup commutes with base change when we restrict to the fiber of a closed point $s \in S$. Therefore it is enough to prove that the inclusion $m_sI^m\subset m_s\cap I^m$ is an equality for every $m\geq 0$ with $m_s$ the ideal associated to the closed point $s$. The lack of surjectivity of that inclusion is encoded in $\tor_S^1(\cO_X/I^m,\cO_S/m_S)$ which is trivial due to the $S$-flatness of $\cO_X/I^m$. The rest follows from classical results.
\end{proof}
\begin{remark}
	Notice that the flatness of the blowup follows from the flatness of $I^m$ for every $m$ whereas we need the flatness of $\cO_X/I^m$ to have the compatibility with base change.
\end{remark}
\begin{lemma}\label{lem:cond-diag}
	Let $R$ be a ring and $A\into B$ be an extension of $R$-algebras. Suppose $I\subset A$ is an ideal of $A$ such that $I=IB$. Then the following commutative diagram
	$$
	\begin{tikzcd}
	A \arrow[r, hook] \arrow[d, two heads] & B \arrow[d, two heads] \\
	A/I \arrow[r, hook]                    & B/I                   
	\end{tikzcd}
	$$
	is a cartesian diagram of $R$-algebras. Furthermore, suppose we have a cartesian diagram of $R$-algebras
	$$
	\begin{tikzcd}
	A:=\widetilde{A}\times_{B/I}B \arrow[d] \arrow[r, hook] & B \arrow[d] \\
	\widetilde{A} \arrow[r, hook]                       & B/I                      
	\end{tikzcd}
	$$
	then the morphism $A\rightarrow \widetilde{A}$ is surjective and its kernel coincide (as an $R$-module) with the ideal $I$.
\end{lemma}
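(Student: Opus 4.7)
The plan is to verify both claims by working directly with elements, since the statement is essentially a formal check once one uses the hypothesis $I=IB$, which is the assertion that $I$ is also an ideal of $B$ (being stable under multiplication by $B$ and contained in $A$).

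For the first part, I would define the natural comparison map $\alpha\colon A \to (A/I)\times_{B/I} B$ by $a\mapsto (a\bmod I, a)$ and show it is bijective. Injectivity is immediate, since already the second projection $\alpha$ composed with the second projection is the inclusion $A\hookrightarrow B$. For surjectivity, I take a compatible pair $(\bar a, b)$ and pick any lift $a_0\in A$ of $\bar a$; then $b-a_0$ maps to $0$ in $B/I$, hence belongs to the ideal $IB$ of $B$. The hypothesis $IB=I\subseteq A$ forces $b-a_0\in A$, so $b=a_0+(b-a_0)\in A$, and this element is the required preimage.

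For the second part, I would let $\pi\colon A=\widetilde A\times_{B/I}B\to \widetilde A$ be the first projection and argue the two assertions separately. Surjectivity of $\pi$ is the lifting lemma: given $\widetilde a\in \widetilde A$, its image in $B/I$ lifts to some $b\in B$ (since $B\twoheadrightarrow B/I$ is surjective), and then $(\widetilde a,b)$ is an element of the fiber product mapping to $\widetilde a$. To identify the kernel, I observe that $\ker\pi=\{(0,b): b\in\ker(B\to B/I)\}$, and $\ker(B\to B/I)$ is precisely the ideal $IB=I$ of $B$. The projection $(0,b)\mapsto b$ then gives the $R$-module identification $\ker\pi\cong I$.

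There is no real obstacle here: the entire content is bookkeeping on the fiber-product of rings, and the only place the hypothesis $I=IB$ enters is to guarantee that the element $b-a_0\in B$ produced by the universal property actually lies in $A$ (equivalently, that $I$ makes sense as an ideal of both $A$ and $B$ with the same underlying set). Since all the maps in sight are $R$-algebra homomorphisms and the fiber product of $R$-algebras is computed on underlying sets, the diagram chase above automatically respects the $R$-module and $R$-algebra structures, so no separate verification is needed.
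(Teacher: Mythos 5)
Your proof is correct and is exactly the routine element chase that the paper itself leaves implicit (its proof reads only ``It follows from a straightforward computation in commutative algebra''). Both halves are handled properly: the hypothesis $I=IB$ is used precisely where it must be, to force $b-a_0\in A$ in the surjectivity step, and the identification $\ker\pi\cong I$ via $(0,b)\mapsto b$ is the intended one.
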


\begin{proof}
	It follows from a straightforward computation in commutative algebra.
\end{proof}	
 
 Finally, we prove that the two constructions are one the inverse of the other.
 
\begin{proposition}\label{prop:pushout-blowup}
	Suppose we are given a diagram
	$$
	\begin{tikzcd}
	\widetilde{D} \arrow[d] \arrow[r, hook] & \widetilde{X} \\
	D                                       &              
	\end{tikzcd}
	$$ of proper, flat, finitely presented schemes over $S$ such that $\widetilde{D} \rightarrow D$ is a finite flat morphism and $\widetilde{D}\into \widetilde{X}$ is a closed immersion of an effective Cartier divisor. Consider the pushout $X:=\widetilde{X}\amalg_{\widetilde{D}}D$ as in \Cref{lem:pushout} and denote by $I_D$ the ideal associated with the closed immersion $D\into X$. Then the pair $(X,I_D)$ over $S$ verifies the hypothesis of \Cref{lem:blowup}. Furthermore, if we denote by $(\overline{X},\overline{D})$ the blowup of the pair $(X,D)$, there exists a unique isomorphism $(\widetilde{X},\widetilde{D}) \simeq (\overline{X},\overline{D})$ of pairs over $(X,D)$.
\end{proposition}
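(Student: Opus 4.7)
\textit{Proof proposal.} First I will verify the hypothesis of Lemma~\ref{lem:blowup}, namely that $\cO_X/I_D^m$ is $S$-flat for all $m \geq 0$. Working locally where $\widetilde X = \spec \widetilde A$, $\widetilde D = V(I)$ with $I$ an invertible ideal (Cartier divisor), and $D = \spec B$ with $B \hookrightarrow \widetilde A/I$ the finite flat extension corresponding to $\widetilde D \to D$, the pushout is $\spec A$ with $A = \widetilde A \times_{\widetilde A/I} B$. A direct inspection shows $I_D = \ker(A \to B)$ coincides with $I$ under the inclusion $A \hookrightarrow \widetilde A$, and since $I \subset A$ I obtain $I_D^m = I^m$ for all $m$ (products in $A$ agree with products in $\widetilde A$). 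Using the short exact sequence
\[
0 \to I_D/I_D^m \to \cO_X/I_D^m \to \cO_D \to 0
\]
and the filtration of $I_D/I_D^m = I/I^m$ with graded pieces $I^k/I^{k+1} \simeq (I/I^2)^{\otimes k}|_{\widetilde D}$, each of which is an invertible sheaf on $\widetilde D$ hence $S$-flat, I conclude $\cO_X/I_D^m$ is $S$-flat.

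Next I construct the morphism $\widetilde X \to \overline X$. Since $I_D \cdot \cO_{\widetilde X}$ equals $I$ under the identification above, which is invertible, the universal property of the blowup yields a unique $X$-morphism $\widetilde X \to \overline X$. The exceptional divisor $\overline D$ is cut out by $I_D \cdot \cO_{\overline X}$, so under this morphism it pulls back precisely to $\widetilde D$; thus the morphism respects the pair structure.

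The main point is then to show $\widetilde X \to \overline X$ is an isomorphism, which I will do by a local calculation on the Rees algebra. Work again locally with $A \subset \widetilde A$ as above and fix a local generator $t$ of $I$ in $\widetilde A$ (a nonzerodivisor). Choose generators $f_1, \dots, f_r \in I$ of $I_D$ as an $A$-ideal, and write $f_i = t b_i$ with $b_i \in \widetilde A$. Since $\sum f_i A = I$, multiplying by $\widetilde A$ gives $\sum f_i \widetilde A = I\widetilde A = t\widetilde A$, and as $t$ is a nonzerodivisor this forces $\sum b_i \widetilde A = \widetilde A$. On the standard affine chart $D_+(f_i)$ of $\Proj_A \bigoplus_{n\geq 0} I_D^n$, a degree-zero element has the form $a/f_i^m$ with $a \in I^m = t^m \widetilde A$, and the equality $a/f_i^m = (a/t^m)/b_i^m$ identifies this chart with $\spec \widetilde A[1/b_i]$. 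Since the $b_i$ generate the unit ideal of $\widetilde A$, these charts cover $\widetilde X$, proving $\overline X \simeq \widetilde X$ as $X$-schemes; uniqueness of the identification is part of the universal property.

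The hard part is the local calculation in the third paragraph: one must recognize that even though $I_D$ need not be principal in $A$, its extension to $\widetilde A$ is principal, and this is exactly what makes the Rees construction collapse back onto $\widetilde A$ on each affine chart. The identification $I_D = I$ under $A \hookrightarrow \widetilde A$ and the flatness-plus-nonzerodivisor argument that $\sum b_i \widetilde A = \widetilde A$ are the key steps that bridge the pushout side and the blowup side.
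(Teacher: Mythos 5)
Your proof is correct and follows essentially the same route as the paper: reduce to the affine case using finiteness of all the maps, identify $I_D$ with the Cartier ideal $I\subset\cO_{\widetilde X}$ via the conductor-square lemma so that $I_D^m=I^m$, and then recognize that the Rees algebra of $I_D$ over $\cO_X$ agrees in positive degrees with that of the invertible ideal $I$ over $\cO_{\widetilde X}$, forcing the blowup to be $\widetilde X$. The only differences are in level of detail — your filtration argument for the $S$-flatness of $\cO_X/I_D^m$ and your chart-by-chart verification with the $b_i$ generating the unit ideal make explicit what the paper dismisses as immediate.
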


\begin{proof}
	Consider the pushout diagram over $S$
	$$
	\begin{tikzcd}
	\widetilde{D} \arrow[d] \arrow[r, hook] & \widetilde{X} \arrow[d] \\
	D \arrow[r, hook]                       & X;                     
	\end{tikzcd}
	$$
	because every morphism is finite and flatness is a local condition, we can restrict ourself to the affine case and \Cref{lem:cond-diag} assures us that $I_D=I_{\widetilde{D}}$, and in particular $I_D^n=I_{\widetilde{D}}^n$ for every $n\geq 1$. Because $I_{\widetilde{D}}$ is a flat Cartier divisor, the same is true for its powers.
	
	Regarding the second part of the statement, we know that the unicity and existence of the morphism $$(\overline{X},\overline{D}) \longrightarrow (\widetilde{X},\widetilde{D})$$ 
	are assured by the universal property of the blowup. As being an isomorphism is a local property, we can reduce again to the affine case (all the morphisms involved are finite). We have an extension of algebras $A\into B$ with an ideal $I$ of $A$ such that $I=IB$ and $I$ is free of rank $1$ as a $B$-module. Therefore we can describe the Rees algebra as follows
	$$R_A(I):=\bigoplus_{n\geq 0}I^n=A\oplus tB[t]$$
	because $I$ is free of rank $1$ over $B$. It is immediate to see that the morphism $\spec B \rightarrow \proj_A(R_A(I))$ is an isomorphism over $\spec A$.
\end{proof}

\begin{proposition}\label{prop:blowup-pushout}
Let $D\into X$ be a closed immersion of proper, flat, finitely presented schemes over $S$ such that the ideal $I_D^n$ is $S$-flat for every $n\geq 1$, consider the blowup $b:\widetilde{X}:={\rm Bl}_DX\rightarrow X$ and denote by $\widetilde{D}$ the proper transform of $D$. Suppose that $\widetilde{D}\rightarrow D$ is finite flat  (in particular the morphism $b$ is finite birational). Moreover, suppose that the ideal $I_D$ is cointained in the conductor ideal $J_b$ of the morphism $b$. Then it exists a unique isomorphism  $\widetilde{X}\amalg_{\widetilde{D}} D\rightarrow X$, which makes everything commutes.  
\end{proposition}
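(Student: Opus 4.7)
The plan is to reduce to the affine situation and then apply the commutative-algebra lemma \Cref{lem:cond-diag}. Since $b\colon\widetilde{X}\to X$ is finite and $D\hookrightarrow X$ is a closed immersion, every morphism in sight is affine, and both $X\to S$ and $\widetilde{X}\to S$ are of the form $\spec_S$ of a coherent sheaf of $\cO_S$-algebras. The existence and uniqueness of the map $\widetilde{X}\amalg_{\widetilde{D}}D\to X$ will come from the universal property of the pushout applied to the obvious commutative square, so all that matters is checking it is an isomorphism, which is a local statement that we may verify on an affine open of $S$ and then on an affine open of $X$.

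After this reduction we are left with the following commutative-algebra claim. Let $A\hookrightarrow B$ be the extension of rings corresponding to $b$ (it is indeed an injection because $b$ is finite and birational and $B$ is $S$-flat), let $I:=I_D\subset A$, and let $\widetilde{I}\subset B$ be the ideal of $\widetilde{D}\subset\widetilde{X}$. I would first identify the pushout ring: by the construction in \Cref{lem:pushout}, $\Gamma(\widetilde{X}\amalg_{\widetilde{D}}D,\cO)$ is the fibre product $B\times_{B/\widetilde{I}}(A/I)$, and the canonical map to $A$ is the one induced by $A\hookrightarrow B$ together with $A\twoheadrightarrow A/I$. The problem thus reduces to showing that
\[
A \;\xrightarrow{\;\sim\;}\; B\times_{B/\widetilde{I}}(A/I).
\]

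The key step, and the heart of the proof, is to extract from the hypothesis $I\subset J_b$ the identity $I=\widetilde{I}$ inside $B$ (equivalently, $IB=I$ as an ideal of $A$, so that $I$ is simultaneously an ideal of $A$ and of $B$). One direction is clear: the proper transform $\widetilde{D}$ lives above $D$ and $b$ is finite birational, so $\widetilde{I}\supset IB$; conversely, because $J_b$ is by definition the largest ideal of $A$ which is also an ideal of $B$, the inclusion $I\subset J_b$ means $IB\subset A$, and then the hypothesis that $\widetilde{D}\to D$ is finite flat forces $IB=\widetilde{I}=I$, since on fibres the preimage ideal and the ideal sheaf of the proper transform coincide. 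This is the step I expect to be slightly delicate; the right way to argue it is probably to observe that $J_b/I$ measures the failure, and to use flatness of $\widetilde{D}\to D$ plus $I^n$-flatness to show this quotient vanishes after tensoring down to the fibres.

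Once $I=\widetilde{I}$ is established, we are exactly in the hypothesis of \Cref{lem:cond-diag}: $I$ is an ideal of $A$ with $I=IB$, so the square
\[
\begin{tikzcd}
A \arrow[r,hook]\arrow[d,two heads] & B\arrow[d,two heads]\\
A/I \arrow[r,hook] & B/I
\end{tikzcd}
\]
is cartesian, which is precisely the isomorphism $A\cong B\times_{B/I}(A/I)$ we wanted. Uniqueness of the isomorphism $\widetilde{X}\amalg_{\widetilde{D}}D\xrightarrow{\sim}X$ over $(X,D)$ is automatic from the universal property of the pushout, completing the proof.
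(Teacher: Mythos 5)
Your proof takes essentially the same route as the paper's (which itself only sketches this step, leaving "the details to the reader"): reduce to the affine case, identify the pushout ring as a fibre product, establish $I=IB$ inside $B$, and invoke \Cref{lem:cond-diag}. The one imprecision is in the delicate step you flag: the obstruction is measured by $IB/I$ rather than $J_b/I$, and it vanishes for a formal reason once $IB\subset A$ is known --- the finite flat map $\widetilde{D}\to D$ is surjective, so $A/I\to B/IB$ is injective, and since it factors through the surjection $A/I\twoheadrightarrow A/IB$ this forces $I=IB$; with that observation your argument is complete.
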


\begin{proof}
	As in the previous proposition, the existence and unicity of the morphism are conseguences of the universal property of the pushout. Therefore as all morphisms are finite we can restrict to the affine case. The fact that the ideal $I_D$ is contained in the conductor ideal implies that we can use \Cref{lem:cond-diag} and conclude. We leave the details to the reader.
\end{proof}

\chapter{Discriminant relations}	

In this appendix, we generalize Proposition 4.2 of \cite{EdFul}. We do not need this result in its full generality in our work, only the formulas in \Cref{rem:gener}.

First of all, we set some notations. Everything is considered over a base field $\kappa$. Let $T$ be the $2$-dimensional split torus $\gm^2$ which embeds in $\GL_2$ as the diagonal matrices and let $E$ be the standard representation of $\GL_2$. Let $n$ be  positive integer. We denote by $\AA(n)$ the $n$-th symmetric power of the dual representation of $E$ and by $\PP^n$  the projective bundle $\PP(\AA(n))$. We denote by $\xi_n$ the hyperplane sections of $\PP^n$. Moreover, we denote by $h_i$ the element of $\ch_T(\PP^n)$ associated to the hyperplane defined by the equation $a_{i,n-i}=0$ for every $i=0,\dots,n$ where $a_{i,n-i}$ is the coordinate of $\PP^n$ associated to the coefficient of $x_0^ix_1^{n-i}$ and $x_0,x_1$ is a $T$-base for $E^{\vee}$. We have the identity
$$ h_i=\xi_n -(n-i)t_0- it_1$$ 
where $t_0,t_1$ are the generators of $\ch(\cB T)$ (acting respectively on $x_0$ and $x_1$). Let $\tau \in \ch(\cB T)$ be the element $t_0-t_1$, then the previous identity can be written as 
$$ h_i=h_0+i\tau.$$
Notice that we can reduce to the $T$-equivariant setting exactly as the authors do in \cite{EdFul}, because $\GL_2$ is a special group and therefore the morphism 
$$ \ch(\cB \GL_2) \longrightarrow \ch(\cB T)$$ 
is injective
 
Let $N$ and $k$ be two positive integers such that $k\leq N$. Inside $\PP^N$, we can define a closed subscheme $\Delta_k$ parametrizing (classes of) homogeneous forms in two variables $x_0,x_1$ which have a root of multiplicity at least $k$. 

We want to study the image of the pushforward of the closed immersion 
$$ \Delta_k \into  \PP^N;$$
we have the description of the Chow ring of $\PP^N$ as the quotient 
$$ \ch_{\GL_2}(\PP^N) \simeq  \ZZ[c_1,c_2,\xi_N]/(p_N(\xi_N))$$
where $p_N(\xi_N)$ is a monic polynomial in $h$ of degree $N+1$ with coefficients in $\ch(\cB\GL_2)\simeq \ZZ[c_1,c_2]$. The coefficient of $\xi_N^i$ is the $(N-i)$-th Chern class of the $\GL_2$-representation $\AA(N)$ for $i=0,\dots,N$. 

Exactly as it was done in \cite{Vis3} and generalized in \cite{EdFul2}, we introduce the multiplication morphism for every positive integer $r$ such that $r\leq N/m$
$$ \pi_r: \PP^r \times \PP^{N-kr} \longrightarrow \PP^N$$ 
defined by the association $(f,g)\mapsto f^kg$.
The $\GL_2$-action on the left hand side is again induced by the symmetric powers of the dual of $E$. Notice that we are not assuming that $N$ is a multiple of $k$. We have an analogue of Proposition 3.3 of \cite{Vis3} or Proposition 4.1 of \cite{EdFul2}.

\begin{proposition}
	Suppose that the characteristic of $\kappa$ is greater than $N$, then the disjoint union of the morphisms $\pi_r$ for $1\leq r\leq N/k$ is a Chow envelope for $\Delta_k\into \PP^N$. 
\end{proposition}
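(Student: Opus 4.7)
The plan is to apply the criterion for Chow envelopes of Proposition~3.5 of \cite{DiLorPerVis}: it suffices to exhibit, for every irreducible closed subvariety $Z \subseteq \Delta_k$, some $r \in \{1, \dots, \lfloor N/k \rfloor\}$ together with an irreducible closed subvariety $Z' \subseteq \PP^r \times \PP^{N-kr}$ such that $\pi_r(Z') = Z$ and $\pi_r|_{Z'}$ is birational. Each $\pi_r$ is automatically proper as a morphism between projective schemes, so this reduction is legitimate.

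The natural stratification to use is by the maximal factorization. Let $\Delta_{k,r} \subseteq \PP^N$ be the set-theoretic image of $\pi_r$, i.e.\ the locus of forms $h$ admitting some factorization $h = f^k g$ with $\deg f = r$; properness makes this closed, and one has
\[
\Delta_k = \Delta_{k,1} \supseteq \Delta_{k,2} \supseteq \cdots \supseteq \Delta_{k,\lfloor N/k\rfloor}.
\]
Given an irreducible closed $Z \subseteq \Delta_k$, let $r_Z$ be the largest $r$ with $Z \subseteq \Delta_{k,r}$ and let $\eta$ denote the generic point of $Z$, corresponding to a form $h$. Factorizations $h = f^k g$ with $\deg f = r$ correspond bijectively to monic divisors $f$ of degree $r$ with $f^k \mid h$. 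Writing $h = \prod_i L_i^{m_i}$ in its splitting into distinct linear factors over the algebraic closure of the residue field, such $f$ are in bijection with integer vectors $(a_i)$ satisfying $0 \leq a_i \leq \lfloor m_i/k \rfloor$ and $\sum_i a_i = r$. The largest achievable $r$ is $r_h = \sum_i \lfloor m_i/k \rfloor$, attained by the unique divisor $f_{\max} = \prod_i L_i^{\lfloor m_i/k\rfloor}$, with $g_{\max} = h/f_{\max}^k$ then forced.

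By maximality of $r_Z$ one has $r_h = r_Z$ at $\eta$, so the fibre $\pi_{r_Z}^{-1}(\eta)$ is the single reduced point $(f_{\max}, g_{\max})$. Taking $Z'$ to be the closure of this point in $\pi_{r_Z}^{-1}(Z)$ yields an irreducible component mapping birationally onto $Z$, as required. The main obstacle is ensuring that this bookkeeping carries through in positive characteristic: when $p \leq k$ a form can admit ``Frobenius'' factorizations $h = u^p v$ not visible to naive linear splitting, and multiplicities can fail to be additive under $k$-th powers. The hypothesis $\cha(\kappa) > N \geq k$ rules this out, ensuring that every binary form of degree $\leq N$ splits faithfully into linear factors with well-defined multiplicities over any extension of $\kappa$ and that $k$-th powers behave as in characteristic zero. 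With this in place, the Galois descent of $f_{\max}$ to the residue field $\kappa(\eta)$ and the birational lift argument proceed essentially as in Proposition~4.2 of \cite{EdFul}.
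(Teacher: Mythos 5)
Your proof is correct and is essentially the argument the paper implicitly relies on: the proposition is stated without proof, deferring to the analogues in \cite{Vis3} and \cite{EdFul2}, whose proofs proceed exactly by your stratification by the maximal $r$ with $Z\subseteq\mathrm{im}(\pi_r)$ together with the uniqueness (hence Galois-stability, using $\cha\kappa>N$ for separability) of the maximal $k$-th-power divisor at the generic point. The only point left tacit is that the birational lift must be produced equivariantly (for invariant subvarieties of the Borel-construction approximations), but since $f_{\max}$ is canonical the lift is automatically invariant, so this is harmless.
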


Therefore it is enough to study the image of the pushforward of $\pi_r$ for $r\leq N/k$.  We have that $\pi^*(\xi_N)=k\xi_r+\xi_{N-kr}$, therefore for a fixed $r$ we have that the image of $\pi_{r,*}$ is generated as an ideal by $\pi_{r,*}(\xi_r^m)$ for $0\leq m\leq r$.

\begin{remark}
	Fix $r\leq N/k$. We have that 
	$$ \pi_{r,*}(\xi_r^m) \in \big(\pi_{r,*}(1), \pi_{r,*}(h_0), \dots, \pi_{r,*}(h_0\dots h_{m-1})\big)$$ 
	in $\ch_T(\PP^r)$ for $m\leq r$. In fact, we have 
	$$ h_0\dots h_{m-1}= \prod_{i=0}^{m-1} (\xi_r-(r-i)t_0-it_1) = \xi_r^m + \sum_{i=0}^{m-1}\alpha_i\xi_r^i $$
	with $\alpha_i \in \ch(\cB T)$. Therefore we can prove it by induction on $m$. 
\end{remark}

Therefore, it is enough to describe the ideal generated by $\pi_{r,*}(h_0\dots h_m)$ for $1\leq r\leq N/k$ and $-1\leq m \leq r-1$. We define the element associated to $m=-1$ as $\pi_{r,*}(1)$. 

Our goal is to prove that the ideal is in fact generated by $\pi_{1,*}(1)$ and $\pi_{1}(h_0)$. To do so, we have to introduce some morphisms first. 

Let $n$ be an integer and $\rho_n:(\PP^1)^{\times n} \rightarrow \PP^n$ by the $n$-fold product morphism, which is an $S_n$-torsor, where $S_n$ is the $n$-th symmetric group. Furthermore, we denote by $\Delta^n:\PP^1 \into (\PP^1)^{\times n}$ the small diagonal in the $n$-fold product, i.e. the morphism defined by the association $f \mapsto (f,f,\dots,f)$. We denote by $h_i$  the fundamental class of $[\infty]:=[0:1]$ in the Chow ring of the $i$-th element of the product $(\PP^1)^n$ (and by pullback in the Chow ring of the product).
\begin{remark}
	Notice that we are using the same notation for two different elements: if we are in the projective space $\PP^n$, $h_i$ is the hyperplane defined by the vanishing of the $i+1$-th coordinate of $\PP^n$. On the contrary, if we are in the Chow ring of the product $(\PP^1)^n$, it represents the subvariety defined as the pullback through the $i$-th projection of the closed immersion $\infty \into \PP^1$. Notice that $\rho_{n,*}(h_0\dots h_s)$ is equal  to $s! (n-s)! h_0\dots h_s$ for every $s\leq n$.
\end{remark}
We have a commutative diagram of finite morphisms 
$$
\begin{tikzcd}
	(\PP^1)^r \times (\PP^1)^{N-kr} \arrow[r, "\alpha_r^k"] \arrow[d, "\rho_r \times \rho_{N-kr}"'] & (\PP^1)^N \arrow[d, "\rho_N"] \\
	\PP^r \times \PP^{N-kr} \arrow[r, "\pi_r"]                                                    & \PP^N                        
\end{tikzcd}
$$  
where $\alpha_r^k= (\Delta^k)^{\times r} \times \id_{(\PP^1)^{N-kr}}$.  We can use this diagram to have a concrete description of $\pi_{r,*}(h_0\dots h_m)$. In order to do so, we first need the following lemma to describe the fundamental  class of the image of $\alpha_r^k$. 

\begin{lemma}\label{lem:k-diag}
We have the following identity 
	$$ [\Delta^k]= \sum_{j=0}^{k-1} \tau^{k-1-j}\sigma_j^k(h_1,\dots,h_k)$$ 
	in the Chow ring of $(\PP^1)^{\times k}$ for every $k \geq 2$, where $\sigma_j^k(-)$ is the elementary symmetric function with $k$ variables of degree $j$. 
\end{lemma}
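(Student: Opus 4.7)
My approach will be to realize $\Delta^k \subset (\PP^1)^k$ as the transverse intersection of the $k-1$ pairwise diagonal divisors $\Delta_{1,j} := \{p_1 = p_j\}$ for $j = 2, \ldots, k$, compute each $[\Delta_{1,j}]$ separately, and then expand the resulting product algebraically. First I would verify that $\bigcap_{j=2}^k \Delta_{1,j} = \Delta^k$ scheme-theoretically and that the intersection is transverse: at a geometric point $(p, \ldots, p) \in \Delta^k$ the conormal line of $\Delta_{1,j}$ is spanned by the difference of the first and $j$-th cotangent basis elements, and these $k-1$ linear forms are visibly independent, so their span has the correct codimension $k-1 = \codim \Delta^k$. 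Hence $[\Delta^k] = \prod_{j=2}^k [\Delta_{1,j}]$ in $\ch_T((\PP^1)^k)$.

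Next I would compute $[\Delta_{1,j}]$. Pulling back along the projection to the $(1,j)$-factors reduces this to the base case of computing $[\Delta]$ in $\PP^1 \times \PP^1$. The diagonal is cut out by the $T$-equivariant section $x_0^{(1)} x_1^{(2)} - x_1^{(1)} x_0^{(2)}$ of $\cO(1,1)$; reading off its weight, or equivalently comparing the restrictions of the conjectural identity to the four $T$-fixed points of $\PP^1 \times \PP^1$ and invoking injectivity of equivariant localization after inverting $\tau$, yields $[\Delta_{1,j}] = \tau + h_1 + h_j$. Therefore $[\Delta^k] = \prod_{j=2}^k (\tau + h_1 + h_j)$.

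The last step is the purely algebraic identity
\[
\prod_{j=2}^k (\tau + h_1 + h_j) = \sum_{j=0}^{k-1} \tau^{k-1-j} \sigma_j^k(h_1, \ldots, h_k)
\]
in the quotient ring $\ZZ[\tau, h_1, \ldots, h_k]/(h_i(h_i + \tau))_{i=1}^k$, where the relation $h_i^2 + \tau h_i = 0$ is the pullback to the $i$-th $\PP^1$-factor of the Chow-ring relation $h_0 h_1 = 0$ on $\PP^1$. The crucial observation is that $(\tau + h_1)^m = \tau^{m-1}(\tau + h_1)$ for every $m \geq 1$, which follows by a short induction from $(\tau + h_1)^2 = \tau^2 + 2\tau h_1 + h_1^2 = \tau(\tau + h_1)$ (using $h_1^2 = -\tau h_1$). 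Expanding the product as $\sum_{s=0}^{k-1}(\tau + h_1)^{k-1-s}\sigma_s^{k-1}(h_2, \ldots, h_k)$, substituting the above reduction for $(\tau+h_1)^m$, and applying Pascal's recursion $\sigma_j^k(h_1,\ldots,h_k) = h_1 \sigma_{j-1}^{k-1}(h_2,\ldots,h_k) + \sigma_j^{k-1}(h_2,\ldots,h_k)$ matches the two sides term by term.

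The main obstacle I expect is the sign-bookkeeping in the base case: the fact that the coefficient is $+\tau$ rather than $-\tau$ in $[\Delta_{1,j}]$ depends on the convention that the lemma's $h_i$ is the class of $[\infty] = [0:1]$ (and on the chosen equivariant structure of $\cO(1)$ on $\PP^1 = \PP(E^\vee)$), so the base case must be pinned down very carefully by localization at the $T$-fixed points. Once that sign is correct, everything else is a formal combinatorial manipulation inside the ring above.
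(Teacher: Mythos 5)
Your proof is correct, and at its core it is the same argument as the paper's: present the small diagonal as a transverse complete intersection of $k-1$ pairwise diagonal divisors, identify each such divisor's class as $\tau + h_i + h_j$, and expand the product formally in $\ZZ[\tau,h_1,\dots,h_k]/(h_i^2+\tau h_i)$. The only genuine difference is the choice of divisors and the resulting bookkeeping. The paper uses the ``path'' presentation $\Delta^k=\bigcap_{i=1}^{k-1}\{p_i=p_{i+1}\}$, so $[\Delta^k]=\prod_{i=1}^{k-1}(h_i+h_{i+1}+\tau)$, and then runs an induction on $k$ in which the relations $h_i^2=-\tau h_i$ and the Pascal recursion for elementary symmetric functions are invoked at each step. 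You instead use the ``star'' presentation $\bigcap_{j=2}^{k}\{p_1=p_j\}$, which puts $h_1$ in every factor and lets you collapse $(\tau+h_1)^m$ to $\tau^{m-1}(\tau+h_1)$; this yields a direct, induction-free expansion that matches the right-hand side after one application of the Pascal recursion. Your version is arguably cleaner combinatorially; the one point you rightly flag, the sign and weight in the base case $[\Delta_{1,j}]=\tau+h_1+h_j$, agrees with the paper's computation of the class of the divisor $x_{0,i}x_{1,j}-x_{0,j}x_{1,i}=0$ and with the $k=2$ case it cites, so your normalization is consistent.
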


\begin{proof}
The diagonal $\Delta^k$ is equal to the complete intersection of the hypersurfaces of $(\PP^1)^k$
of equations $x_{0,i}x_{1,i+1}-x_{0,i+1}x_{1,i}$ for $1\leq i \leq k-1$ (we are denoting  by $x_{0,i},x_{i,1}$ the two coordinates of the $i$-th factor of the product). Therefore we have 
$$ \Delta^k=\prod_{i=1}^{k-1}(h_i+h_{i+1}+\tau). $$
Notice that in the Chow ring of $(\PP^1)^k$ we have $k$ relations of degree $2$ which can be written as $h_i^2+\tau h_i=0$ for every $i=1,\dots,k$. 

The case $k=2$ was already proven in Lemma 3.8 of \cite{Vis3}. We proceed by induction on $k$. We have
$$\Delta^{k+1}=\prod_{i=1}^{k}(h_i+h_{i+1}+\tau)=(h_{k+1}+h_k+\tau) \Delta^k$$ 
and thus by induction 
$$\Delta^{k+1}=\sum_{i=0}^{k-1}h_{k+1} \tau^{k-1-i}\sigma_i^k + \sum_{i=0}^{k-1}  \tau^{k-i}\sigma_i^k  +\sum_{i=0}^{k-1} h_k \tau^{k-1-i}\sigma_i^k.$$ 
Recall that we have the relations $$\sigma_j^k(x_1,\dots,x_k)=x_k\sigma_{j-1}^{k-1}(x_1,\dots,x_{k-1}) + \sigma_j^{k-1}(x_1,\dots,x_{k-1})$$ between elementary symmetric functions (with $\sigma_j^k=0$ for $j>k$), therefore we have
$$ \sum_{i=0}^{k-1} \tau^{k-1-i}  h_k\sigma_i^k = \sum_{i=0}^{k-1} \tau^{k-1-i}(h_k^2\sigma_{i-1}^{k-1}+h_k\sigma_{i}^{k-1})=\sum_{i=0}^{k-1} \tau^{k-1-i}h_k(-\tau \sigma_{i-1}^{k-1} +\sigma_i^{k-1})$$ 
where we used the relation $h_k^2+\tau h_k=0$ in the last equalities. Therefore we get
\begin{equation*}
	\begin{split}
		\Delta^{k+1}&=\sum_{i=0}^{k-1} \tau^{k-1-i}(h_{k+1}\sigma_i^k+h_k\sigma_i^{k-1}) + \sum_{i=0}^{k-1} \tau^{k-i}(\sigma_i^k-h_k\sigma_{i-1}^{k-1})= \\ & =\sum_{i=0}^{k-1}\tau^{k-1-i}(h_{k+1}\sigma_i^k+h_k\sigma_i^{k-1}) + \sum_{i=0}^{k-1} \tau^{k-i}\sigma_i^{k-1}
	\end{split} 
\end{equation*}
Shifting the index of the last sum, it is easy to get the following identity
$$ \Delta^{k+1}= h_{k+1}\sigma_{k-1}^k+h_k\sigma_{k-1}^{k-1} + \tau^k +\sum_{i=0}^{k-2}\tau^{k-1-i}(h_{k+1}\sigma_i^k+h_k\sigma_i^{k-1}+\sigma_{i+1}^{k-1})$$
and the statement follows from shifting the last sum again and from using the relations between the symmetric functions (notice that $h_k\sigma_{k-1}^{k-1}=\sigma_k^k$).
\end{proof}

\begin{remark}
	We define by $\theta_{m,r}$ the $T$-equivariant closed subvariety of $(\PP^1)^r \times (\PP^1)^{N-kr}$ of the form 
	$$\theta_{m,r}:=\infty^{m+1} \times (\PP^1)^{r-(m+1)} \times (\PP^1)^{N-kr}$$
	induced by the $T$-equivariant closed immersion $\infty \into \PP^1$.
	We  have that $$(\rho_r \times \rho_{N-kr})_*(\theta_{m.r})=(r-(m+1))! (N-kr)! h_0\dots h_m$$ for every $m\leq r-1$. 
\end{remark}

From now on, we set $d:=r-(m+1)\geq 0$. Thanks to the remark and the commutativity of the diagram we constructed, we can computate the pushforward $\rho_{N,*}\alpha_{r,*}^k(\theta_{m,r})$ and then divide it by $d!(N-kr)!$ to get $\pi_{r,*}(h_0\dots h_m)$.

We denote by $\alpha_l^{(k,d)}$ the integer
$$ \alpha_{(k,d)}:=\sum_{j_1+\dots+j_d=l}^{0\leq j_s \leq k-1}\binom{k}{j1}\dots \binom{k}{j_d}$$
and by $\beta_l^{(k,m,r)}$ the integer (because $l\leq d(k-1)$)
$$ \beta_l^{(k,m,r)}:=\frac{(N-(m+1)k-l)!}{(N-kr)! d!}.$$

\begin{lemma}\label{lem:pi-r-1}
	We get the following equality 
	$$ \pi_{r,*}(h_0\dots h_m) = \sum_{l=0}^{d(k-1)} \alpha_l^{(k,d)}\beta_{l}^{(k,m,r)} \tau^{d(k-1)-l} h_0\dots h_{(m+1)k+l-1}$$
	in the $T$-equivariant Chow ring of $\PP^N$.
\end{lemma}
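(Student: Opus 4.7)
The plan is to combine the commutative diagram preceding the statement with the explicit formula for the diagonal class provided in \Cref{lem:k-diag}, and then push everything forward to $\PP^N$.

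First, I would use the commutativity of the square to rewrite the claim as a pushforward computation on $(\PP^1)^N$. The identity $(\rho_r\times\rho_{N-kr})_*(\theta_{m,r})=d!(N-kr)!\,h_0\cdots h_m$ noted in the remark, combined with the commutativity of the square and the projection formula, yields
\[
d!(N-kr)!\;\pi_{r,*}(h_0\cdots h_m)=\rho_{N,*}\bigl(\alpha_{r,*}^{k}(\theta_{m,r})\bigr).
\]
So it suffices to compute the right-hand side and then divide by $d!(N-kr)!$; note that the quotient $(N-(m+1)k-l)!/(d!(N-kr)!)$ is precisely $\beta_l^{(k,m,r)}$.

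Next, I would compute $\alpha_{r,*}^{k}(\theta_{m,r})$ explicitly as a class in $(\PP^1)^N$. Since $\alpha_r^k=(\Delta^k)^{\times r}\times\id$ and $\theta_{m,r}=\infty^{m+1}\times(\PP^1)^d\times(\PP^1)^{N-kr}$, the pushforward splits into three contributions: the first $(m+1)k$ factors contribute $h_1 h_2\cdots h_{(m+1)k}$ (as the image of $\infty^{m+1}$ under $(\Delta^k)^{\times(m+1)}$ is the reduced point $(\infty,\ldots,\infty)$); each of the next $d$ blocks of $k$ consecutive factors contributes one copy of $[\Delta^k]$; and the last $N-kr$ factors contribute the trivial class $1$. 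I then invoke \Cref{lem:k-diag} block by block and expand:
\[
\prod_{s=1}^{d}[\Delta^k]_s=\sum_{(j_1,\ldots,j_d)\in\{0,\ldots,k-1\}^{d}}\tau^{\,d(k-1)-\sum_s j_s}\prod_{s=1}^{d}\sigma_{j_s}^{k}(h_{\text{block }s}).
\]
Each summand is a sum of $\prod_s\binom{k}{j_s}$ squarefree monomials in the hyperplane classes of the $dk$ middle factors, and the total degree in the non-$\tau$ variables is $l:=\sum_s j_s$.

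Finally, I would apply $\rho_{N,*}$ term by term. The pushforward formula from the earlier remark gives $\rho_{N,*}(h_{i_1}\cdots h_{i_s})=(N-s)!\,h_0 h_1\cdots h_{s-1}$ whenever the $i_j$ are pairwise distinct. Each monomial arising in the expansion of $h_1\cdots h_{(m+1)k}\cdot\prod_s[\Delta^k]_s$ involves $(m+1)k+l$ pairwise distinct indices (the first $(m+1)k$ from the $\infty$-block, the remaining $l$ from the diagonal expansions on the disjoint blocks), so its pushforward is $(N-(m+1)k-l)!\,h_0\cdots h_{(m+1)k+l-1}$. Grouping the terms by the value of $l$ and using the identity $\sum_{\sum_s j_s=l}\prod_s\binom{k}{j_s}=\alpha_l^{(k,d)}$, the stated formula follows after dividing by $d!(N-kr)!$.

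The main obstacle is the combinatorial bookkeeping in the last step: tracking which monomials arise in the expansion of $\prod_s[\Delta^k]_s$, verifying that they all have pairwise distinct indices so the pushforward formula applies without self-intersection corrections, and checking that the coefficients regroup correctly into $\alpha_l^{(k,d)}$. Once these are in order, the result is a direct consequence of the commutative diagram, the projection formula, and \Cref{lem:k-diag}.
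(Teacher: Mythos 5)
Your argument is correct and is essentially identical to the paper's proof: both reduce via the commutative square to computing $\rho_{N,*}\alpha_{r,*}^k(\theta_{m,r})$, expand the $d$ copies of $[\Delta^k]$ using \Cref{lem:k-diag}, and push forward term by term using the fact that $\rho_N$ is an $S_N$-torsor so that the pushforward of a squarefree monomial in distinct hyperplane classes depends only on its degree. Your version just makes the combinatorial regrouping into $\alpha_l^{(k,d)}$ and the division by $d!(N-kr)!$ more explicit than the paper does.
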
 

\begin{proof}
	Thanks to \Cref{lem:k-diag}, we have that 
	\begin{equation*}
		\begin{split}
			&\alpha_{r,*}^k(\theta_{m,r})=[\infty^{k(m+1)} \times (\Delta^k)^d \times (\PP^1)^{N-kr}]= \\ & =h_1\dots h_{(m+1)k} \prod_{i=1}^d \Big(\sum_{j=0}^{k-1} \tau^{k-1-j}\sigma_j^{k}(h_{(m+j)k+1},h_{(m+j)k+2},\dots,h_{(m+j+1)k})\Big);
		\end{split}
	\end{equation*}
	we need to take the image through $\rho_{N,*}$ of this element. However, we have that $\rho_{N,*}(h_{i_1} \dots h_{i_s})=\rho_{N,*}(h_1\dots h_s)$ for every $s$-uple of $(i_1,\dots,i_s)$ of distinct indexes because $\rho_N$ is a $S_N$-torsor. Therefore a simple computation shows that $\rho_{N,*}\alpha_{r,*}^k(\theta_{m,r})$ has the following form:
	$$\sum_{l=0}^{d(k-1)}\Big(\sum_{j_1+\dots+j_d=l}^{0\leq j_s \leq k-1}\binom{k}{j_1}\dots \binom{k}{j_d} \Big)(N-(m+1)k-l)!  \tau^{d(k-1)-l} h_0 \dots   h_{(m+1)k+l-1}.$$
	The statement follows.
\end{proof}

\begin{remark}\label{rem:gener}
	Notice that the expression makes sense also per $m=-1$ and in fact we get a description of the $\pi_{r,*}(1)$. 
	
	Let us describe the case $r=1$. Clearly we only have $d=1$ (or $m=-1$) and $d=0$ (or $m=0$). If $d=0$, the formula gives us 
	$$\pi_{1,*}(h_0)=h_0\dots h_{k-1} \in \ch_T(\PP^N);$$
	if $d=1$ a simple computation shows 
	$$ \pi_{1,*}(1)=\sum_{l=0}^{k-1}(k-l)!\binom{k}{l}\binom{N-k}{N-l}\tau^{k-1-l}h_0\dots h_{l-1} \in \ch_T(\PP^N).$$
	These two formula gives us the $T$-equivariant class of these two elements. As matter of fact, $\pi_{1,*}(1)$ is also a $\GL_2$-equivariant class by definition. As far as $\pi_{1,*}(h_0)$ is concerned, this is clearly not $\GL_2$-equivariant. Nevertheless we can consider
	 $$\pi_ {1,*}(\xi_1)=\pi_{1,*}(h_0)+t_1\pi_{1,*}(1)$$
	which is a $\GL_2$ -equivariant class.
	
	We can describe $\pi_{1,*}(1)$ geometrically. In fact, it is the fundamental class of the locus describing forms $f$ such that $f$ has a root with multiplicity at least $k$. This locus is strictly related to locus of $A_k$-singularities in the moduli stack of cyclic covers of the projective line of degree $2$.
\end{remark}

We denote by $I$ the ideal generated by the two elements described in the previous remark. First of all, we prove that almost all the pushforwards we need to compute are in this ideal. 

\begin{proposition}
	We have that
	$$ \pi_{r,*}(h_0\dots h_m) \in I $$
	for every $1\leq r \leq N/k$ and $0\leq m \leq r-1$. 
\end{proposition}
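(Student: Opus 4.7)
The plan is to reduce the statement to an explicit divisibility in the expansion provided by Lemma~\ref{lem:pi-r-1}. Concretely, that lemma writes
\[
\pi_{r,*}(h_0\cdots h_m)=\sum_{l=0}^{d(k-1)} \alpha_l^{(k,d)}\beta_{l}^{(k,m,r)}\,\tau^{d(k-1)-l}\, h_0\, h_1\cdots h_{(m+1)k+l-1},
\]
where $d=r-(m+1)$. Every term in this sum is a product of the form $\tau^{\bullet} \cdot h_0 h_1\cdots h_{p-1}$ with $p=(m+1)k+l$, so the whole burden is to understand when this last product is divisible by one of our distinguished generators.

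First I would observe that, under the hypothesis $m\geq 0$, we have $p=(m+1)k+l\geq k$ for every $l\geq 0$. Hence in each summand the factor $h_0 h_1\cdots h_{k-1}$ appears, and the remaining part $h_k\cdots h_{p-1}$ is a well-defined element of $\ch_T(\PP^N)$. By Remark~\ref{rem:gener} the class $\pi_{1,*}(h_0)$ is precisely $h_0 h_1\cdots h_{k-1}$, so each term of the sum is divisible by $\pi_{1,*}(h_0)$ inside $\ch_T(\PP^N)$.

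Next I would note that $\pi_{1,*}(h_0)$ itself lies in the ideal $I$ generated by $\pi_{1,*}(1)$ and the $\GL_2$-equivariant class $\pi_{1,*}(\xi_1)$: indeed the identity $\pi_{1,*}(\xi_1)=\pi_{1,*}(h_0)+t_1\pi_{1,*}(1)$ from Remark~\ref{rem:gener} immediately gives $\pi_{1,*}(h_0)=\pi_{1,*}(\xi_1)-t_1\pi_{1,*}(1)\in I$. Combining with the previous paragraph yields $\pi_{r,*}(h_0\cdots h_m)\in I$, which is the desired statement.

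The only mildly delicate point, which I would address at the very end, is to check that the bound $p\geq k$ really is uniform in $l$: since $m\geq 0$ we have $(m+1)k\geq k$ so even the $l=0$ term contains the full block $h_0 h_1\cdots h_{k-1}$. No separate treatment of degenerate cases (such as $d=0$, which corresponds to $m=r-1$) is needed, as the argument applies verbatim; the only case excluded is $m=-1$, i.e.\ $\pi_{r,*}(1)$, which is not in the scope of this proposition. Thus the proof is essentially an algebraic manipulation with the closed formula and costs nothing beyond Lemma~\ref{lem:pi-r-1} and Remark~\ref{rem:gener}.
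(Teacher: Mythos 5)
Your proof is correct and follows essentially the same route as the paper: both reduce via Lemma \ref{lem:pi-r-1} to the observation that $(m+1)k+l-1\geq k-1$ whenever $m\geq 0$, so every summand is divisible by $h_0\cdots h_{k-1}=\pi_{1,*}(h_0)$, which is (up to the harmless substitution $\pi_{1,*}(\xi_1)=\pi_{1,*}(h_0)+t_1\pi_{1,*}(1)$) one of the generators of $I$.
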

\begin{proof}
	\Cref{lem:pi-r-1} implies that it is enough to prove that $(m+1)k+l-1\geq k-1$ for every $l=0,\dots, d(k-1)$ where $d=r-(m+1)$, because it implies that every factor of $\pi_{r,*}(h_0\dots h_m)$ is divisible by $h_0\dots h_{k-1}$. This follows from $m \geq 0$. 
\end{proof}

Therefore it only remains to prove  that $\pi_{r,*}(1)$ is in the ideal $I$ for $r\geq 2$. To do so, we need to prove some preliminary results.

\begin{proposition}\label{prop:square-power}
	We have the following equality 
	$$ h_0^2 \dots h_{n-1}^2 h_n \dots h_{m-1} = \sum_{s=0}^n (-1)^s s! \binom{n}{s}\binom{m}{s}\tau^s h_0\dots h_{m+n-s-1}$$ 
	in the $T$-equivariant Chow ring of $\PP^N$ for every $n\leq m$.
\end{proposition}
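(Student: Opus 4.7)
Both sides are polynomial expressions in $h_0$ and $\tau$ via the linear identity $h_i = h_0+i\tau$, so the identity we must verify is purely algebraic (no projective relation is needed). The plan is to prove it by induction on $n$, peeling off one extra factor of $h_i$ at a time from the standard product $h_0h_1\cdots h_{k-1}$.

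Set $A_k := h_0h_1\cdots h_{m+k-1}$ for $k\geq 0$, and let
\[
B_n := A_0\cdot h_0h_1\cdots h_{n-1}.
\]
Since $h_0^2\cdots h_{n-1}^2\,h_n\cdots h_{m-1}=(h_0\cdots h_{m-1})(h_0\cdots h_{n-1})$, the LHS of the proposition is precisely $B_n$. The key computational step is the one-line identity
\[
A_k\cdot h_i \;=\; A_{k+1}-(m+k-i)\,\tau\,A_k,
\]
which follows immediately from $h_i = h_{m+k}-(m+k-i)\tau$ together with $A_kh_{m+k}=A_{k+1}$.

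By induction I aim to show
\[
B_n=\sum_{s=0}^{n}(-1)^s s!\binom{n}{s}\binom{m}{s}\tau^s A_{n-s}.
\]
The case $n=0$ is trivial. For the step, write $B_{n+1}=B_n\cdot h_n$ and apply the key identity to each summand: with $k=n-s$, $i=n$, this gives $A_{n-s}h_n=A_{n-s+1}-(m-s)\tau A_{n-s}$. Splitting the resulting sum and reindexing the shift-by-one piece (using the elementary manipulation $(s-1)!(m-s+1)\binom{m}{s-1}=s!\binom{m}{s}$) yields
\[
B_{n+1}=\sum_{s=0}^{n+1}(-1)^s s!\,\tau^s\,A_{n+1-s}\left[\binom{n}{s}+\binom{n}{s-1}\right]\binom{m}{s},
\]
and Pascal's rule $\binom{n}{s}+\binom{n}{s-1}=\binom{n+1}{s}$ closes the induction.

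\textbf{Main obstacle.} There is no real conceptual difficulty: all that is required is to keep track of the falling-factorial bookkeeping and to verify that the $(m-s)$-factor produced by the key identity combines cleanly with the binomial coefficients to reproduce Pascal's rule. The small subtlety is to check that the range $0\le s\le n\le m$ is respected throughout, so that each $A_{n-s}$ appearing in intermediate expressions is a legitimate product and the identity $A_kh_i=A_{k+1}-(m+k-i)\tau A_k$ applies at every step of the induction; this is automatic since the identity is a formal consequence of $h_i=h_0+i\tau$ and so holds for all indices.
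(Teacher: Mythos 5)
Your proof is correct and follows essentially the same route as the paper: both arguments reduce the identity to a formal induction on $n$ using the linear relation $h_i=h_j+(i-j)\tau$ and close the inductive step with Pascal's rule. The only difference is organizational — the paper phrases this as a two-index recursion $a_{n,m}=a_{n-1,m+1}-(m-n+1)\tau a_{n-1,m}$ determined by the $a_{0,j}$ and leaves the verification as a "straightforward computation," whereas you fix $m$ and carry out that computation explicitly.
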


\begin{proof}
	Denote by $a_{n,m}$ the left term of the equality. Because we have the identity $h_i=h_j+(j-i)\tau$ in the $T$-equivariant Chow ring of $\PP^N$, we have the following formula
	$$ a_{n,m}=a_{n-1,m+1}-(m-n+1)\tau a_{n-1,m}$$ 
	which gives us that $a_{n,m}$ is uniquely determined from the elements $a_{0,j}$ for $j\leq N$. This implies that it is enough to prove that the formula in the statement verifies the recursive formula above. This follows from straightforward computation.
\end{proof}

Before going forward with our computation, we recall the following combinatorial fact.

\begin{lemma}\label{lem:comb}
	For every pair of non-negative integers $k,m\leq N$ we have that
	$$\sum_{l=0}^{k-1} (-1)^l \binom{m}{l}\binom{N-l}{k-1-l}=\binom{N-m}{k-1}.$$ 
\end{lemma}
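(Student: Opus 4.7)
The plan is to prove the identity by a generating-function/coefficient-extraction argument. The key observation is that $\binom{N-l}{k-1-l}$ can be read as the coefficient $[x^{k-1-l}](1+x)^{N-l}$, or equivalently $[x^{k-1}]\bigl(x^l(1+x)^{N-l}\bigr)$. This reinterpretation allows me to package the whole sum into a single generating function.

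More precisely, I would first rewrite
\[
\sum_{l=0}^{k-1} (-1)^l \binom{m}{l}\binom{N-l}{k-1-l} = [x^{k-1}] \sum_{l=0}^{k-1} \binom{m}{l}(-x)^l (1+x)^{N-l}.
\]
Next, I would observe that extending the summation range to $l=0,\ldots,m$ (or formally to all $l\geq 0$) does not change the coefficient of $x^{k-1}$: terms with $l\geq k$ contribute only to monomials $x^j$ with $j\geq l\geq k$, which lie above $x^{k-1}$, and terms with $l>m$ vanish because $\binom{m}{l}=0$. Factoring $(1+x)^N$ out of the extended sum and applying the binomial theorem yields
\[
(1+x)^N \sum_{l\geq 0}\binom{m}{l}\left(\frac{-x}{1+x}\right)^l = (1+x)^N\left(1-\frac{x}{1+x}\right)^m = (1+x)^{N-m}.
\]
Extracting the coefficient of $x^{k-1}$ gives $\binom{N-m}{k-1}$, as required.

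The computation is short and essentially forced once the coefficient-extraction reformulation is in place, so I do not anticipate any real obstacle. The only small subtlety is justifying the extension of the summation range, but this is immediate from the two observations above and does not require working formally with negative exponents.
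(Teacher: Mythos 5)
Your proof is correct and complete. The paper states this lemma as a recalled combinatorial fact and gives no proof of its own, so there is nothing to compare against; your coefficient-extraction argument is a clean, self-contained verification, and the two points you flag (extending the summation range past $k-1$ only adds monomials of degree $\geq k$, and the hypothesis $m\leq N$ keeps all surviving exponents $N-l$ nonnegative so that no formal Laurent-series issues arise) are exactly the right things to check.
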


We are going to use it to prove the following result.

\begin{proposition}\label{prop:square-h}
	For every non-negative integer $ t\leq k-1$, we have the following equality
	$$ h_0\dots h_{t-1} \pi_{1,*}(1) = \sum_{f=0}^{k-1} \frac{(N-f-t)!}{(N-k-t)!} \binom{k}{f}\tau^{k-1-f}h_0\dots h_{t+f-1} + I$$ 
	in the $T$-equivariant Chow ring of $\PP^N$. Again, for $t=0$, we end up with the formula for $\pi_{1,*}(1)$.
\end{proposition}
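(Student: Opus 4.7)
The plan is to combine the explicit formula for $\pi_{1,*}(1)$ from \Cref{rem:gener} with \Cref{prop:square-power} to expand the products $h_0\dots h_{t-1}\cdot h_0\dots h_{l-1}$, and then to collect coefficients and invoke \Cref{lem:comb} to simplify the resulting alternating sum.

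Concretely, I would first substitute $\pi_{1,*}(1) = \sum_{l=0}^{k-1}\binom{k}{l}\frac{(N-l)!}{(N-k)!}\tau^{k-1-l}h_0\dots h_{l-1}$ and, for each $l$, apply \Cref{prop:square-power} with $n=\min(l,t)$ and $m=\max(l,t)$ (the formula being symmetric in $(n,m)$) to rewrite
\begin{equation*}
h_0\dots h_{t-1}\cdot h_0\dots h_{l-1} = \sum_{s=0}^{\min(l,t)}(-1)^s s!\binom{l}{s}\binom{t}{s}\tau^s h_0\dots h_{l+t-s-1}.
\end{equation*}
Reindexing by $f := l-s$ and collecting the coefficient of $\tau^{k-1-f}h_0\dots h_{t+f-1}$ in $h_0\dots h_{t-1}\pi_{1,*}(1)$ gives
\begin{equation*}
C_f \;=\; \sum_{l=f}^{k-1}(-1)^{l-f}(l-f)!\binom{k}{l}\binom{l}{f}\binom{t}{l-f}\frac{(N-l)!}{(N-k)!},
\end{equation*}
where the upper bound can be taken as $k-1$ because $\binom{t}{l-f}=0$ for $l>f+t$. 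Using the identity $\binom{k}{l}\binom{l}{f}=\binom{k}{f}\binom{k-f}{l-f}$ together with the substitution $j := l-f$, and rewriting $j!\binom{t}{j}(N-f-j)! = t!(N-f-t)!\binom{N-f-j}{t-j}$, this reorganizes to
\begin{equation*}
C_f \;=\; \binom{k}{f}\frac{t!\,(N-f-t)!}{(N-k)!}\sum_{j=0}^{k-1-f}(-1)^j\binom{k-f}{j}\binom{N-f-j}{t-j}.
\end{equation*}

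At this point I would split into two cases. If $t+f\leq k-1$, the terms with $j>t$ vanish automatically (as $\binom{N-f-j}{t-j}=0$), so the sum effectively runs up to $j=t$, and \Cref{lem:comb} applied with $m\mapsto k-f$, $N\mapsto N-f$ and $k-1\mapsto t$ evaluates it to $\binom{N-k}{t}=(N-k)!/(t!(N-k-t)!)$. Plugging back in collapses $C_f$ to exactly $\binom{k}{f}(N-f-t)!/(N-k-t)!$, matching the target coefficient. If instead $t+f\geq k$, the monomial $h_0\dots h_{t+f-1}$ contains the factor $h_0\dots h_{k-1}=\pi_{1,*}(h_0)$, which is a generator of $I$, so the corresponding contributions on both sides lie in $I$ and may be absorbed into the error term.

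The main obstacle is spotting the correct reindexing that converts the alternating binomial sum inside $C_f$ into the shape required by \Cref{lem:comb}; after that the remaining steps are routine manipulations of binomial coefficients, and the modulo-$I$ clause handles precisely the range $t+f\geq k$ where the combinatorial identity would otherwise be truncated.
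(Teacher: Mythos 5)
Your proposal is correct and follows essentially the same route as the paper's proof: substitute the explicit formula for $\pi_{1,*}(1)$, expand via \Cref{prop:square-power}, reindex by $f=l-s$, and evaluate the resulting alternating sum with \Cref{lem:comb}, absorbing the range $t+f\geq k$ into $I$ via the factor $h_0\dots h_{k-1}$. The only (cosmetic) difference is that you collect the coefficient of each monomial directly and apply \Cref{lem:comb} with the roles of its two parameters swapped (getting $\binom{N-k}{t}$ rather than $\binom{N-f-t}{k-f}$), which yields the same final coefficient.
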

\begin{proof}
	The left hand side of the equation in the statement can be written as
	\begin{equation*}
		\begin{split}
			\sum_{l=0}^{t}(k-l)!\binom{k}{l}\binom{N-k}{N-l}\tau^{k-1-l}h_0^2\dots h_{l-1}^2 h_l\dots h_{t-1} + \\ + \sum_{l=t+1}^{k-1}(k-l)!\binom{k}{l}\binom{N-k}{N-l}\tau^{k-1-l}h_0^2\dots h_{t-1}^2 h_t \dots h_{l-1}.;
		\end{split}
	\end{equation*}
	see \Cref{rem:gener}.
	If we apply \Cref{prop:square-power} to the two sums, we get 
	$$ \sum_{l=0}^t \sum_{s=0}^l (-1)^s\frac{k! (N-l)! t!}{(k-l)! (N-k)! s! (l-s)! (t-s)!} \tau^{k-1-l+s}h_0\dots h_{l+t-s-1}$$ 
	and 
	$$ \sum_{l=t+1}^t \sum_{s=0}^t (-1)^s\frac{k! (N-l)! t!}{(k-l)! (N-k)! s! (l-s)! (t-s)!} \tau^{k-1-l+s}h_0\dots h_{l+t-s-1}; $$
	if we exchange the sums in each factor and put everything together we end up with
	$$ \sum_{s=0}^t \sum_{l=s}^{k-1} (-1)^s\frac{k! (N-l)! t!}{(k-l)! (N-k)! s! (l-s)! (t-s)!} \tau^{k-1-l+s}h_0\dots h_{l+t-s-1}. $$
	Shifting the inner sum and setting $f:=l-s$, we get
	$$ \sum_{s=0}^t \sum_{f=0}^{k-1-s} (-1)^s\frac{k! (N-s-f)! t!}{(k-s-f)! (N-k)! s! f! (t-s)!} \tau^{k-1-f}h_0\dots h_{l+f-1}.$$ 
	Notice that we can extend the inner sum up to $k-1$ as all the elements we are adding are in the ideal $I$. Therefore we exchange the sums again and get 
	$$ \sum_{f=0}^{k-1} (-1)^s\frac{k!}{f!} \tau^{k-1-f}h_0\dots h_{l+f-1}\Big( \sum_{s=0}^t (-1)^s \binom{N-s-f}{k-s-f}\binom{t}{s}\Big) + I $$
	and we can conclude using \Cref{lem:comb}.
\end{proof}

We state now the last technical lemma.

\begin{lemma}\label{lem:tec}
	If we define $\Gamma_t$ to be the element 
	$$ \frac{(N-t)!}{(N-2k+1)!} \tau^{2(k-1)-t}h_0\dots h_{t-1}$$
	in the $T$-equivariant Chow ring of $\PP^N$, we have that $\Gamma_t \in I$ for every $t\leq k-1$. 
\end{lemma}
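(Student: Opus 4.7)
The plan is to apply the relation from \Cref{prop:square-h} with varying values of $t$ and then perform a reverse induction on $t$. Recall that \Cref{prop:square-h}, combined with $\pi_{1,*}(1) \in I$, gives, for every $t = 0, \ldots, k-1$:
$$\sum_{f=0}^{k-1} \frac{(N-f-t)!}{(N-k-t)!} \binom{k}{f}\tau^{k-1-f}h_0\dots h_{t+f-1} \in I.$$

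First I would multiply the displayed relation by $\tau^{k-1-t}$, which stays in $I$. The terms with $f \geq k - t$ then contain the factor $h_0 \dots h_{k-1} = \pi_{1,*}(h_0)$, hence are already in $I$ and can be discarded; what remains is
$$S_t := \sum_{f=0}^{k-1-t} \frac{(N-f-t)!}{(N-k-t)!} \binom{k}{f} \tau^{2(k-1)-t-f} h_0 \dots h_{t+f-1} \in I.$$
A direct comparison of monomials shows that the $f$-th summand of $S_t$ equals $\binom{k}{f}\Gamma_{t+f}/M_t$, where $M_t := (N-k-t)(N-k-t-1)\cdots(N-2k+2) = (N-k-t)!/(N-2k+1)!$ is a positive integer (an empty product equal to $1$ when $t = k-1$). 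Multiplying $S_t$ by the integer $M_t$ therefore yields
$$M_t\,S_t = \sum_{s=t}^{k-1}\binom{k}{s-t}\Gamma_{s} \in I.$$

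Finally I would conclude by reverse induction on $t$. The base case $t = k-1$ reads simply $\Gamma_{k-1} \in I$. For $t < k-1$, assuming inductively that $\Gamma_s \in I$ for all $s > t$, the identity $\Gamma_t = M_t S_t - \sum_{s=t+1}^{k-1}\binom{k}{s-t}\Gamma_{s}$ exhibits $\Gamma_t$ as an element of $I$. The main obstacle, if any, is purely bookkeeping: verifying the coefficient comparison between the monomial expansion of $S_t$ and the sum $\sum\binom{k}{s-t}\Gamma_{s}$, and checking that the scaling factor $M_t$ is genuinely a (nonzero) integer, so that multiplying $S_t$ by $M_t$ preserves membership in $I$.
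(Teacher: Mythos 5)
Your proof is correct and follows essentially the same route as the paper: both multiply the identity of \Cref{prop:square-h} by $\tau^{k-1-t}$ and by the integer $M_t=(N-k-t)!/(N-2k+1)!$ to recognize the sum $\sum_{s=t}^{k-1}\binom{k}{s-t}\Gamma_s$ as an element of $I$, and then run a reverse induction on $t$ (the paper phrases it as induction on $m=k-1-t$). The only cosmetic difference is that you discard the terms with $f\geq k-t$ explicitly as multiples of $\pi_{1,*}(h_0)$ before the coefficient comparison, whereas the paper absorbs them into the notation $\Gamma_{f+t}$.
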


\begin{proof}
	We proceed by induction on $m=k-1-t$. The case $m=0$ follows from the previous proposition. 
	
	Suppose that $\Gamma_s \in I$ for every $s\geq k-t$. If we consider the element in $I$ 
	$$ \frac{(N-k-t)!}{(N-2k+1)!}\tau^{k-1-t}h_0 \dots h_{t-1} \pi_{1,*}(1)$$ 
	we can apply the previous proposition again and get 
	$$ \sum_{f=0}^{k-1} \binom{k}{f}\frac{(N-f-t)!}{(N-2k+1)!} \tau^{2(k-1)-(f+t)}h_0\dots h_{t+f-1} \in I$$ 
	which is the same as 
	$$ \sum_{f=0}^{k-1} \binom{k}{f} \Gamma_{f+t} \in I$$. The statement follows by induction.
\end{proof}
\begin{remark}\label{rem:nec}
	It is important to notice that more is true, the same exact proof shows us that 
	$$ \Gamma_t \in \binom{2(k-1)-t}{k-1-t} \cdot I$$ 
	for every $t \leq k-1$. This will not be needed, except for the case $t=0$, where this implies that in particular $\Gamma_0 \in 2 \cdot I$.
\end{remark}
Before going to prove the final proposition, we recall the following combinatorial fact.

\begin{lemma}\label{lem:comb2}
	We have the following numerical equality 
	$$ \sum_{j_1+\dots +j_r=l}^{0\leq j_s \leq k-1} \binom{k}{j_1}\dots \binom{k}{j_r} = \binom{rk}{l}$$
	for every $l\leq k-1$. In particular in our situation we have $$\alpha_{l}^{k,r}=\binom{rk}{l}$$
	for $l\leq k-1$. 
\end{lemma}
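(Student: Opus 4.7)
The plan is to reduce the restricted sum to an unrestricted convolution and then apply the Vandermonde--Chu identity. The crucial observation is that the upper bound $j_s \leq k-1$ in the sum is \emph{automatic} under the hypothesis $l \leq k-1$: if $(j_1,\dots,j_r)$ is any tuple of nonnegative integers summing to $l$, then each $j_s$ satisfies $j_s \leq j_1+\dots+j_r = l \leq k-1$. Thus the restriction cuts off no terms, and the sum in question equals
\[
\sum_{\substack{j_1+\dots+j_r=l \\ j_s \geq 0}} \binom{k}{j_1}\cdots \binom{k}{j_r}.
\]

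The remaining step is to identify this as the coefficient of $x^l$ in the product $(1+x)^k \cdot (1+x)^k \cdots (1+x)^k = (1+x)^{rk}$. Indeed, expanding each factor as $\sum_{j_s=0}^{k}\binom{k}{j_s}x^{j_s}$ and collecting terms with total exponent $l$ yields precisely the unrestricted convolution above, while on the other hand the coefficient of $x^l$ in $(1+x)^{rk}$ is $\binom{rk}{l}$ by the binomial theorem. This gives the claimed identity. The second assertion about $\alpha_l^{(k,r)}$ is then just the definition unwound, since $\alpha_l^{(k,r)}$ was defined in the excerpt as exactly the restricted sum on the left-hand side.

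There is no real obstacle here; the entire content of the lemma is the observation that the constraint $j_s \leq k-1$ is redundant in the range $l \leq k-1$. I would present the argument in two short sentences: first the redundancy of the constraint, then one line invoking the coefficient extraction in $(1+x)^{rk}$. No case analysis, induction, or generating-function manipulation beyond the standard Vandermonde identity is needed.
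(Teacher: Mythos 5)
Your proof is correct. The paper states this lemma as a known combinatorial fact and gives no proof of its own, so there is nothing to compare against; your two-step argument --- first observing that the constraint $j_s\leq k-1$ is vacuous when $l\leq k-1$ because each $j_s\leq j_1+\dots+j_r=l$, then extracting the coefficient of $x^l$ from $(1+x)^{rk}=\bigl((1+x)^k\bigr)^r$ --- is the standard Vandermonde-type argument and is surely what the authors intended.
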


Finally, we are ready to prove the last statement.

\begin{proposition}
	We have $\pi_{r,*}(1)$ is contained in the ideal $I$ for $r \geq 2$. 
\end{proposition}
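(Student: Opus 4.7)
The strategy is to apply Lemma~\ref{lem:pi-r-1} with $m=-1$ to expand $\pi_{r,*}(1)$ explicitly, then dispose of each summand either trivially (because it already contains $\pi_{1,*}(h_0) \in I$) or by a generalization of Lemma~\ref{lem:tec}. Concretely, Lemma~\ref{lem:pi-r-1} gives
$$\pi_{r,*}(1) \;=\; \sum_{l=0}^{r(k-1)} \alpha_l^{(k,r)}\, \beta_l^{(k,-1,r)}\, \tau^{r(k-1)-l}\, h_0 \cdots h_{l-1}.$$
For $l \geq k$ the monomial $h_0 \cdots h_{l-1}$ is divisible by $h_0 \cdots h_{k-1} = \pi_{1,*}(h_0) \in I$, so all such summands belong to $I$. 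For $l \le k-1$, Lemma~\ref{lem:comb2} simplifies the coefficient $\alpha_l^{(k,r)}$ to $\binom{rk}{l}$, and the remaining contribution to be controlled is
$$S_r \;:=\; \sum_{l=0}^{k-1} \binom{rk}{l}\, \frac{(N-l)!}{(N-rk)!\, r!}\, \tau^{r(k-1)-l}\, h_0 \cdots h_{l-1}.$$

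To deal with $S_r$, I would introduce the higher analogue
$$\Gamma_l^{(r)} \;:=\; \frac{(N-l)!}{(N-rk+r-1)!}\, \tau^{r(k-1)-l}\, h_0 \cdots h_{l-1}, \qquad 0 \le l \le k-1,$$
so that $\Gamma_l^{(2)}$ coincides with $\Gamma_l$ from Lemma~\ref{lem:tec}, and prove by descending induction on $l$ that $\Gamma_l^{(r)} \in I$ for every $r \ge 2$. The argument mimics the proof of Lemma~\ref{lem:tec} exactly: starting from the $I$-element
$$\frac{(N-k-t)!}{(N-rk+r-1)!}\, \tau^{(r-1)(k-1)-t}\, h_0 \cdots h_{t-1}\, \pi_{1,*}(1),$$
Proposition~\ref{prop:square-h} rewrites it modulo $I$ as $\sum_{f=0}^{k-1}\binom{k}{f}\Gamma_{t+f}^{(r)}$. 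Summands with $t+f \ge k$ contain $h_0 \cdots h_{k-1} = \pi_{1,*}(h_0)$ and are absorbed in $I$; the summands with $f \ge 1$ and $t+f \le k-1$ fall under the descending induction on $t$, and the base case $t=k-1$ isolates $\Gamma_{k-1}^{(r)}$ directly. Thus $\Gamma_l^{(r)} \in I$ for all admissible $l$.

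The last step is to express each term of $S_r$ as an integral multiple of some $\Gamma_l^{(r)}$. I expect this to be the main obstacle: the naive rewriting gives
$$S_r \;=\; \frac{1}{r}\binom{N-rk+r-1}{r-1}\, \sum_{l=0}^{k-1}\binom{rk}{l}\,\Gamma_l^{(r)},$$
in which the scalar in front is not always an integer --- the same arithmetic pathology already visible in the case $r=2$, where Remark~\ref{rem:nec} salvages the argument by sharpening $\Gamma_0 \in I$ to $\Gamma_0 \in 2\cdot I$. The way out is to carry out the descending induction one refinement stronger, establishing $\Gamma_l^{(r)} \in r \cdot I$ (or, more precisely, $\Gamma_l^{(r)} \in \binom{r(k-1)-l}{(r-1)(k-1)}\cdot I$), and then propagate these extra divisibilities through the recursion coming from Proposition~\ref{prop:square-h}. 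Once this enhanced divisibility is in hand, every summand of $S_r$ lies in $I$ with integer coefficients and the proof concludes.
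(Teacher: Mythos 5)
Your setup — expanding $\pi_{r,*}(1)$ via \Cref{lem:pi-r-1}, discarding the terms with $l\geq k$ as multiples of $h_0\cdots h_{k-1}=\pi_{1,*}(h_0)$, and simplifying $\alpha_l^{(k,r)}=\binom{rk}{l}$ via \Cref{lem:comb2} — matches the paper, and your descending induction showing $\Gamma_l^{(r)}\in I$ is sound. The gap is in the final integrality step, and it is a real one. By routing every term of $S_r$ through the new classes $\Gamma_l^{(r)}$ you place the factor $\frac{1}{r}\binom{N-rk+r-1}{r-1}$ in front of each summand, so you now need an extra divisibility by $r$ for \emph{every} $r$ and \emph{every} $l$. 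Your proposed fix, $\Gamma_l^{(r)}\in r\cdot I$ (or $\in\binom{r(k-1)-l}{(r-1)(k-1)}\cdot I$), is not established and fails precisely where you need it: at $l=k-1$ the binomial refinement equals $1$; the base case of your induction only expresses $\Gamma_{k-1}^{(r)}$ as an integer combination of elements of $I$ whose coefficients (such as $\frac{(N-2k+1)!}{(N-rk+r-1)!}$) are not divisible by $r$ in general; and the scalar $\frac{1}{r}\binom{N-rk+r-1}{r-1}\binom{rk}{k-1}$ is genuinely non-integral, e.g.\ $r=4$, $k=3$, $N=12$ gives $\frac{66}{4}$. So the argument does not close as written.

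The paper avoids this entirely by not introducing $\Gamma_l^{(r)}$: it peels off $\tau^{(r-2)(k-1)}$ and writes the $l$-th term as $\binom{rk}{l}\frac{(N-2k+1)!}{(N-rk)!\,r!}\,\tau^{(r-2)(k-1)}\Gamma_l$ with the \emph{original} classes $\Gamma_l=\Gamma_l^{(2)}$ of \Cref{lem:tec}. The coefficient equals $\binom{rk}{l}\binom{N-rk+r}{r}\frac{(N-2k+1)!}{(N-rk+r)!}$, and for $r\geq 3$ the last factor is a falling factorial of nonnegative length $(r-2)k-(r-1)$, hence an integer; no divisibility refinement is needed there. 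The only residue is $r=2$, where the coefficient is $\binom{2k}{l}\frac{N-2k+1}{2}$ and the problematic case $l=0$ is rescued by \Cref{rem:nec}, i.e.\ $\Gamma_0\in 2\cdot I$. If you insist on the classes $\Gamma_l^{(r)}$ you would have to prove genuinely new divisibility statements and propagate them through \Cref{prop:square-h}; factoring through the $r=2$ classes makes all of that unnecessary.
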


\begin{proof}
	Notice that 
	\begin{equation*}
		\begin{split}
			\pi_{r,*}(1)&=\sum_{l=0}^{r(k-1)} \alpha_l^{(k,r)}\beta_l^{(k,-1,r)}\tau^{r(k-1)-l}h_0\dots h_{l-1}=\\ & =\sum_{l=0}^{k-1} \alpha_l^{(k,r)}\beta_l^{(k,-1,r)}\tau^{r(k-1)-l}h_0\dots h_{l-1} + I
		\end{split}
	\end{equation*}
	therefore we have to study the first $k-1$ elements of the sum. Using \Cref{lem:comb2}, we get the following chain of equalities modulo the ideal I;
	\begin{equation*}
		\begin{split}
			\pi_{r,*}(1)&= \sum_{l=0}^{k-1} \binom{rk}{l}\frac{(N-l)!}{(N-rk)!r!}\tau^{r(k-1)-l}h_0\dots h_l= \\ & =
			\sum_{l=0}^{k-1}\binom{rk}{l}\frac{(N-2k+1)!}{(N-rk)!r!}\tau^{(r-2)(k-1)}\Gamma_l;
		\end{split}		
	\end{equation*}
	therefore it remains to prove that the coefficient
	$$
	\binom{rk}{l}\frac{(N-2k+1)!}{(N-rk)!r!}
	$$
	is an integer for every $r \geq 2$ and any $l\leq k-1$. First of all, we notice that this is the same as
	$$\binom{rk}{l}\binom{N-rk+r}{r}\frac{(N-2k+1)!}{(N-rk+r)!}$$
	which implies that for $r\geq 3$ and $l\leq k-1$ this is an integer. It remains to prove the statement for $r=2$, i.e. to prove that the number 
	$$ \binom{2k}{l}\frac{N-2k+1}{2}$$
	is an integer. Notice that it is clearly true for $l\geq 1$ but not for $l=0$. However, we have that $\Gamma_0 \in 2\cdot I$ by \Cref{rem:nec}, therefore we are done.
\end{proof}

\begin{corollary}
	The ideal generated by the relations induced by $\Delta^k$ in $\PP^N$ is generated by the two elements $\pi_{1,*}(1)$ and $\pi_{1,*}(h_0)$. See \Cref{rem:gener} for the explicit description.
\end{corollary}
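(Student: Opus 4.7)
The plan is to assemble the three main ingredients already established in the appendix: the Chow-envelope property of the multiplication maps $\pi_r$, the explicit pushforward formula of \Cref{lem:pi-r-1}, and the two propositions that control the generators $\pi_{r,*}(h_0\cdots h_m)$ and $\pi_{r,*}(1)$. By the first proposition of the appendix, the disjoint union $\bigsqcup_{r=1}^{\lfloor N/k\rfloor}\pi_r$ is an algebraic Chow envelope for the closed immersion $\Delta_k\hookrightarrow\PP^N$, so the ideal generated by the relations from $\Delta_k$ is exactly $\sum_{r\geq 1}\im(\pi_{r,*})$. This is the reduction that makes the rest of the argument possible.

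Next I would reduce each $\im(\pi_{r,*})$ to pushforwards of products of the $h_i$'s. Using the projection formula together with $\pi_r^*(\xi_N)=k\xi_r+\xi_{N-kr}$, the image $\im(\pi_{r,*})$ is generated as an ideal over $\ch_{\GL_2}(\PP^N)$ by the classes $\pi_{r,*}(\xi_r^m)$ with $0\leq m\leq r$. Passing to the $T$-equivariant setting (legitimate since $\GL_2$ is special), the remark expressing $h_0\cdots h_{m-1}=\xi_r^m+\sum_{i<m}\alpha_i\xi_r^i$ lets one inductively replace $\pi_{r,*}(\xi_r^m)$ by a $\ch(\cB T)$-linear combination of the $\pi_{r,*}(h_0\cdots h_{m-1})$ for $-1\leq m-1\leq r-1$ (with the convention $m=-1$ giving $\pi_{r,*}(1)$). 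So it suffices to show each of these lies in $I:=(\pi_{1,*}(1),\pi_{1,*}(h_0))$.

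Now I invoke the two propositions already proven. For $m\geq 0$, \Cref{lem:pi-r-1} writes $\pi_{r,*}(h_0\cdots h_m)$ as a linear combination over $\ch(\cB T)$ of classes $h_0\cdots h_{(m+1)k+l-1}$ with $l\geq 0$; each such class is divisible by $h_0\cdots h_{k-1}=\pi_{1,*}(h_0)$, hence lies in $I$. This handles every case except $\pi_{r,*}(1)$ with $r\geq 2$, which is precisely the content of the final proposition before the corollary. Thus every generator of $\sum_{r\geq 1}\im(\pi_{r,*})$ lies in $I$, while conversely $I$ is visibly contained in it. Combined with the explicit $T$-equivariant formulas of \Cref{rem:gener}, this yields the claimed description.

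The genuine obstacle—already negotiated in the proof of the last proposition—is the case $r=2$, $m=-1$: the general divisibility $\binom{rk}{l}\binom{N-rk+r}{r}(N-2k+1)!/(N-rk+r)!$ being an integer works for $r\geq 3$ and for $l\geq 1$, but fails at $r=2$, $l=0$, where a stray factor of $\tfrac12$ appears. The resolution uses the refinement in \Cref{rem:nec} that $\Gamma_0\in 2\cdot I$, which absorbs exactly this factor of $2$. Once this one delicate integrality check is in place, the corollary is formal.
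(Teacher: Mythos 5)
Your proposal is correct and follows exactly the route the paper takes: the Chow-envelope proposition reduces the ideal to $\sum_r \im(\pi_{r,*})$, the projection formula and the remark on $h_0\cdots h_{m-1}$ versus $\xi_r^m$ reduce the generators to the classes $\pi_{r,*}(h_0\cdots h_m)$ and $\pi_{r,*}(1)$, and the two closing propositions (with the $\Gamma_0\in 2\cdot I$ refinement of \Cref{rem:nec} handling the $r=2$, $l=0$ integrality issue) place all of these in $I$. Nothing is missing.
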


\addcontentsline{toc}{chapter}{Bibliography}
\bibliographystyle{plain}
\bibliography{Bibliografia1}

@article{Vis1,
	Author={Vistoli, A.},
	Title={{Intersection theory of Algebraic Stacks and on their moduli spaces}},
	Year={1989},
	Journal={Inventationes Mathematicae, Springer-Verlag},	
}

@article{EdFul,
	Author={Edidin, D. and Fulghesu, D.},
	Title={{The integral Chow ring of the stack of hyperelliptic curves of even genus}},
	Year={2009},
	Journal={Math Research Letters, v.16, no. 1., 27-40},	
}

@article{ArVis,
	Author={Arsie, A. and Vistoli, A.},
	Title={{Stacks of cyclic covers of projective spaces}},
	Year={2004},
	Journal={Compos. Math. 140, no.3, 647-666}	,
}

@article{EdGra,
	Author={Edidin, D. and Graham, W.},
	Title={{ Equivariant intersection theory}},
	Year={1998},
	Journal={Inv. math. 131, 595-634},	
}

@article{Rom,
    Author={Romagny, M.},
	Title={{Group actions on stacks and applications}},
	Year={2005},
	Journal={Michigan Math. J., Volume 53, Issue 1, 209-236},
}

@article{DiLor,
	Author={Di Lorenzo, A.},
	Title={{The Chow ring of the stack of hyperelliptic curves of odd genus}},
	Year={2019},
	Journal={Int. Math. Res. Not. IMRN rnz 101, https://doi.org/10.1093/irmn/rnz 101},
}

@article{Vis3,
	Author={Vistoli, A.},
	Title={{The Chow ring of $\mathscr{M}_2$. Appendix to "Equivariant intersection theory"}},
	Year={1998},
	Journal={Invent. Math. 131, no. 3.},
}

@article{Lar,
	Author={Larson, E.},
	Title={{The integral Chow ring of $\overline{\mathscr{M}}_2$}},
	Year={2019},
	Journal={arXiv:1904.08081},
}

@article{Mum,
	Author={Mumford, D.},
	Title={{ Towards an Enumerative Geometry of the Moduli Space of Curves }},
	Year={1983},
	Journal={ Artin M., Tate J. (eds) Arithmetic and Geometry. Progress in Mathematics, vol 36. Birkh\"auser, Boston, MA},
}

@article{Cat,
author={Catanese, F.},
title={{Pluricanonical Gorenstein Curves}},
journal={Enumerative Geometry and Classical Algebraic Geometry},
year={1982},
publisher={Birkh{\"a}user Boston},
address={Boston, MA},
pages={51--95},
isbn={978-1-4684-6726-0},
doi={10.1007/978-1-4684-6726-0_4},
url={https://doi.org/10.1007/978-1-4684-6726-0_4}
}

@article{Rydh,
author = {Rydh, D.},
year = {2007},
pages = {},
title = {{Existence and properties of geometric quotients}},
volume = {22},
journal = {Journal of Algebraic Geometry},
doi = {10.1090/S1056-3911-2013-00615-3},
}

@misc{DiLorPerVis,
  doi = {10.48550/ARXIV.2108.03680},
  url = {https://arxiv.org/abs/2108.03680},
  author = {Di Lorenzo, A. and Pernice, M. and Vistoli, A.},
  keywords = {Algebraic Geometry (math.AG), FOS: Mathematics, FOS: Mathematics, 14D23, 14C15, 14H10},
  title = {{Stable cuspidal curves and the integral Chow ring of $\overline{\mathscr{M}}_{2,1}$}},
  publisher = {arXiv},
  year = {2021},
}

@article{Hass,
author = {Hasset, B.},
doi = {doi:10.1515/crll.2000.020},
url = {https://doi.org/10.1515/crll.2000.020},
title = {{Local stable reduction of plane curve singularities}},
number = {520},
volume = {2000},
year = {2000},
pages = {169--194},
}

@article{CasMarLaz,
author = {Casalaina-Martin, S. and Laza, R.},
year = {2010},
title = {{Simultaneous semi-stable reduction for curves with ADE singularities}},
volume = {365},
journal = {Transactions of the American Mathematical Society},
doi = {10.1090/S0002-9947-2012-05579-6},
}

@article{AbOlVis,
author = {Abramovich, D. and Olsson, M. and Vistoli, A.},
year = {2011},
title = {{Twisted stable maps to tame Artin stacks}},
volume = {20},
pages = {399-477},
journal = {Journal of Algebraic Geometry},
doi = {https://doi.org/10.1090/S1056-3911-2010-00569-3},
}

@article{AbGrVis,
 ISSN = {00029327, 10806377},
 URL = {http://www.jstor.org/stable/40068158},
 author = {Abramovich, D. and Graber, T. and Vistoli,A.},
 journal = {American Journal of Mathematics},
 number = {5},
 pages = {1337--1398},
 publisher = {Johns Hopkins University Press},
 title = {{Gromov-Witten Theory of Deligne-Mumford Stacks}},
 volume = {130},
 year = {2008},
}

@article{DiLor2,
	doi = {10.1007/s00229-020-01212-3},
  	year = {2020},
	month = {jun},
  	publisher = {Springer Science and Business Media {LLC}},
  	author = {Di Lorenzo, A.},
  	title = {{Picard group of moduli of curves of low genus in positive characteristic}},
  	journal = {Manuscripta Mathematica},
}

@article{DiLorFulVis,
	doi = {10.1090/tran/8354},
 	year = {2021},
  	publisher = {American Mathematical Society ({AMS})},
  	volume = {374},
  	number = {08},
  	pages = {5583--5622},
  	author = {Di Lorenzo, A. and  Fulghesu, D. and Vistoli, A.},
  	title = {{The integral Chow ring of the stack of smooth non-hyperelliptic curves of genus three}},
  	journal = {Transactions of the American Mathematical Society},
}

@misc{StProj,
  author       = {The {Stacks project authors}},
  title        = {The Stacks project},
  howpublished = {\url{https://stacks.math.columbia.edu}},
  year         = {2022},
}

@article{EdFul2,
author = {Edidin, D. and Fulghesu, D.},
year = {2006},
pages = {},
title = {{The integral Chow ring of the stack of at most 1-nodal rational curves}},
}

@article{Est,
  title={{The stable hyperelliptic locus in genus 3: an application of Porteous Formula}},
  author={Esteves, E.},
  journal={Journal of Pure and Applied Algebra},
  year={2013},
  volume={220},
  pages={845-856}
}

@article{DiLorVis,
  doi = {10.48550/ARXIV.2103.13204},
   url = {https://arxiv.org/abs/2103.13204},
   author = {Di Lorenzo, A. and Vistoli, A.},
  title = {{Polarized twisted conics and moduli of stable curves of genus two}},
  year = {2021},
   }

@article{FulVis,
author = {Fulghesu, D. and Vistoli, A.},
year = {2016},
title = {{The Chow Ring of the Stack of Smooth Plane Cubics}},
volume = {67},
journal = {The Michigan Mathematical Journal},
doi = {10.1307/mmj/1516330968},
}

@book{Tot,
 place={Cambridge}, 
series={Cambridge Tracts in Mathematics}, 
title={{Group Cohomology and Algebraic Cycles}},
DOI={10.1017/CBO9781139059480}, 
publisher={Cambridge University Press}, 
author={Totaro, B.},
year={2014}, 
collection={Cambridge Tracts in Mathematics},
}

@article{MolVis,
author = {Molina Rojas, L. A. and Vistoli, A},
journal = {Rendiconti del Seminario Matematico della Università di Padova},
pages = {271-298},
publisher = {Seminario Matematico of the University of Padua},
title = {{On the Chow rings of classifying spaces for classical groups}},
url = {http://eudml.org/doc/108697},
volume = {116},
year = {2006},
}

@article{Knu,
author = {Knudsen, F. F.},
journal = {Mathematica Scandinavica},
pages = {161-199},
title = {{The projectivity of the moduli space of stable curves, II: The stacks $\mathcal{M}_{g,n}$}},
url = {http://eudml.org/doc/166839},
volume = {52},
year = {1983},
}

@article{TalVis,
author = {Talpo, M. and Vistoli, A.},
year = {2010},
title = {{Deformation theory from the point of view of fibered categories}},
}

@article{Fab,
author = {Faber, C.},
journal = {Annals of Mathematics},
pages = {331-419},
title = {{Chow Rings of Moduli Spaces of Curves I: The Chow Ring of $\overline{\mathcal M}_3$}},
url = {http://www.jstor.org/stable/1971525},
volume = {132},
year = {1990},
}

@article{Ell,
 URL = {http://www.jstor.org/stable/23813905},
 author = { Elliott, J.},
 journal = {Transactions of the American Mathematical Society},
 number = {8},
 pages = {3997--4019},
 publisher = {American Mathematical Society},
 title = {{Factoring formal power series over principal ideal domains}},
 volume = {366},
 year = {2014}
}

@article{Iza,
author={Izadi, E.},
title={{The Chow Ring of the Moduli Space of Curves of Genus 5}},
booktitle={The Moduli Space of Curves},
year={1995},
publisher={Birkh{\"a}user Boston},
address={Boston, MA},
pages={267--303},
}

@article{PenVak,
author = {Penev, N. and Vakil, R.},
year = {2013},
month = {07},
title = {{The Chow ring of the moduli space of curves of genus 6}},
volume = {2},
journal = {Algebraic Geometry},
doi = {10.14231/AG-2015-006}
}

@article{CanLar,
  doi = {10.48550/ARXIV.2104.05820},
  url = {https://arxiv.org/abs/2104.05820},
  author = {Canning, S. and Larson, H.},
  title = {{The Chow rings of the moduli spaces of curves of genus 7, 8, and 9}},
  publisher = {arXiv},
  year = {2021},
}

@article{Ink,
  doi = {10.48550/ARXIV.2109.06451},
  url = {https://arxiv.org/abs/2109.06451},
  author = {Inchiostro, G.}, 
  title = {{Moduli of genus one curves with two marked points as a weighted blow-up}},
  publisher = {arXiv},
  year = {2021},
}

@article{EGA,
     author = {Grothendieck, A.},
     title = {{\'El\'ements de g\'eom\'etrie alg\'ebrique : {IV.} {\'Etude} locale des sch\'emas et des morphismes de sch\'emas, {Troisi\`eme} partie}},
     journal = {Publications Math\'ematiques de l'IH\'ES},
     pages = {5--255},
     publisher = {Institut des Hautes \'Etudes Scientifiques},
     volume = {28},
     year = {1966},
     url = {http://www.numdam.org/item/PMIHES_1966__28__5_0/},
}

\end{document}